\renewcommand\qedsymbol{Q.E.D.}
  \theoremstyle{plain}
  \newtheorem{definition}{Definition}[section]
\newtheorem{theorem}[definition]{Theorem}
\newtheorem{lemma}[definition]{Lemma}
  \newtheorem{question}[definition]{Question}
 \newtheorem{example}[definition]{Example}  
 \newtheorem{corollary}[definition]{Corollary}
 \newtheorem{conjecture}[definition]{Conjecture}
 \newtheorem{assumption}[definition]{Assumption}
 \newtheorem{proposition}[definition]{Proposition}
   \newtheorem{remark}[definition]{Remark}
\crefname{lemma}{Lemma}{Lemma}
  \crefname{corollary}{Corollary}{Corollary}
  \crefname{theorem}{Theorem}{Theorem}
  \crefname{definition}{Definition}{Definition}
   \crefname{proposition}{Proposition}{Proposition}
 \crefname{section}{Section}{Section} 
   \crefname{construction}{Construction}{Construction}
   \crefname{generalization}{Generalization}{Generalization}
  \crefname{construction}{Construction}{Construction}
  \crefname{notation}{Notation}{Notation}
   \crefname{example}{Example}{Example}
  \crefname{remark}{Remark}{Remark}
  \crefname{fact}{Fact}{Fact}
  \crefname{conjecture}{Conjecture}{Conjecture}
\crefname{chapter}{Chapter}{Chapter}  
  \crefname{motivation}{Motivation}{Motivation}  
  \crefname{figure}{Figure}{Figure}  
  \crefname{table}{Table}{Table}
  \numberwithin{equation}{section}
 \numberwithin{figure}{section}
  \numberwithin{figure}{subsection}
  \renewcommand{\cH}{{\mathcal H}}
  \newcommand{\cA}{{\mathcal A}}
  \newcommand{\cF}{{\mathcal F}}
  \renewcommand{\cL}{{\mathcal L}}
  \renewcommand{\cD}{{\mathcal D}}
  \newcommand{\cC}{{\mathcal C}}
  \newcommand{\cG}{{\mathcal G }}
  \newcommand{\cO}{{\mathcal O }}
  \newcommand{\cT}{{\mathcal T }}
  \newcommand{\cK}{{\mathcal K }}
  \newcommand{\cS}{{\mathcal S }}
    \newcommand{\cI}{{\mathcal I }}
  \newcommand{\cB}{{\mathcal B }}
        \newcommand{\cX}{{\mathcal X }}
         \newcommand{\calR}{{\mathcal R }}
       \newcommand{\cU}{{\mathcal U }}
  \renewcommand{\cR}{\mathcal{R}}
  \newcommand{\sH}{\mathscr{H}}
  \newcommand{\Hom}{\text{Hom}}
  \newcommand{\HOM}{\text{HOM}}
      \newcommand{\End}{\text{End}}
  \newcommand{\Com}{\text{Com}}
  \newcommand{\Kom}{\text{Kom}}
  \newcommand{\cone}{\text{cone}}
  \newcommand{\id}{\text{id}}
  \newcommand{\Ba}{\mathscr{B}}
  \newcommand{\Aa}{\mathscr{A}}
  \newcommand{\bigrdim}{\text{bigrdim}}
  \newcommand{\sgn}{\text{sgn}}
   \newcommand{\ba}{\begin{eqnarray}}
   \newcommand{\na}{\end{eqnarray}}
   \newcommand{\ban}{\begin{eqnarray*}}
   \newcommand{\nan}{\end{eqnarray*}}
 \newcommand{\fs}{{\mathfrak s}}
 \newcommand{\fl}{{\mathfrak l}}
 \newcommand{\h}{{\mathfrak h}}
 \newcommand{\m}{\mathfrak{m}}
  \newcommand{\fc}{\mathfrak{c}}
  \newcommand{\fC}{\mathfrak{C}}
    \newcommand{\fD}{\mathfrak{D}}
     \newcommand{\fq}{\mathfrak{q}}
     \newcommand{\fp}{\mathfrak{p}}
         \newcommand{\ff}{\mathfrak{f}}
            \newcommand{\fB}{\mathfrak{B}}
  \newcommand{\fT}{\mathfrak{T}}
  \newcommand{\C}{{\mathbb C}}
  \newcommand{\R}{{\mathbb R}}
  \newcommand{\Z}{{\mathbb Z}}
   \newcommand{\Q}{{\mathbb Q}}
   \newcommand{\N}{\mathbb N}
       \newcommand{\bS}{\mathbb S}
         \newcommand{\D}{\mathbb D}
          \newcommand{\F}{\mathbb F}
  \newcommand{\A}{{\mathbb A}}
     \newcommand{\B}{{\mathbb B}}
  \renewcommand{\d}{\delta}
  \newcommand{\sB}{\mathscr{B}}
\newcommand{\sD}{\mathscr{D}}
  \newcommand{\sF}{\mathscr{F}}
       \newcommand{\sG}{\mathscr{G}}
\newcommand{\ra}{\rightarrow}
\newcommand{\xra}{\xrightarrow}
\newcommand{\hra}{\hookrightarrow}
\newcommand{\wt}{\widetilde}
\newcommand{\ol}{\overline}
\newcommand{\Ra}{\Rightarrow}
\newcommand\rb[1]{\textcolor{red}{\textbf{#1}}}
\newcommand{\DA}{\D^A_{2n}}
\newcommand{\DAz }{\D^A_{2n} \setminus \Delta_0}
\newcommand{\DB}{\D^B_{n+1}}
\newcommand{\bs}{\backslash}
\newcommand{\bsz}{\backslash \{ 0 \} }
\newcommand{\MCG}{\text{MCG}}
\newcommand{\Diff}{\text{Diff}}
\newcommand{\RP}{\mathbb{R} \text{P}^1}
\newcommand{\udfC}{\undertilde{\widetilde{\mathfrak{C}}}}
\newcommand{\RDA}{\fD^A_{\Delta}}
\newcommand{\RDB}{\fD^B_{\Lambda}}
\newcommand{\RDAz}{\fD^A_{\Delta_{0}}  }
\newcommand{\wRDA}{\wt{\fD}^A_{\Delta}}
\newcommand{\wRDB}{\check{\fD}^B_{\Lambda}}
\newcommand{\wRDAz}{\wt{\fD}^A_{\Delta_{0}}}
\newcommand{\MCGB}{\text{MCG} \left( \D^B_{n+1}, \{ 0 \} \right)}
\newcommand{\MCGA}{\text{MCG} \left( \D^A_{2n} \right)}
\newcommand{\DiffB}{\text{Diff} \left( \D^B_{n+1}, \{0\} \right)}
\newcommand{\ut}{\undertilde}
\newcommand{\wuc}{\undertilde{\widetilde{c}}}
     \def\uv{\underline{v}}
       \def\uw{\underline{w}}
\newcommand{\bit}[1]{\textit{\textbf{#1}}}
  \newcommand{\<}{\langle}
  \renewcommand{\>}{\rangle}
\newcommand\reallywidehat[1]{%
\savestack{\tmpbox}{\stretchto{%
  \scaleto{%
    \scalerel*[\widthof{\ensuremath{#1}}]{\kern-.6pt\bigwedge\kern-.6pt}%
    {\rule[-\textheight/2]{1ex}{\textheight}}
  }{\textheight}%
}{0.5ex}}%
\stackon[1pt]{#1}{\tmpbox}%
}
\newtheorem*{theorem*}{Theorem}
\newtheoremstyle{named}{}{}{\itshape}{}{\bfseries}{.}{.5em}{\thmnote{#3's }#1}
\theoremstyle{named}
\newtheoremstyle{name}{}{}{\itshape}{}{\bfseries}{.}{.5em}{\thmnote{#3}#1}
\theoremstyle{name}
\newcommand\xrightleftarrows[2][]{\ext@arrow 0099{\longrightleftarrowsfill@}{#1}{#2}}
\def\longrightleftarrowsfill@{\arrowfill@\leftarrow\relbar\rightarrow}
  \newcommand{\blanknonumber}{\newpage\thispagestyle{empty}}
\begin{document}

  \frontmatter

           \begin{titlepage}
\begin{center}

\vspace*{\fill} \LARGE 
                    \textbf{{\Huge From Homological Algebra to Topology }via \textit{Type B Zigzag Algebra} and Heisenberg Algebra.}
\\
\vfill\vfill\Large
                          Kie Seng, Nge
                          \\
                          \vfill
                          \begin{align*}
                          \text{Primary Supervisor and Chair}&: \text{Assoc.  Prof.  Anthony M. Licata} \\
                          \text{Associate Supervisor } 1&: \text{Assoc. Prof. Uri Onn}		\\
                          \text{Associate Supervisor } 2&: \text{Dr. Asilata Bapat}
                           \end{align*}
\\
\vfill\vfill
                          March,  2022
\\
\vfill\vfill \large
         A thesis submitted for the degree of Doctor of Philosophy\\
         of the Australian National University
\vfill
         \includegraphics{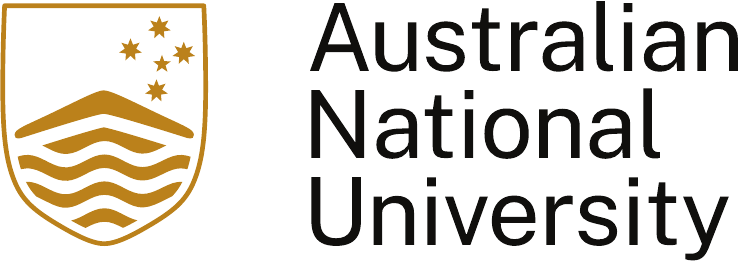}

\end{center}

\end{titlepage}
\blanknonumber

\blanknonumber\ \blanknonumber

\vspace*{\fill}

\begin{center}\emph{
{\large For my father and mother.}
}
\end{center}

\vfill\vfill\vfill

\newpage

\vspace*{\fill}

\begin{singlespace} 
\begin{center}

\begin{CJK*}{UTF8}{gbsn}

\begin{LARGE}
廿八数至寒窗毕， \\
数算真相美妙哉， \\
同调代数论拓扑， \\
世界万千皆弦数。 \\
\end{LARGE}

\end{CJK*}

\end{center}

\end{singlespace}

\vspace{15mm}

\begin{center}
\begin{LARGE}
\textit{Twenty-eight years worth of maths,} \\
\textit{Truth and beauty lie within, }\\
\textit{Topos carol algebras echo,}  \\
\textit{Tick-tocking string whirring gravity.} \\
\end{LARGE}
\end{center}

\vspace{15mm}

\begin{center}

\begin{LARGE}

Berpuluh tahun mengejar kejayaan, \\

Ilmu hisab betapa benar dan cantik, \\

Huruf tanda mengukur garisan, \\

Daya bersatu dawai berdetik. \\
\end{LARGE}

\end{center}

\vfill\vfill\vfill\vfill\vfill\vfill
\blanknonumber
          
\chapter*{Declaration}\label{declaration}
\thispagestyle{empty}
The work in this thesis is my own except where otherwise stated.

\vspace{1in}

\hfill\hfill\hfill
Kie Seng, Nge
\hspace*{\fill}
\blanknonumber 
          
\chapter*{Acknowledgements} \label{ack}
\addcontentsline{toc}{chapter}{Acknowledgements}
\begin{quote}
``We acknowledge and celebrate the First Australians on whose lands the Australian National University (ANU) operates, and pay our respect to the elders of the Ngambri and Ngunnawal people, past, present, and emergent.''
\end{quote}

\begin{CJK*}{UTF8}{gbsn}
First and foremost, I would like to express my greatest gratitude to my beloved supervisor, Anthony Michael Licata or simply Tony, a more familiar name known by his collaborators, his colleagues, his students, and his family.
  I thank Tony for generously accepting me as his first, if not, the second graduate student.
  I am grateful that he brought me into the field of representation theory which has vast connections to other fields. 
  Moreover, he shaped me into an independent and thoughtful mathematician through fortnightly critical dialogues with him.
  As a caring supervisor,  he often cares of his students' welfare.
  Whenever I reached a bottleneck, he could try his best to give the most useful advices, which I appreciated very much.
  Thank you for the conversations we had. 
  Thank you, Tony, as my teacher\footnote{Chinese say, ``A day as a teacher, a lifetime as a father (一日为师，终身为父)''; respect your teacher as your father. }.
  
\end{CJK*}

    Second, I would like to thank my PhD\footnote{Doctor of Philosophy. }-mate, Edmund Xian Chen Heng. 
       PhD\footnote{
       However, permanent head damage is the phrase that netizen refer to whenever they find PhD's are hard to get along with.} is usually completed singly as the topic might get specific, but I'm lucky to have met him in my PhD journey.
    We started our first bachelor degree together at the ANU.
    We were not as close back then, not until he chose to start his PhD with my supervisor.
In the first two years of PhD, we shared the same learning and conference experience before we broke off pursuing our own final project topics.
  In particular, two of the conferences we participated in have resulted in a joint paper on type $B$ zigzag algebra and its sequel to $2$-category that constitute the first two chapters in this thesis.
  I find him particularly easy to communicate with, especially since we both come from the same country, Malaysia.
  Whenever there are two clear paths we can proceed in doing maths, without a second thought, we would pick the distinct path with each other.
  However, it has always been pleasant\footnote{I found it amusing on one occasion that we brought our maths conversation to a local Chinese restaurant with a mix of English and Chinese terms while having lunch together; 
  the customers around us kept turning their heads to us, wondering why we were talking about the languages they knew but didn't quite understand.} and fruitful discussions with him; there are countless days where we discussed maths at the office of Mathematical Science Institute (MSI) until the sun set and walked home in the dark.
  I have learnt a lot from him perceiving maths from his perspective; he possessed good qualities that make him suitable to be a mathematics researcher.
 Congratulation to Heng that he got a postdoctorate offer from Institut des Hautes Études Scientifiques. 
 Good luck to Heng in his future endeavour and undertakings!

      Not forgetting, Heng and I would like to thank Peter McNamara for suggesting the construction of the type $B$ zigzag algebra during the Kiola Conference $2019$.
      We would also like to thank Hoel Queffelec and Daniel Tubbenhauer for their helpful comments on the draft(s) of our earlier version of the paper \cite{hengseng2019curves}.
      We thank Thorge Jensen, who was willing to share with us his Master thesis work on proving the faithfulness of the type $A$ $2$-braid group when we met at the Mooloolaba Conference $2020$.
     
      I personally want to thank Bryan Bailing Wang, who allowed me to learn symplectic geometry in his reading classes and gave me opportunities to share my research interest with his students.
      He even encouraged his previous PhD student, Shuaige Qiao, to work with me on my ``ill-posed'' question on type $B$ Floer cohomology. 
      When Qiao returned to China after graduation, we endeavoured to communicate over Zoom, hoping to produce some new results despite the VPN problem and unsynchronised internet speed. 
      This, in fact, is a fun collaboration between two different mathematics backgrounds where I learn how to communicate with a person who is not so familiar with representation theory.
      Although by the thesis submission date, Qiao and I haven't quite reached our dream goal due to my inadequate knowledge in geometry, we have made decent progress in understanding the subject of Floer cohomology\footnote{It is a folklore that the study of Floer (co)homology is the study of every mathematical fields.}  of Type $A$ Milnor fibre; some different insights from \cite{hengseng2019curves} have been presented in this thesis.
      Qiao is now taking up a postdoctoral position in China. 
      I wish her all the best in her future career!
      
      Before jumping to the next section, I would like to take this opportunity to thank the other members of the MSI who impacted me with their knowledge of maths.
      Thank you,  Uri Onn, Asilata Bapat, Anand Deopurkar, Vigleik Angeltveit, Amnon Neeman, James Borger, and Joan Licata.
       I also thank mathematics researcher Joshua Jeishing Wen, who is willing to share his research on wreath Macdonald polynomial with me through emails even though we had never met each other in person before.
       Thanks to Ivo De Los Santos Vekemans for organising the homological algebra graduate reading seminar, which forms the basis of my research afterwards.
       Thank you, Bradley Windelborn, for many heart-to-heart talks opposite my MSI desk, though you have left for your new PhD life in New Zealand.
 
    Inevitably, homesickness is a part of the PhD journey when it comes to studying in the land, not where the home is based. 
    Certain nostalgic elements make me feel at home;
    badminton is the first on the list. 
    I am grateful to gather a band of badminton fanatics, ``Pattern More Than Badminton,'' who love badminton as crazy as me and can play badminton with me anytime\footnote{Literally, somebody in the group spontaneously asked me for a singles game while writing this sentence.} at any place\footnote{I recall that we played on the street behind the building of College of Business and Economics when we were only allowed a certain period of outdoor exercises in the COVID-$19$ lockdown period.
   This reminded me of playing badminton with my brother and cousins at the street outside the house in my childhood. }. 
    A big shout-out to Guang Hui Wong, Shao Qi Lim, Anderson Li Yang Kho, Sharon Kai Yun Ngu, Qishang Chua, Shuwen Hua,  Hai Zhu, Song Peng, Bin Guo, Jiazhen Xu, Yifan Zhu, Hohin Mo, Joshua Zhen Ting, Wanqi Yao, Fuyao Liu, Weiwei Fan as well as the coach, Shuen Joe and many others. 
      Second, I am thankful to have weekly bondings as well as sharings on zillions of issues and ideas with fellow Malaysian postgraduate friends. 
      I value my friendship with Danny Ka Po Tee, Bryan Pi-Ern Tee, Joshua Zheyan Soo, Wei Wen Wong, and Anson Wee Ooi.
      Third, I appreciate the love and care I received from the Canberra Chinese Methodist Church.
      Special thanks to my sisters-in-christ, Yan Li and Mei Shy Lui, as well as my brothers-in-christ, Andrew Chu King Yao, Dongrui Gao, and Siang Bing Ting, who always provided me with the transport to church and encouraged me when I was down.
      Thanks to all other church members who always keep me in their prayers during my most stressful time.
    Fourth, I thank the Foochow aunt and uncle who cordially invite me to their house to enjoy their home-cooked food on several occasions.
   As a patron, I value the commitment of Madam Lu Malaysian restaurant offering the best service and food to fulfil my craving for the taste of home.

      Last but not least, a huge thank you to my family, especially to my father and my mother, who fully support me in pursuing my PhD path without a single complaint. 
      They had sacrificed their time without my accompany for ten years ever since I left high school.
      Thank you, Pa and Ma.
      At the same time, I congratulate my brother on submitting his Honours thesis in Germany just before I did.

      Before ending, I would like to acknowledge the Australian government for providing me with the four-year PhD scholarship.
      Thanks to all my thesis proofreaders, including Dominic Lik Ren Liew, Jean-Paul Guang Tian Hii, and Qiyu Zhou.
      Thank you to all my teachers, mentors, and lecturers who shower me with the fountain of knowledge from kindergarten to ivory tower.
     Thank you to every single person who had appeared in my life, be it a short period or an extended period, a good memory or an unpleasant memory; you have shaped me into who I am today. 
     Throughout my PhD journey in Canberra, I encountered the worst bushfire\footnote{\href{https://www.wwf.org.au/what-we-do/bushfires}{The most catastrophic bushfire season ever experienced in the history of Australia.}} and then the outbreak of a global pandemic that is still ongoing now.
      COVID-19 lockdown is a hardship\footnote{In the midst of thesis writing and two years of Chinese New Year not able to have a reunion with family.} for me, but thanks to Ken Behrens\footnote{\href{https://www.abc.net.au/news/2021-08-20/meet-the-creator-of-canberra-covid-19-hero-ken-behrens/100393472}{Canberra's COVID-19 lockdown legend `Ken Behrens'.}}.
      It's a wrap.
     Thanks, God.     
     Amen.

\vspace{3mm}

 \begin{center}
  {\Large \textit{Soli Deo gloria.}}
 \end{center}

\begin{flushright}
\trebleclef *Gloria Patri* \stopbar
\end{flushright}

\begin{flushleft}
Sincerely, \\
Kie Seng, Nge \\
\begin{CJK*}{UTF8}{gbsn}
继胜, 倪
\end{CJK*}
\end{flushleft}

\blanknonumber  
           \chapter*{Abstract}\label{abstract}

\addcontentsline{toc}{chapter}{Abstract}

  We construct a faithful categorical action of the type $B$ braid group on the bounded homotopy category of finitely generated projective modules over a finite dimensional algebra which we call the type $B$ zigzag algebra.
This categorical action is closely related to the action of the type  $B$ braid group on curves on the disc.
Thus, our exposition can be seen as a type $B$ analogue of the work of Khovanov-Seidel \cite{KhoSei}.
Moreover, we relate our topological (respectively categorical) action of the type $B$ Artin braid group to their topological (respectively categorical) action of the type $A$ Artin braid group.

 Then, we prove Rouquier's conjecture \cite[Conjecture 9.8]{Rouq} on the faithfulness of Type $B$ $2$-braid group on Soergel category following the strategy used by \cite{jensen_master} with the diagrammatic tools from \cite{EliGeoSoe}.
 
 In the final part of the thesis, we produce a graded Fock vector in the Laurent ring $\Z[t,t^{-1}]$ for a crossingless matching using Heisenberg algebra.
 We conjecture that the span of such vectors forms a Temperley-Lieb representation, and hence, a new presentation of Jones polynomial can be obtained.\blanknonumber
         \chapter*{Preface}\label{preface}

\begin{quote}
\begin{center}
``Categorification is the modern approach to natural science."
\end{center}
\end{quote}

    In search of the truth of life, humans have been unceasingly employing mathematics to explain the mechanics of the universe \footnote{Interested readers can get a flavour of the modern views on the mathematical universe from \cite{Tegmark,Joel, Roger}. }. 
    Galileo, in the 17th century, famously stated that the universe is written in this grand book,  which stands continually open before our eyes,  communicating in  mathematical language.
    Dating back to the days of ancient Greece, a clear example is written in the dialogue \textit{Timaues},  where Plato made a direct metaphysical one-to-one correspondence between five Platonic solids (tetrahedron, cube, octahedron, dodecahedron, and icosahedron) and five basic elements (earth, air, fire, water and quintessence\footnote{Plato remarked, ``...the God used for arranging the constellations on the whole heaven.''}) to classify matters using geometry.
    Those assignments seem to be wrong in many ways, but could there be a merit to the idea? (What about the symmetry of the solids? The McKay correspondence\footnote{Please see related references \cite{Mckay,SteMc, Naka} on classification by symmetry of Platonic solids.}.)
    Another philosopher, Pythagoras, is perhaps, the earliest human being who believes that everything in the world is made up of numbers  (or in the modern view, the world is digital), according to Aristotle. 
    No one then could  expect that the every-high-school-students-must-know formula --- $a^2 + b^2 = c^2$ --- formed the cornerstone of modern differential geometry when it was first established as Pythagoras' theorem on right-angled triangles\footnote{Pythagoras' theorem was proved twice in \textit{Euclid's Elements} \cite[(Book I, Proposition 47);(Book VI, Proposition 31)]{Euclid}.}.
     It determines what form of geometry a space could possibly exhibit; we now know it is either Euclidean, hyperbolic, or spherical.

       For a long time, we thought that we were creatures living in a flat Euclidean setting.
     It was not until 1915 that Einstein found out that the spacetime where we inhabit in is actually a stitch of various geometries, using his widely celebrated theory of general relativity, and astoundingly, gravity can be described as the physical manifestation of a mathematical object known as the curvature! 
      This is one of the greatest triumphs of modern science in the twentieth century; it replaces Newton's law of universal gravitation that fails to explain minute anomalies in the orbit of Mercury.
        Since then, people have started the journey of searching the grand quest - a theory of everything - in order to unify the four fundamental natural forces, namely the gravity, the strong force, the weak force, and the electromagnetic force.
      The last three mentioned forces are quantum mechanics in nature and are successfully unified under the main attribution to Yang-Mills theory through the representation theory of $SU(3) \times SU(2) \times U(1),$ that is, the Standard Model.
      On the other hand, general relativity deals with objects of gigantic scale, for example, Mercury, Sun, and a black hole.
      Hence, it is unclear how an atomic (negligibly small relative to that of celestial bodies) mass of an electron or photon can even interact with the theory.
      
      Driven by this mission, Crane \cite{Crane}  reinterpreted quantum mechanics in a general relativistic context by probing into the algebraic structure governing the quantum theory of gravity and conjectured that quantum gravity is a 2-category which emerges as a ``categorification'' of the 1-category (modular tensor category)  \cite{CraneCSW} formulating Chern-Simons-Witten theory \cite{WitJones}.
      To make sense of categorification mathematically, Crane\footnote{His earlier ideas \cite{CraYetEx,Tftqg,relspinqg,Crane:2006tu,CraneCausal,Crane:2007md,Crane1997APF} related to (combinatorial/diagrammatic) categorification rooted from approaches to quantum gravity.} worked together with Frenkel\footnote{Frenkel's students wrote about his fundamental work on categorification \cite{WorkIgor} which later blossomed into an active field of research in higher representation theory through him.} to outline a combinatorial method to enhance an algebraic structure with categorically higher morphisms and, what is more, the original algebraic structure can be retained by the process of decategerification, forgetting the newly-introduced morphisms in the categorified algebraic structure \cite{CraneFren}.
      It is in this author's birth year 1994 when the idea of categorification \cite{CraneFren} was officially conveyed in precise mathematical terms and illustrated in the example of categorification of Lusztig's idempotented version of quantum algebra $U_q(\fs \fl_2).$
      At a generic value of the quantum parameter, this was done by Lauda \cite{Lausl2, Lausi22, LauEx, LauIntro}, although the case at the root of unity still remains open today.
      Some recent work has been done along this line; Khovanov (a math descendant of Frenkel) laid the first path \cite{KhoHopf}, suggesting ways to categorify at a prime root of unity, which was then picked up by Qi (a math descendant of Khovanov) et al. \cite{QiThesis,QiHopf,KhoQi,EliQi,EliQisl2,JoshQi}.    
     For more discussion on the subject of categorification, the readers are welcome to refer to \cite{LaudaJosh, AlgCat, savage2015introduction, savage2019string, Lu2014AlgebraicSO, Baez0, Baez2, Baez1, BaezLau}.

    To date, categorification has proven fruitful in various research fields such as representation theory, algebraic topology, link homology, mathematical physics, et cetera.
    For instance, it provides a framework for Kontsevich's Homological Mirror Symmetry \cite{Kont}, which was announced at the   International Congress of Mathematicians, Zurich, in 1994.
    Motivated by the Mirror Symmetry in string theory, Kontsevich conjectured that the derived Fukaya category of a Calabi-Yau manifold (deemed a symplectic manifold) is equivalent to the derived category of coherent sheaves on its dual manifold (deemed a complex algebraic variety).
    A gentle introduction to Homological Mirror Symmetry and Mirror symmetry can be found in \cite{Raf} and \cite{Mir,Miral,MirrorPhy}, respectively.
    Around the same time, as noted in \cite{TriArn}, Arnold \cite{SomeArn} suggested the realisation of symplectomorphism by the monodromy of Milnor fibres. 
    This was then picked up by Kontsevich and Donaldson, where they categorified it to the monodromy action on the Fukaya category of Milnor fibres. 
    
    Consequently, powered by Homological Mirror symmetry, Khovanov, together with Seidel and Thomas (both math descendants of Donaldson), initiated the study of categorical braid group action on the derived category of Fukaya category of type $A$ Milnor fibres.
    This was done in relation to the categorical braid group action on the bounded derived category of coherent sheaves on the minimal resolution of higher dimensional type $A$ Kleinian singularity \cite{SeiTho} and the categorical braid group action on the bounded derived category $D^b(\Aa$-$g_r$mod$)$ of graded modules over the type $A$ zigzag algebra $\Aa$ \cite{KhoSei}.
    Here, we refer to the complex $d$-dimensional type $A_{n}$ singularities \cite{SlowFour,SlowSim} as the variety defined by the zero set of the functions 
    \begin{equation}
     g(x_0, x_1, \hdots, x_d) = x_0^2 + x_1^2 + \cdots + x_{d-1}^2 + x_d^{n+1} 
\end{equation}    
while type $A_n$ Milnor fibre, by a theorem of Milnor \cite{Milnor}, is defined to be the intersection of closed ball $\overline{B}^{2d+2}(\epsilon)$ of sufficiently small radius $\epsilon > 0$ around the origin in $\C^{d+1}$  and the symplectic manifold defined by the zero set of the deformation of the type $A_n$ singularities
\begin{equation}
     g(x_0, x_1, \hdots, x_d) = x_0^2 + x_1^2 + \cdots + x_{d-1}^2 + h_w(x_d) 
\end{equation}    
where $h_w(z) = z^{n+1} + w_n z^n + \cdots + w_1z + w_0 \in \C[z]$ for $w = (w_0, w_1, \hdots, w_n)$ in the interior $B^{2d+2}(\delta)$ of $\overline{B}^{2d+2}(\delta)$  for sufficiently small $\delta > 0$ such that the intersection of the symplectic manifold and the boundary $S^{2d+1}(\epsilon)$ of $\overline{B}^{2d+2}(\epsilon)$ remains transverse.
     Moreover, the zero set of the function $h_w$ is referred to as the bifurcation diagram by Arnold \cite{ArnADE,Arn11,Arn1}.
      Since $\Aa$  is of finite homological dimension \cite[Proposition 2.1]{KhoSei}, the bounded derived category $D^b(\Aa$-$g_r$mod$)$ of graded modules over $\Aa$ is isomorphic to the homotopy category of the  graded projective modules over $\Aa.$

    This project leads to \cite{KhoSei} the faithfulness of braid group representations in low dimensional topology due to categorification.
    Seidel \cite{SeiSym}  first took the initiative to define a type $A_n$ chain of Lagrangian spheres from a set of generating geodesics in bifurcation diagrams associated with the type $A_n$ singularities. 
    Moreover, the symplectomorphisms defined by such Lagrangian spheres satisfy the braid relation in Artin's $(n+1)$-strand braid group, or the type $A_n$ braid group.
    He has also shown that those bifurcation diagrams are topologically punctured discs $\D^A$ \cite[Lemma 6.10]{KhoSei}, and subsequently, the Milnor fibres are Lefschetz fibration over the punctured discs $\D^A$.
    Recall that the Dehn twists, which generate the mapping class group of punctured discs \cite{PrimerMCG}, also form  the type $A_n$ braid group.
   This way, Seidel and Khovanov employed certain bimodule functors in the bounded homotopy category  $\Kom^b(\Aa_{n}$-$\text{p$_{r}$g$_{r}$mod})$ of graded projective modules over the Fukaya algebra of type $A_n$ chain of Lagrangian spheres, which is the geometric interpretation of type $A_n$ zigzag algebra.
   Following this, they formed a correspondence between geodesics connecting the punctures and minimal complexes in  $\Kom^b(\Aa_{n}$-$\text{p$_{r}$g$_{r}$mod})$.
    In particular, they proved that the topological braid group action intertwines with the categorical bimodule braid group action, so that the dimension of graded module homomorphism space agreed with the graded curve intersection number (inspired by graded Lagrangian manifold \cite{SeiGrad}).  
    The faithfulness of the graded topological braid group action on a certain covering space of the punctured disc and following that, the faithfulness of graded symplectic braid group action on the Fukaya category of the Milnor fibre follows from the unique bigrading in  $\Kom^b(\Aa_{n}$-$\text{p$_{r}$g$_{r}$mod}).$

\begin{center}
\begin{table} [H]

  \begin{tabular}{ c | c | c }
    \hline
    Symplectic geometry & Low-dimensonal topology & Homological Algebra \\ \hline
    bifurcation diagram & $(n+1)$-punctured disc & - \\
    - &  curve isotopy & complex homotopy \\ 
    Fukaya algebra & - & zigzag algebra \\
    Lagrangian sphere & geodesics & projective modules($p_r$mod) \\
     symplectomorphism  & Dehn twist & bimodule functor \\
     Lagrangian intersection & curve intersection &  $p_r$mod morphism  \\
    vanishing cycle intersection form  & curves intersection numbers & Cartan matrix \\
     Floer cohomology dimension  &  bigraded intersection number &  $p_r$mod morphism  dimension\\
     graded Lagrangian Grassmannian & graded projective space & graded projective modules \\
 
  \end{tabular}
      \caption{List of Comparison of Terminology in type $A$ setting.} \label{Terminology}
\end{table} 
\end{center}    
    
    See \cref{Terminology} for a comparison of the terminology appearing in \cite{KhoSei}.
      A few categorification results follow: the categorical braid group representation on $\Kom^b(\Aa_{n}$-$\text{p$_{r}$g$_{r}$mod})$ categorifies the classical reduced Burau representation \cite{BraidGroups, LinkHom}, and the dimension of graded endomorphism algebra of  projective modules categorifies the intersection form of vanishing cycles in type $A$ singularities, which is also the Cartan matrix of type $A$ Dynkin diagram.
      It is known that Burau representation of the type $A$ braid group is equivalent to the homological representation\footnote{As an aside, homological representation refers to the study of the representation on the first fundamental group of the punctured disc. On the contrary, homological algebra refers to the study of homology on chain complexes of modules. } of the mapping class group of the punctured disc \cite{BraidGroups} and is not faithful for $n \geq 5$ \cite{Moody,LongPaton,Bigelow}.
      By lifting the punctured disc to a bigraded covering space inspired by graded Lagrangian Grassmannian, the graded mapping class group of punctured disc acts faithfully. 
      Similarly, the type $A$ braid group injects into the graded symplectic mapping class group of Milnor fibre as the graded dimension of Floer cohomology is related to the dimension of graded module homomorphisms, which is also linked to the graded curve intersection numbers.

    In \cref{Chap1} of this thesis, we generalise the results of the type $A$ braid group in \cite{KhoSei} to that of the non-simply laced type $B$ braid group. 
    We focus on the result in \cite{hengseng2019curves} where we utilise the strength of degree $2$ field extension to build a new zigzag algebra, as mentioned in the title, the type $B$ zigzag algebra to categorify the type $B$ non-symmetric integral Cartan matrix (see \cref{catCar}) and the intersection form of vanishing (hemi)cycles in type $B$ singularities under the interpretation of dimension of graded modules over different fields (see \cref{catint}).
      Inspired by the folding of Dynkin diagrams\footnote{With a hint from Peter McNamara, University of Queensland.}, at Kiola Conference 2019,   Heng\footnote{Heng is being acknowledged in the  Acknowledgements  section.} and the author defined a type $B$  zigzag algebra $\Ba_n$ which,  after a degree two scalar extension,  is isomorphic to $\Aa_n \cong   \C  \otimes_{\R}  \Ba_n$ (see \cref{isomorphic algebras}).
      Subsequently,  we defined  the bounded homotopy category $\Kom^b(\Ba_n $-$p_r g_r mod)$ and the type $B$ braid bimodule functors acting on the category.   
      The type $B$ braid bimodule functors indeed defined the  type $B$ generalised braid group action on  $\Kom^b(\Ba_n$-$p_r g_r mod) $ (see \cref{standard braid relations,type B relation}). 
      As in \cite{KhoSei},  the bimodules we used  (see \cref{TBTemp}) provided a functor realisation to the type $B$ Temperley-Lieb algebra \cite{Green}.
      
      Echoing the philosophy in the thesis title, the author is inclined to extend the story to the topology analogous to what Khovanov and Seidel did. 
     In the 1980s,  Slodowy  \cite{SlowSim,  SlowFour} introduced  simple singularities associated to inhomogeneous Dynkin diagrams using the Kleinian singularities together with certain automorphisms on them.   
      In particular,  a type $B$ simple singularity is defined as the type A singularity associated with a $\Z/ 2 \Z$-automorphism on it.
      While studying critical points of differential functions on manifolds with boundary, Arnold also gave a definition for type $B$ simple singularity on surfaces \cite{ArnB,Arn2,Arn11}. 
      On the complex surface $x^2 + y^{2n} = 0$,  it is the involution $(x,y) \mapsto (x,-y).$
      After a generic deformation of the complex surface,  we were bound to consider the quotient space of the $2n$-punctured disc $\DA$ by a $\pi$-rotation. 
      Unsurprisingly,  after some trials and errors, we worked out a similar map $L_B$ (ref.  the left map in \cref{equi}), which to curves in the quotient disc $\DB$ associates complexes in $\Kom^b(\Ba_n $-$p_r g_r mod)$. 
      In addition, the type $B$ braid group $\mathcal{A}(B_n)$ appears in at least two ways as a subgroup of the mapping class groups of punctured disc; one is a classical injection $\mathcal{A}(B_n) \rightarrowtail \mathcal{A}(A_{n})$  into type $A$ braid group $\mathcal{A}(A_{n})$ and the other one $\cA(B_n) \hookrightarrow \mathcal{A}(A_{2n-1})$ with $\mathcal{A}(A_{2n-1})$ is given by Crisp \cite{crisp_1997} from the Dynkin diagram viewpoint or Birman and Hilden \cite{BirHil} from the topological viewpoint.
     The latter helps in connecting the categorical algebraic construction and the topological setting between type $A$ and type $B$, providing us with the following big intertwining picture:
     \begin{theorem} [= \cref{diagram commutes}] \label{equi}
The following diagram is commutative, and the four maps on the square are $\cA(B_n)$-equivariant:
\begin{figure}[H]
\centering
\begin{tikzpicture} [scale=0.9]
\node (tbB) at (-3,1.5) 
	{$\mathcal{A}(B_n)$};
\node (cbB) at (-3,-3.5) 
	{$\mathcal{A}(B_n)$};
\node (tbA) at (10,1.5) 
	{$\cA(B_n) \hookrightarrow \mathcal{A}(A_{2n-1})$}; 
\node (cbA) at (10.5,-3.5) 
	{$\cA(B_n) \hookrightarrow \mathcal{A}(A_{2n-1})$};

\node[align=center] (cB) at (0,0) 
	{Isotopy classes of graded \\ admissible curves $\check{\fC}^{adm}$  in $\DB$};
\node[align=center] (cA) at (7,0) 
	{Isotopy classes of graded \\ admissible multicurves $\ddot{\undertilde{\wt{\fC}}}^{adm}$  in $\DA$};
\node (KB) at (0,-2)
	{$\Kom^b(\Ba_n$-$\text{p$_{r}$g$_{r}$mod})$};
\node (KA) at (7,-2) 
	{$\Kom^b(\Aa_{2n-1}$-$\text{p$_{r}$g$_{r}$mod})$};

\coordinate (tbB') at ($(tbB.east) + (0,-1)$);
\coordinate (cbB') at ($(cbB.east) + (0,1)$);
\coordinate (tbA') at ($(tbA.west) + (1,-1)$);
\coordinate (cbA') at ($(cbA.west) + (0,1)$);

\draw [->,shorten >=-1.5pt, dashed] (tbB') arc (245:-70:2.5ex);
\draw [->,shorten >=-1.5pt, dashed] (cbB') arc (-245:70:2.5ex);
\draw [->, shorten >=-1.5pt, dashed] (tbA') arc (-65:250:2.5ex);
\draw [->,shorten >=-1.5pt, dashed] (cbA') arc (65:-250:2.5ex);

\draw[->] (cB) -- (KB) node[midway, left]{$L_B$};
\draw[->] (cB) -- (cA) node[midway,above]{$\mathfrak{m}$}; 
\draw[->] (cA) -- (KA) node[midway,right]{$L_A$};
\draw[->] (KB) -- (KA) node[midway,above]{$\Aa_{2n-1} \otimes_{\Ba_n} -$};
\end{tikzpicture}
\end{figure}
\end{theorem}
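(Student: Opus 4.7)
The plan is to verify the four equivariance statements separately and then check commutativity of the square on objects, reducing the latter to a concrete computation on a set of "standard" generating curves whose associated complexes were classified earlier in the chapter. Throughout, I would lean on the fact established in \cref{isomorphic algebras} that $\Aa_{2n-1}$ is obtained from $\Ba_n$ by a scalar-extension-type construction, so that the extension-of-scalars functor $\Aa_{2n-1} \otimes_{\Ba_n} -$ has a transparent description on indecomposable projectives.

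First I would dispose of the two vertical arrows. The equivariance of $L_A$ (with respect to $\cA(A_{2n-1})$, and a fortiori with respect to its subgroup $\cA(B_n) \hookrightarrow \cA(A_{2n-1})$) is exactly the content of Khovanov--Seidel, while the equivariance of $L_B$ is proved earlier in \cref{Chap1} via the type $B$ categorical braid relations (\cref{standard braid relations,type B relation}) together with the compatibility of $L_B$ with the generating Dehn twists of $\DB$. For the top horizontal arrow $\mathfrak{m}$, equivariance is the Birman--Hilden statement: the embedding $\cA(B_n) \hookrightarrow \cA(A_{2n-1})$ is realised topologically by lifting a half-twist about a puncture in $\DB$ to the product of the two half-twists about its preimages in $\DA$, and this is compatible with the double-cover construction of $\mathfrak{m}$ on curves. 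For the bottom horizontal arrow, equivariance reduces to the statement that for each generator $\sigma_i^B$ of $\cA(B_n)$ and its image $\sigma_i^A\sigma_{2n-i}^A$ in $\cA(A_{2n-1})$, the corresponding bimodule of type $B$, when tensored with $\Aa_{2n-1}$, becomes the composition of the two commuting type $A$ braid bimodules; this is a direct check on the defining bimodules of type $A$ and $B$.

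Having the four edges handled, commutativity of the square would then be checked on an admissible curve $c \in \check{\fC}^{adm}$ in $\DB$. The multicurve $\mathfrak{m}(c)$ is by construction the preimage of $c$ in the branched double cover $\DA \to \DB$, hence it is either one curve (if $c$ passes through the central branch point) or a disjoint pair $c' \sqcup c''$ exchanged by the deck involution. On the level of projective modules, the corresponding indecomposable $P_i^B$ satisfies $\Aa_{2n-1}\otimes_{\Ba_n} P_i^B \cong P_i^A \oplus P_{2n-i}^A$ (respectively, a single indecomposable in the branched case), with a grading shift pattern dictated by \cref{isomorphic algebras}. I would then argue inductively on the combinatorial complexity of $c$: both $L_A(\mathfrak{m}(c))$ and $\Aa_{2n-1}\otimes_{\Ba_n} L_B(c)$ are minimal complexes whose term-by-term constituents and differentials match on the standard generators, and whose behaviour under type $B$ Dehn twists matches by the equivariance established in the previous step, so commutativity on generators together with equivariance forces commutativity in general.

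The main obstacle I anticipate is bookkeeping of the gradings in the branched case: when $c$ crosses the fixed locus of the deck involution, $\mathfrak{m}(c)$ is a single curve rather than a pair, and the corresponding indecomposable $\Ba_n$-projective becomes, after scalar extension, a single $\Aa_{2n-1}$-projective with a nontrivial internal grading doubling. Matching this doubling with the grading of $L_A$ applied to the lifted curve, and ensuring the differentials on both sides agree up to isomorphism of minimal complexes (not merely up to homotopy), is where the technical work lies. Once this branched case is handled, the unbranched case is formal because both sides then decompose as an orthogonal sum indexed by the two sheets of the cover, and the $\cA(B_n)$-equivariance already established propagates commutativity from the generating curves to all of $\check{\fC}^{adm}$.
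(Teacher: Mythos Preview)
Your proposal has a genuine circularity problem. You claim that the equivariance of $L_B$ is ``proved earlier in \cref{Chap1} via the type $B$ categorical braid relations (\cref{standard braid relations,type B relation}) together with the compatibility of $L_B$ with the generating Dehn twists,'' and you then use this to reduce commutativity to a check on the basic curves $b_j$. But \cref{standard braid relations,type B relation} only show that the $R_j$ define a well-defined $\cA(B_n)$-action on $\Kom^b(\Ba_n\text{-}p_rg_rmod)$; they say nothing about whether $L_B$ intertwines this action with the mapping class group action on curves. In the paper's logical order, $L_B$-equivariance (\cref{L_B equivariant}) is \emph{deduced from} commutativity (\cref{diagram commutes}) together with the other three equivariances and a Krull--Schmidt argument, not the other way around. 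So your bootstrap ``commutativity on generators $+$ equivariance $\Rightarrow$ commutativity everywhere'' is circular as written.

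If you could establish $L_B$-equivariance independently (by a Khovanov--Seidel style case-by-case check of $\sigma_j^B L_B(\check c) \cong L_B(\sigma_j^B \check c)$), your approach to commutativity would indeed work and would be shorter. The paper deliberately avoids that route. Instead, commutativity is proved \emph{directly} for an arbitrary admissible trigraded curve $\check c$: one first observes that the underlying graded modules of $\Aa_{2n-1}\otimes_{\Ba_n} L_B(\check c)$ and $L_A(\mathfrak m(\check c))$ agree via the maps $\Phi_j$, so it remains to match differentials. The curve is then sliced into pieces $c_j$ lying in $\bigcup_{k\ge 2} D_k$ and $1$-strings $g_i$ lying in $D_0\cup D_1$. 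On the $c_j$ pieces the differentials already agree; on each $1$-string $g_i$ one constructs, type by type (VI, $\mathrm{III}'_k$, $\mathrm{II}'_k$, $\mathrm{V}'$, $\mathrm{III}'_{k+\frac12}$, $\mathrm{II}'_{k+\frac12}$), an explicit module automorphism $\mu_i$ of $Q'$ whose conjugation corrects the differential on $Q'_{G_i}$ without disturbing the other blocks. This case analysis, not a grading-doubling phenomenon, is where the technical work lives --- your intuition that the difficulty is concentrated near the branch point is correct, but the actual mechanism is matching differentials, not matching ranks. (Incidentally, your indexing is off: the embedding sends $\sigma_1^B\mapsto\sigma_n^A$ and $\sigma_j^B\mapsto\sigma_{n+j-1}^A\sigma_{n-j+1}^A$, and $P_1^B$ goes to the single $P_n^A$, not a doubled one.)
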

\noindent      It follows from the commutative diagram and the faithful type $A$ categorical action that we indeed construct a faithful type $B$ categorical action. 
   
      Actually, we could have done the same as  Khovanov and Seidel where the faithfulness result relies on the intersection numbers, and we did!
      The author introduced the trigraded covering space for $\DB$ (in \cref{tribundle}) and graded intersection number for curves in $\DB$ motivated by the homomorphism space of corresponding module complexes (in \cref{triint}).     
      Not included in the earlier version of \cite{hengseng2019curves}, the author just established that  the type $B$ zigzag algebra $\Ba_n$, in fact, sit as an invariant algebra inside the type $A$ zigzag algebra $\Aa_{2n-1}$ under the $\Z / 2 \Z$-diagonal action of the product group of  $\Z / 2 \Z$-complex conjugation and the $\Z / 2 \Z$-automorphism of the type $A$ Dynkin diagram.
        
\begin{theorem} [=\cref{invariant algebra}]
The type $B$ zigzag algebra $\Ba_n$ is the $\R$-algebra of the invariant algebra  
$$   \Ba_n  \cong_{\R}  \Aa^{inv}_{2n-1}$$ under the $\Z / 2\Z$-diagonal action of the product group of $\Z / 2\Z \times \Z / 2\Z$ where the first group factor is coming from the symplectic consideration and the second group factor is coming from the algebraic consideration.
\end{theorem}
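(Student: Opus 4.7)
The plan is to construct an explicit $\R$-algebra isomorphism $\varphi : \Ba_n \xrightarrow{\sim} \Aa_{2n-1}^{inv}$ by sending each type $B$ generator to a symmetrised element in $\Aa_{2n-1}$, and then verify the multiplicative relations of $\Ba_n$ become consequences of those of $\Aa_{2n-1}$ after symmetrisation. First I would make the two $\Z/2\Z$-actions on $\Aa_{2n-1}$ explicit. Label the vertices of the type $A_{2n-1}$ Dynkin diagram by $\{1,2,\dots,2n-1\}$ with central vertex $n$; then the diagram automorphism $\tau$ acts on generators by $e_i \mapsto e_{2n-i}$ and on the path generators $(i|j)$ by $(i|j)\mapsto (2n-i|2n-j)$, while complex conjugation $\sigma$ acts trivially on path generators but conjugates scalars in $\C$. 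The diagonal action of $\Z/2\Z$ inside $\Z/2\Z\times\Z/2\Z$ is the simultaneous action $\sigma\tau$, so the invariants are $\R$-linear combinations of $\tau$-symmetric path sums together with $\R\cdot i$-linear combinations of $\tau$-antisymmetric path sums.

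Next I would write down candidate images. For the "unfolded" vertices $i<n$ I propose
\[
\varphi(\underline{e}_i) \;=\; e_i + e_{2n-i}, \qquad \varphi(\underline{(i|i+1)}) \;=\; (i|i+1) + (2n-i|2n-i-1),
\]
with the analogous formulas for reverse arrows, and for the vertex at the fold I send the type $B$ short-root idempotent to $e_n$ (which is already $\tau$-fixed). The key is the "short" loop/arrow at the folded vertex: in type $B$ it carries the non-trivial $\R$-structure reflecting the degree-$2$ extension. My proposal is that the type $B$ generator attached to the edge $\{n-1,n\}$ should map to the antisymmetric combination, for example $\varphi(\beta_{n-1|n}) = (n-1|n) - (n+1|n)$ (multiplied by $i$ if needed so that the image lies in $\Aa_{2n-1}^{inv}$ under the diagonal action), and similarly on the reverse arrow; these are precisely the invariants that get rescaled by $\sqrt{-1}$, which is the mechanism producing the real (rather than complex) coefficient algebra at the folded edge.

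Once the map is defined on generators, I would check that (i) the images actually lie in $\Aa_{2n-1}^{inv}$, (ii) the defining relations of $\Ba_n$ (the path relations, the zigzag loop relation, the idempotent and commutation relations) are satisfied by the images — the relations away from the folded vertex follow directly by pairing each equation with its $\tau$-image, while the relations at the folded vertex reduce to identities like $((n-1|n)-(n+1|n))\cdot((n|n-1)-(n|n+1)) = (n-1|n|n-1) + (n+1|n|n+1)$ in $\Aa_{2n-1}$, which should recover the "doubled" loop appearing in $\Ba_n$ at the short-root vertex. Surjectivity follows by exhibiting the symmetric/antisymmetric basis of $\Aa_{2n-1}^{inv}$ as $\R$-linear combinations of the generator images, and injectivity by a dimension count: the $\R$-dimension of $\Ba_n$ in each bigrading equals the number of $\tau$-orbits in the corresponding bigraded piece of $\Aa_{2n-1}$, which matches the real dimension of the invariant piece under the diagonal action.

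The main obstacle I expect is not a single hard step but the bookkeeping at the folded vertex: I must fix the scalar normalisations (the $\sqrt{-1}$ twists) so that both the $\R$-linearity and the multiplicative structure match simultaneously, and I must ensure that the bigrading on $\Ba_n$ coincides with the one induced from $\Aa_{2n-1}$ under the isomorphism so that the statement refines to a graded isomorphism. If the normalisation at the folded vertex is chosen cleanly — using the identification of $\C$ with $\R[i]$ and realising the antisymmetric generators as the $i$-multiples inside the invariants — then the remaining verifications are routine case checks along each edge of the Dynkin diagram, organised by orbits of $\tau$.
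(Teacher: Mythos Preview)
Your overall strategy—define an explicit $\R$-algebra map on generators into $\Aa_{2n-1}$, check the images are diagonal-invariant, and conclude by a dimension count—is exactly the route the paper takes. The problem is that you have misplaced where the degree-$2$ extension lives in $\Ba_n$, and this propagates into a map that cannot be surjective.

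In the paper's $\Ba_n$ the short-root vertex (the one at the fold, mapping to $e_n$) carries only the plain $\R$-structure; it has \emph{no} extra loop. The degree-$2$ structure sits at the \emph{long}-root vertices, each of which carries an additional generator $ie_j$ satisfying $(ie_j)^2=-e_j$. Thus $\dim_\R \Ba_n = 8n-6$, not the $4n-2$ your orbit count predicts: away from the fold, every basis element of $\Ba_n$ comes with an $i$-twisted partner. On the $\Aa_{2n-1}$ side, each $\tau$-pair $\{x,\tau(x)\}$ contributes a \emph{two}-dimensional space of $(\sigma\tau)$-invariants, spanned by $x+\tau(x)$ and $i(x-\tau(x))$; a $\tau$-fixed element contributes one. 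This gives $2\cdot 1 + 2\cdot(4n-4)=8n-6$, matching $\Ba_n$ exactly.

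The correct map therefore uses \emph{both} symmetrisations at every unfolded vertex and edge, not just at the special edge: the plain generators go to symmetric sums, e.g.\ $e_j\mapsto e_{n-j+1}+e_{n+j-1}$ and $(j-1|j)\mapsto (n-j+2|n-j+1)+(n+j-2|n+j-1)$, while the $i$-twisted generators go to $i$ times the antisymmetric sums, e.g.\ $ie_j\mapsto i(e_{n-j+1}-e_{n+j-1})$ and $(j-1|j)i\mapsto i\big((n-j+2|n-j+1)-(n+j-2|n+j-1)\big)$. At the fold one simply has $e_1\mapsto e_n$, $X_1\mapsto X_n$, with nothing antisymmetric needed. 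Your proposal sends only one generator per edge (symmetric away from the fold, antisymmetric at the fold), so its image is an $\R$-subalgebra of dimension roughly $4n-2$, strictly smaller than $\Aa_{2n-1}^{inv}$; the special edge $\{n-1,n\}$ is not singled out in the map at all. Once you incorporate the $ie_j$ generators with the antisymmetric images above, the relation checks and the dimension argument go through as you outlined.
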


  \noindent     This serves as another algebraic interpretation of definition of the type $B$ zigzag algebra which seems rather ad hoc at first sight. 
       This result came out from the author's effort in finding the geometric meaning of  the type $B$ zigzag algebra in collaboration with Qiao\footnote{Qiao is being acknowledged in the Acknowledgements section.}.

       Practising the chant of categorification,  the bimodule category $(\Aa_{n},\Aa_{n})$-bimod and similarly, $(\Ba_n, \Ba_n)$-bimod can be interpreted as a $2$-category where the object is the single index $n$, the morphisms are bimodules, and the $2$-morphisms are the bimodule homomorphisms.
       Since the Temperley-Lieb algebra is a quotient algebra of Hecke algebra and the bimodule functors in $\Kom^b(\Aa_{n}$-$\text{p$_{r}$g$_{r}$mod})$ and $\Kom^b(\Ba_n $- $p_r g_r mod)$ categorify a (quotient of) Temperley-Lieb algebra of an appropriate type, it is natural to look into their relations with the category of Soergel bimodules.
        This is because Soergel \cite{SoeKaz} first introduced the category of Soergel bimodules through the combinatorics of Harish-Chandra modules \cite{SoeCom} to categorify the Hecke algebra.
       This fact here was first conjectured by Soergel (he verified it for finite Weyl group over field of certain characteristic) and fully verified through a collective work  \cite{EliKho,EliDiGr,EliTwo,EliThick,EliGeoSoe} mainly from Elias (a math descendant of Khovanov) and Williamson (a math descendent of Soergel) using diagrammatic category description which finally led them to positivity conjecture of the Kazhdan-Lusztig polynomial\footnote{Kazhdan-Lusztig polynomial is the coefficient of Kazhdan-Lusztig basis element expressed in terms of the basis elements indexed by Coxeter group generators \cite{KazLus}. } \cite{EliWilHodge}.
        A nice introduction to Soergel bimodules can be found in the textbook 
       \cite{EliasSoe} and Libedinsky's note
      \cite{LibSoe}. 
      
      In \cref{Chap2} of this thesis, we upgrade the faithfulness result in \cref{Chap1} to a faithful $2$-categorical braid group action on the Soergel category.
        Building on his work \cite{ChuangRou} with Chuang, Rouquier constructed Rouquier complexes \cite{Rouq} which he called  the $2$-braid group.
        He conjectured \cite[Conjecture 9.8]{Rouq} that the $2$-braid group categorifies the generalised braid group where he outlined a proof for the type $A$ case in \cite[Remark 9.9]{Rouq}.
        This proof for the type $A$ braid group was carried out by  Jensen (a math descendant of Williamson) in his Master thesis \cite{jensen_master} and subsequently extended in   \cite{jensen_2016} using combinatorial methods.
        Since we have type $B$ zigzag algebra at our disposal, the content of \cref{Chap2} illustrates the use of the same technique in \cite{jensen_master} to construct a monoidal functor from the diagrammatic Soergel category \cite{EliGeoSoe} to our bimodule category $(\Ba_n, \Ba_n)$-bimod and hence, the faithfulness result is carried across to the $2$-category, thereby proving the Rouquier conjecture for type $B$:
        
        \begin{theorem} [=\cref{Faith2}]
        The type $B$ $2$-braid group decategorifies to the type $B$ braid group.
        \end{theorem}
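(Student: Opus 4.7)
The plan is to adapt the strategy of Jensen's Master thesis \cite{jensen_master} from the type $A$ setting to type $B$, using the type $B$ zigzag algebra $\Ba_n$ constructed in Chapter 1 as a receptacle for the diagrammatic Soergel calculus. The central object to construct is a monoidal functor
\[
F : \mathcal{D}_{BS}(B_n) \longrightarrow (\Ba_n, \Ba_n)\text{-bimod}
\]
from the diagrammatic Soergel (Bott--Samelson) category of type $B$ in the sense of Elias--Williamson \cite{EliGeoSoe} to the bimodule category over the type $B$ zigzag algebra. It is to be defined on the generating Bott--Samelson objects indexed by the simple reflections of the type $B$ Weyl group and on the generating diagrammatic $2$-morphisms (dots, trivalent vertices, and the $2m_{ij}$-valent vertices).

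First, I would define $F$ on objects by sending each generator $B_{s_i}$ to the corresponding type $B$ braid bimodule over $\Ba_n$ appearing in Chapter 1. Next, I would define $F$ on the generating $2$-morphisms via explicit bimodule homomorphisms, and then verify the defining relations: the Frobenius/biadjunction relations on each colored strand; the two-color relations for commuting reflections ($m_{ij}=2$); the two-color $6$-valent vertex relations for $m_{ij}=3$; and crucially the two-color $8$-valent vertex relation at the double bond ($m_{ij}=4$) of the type $B$ Dynkin diagram. This last case is the principal new feature compared with Jensen's type $A$ argument.

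Once $F$ is in place, I would pass to bounded homotopy categories and verify that $F$ sends Rouquier complexes to the type $B$ braid bimodule complexes acting on $\Kom^b(\Ba_n$-$p_r g_r mod)$ in Chapter 1. Given a braid word $\beta \in \mathcal{A}(B_n)$ whose associated Rouquier complex is homotopy equivalent to the identity in the $2$-braid group, applying $F$ yields a homotopy equivalence in $\Kom^b(\Ba_n$-$p_r g_r mod)$ between the braid bimodule functor for $\beta$ and the identity. By the faithfulness of the type $B$ categorical action established in Chapter 1 (obtained via the intertwining commutative diagram with the faithful type $A$ action of Khovanov--Seidel), this forces $\beta = 1$ in $\mathcal{A}(B_n)$, which yields the claimed decategorification.

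The main obstacle will be the explicit verification of the $8$-valent vertex relation at the double bond, which encodes the length-four type $B$ braid relation $s_is_js_is_j = s_js_is_js_i$ inside the bimodule category. This requires choosing a realization of the type $B$ root system compatible with the scalar-extension description $\Aa_{2n-1} \cong \C \otimes_\R \Ba_n$ from Chapter 1; the folding viewpoint developed there should make a natural choice available, but translating the multicolored Soergel calculus into concrete bimodule homomorphisms over $\R$ and checking all the Jones--Wenzl-type identities without the $\C$-linear simplification available in type $A$ will require careful bookkeeping.
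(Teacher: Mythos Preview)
Your proposal is correct and follows the same strategy as the paper: construct a monoidal functor from the Elias--Williamson diagrammatic category $\cD$ (for the non-symmetric type $B$ realization with $a_{s_1,s_2}=-2$, $a_{s_2,s_1}=-1$) to the additive monoidal subcategory of $(\Ba_n,\Ba_n)$-bimod generated by the bimodules $\cU_j = P_j^B \otimes_{\F_j} {}_jP^B(1)$, check it sends Rouquier complexes to the $R_j$ up to shift, and then invoke the faithfulness from Chapter~1.

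One clarification about where the work actually sits: in the paper's functor, \emph{all} $2m_{st}$-valent vertices (both the $6$-valent for $m=3$ and the $8$-valent for $m=4$) are sent to the zero morphism. This immediately kills the two-colour associativity relations (\cref{assoc3}, \cref{assoc4}) and all the Zamolodchikov relations, so you do not need to construct anything nontrivial for the $8$-valent vertex itself. The genuine computation is the dot-crossing relation \cref{mst4}: with the $8$-valent vertex mapping to zero on the left, you must check that the $JW_3$ expression on the right also maps to zero. This unpacks into a small system of scalar equations in the coefficients $a_j,b_j,c_j,d_j$ attached to the dot and trivalent generators, which the paper solves explicitly (e.g.\ $a_j=b_j=(-1)^{j+1}$, $c_j=d_j=1$). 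So the ``careful bookkeeping'' you anticipate is real but lighter than your phrasing suggests, and the $\R$/$\C$ dichotomy in $\Ba_n$ plays no obstructive role in these verifications.
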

        
        In \cref{Chap3} of this thesis, we finally consider the wreath product zigzag algebra analogue of \cite{KhoSei}, which is inspired by another $2$-category, the Heisenberg category.
      In 2012, Licata (a math descendant of Frenkel, my supervisor) and Cautis (a math descendant of Harris) defined a $2$-category called Heisenberg category  \cite{CauLiHeiHil}, which categorifies Heisenberg algebra and, together with Sussan (a math descendant of Frenkel), constructed a braid group action on that category  \cite{CaLiSuBraid}. 
        The braid group action there was regarded as a wreath analogue of the bimodule functor considered in \cite{KhoSei}.
        Please refer \cite{KhoHei,AliLi,LiSaHec,LiAliErr} for related definitions of Heisenberg category, \cite{ShiftSym,Elliptic,Walgebra} for algebras arising from Heisenberg category and paper series \cite{AliHei,RoSaWPA,BruSaWeb,BruSaWebFound,SaFrob,BruSaWebQuan,BruSaWebQFH,MacSa,QueSaYa,RoSaTTH,RoSaTH,AliOded,LiAliDe}
      by Savage (a math descendant of Frenkel) et al. on recent developments of Heisenberg category in more general settings.        
   
       However, it is the conjectural braid group action emerging from the categorified vertex operator  \cite{cautis2014vertex} with a similar line of argument generating braid group action from exponential of quantum algebra  Chevalley generators by Lusztig \cite{Lusztig} that is related to the link homology.
        We are currently working on a thick/idempotented calculus for the type $A$ quantum lattice Heisenberg category in order to fully develop the homological algebra part of the category \cite{Penrose, Cvi, Gonz, HerYouOp, AlComp, AlcSimp,AlcTrans}. 
        Even on the chain group level, it heavily uses combinatorial gadgets like symmetric functions \cite{YoungRuth, Mac, MacSymOrth, Stan2, EggeSym, Processi}, tableaux identities \cite{Fulton, Sagan,FultonHarris, GoWa, Tung}, $q$-calculus \cite{Stan1, GeorgeInt, HardyWri}, basic hypergeometric series \cite{Ernst}, and many more that the author is still searching...

        The final question we will answer in this chapter is whether a chain complex can be associated to a multicurve analogue of \cite{KhoSei} via Heisenberg categorification. 
        In the last chapter, we give a partial solution to the problem posed by my supervisor, Licata.
        Instead of a chain complex, we associate a set of chain groups to a crossingless matching without differential (in \cref{Fockinv}).
        In other words, we construct a graded Fock vector living in Fock space representation of Heisenberg algebra over the Laurent ring $\Z[t,t^{-1}]$ given a crossingless matching.
        This is done by using combinatorial tools like Gaussian polynomial  \cite{GeorgeInt}   and partition statistics from Kostka polynomial \cite{Haglund}.

      We conjecture that the span of the graded Fock vector coming from crossingless matchings forms a Temperley-Lieb representation  (see \cref{graTemp}). 
      Since every link can be represented by a braid plat closure \cite{BirBraid}, we may potentially obtain a new presentation of the Jones polynomial  \cite{Jones,JonesSub} from the above Temperley-Lieb representation lying in the Fock space representation of Heisenberg algebra (see \cref{preJones}). 
        This is done through Kauffman's diagrammatic braid group representation into Temperley-Lieb algebra \cite{Kauff,KauffKnot} inspired by state models \cite{KauffFor}.
        
        \begin{conjecture} [=\cref{graTemp,preJones}]
        The span of the graded Fock vector coming from crossingless matchings forms a Temperley-Lieb representation and hence, produces a new presentation of the Jones polynomial in Fock space representation.
        \end{conjecture}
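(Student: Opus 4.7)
The plan is to first make the Temperley-Lieb side concrete. Recall that $TL_n$ has a basis given by crossingless matchings on $2n$ points, and the generators $U_1, \dots, U_{n-1}$ act by vertical stacking (cup-cap insertion in the $i$-th position) followed by removal of closed loops, each weighted by $[2]_t = t + t^{-1}$. Let $v_M \in \Z[t,t^{-1}]$ denote the graded Fock vector associated to a crossingless matching $M$, as defined in the chapter. I would set $V := \text{span}_{\Z[t,t^{-1}]}\{v_M : M \text{ a crossingless matching}\}$ and define linear operators $\widehat{U}_i$ on $V$ by $\widehat{U}_i(v_M) := v_{U_i \cdot M}$, extended linearly. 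The first step is to check that this is well-defined, i.e., that the vectors $v_M$ are $\Z[t,t^{-1}]$-linearly independent, which I expect to follow from a leading-term argument: the Gaussian-polynomial/Kostka statistics attached to distinct matchings have distinguishable lowest-degree partition supports.

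Next, I would verify the three Temperley-Lieb relations on $V$: the commutation $\widehat{U}_i \widehat{U}_j = \widehat{U}_j \widehat{U}_i$ for $|i-j| > 1$, the near-identity relation $\widehat{U}_i \widehat{U}_{i\pm 1} \widehat{U}_i = \widehat{U}_i$, and the quadratic relation $\widehat{U}_i^2 = [2]_t \widehat{U}_i$. The first two are essentially topological and follow from isotopy of the underlying matchings, provided one checks that $v_M$ depends only on the isotopy class of $M$; this in turn reduces to an invariance statement for the Gaussian-polynomial weighting used in the construction. The quadratic relation is the substantive one: stacking $U_i$ on itself creates exactly one closed loop, and the prefactor produced on the Fock side must be precisely $[2]_t$.

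For the Jones polynomial half, I would invoke Birman's theorem \cite{BirBraid} that every link arises as the plat closure of a braid, combined with Kauffman's monoidal map from the Artin braid group into $TL$ resolving each crossing as $A \cdot \mathrm{id} + A^{-1} \cdot U_i$ \cite{Kauff}. Composing with the representation $V$ and pairing the image with the distinguished Fock vector coming from the trivial matching (acting as the plat trace) should then produce a polynomial link invariant; by comparing with the standard Kauffman bracket on both sides, the result must agree with the Jones polynomial up to the usual framing correction $(-A)^{-3w}$.

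The hard part will be the quadratic relation $\widehat{U}_i^2 = [2]_t \widehat{U}_i$. The graded Fock vector $v_M$ is a weighted sum indexed by partitions, with weights assembled from Gaussian polynomials and a Kostka-type statistic, so checking that the insertion of a closed loop inserts precisely the factor $t + t^{-1}$ amounts to a non-trivial $q$-series identity. The most promising route is to reformulate $\widehat{U}_i$ directly in terms of Heisenberg generators $p_m, p_{-m}$ acting on Fock space, at which point the loop relation should reduce to the canonical commutator $[p_m, p_{-m}] = m$ summed against an explicit generating function for Gaussian polynomials. If this operator-theoretic rewriting succeeds, then linear independence of the $v_M$ and the two easier Temperley-Lieb relations follow uniformly, and the Jones polynomial identification becomes a routine diagrammatic verification.
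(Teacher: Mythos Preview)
The statement you are attempting to prove is presented in the paper explicitly as a \emph{conjecture} (\cref{graTemp,preJones}); the paper offers no proof, only the construction of the vectors $v_c$ and a strategy-of-proof sketch for the related (and logically prior) conjecture that the Heisenberg-defined operators $\Sigma_{i,m}$ braid. So there is no ``paper's proof'' to compare against.

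More importantly, your proposal misses what the conjecture is actually asserting. You define $\widehat{U}_i$ by transport of structure, $\widehat{U}_i(v_M) := v_{U_i\cdot M}$, and then propose to check the Temperley--Lieb relations. But if the $v_M$ are linearly independent (your first step), this is automatic: the map $M \mapsto v_M$ is then an isomorphism onto its image, and any representation at all can be transported along it. The substantive content of the paper's conjecture is different. In \cref{braidop} the paper defines braid operators $\Sigma_{i,m}$ \emph{intrinsically in the Heisenberg algebra} via the elements $\varsigma_{i,m} = \sum_\lambda (-1)^{d_\lambda} t^{-d_\lambda} p_i^\lambda q_i^{\lambda^T}$, and the background section of \cref{Chap3} makes clear that the intended claim is that \emph{these} operators preserve $\mathrm{span}\{v_c\}$ and act there via Kauffman's map $\sigma_i \mapsto A\,u_i + A^{-1}\cdot\mathrm{id}$ into Temperley--Lieb. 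That is a statement about how the Heisenberg commutation relations interact with the specific combinatorics (Gaussian polynomials, major statistics, the $\ell$-shift) used to build $v_c$; it cannot be resolved by declaring the action and checking relations on the diagrammatic side.

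Similarly, for \cref{preJones} the paper's proposed mechanism is to pair $v_c$ against $v_{c^t}$ (the latter built from $q$'s rather than $p$'s) inside Fock space and count graded vacuum vectors. Your plan to pair with ``the distinguished Fock vector coming from the trivial matching'' and invoke the Kauffman bracket is again transport of a known invariant, not the Fock-space computation the paper is conjecturing. Your final paragraph gestures in the right direction---rewriting $\widehat{U}_i$ in Heisenberg generators---but that rewriting \emph{is} the whole problem, not a technical lemma on the way to it, and it must be done for the specific operators $\Sigma_{i,m}$ the paper has already fixed.
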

        
   \noindent     The entire construction above admits a $2$-category description where the endomorphism of certain modules can be related to Khovanov arc algebra \cite{Khovfunc}, which in fact, is the original motivation of the project.

\blanknonumber
 
  \tableofcontents\blanknonumber


  \mainmatter

          \chapter{The Type B Zigzag Algebra.} \label{Chap1}

\section{Background and Outline of the Chapter}

 Eager readers can skip this section for the subsequent result sections. 
 This section consists of part of the introduction in \cite{hengseng2019curves}.
  The background here provides a detailed comparison between the existing literature for the recent development of the subject after the publication of \cite{KhoSei} and a more concise summary of the chapter, which was not covered in the preface. 
    We also inform some upcoming works inspired by the work of this chapter.
  
In the seminal work \cite{KhoSei}, Khovanov-Seidel introduced a  categorical action of the type $A_m$ Artin braid group $\cA(A_m)$, where the group acts faithfully by exact autoequivalences on  $\Kom^b(\Aa_m$-$\text{p$_{r}$g$_{r}$mod})$, the homotopy category of projective (graded) modules over the type $A_m$ zigzag algebra $\Aa_m$.
Moreover, they showed that this braid group action on $\Kom^b(\Aa_m$-$\text{p$_{r}$g$_{r}$mod})$ is fundamentally related to the mapping class group action on curves on the punctured disc. 
More precisely, they constructed a map $L_A$ that associates  complexes of projective $\Aa_m$-modules to isotopy classes of curves in the disc, and showed that $L_A$ intertwines the categorical action of $\cA(A_m)$ on complexes with the mapping class action of $\cA(A_m)$ on curves.
Furthermore, the geometric intersection number between two curves $c_1$ and $c_2$ can be computed from the dimension of their corresponding total Hom space $\HOM^*(L_A(c_1), L_A(c_2))$.
This may be seen as a bridge connecting two fundamental appearances of the same group: the former as the Artin group associated to the type $A$ Coxeter group, and the latter as the mapping class group of the punctured disc.

Another family of Artin groups which also appear as mapping class groups, are the type $B$ Artin groups  $\cA(B_n)$.
To this end, Gadbled-Thiel-Wagner developed a type $B$ analogue of the Khovanov-Seidel construction in \cite{GTW}.
However, their story is more accurately stated using extended affine type $A$  Artin groups $\widehat{\cA}(\hat{A}_{n-1})$.
Although the groups $\cA(B_n)$ and $\widehat{\cA}(\hat{A}_{n-1})$ are indeed isomorphic, they have rather different origins; both algebraically and topologically.
When interpreted as Artin groups, they are given by different Artin presentations corresponding to different Coxeter diagrams.
Although they are both mapping class groups of an $(n+1)$-punctured disc (fixing one of the punctures), their corresponding natural affine configurations of the disc are different (see \cref{fig: diff affine configuration}).

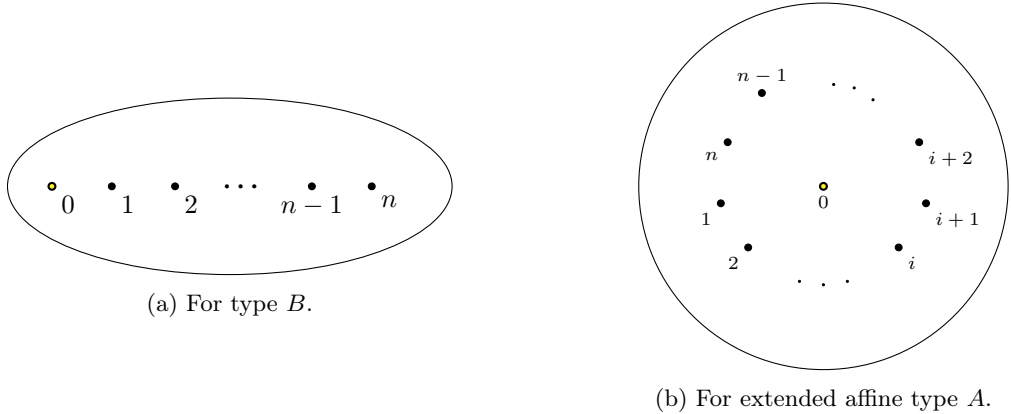
\begin{figure}[h]  
\begin{subfigure}{0.4 \textwidth}
\centering
\begin{tikzpicture}[scale = 0.45]

\draw (5.2,2.5) ellipse (6.5cm and 2.6cm); 
\draw[fill] (1.75,2.5) circle [radius=0.1] ;
\draw[fill] (3.6,2.5) circle [radius=0.1]  ;
\draw[fill] (7.6,2.5) circle [radius=0.1]  ;
\draw[fill] (9.35,2.5) circle [radius=0.1]  ;
\filldraw[color=black!, fill=yellow!, thick]  (0,2.5) circle [radius=0.1];

\node  at (5.6,2.5) {$ \boldsymbol{\cdots}$};
\node[below right] at  (0,2.5) {0};
\node[below right] at  (1.75,2.5) {1};
\node[below right] at  (3.6,2.5) {2};
\node[below] at  (7.6,2.5) {$n-1$};
\node[below right] at  (9.35,2.5) {$n$};

\end{tikzpicture}
\caption{For type $B$.} 
\end{subfigure}
\quad
\begin{subfigure}{0.49 \textwidth}
\centering
\begin{tikzpicture} [scale = 0.45]

\draw (0,0) circle (5.4cm);

\filldraw[color=black!, fill=yellow!, thick] (0,0) circle [radius=0.1] ;


\draw[fill] (3,-.5) circle [radius=0.1] ;
\draw[fill] (-3,-.5) circle [radius=0.1] ;

\draw[fill] (2.2,-1.8) circle [radius=0.1] ;
\draw[fill] (-2.2,-1.8) circle [radius=0.1] ;

\draw[fill] (2.8,1.3) circle [radius=0.1] ;
\draw[fill] (-2.8,1.3) circle [radius=0.1] ;

\draw[fill] (-1.8,2.75) circle [radius=0.1] ;

\draw[fill] (0,-2.9) circle [radius=0.03] ;
\draw[fill] (-0.7,-2.8) circle [radius=0.03] ;
\draw[fill] (0.7,-2.8) circle [radius=0.03] ;

\draw[fill] (0.3,3) circle [radius=0.03] ;
\draw[fill] (0.9,2.9) circle [radius=0.03] ;
\draw[fill] (1.45,2.55) circle [radius=0.03] ;

\node[below] at (0,0) {\scriptsize{$0$}} ;
\node[below right] at (2.2,-1.8) {\scriptsize{$ i$}} ;
\node[below left] at (-2.2,-1.8) {\scriptsize{$2$}} ;
\node[below right] at (3,-.5) {\scriptsize{$ i + 1$}} ;
\node[below left] at (-3,-.5) {\scriptsize{$1$}} ;

\node[below right] at (2.8,1.3) {\scriptsize{$ i + 2$}} ;
\node[below left] at (-2.8,1.3) {\scriptsize{$n$}} ;

\node[above] at (-1.8,2.75) {\scriptsize{$n-1$}} ;

\end{tikzpicture}
\caption{For extended affine type $A$.}
\end{subfigure}
\caption{Two different affine configurations of the $(n+1)$-punctured disc.}
\label{fig: diff affine configuration}
\end{figure}

The goal of the present chapter is to develop a proper (non-simply-laced) type $B$ analogue of the constructions given by Khovanov-Seidel and Gadbled-Thiel-Wagner.
To this end, we introduce a (finite dimensional) quotient of a quiver algebra  $\Ba_n$ over $\R$, which we call the type $B_n$ zigzag algebra.
Unlike in type $A$ and extended affine type $A$, the root system of type $B$ is no longer simply-laced.
As a result, the definition of the type $B$ zigzag algebra is somewhat subtle -- the indecomposable projective $\Ba_n$-module whose class in the Grothendieck group is a short simple root will only have the structure of a $\mathbb{R}$-vector space, while all the other indecomposable projective $\Ba_n$-modules whose classes are long simple roots will actually be $\mathbb{C}$-vector spaces.  
Note that this is somewhat reminiscent of other non-simply-laced constructions in the quiver theory, see \cite{DR}.
The relevance of $\Ba_n$ to Coxeter theory is provided by the following theorem:
\begin{theorem}[= \cref{Cat B action} and \cref{faithful action}] \label{faithfulintro}
The homotopy category of projective graded modules $\Kom^b(\Ba_n$-$\text{p$_{r}$g$_{r}$mod})$ carries a faithful (weak) action of the type $B_n$ Artin group $\cA(B_n).$
\end{theorem}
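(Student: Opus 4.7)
The plan has two main stages: first, establish that a natural family of bimodule complexes $R_i$ over $\Ba_n$ satisfies the Artin relations of $\cA(B_n)$, yielding a weak categorical action; second, deduce faithfulness by comparison with the (already faithful) type $A_{2n-1}$ Khovanov--Seidel action via the Crisp embedding. For each simple generator $\sigma_i$ of $\cA(B_n)$ I would define the Khovanov--Seidel-style two-term complex
\[ R_i \;:=\; \bigl(\, P_i \otimes_{k_i}\! {}_iP \;\longrightarrow\; \Ba_n \,\bigr) \]
in cohomological degrees $-1,0$, where $P_i$ is the indecomposable projective at vertex $i$ of $\Ba_n$, and $k_i$ is $\R$ at the short-root vertex and $\C$ at each long-root vertex; the inverse $R_i^{-1}$ is the dual two-term complex. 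The heterogeneity of the endomorphism rings at the vertices is exactly what forces the tensor products to be taken over $k_i$, and is the source of the non-simply-laced behaviour.

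Next I would check the three families of defining relations of $\cA(B_n)$. For $|i-j|\ge 2$, expanding $R_iR_j$ and $R_jR_i$ as iterated tensor products gives agreement on the nose, since the relevant paths in the quiver are disjoint; this mirrors the type $A$ computation of \cite{KhoSei}. For an adjacent pair of long-root vertices, both $R_iR_jR_i$ and $R_jR_iR_j$ expand as eight-term complexes, and the explicit homotopy equivalence of \cite{KhoSei} transports verbatim, since the $\C$-structure at those vertices matches the Khovanov--Seidel setting. The genuinely new content is the type $B$ four-cycle relation $R_1R_2R_1R_2 \simeq R_2R_1R_2R_1$ at the edge adjacent to the short-root vertex: writing each side as a total complex of sixteen summands, I would construct an explicit chain map and contracting homotopy from the path relations in $\Ba_n$, exploiting the interaction between the $\R$-structure at the short-root vertex and the $\C$-structure at the adjacent long-root vertex.

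Once the weak action is established, faithfulness follows from the commutative square of \cref{equi}. The Crisp inclusion $\cA(B_n) \hookrightarrow \cA(A_{2n-1})$ is injective on the level of groups, and by \cite{KhoSei} the induced categorical action on $\Kom^b(\Aa_{2n-1}\text{-}\mathrm{p}_r\mathrm{g}_r\mathrm{mod})$ is faithful. The base-change functor $\Aa_{2n-1}\otimes_{\Ba_n}-$ intertwines the two categorical actions, so any $\beta\in\cA(B_n)$ acting trivially on $\Kom^b(\Ba_n\text{-}\mathrm{p}_r\mathrm{g}_r\mathrm{mod})$ also acts trivially on the type $A$ category after base change, hence equals the identity in $\cA(A_{2n-1})$, and therefore in $\cA(B_n)$.

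The main obstacle is the four-cycle relation: it has no analogue in the Khovanov--Seidel proof, so one cannot copy their homotopy. A cleaner alternative is to prove it by lifting along $\Aa_{2n-1}\otimes_{\Ba_n}-$, where the four-cycle unfolds into two commuting type $A$ braid relations, provided one can show that base change is sufficiently conservative on complexes of the form $R_{i_1}\cdots R_{i_k}$ to detect homotopy equivalence downstairs in $\Ba_n$.
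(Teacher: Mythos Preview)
Your overall architecture matches the paper exactly: define two-term complexes $R_i$, verify the Artin relations, then deduce faithfulness via the $\cA(B_n)$-equivariance of $\Aa_{2n-1}\otimes_{\Ba_n}-$ together with Khovanov--Seidel faithfulness and injectivity of the embedding $\Psi$. The faithfulness step is identical to what the paper does in \cref{faithful action}.

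The one substantive divergence is the proof of the four-cycle relation $R_1R_2R_1R_2\simeq R_2R_1R_2R_1$. Rather than either of your two options, the paper uses a biadjunction/spherical-twist trick: since $(P_1^B,{}_1P^B)$ is a biadjoint pair, $R_1=\sigma_{P_1^B}$ in the sense of a categorical Dehn twist, and conjugation by an invertible complex $\Sigma$ gives $\Sigma\,\sigma_{P_1^B}\,\Sigma^{-1}\cong\sigma_{\Sigma(P_1^B)}$. The four-cycle relation thus reduces to showing $R_2R_1R_2(P_1^B)\cong P_1^B$ up to grading shift, which is a short direct computation on a single indecomposable projective rather than a sixteen-term bimodule complex. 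This is considerably cleaner than your first option (which would work, but is laborious), and sidesteps your second option entirely---the paper never needs conservativity of base change on bimodule complexes, and indeed the conservativity you flag as an obstacle is not established anywhere in the paper.
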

\noindent
Similar to the works of Khovanov-Seidel and Gadbled-Thiel-Wagner,  we establish the following results that relate the categorical notions to low-dimensional topology:
\begin{theorem}[= \cref{L_B equivariant} and \cref{poin poly equals tri int}] \label{L_Bintro} The exists a map $L_B$ that associates complexes in $\Kom^b(\Ba_n$-$\text{p$_{r}$g$_{r}$mod})$ to curves in the $(n+1)$-punctured disc. 
This map $L_B$ is $\cA(B_{n})$-equivariant, intertwining the $\cA(B_n)$-action on curves and the $\cA(B_n)$-action on complexes in  $\Kom^b(\Ba_n$-$\text{p$_{r}$g$_{r}$mod}).$
Moreover, the (trigraded) intersection number between two curves $c_1$ and $c_1$ is given by the Poincar\'e polynomial of the total Hom space between $L_B(c_1)$ and $L_B(c_2)$.
\end{theorem}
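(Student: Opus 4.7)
The plan is to emulate Khovanov--Seidel's construction in type $A$, adapted to the type $B$ context via the zigzag algebra $\Ba_n$, with two complementary routes for the intersection-number claim: a direct one through a trigraded cover of $\DB$, and an indirect one leveraging the commutative square of \cref{equi}.

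First, I would define $L_B$ on a distinguished set of basic admissible curves $b_1, \dots, b_n$ in $\DB$ (the elementary arcs connecting consecutive punctures, with $b_1$ the arc incident to the fixed puncture), by assigning to each $b_i$ the stalk complex in homological degree zero consisting of the indecomposable projective $\Ba_n$-module $P_i$ in trivial internal grading. Since every admissible isotopy class of curves in $\DB$ lies in the $\cA(B_n)$-orbit of some $b_i$, I would extend $L_B$ by $L_B(\beta \cdot b_i) := \beta \cdot L_B(b_i)$ using the categorical action of \cref{faithfulintro}. Well-definedness requires that if $\beta \cdot b_i \simeq \beta' \cdot b_j$ as curves then the two resulting complexes are homotopy equivalent; after passing to $\gamma = \beta'^{-1}\beta$, this reduces to checking that the stabiliser of a basic curve inside $\cA(B_n)$ acts on $P_i$ trivially up to homotopy, which can be verified on Artin generators. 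With $L_B$ set up this way, $\cA(B_n)$-equivariance is tautological, so the first half of the theorem is essentially wrapped up.

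For the intersection formula, the cleanest route is to introduce a trigraded cover $\wRDB$ of $\DB$ (\cref{tribundle}) whose three gradings encode (i) the homological shift, (ii) the internal grading shift on $\Ba_n$-modules, and (iii) an additional $\Z/2\Z$-parity that records the short- versus long-root distinction, equivalently the $\R$- versus $\C$-structure on the indecomposable projectives of $\Ba_n$. After fixing trigraded lifts $\tilde c_1, \tilde c_2$, the trigraded intersection number of \cref{triint} becomes a three-variable polynomial, and the claim is that it equals the Poincar\'e polynomial of $\HOM^*(L_B(c_1), L_B(c_2))$. I would prove this in three stages: (a) compute both sides directly for pairs of basic curves using the explicit description of $\bigoplus_{i,j} \Hom^*(P_i, P_j)$ coming from the zigzag relations; (b) observe that both sides are simultaneously $\cA(B_n)$-equivariant; (c) perform a minimal-position / admissible-curve resolution to reduce an arbitrary pair to one in which a member is basic. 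As a cross-check, the commutative square of \cref{equi} allows deducing the type $B$ formula from the already-known type $A$ formula by pulling back along the double-cover map $\mathfrak{m}$ and the extension-of-scalars functor $\Aa_{2n-1} \otimes_{\Ba_n} -$, provided one tracks how the third grading interacts with complex conjugation.

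The principal obstacle will be the third stage. The extra grading has no analogue in \cite{KhoSei}, so arranging its covering-space realisation to pair cleanly with the $\R$/$\C$-decomposition of Hom spaces over $\Ba_n$ requires careful bookkeeping, and one must also verify that minimal-position counts are free of unintended cancellations so that geometric intersections match Poincar\'e coefficients term by term. Once that technical core is in place, the formula on basic curves is immediate from the zigzag presentation, but propagating it through general admissible curves without losing track of shifts, signs, and parity is the delicate heart of the argument.
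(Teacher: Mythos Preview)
Your orbit-based definition of $L_B$ has a genuine gap. You claim well-definedness ``reduces to checking that the stabiliser of a basic curve inside $\cA(B_n)$ acts on $P_i$ trivially up to homotopy, which can be verified on Artin generators.'' But the stabiliser of an ungraded $b_i$ is not generated by the Artin generators in any tractable way, and in fact the half-twist $\sigma^B_i$ (or the full-twist $\sigma^B_1$) itself stabilises $b_i$ while acting on $P_i^B$ by a nontrivial grading shift --- so the claim is already false for ungraded curves. Passing to trigraded curves repairs this particular example (Lemma~\ref{action} shows the twist shifts the trigrading correspondingly), but you are still left with checking that every element of the trigraded stabiliser acts trivially on $P_i^B$, and that subgroup is not something you can enumerate by inspecting Artin generators. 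In effect, verifying well-definedness of your orbit definition is exactly as hard as proving equivariance of an explicit construction, so the ``tautological'' equivariance buys nothing.

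The paper proceeds differently. First, $L_B$ is defined explicitly on every trigraded admissible curve in normal form: the complex has one summand $P^B_{y_0}[-y_1]\{y_2\}\langle y_3\rangle$ for each crossing $y$ of $\check c$ with the vertical curves $d_j$, and the differentials are read off from the essential segments (with a nontrivial modification rule in $D_0\cup D_1$ to handle the $\R$/$\C$ interaction). Second --- and this is the point you relegate to a ``cross-check'' --- equivariance is not verified directly but is \emph{deduced} from the commutative square of \cref{equi}: one proves $\Aa_{2n-1}\otimes_{\Ba_n} L_B(\check c)\cong L_A(\mathfrak m(\check c))$ (Proposition~\ref{diagram commutes}, the lengthy technical core), then uses the already-known equivariance of $L_A$, $\mathfrak m$, and the scalar-extension functor to conclude that $\Aa_{2n-1}\otimes_{\Ba_n}(-)$ sends $L_B(\sigma^B\check c)$ and $\sigma^B L_B(\check c)$ to isomorphic objects; Krull--Schmidt plus a parity invariant on the third grading (Lemma~\ref{gradinginvariant}) then yields the isomorphism over $\Ba_n$. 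So the square is the main engine for equivariance, not an afterthought.

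For the intersection formula your three-stage plan is closer to the paper's, but stage~(a) as stated is insufficient: knowing both sides on pairs of \emph{basic} curves does not suffice, since equivariance only reduces to the case where \emph{one} member is basic. The paper handles this by decomposing the second curve into its trigraded $j$-strings: Lemma~\ref{poly c g} shows the Poincar\'e polynomial of $\HOM^*(P_j^B, L_B(\check c))$ splits as a sum over $j$-strings of $\check c$, and Lemma~\ref{compute tri int} gives the matching decomposition of $I^{\mathrm{trigr}}(\check b_j,\check c)$; equality is then checked type-by-type on the finite list of $j$-string classes (Lemma~\ref{poly tri int b g}).
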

%

One main feature of our work contrasting that of Gadbled-Thiel-Wagner is an explicit realisation of a well-known connection between type $B$ and type $A$.
To explain this, recall that the type $B_n$ Artin group is known to be a (proper) subgroup of the type $A_{n-1}$ Artin group.
Once again, this has two interpretations: algebraically, it embeds through an LCM homomorphism induced from the folding of Coxeter diagrams \cite{crisp_1997};
topologically, the embedding is induced from a double-branched cover\footnote{Such a covering was also considered in \cite{Ito} to construct curve diagrams for $\mathcal{A}(B_n)$.} of the $(n+1)$-punctured disc $\DB$ by a $2n$-punctured disc $\DA$.
The topological interpretation induces an $\mathcal{A}(B_n)$-equivariant map $\mathfrak{m}$ that takes curves in the $(n+1)$-punctured disc to \emph{multicurves} in the $2n$-punctured disc, defined by taking the pre-image of the covering map.
Our work includes a categorical interpretation of this map $\mathfrak{m}$, given by a scalar extension functor:
\begin{theorem}[= \cref{isomorphic algebras} and \cref{fullmaintheorem}]\label{intromaintheorem}
The type $B$ zigzag algebra $\Ba_n$ (over $\R$) algebra is isomorphic to Khovanov-Seidel type $A$ zigzag algebra $\Aa_{2n-1}$ after extending scalars to $\C$, namely 
\[
\C\otimes_\R \Ba_n \cong \Aa_{2n-1} \quad \text{as $\C$-algebras}.
\]
This induces a scalar extension functor $\Aa_{2n-1} \otimes_{\Ba_n} -$, which renders the diagram in \cref{fig: full picture} commutative, with all four maps on the square $\cA(B_n)$-equivariant.
\end{theorem}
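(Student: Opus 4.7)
The plan has two pieces: (i) the algebra isomorphism $\C\otimes_\R\Ba_n\cong\Aa_{2n-1}$, which I would prove by hand from the quiver-and-relations presentations of both algebras; and (ii) the commutative diagram with $\cA(B_n)$-equivariance, which I would reduce to a computation on a generating family of curves together with a compatibility check of the bimodule functors under scalar extension.

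For (i), the essential bookkeeping happens at the idempotents. In $\Ba_n$, the idempotent $e_n^B$ at the short-root vertex generates $\R\cdot e_n^B\cong\R$, while for $i=1,\ldots,n-1$ each idempotent $e_i^B$ at a long-root vertex generates $\C\cdot e_i^B\cong\C$. After scalar extension, $e_n^B\otimes 1$ remains primitive and I would identify it with the central idempotent $e_n^A$ of $\Aa_{2n-1}$, whereas each $e_i^B\otimes 1$ splits into two orthogonal primitive idempotents corresponding to $e_i^A$ and $e_{2n-i}^A$; this yields the correct total count of $2n-1$ primitive idempotents. I would then send each arrow of $\Ba_n$ to an explicit $\C$-linear combination of arrows of $\Aa_{2n-1}$ compatible with this idempotent splitting, verify that the zigzag relations map to zero, and conclude bijectivity by comparing graded dimensions idempotent-by-idempotent. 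A cleaner alternative is to invoke the realisation $\Ba_n\cong\Aa_{2n-1}^{inv}$ as an invariant subalgebra under a $\Z/2$-diagonal action, from which the base-change isomorphism follows by Galois descent along $\C/\R$ in a single step.

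For (ii), I would first verify the isomorphism $\Aa_{2n-1}\otimes_{\Ba_n}L_B(c)\cong L_A(\mathfrak{m}(c))$ on a generating family of curves: the admissible arcs $c_i$ between adjacent punctures in $\DB$. For such an arc, $L_B(c_i)$ is a shift of the indecomposable projective $P_i^B$, whose scalar extension is $P_i^A\oplus P_{2n-i}^A$ for a non-central arc and $P_n^A$ for the arc at the central puncture. On the other side, $\mathfrak{m}(c_i)$ is by construction the preimage multicurve under the branched double cover, namely the two arcs in $\DA$ related by the $\pi$-rotation (respectively the single arc through the centre), and $L_A$ evaluated on this multicurve agrees term-for-term with the above. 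The remaining three sides of the square are already equivariant: $L_A$ by Khovanov--Seidel (hence for $\cA(B_n)$ through Crisp's embedding), $L_B$ by \cref{faithfulintro}, and $\mathfrak{m}$ by construction of the double cover. What remains is a natural isomorphism of functors
\[
\Aa_{2n-1}\otimes_{\Ba_n}T_i^B(-)\;\cong\;T^A_{\mathrm{Crisp}(\sigma_i^B)}\bigl(\Aa_{2n-1}\otimes_{\Ba_n}-\bigr)
\]
for each type $B$ braid generator $\sigma_i^B$, where $T^A_{\mathrm{Crisp}(\sigma_i^B)}$ denotes the composite of Khovanov--Seidel bimodule functors corresponding to Crisp's expression for $\sigma_i^B$ as a type $A$ braid word.

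The main technical obstacle is this last step. I would prove the functorial isomorphism by descending to the bimodule level, explicitly computing the two-term complexes of bimodules on each side after scalar extension and matching them term by term (including the sign and degree data of the differentials) using the explicit form of the Khovanov--Seidel bimodules and the type $B$ bimodules constructed earlier in the chapter. The delicate case is the short-root generator $\sigma_n^B$, whose Crisp image is a product of type $A$ generators threading around the fixed central puncture; here the compatibility of scalar extension with composition of bimodules must be verified by a direct calculation that uses crucially the fact that $e_n^A$ arose from the \emph{real} structure at the short-root vertex of $\Ba_n$. Once this bimodule identity is established, the commutativity of the square propagates from the generating arcs to all admissible curves via the $\cA(B_n)$-equivariance just assembled, completing the proof.
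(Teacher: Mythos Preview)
Your plan for part (i) matches the paper's approach: split idempotents after base change, write down an explicit $\C$-algebra map, and conclude by a dimension count. (A labelling caveat: in the paper it is vertex $1$, not vertex $n$, that carries the $\R$-structure; correspondingly $e_1^B$ maps to the central $e_n^A$ and each $e_j^B$ for $j\ge 2$ splits into $e_{n-j+1}^A$ and $e_{n+j-1}^A$.)

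For part (ii), however, there is a genuine circularity. Your propagation argument --- check commutativity on the basic arcs $b_j$ and then transport to an arbitrary admissible curve $c=\sigma(b_j)$ using the equivariance of all four sides --- requires in particular that $L_B$ is $\cA(B_n)$-equivariant. But you cite \cref{faithfulintro} for this, which is about faithfulness of the categorical action, not about $L_B$. In the paper's logical order, the equivariance of $L_B$ (\cref{L_B equivariant}) is \emph{deduced from} the commutativity of the square, not assumed beforehand: one first proves commutativity for \emph{every} admissible curve directly (\cref{diagram commutes}), and only then combines it with the three independently-known equivariances ($\mathfrak m$, $L_A$, and $\Aa_{2n-1}\otimes_{\Ba_n}-$) plus a Krull--Schmidt reflection argument to conclude $L_B(\sigma(c))\cong\sigma L_B(c)$. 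So checking commutativity only on generators is not enough; the bulk of the work you are skipping is exactly the content of \cref{diagram commutes}, which proceeds by slicing an arbitrary curve into its $1$-strings and constructing, for each of the six types of $1$-string, an explicit chain isomorphism between $\Aa_{2n-1}\otimes_{\Ba_n}L_B(\check c)$ and $L_A(\mathfrak m(\check c))$.

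A smaller point: under Crisp's embedding $\Psi$ used here, the short-root generator $\sigma_1^B$ maps to the \emph{single} generator $\sigma_n^A$, while it is the long-root generators $\sigma_j^B$ ($j\ge 2$) that map to products $\sigma_{n-j+1}^A\sigma_{n+j-1}^A$. Accordingly, in the bimodule compatibility check (\cref{tensor equivariant}) the short-root case is the easier one, not the delicate one; the subtlety for $j\ge 2$ is the conjugate $\C$-action on $P^A_{n-(j-1)}$ appearing in $\Aa_{2n-1}\otimes_{\Ba_n}P_j^B$.
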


\begin{figure}[H]
\centering
\begin{tikzpicture} [scale=0.85]
\node (tbB) at (-3,1.5) 
	{$\mathcal{A}(B_n)$};
\node (cbB) at (-3,-3.5) 
	{$\mathcal{A}(B_n)$};
\node (tbA) at (10,1.5) 
	{$\cA(B_n) \hookrightarrow \mathcal{A}(A_{2n-1})$}; 
\node (cbA) at (10.5,-3.5) 
	{$\cA(B_n) \hookrightarrow \mathcal{A}(A_{2n-1})$};

\node[align=center] (cB) at (0,0) 
	{Isotopy classes of trigraded \\ admissible curves $\check{\fC}^{adm}$  in $\DB$};
\node[align=center] (cA) at (7,0) 
	{Isotopy classes of bigraded \\ admissible multicurves $\ddot{\undertilde{\wt{\fC}}}^{adm}$  in $\DA$};
\node (KB) at (0,-2)
	{$\Kom^b(\Ba_n$-$\text{p$_{r}$g$_{r}$mod})$};
\node (KA) at (7,-2) 
	{$\Kom^b(\Aa_{2n-1}$-$\text{p$_{r}$g$_{r}$mod})$};

\coordinate (tbB') at ($(tbB.east) + (0,-1)$);
\coordinate (cbB') at ($(cbB.east) + (0,1)$);
\coordinate (tbA') at ($(tbA.west) + (1,-1)$);
\coordinate (cbA') at ($(cbA.west) + (0,1)$);

\draw [->,shorten >=-1.5pt, dashed] (tbB') arc (245:-70:2.5ex);
\draw [->,shorten >=-1.5pt, dashed] (cbB') arc (-245:70:2.5ex);
\draw [->, shorten >=-1.5pt, dashed] (tbA') arc (-65:250:2.5ex);
\draw [->,shorten >=-1.5pt, dashed] (cbA') arc (65:-250:2.5ex);

\draw[->] (cB) -- (KB) node[midway, left]{$L_B$};
\draw[->] (cB) -- (cA) node[midway,above]{$\mathfrak{m}$}; 
\draw[->] (cA) -- (KA) node[midway,right]{$L_A$};
\draw[->] (KB) -- (KA) node[midway,above]{$\Aa_{2n-1} \otimes_{\Ba_n} -$};
\end{tikzpicture}
\caption{The commutative diagram relating our type $B$ story (left column) and the type $A$ story by Khovanov-Seidel (right column).}
\label{fig: full picture}
\end{figure}
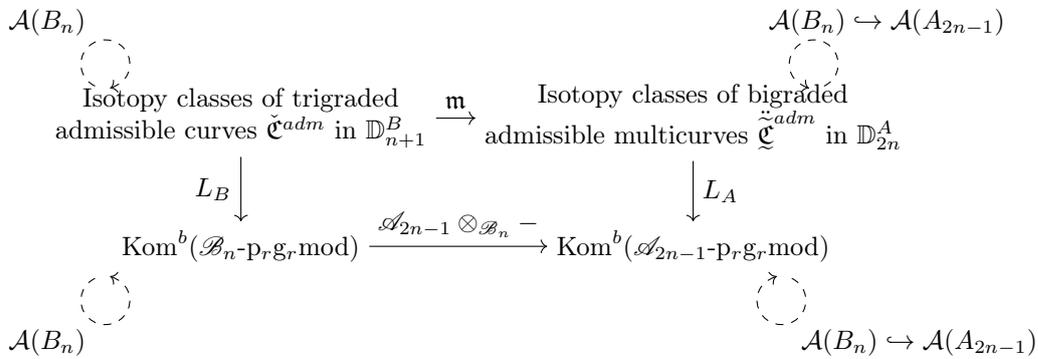

We would like to mention here that the construction of the type $B$ zigzag algebra in this paper can be easily modified to allow for other \emph{Lie-type Dynkin diagrams}, particularly for types $C,F_4,$ and $G_2$ (where for label 6 one uses a field extension of degree 3 instead).
Together with the simply-laced constructions \cite{HueKho, LicQuef}, this (only) covers all of the Lie-type Dynkin diagrams.
To the best of our knowledge, there is no easy generalisation of this construction via (finite-dimensional) algebras that encapsulates all \emph{Coxeter diagrams}; not even for the finite types $H$ and $I_2(k)$.
A different construction via algebra objects in fusion categories that allows for arbitrary Coxeter diagrams will be provided in Heng's PhD thesis.

  Finally, recall that the type $A_n$ zigzag algebra has a geometric origin: it is quasi-isomorphic as a differential graded algebra (dga) with zero differential to the dga associated to an $A_n$-chain of spherical objects \cite{SeiTho}.
   In particular, it is quasi-isomorphic to the Fukaya $A_\infty$-algebra of the Donaldson-Fukaya category corresponding to the Milnor fibre of type $A$ singularities \cite{Seibook}.
   Type $B$ singularities have also been studied; from Arnold's symplectic point of view \cite{Arn1,  Arn2} as boundary singularities and from the algebraic geometry point of view by Slodowy \cite{SlowFour, SlowSim} as simple singularities associated with a $\Z /2 \Z$-group action.
  We expect our type $B$ zigzag algebra to have a similar geometric origin as in the type $A$ case.
This is an ongoing work of the author with Qiao.

\end{comment}

A more precise outline of each section in the chapter is as follows.
\subsection*{Outline of the chapter}
\cref{topology} contains all the topological side of this paper.
We describe the double-branched cover of $\D^B_{n+1}$ by $\D^A_{2n}$, which induces an injection of groups $\Psi: \mathcal{A}(B_n) \hookrightarrow \mathcal{A}(A_{2n-1})$.
Within this section, we also make explicit the definition of curves and admissible curves, and also introduce the notion of trigraded curves -- the type $B$ analogue of the bigraded curves for type $A$.
The construction of the $\cA(B_n)$-equivariant map $\mathfrak{m}$, which lifts trigraded curves to bigraded multicurves can be found in \cref{lift section}.
We also introduce the notion of a trigraded intersection number of trigraded curves -- the type $B$ analogue of bigraded intersection number of bigraded curves, and relate the two through the map $\mathfrak{m}$.

\cref{define zigzag} is where the zigzag algebras are defined.
We start by recalling the type $A_{2n-1}$ zigzag $\C$-algebra $\Aa_{2n-1}$ as defined in \cite{KhoSei} (with a minor change in grading) and describe the $\mathcal{A}(A_{2n-1})$-action on $\Kom^b(\Aa_{2n-1}$-$\text{p$_{r}$g$_{r}$mod})$.
We then define our type $B_n$ zigzag $\R$-algebra $\Ba_n$ and describe the corresponding $\mathcal{A}(B_n)$-action on $\Kom^b(\Ba_n$-$\text{p$_{r}$g$_{r}$mod})$.
We end this section by describing the functor realisation of the type $B_n$ Temperley-Lieb algebra given by certain $(\Ba_n,\Ba_n)$-bimodules.

In \cref{relating categorical b a action}, we develop the algebraic analogue of $\mathfrak{m}$, given by an extension of scalar functor $\Aa_{2n-1}\otimes_{\Ba_n} -$.
In particular, we show that there is an injection $\Ba_n \hookrightarrow \Aa_{2n-1}$ of $\R$-algebras, which induces an extension of scalar functor $\Aa_{2n-1}\otimes_{\Ba_n} - : \Ba_n$-$\text{mod} \ra \Aa_{2n-1}$-$\text{mod}$ and thus on their homotopy categories.
We will then show that the functor $\Aa_{2n-1}\otimes_{\Ba_n} -$ is $\mathcal{A}(B_n)$-equivariant, with the $\mathcal{A}(B_n)$-action on $\Kom^b(\Aa_{2n-1}$-$\text{p$_{r}$g$_{r}$mod})$ induced by the injection $\Psi$ as defined in \cref{topology}.
This in turn, allows us to deduce the faithfulness of the $\cA(B_n)$ categorical action by using the faithfulness of the $\cA(A_{2n-1})$ categorical action proven in \cite{KhoSei}.

\cref{main theorem} is where we prove the remaining pieces of \cref{intromaintheorem}.
We recall the $\cA(A_m)$-equivariant map $L_A$ that associates admissible complexes to curves as in \cite{KhoSei} and construct the type $B$ analogue map $L_B$ in a similar fashion.
To show that $L_B$ is $\cA(B_n)$-equivariant and that the diagram in \cref{intromaintheorem} commutes, we will begin with the latter, which will allow us to derive the former.

\cref{categorification hom rep} contains the decategorified version of the main theorem (see \cref{decat main theorem} for the corresponding diagram).
Namely, just as the $\cA(A_m)$ action on $\Kom^b(\Aa_m$-$\text{p$_{r}$g$_{r}$mod})$ categorifies the Burau representation (which can be described as a representation on the first homology of an explicit covering space of $\DA$), we show that the categorical action of $\cA(B_n)$ on $\Kom^b(\Ba_n$-$\text{p$_{r}$g$_{r}$mod})$ categorifies a representation on (a submodule of) the first homology of an explicit covering space of $\DB$.

\cref{int num and hom} is where we relate the trigraded intersection numbers of (admissible) curves and the Poincar\'e polynomial of the homomorphism spaces of their corresponding complexes.

\cref{InvCat} is where we prove the type $B$ zigzag algebra as an invariant  algebra over $\R$ of the type $A$ zigzag algebra (see \cref{isomorphic algebras}). 
Moreover, we show that the endomorphism space of projective modules over the type $B$ zigzag algebra categorifies the type $B$ intersection form (see \cref{catint}) and the type $B$ Cartan matrix (see \cref{catCar}).

\newpage

\section{Generalised Artin Groups of Type $B_n$ and Type $A_{2n-1}$ and Mapping Class Groups}\label{topology}
	
In this section, we will first describe type $A$ and type $B$ braid groups using generators and relations. 
  After that, we associate these two braid groups to mapping class groups of a surface.
  We then introduce trigraded curves and trigraded intersection number as a trigraded analogue of bigraded curve and bigraded intersection number in \cite{KhoSei} .
  Finally, we construct a lift of the isotopy classes of trigraded curves to the isotopy classes bigraded multicurves, and more importantly, we show that this lift is $\cA(B_n)$-equivariant.

\subsection{Generalised Artin braid groups by generators and relations} \label{genbraid} 
Given a Coxeter graph $\sG$ whose node set is $S$ and whose weighted edges are the unordered pairs $\{s, s'\}$ with their weight $w(s,s') \in \{3,4, \cdots, \infty \}$ (by convention, the edges with $w(s,s') = 3$ is left unlabelled), and $w(s,s')= w(s',s)$, we  define the \emph{generalised braid group} or \emph{Artin-Tits group }$\cA(\sG)$ associated to the graph $\sG$ to be the group generated by  the elements of $S$ and and relations  given by: (i) $\underbrace{ss's\cdots}_{w(s,s')} =\underbrace{s'ss'\cdots}_{w(s,s')}$ where $s,s'$ run over pairs of elements in $S$ with $w(s,s') \neq \infty,$ and (ii)  $ss' = s's$ if there is no edges connecting them.
 By definition, there is no relation between $s$ and $s'$ if $w(s,s') = \infty$.
  See \cite{BraidGroups} and \cite{ComCox} for a more extensive theory on generalised Artin braid groups.

For $n \geq 2,$ the type $A_n$ braid group $\cA({A_n})$ associated to the type $A_n$ Coxeter graph

\begin{figure}[H]
\centering
\begin{tikzpicture}
\draw[thick] (0,0) -- (1,0) ;
\draw[thick] (1,0) -- (2,0) ;
\draw[thick] (2,0) -- (3.2,0) ;
\draw[thick,dashed] (3.2,0) -- (4,0) ;
\draw[thick,dashed] (4,0) -- (4.8,0) ;
\draw[thick] (4.8,0) -- (6,0) ;
\draw[thick] (6,0) -- (7,0) ;
\filldraw[color=black!, fill=white!]  (0,0) circle [radius=0.1];
\filldraw[color=black!, fill=white!]  (1,0) circle [radius=0.1];
\filldraw[color=black!, fill=white!]  (2,0) circle [radius=0.1];
\filldraw[color=black!, fill=white!]  (3,0) circle [radius=0.1];
\filldraw[color=black!, fill=white!]  (5,0) circle [radius=0.1];
\filldraw[color=black!, fill=white!]  (6,0) circle [radius=0.1];
\filldraw[color=black!, fill=white!]  (7,0) circle [radius=0.1];

\node[below] at (0,-0.1) {1};
\node[below] at (1,-0.1) {2};
\node[below] at (2,-0.1) {3};
\node[below] at (3,-0.1) {4};
\node[below] at (5,-0.1) {$n$-2};
\node[below] at (6,-0.1) {$n$-1};
\node[below] at (7,-0.1) {$n$};
\end{tikzpicture}
\end{figure}

\noindent is generated by
$$ \sigma_1^A,\sigma_2^A,\ldots, \sigma_{n}^A $$
\noindent subject to the relations

\begin{align}
      \sigma_j^A \sigma_k^A &= \sigma_k^A \sigma_j^A  & \text{for} \ |j-k|> 1;\\    
      \sigma_j^A \sigma_{j+1}^A \sigma_j^A &=  \sigma_{j+1}^A \sigma_{j}^A \sigma_{j+1}^A & \text{for} \ j= 1, 2, \ldots, n.
\end{align}

\noindent Note that $\cA(A_n)$ is the usual braid group $\cB r_{n+1}$ of $(n+1)$-strands.

For $n \geq 2,$ the type $B_n$ braid group $\cA({B_n})$ associated to the type $B_n$ Coxeter graph 

\begin{figure}[H]
\centering
\begin{tikzpicture}
\draw[thick] (0,0) -- (1,0) ;
\draw[thick] (1,0) -- (2,0) ;
\draw[thick] (2,0) -- (3.2,0) ;
\draw[thick,dashed] (3.2,0) -- (4,0) ;
\draw[thick,dashed] (4,0) -- (4.8,0) ;
\draw[thick] (4.8,0) -- (6,0) ;
\draw[thick] (6,0) -- (7,0) ;
\filldraw[color=black!, fill=white!]  (0,0) circle [radius=0.1];
\filldraw[color=black!, fill=white!]  (1,0) circle [radius=0.1];
\filldraw[color=black!, fill=white!]  (2,0) circle [radius=0.1];
\filldraw[color=black!, fill=white!]  (3,0) circle [radius=0.1];
\filldraw[color=black!, fill=white!]  (5,0) circle [radius=0.1];
\filldraw[color=black!, fill=white!]  (6,0) circle [radius=0.1];
\filldraw[color=black!, fill=white!]  (7,0) circle [radius=0.1];

\node[below] at (0,-0.1) {1};
\node[below] at (1,-0.1) {2};
\node[below] at (2,-0.1) {3};
\node[below] at (3,-0.1) {4};
\node[below] at (5,-0.1) {$n$-2};
\node[below] at (6,-0.1) {$n$-1};
\node[below] at (7,-0.1) {$n$};
\node[above] at (.5,0.1) {4};
\end{tikzpicture}
\end{figure}
\noindent is generated by
$$ \sigma_1^B,\sigma_2^B,\ldots, \sigma_{n}^B $$
\noindent subject to the relations

\begin{align}
\sigma_1^B \sigma_2^B \sigma_1^B \sigma_2^B & = \sigma_2^B \sigma_1^B \sigma_2^B \sigma_1^B;  \\
      \sigma_j^B  \sigma_k^B  &= \sigma_k^B  \sigma_j^B   & \text{for} \ |j-k|> 1;\\    
      \sigma_j^B  \sigma_{j+1}^B  \sigma_j^B  &=  \sigma_{j+1}^B  \sigma_{j}^B  \sigma_{j+1}^B & \text{for} \ j= 2,3, \ldots, n.
\end{align}

\subsection{Discs with marked points and their mapping class groups}  \label{gendefmap}

\subsubsection{Branched covering of $\D^B_{n+1}$ by $\D^A_{2n}$} \label{brcover}
Consider the following closed disk embedded in $\C$ with the set $\Delta$  of $2n$ marked points, that is,
$\D^A_{2n} := \{ z \in \C : \|z \| \leq n+1 \}  \text{ \ and \ }  \Delta = \{-n, \hdots,-1,1, \hdots,n\},$
\noindent as drawn below:
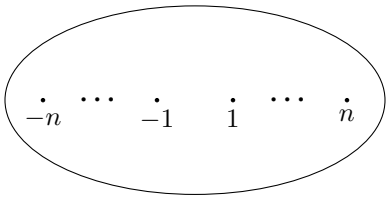
\begin{figure}[H]  
\centering
\begin{tikzpicture} [scale= 0.5]
\draw (0,0) ellipse (5cm and 2.5cm);
\draw[fill] (1,0) circle [radius=0.045];
\draw[fill] (-1,0) circle [radius=0.045];
\draw[fill] (-4,0) circle [radius=0.045];
\draw[fill] (4,0) circle [radius=0.045];

\node  at (2.5,0) {$ \boldsymbol{\cdots}$};9
\node  at (-2.5,0) {$ \boldsymbol{\cdots}$};
\node  [below] at (1,0) {$1$} ;
\node [below] at (-1,0) {$-1$};
\node [below] at (-4,0) {$-n$};
\node [below] at (4,0) {$n$};
\end{tikzpicture}

\caption{{\small The affine configuration of the ${2n}$-punctured $2$-disk $\D^A_{2n}.$}} \label{Disk}
\end{figure}
Let $r:\D_{2n}^A \ra \D_{2n}^A$ be the half rotation of the disc $\D^A_{2n}$ defined by $r(x) = -x$ for $x \in \D^A_{2n}.$ 
Consider the group $\calR$ generated by $r$, $ \calR = \<r \mid r^2=1 \> \cong \Z/2\Z$ and its action on $\D^A_{2n}$.
It is clear that each $x \in \D_{2n} \backslash \{0 \}$ has a neighbourhood $U_x$ such that $r(U_x)\cap r(U_y) = \emptyset$ for all $y \neq x$.
  In this way, the quotient map $q_{br}: \DA \ra \DA / (\Z/2\Z)$ to its orbit space is a normal branched covering with branched point $\{0\}$ \cite{Piek}.
	From now on, we will denote $\DB$ as the orbit space $\left(\D^A_{2n} \right)/ \left(\Z/2\Z \right)$, and $\Lambda = \{[0], [1], [2], \cdots, [n]\}$ as the set of $n+1$ marked points in $\DB$.
	To simplify notation and to help us picture the orbit space $\DB$, for each equivalence class in $\DB$, we will always pick the element with positive real part as the representative of the equivalence class whenever possible (i.e. as long as the equivalence class does not contain points on the imaginary line).
This way, we can denote the set of marked points $\Lambda$ as $\{0, 1, 2, \cdots, n\}$.
The following figure illustrates how we will be picturing $\DB$ for the case when $n=6$, where we identify the two green lines:
\begin{figure}[H]  
\centering
\begin{tikzpicture} [scale=0.6]

\draw[thick,green, dashed]  (2.525,7.5)--(2.525,5.5);
\draw[thick] plot[smooth, tension=2.25]coordinates {(2.525,3.5) (6.475,5.5) (2.525,7.5)};
\draw[thick, green, dashed]   (2.525,5.5)--(2.525,3.5);
\draw[thick, green, dashed, ->]   (2.525,5.5)--(2.525,4.5);
\draw[thick,  green, dashed, ->]   (2.525,5.5)--(2.525,6.5);
\draw[fill] (3.25,5.5) circle [radius=0.1] ;
\draw[fill] (4.25,5.5) circle [radius=0.1]  ;
\draw[fill] (5.25,5.5) circle [radius=0.1]  ;
\filldraw[color=black!, fill=yellow!, thick]  (2.525,5.5) circle [radius=0.1];

\node[below] at  (2.7,5.5) {0};
\node[below] at  (3.4,5.5) {1};
\node[below] at  (4.4,5.5) {2};
\node[below] at  (5.4,5.5) {3};

\end{tikzpicture}

\caption{{\small The orbit space $\left(\D^A_{6}  \right)/ \left(\Z/2\Z \right)$ with the set of marked points $\{ 0, 1, 2, 3 \}$.}} \label{orbitspace}
\end{figure}

\subsubsection{Artin braid groups as mapping class groups}
Suppose $\cS$ is a compact, connected, oriented surface, possibly with boundary $\partial \cS$, and $\Delta \subset \cS \bs \partial \cS $ a finite set of marked points. 
 We denote such a surface as $(\cS,\Delta)$ and we will just write $\cS$ if it is clear from the context that what $\Delta$ is associated to $\cS$. 
 Let $\Delta^{id} \subset \Delta$ be a subset.
 Denote by Diff$(\cS, \partial \cS ; \Delta^{id} )$ as the group of orientation-preserving diffeomorphisms $f:\cS \ra \cS$ with $f|_{\partial \cS \cup \Delta^{id}} = id$ and $f(\Delta) = \Delta$.
 If $\Delta^{id} = \emptyset,$ then we write Diff$(\cS, \partial \cS )$ : = Diff$(\cS, \partial \cS; \emptyset)$ for simplicity.
 We then define the mapping class group MCG$(\cS, \Delta^{id})$ of the surface $\cS$ with a set $\Delta$ of marked points fixing elements in $\Delta^{id}$ pointwise by 
 $$ \text{MCG} (\cS, \Delta^{id}) := \pi_0 \big( \text{Diff}(\cS, \partial \cS;\Delta^{id}) \big). $$
\noindent In a similar fashion, if $\Delta^{id} = \emptyset,$ we denote mapping class group of $\cS$ by $ \MCG(\cS) := \text{MCG} (\cS, \emptyset)$.
	We will just write $\MCG(\cS)$ if those conditions are clear from the context.
	The elements of $\MCG(\cS)$ are called \textit{mapping classes}.
%
   We will see that both generalised braid groups in \cref{genbraid} appears as mapping class groups, where we refer the reader to \cite{PrimerMCG} for a more detailed exposition on this.

By construction, the marked points on $\DA$ and $\DB$ are subsets of $\Z.$  
  Therefore, we enumerate the marked points on the disc by increasing sequences of points.
  Let $\varrho_j$ (resp. $b_j$) be the horizontal curve connecting the $j$-th marked point and $(j+1)$-th marked point in $\DA$ (resp. $\DB$) for $1 \leq j\leq 2n$ (resp. $1 \leq j\leq n+1$).

 The group $\cA(A_{2n-1})$ is isomorphic to the mapping class group MCG($\DA$) of a closed disk $\DA$ with $2n$ marked points.  
The generator $\sigma^A_j$ corresponds to the half twist $[t^A_{\varrho_j}]$ along the arc $\varrho_j.$
 Here, $t^A_{\varrho_j}$ is a diffeomorphism in $\DA$ rotating a small open disk enclosing the $j$-th and $(j+1)$-th marked points anticlockwise by an angle of $\pi$, permuting the two enclosed marked points, whilst leaving all other marked points fixed as shown in \cref{halftwist}.

Similarly, the group $\cA({B_n})$ is isomorphic to the mapping class group MCG($\D^B_{n+1}, \{0\}$) of a closed disk $\DB$ with $n+1$ marked points fixing the point $\{0\}$ pointwise.  
The generator $\sigma^B_1$ corresponds to the full twist $[(t^B_{b_1})^2]$ along the arc $b_1$ and, 
 for $2 \leq j \leq n,$ each generators $\sigma^B_j$ correspond to the half twist $[t^B_{b_j}]$ along the arc $b_j.$
Here, $t^B_{b_j}$ is a diffeomorphism in $\DB$ rotating a small open disk enclosing the $j$-th and $(j+1)$-th marked points by an angle of $\pi$ anticlockwise as illustrated in \cref{halftwist}.  
 As a result, it interchanges the marked points $j-1$ and $j$ and leaves the other points fixing pointwise.
 	Consequently, $(t^B_{b_1})^2$  is a diffeomorphism rotating a small open disk enclosing the marked points 0 and 1 anticlockwise by an angle of $2 \pi$  leaving all the marked points fixed pointwise as shown in \cref{fulltwist}.

\begin{figure}[H]
\centering
\begin{tikzpicture} [scale= 1] 
\draw (-3,0) circle (1cm);
\draw (3,0) circle (1cm);
\draw [|->] (-0.5,0) -- (0.5,0);
\draw[->,thick] (-3.5,0)--(-2.5,0);
\draw[fill] (-3.5,0) circle [radius=0.045];
\draw[fill] (-2.5,0) circle [radius=0.045];
\draw[densely dotted, blue] (-4.5,0) -- (-3.5,0);
\draw[densely dotted,red] (-2.5,0) -- (-1.5,0);

\draw [densely dotted, blue] (2.05,0) arc (180:360:0.725);
\draw[densely dotted, blue] (2.05,0) -- (1.51,0);

\draw [densely dotted, red] (3.95,0) arc (0:180:0.725);
\draw[densely dotted,red] (3.95,0) -- (4.49,0);

\draw [<-|](2.5,0)--(3.5,0);
\draw[fill] (3.5,0) circle [radius=0.045];
\draw[fill] (2.5,0) circle [radius=0.045];

\node [above] at (-3,0) {};
\node [below] at (-3.5,0) { {\scriptsize $j$}};
\node [below] at (-2.5,0) {{\scriptsize $j+1$}};

\end{tikzpicture}
\caption{A half twist $t^A_j$ (similarly $t^B_j$).} \label{halftwist}
\end{figure}
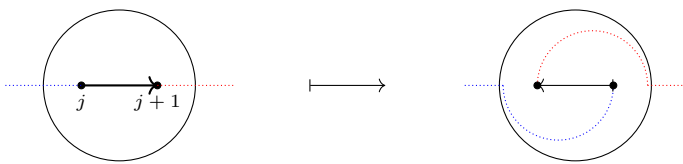

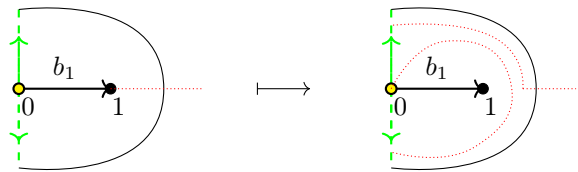
\begin{figure}[H]  
\centering
\begin{tikzpicture} [scale=0.7]
\draw[thick,green, dashed]  (2.525,7)--(2.525,5.5);
\draw plot[smooth, tension=2.25]coordinates {(2.525,4) (5.25,5.5) (2.525,7)};
\draw[thick, green, dashed]   (2.525,5.5)--(2.525,4);
\draw[thick, green, dashed, ->]   (2.525,5.5)--(2.525,4.5);
\draw[thick,  green, dashed, ->]   (2.525,5.5)--(2.525,6.5);
\draw[->,thick] (2.525,5.5)--(4.25,5.5);
\draw[fill] (4.25,5.5) circle [radius=0.1]  ;
\filldraw[color=black!, fill=yellow!, thick]  (2.525,5.5) circle [radius=0.1];
\draw[densely dotted,red] (4.25,5.5) -- (6,5.5);

\node [above] at (3.3875,5.5) {$b_1$};
\node[below] at  (2.7,5.5) {0};
\node[below] at  (4.4,5.5) {1};

\draw [|->] (7,5.5) -- (8,5.5);

\draw[thick,green, dashed]  (9.525,7)--(9.525,5.5);
\draw plot[smooth, tension=2.25]coordinates {(9.525,4) (12.25,5.5) (9.525,7)};
\draw[thick, green, dashed]   (9.525,5.5)--(9.525,4);
\draw[thick, green, dashed, ->]   (9.525,5.5)--(9.525,4.5);
\draw[thick,  green, dashed, ->]   (9.525,5.5)--(9.525,6.5);
\draw[->,thick] (9.525,5.5)--(11.25,5.5);
\draw[fill] (11.25,5.5) circle [radius=0.1]  ;
\filldraw[color=black!, fill=yellow!, thick]  (9.525,5.5) circle [radius=0.1];
\draw[densely dotted,red] (12,5.5) -- (13,5.5);

\draw[densely dotted,red] plot[smooth, tension=1]coordinates {(12,5.5) (11.4, 6.5) (9.525,6.7)};

\draw[densely dotted,red] plot[smooth, tension=1]coordinates {(9.525,5.5) ( 10.55 ,6.4) (11.75,5.7) (11.1,4.4) (9.525,4.3)};

\node [above] at (10.3875,5.5) {$b_1$};
\node[below] at  (9.7,5.5) {0};
\node[below] at  (11.4,5.5) {1};

\end{tikzpicture}
\caption{{\small A full twist $(t^B_{b_1})^2$.}}\label{fulltwist}
\end{figure}

\subsubsection{Injection of $\MCGB$ into $\MCGA$ } \label{InjSec}

A diffeormorphism $f^B$ in $\Diff(\DB,\{0\})$ can be lifted to a unique fiber-preserving diffeomorphism $f^A$ in $\Diff(\DA)$ via the branched covering map $q_{br}.$
Similarly,  an isotopy in $\DB $ can be lifted to an isotopy in $\DA \bsz.$ 
   As such, we have a well-defined map $\Psi$ on the mapping class groups from MCG$\left(\DB, \{0\} \right)\ra$ MCG$ \left(\DA\right)$ defined by lifting the mapping class of $f^B$ to the mapping class of $f^A$. 
   More concretely, using the standard presentation of the groups, $\Psi$ is given by $\sigma^B_1$ mapping to $\sigma^A_n$ and $\sigma^B_{j}$ mapping to $\sigma^A_{n+j-1} \sigma^A_{n-(j-1)}$ for $j \geq 2$.
   In fact, the image of the map $\Psi$ is generated by  fiber-preserving mapping classes in $\MCG_p \left({\DA} \right).$ 
   By \cite[Theorem 1]{BH}, we know that any fibre-preserving diffeomorphism $f^A$ which is isotopic to the identity possesses a fiber-preserving isotopy to the identity, which can then be projected to $\DB$ to get the isotopy $f^B \simeq id$. 
   Therefore, we have the following:

\begin{proposition} \label{injB}
The homomorphism $\Psi: \MCG \left(\D^B_{n+1}, \{0\} \right)\ra$ MCG$\left(\D^A_{2n} \right)$ defined by
$$ \Psi([t^B_{b_i}]) = \begin{cases} 
      [t^A_{b_n}] & \text{for }i = 1; \\
      \left[t^A_{b_{n+i-1}} t^A_{b_{n-(i-1)}} \right] & \text{for } i \geq 2, \\
   \end{cases}
\text{ \quad is injective.}$$
\end{proposition}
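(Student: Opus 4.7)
The plan is to build $\Psi$ via the branched covering $q_{br}\colon \DA \to \DB$ and then extract injectivity from the Birman--Hilden theorem \cite{BH}. First I would pin down the well-definedness of the lift. Given $f^B \in \Diff(\DB, \partial\DB; \{0\})$, the restriction $q_{br}|_{\partial \DA}\colon \partial \DA \to \partial \DB$ is an ordinary (unbranched) double cover of circles, so the identity on $\partial \DB$ has two lifts on $\partial \DA$. I would single out the canonical lift of $f^B$ as the unique $f^A$ fixing $\partial \DA$ pointwise; since $f^B$ fixes the branch value $\{0\}$, the lift extends continuously over the branch locus and is automatically fiber-preserving by uniqueness (the deck involution $r$ commutes with it). The same procedure applied to a smooth family $\{f^B_t\}$ yields a lifted isotopy $\{f^A_t\}$, so $\Psi$ descends to $\pi_0$ and is a group homomorphism.

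Next I would verify the formulas for the generators by direct geometric inspection. For $i \geq 2$, the arc $b_i \subset \DB$ joins the marked points $i-1$ and $i$, neither of which is the branch point, so its preimage under $q_{br}$ is a disjoint union of two arcs in $\DA$: one joining $-(i-1)$ to $-i$ and the other joining $i-1$ to $i$, which are $b_{n-(i-1)}$ and $b_{n+i-1}$ in the enumeration of \cref{Disk}. The half twist $t^B_{b_i}$ is supported in a small tubular bigon around $b_i$ away from $\{0\}$, so its lift is supported in the two disjoint tubular preimage bigons and restricts to a half twist on each; these commute as their supports are disjoint, yielding $\Psi(\sigma^B_i) = [t^A_{b_{n+i-1}} t^A_{b_{n-(i-1)}}]$. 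For $i = 1$, the arc $b_1$ passes through the branch point $\{0\}$, so its preimage is the single arc $b_n$ joining $-1$ to $+1$. The normalisation $f^A|_{\partial \DA} = \id$ forces a $2\pi$-rotation downstairs to lift to a $\pi$-rotation upstairs (the branched cover halves the rotation angle), so $(t^B_{b_1})^2$ lifts to the single half twist $t^A_{b_n}$, giving $\Psi(\sigma^B_1) = [t^A_{b_n}]$.

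Finally I would establish injectivity. Suppose $[f^B] \in \ker\Psi$, so its lift $f^A$ is isotopic to the identity in $\DA$ via some isotopy $\{g^A_t\}$ that need not be fiber-preserving. I would then invoke \cite[Theorem 1]{BH}: any fiber-preserving diffeomorphism of our branched cover that is isotopic to the identity is in fact fiber-preservingly isotopic to the identity. Projecting such a fiber-preserving isotopy $\{F^A_t\}$ down along $q_{br}$ yields an isotopy in $\Diff(\DB, \partial\DB; \{0\})$ from $f^B$ to $\id$, so $[f^B]$ is trivial.

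The main obstacle is precisely this last step: upgrading an arbitrary isotopy of $f^A$ to $\id$ to a fiber-preserving one. Without Birman--Hilden this would be genuinely subtle, since one would have to globally straighten an isotopy so that it commutes with the deck involution $r$; with \cite{BH} in hand, the rest of the argument is routine bookkeeping with the branched cover.
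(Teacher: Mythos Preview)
Your proposal is correct and follows essentially the same approach as the paper: construct $\Psi$ by lifting diffeomorphisms and isotopies through the branched cover $q_{br}$, identify the images of the generators geometrically, and deduce injectivity by invoking \cite[Theorem~1]{BH} to upgrade an arbitrary isotopy of the lift to a fiber-preserving one, then project it down. The paper's argument (given in the paragraph preceding the proposition) is more terse, while you spell out the uniqueness of the lift and the generator computations in greater detail, but the strategy and the key input from Birman--Hilden are identical.
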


\subsection{Curves and geometric intersection numbers} \label{diskcurves}

Let $(\D,\Delta)$ be a surface as in \cref{gendefmap}.
	A \textit{curve} $c$ in $(\D,\Delta)$ is a subset of $\D$ that is either a simple closed curve in the interior $\D^o = \D \bs (\partial \D \cup \Delta)$ of $\D$ and essential (non-nullhomotopic in $\D^o$), or the image of an embedding $\gamma:[0,1] \ra \cS$ which is transverse to the boundary $\partial \D $ of $\D   $ 
with its endpoint lying in $\partial \D \cup \Delta$, that is, $\gamma^{-1}( \partial \D \cup \Delta) = \{0,1\}$.
	In this way, our defined curves are smooth and unoriented.
	A \textit{multicurve} in $(\D,\Delta)$ is the union of a finite collection of disjoint curves in $(\D,\Delta)$.
	We say two curves $c_0$ and $c_1$ are \emph{isotopic} if there exists an isotopy in $\Diff(\D, \partial \D; \Delta)$ deforming one into the other, denoted by $c_0 \simeq c_1.$
	In this way, the points on $\partial \D \cup \Delta$ may not move during an isotopy.
	Therefore, we can partition all curves in $(\D,\Delta)$ into isotopy classes of curves. 
	Two multicurves $\fc_0, \fc_1$ are isotopic if they have the same number of disjoint curves, and each curve in $\fc_0$ is isotopic to one and only one curve in $\fc_1$.   
	Two curves $c_0,c_1$ are said to have \emph{minimal intersection} if given two intersection points $z_- \neq z_+$ in $c_0 \cap c_1,$ the two arcs $\alpha_0 \subset c_0$,  $\alpha_1 \subset c_1$  with endpoints $z_1 \neq z_+$ such that $\alpha_0 \cap \alpha_1 = \{z_-, z_+\}$ do not form an empty bigon (that is, it contains no marked points) unless $z_-,z_+$ are marked points.
	Two multicurves $\fc_0, \fc_1$ are said to have \emph{minimal intersection} if any two curves $c_0 \subseteq \fc_0$ and  $c_1 \subseteq \fc_1$ have minimal intersection.

Observe that given two arbitrary curves $c_0,c_1$ in $(\D,\Delta),$ we can always find a curve $c'_1 \simeq c_1$ such that $c_0$ and $c'_1$ have minimal intersection.
Given two curves $c_0$ and $c_1$ in $(\D,\Delta),$ we define the \emph{geometric intersection numbers} $I(c_0,c_1) \in \frac{1}{2} \Z$ as follows:
\begin{equation}\label{geoint}
 I(c_0,c_1) =  \left\{
\begin{array}{ll}
      2, & \text{if $c_0,c_1$ are closed and isotopic,} \\
      |(c_0 \cap c_1')\backslash \Delta | + \frac{1}{2}|(c_0 \cap c_1')\cap \Delta |, & \text{if $c_0 \cap c_1'\cap \partial \D = \emptyset $.}  
\end{array} 
\right. 
\end{equation} 

Together with \cite[Lemma 3.2]{KhoSei} and \cite[Lemma 3.3]{KhoSei}, the definition is indeed independent of the choice of $c_1'$.
Moreover, note that  the definition above doesn't depend on the orientation of $\D$ and is symmetric.
    We can extend the definition of geometric intersection numbers for multicurves by just adding up the geometric intersection numbers of each pair of curves $c_0\subseteq \fc_0$ and $c_1 \subseteq \fc_1.$

      

\subsection{Trigraded curves in $\DB$} \label{tribundle}

   Let us remind the reader that the disk $\DA$ has the set of marked points $\Delta = \{-n, \hdots,-1,1, \hdots,n\}$ and the disk $\DB$ has the set of marked points $\Lambda = \{0,1, \hdots,n\}.$
   We introduce another set of marked points $ \Delta_0 = \Delta \cup \{0\}$ in the same disk $\DA$.
   Fix the notation as follows: $ \RDB := PT \left(\DB \setminus \Lambda \right),$ and $\RDAz := PT \left(\DA \setminus \Delta_0 \right) $ where $PT(\cdot)$ is the real projectivisation of the tangent bundle of the respective disks by taking an oriented trivilisation of its tangent bundle.
   We can then identify $\RDAz \cong \RP \times \left( \DA\setminus \Delta_0 \right).$
    In $\DA,$ pick a small loop $\lambda_i$ winding positively around the puncture $\{i\}$ for every marked points $i $ in $ \Delta.$
  In this way, the classes $[point \times \lambda_i]$ and $[\RP \times point]$ form a basis of $H_1 (\RDAz; \Z).$
	We can then build the covering space $\wRDAz$ of $\RDAz$ classified by the cohomology class $C_0 \in H^1(\RDAz; \Z \times \Z)$ defined as follows:
	\begin{align}
\label{l0} C_0([point \times \lambda_0]) &= (0,0); \\ 
\label{20} C_0([point \times \lambda_i]) &= (-2,1) \ \text{for } i= -n, \cdots, -1, 1, \cdots, n; \\
\label{30} C_0([\R \text{P}^1 \times point]) &= (1,0).
\end{align}

In fact, $\wRDA$ is a covering for $\RDB$ such that with group of deck transformation $\Z \times \Z \times \Z / 2\Z$ as explained in the following lemma.

\begin{lemma} \label{finalcovering} ~ 
\begin{enumerate}
\item  Under the action of the rotation group $\calR$ generated by the half rotation $r,$ the quotient map 
$q : \DAz   \ra   \DB \setminus \Lambda  $
 is a normal covering space with deck transformation group $\calR \cong \Z/2 \Z$.
\item The composite $\wRDAz \xra{\fp} \RDAz \xra{\fq} \RDB $
 is a normal covering where $\fq$ is the normal covering map induced by  the quotient map $q$ on the disk component and identity map on the $\RP$ component.

\item The group of deck transformations for the covering space $\wRDAz \xra{\fp \circ \fq}  \RDB $ is 
$\Z \times \Z \times \Z / 2\Z.$
\end{enumerate}
\end{lemma}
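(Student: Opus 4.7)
For part (1), I would first observe that the rotation $r(x) = -x$ has $0$ as its unique fixed point on $\DA$, and $0 \in \Delta_0$ has been removed from $\DAz$. Hence $\calR \cong \Z/2\Z$ acts freely on $\DAz$, and being a finite group action by diffeomorphisms it is automatically properly discontinuous. Since $\Delta_0/\calR = \Lambda$ under the branched cover $q_{br}$, the orbit space is $\DAz/\calR = \DB \setminus \Lambda$, so $q$ is a normal covering with deck group $\calR$.

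For part (2), first note that the differential $dr = -I$ acts trivially on $\RP$, so the induced action on $\RDAz \cong \RP \times \DAz$ is $\rho = \id_{\RP} \times r$. This is still free and properly discontinuous with orbit space $\RP \times (\DB \setminus \Lambda) = \RDB$, making $\fq$ a normal covering with deck group $\Z/2\Z$. For $\fp \circ \fq$ to itself be a covering, I would need to lift $\rho$ across the $\Z \times \Z$-cover $\fp$; the obstruction is $\rho^* C_0 - C_0 \in H^1(\RDAz; \Z^2)$, which I check vanishes by noting that $\rho$ fixes $[\RP \times \mathrm{pt}]$ and $[\lambda_0]$ and swaps $[\lambda_i] \leftrightarrow [\lambda_{-i}]$ for $i \neq 0$, while $C_0$ takes equal values on each such orbit by \eqref{l0}--\eqref{30}. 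Hence $\rho$ lifts to a diffeomorphism $\tilde\rho$ of $\wRDAz$.

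For part (3), let $G$ denote the deck group of $\fp \circ \fq$. Every element of $G$ covers a deck transformation of $\fq$, giving a surjection $G \twoheadrightarrow \Z/2\Z$ whose kernel is the deck group $\Z^2$ of $\fp$, yielding a short exact sequence $1 \to \Z^2 \to G \to \Z/2\Z \to 1$. To show $G \cong \Z^2 \times \Z/2\Z$ it suffices to find a lift $\tilde\rho$ with $\tilde\rho^2 = \id$. For any initial lift, $\tilde\rho^2$ is some $(a,b) \in \Z^2$; since $\rho$ acts trivially on the deck group (because $\rho^* C_0 = C_0$), replacing $\tilde\rho$ by $(c,d) \cdot \tilde\rho$ changes $\tilde\rho^2$ to $(a+2c,\, b+2d)$, so splitting amounts to $(a,b) \in 2\Z^2$. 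I would compute $(a,b)$ by monodromy: pick a basepoint $x_0 \in \DAz$ off the imaginary axis, let $\gamma_0$ be the upper-half arc in $\DAz$ from $x_0$ to $-x_0 = r(x_0)$ going around $0$, and set $\gamma = (\mathrm{const}_{\RP},\, \gamma_0)$. Then the concatenation $\gamma * \rho\gamma$ equals $(\mathrm{const}_{\RP},\, \gamma_0 * r(\gamma_0))$, whose disk factor is a full small loop around $0$, namely $\lambda_0$. Thus $\tilde\rho^2$ equals the monodromy class $C_0(\lambda_0) = (0,0)$ by \eqref{l0}. Hence $\tilde\rho^2 = \id$ already for this lift, the extension splits, and $G \cong \Z \times \Z \times \Z/2\Z$.

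The hard part is the splitting step in (3). The calculation becomes short once one picks the arc $\gamma_0$ going around $0$, but it highlights why the cohomology class $C_0$ has been rigged so that $\lambda_0$ carries weight $(0,0)$ -- precisely so that the monodromy obstruction $\tilde\rho^2$ vanishes in $\Z^2/2\Z^2$ and the trigrading group emerges as a genuine direct product rather than a non-split extension.
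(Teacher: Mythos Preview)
Your argument is correct and follows the same skeleton as the paper's: both produce the extension $1 \to \Z^2 \to G \to \Z/2\Z \to 1$ (you via deck transformations, the paper via the quotient $\pi_1(\RDB)/(\fq_*\fp_*)\pi_1(\wRDAz)$) and split it using $C_0([\text{pt}\times\lambda_0])=(0,0)$. Your monodromy computation of $\tilde\rho^2$ is actually more explicit than the paper, which simply asserts that the resulting short exact sequence of abelian groups splits without writing down a section; your path $\gamma_0 * r(\gamma_0) \simeq \lambda_0$ is precisely what justifies that step. One minor wrinkle in your exposition of (2): the composite $\fq\circ\fp$ is automatically a covering map (the intermediate cover $\fq$ has finite fibres over a manifold base), so lifting $\rho$ is what you need for \emph{normality} and for the deck-group calculation in (3), not for the composite to be a covering per se.
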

\begin{proof} The proofs of $(1)$ and $(2)$ are straightforward and we leave them to the reader. 

We will now prove $(3).$
 Since the covering space is normal, it is enough to show that 
$$\frac{\pi_1 \left(\RDB \right)} {q_* \left(p_* \left(\pi_1 \left( \RDAz \right)\right)\right)} \cong \Z \times \Z \times \Z / 2\Z.$$
 Observe that we have the following short exact sequences:
\begin{center}
 \begin{tikzcd}[column sep = 4mm,  row sep = 2mm] 
 & & & 1 \arrow[rd] & & 1 \\
 & & 1 \arrow[rd] &  & {\Z \times \Z} \arrow[ru] \arrow[rrdd,bend left, "q_*"]\\
  & &   &\pi_1(\RDAz) \arrow[rd, hook, "q_*" ]  \arrow[ru, "C"] \\
 1 \arrow[rr, hook] & & \pi_1 ( \wRDAz ) \arrow [ru, hook, "p_* "] \arrow[rr, hook, "q_* \circ p_* "]  & & \pi_1(\RDB) \arrow[rr] \arrow[rd, "\lambda_0 \mapsto 1"'] & &  Q
 \arrow[r] \arrow[dl, bend left] & 1\\
1 \arrow[rru, hook] & & & & & \Z / 2 \Z \arrow[rd] \arrow[ld] \\
& & & &1 & & 1 
 \end{tikzcd}
\end{center}
After that, we see that the group $Q$ is abelian since if you take a commutator in $\pi_1 \left(\RDB \right),$ it lifts to a commutator in $\pi_1 \left(\RDAz \right)$ which evaluates to zero by $C_0$ meaning it is in the image of $q_* \circ p_*.$

	Finally, we see that the rightmost short exact sequence of abelian group 
	$$ 1 \ra \Z \times \Z \ra Q \ra \Z/ 2\Z \ra 1$$
splits giving the isomorphism $Q \cong \Z \times \Z \times \Z / 2  \Z$ as desired.
\end{proof}

   Note that every $f \in$ Diff$ \left(\DB, \{0\} \right)$ preserves the class $C$ and therefore can be lifted to a unique equivariant diffeomorphism $\check{f}$ of $\wRDAz$ that acts trivially on the fibre of $\wRDAz$over all points in $T_z\DB$ for $z \in {\partial \DB}$.
   Furthermore, every curve $c$ in $\DB$ admits a canonical section $s_c : c \backslash \Lambda \ra \wRDAz$ defined by $s_c(z) = T_z c$ which is obtaining the class in every fiber of its tangent line. 
     We define a \textit{trigrading} of $c$ to be a lift $\check{c}$ of $s_c$ to $\wRDAz$  and a \textit{trigraded curve} to be a pair $(c, \check{c})$ consisting a curve and a trigrading; we will often just write $\check{c}$ instead of $(c, \check{c})$ when the context is clear.
    We denote the $\Z \times \Z \times \Z / 2\Z$-action on $\wRDAz$ by $\chi^B.$
     On top of that, we can easily extend the notion of isotopy to the set of trigraded curves where $\chi^B$ and $ \MCG \left( \DB, \{0\} \right) $ have induced actions on the set of isotopy classes of trigraded curves.
    In particular, for $[f] \in $ $\cA(B_n) \cong$ MCG$\left(\DB, \{0\} \right)$ and a trigraded curve $\check{c}$, $ [\check{f}] (\check{c}) :=  \check{f} \circ \check{c} \circ f^{-1} : f(c) \setminus \Lambda \ra \wRDAz. $

\begin{lemma}\label{freeact} ~
\begin{enumerate}
\item A curve $c$ admits a trigrading if and only if it is not a simple closed curve. 

\item  The $\Z \times \Z \times \Z \backslash 2\Z-$action on the set of isotopy classes of trigraded curves is free. 
	Equivalently, a trigraded curve $\check{c}$ is never isotopic to $\chi(r_1,r_2,r_3) \check{c}$ for any $(r_1,r_2,r_3)\neq 0.$
\end{enumerate}

\end{lemma}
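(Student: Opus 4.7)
The key observation underlying both parts is that a trigrading of $c$ is exactly a lift of the canonical section $s_c: c\setminus\Lambda \to \RDB$ along the covering $\wRDAz\to\RDB$ established in \cref{finalcovering}. My plan is to analyse this lifting problem in each part.

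For part $(1)$, I would first handle the non-simple-closed case: if $c$ is an embedded arc with endpoints on $\partial\DB\cup\Lambda$, then $c\setminus\Lambda$ is a disjoint union of contractible open arcs; since $\pi_1(c\setminus\Lambda)=0$, the section automatically lifts. For the essential simple closed case, I would compute the image of $[s_c]\in\pi_1(\RDB)$ under the classifying homomorphism $\pi_1(\RDB)\to\Z\times\Z\times\Z/2\Z$ of the covering. Using that the tangent line traverses $\RP$ twice per full rotation of the tangent vector (via the double cover $S^1\to\RP$), one has $[s_c]=2[\RP\times\text{pt}]+\sum_{i\in\Lambda}n_i[\text{pt}\times\lambda_i]$, where $n_i$ is the winding number of $c$ about the puncture $i$. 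Tracing through the short exact sequence in the proof of \cref{finalcovering}, the generators evaluate as $[\RP\times\text{pt}]\mapsto(1,0,0)$, $[\lambda_i]\mapsto(-2,1,0)$ for $i\neq 0$, and $[\lambda_0]\mapsto(0,0,1)$; the $\Z/2\Z$-contribution from $\lambda_0$ is forced because $\lambda_0$ lifts to an arc (not a loop) in $\DA\setminus\{0\}$, while only $\lambda_0^2$ closes up and maps to $(0,0)$ under $C_0$. The image of $[s_c]$ is therefore $\bigl(2-2\sum_{i\neq 0}n_i,\ \sum_{i\neq 0}n_i,\ n_0\bmod 2\bigr)$, and the first two coordinates cannot vanish simultaneously, so no lift exists.

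For part $(2)$, suppose for contradiction that $\check c\simeq\chi(r_1,r_2,r_3)\check c$ with $(r_1,r_2,r_3)\neq 0$; by definition this gives an ambient isotopy $\{f_t\}$ in $\Diff(\DB,\partial\DB;\{0\})$ from $f_0=\text{id}$ to some $f_1$ with $f_1(c)=c$ setwise and $\check f_1\circ\check c\circ f_1^{-1}=\chi(r_1,r_2,r_3)\check c$, where $\check f_t$ denotes the canonical lift. Since $\check f_1(\check c)$ and $\check c$ are two lifts of the same section $s_c$ and the deck group acts freely on fibres, agreement at any single point of $c\setminus\Lambda$ forces $(r_1,r_2,r_3)=0$. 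If $c$ has an endpoint $z_0\in\partial\DB$, then $f_t(z_0)=z_0$ throughout, and the restriction of $\check f_t$ to the discrete fibre over $z_0$ is a continuous path of bijections starting at the identity, hence constant at the identity; evaluating at $z_0$ yields the required agreement and the contradiction.

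For the remaining case where both endpoints of $c$ lie in $\Lambda$, I would fix an interior point $z_0\in c\setminus\Lambda$ and exhibit a nullhomotopic loop in $\RDB$ realising the deck element $(r_1,r_2,r_3)$: concatenate the projection of the continuous path $t\mapsto\check f_t(\check c(z_0))$ with the projection of $s\mapsto\check c(\alpha^{-1}(s))$, where $\alpha\subset c\setminus\Lambda$ is a path from $z_0$ to $f_1(z_0)$ (such $\alpha$ exists because $f_1|_c$ is an orientation-preserving self-homeomorphism of the arc $c$ fixing its two endpoints, hence isotopic to the identity on $c$). The $2$-parameter family $(s,t)\mapsto s_{f_t(c)}(f_t(\alpha(s)))$ then fills this loop with a disc, so the loop is nullhomotopic in $\RDB$, forcing the deck element to be trivial and contradicting $(r_1,r_2,r_3)\neq 0$. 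The main technical obstacle is ensuring that the filling disc stays within $\RDB$ and does not run into tangent fibres over $\Lambda$; since $\alpha$ and all $f_t(\alpha)$ are compact and bounded away from $\Lambda$, this can be arranged uniformly in $(s,t)$.
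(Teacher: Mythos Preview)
The paper's own proof is just a citation to \cite[Lemmas~3.12 and~3.13]{KhoSei}; your proposal attempts to supply the details.

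Part~(1) is correct. For part~(2) in the boundary-endpoint case, the claim that $\check f_t$ acts as a self-bijection of ``the discrete fibre over $z_0$'' is imprecise: since $Df_t|_{z_0}$ need not be the identity (only $f_t|_{\partial\DB}$ is), the map $\check f_t$ carries the fibre over $(z_0,T_{z_0}c)$ to the fibre over $(z_0,Df_t(T_{z_0}c))$, which is a different fibre in general. The argument is rescued by observing that $Df_t|_{z_0}$ stays in the contractible subgroup of $GL_2^+(\R)$ preserving the boundary tangent line and the inward half-plane, so the projected loop $t\mapsto(z_0,Df_t(T_{z_0}c))$ is nullhomotopic in $\RP\times\{z_0\}\subset\RDB$ and its lift closes.

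The marked-point case has a genuine gap. Your square $F(s,t)=s_{f_t(c)}(f_t(\alpha(s)))$ has four nontrivial boundary sides: the $s=0$ and $t=0$ sides are the two arcs of your loop $\gamma$, but the $s=1$ side is $t\mapsto Df_t(s_c(f_1(z_0)))$ and the $t=1$ side is $s\mapsto s_c(f_1(\alpha(s)))$, both running from $s_c(f_1(z_0))$ to $s_c(f_1^2(z_0))$. The square therefore only exhibits a free homotopy from $\gamma$ to the analogous loop $\gamma'$ built with $z_0$ replaced by $f_1(z_0)$ and $\alpha$ by $f_1\circ\alpha$ --- an infinite regress, not a nullhomotopy. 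Closing this gap requires further input: for instance one may first modify the isotopy so that $f_1|_c=\mathrm{id}_c$ (possible since $f_1|_c$ fixes both marked endpoints and preserves orientation on $c$, and an ambient extension of such an isotopy preserves $c$ throughout and hence does not change $r$), which collapses the extra sides; but even then one must control the winding of $t\mapsto Df_t|_j$ at the marked endpoint $j$, which your proposal does not address.
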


 \begin{proof}
    This is essentially the same proof as in \cite[Lemma 3.12 and 3.13]{KhoSei}.
     \end{proof}

\begin{lemma} \label{action} ~
\begin{enumerate}
\item Let $c$ be a curve in $\DB$ which joins two points of $\Lambda \backslash \{0\},$ $t_c \in \MCG \left(\DB, \{0\} \right)$ the half twist along it, and $\check{t}_c$ its preferred lift to $\wRDAz.$
	Then, $\check{t}_c(\check{c}) = \chi^B(-1,1,0) \check{c}$ for any trigrading $\check{c}$ of $c$.
	\item Let $c$ be a curve in $\DB$ which joins two points of $\Lambda$ with one of them being $\{0\},$ $t_c \in \cG^B$ the full twist along it, and $\check{t}_c$ its preferred lift to $\wRDAz.$
	Then, $\check{t}_c(\check{c}) = \chi^B(-1,1,1) \check{c}$ for any trigrading $\check{c}$ of $c$.
	
\end{enumerate}     
\end{lemma}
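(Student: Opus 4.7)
The plan is to reduce each part to the type $A$ computation of \cite{KhoSei} (Lemma 3.14) via the branched cover $q_{br}\colon \DA \to \DB$, using the fact that both statements are purely local. Since any two trigradings of a curve $c$ differ by a deck transformation $\chi^B(a,b,d)$, and the preferred lift $\check{t}_c$ commutes with the $\chi^B$-action, it suffices to verify each claim for one convenient trigrading of a standard model curve.

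For part (1), since $c$ joins two points in $\Lambda \setminus \{0\}$, we may isotope $c$ so that it avoids a neighbourhood of $\{0\}$. Its preimage $q^{-1}(c)$ then consists of two disjoint curves $c_1, c_2 \subset \DA$, each joining two points of $\Delta$ and exchanged by the deck involution $r$. Under the lift $\Psi$ of \cref{injB}, the half twist $t_c$ lifts to the composite half twist $t_{c_1}^A t_{c_2}^A$ in $\DA$, and this composite further lifts to the preferred lift $\check{t}_c$ of $\wRDAz$. Applying the type $A$ statement \cite[Lemma 3.14]{KhoSei} to each of $c_1$ and $c_2$ (with their induced bigradings from $\check c$) shows that $t_{c_i}^A$ shifts the $\Z \times \Z$-lift by $(-1,1)$, and since $c$ does not enclose the branch fibre over $\{0\}$, no extra monodromy appears in the third $\Z/2\Z$ coordinate. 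This yields $\check{t}_c(\check c) = \chi^B(-1,1,0)\check c$.

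For part (2), where $c$ joins $\{0\}$ to another marked point $p \in \Lambda \setminus \{0\}$, the preimage $q^{-1}(c)$ is now a single connected arc $\tilde c \subset \DA$ from $-p$ to $p$, meeting every neighbourhood of $0$ but not containing it. The crucial topological observation is that the \emph{full} twist $t_c^2$ in $\DB$ around a curve incident to the fixed point $\{0\}$ lifts to a \emph{half} twist $t_{\tilde c}^A$ in $\DA$ along $\tilde c$. Applying \cite[Lemma 3.14]{KhoSei} to $t_{\tilde c}^A$ again gives a shift by $(-1,1)$ in the first two coordinates of $\wRDAz$. However, because the lift $\tilde c$ passes on one side of $\{0\}$ and the full twist traverses a small loop enclosing $\{0\}$ downstairs in $\DB$, one picks up an extra contribution $\chi^B(0,0,1)$ coming from the small loop $\lambda_0$, which is precisely the generator of the $\Z/2\Z$ deck summand in \cref{finalcovering}. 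Combining the two contributions yields $\check{t}_c(\check c) = \chi^B(-1,1,1)\check c$.

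The main obstacle will be rigorously extracting the $\Z/2\Z$ contribution in part (2). This requires a careful local computation in a neighbourhood of $\{0\}$, most cleanly carried out by choosing the concrete model curve $c = b_1$ (the horizontal arc from $0$ to $1$) and explicitly parametrising the full twist $(t^B_{b_1})^2$ depicted in \cref{fulltwist}, tracking how its lift to $\wRDAz$ winds around the class $[\mathrm{point} \times \lambda_0]$ relative to the preferred normalisation on $\partial \DB$. Because both $\check{t}_c(\check c)$ and $\chi^B(-1,1,1)\check c$ are trigradings of the same underlying curve $t_c(c) = c$, what must be checked is that their difference, a priori some element $\chi^B(a,b,d)$, is the identity, and the explicit model reduces this to reading off the cocycle $C$ on one basic loop.
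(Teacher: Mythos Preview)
Your final paragraph is essentially the paper's proof: both parts come down to evaluating the classifying cocycle on one explicit loop, and this is exactly what the paper does. For part (1) the paper simply says the argument of \cite[Lemma~3.14]{KhoSei} goes through verbatim (the local picture near $c$ never sees the puncture $0$, so the third coordinate is trivially $0$). For part (2), rather than factoring through the branched cover, the paper runs the Khovanov--Seidel reference-path argument directly in the trigraded setting: take a path $\beta$ from $\partial\DB$ to the fixed point of $t_c$ on $c$, form the closed loop $\kappa$ in the projectivised tangent bundle by concatenating $Dt_c(\R\beta')$ with the reverse of $\R\beta'$, and read off from the picture that $[\kappa] = [\RP\times\text{point}] + [\text{point}\times\lambda_0] + [\text{point}\times\lambda_j]$. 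Evaluating the cocycle gives $(1,0,0)+(0,0,1)+(-2,1,0)=(-1,1,1)$ in one stroke.

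Your intermediate factoring via $\tilde c$ and the half twist $t_{\tilde c}^A$ in $\DA$ is a genuinely different and conceptually appealing route: the first two coordinates do descend from the bigraded computation because $C_0$ is the pullback of $C$ along $\RDAz\to\RDA$, and the third coordinate can be seen from the fact that $t_{\tilde c}^A$ interchanges the two halves of $\tilde c\setminus\{0\}$, hence the two sheets of the $\Z/2\Z$ cover over $c\setminus\{0\}$. Two caveats: first, $\tilde c = q_{br}^{-1}(c)$ \emph{does} contain $0$ (it is the unique preimage of the branch point), contrary to what you wrote; second, turning the sheet-swap observation into a rigorous statement about the trigrading shift essentially requires either the compatibility machinery around the map $\mathfrak m$ (developed later in the paper) or the very loop computation you defer to at the end. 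The paper's single-loop computation is thus more economical and logically self-contained at this point in the exposition.
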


\begin{proof}
The proof of $(1)$ is as in \cite[Lemma 3.14]{KhoSei}.
 We will now prove $(2).$ 
 Let $\beta: [0,1] \ra \DB \setminus \Lambda$ be an embedded vertical smooth path from a point $\beta(0) \in \partial\DB$ to the fixed point $\beta(1) \in c$ of $t_c.$
  Note that we have $\check{t}_c(\check{c}) = \chi(r_1, r_2, r_3) \check{c}$ as $t_c(c)= c.$
  Consider the closed path $\kappa:[0,2] \ra \RDAz$ given by 
  $ \kappa(t) = \begin{cases} 
      Dt_c(\R\beta'(t)), & \text{ if } t \leq 1, \\
      \R\beta'(2-t), & \text{ if } t \geq 1, \\
   \end{cases}
$
where $\R\beta'(s) \subset T_{\beta(s) }\DA$. The above situation is illustrated in \cref{ft0}.
    Then,  we compute 
$(r_1,r_2,r_3) = C([\kappa]) = C([\RP \times points]) + C([points \times \lambda_0] + C([points \times \lambda_j]  ) = (-1,1,1).$
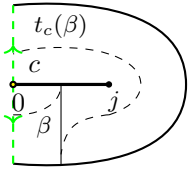
\begin{figure}[H]  
\centering
\begin{tikzpicture} [scale=0.35]

\draw[thick] plot[smooth, tension=2.25]coordinates {(0,-.5) (6.5,2.5) (0,5.5)};
\draw[thick,green, dashed]  (0,5.5)--(0,3.875);
\draw[thick, green, dashed]   (0,-.5)--(0,1.125);
\draw[thick, green, dashed, ->]   (0,2.5)--(0,4);
\draw[thick,  green, dashed, ->]   (0,2.5)--(0,1);
\draw[very thick]   (0,2.5)--(3.6,2.5);
\draw[fill] (3.6,2.5) circle [radius=0.1]  ;
\draw (1.8, -.55) -- (1.8, 2.5);
\draw [dashed] plot[smooth, tension=1.3]coordinates { (3.6,1.35)  (4.8,2.5) (3.6, 3.65)};
\draw [dashed] plot[smooth, tension=1.3]coordinates { (3.6, 3.65) (1.7,3.9) (0, 3.65)};
\draw [dashed] plot[smooth, tension=1.1]coordinates { (0, 1.35) (1.3, 1.65) (1.8, 2.5)};
\draw [dashed] plot[smooth, tension=.85]coordinates { (3.6,1.35)  (2.3,.95) (1.8,-.15)};

\filldraw[color=black!, fill=yellow!, thick]  (0,2.5) circle [radius=0.1];

\node [above] at (0.8, 2.6) {$c$};
\node [left] at (1.8, .9) {{\small $\beta$}};
\node [above] at (1.8,3.9) {{\small $t_c({\beta})$}};
\node [below] at (0.2, 2.5) {$0$};
\node [below] at (3.8, 2.6) {{\small $j$}};
\end{tikzpicture}
\caption{{\small The action of full twist around curve joining $\{0\}$ and another point in $\Lambda.$}} \label{ft0}
\end{figure}
\noindent
\end{proof}

\subsection{Local index and trigraded intersection numbers} \label{triint}
Now, we define the local index (similar to \cite{KhoSei}) of an intersection between two trigraded curves. 
 Suppose $(c_0, \check{c_0})$ and $(c_1, \check{c_1})$ are two trigraded curves, and $z \in \DB \setminus \partial \DB$ is a point where $c_0$ and $c_1$ intersect transversally. Take a small circle $\ell \subset \DB \setminus \Lambda$ around $z$ and an embedded arc $\alpha : [0,1] \ra \ell $ which moves clockwise around $\ell$ such that $\alpha(0) \in c_0$ and $\alpha(1) \in c_1$ and $\alpha(t) \notin c_0 \cup c_1$ for all $t \in (0,1).$ 
 If $z \in \Delta,$ then $\alpha$ is unique up to a change of parametrisation, otherwise, there are two choices which can be told apart by their endpoints.
 	Then, take a smooth path $\kappa:[0,1] \ra \RDB$ with $\kappa(t) \in \left(\RDB \right)_{\alpha(t)}$ for all $t,$ going from $\kappa(0)= T_{\alpha(0)}c_0$ to $\kappa(1)= T_{\alpha(1)}c_1$ such that  $\kappa(t)= T_{\alpha(t)}\ell$ for every $t$. 
 	One can take $\kappa$ as a family of tangent lines along $\alpha$ which are all transverse to $\ell.$
 	After that, lift $\kappa$ to a path $\check{\kappa}:[0,1] \ra \wRDB$ with $\check{\kappa}(0) = \check{c}_0(\alpha(0));$
 	subsequently, there exists some $(\mu_1, \mu_2, \mu_3) \in \Z \times \Z \times \Z / 2\Z$ such that 
\begin{equation} \label{localpath}
\check{c}_1(\alpha(1)) = \chi^B(\mu_1, \mu_2, \mu_3) \check{\kappa}(1) 
\end{equation}
 	 as $\check{c}_1(\alpha(1))$ and $\check{\kappa}(1)$ are the lift of the same point in $\RDB.$ 
 	To this end, we define the local index of $\check{c}_0, \check{c}_1$ at $z$ as 
 	$$ \mu^{trigr}(\check{c}_0, \check{c}_1;z) = (\mu_1, \mu_2, \mu_3) \in \Z \times \Z \times \Z / 2\Z. $$
 	It is easy to see that the definition is independent of all the choices made.
 	
 	The local index has a nice symmetry property similarly in \cite[pg. 21]{KhoSei}, namely:

\begin{lemma} \label{locinsym}
If $(c_0,\check{c}_0)$ and $(c_1,\check{c}_1)$ are two trigraded curves such that $c_0$ and $c_1$ have minimal intersection, then 
\[ \mu^{trigr}(\check{c}_1, \check{c}_0;z) = \left\{
\begin{array}{ll}
     (1,0,0) - \mu^{trigr}(\check{c}_0, \check{c}_1;z), & \text{if } z \notin \Delta; \\
    (0,1,0) - \mu^{trigr}(\check{c}_0, \check{c}_1;z), & \text{if } z \in \Delta \backslash \{0\}; \\
     (1,0,1) - \mu^{trigr}(\check{c}_0, \check{c}_1;z), & \text{if } z \in \{0\}.\\
\end{array} 
\right. \]

\end{lemma}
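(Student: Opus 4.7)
The plan is to adapt the proof of the bigraded analog in \cite{KhoSei} to the trigraded setting. Given an intersection point $z \in c_0 \cap c_1$ with chosen clockwise arcs $\alpha$ (used in the definition of $\mu^{trigr}(\check{c}_0,\check{c}_1;z)$) and $\alpha'$ (used in the definition of $\mu^{trigr}(\check{c}_1,\check{c}_0;z)$), I will build a distinguished loop $\gamma$ in $\RDB$ based at $T_{q_0} c_0$ by concatenating the transverse tangent-line paths $\kappa$ over $\alpha$ and $\kappa'$ over $\alpha'$, and --- only in the case $z \notin \Delta$, where $\alpha \cup \alpha'$ covers only half of $\ell$ --- closing the loop by a return arc along $c_0$ through $z$ carrying the canonical section $Tc_0$.

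Unwinding \cref{localpath} piece by piece shows that lifting $\gamma$ to $\wRDAz$ starting at $\check{c}_0(q_0)$ ends at $\chi^B\bigl(-\mu^{trigr}(\check{c}_0,\check{c}_1;z)-\mu^{trigr}(\check{c}_1,\check{c}_0;z)\bigr)\,\check{c}_0(q_0)$, so the covering space correspondence furnishes the identity
\[
C([\gamma]) \;=\; -\,\mu^{trigr}(\check{c}_0,\check{c}_1;z)\;-\;\mu^{trigr}(\check{c}_1,\check{c}_0;z)
\]
in $\Z\times\Z\times\Z/2\Z$. It therefore remains to compute $[\gamma]\in H_1(\RDB)$ and evaluate $C$ on it in each of the three cases, which I would do by inspection of the local picture near $z$.

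When $z \notin \Delta$ the projection of $\gamma$ bounds a small half-disk free of marked points, so the only surviving contribution is the half-turn $-[\RP\times\mathrm{pt}]$ of the tangent direction along $\ell$, giving $C([\gamma])=(-1,0,0)$. When $z \in \Delta\setminus\{0\}$ the two arcs together cover $\ell$ once clockwise, so $[\gamma]=-[\mathrm{pt}\times\lambda_z]-2[\RP\times\mathrm{pt}]$ and $C([\gamma])=(0,-1,0)$. When $z=\{0\}$ the base loop is still one clockwise revolution around $0$ with the same tangent winding, but $C$ must be evaluated on $[\mathrm{pt}\times\lambda_0]$ with care: the third coordinate is nontrivial because $\lambda_0$ lifts only to a half-loop in $\DAz$, while the first two coordinates are pinned down by pushing forward $C_0([\mathrm{pt}\times\lambda_0^{\DAz}])=(0,0)$ together with the extra rotation $\arg(dq) = \arg(z)$ of tangent frames introduced by the branching; the upshot is $C([\gamma])=(-1,0,1)$. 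Substituting the three values of $C([\gamma])$ into the displayed identity and rearranging recovers the three formulas of the lemma.

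The main technical subtlety is the branched case $z=\{0\}$: a naive class count entirely within $\RDB$ produces the wrong $\Z\times\Z$ part, and the correct value is obtained only by accounting for how $dq$ twists tangent frames near the branch point so that the $(1,0,1)$-twist appears rather than $(2,0,1)$. The other two cases are formally the same as the bigraded symmetry computation in \cite{KhoSei}, so the novel content is concentrated in the branched case.
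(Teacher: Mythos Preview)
Your approach is correct and is essentially the same as the paper's, which for the first two cases simply defers to \cite{KhoSei} and for $z=\{0\}$ displays the local figure in the half-disk model of $\DB$ and says ``it can be verified using the figure below''. You have supplied the homology computation that the paper leaves implicit.

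One point worth flagging: your treatment of the $z=\{0\}$ case and the paper's treatment use different trivializations of $\RDB$, and you should be careful not to mix them. The paper's convention (stated in \cref{finalcovering}(2)) is that $\fq$ is literally $q\times\mathrm{id}_{\RP^1}$ in the chosen trivializations, \emph{not} the map induced by $dq$; this is the half-disk picture. In that picture the small ``circle'' $\ell$ around $0$ is only a Euclidean half-circle (its endpoints on the green identified line are glued), so the radial tangent family rotates only by $\pi$ as one goes once around $\ell$, i.e.\ by a single turn of $\RP^1$. Hence directly $[\gamma]=-[\mathrm{pt}\times\lambda_0]-[\RP^1\times\mathrm{pt}]$ and $C^B([\gamma])=-(0,0,1)-(1,0,0)=(-1,0,1)$, with no appeal to $dq$. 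Your route instead uses the $w=z^2$ coordinate on $\DB$, in which the tangent genuinely winds twice but $C^B([\mathrm{pt}\times\lambda_0])$ picks up the compensating $(-1)$ from the $dq$-twist. Both are internally consistent and yield the same $(-1,0,1)$; the paper's figure is drawn in the first convention, which is why the computation there is immediate.
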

 
\begin{proof}
The third symmetry is different from \cite{KhoSei} due to the construction of $\DB$ and the definition of $\wRDAz$.
It can be verified using the figure below.
\begin{figure} [H]
\centering
\begin{tikzpicture} [scale=0.65]
\draw[thick, green, dashed] (0,1.1)--(0,2);
\draw[ thick, green, dashed, ->] (0,0)--(0,1.1);
\draw [thick, green, dashed, ->] (0,0)--(0,-1.1);
\draw[thick, green, dashed] (0,-1.1)--(0,-2);

\draw [thick] plot[smooth, tension=.5]coordinates { (0, 0) (1.3, 1.15) (2.5, 1.3)};
\draw [thick] plot[smooth, tension=.5]coordinates { (0, 0) (1.3, -1.15) (2.5, -1.3)};
\filldraw[color=black!, fill=yellow!, thick]  (0,0) circle [radius=0.1];
\draw [thick, blue] (0,-1.3) arc (-90:90:1.3);

\draw[thick, brown] (1,0) -- (1.6,0); 
\draw[thick, brown] (0.7,.55) -- (1.3,1); 
\draw[thick, brown] (0.8,.49) -- (1.4,.8); 
\draw[thick, brown] (0.9,.4) -- (1.5,.6); 
\draw[thick, brown] (.95,.3) -- (1.55,.4); 
\draw[thick, brown] (1,.15) -- (1.6,.2); 

\draw[thick, brown] (0.7,-.55) -- (1.3,-1); 
\draw[thick, brown] (0.8,-.49) -- (1.4,-.8); 
\draw[thick, brown] (0.9,-.4) -- (1.5,-.6); 
\draw[thick, brown] (.95,-.3) -- (1.55,-.4); 
\draw[thick, brown] (1,-.15) -- (1.6,-.2); 

\node[right] at (2.5,1.3) {$c_0$};
\node[right] at (2.5,-1.3) {$c_1$};
\node[right] at (1.6,0) {$\kappa$};
\node[above] at (.3,1.2) {$\ell$};
\node[below]  at (-.15,0) {$0$};
\end{tikzpicture}
\caption{{\small Two curves $c_0, c_1$ intersecting at $\{0\}.$}}
\end{figure}
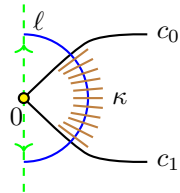  
\end{proof}

Let $\check{c_0}$ and $\check{c_1}$ be two trigraded curves such that they do not intersect at $\partial \DB.$
   Pick a curve $c_1' \simeq c_1$ which intersects minimally with $c_0.$  
   Then, by \cref{freeact}, $c_1'$ has a unique trigrading $\check{c}_1'$ of $c_1'$ so that $\check{c}_1' \simeq \check{c}_1.$ 
   Then, the trigraded intersection number $I^{trigr}(\check{c}_0,\check{c}_1) \in \Z[q_1,q_1^{-1}, q_2, q_2^{-1},q_3]/ \<q_3^2-1\>$  of $\check{c}_0$ and $\check{c}_1$ is defined by the following:
\begin{itemize}
\item if $\check{c}_1 \simeq \chi(r_1, r_2, r_3)\check{c}_0$ with $(r_1,r_2,r_3) \in \Z \times \Z \times \Z \backslash 2\Z $ and $c_0 \cap c_1 \cap \{0 \}$ non-empty, then
\begin{align}
 I^{trigr}(\check{c}_0,\check{c}_1) = q_1^ {r_1} q_2^ {r_2} q_3^ {r_3} (1 + q_2);
\end{align}
\item otherwise
\begin{equation} 
\begin{split}
I^{trigr}(\check{c}_0,\check{c}_1) &=
  (1+q_3)(1+q_1^{-1}q_2) \sum_{z \in (c_0 \cap c_1') \backslash \Delta} q_1^ {\mu_1(z)} q_2^ {\mu_2(z)} q_3^ {\mu_3(z) } \\
& + (1+q_3) \sum_{z \in (c_0 \cap c_1') \cap \Delta \backslash \{0\}} q_1^ {\mu_1(z)} q_2^ {\mu_2(z)} q_3^ {\mu_3(z) } \\ 
& + (1+ q_1^ {-1} q_2 q_3) \sum_{z \in (c_0 \cap c_1') \cap\{0\}} q_1^ {\mu_1(z)} q_2^ {\mu_2(z)} q_3^ {\mu_3(z) } .\\
\end{split}
\end{equation}
\end{itemize}
 The fact that this definition is independent of the choice of $c_1'$ and is an invariant of the isotopy classes of  $(\check{c}_0,\check{c}_1)$ follows similarly as in the case of ordinary geometric intersection numbers.

\begin{lemma}  \label{triintpro}
The trigraded intersection number has the following properties:

\begin{enumerate} [(T1)]
 \item For any $f \in \Diff \left( \DB, \{0 \} \right),$
 $I^{trigr}(\check{f}(\check{c}_0),\check{f}(\check{c}_1)) = I^{trigr}(\check{c}_0,\check{c}_1).$
 \item For any $(r_1,r_2,r_3) \in \Z \times \Z \times \Z \backslash 2\Z,$ \newline 
 $ I^{trigr}(\check{c}_0,\chi(r_1,r_2,r_3)\check{c}_1) = I^{trigr}(\chi(-r_1,-r_2,r_3)\check{c}_0,\check{c}_1) = q_1^ {r_1} q_2^ {r_2} q_3^ {r_3} I^{trigr}(\check{c}_0,\check{c}_1).$
 \item If $c_0, c_1$  are not isotopic curves with $\{0\}$ as one of its endpoints, $c_0 \cap c_1 \cap \partial \DB = \emptyset,$ and $I^{trigr}(\check{c}_0, \check{c}_1) = \sum_{r_1,r_2,r_3 } a_{r_1,r_2,r_3} q_1^{r_1} q_2^{r_2} q_3^{r_3},$ then 
 $I^{trigr}(\check{c}_1, \check{c}_0) = \sum_{r_1,r_2,r_3} a_{r_1,r_2,r_3} q_1^{-r_1} q_2^{1-r_2} q_3^{r_3}.$

\noindent If $c_0, c_1$  are isotopic curves with $\{0\}$ as one of its endpoints, $c_0 \cap c_1 \cap \partial \DB = \emptyset,$ then 
 $I^{trigr}(\check{c}_1, \check{c}_0) = I^{trigr}(\check{c}_0, \check{c}_1).$

\end{enumerate}

\end{lemma}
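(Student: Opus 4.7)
The plan is to verify each property by tracking how local indices behave under the relevant operations, with \cref{locinsym} as the main input for (T3). In each case I pick a representative $c_1' \simeq c_1$ with $c_0 \cap c_1'$ minimal and decompose the sum in the definition of $I^{trigr}$ according to whether an intersection point $z$ lies in $\DB \setminus \Delta$, in $\Delta \setminus \{0\}$, or in $\{0\}$. Property (T1) is then immediate: since $f$ lifts canonically to a $\chi^B$-equivariant diffeomorphism $\check{f}$ of $\wRDAz$ preserving the tangent bundle structure, the class $C$, and the rotation $r$, every transverse intersection $z \in c_0 \cap c_1'$ is sent to $f(z) \in f(c_0) \cap f(c_1')$ with $\mu^{trigr}(\check{f}(\check{c}_0), \check{f}(\check{c}_1'); f(z)) = \mu^{trigr}(\check{c}_0, \check{c}_1'; z)$. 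Moreover, $f$ permutes $\Delta$ and fixes $\{0\}$, so the three-part decomposition of the sum is preserved.

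For (T2), I trace through the defining equation \eqref{localpath}. Replacing $\check{c}_1$ by $\chi^B(r_1,r_2,r_3)\check{c}_1$ shifts $\check{c}_1(\alpha(1))$ by the same deck transformation, so that $\mu^{trigr}(\check{c}_0, \chi^B(r_1,r_2,r_3)\check{c}_1; z) = \mu^{trigr}(\check{c}_0, \check{c}_1; z) + (r_1, r_2, r_3)$ for every intersection $z$; this uniformly multiplies each contribution by $q_1^{r_1}q_2^{r_2}q_3^{r_3}$. The case of shifting $\check{c}_0$ by $\chi^B(-r_1,-r_2,-r_3)$ works symmetrically: the initial point $\check{\kappa}(0) = \check{c}_0(\alpha(0))$ shifts by $\chi^B(-r_1,-r_2,-r_3)$ and hence so does $\check{\kappa}(1)$, producing the same $(r_1,r_2,r_3)$ shift in the local index. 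The first case of the definition (isotopic curves sharing $\{0\}$) is consistent directly from the explicit formula $q_1^{r_1'}q_2^{r_2'}q_3^{r_3'}(1+q_2)$ by composing shifts.

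For (T3) in the non-isotopic case, I apply \cref{locinsym} to each contribution to $I^{trigr}(\check{c}_1, \check{c}_0)$ (using that minimal intersection is symmetric, so the same $c_1'$ may be used). After multiplication by the appropriate weight and simplification via $q_3^2 = 1$, each contribution equals the image of the corresponding contribution to $I^{trigr}(\check{c}_0, \check{c}_1)$ under the substitution $q_1^{r_1}q_2^{r_2}q_3^{r_3} \mapsto q_1^{-r_1}q_2^{1-r_2}q_3^{r_3}$: for instance at $z \in \{0\}$ the rearrangement yields $(q_2 + q_1 q_3)q_1^{-\mu_1}q_2^{-\mu_2}q_3^{\mu_3}$, which matches the substitution applied to $(1 + q_1^{-1}q_2 q_3)q_1^{\mu_1}q_2^{\mu_2}q_3^{\mu_3}$; the cases $z\notin\Delta$ and $z\in\Delta\setminus\{0\}$ are analogous. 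For the isotopic case with $\{0\}$ as endpoint, \cref{freeact} ensures that $\check{c}_1 \simeq \chi^B(r_1, r_2, r_3)\check{c}_0$ forces $\check{c}_0 \simeq \chi^B(-r_1, -r_2, r_3)\check{c}_1$ (using $-r_3 \equiv r_3 \bmod 2$), so (T2) combined with the defining formula yields the asserted symmetry after reducing via the substitution rule. The main obstacle will be the bookkeeping in (T3) to align the three cases of \cref{locinsym} with the three weightings in the definition of $I^{trigr}$ and to apply $q_3^2 = 1$ at the right moments; no further geometric input is needed beyond the local-index symmetry lemma.
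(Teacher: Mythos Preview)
Your approach is correct and follows the same strategy as the paper: (T1) and (T2) are handled by direct topological arguments tracking how lifts and local indices transform, while (T3) is derived from \cref{locinsym} by matching each of its three cases to the corresponding weighting factor $(1+q_3)(1+q_1^{-1}q_2)$, $(1+q_3)$, or $(1+q_1^{-1}q_2 q_3)$ in the definition of $I^{trigr}$. The paper's own proof is a brief sketch saying exactly this (``simple topological argument'' for (T1)--(T2), ``consequence of \cref{locinsym}'' for (T3), noting that the factor $(1+q_1^{-1}q_2 q_3)$ is essential at $z=0$); you have simply written out the bookkeeping it omits.
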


\begin{proof}
For (T1) and (T2) these can be proven using a simple topological argument which we omit. 
For (T3), this is a consequence of \cref{locinsym}. 
We point out that the term $(1+q_1^{-1}q_2q_3)$ is essential in the definition of trigraded intersection numbers was essential in the verification of the formula for two curves that intersect at the point $\{0\}.$
\end{proof}

\subsection{Admissible curves and normal form in $\DB$} \label{normal DB}
 A curve $c$ is called \bit{admissible} if it is equal to $f(b_j)$ for some diffeomorphisms $f \in \Diff \left(\DB, \{0\} \right)$ and $0 \leq j \leq {n}.$
 Note that the endpoints of $c$ must then lie in $\{0, \hdots, n\}$; conversely, every curves which start and end at $\{0, \hdots, n\}$ are admissible.
 Moreover, the two (distinct) orbits $\mathcal{O}([b_1])$ and $\mathcal{O}([b_2])$ under the action of $\cA(B_n) \cong MCG(\D^B_{n+1}, \{0\})$ partition the set of isotopy classes of admissable curves.
 
 We fix the set of basic curves $b_1, \hdots , b_{n}$ and choose vertical curves $d_1, \hdots, d_{n}$ as in Figure \ref{bidi} which divide $\DB$ into regions $D_0, \hdots, D_{n+1}.$
 If an admissible curve $c$ in its isotopy class has minimal intersection with all the $d_j$'s among its other representatives,  then we say that $c$ is in \bit{normal form}. 
 A normal form of $c$ is always achievable by performing an isotopy.

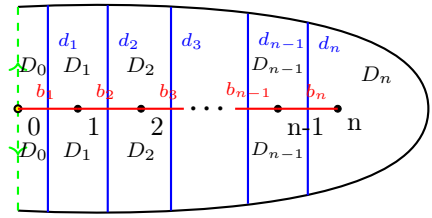
\begin{figure}[H]  
\centering
\begin{tikzpicture} [scale = 0.45]

\draw[thick] plot[smooth, tension=2.25]coordinates {(0,-.5) (12,2.5) (0,5.5)};
\draw[thick,green, dashed]  (0,5.5)--(0,3.875);
\draw[thick, green, dashed]   (0,-.5)--(0,1.125);
\draw[thick, green, dashed, ->]   (0,2.5)--(0,3.875);
\draw[thick,  green, dashed, ->]   (0,2.5)--(0,1.125);
\draw[fill] (1.75,2.5) circle [radius=0.1] ;
\draw[fill] (3.6,2.5) circle [radius=0.1]  ;
\draw[fill] (7.6,2.5) circle [radius=0.1]  ;
\draw[fill] (9.35,2.5) circle [radius=0.1]  ;
\filldraw[color=black!, fill=yellow!, thick]  (0,2.5) circle [radius=0.1];

\draw [thick,blue] (0.875, -.55) -- (0.875, 5.55);
\draw [thick,blue](2.625, -.55) -- (2.625, 5.55);
\draw [thick,blue](4.475, -.5) -- (4.475, 5.5);
\draw [thick, blue] (6.725, -.3) -- (6.725, 5.3);
\draw [thick,blue](8.475, -.05) -- (8.475, 5.05);

\node[right,blue] at (0.9, 4.5) {{\scriptsize $d_1$}};
\node[right,blue] at (2.65, 4.5) {{\scriptsize $d_2$}};
\node[right,blue] at (4.5, 4.5) {{\scriptsize $d_3$}};
\node[right,blue] at (6.75, 4.5) {{\scriptsize $d_{n-1}$}};
\node[right,blue] at (8.5, 4.4) {{\scriptsize $d_n$}};

\draw [thick,red] (1.75,2.5)--(0,2.5);
\draw [thick,red] (1.75,2.5)--(3.6,2.5);
\draw [thick,red] (3.6,2.5)--(4.85,2.5);
\draw [thick,red] (6.35,2.5)--(7.6,2.5);
\draw [thick,red](7.6,2.5) -- (9.35,2.5);

\node[above right,red] at (0.25,2.5) {{\scriptsize $b_1$}};
\node[above right,red] at (2,2.5) {{\scriptsize $b_2$}};
\node[above right,red] at (3.85,2.5) {{\scriptsize $b_3$}};
\node[above left,red] at (7.7,2.5) {{\scriptsize $b_{n-1}$}};
\node[above left,red] at (9.35,2.5) {{\scriptsize $b_n$}};

\node  at (5.6,2.5) {$ \boldsymbol{\cdots}$};
\node[below right] at  (0,2.5) {0};
\node[below right] at  (1.75,2.5) {1};
\node[below right] at  (3.6,2.5) {2};
\node[below right] at  (7.6,2.5) {n-1};
\node[below right] at  (9.35,2.5) {n};

\node at (.4375,3.75) {{\footnotesize $D_0$}};
\node at (.4375,1.25) {{\footnotesize $D_0$}};
\node at (1.75,3.75) {{\footnotesize $D_1$}};
\node at (1.75,1.25) {{\footnotesize $D_1$}};
\node at (3.6,3.75) {{\footnotesize $D_2$}};
\node at (3.6,1.25) {{\footnotesize $D_2$}};
\node at (7.6,3.75) {{\scriptsize $D_{n-1}$}};
\node at (7.6,1.25) {{\scriptsize  $D_{n-1}$}};
\node at (10.5,3.5) {{\footnotesize  $D_n$}};

\end{tikzpicture}

\caption{{\small The curves $b_i$ and $d_i$ in the aligned configuration with regions $D_i.$} } \label{bidi} 
\end{figure}


Let $c$ be an admissible curve in normal form. 
We use the same classification as in \cite[Section 3e]{KhoSei} to group every connected components of $c \cap D_j$ into finitely many types. 
For $1 \leq j \leq n-1,$ there are six types as depicted in \cref{sixtype} where as for $j=0,n,$ there are two types as shown in \cref{2types0} and \cref{2typesn} .
Moreover, an admissible curve $c$ intersecting all the $d_j$ transversely such that each connected component of $c \cap D_j$ belongs to \cref{sixtype}, \cref{2types0},  and \cref{2typesn} is already in normal form.

\begin{figure}[H] 
\centering
\begin{tikzpicture} [scale = 0.75]
\draw[thick] (0,0)--(14,0);
\draw[thick] (0,0)--(0,12);
\draw[thick] (14,0)--(14,12);
\draw[thick] (0,12)--(14,12);
\draw[thick] (7,0)--(7,12);
\draw[thick] (0,4)--(14,4);
\draw[thick] (0,8)--(14,8);
\draw[thick] (0,0.5)--(14,0.5);
\draw[thick] (0,4.5)--(14,4.5);
\draw[thick] (0,8.5)--(14,8.5);

\draw[thick]  (2.25,1)--(2.25,3.5);
\draw[thick]  (4.75,1)--(4.75,3.5);
\draw[fill] (3.50,2.25) circle [radius=0.1]  ;
\draw[thick,red] (2.25,2.25) -- (3.50,2.25);
\node at (3.50, 0.25) {Type 3};

\draw[thick]  (2.25,5)--(2.25,7.5);
\draw[thick]  (4.75,5)--(4.75,7.5);
\draw[fill] (3.50,6.25) circle [radius=0.1]  ;
\draw[thick,red] plot[smooth,tension=1.75] coordinates {(2.25,5.55) (4.15,6.25) (2.25,6.85) };
\node at (3.50, 4.25) {Type 2};

\draw[thick]  (2.25,9)--(2.25,11.5);
\draw[thick]  (4.75,9)--(4.75,11.5);
\draw[fill] (3.50,10.25) circle [radius=0.1]  ;
\draw[thick,red] (2.25,10.85) -- (4.75,10.85);
\node at (3.50, 8.25) {Type 1};

\draw[thick]  (9.25,1)--(9.25,3.5);
\draw[thick]  (11.75,1)--(11.75,3.5);
\draw[fill] (10.50,2.25) circle [radius=0.1]  ;
\draw[thick,red] (11.75,2.25) -- (10.50,2.25);
\node at (10.50, 0.25) {Type 3'};

\draw[thick]  (9.25,5)--(9.25,7.5);
\draw[thick]  (11.75,5)--(11.75,7.5);
\draw[fill] (10.50,6.25) circle [radius=0.1]  ;
\draw[thick,red] plot[smooth,tension=1.75] coordinates {(11.75,5.55) (9.85,6.25) (11.75,6.85) };
\node at (10.50, 4.25) {Type 2'};

\draw[thick]  (9.25,9)--(9.25,11.5);
\draw[thick]  (11.75,9)--(11.75,11.5);
\draw[fill] (10.50,10.25) circle [radius=0.1]  ;
\draw[thick,red] (9.25,9.55) -- (11.75,9.55);
\node at (10.50, 8.25) {Type 1'};

\node[left] at (2.25,2.25) {{\scriptsize (r$_1$,r$_2$,r$_3$)}};
\node[left] at (9.25,9.55) {{\tiny  (r$_1$+1,r$_2$-1,r$_3$)}};
\node[right] at (11.75,9.55) {{\scriptsize (r$_1$,r$_2$,r$_3$)}};
\node[right] at (11.75,5.55) {{\scriptsize (r$_1$,r$_2$,r$_3$)}};
\node[right] at (11.75,6.85) {{\tiny (r$_1$+1,r$_2$-1,r$_3$)}};
\node[right] at  (11.75,2.25) {{\scriptsize (r$_1$,r$_2$,r$_3$)}};
\node[left] at (2.25,6.85) {{\scriptsize (r$_1$,r$_2$,r$_3$)}};
\node[left] at (2.25,5.55) {{\tiny (r$_1$+1,r$_2$-1,r$_3$)}};
\node[left] at (2.25,10.85)  {{\scriptsize (r$_1$,r$_2$,r$_3$)}};
\node[right] at (4.75,10.85) {{\tiny  (r$_1$+1,r$_2$,r$_3$)}};

\end{tikzpicture}
\caption{{\small The six possible types of connected components $c\cap D_j,$ for $c$ in normal form and $1 \leq j < n,$}} 
\label{sixtype}
\end{figure}
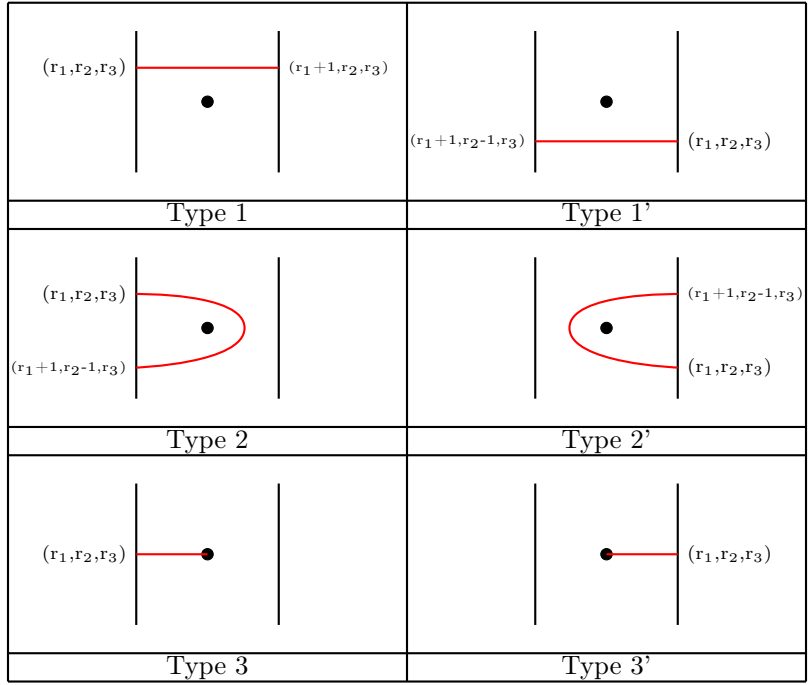

\begin{figure}[H] 
\centering
\begin{tikzpicture} [scale = 0.75]
\draw[thick] (0,0)--(14,0);
\draw[thick] (0,0)--(0,4);
\draw[thick] (14,0)--(14,4);
\draw[thick] (0,4)--(14,4);
\draw[thick]  (7,0)--(7,4);
\draw[thick] (0,0.5)--(14,0.5);

\draw[thick]  (4.75,3.5)--(5,3.5);
\draw[thick]  (4.75,1)--(5,1);
\draw[thick]  (3.5,3.5)--(5,3.5);
\draw[thick]  (3.5,1)--(5,1);
\draw[thick,red]  (3.5,2.85)--(4.75,2.85);
\draw[thick,red]  (3.5,1.55)--(4.75,1.55);
\draw[thick]  (4.75,1)--(4.75,3.5);

\draw[thick,green, dashed]  (3.50,3.5)--(3.50,2.875);
\draw[thick, green, dashed]   (3.50,1)--(3.50,1.825);
\draw[thick, green, dashed, ->]   (3.50,2.25)--(3.50,2.9);
\draw[thick,  green, dashed, ->]   (3.50,2.25)--(3.50,1.6);
\filldraw[color=black!, fill=yellow!, very thick] (3.50,2.25) circle [radius=0.1]  ;
\node at (3.50, 0.25) {Type 2'};

\draw[thick]  (11.75,3.5)--(12,3.5);
\draw[thick]  (11.75,1)--(12,1);
\draw[thick]  (11.75,1)--(11.75,3.5);
\draw[thick]  (10.5,3.5)--(12,3.5);
\draw[thick]  (10.5,1)--(12,1);

\draw[thick,green, dashed]  (10.50,3.5)--(10.50,2.875);
\draw[thick, green, dashed]   (10.50,1)--(10.50,1.825);
\draw[thick, green, dashed, ->]   (10.50,2.25)--(10.50,2.9);
\draw[thick,  green, dashed, ->]   (10.50,2.25)--(10.50,1.6);
\filldraw[color=black!, fill=yellow!, very thick] (10.50,2.25) circle [radius=0.1]  ;

\draw[thick,red] (11.75,2.25) -- (10.50,2.25) ;
\node at (10.50, 0.25) {Type 3'};


\node [right] at (11.75,2.25) {{\scriptsize (r$_1$,r$_2$,r$_3$)}};
\node [right] at (4.75,2.85) {{\scriptsize (r$_1$,r$_2$,r$_3$+1)}};
\node[right] at (4.75,1.55) {{\tiny (r$_1$,r$_2$,r$_3$)}};

\end{tikzpicture}
\caption{{\small The two possible types of connected components $c\cap D_0.$ }} \label{2types0}
\end{figure}

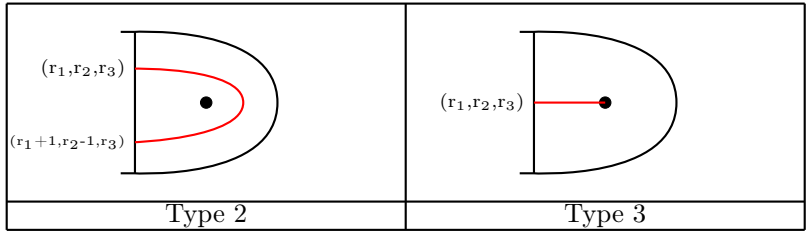
\begin{figure}[H] 
\centering
\begin{tikzpicture} [scale = 0.75]
\draw[thick] (0,0)--(14,0);
\draw[thick] (0,0)--(0,4);
\draw[thick] (14,0)--(14,4);
\draw[thick] (0,4)--(14,4);
\draw[thick]  (7,0)--(7,4);
\draw[thick] (0,0.5)--(14,0.5);

\draw[thick]  (2.25,1)--(2.25,3.5);
\draw[thick]  (2.25,3.5)--(2,3.5);
\draw[thick]  (2.25,1)--(2,1);
\draw[fill] (3.50,2.25) circle [radius=0.1]  ;
\draw[thick] plot[smooth,tension=2] coordinates {(2.25,3.5) (4.75,2.25) (2.25,1)};
\draw[thick,red] plot[smooth,tension=1.75] coordinates {(2.25,1.55) (4.15,2.25) (2.25,2.85) };
\node at (3.50, 0.25) {Type 2};

\draw[thick]  (9.25,1)--(9.25,3.5);
\draw[thick]  (9.25,3.5)--(9,3.5);
\draw[thick]  (9.25,1)--(9,1);
\draw[thick] plot[smooth,tension=2] coordinates {(9.25,3.5) (11.75,2.25) (9.25,1)};
\draw[fill] (10.50,2.25) circle [radius=0.1]  ;
\draw[thick,red] (9.25,2.25) -- (10.50,2.25);
\node at (10.50, 0.25) {Type 3};

\node [left] at (2.25,1.55) {{\tiny (r$_1$+1,r$_2$-1,r$_3$)}};
\node [left] at (2.25,2.85) {{\scriptsize (r$_1$,r$_2$,r$_3$)}};
\node [left] at (9.25,2.25) {{\scriptsize (r$_1$,r$_2$,r$_3$)}};

\end{tikzpicture}
\caption{{\small The two possible types of connected components $c\cap D_n.$}} \label{2typesn}
\end{figure}

For the rest of this section, by a \emph{curve} $c$, we mean an admissible curve in  normal form. 
  We call the intersection of a curve $c$ with the curves $d_i$ \emph{crossings} and denote them 
$cr(c) = c \cap (d_0 \cup d_1 \cup \hdots \cup d_{n-1}).$
  Those intersections $c \cap d_j$ are called \emph{$j$-crossings of $c$}.
   For $0 \leq j \leq n$, the connected components of $c \cap D_j$ are called \emph{segments} of $c$.
 If the endpoints of a segment are both crossings, then it is \emph{essential}.

  Now, we will study the action of half twist $[t_{b_k}^B]$ on normal forms.
 In gerenal, $[t_{b_k}^B](c)$ won't be in normal form even though $c$ is a normal form.
	Nonetheless,  $[t_{b_k}^B](c)$ has minimal intersection with all $d_j$ for $j \neq k.$
 One just need to decrease its intersections with $d_k,$ in order to get  $[t_{b_k}^B](c)$ into normal form.
 The same argument as in \cite{KhoSei} gaves us the analogous result:

\begin{proposition} ~
\begin{enumerate} 
\item The normal form of $[t_{b_k}^B](c)$ coincides with $c$ outside of $D_{k-1} \cup D_{k}.$
	The curve $[t_{b_k}^B](c)$ can be brought into normal gorm by an isotopy inside   $D_{k-1} \cup D_{k}.$
	
\item Suppose that $[t_{b_k}^B](c)$ is in normal form. 
	There is a natural bijection between $j$-crossings of $c$ and the $j$-crossings of $[t_{b_k}^B](c)$ for $j \neq k.$
		There is a natural bijection between connected components of intersections of $c$ and $[t_{b_k}^B](c)$ inside $D_{k-1} \cup D_{k}.$
\end{enumerate}
\end{proposition}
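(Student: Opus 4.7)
The plan is to adapt the Khovanov-Seidel argument for type $A$ from \cite{KhoSei} to our type $B$ setting. First, I will choose an explicit representative of $t_{b_k}^B$ supported in a small tubular neighbourhood $N_k$ of $b_k$; for $k \geq 2$ this is a half twist, while for $k = 1$ the relevant mapping class is the full twist $(t_{b_1}^B)^2$. In either case, $N_k$ can be taken to lie entirely inside $D_{k-1} \cup D_k$ and to be disjoint from $d_j$ for all $j \notin \{k-1, k\}$; in particular, $t_{b_k}^B$ fixes every $d_j$ for $j \neq k$ pointwise, and acts as the identity on $c$ outside $N_k$.

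For part (1), the first sentence is then immediate: since $t_{b_k}^B$ is the identity outside $N_k \subset D_{k-1} \cup D_k$, the curves $c$ and $[t_{b_k}^B](c)$ literally agree outside $D_{k-1} \cup D_k$, and this agreement persists under any subsequent isotopy supported in $D_{k-1} \cup D_k$. For the second sentence, I will argue that the only possible failure of minimality for $[t_{b_k}^B](c)$ occurs with $d_k$. Indeed, the intersections of $[t_{b_k}^B](c)$ with $d_j$ for $j \neq k$ are identical to those of $c$, which is in normal form, so those intersections are already minimal. Any non-minimal intersection with $d_k$ gives an empty bigon (containing no marked point) bounded by an arc of $d_k$ and an arc of $[t_{b_k}^B](c)$; both arcs lie in $D_{k-1} \cup D_k$, so the bigon does too, and it can be removed by an isotopy supported in $D_{k-1} \cup D_k$. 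Iteratively removing all such bigons puts $[t_{b_k}^B](c)$ into normal form without altering it outside $D_{k-1} \cup D_k$.

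For part (2), the bijection of $j$-crossings for $j \neq k$ is then tautological, since $c$ and $[t_{b_k}^B](c)$ coincide on each such $d_j$. To establish the bijection of connected components inside $D_{k-1} \cup D_k$, I will trace each segment of $c$ in $c \cap (D_{k-1} \cup D_k)$ through the twist using the local classification in \cref{sixtype}, \cref{2types0}, and \cref{2typesn}: each local type is mapped by $t_{b_k}^B$ to a prescribed combination of local types (possibly after bigon-removal), and I will verify by direct inspection that this correspondence is bijective on the level of components, preserving the way segments are joined across $d_{k-1}$ and $d_k$.

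The main obstacle will be the boundary cases $k = 1$ and $k = n$. For $k = 1$, the region $D_0$ contains the fixed puncture $\{0\}$ and the relevant map is a full twist (see \cref{fulltwist}), which wraps segments all the way around $\{0\}$; I must check carefully that any additional intersections with $d_1$ created in the process form only empty bigons and so can be removed inside $D_0 \cup D_1$ using the local types in \cref{2types0}. For $k = n$, the region $D_n$ has no further $d_j$ on one side, so segments may terminate on $\partial \D^B_{n+1}$, and the verification uses \cref{2typesn} instead. In both cases, the proof is reduced to a picture-based case analysis directly parallel to the one carried out by Khovanov-Seidel in the type $A$ setting, and we will state the combinatorial output without repeating each diagram.
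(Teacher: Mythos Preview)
Your proposal is correct and takes essentially the same approach as the paper, which simply states that ``the same argument as in \cite{KhoSei} gives us the analogous result'' without further detail. Your outline is in fact more explicit than what the paper provides, and your attention to the $k=1$ case (where the relevant mapping class is the full twist $(t_{b_1}^B)^2$ rather than a half twist) correctly identifies the only genuinely new wrinkle in the type $B$ setting.
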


A connected component of $c \cap (D_{j-1} \cup D_{j})$ is called \emph{j-string of $c$}.
Denote by $st(c,j)$ the set of $j$-string of $c.$ 
In addition, we define a \emph{j-string} as curve in  $D_{j-1} \cup D_{j}$ which is a $j$-string of $c$ for some admissible curve $c$ in normal form.

Two $j$-strings are isotopic (equivalently belong to the same isotopy class) if there exists a deformation of one into the other via diffeomorphisms $f$ of $D' = D_{j-1} \cup D_{j}$ which fix $d_{j-1}$ and $d_{j+1}$ as well as preserves the marked points in $D'$ pointwise , that is, $f(d_{j-1})= d_{j-1}, f(d_{j+1})= d_{j+1},$ and $f|_{\Delta \cap D'} = id.$

For $1 < j < n,$ isotopy classes of $j$-strings can be divided into types as follows: there are five infinite families $I_w, II_w, II'_w, III_w, III'_w(w \in \Z)$ and five exceptional types $IV, IV',V,V'$ and $VI$ (see \cref{B j-string}). 
	When $j = n$, there is a similar list, with two infinite families and two exceptional types (see \cref{B n-string}).
    The rule for obtaining the $(w+1)$-th from the $w$-th is by applying $[t^B_{b_j}].$ 
	For $1$-string, there are, instead, four infinite families  the type $II'_w, II'_{w + \frac{1}{2}},  III'_w,  III'_{w+ \frac{1}{2}}(w \in \Z)$ and two exceptional types $V' $ and $VI$ (see \cref{B 1-string}).
	Note that for 1-strings, the rule for obtaining the $(w+1)$-th from the $w$-th is instead by applying $[(t^B_{b_1})^2].$

\begin{figure}[h] 
\centering
\begin{tikzpicture} [scale=0.7][yscale=1, xscale=0.92]
\draw[thick] (0,0)--(16,0);
\draw[thick] (0,0)--(0,14);
\draw[thick] (16,0)--(16,14);
\draw[thick] (0,14)--(16,14);
\draw[thick] (8,0)--(8,14);
\draw[thick] (0,4)--(16,4);

\draw[thick] (0,0.5)--(16,0.5);
\draw[thick] (0,3.5)--(16,3.5);
\draw[thick] (0,4)--(16,4);
\draw[thick] (0,7)--(16,7);
\draw[thick] (0,7.5)--(16,7.5);
\draw[thick] (0,10.5)--(16,10.5);
\draw[thick] (0,11)--(16,11);
\draw[thick] (0,14)--(16,14);
\draw[thick] (4,14)--(4,17.5);
\draw[thick] (12,14)--(12,17.5);

\draw[thick] (4,-3.5)--(12,-3.5);
\draw[thick] (4,-3)--(12,-3);
\draw[thick] (4,-3.5)--(4,0);
\draw[thick] (12,-3.5)--(12,0);
\draw[thick] (4,14.5)--(12,14.5);
\draw[thick] (4,17.5)--(12,17.5);

\draw[thick]  (2.525,1)--(2.525,3);
\draw[thick]  (5.475,1)--(5.475,3);
\draw[fill] (3.25,2) circle [radius=0.1]  ;
\draw[fill] (4.75,2) circle [radius=0.1]  ;
\draw[thick,red] plot[smooth,tension=2.25] coordinates {(2.525,1.5)  (5.1125,2) (2.525, 2.5)};
\node at (4, .25) {{\small Type V}};

\draw[thick]  (10.525,1)--(10.525,3);
\draw[thick]  (13.475,1)--(13.475,3);
\draw[fill] (11.25,2) circle [radius=0.1]  ;
\draw[fill] (12.75,2) circle [radius=0.1]  ;
\draw[thick,red] plot[smooth,tension=2.25] coordinates {(13.475,2.5)  (10.8675,2) (13.475, 1.5)};
\node at (12, 0.25) {{\small Type V'}};

\draw[thick]  (2.525,4.5)--(2.525,6.5);
\draw[thick]  (5.475,4.5)--(5.475,6.5);
\draw[fill] (3.25,5.5) circle [radius=0.1]  ;
\draw[fill] (4.75,5.5) circle [radius=0.1]  ;
\draw[thick,red]  (2.525,6)--(5.475,6);;
\node at (4, 3.75) {{\small Type IV}};

\draw[thick]  (10.525,4.5)--(10.525,6.5);
\draw[thick]  (13.475,4.5)--(13.475,6.5);
\draw[fill] (11.25,5.5) circle [radius=0.1]  ;
\draw[fill] (12.75,5.5) circle [radius=0.1]  ;
\draw[thick,red]  (10.525,5)--(13.475,5);;
\node at (12, 3.75) {{\small Type IV'}};

\draw[thick]  (2.525,8)--(2.525,10);
\draw[thick]  (5.475,8)--(5.475,10);
\draw[fill] (3.25,9) circle [radius=0.1]  ;
\draw[fill] (4.75,9) circle [radius=0.1]  ;
\draw[thick,red] (2.525,9) -- (3.25,9);
\node at (4, 7.25) {{\small Type III$_0$}};

\draw[thick]  (10.525,8)--(10.525,10);
\draw[thick]  (13.475,8)--(13.475,10);
\draw[fill] (11.25,9) circle [radius=0.1]  ;
\draw[fill] (12.75,9) circle [radius=0.1]  ;
\draw[thick,red] (12.75,9) -- (13.475,9);
\node at (12, 7.25) {{\small Type III'$_0$}};

\draw[thick]  (2.525,11.5)--(2.525,13.5);
\draw[thick]  (5.475,11.5)--(5.475,13.5);
\draw[fill] (3.25,12.5) circle [radius=0.1]  ;
\draw[fill] (4.75,12.5) circle [radius=0.1]  ;
\draw[thick,red] plot[smooth,tension=1.7] coordinates {(2.525,13)  (4.1,12.5) (2.525, 12)};
\node at (4, 10.75) {{\small Type II$_0$}};

\draw[thick]  (10.525,11.5)--(10.525,13.5);
\draw[thick]  (13.475,11.5)--(13.475,13.5);
\draw[fill] (11.25,12.5) circle [radius=0.1]  ;
\draw[fill] (12.75,12.5) circle [radius=0.1]  ;
\draw[thick,red] plot[smooth,tension=1.7] coordinates {(13.475,13)  (12.1,12.5) (13.475, 12)};
\node at (12, 10.75) {{\small Type II'$_0$}};

\draw[thick]  (6.525,15)--(6.525,17);
\draw[thick]  (9.475,15)--(9.475,17);
\draw[fill] (7.25,16) circle [radius=0.1]  ;
\draw[fill] (8.75,16) circle [radius=0.1]  ;
\draw[thick,red] plot[smooth,tension=.7] coordinates {(6.525,16)  (7.4,16.4)  (8.6, 15.6) (9.475,16)};
\node at (8, 14.25) {{\small Type I$_0$}};

\draw[thick]  (6.525,-2.5)--(6.525,-0.5);
\draw[thick]  (9.475,-2.5)--(9.475,-0.5);
\draw[fill] (7.25,-1.5) circle [radius=0.1]  ;
\draw[fill] (8.75,-1.5) circle [radius=0.1]  ;
\draw[thick,red] (7.25,-1.5)--(8.75,-1.5);
\node at (8, -3.25) {{\small Type VI}};

\node [left] at (6.525,16) {{\scriptsize (r$_1$,r$_2$,r$_3$)}};
\node [right] at (9.475,16) {{\tiny (r$_1$,r$_2$+1,r$_3$)}};
\node [left] at (2.525,13) {{\scriptsize (r$_1$,r$_2$,r$_3$)}};
\node [left] at (2.525, 12) {{\tiny (r$_1$+1,r$_2$-1,r$_3$)}};
\node [right] at (13.475,13) {{\tiny (r$_1$+1,r$_2$-1,r$_3$)}};
\node [right] at (13.475, 12) {{\scriptsize (r$_1$,r$_2$,r$_3$)}};
\node [left] at (2.525,9) {{\tiny (r$_1$,r$_2$,r$_3$)}};
\node [right] at (13.475,9) {{\scriptsize (r$_1$,r$_2$,r$_3$)}};
\node [left] at (2.525,6) {{\scriptsize (r$_1$,r$_2$,r$_3$)}};
\node [right] at (5.475,6) {{\tiny (r$_1$+2,r$_2$,r$_3$)}};
\node [right] at (13.475,5) {{\scriptsize (r$_1$,r$_2$,r$_3$)}};
\node [left] at (10.525,5) {{\tiny (r$_1$+2,r$_2$-2,r$_3$)}};
\node [left] at (2.525, 2.5) {{\scriptsize (r$_1$,r$_2$,r$_3$)}};
\node [left] at (2.525,1.5)  {{\tiny (r$_1$+3,r$_2$-2,r$_3$)}};
\node [right] at (13.475, 1.5)  {{\scriptsize (r$_1$,r$_2$,r$_3$)}};
\node [right] at (13.475,2.5)  {{\tiny (r$_1$+3,r$_2$-2,r$_3$)}};

\end{tikzpicture}
\caption{{\small The isotopy classes of $j$-strings, for $1 < j < n,$}}
\label{B j-string}
\end{figure}

\begin{figure}[h] 
\centering
\begin{tikzpicture} [scale=0.7] [yscale=1, xscale=0.92]

\draw[thick] (0,0)--(16,0);
\draw[thick] (0,0)--(0,10.5);
\draw[thick] (16,0)--(16,10.5);
\draw[thick] (8,0)--(8,10.5);

\draw[thick] (0,0.55)--(16,0.55);
\draw[thick] (0,3.5)--(16,3.5);
\draw[thick] (0,4.05)--(16,4.05);
\draw[thick] (0,7)--(16,7);
\draw[thick] (0,7.55)--(16,7.55);
\draw[thick] (0,10.5)--(16,10.5);

\draw[thick]  (5.475,1)--(5.7,1);
\draw[thick]  (5.475,3)--(5.7,3);  
\draw[thick]  (5.475,1)--(5.475,3);
\draw[thick,red]  (3.25,2.5)--(5.475,2.5);
\draw[thick,red]  (3.25,1.5)--(5.475,1.5);
\draw[thick]  (3.25,3)--(5.7,3);  
\draw[thick]  (3.25,1)--(5.7,1);
\draw[thick,green, dashed]  (3.25,3)--(3.25,2.5);
\draw[thick, green, dashed]   (3.25,1)--(3.25,1.5);
\draw[thick, green, dashed, ->]   (3.25,2)--(3.25,2.6);
\draw[thick,  green, dashed, ->]   (3.25,2)--(3.25,1.4);\filldraw[color=black!, fill=yellow!, very thick] (3.25,2) circle [radius=0.1]  ;
\draw[fill] (4.75,2) circle [radius=0.1]  ;
\node [right] at (5.475, 2.5) {{\tiny (r$_1$+2,r$_2$-1,r$_3$+1)}};
\node [right] at (5.475, 1.5) {{\scriptsize (r$_1$,r$_2$,r$_3$)}};
\node at (4, .25) {{\small Type V'}};

\draw[thick]  (13.475,1)--(13.7,1);
\draw[thick]  (13.475,3)--(13.7,3);  
\draw[thick]  (13.475,1)--(13.475,3);
\draw[thick]  (11.25,3)--(13.7,3);  
\draw[thick]  (11.25,1)--(13.7,1);
\draw[thick,green, dashed]  (11.25,3)--(11.25,2.5);
\draw[thick, green, dashed]   (11.25,1)--(11.25,1.5);
\draw[thick, green, dashed, ->]   (11.25,2)--(11.25,2.6);
\draw[thick,  green, dashed, ->]   (11.25,2)--(11.25,1.4);
\filldraw[color=black!, fill=yellow!, very thick] (11.25,2) circle [radius=0.1]  ;
\draw[fill] (12.75,2) circle [radius=0.1]  ;
\draw[thick,red] (11.25,2) -- (12.75,2);
\node at (12, 0.25) {{\small Type VI}};

\draw[thick]  (5.475,4.5)--(5.7,4.5);
\draw[thick]  (5.475,6.5)--(5.7,6.5);  
\draw[thick]  (5.475,4.5)--(5.475,6.5);

\draw[thick]  (3.25,6.5)--(5.7,6.5);  
\draw[thick]  (3.25,4.5)--(5.7,4.5);
\draw[thick,green, dashed]  (3.25,6.5)--(3.25,6);
\draw[thick, green, dashed]   (3.25,4.5)--(3.25,5);
\draw[thick, green, dashed, ->]   (3.25,5.5)--(3.25,6.1);
\draw[thick,  green, dashed, ->]   (3.25,5.5)--(3.25,4.9);

\filldraw[color=black!, fill=yellow!, very thick]  (3.25,5.5) circle [radius=0.1]  ;
\draw[fill] (4.75,5.5) circle [radius=0.1]  ;
\draw[thick,red] (4.75,5.5) -- (5.475,5.5);
\node [right] at (5.475,5.5) {{\scriptsize (r$_1$,r$_2$,r$_3$)}};
\node at (4, 3.75) {{\small Type III'$_0$}};

\draw[thick]  (13.475,4.5)--(13.7,4.5);
\draw[thick]  (13.475,6.5)--(13.7,6.5);  
\draw[thick]  (13.475,4.5)--(13.475,6.5);
\draw[thick]  (11.25,6.5)--(13.7,6.5);  
\draw[thick]  (11.25,4.5)--(13.7,4.5);
\draw[thick,green, dashed]  (11.25,6.5)--(11.25,6);
\draw[thick, green, dashed]   (11.25,4.5)--(11.25,5);
\draw[thick, green, dashed, ->]   (11.25,5.5)--(11.25,6.1);
\draw[thick,  green, dashed, ->]   (11.25,5.5)--(11.25,4.9);
\filldraw[color=black!, fill=yellow!, very thick]  (11.25,5.5) circle [radius=0.1]  ;
\draw[fill] (12.75,5.5) circle [radius=0.1]  ;
\draw[thick,red] plot[smooth,tension=1] coordinates {(11.25,5.5)  (12.225,5.95) (13.475, 6)};
\node [right] at (13.475, 6) {{\scriptsize (r$_1$,r$_2$,r$_3$)}};
\node at (12, 3.75) {{\small Type III'$_\frac{1}{2}$}};

\draw[thick]  (5.475,8)--(5.7,8);
\draw[thick]  (5.475,10)--(5.7,10);  
\draw[thick]  (5.475,8)--(5.475,10);

\draw[thick]  (3.25,10)--(5.7,10);  
\draw[thick]  (3.25,8)--(5.7,8);
\draw[thick,green, dashed]  (3.25,10)--(3.25,9.5);
\draw[thick, green, dashed]   (3.25,8)--(3.25,8.5);
\draw[thick, green, dashed, ->]   (3.25,9)--(3.25,9.6);
\draw[thick,  green, dashed, ->]   (3.25,9)--(3.25,8.4);

\filldraw[color=black!, fill=yellow!, very thick]  (3.25,9) circle [radius=0.1]  ;
\draw[fill] (4.75,9) circle [radius=0.1]  ;
\draw[thick,red] plot[smooth,tension=1.7] coordinates {(5.475,9.5)  (4.1,9) (5.475, 8.5)};

\node [right] at (5.475, 9.5) {{\tiny (r$_1$+1,r$_2$-1,r$_3$)}};
\node [right] at (5.475, 8.5) {{\scriptsize  (r$_1$,r$_2$,r$_3$)}};
\node at (4, 7.25) {{\small Type II'$_0$}};

\draw[thick]  (13.475,8)--(13.7,8);
\draw[thick]  (13.475,10)--(13.7,10); 
\draw[thick]  (13.475,8)--(13.475,10);
\draw[thick,red]  (11.25,9.65)--(13.475,9.65);
\draw[thick]  (11.25,10)--(13.7,10);  
\draw[thick]  (11.25,8)--(13.7,8);
\draw[thick,green, dashed]  (11.25,10)--(11.25,9.5);
\draw[thick, green, dashed]   (11.25,8)--(11.25,8.5);
\draw[thick, green, dashed, ->]   (11.25,9)--(11.25,9.6);
\draw[thick,  green, dashed, ->]   (11.25,9)--(11.25,8.4);
\filldraw[color=black!, fill=yellow!, very thick]  (11.25,9) circle [radius=0.1]  ;
\draw[fill] (12.75,9) circle [radius=0.1]  ;

\draw[thick,red] plot[smooth,tension=1] coordinates { (11.25,8.35) (11.7,8.5) (12.5,9.2) (13.475,9.35)};

\node [right] at (13.475,9.65) {{\tiny (r$_1$,r$_2$,r$_3$+1)}};
\node [right] at (13.475,9.3) {{\scriptsize (r$_1$,r$_2$,r$_3$)}};
\node at (12, 7.25) {{\small Type II'$_\frac{1}{2}$}};

\end{tikzpicture}
\caption{{\small The isotopy classes of $1$-strings.}}
\label{B 1-string}
\end{figure}

\begin{figure}[h] 
\centering
\begin{tikzpicture} [scale= 0.7][yscale=1, xscale=0.92]
\draw[thick] (0,0)--(16,0);
\draw[thick] (0,0)--(0,7);
\draw[thick] (16,0)--(16,7);
\draw[thick] (8,0)--(8,7);
\draw[thick] (0,4)--(16,4);

\draw[thick] (0,0.5)--(16,0.5);
\draw[thick] (0,3.5)--(16,3.5);
\draw[thick] (0,4)--(16,4);
\draw[thick] (0,7)--(16,7);

\draw[thick] plot[smooth, tension=2.25]coordinates {(2.525,1) (5.475,2) (2.525,3)};
\draw[thick]  (2.525,1)--(2.3,1);
\draw[thick]  (2.525,3)--(2.3,3);
\draw[thick]  (2.525,1)--(2.525,3);
\draw[fill] (3.25,2) circle [radius=0.1]  ;
\draw[fill] (4.75,2) circle [radius=0.1]  ;
\draw[thick,red] plot[smooth,tension=2.25] coordinates {(2.525,1.5)  (5.1125,2) (2.525, 2.5)};
\node at (4, .25) {{\small Type V}};
\node [left] at (2.525, 2.5) {{\scriptsize (r$_1$,r$_2$,r$_3$)}};
\node [left] at (2.525,1.5)  {{\tiny (r$_1$+3,r$_2$-2,r$_3$)}};

\draw[thick]  (10.525,4.5)--(10.525,6.5);
\draw[thick] plot[smooth, tension=2.25]coordinates {(10.525,4.5) (13.475,5.5) (10.525,6.5)};
\draw[thick]  (10.525,4.5)--(10.3,4.5);
\draw[thick]  (10.525,6.5)--(10.3,6.5);
\draw[fill] (11.25,5.5) circle [radius=0.1]  ;
\draw[fill] (12.75,5.5) circle [radius=0.1]  ;
\draw[thick,red] (10.525,5.5) -- (11.25,5.5);
\node at (12, 3.75) {{\small Type III$_0$}};
\node [left] at (10.525,5.5) {{\scriptsize (r$_1$,r$_2$,r$_3$)}};

\draw[thick]  (2.525,4.5)--(2.525,6.5);
\draw[thick] plot[smooth, tension=2.25]coordinates {(2.525,4.5) (5.475,5.5) (2.525,6.5)};
\draw[thick]  (2.525,4.5)--(2.3,4.5);
\draw[thick]  (2.525,6.5)--(2.3,6.5);
\draw[fill] (3.25,5.5) circle [radius=0.1]  ;
\draw[fill] (4.75,5.5) circle [radius=0.1]  ;
\draw[thick,red] plot[smooth,tension=1.7] coordinates {(2.525,6)  (4.1,5.5) (2.525, 5)};
\node at (4, 3.75) {{\small Type II$_0$}};
\node [left] at (2.525,6) {{\scriptsize (r$_1$,r$_2$,r$_3$)}};
\node [left] at (2.525, 5) {{\tiny (r$_1$+1,r$_2$-1,r$_3$)}};

\draw[thick]  (10.525,1)--(10.525,3);
\draw[thick] plot[smooth, tension=2.25]coordinates {(10.525,1) (13.475,2) (10.525,3)};
\draw[thick]  (10.525,1)--(10.3,1);
\draw[thick]  (10.525,3)--(10.3,3);
\draw[fill] (11.25,2) circle [radius=0.1]  ;
\draw[fill] (12.75,2) circle [radius=0.1]  ;
\draw[thick,red] (11.25,2)--(12.75,2);
\node at (12, 0.25) {{\small Type VI}};

\end{tikzpicture}
\caption{{\small The isotopy classes of $n$-strings.}}  \label{B n-string}
\end{figure}

Based on our definition, $j$-strings are assumed to be in normal form. 
	As before, we can define \emph{crossings} and \emph{essential segments} of $j$-string as in the case for admissible curves in normal form and denote the set of crossings of a $j$-string $g$ by $cr(g).$

Now, let us adapt the discussion to trigraded curves. Choose trigradings $\check{b}_j, \check{d}_j$ of $b_j,d_j$ for $1 \leq j \leq n,$ such that
\begin{equation} \label{basic trigrading condition}
I^{trigr}(\check{d}_j,\check{b}_j) = (1+ q_3)(1+q_1^{-1}q_2), \qquad
 I^{trigr}(\check{b}_j,\check{b}_{j+1}) = 1 + q_3.
\end{equation}
These conditions determine the trigradings uniquely up to an overall shift $\chi(r_1,r_2,r_3).$

Suppose $\check{c}$ is a trigrading of an admisible curve $c$ in normal form. 
	If $a \subset c$ is  a connected component of $c \cap D_j$ for some $j$, and $\check{a}$ is $\check{c}|_{a\setminus \Lambda}$, then $\check{a}$ is evidently determined by $a$ together with the local index $\mu^{trigr}(\check{d}_{j-1}, \check{a};z)$ or $\mu^{trigr}(\check{d}_{j}, \check{a};z)$ at any point $z \in (d_{j-1} \cup d_j) \cap a.$
	Moreover, if there is more than one such point, the local indices determine each other.
	
	In \cref{sixtype}, \cref{2types0}, and \cref{2typesn}, we classify the types of pair $(a,\check{a})$ with the local indices. 
	For instance, consider the type $1(r_1, r_2, r_3)$ with $(k-1)$-crossing $z_0 \in d_{k-1} \cap a$ and $k$-crossing $z_1 \in d_{k} \cap a$ and we have that the local index at $z_0$ and at $z_1$ to be $\mu^{trigr}(\check{d}_{k-1}, \check{a};z_0) = (r_1, r_2, r_3)$ and $\mu^{trigr}(\check{d}_{k}, \check{a};z_1) = (r_1 +1, r_2, r_3)$ respectively.

We recall, in \cref{tribundle}, that there is a unique lift $\check{f}$ that is the identity on the boundary for any diffeomorphism $f \in \DiffB$ to a diffeomorphism of $\wRDAz.$
	Similarly, we can lift mapping classes in $\DB$ to mapping classess in $\wRDAz.$
   Denote by $\check{t}^A_{b_j}$ the canonical lift of the twist $t^A_{b_j}$ along the curve $b_j$ in $\DA.$

\begin{proposition} \label{injBA}
 Understanding $\wRDAz$ as a subset of $\wRDA,$ the map $ \psi: \cA(B_n) \ra \MCG\left(\wRDAz \right) $ defined by
$ \psi(\sigma^B_i) = \begin{cases} 
      [\check{t}^A_{b_n}|_{\wRDAz}] & \text{ for } i = 1; \\
      [\check{t}^A_{b_{n+i-1}}, \check{t}^A_{b_{n-(i-1)}}|_{\wRDAz}] & \text{ for } i \geq 2, \\
   \end{cases}
$
is an injective group homomorphism. 

\end{proposition}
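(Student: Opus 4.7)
The plan is to deduce the injectivity of $\psi$ directly from the injectivity of $\Psi$ (\cref{injB}) by factoring $\psi$ through a canonical lifting operation, and then reversing this factorization via a projection $\MCG(\wRDAz) \to \MCG(\DA)$.

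First I would verify that $\psi$ is a well-defined group homomorphism. Canonical lifts are unique (determined by the condition of being the identity on the boundary), so the assignment $[f] \mapsto [\check f]$ defines a group homomorphism $\MCG(\DA) \to \MCG(\wRDA)$ because the canonical lift of a composition is the composition of canonical lifts. Representatives of elements in $\Psi(\cA(B_n))$ can be chosen to commute with the rotation $r$ (as they arise as lifts from $\DB$ through the branched cover $q_{br}$), and in particular to fix $0 \in \DA$ pointwise; their canonical lifts therefore preserve $\wRDAz \subset \wRDA$, so restriction to $\wRDAz$ is well-defined. Composing everything, $\psi$ factors as
\[
\cA(B_n) \xrightarrow{\;\Psi\;} \cA(A_{2n-1}) \xrightarrow{\mathrm{can.\ lift}} \MCG(\wRDA) \xrightarrow{\mathrm{restr.}} \MCG(\wRDAz),
\]
which is a composition of group homomorphisms.

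For injectivity, I would construct a projection $\Pi : \MCG(\wRDAz) \to \MCG(\DA)$ using the tower $\wRDAz \xrightarrow{\mathfrak p} \RDAz \to \DA \setminus \Delta_0 \hookrightarrow \DA$. A mapping class in $\MCG(\wRDAz)$ that is trivial admits an isotopy to the identity that descends along this tower to an isotopy in $\DA \setminus \Delta_0$ fixing $\Delta$; such an isotopy then extends to one in $\DA$ fixing $\Delta$ by forgetting the (unmarked in $\DA$) point $0$. This shows $\Pi$ is a well-defined homomorphism, and by construction $\Pi \circ \psi = \Psi$. Hence if $\psi(\beta) = 1$ then $\Psi(\beta) = 1$, and \cref{injB} forces $\beta = 1$.

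The main obstacle is verifying the technical compatibility of the various covering spaces: one must check that canonical lifts of the chosen representatives of $\Psi(\sigma^B_i)$ really do preserve the subset $\wRDAz$ (especially for $\sigma^B_1$, whose image involves the half twist along $b_n$ passing through $0$, requiring a judicious choice of representative fixing $0$), and that the projection $\Pi$ transmits triviality across each stage of the tower. These are standard covering-space arguments once the correct representatives are pinned down, but they require careful bookkeeping of the different sets of marked points $\Delta$ and $\Delta_0$.
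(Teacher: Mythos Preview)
Your proposal is correct and aligns with the paper's approach; the paper's proof consists of the single sentence ``This can be proven similar to \cref{injB},'' and your factorization through $\Psi$ together with the projection $\Pi$ is a reasonable way to unpack that sentence.

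One remark on emphasis: the step you label as ``standard covering-space arguments''---that an isotopy of $\check f$ to the identity in $\wRDAz$ can be made to descend through the tower to an isotopy of $f$ in $\DA$---is precisely the Birman--Hilden-type content that drives the proof of \cref{injB} itself (an arbitrary isotopy need not be through fibre-preserving maps, so descent is not automatic). So while your structure differs slightly from a literal repetition of the \cref{injB} argument (you invoke \cref{injB} as a black box and reduce to it via $\Pi$, rather than running a fresh Birman--Hilden argument for the cover $\wRDAz \to \RDB$), the substantive analytic input is the same in both routes. Your proposal correctly flags this as the main obstacle.
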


\begin{proof}
 This can be proven similar to  \cref{injB}.
\end{proof}

A crossing of $c$ will be also a \emph{crossing of $\check{c}$}, and we denote the set of crossings of $\check{c}$ by $cr(\check{c}).$
 Note that as set, $cr(\check{c}) = cr(c).$ 
 However, a crossing of $\check{c}$ comes with a local index in $\Z \times \Z \times \Z/ 2\Z.$
 
	Moreover, we assign each crossing $y$ of $\check{c}$ to a $4$-tuple $(y_0, y_1, y_2, y_3)$ where $y_0$ denote the vertical curve which contains the crossing $y \in d_{y_0} \cap c$, and $(y_1, y_2, y_3)$ is the local index $(\mu_1, \mu_2,\mu_3)$ of the crossing $y.$  \label{local index function}
	
	We define the \emph{essential segments} of $\check{c}$ as the essential segments of $c$ together with the trigradings which can be obtained from  local indices assigned to the ends of the segments.
	
	We also define a \emph{$j$-string of $\check{c}$} as a connected component of $\check{c} \cap \left( D_{j-1} \cup D_{j} \right)$ together with the trigrading induced from $\check{c}.$ 
	Denote the set of $j$-string of $\check{c}$ by $st(\check{c},j).$
	
	On top of that, we define a trigraded $j$-string as a trigraded curve in $D_{j-1} \cup D_{j}$ that is a connected component of $\check{c} \cap (D_{j-1} \cup D_{j})$ for some trigraded curve $\check{c}.$ 
	
	In \cref{B j-string}, \cref{B 1-string}, and \cref{B n-string}, we depict the isotopy classes of trigraded $j$-strings. 
	Since $j$-string of type $VI$ does not intersect with $d_{j-1} \cup d_{j+1},$ we say that a trigraded $j$-string $\check{g}$ with the underlying $j$-string $g$ of type $VI$ has type $VI(r_1, r_2, r_3)$ if $\check{g}= \chi^B(r_1, r_2, r_3) \check{b}_j$

     The next result shows how one can compute the trigraded intersection number of $\check{b}_j$ with any given trigraded curve. 

\begin{lemma} \label{compute tri int}
Let $(c, \check{c})$ be a trigraded curve. 
Then, $I^{trigr}(\check{b}_j, \check{c})$ can be computed by adding up contributions from each trigraded $j$-string of $\check{c}.$
 For $j >0,$ the contributions are listed in the following table:
\begin{center}
\begin{tabular}{ c|c|c|c} 

 $I_0(0,0,0)$ & $II_0(0,0,0)$ & $II'_0(0,0,0)$ & $III_0(0,0,0)$  \\ 
 \hline 
$q_1 + q_2 + q_2q_3 + q_1q_3 $ & $q_1 + q_2 + q_2q_3 + q_1q_3 $ & $1 + q_1q_2^{-1} + q_3 + q_1q_2^{-1} q_1q_3 $  & $q_2 + q_2q_3$  \\

\end{tabular}
\end{center}
\begin{center}
\begin{tabular}{ c|c|c|c|c|c} 
 $III'_0(0,0,0)$ & $IV$  & $IV'$ & $V$ &  $V'$  & $VI(0,0,0)$\\
 \hline
 $1 + q_3$ & 0 & 0 & 0 & 0 & $1+q_2 +q_3 + q_2q_3$

\end{tabular}
\end{center}
and the remaining ones can be computed as follows: to determine the contribution of a component of type, say, $I_u(r_1,r_2,r_3)$, one takes the contribution of $I_0(0,0,0)$ and multiplies it by $q_1^{r_1}q_2^{r_2}q_3^{r_3}(q_1^{-1}q_{2})^u.$
For $j=0,$ the relevant contributions are
\begin{center}
\begin{tabular}{ c|c|c|c|c|c} 
  $II'_0(0,0,0)$ &  $II'_\frac{1}{2}(0,0,0)$ & $III'_0(0,0,0)$ & $III'_\frac{1}{2}(0,0,0)$ & $V'$  & $VI(0,0,0)$\\ 
 \hline 
$ 1+ q_3 + q_1q_2^{-1} + q_1q_2^{-1}q_3$ & $ 1+ q_3 + q_1^{-1}q_2 + q_1^{-1}q_2q_3$ & $1 + q_3 $  & $q_1^{-1}q_2 + q_3$ & 0 & $1 + q_2$ 
\end{tabular}
\end{center}
and the remaining ones can be computed as follows: to determine the contribution of a component of type, say, $II'_u(r_1,r_2,r_3)$, one takes the contribution of $II'_0(0,0,0)$ and multiplies it by $q_1^{r_1}q_2^{r_2}q_2^{r_2}(q_1^{-1}q_2q_3)^u.$

\end{lemma}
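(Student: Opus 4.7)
My plan is to follow the same overall strategy as in the type $A$ case of \cite{KhoSei}, splitting the proof into three stages: localisation, reduction via symmetries, and case-by-case verification of base types.

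\textbf{Stage 1 (Localisation).} The first step is to observe that since $b_j \subset D_{j-1} \cup D_j$, any representative of the isotopy class of $c$ that meets $b_j$ minimally can be arranged so that $b_j \cap c$ lies entirely inside $D_{j-1} \cup D_j$. In particular, the intersection decomposes as a disjoint union indexed by the $j$-strings of $c$, and the local indices at each intersection point depend only on the trigraded $j$-string containing it (this is because the local index is computed from a path $\kappa$ that can be taken to lie arbitrarily close to the intersection point). Consequently, $I^{trigr}(\check b_j,\check c)$ is the sum of the contributions from the trigraded $j$-strings of $\check c$, so it suffices to tabulate the contribution for each isotopy class of trigraded $j$-string.

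\textbf{Stage 2 (Reduction to base types).} I would then reduce the infinite families to the base cases. By property (T2) of \cref{triintpro}, shifting a $j$-string by $\chi^B(r_1,r_2,r_3)$ multiplies its contribution by $q_1^{r_1}q_2^{r_2}q_3^{r_3}$, which handles all the shift parameters. For the integer index $u$ in the families $I_u, II_u, II'_u, III_u, III'_u$ (and the half-integer version for $j=1$), I would apply $[t^B_{b_j}]$ (respectively $[(t^B_{b_1})^2]$ for $j=1$) to pass between consecutive $u$-values. Combining the equivariance property (T1) with \cref{action}, which says that $\check t^B_{b_j}$ acts on trigraded curves meeting $b_j$ as a shift by $\chi^B(-1,1,0)$ for $j>1$ and by $\chi^B(-1,1,1)$ for $j=1$, this yields precisely the multiplicative factors $q_1^{-1}q_2$ (respectively $q_1^{-1}q_2q_3$) asserted in the lemma. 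This reduces the computation to the finite list of base types $I_0, II_0, II'_0, III_0, III'_0, IV, IV', V, V', VI$ (for $j>1$) and $II'_0, II'_{1/2}, III'_0, III'_{1/2}, V', VI$ (for $j=1$), all at shift $(0,0,0)$.

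\textbf{Stage 3 (Verification of base cases).} For each base type I would directly compute the contribution from the defining formula of $I^{trigr}$. The exceptional types $IV, IV', V, V'$ contribute $0$ because the underlying $j$-string can be isotoped away from $b_j$ within $D_{j-1}\cup D_j$. The type $VI$ at $(0,0,0)$ is the case $\check c\simeq \check b_j$: for $j>1$ the underlying curves share two non-$\{0\}$ endpoints in $\Delta$, and minimal perturbation produces two intersection points at those marked points with local indices $(0,0,0)$ and $(0,1,0)$, giving $(1+q_3)(1+q_2)$; for $j=1$ the curves are isotopic and touch $\{0\}$, so the first bullet of the definition of $I^{trigr}$ applies and returns $(1+q_2)$. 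For the remaining base types, I would compute the local index at each intersection point of $b_j$ with a representative in the picture: the crossings with $d_{j-1}$ or $d_{j}$ are normalised to $(0,0,0)$ by choice of base representative, and I would then propagate along $b_j$ using \eqref{localpath}, combining with the symmetry \cref{locinsym} (using the correct formula depending on whether the intersection lies off $\Delta$, in $\Delta\setminus\{0\}$, or at $\{0\}$) and with the normalisation \eqref{basic trigrading condition} of the trigradings $\check b_j$, $\check d_j$.

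\textbf{Expected main obstacle.} The delicate point is the case $j=1$, where the half-integer families $II'_{1/2}, III'_{1/2}$ appear and $b_1$ has $\{0\}$ as an endpoint, forcing one to use the third bullet of the definition of $I^{trigr}$ with its twisted prefactor $(1+q_1^{-1}q_2 q_3)$ and the third symmetry in \cref{locinsym}. Correctly bookkeeping the $q_3$-shifts (the $\Z/2\Z$-component of the trigrading, which records the $\pi$-rotation ambiguity in the branched cover) at intersection points lying at $\{0\}$ is where the computation is easiest to get wrong, and it is precisely here that the type $B$ case genuinely differs from the type $A$ story of \cite{KhoSei}.
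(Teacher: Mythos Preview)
Your proposal is correct and follows essentially the same approach as the paper's (very terse) proof, which simply says ``Apply \cref{action} as well as (T2) and (T3) of \cref{triintpro}.'' The only minor difference is that where you do the base-case computations directly via \cref{locinsym} and the normalisation \eqref{basic trigrading condition}, the paper implicitly also invokes (T3) (itself a consequence of \cref{locinsym}) to shortcut some of these by passing between a type and its primed mirror; both routes work.
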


\begin{proof}
Apply \cref{action} a well as $(T2)$ and $(T3)$ of \cref{triintpro}.
\end{proof}

\subsection{Bigraded curves and bigraded multicurves  in $\wRDAz$}
We briefly remind the reader the definition of a bigraded curve in $\wRDAz$.
Refer to \cite[Section 3d]{KhoSei} for a more detailed construction.
	Consider the projectivasation  $\RDA := PT \left(\DA \setminus \Delta \right)$ of the tangent bundle of $\DA \setminus \Delta.$ 
	 The covering $\wRDA$ of $\RDA$ is classified by
	the cohomology class $C \in H^1(\RDA; \Z \times \Z)$ defined as follows:
	\begin{align}
 C([point \times \lambda_i]) &= (-2,1) \ \text{for } i= -n, \cdots, -1, 1, \cdots, n; \\
 C([\R \text{P}^1 \times point]) &= (1,0).
\end{align}
A bigrading of a curve $c \in \DA$ is a lift $\ddot{c}$ of $s^A_c $ to $\wRDA$ where  $s^A_c: c \setminus \Delta \ra \RDA$ is the canonical section given by $s^A_c(z) = T_zc.$ 
  A \emph{bigraded curve} is a pair $(c, \ddot{c}),$ where sometimes we just abbreviate as $\ddot{c}.$  
  A bigraded multicurve $\ddot{\fc}$ consists of a disjoint union of a finite collection of disjoint bigraded curves.
  There is an obvious notion of isotopy for bigraded multicurves.

\subsection{Lifting of trigraded curves to bigraded curves}\label{lift section}
Our goal is to define a map  $\m: \check{\fC} \ra \ddot{\undertilde{\wt{\fC}}}$ from the set $\check{\fC}$ of isotopy classes of trigraded curves to the set $\ddot{\undertilde{\wt{\fC}}}$ of isotopy classes of bigraded multicurves.
Let $c $ be a curve in $\D^B_{n+1}$ with trigrading $\check{c}$.
First consider the case when $c \cap \{0\} = \emptyset.$
Recall the map $q_{br}: \DA \ra \DB $ as defined in \cref{brcover}. Then,  $q_{br}^{-1}{(c)}$ has two connected components in $\DA$; denote them as $\widetilde{c}, \undertilde{c}$, such that  $\wt{c} \setminus \Delta = \pi^A \circ \fp \circ \check{c}(c \setminus \Lambda)$ and $\ut{c} \setminus \Delta = \pi^A \circ \fp \circ \chi^B(0,0,1)\check{c}(c \setminus \Lambda).$
Define $\ddot{\wt{c}}: \wt{c}\setminus \Delta \ra \wRDA$ as 
$\ddot{\widetilde{c}} := \wt{\sF} \circ \check{c}  \circ q_{br}|_{\wt{c} \setminus \Delta};$
similarly $\ddot{\ut{c}} : \ut{c}\setminus \Delta \ra \wRDA $ is defined by 
$\ddot{\widetilde{c}} := \wt{\sF} \circ \chi^B(0,0,1) \check{c} \circ q_{br}|_{\wt{c} \setminus \Delta}$
where $\wt{\sF}: \wRDAz \ra \wRDA$ is the unique map induced by the inclusion $\sF: \RDAz \ra \RDA.$
It is easy to check that these are indeed bigradings of their respective curves.
     On the other hand,  if $c$ contains $0$ as one of its endpoints,we define $\widetilde{\undertilde{c}} := \wt{ c \setminus \{0\}} \amalg \{0\} \amalg \ut{c \setminus \{0\}} $, which is just a single connected component. 
     Furthermore, the $\ddot{\wuc}$  is defined to be the unique continuous extension of  $\ddot{\widetilde{c \setminus \{0\}}} \amalg \ddot{\ut{c \setminus \{0\}}}$, which is again an easy verification that it is a bigrading of $\wuc.$

	 In total, we define the map $\m: \check{\fC} \ra \ddot{\undertilde{\wt{\fC}}}$  as follows: for a trigraded curve $\left(c, \check{c} \right)$ in $\wRDAz,$
    \[ \m(\left(c, \check{c} \right))  :=  \begin{cases} 
      (\wuc, \ddot{\tilde{\undertilde{c}}} ),  & \text{ if $c $ has $\{0\}$ as one of its endpoints;} \\
      (\wt{c}, \ddot{\tilde{c}}) \amalg (\undertilde{c},\ddot{\undertilde{c}}), & \text{ otherwise.}\\
       
   \end{cases}
\]
  Due to the isotopy lifting property of the space, $\m$ is well-defined on the isotopy classes of trigraded curves.

Recall the natural induced action of $\cA(B_n) \cong$ MCG$(\DB, \{0\} )$ on $\check{\fC}$ given in the paragraph before \cref{freeact}.
 Since $\mathcal{A}(A_{2n-1}) \cong $ MCG$( \DA )$ acts on $\ddot{\undertilde{\wt{\fC}}} $,  there exists an induced action of $\cA(B_n) \cong$ MCG$_p (\DA )$ on bigraded curves.
\begin{proposition} \label{topological equivariant}

The map 
$\m : \check{\fC} \ra  \ddot{\undertilde{\wt{\fC}}}$ from the isotopy classes of trigraded curves in $\wRDAz$  to  the isotopy classes of bigraded multicurves in $\wRDA$
is $\cA({B_{n}})$-equivariant.
\begin{center}
\begin{tikzpicture} [scale=0.8]
\node (tbB) at (-2.5,1.5) 
	{$\mathcal{A}(B_n)$};

\node (tbA) at (10,1.5) 
	{${\cA}(B_n) \hra \mathcal{A}(A_{2n-1})$};

\node[align=center] (cB) at (0,0) 
	{Isotopy classes $\check{\fC}$ of\\ trigraded curves  in $\DB$};
\node[align=center] (cA) at (6.5,0) 
	{Isotopy classes $\ddot{\undertilde{\wt{\fC}}}$ of\\ bigraded multicurves  in $\DA$};

\coordinate (tbB') at ($(tbB.east) + (-1,-1)$);

\coordinate (tbA') at ($(tbA.west) + (1.5,-.95)$);

\draw [->,shorten >=-1.5pt, dashed] (tbB') arc (245:-70:2.5ex);

\draw [->, shorten >=-1.5pt, dashed] (tbA') arc (-65:250:2.5ex);

\draw[->] (cB) -- (cA) node[midway,above]{$\mathfrak{m}$}; 

\end{tikzpicture}
\end{center}
\end{proposition}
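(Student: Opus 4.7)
The plan is to reduce the equivariance statement to an equivariance statement for a canonical lift of diffeomorphisms through the tower $\wRDAz \xra{\fp} \RDAz \xra{\fq} \RDB$ and its counterpart $\wRDA \xra{\fp'} \RDA$, related by the inclusion-induced map $\wt{\sF}:\wRDAz \to \wRDA$. Fix a representative diffeomorphism $f \in \Diff(\DB,\{0\})$ of $\sigma \in \cA(B_n)$ and let $f^A \in \Diff(\DA)$ be its unique fiber-preserving lift via $q_{br}$, so that $\Psi([f]) = [f^A]$ (cf.\ \cref{injB}). For a trigraded curve $(c,\check{c})$, by the very definition of the action, $[f]\cdot \check{c} = \check{f}\circ \check{c}\circ f^{-1}$ and $[f^A]\cdot \ddot{\fc} = \ddot{f^A}\circ \ddot{\fc}\circ (f^A)^{-1}$ on appropriate domains, where $\check{f}$ and $\ddot{f^A}$ are the unique lifts to $\wRDAz$ and $\wRDA$ respectively that restrict to the identity on the boundary fibers.

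First, I would verify the statement on underlying curves: since $q_{br}\circ f^A = f\circ q_{br}$, one has $q_{br}^{-1}(f(c)) = f^A(q_{br}^{-1}(c))$, so $f^A$ carries the pair $(\wt{c},\ut{c})$ (or the connected lift $\wuc$) to the pair corresponding to $f(c)$. The more substantial step is to match the gradings. For this I would establish the key intertwining identity
\[
\wt{\sF}\circ \check{f} \;=\; \ddot{f^A}\circ \wt{\sF}
\]
as maps $\wRDAz \to \wRDA$. Both sides are lifts of $\sF\circ f_\ast = f^A_\ast\circ \sF: \RDAz \to \RDA$ (where $f_\ast$ denotes the projectivised differential), and both restrict to the identity on boundary fibers; by the uniqueness of such lifts for a covering map, they must coincide.

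With this identity in hand, the equivariance follows by direct substitution: on the component $\wt{c}$, say,
\[
\ddot{\wt{f(c)}} \;=\; \wt{\sF}\circ \check{f}\circ \check{c}\circ f^{-1}\circ q_{br}\big|_{\wt{f(c)}\setminus \Delta} \;=\; \ddot{f^A}\circ\wt{\sF}\circ \check{c}\circ q_{br}\circ (f^A)^{-1}\big|_{\wt{f(c)}\setminus \Delta} \;=\; [f^A]\cdot \ddot{\wt{c}},
\]
and analogously for $\ut{c}$. The same calculation applied to the twin shift $\chi^B(0,0,1)\check{c}$ handles the other component when $c\cap\{0\}=\emptyset$, and it also handles the single-component case $\wuc$ once restricted to $\wuc\setminus\{0\}$.

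The main obstacle will be the case where $c$ has $\{0\}$ as an endpoint. Here $\{0\}$ is the branch point of $q_{br}$, the bigrading $\ddot{\wuc}$ was defined by continuous extension from $\wt{c\setminus\{0\}}\amalg \ut{c\setminus\{0\}}$ across the puncture, and $\check{c}$ itself is only defined on $c\setminus\Lambda$. I would handle this by working away from $\{0\}$ using the intertwining identity above, and then invoking continuity of both $\ddot{f^A}$ and the extension procedure to conclude that the two bigradings of $\wt{f(\wuc)}$, which agree on the dense subset $\wuc\setminus\{0\}$, must agree everywhere. Once this is done, well-definedness on isotopy classes follows from the isotopy lifting property already used in the definition of $\mathfrak{m}$.
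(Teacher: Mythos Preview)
Your proposal is correct and matches the paper's approach; in fact the paper gives only the single sentence ``This follows directly from the definition $\mathfrak{m}$ and the actions,'' so your argument is a faithful unpacking of what that sentence asserts. Your key intertwining identity $\wt{\sF}\circ \check{f} = \ddot{f^A}\circ \wt{\sF}$ via uniqueness of boundary-normalised lifts, together with the continuity argument at the branch point, is exactly the content that justifies the paper's one-line claim.
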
 
\begin{proof} This follows directly from the definition $\mathfrak{m}$ and the actions.
\end{proof}

\subsection{Bigraded intersection number and normal form of bigraded curves in $\wRDA$}

The local index for bigraded curve in $\wRDA$ is defined in the same spirit as trigraded curve $\wRDAz.$ 
 For a more explanation, we refer the reader to \cite[Section 3d]{KhoSei}. 

To remind the reader, for two bigraded curves $\ddot{c}_0, \ddot{c}_1$ that do not intersect at $\partial \DA,$ the bigraded intersection number is defined by 
\begin{equation} \label{bigrint}
I^{bigr} (\ddot{c}_0, \ddot{c}_1) = 
  (1+q_1^{-1}q_2) \left( \ \sum_{z \in (c_0 \cap c_1') \backslash \Delta} q_1^ {\mu_1(z)} q_2^ {\mu_2(z)} \ \right) 
 + \left( \ \sum_{z \in (c_0 \cap c_1') \cap \Delta } q_1^ {\mu_1(z)} q_2^ {\mu_2(z)} \ \right)  .
\end{equation}
We extend the definition of bigraded intersection number of bigraded curves to bigraded multicurves by adding up the bigraded intersection numbers of each pair of bigraded curves.

 To talk about the normal form of bigraded  curves on $\DA,$
we need to fix a set of basic bigraded curves.
To do so, first recall the set basic trigraded curves $(b_j, \check{b}_j)$ and $(d_j, \check{d}_j)$ as defined in \cref{normal DB} (see the paragraph after \cref{B n-string}).
Denote this set of basic trigraded curves as $\check{\fB}$.
Consider,  for each $(c,\check{c}) \in \check{\fB}$, the lifts 
$\m(c, \check{c})  :=  \begin{cases} 
      (\wuc, \ddot{\tilde{\undertilde{c}}} )  & \text{ if $(c, \check{c}) = (b_1, \check{b}_1) $;} \\
      (\wt{c}, \ddot{\tilde{c}}) \amalg (\undertilde{c},\ddot{\undertilde{c}}) & \text{ otherwise,}\\
   \end{cases} $
	where $\wt{c}$ is chosen to have all of its points in positive real parts;
 so $\ut{c}$ has all its points in negative real parts. 
We shall fix the set of basic trigraded curves $(\theta_j, \ddot{\theta}_j)$, $(\varrho, \ddot{\varrho}_j)$ as follows:
	\begin{itemize}
	\item Choose $(\theta_n, \ddot{\theta}_n) := (\wt{d}_1, \ddot{\wt{d}_1} ) ;$
	\item Choose $(\theta_{n+j-1}, \ddot{\theta}_{n+j-1}) := (\wt{d}_j, \ddot{\wt{d}_j} ) $ and $(\theta_{n-j+1}, \ddot{\theta}_{n-j+1}) := (\ut{d_j}, \ddot{\ut{d_j}} )  $ for $2 \leq j \leq n;$
	\item Choose $( \varrho_{n+j-1}, \ddot{\varrho}_{n+j-1}) := (\wt{b}_j, \ddot{\wt{b}}_j ) $ and $(\varrho_{n-j+1}, \ddot{\varrho}_{n-j+1}) := (\ut{b_j}, \ut{\ddot{b}_j} )  $ for $2 \leq j \leq n;$
	\item Choose $( \varrho_{n}, \ddot{\varrho}_{n}) := (\ut{\wt{b}_1}, \ut{\ddot{\wt{b}}_1}).$
	\end{itemize}
The following figure illustrates the basic curves $\theta_j$ and $\varrho_j$ chosen:
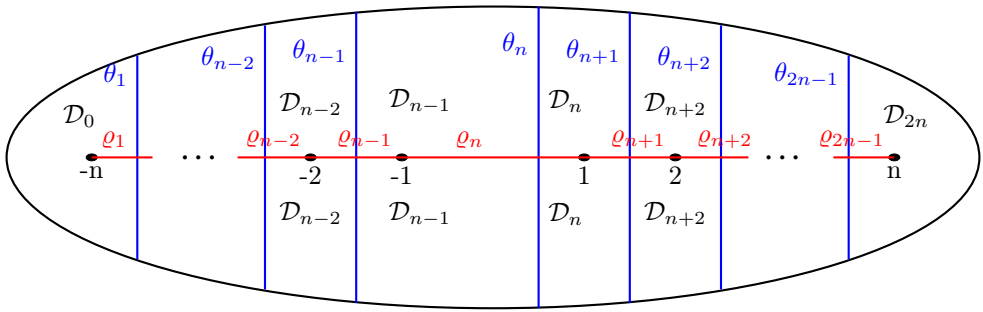
\begin{figure}[H]  
\centering
\begin{tikzpicture}  [xscale=1.6, yscale=1]
\draw[thick] (0,0) ellipse (4cm and 2cm);
\draw[fill] (-3.3,0) circle [radius=0.045];
\node  at (-2.4,0) {$ \boldsymbol{\cdots}$};
\draw[fill] (-1.5,0) circle [radius=0.045];
\draw[fill] (-0.75,0) circle [radius=0.045];
\draw[fill] (.75,0) circle [radius=0.045];
\draw[fill] (1.5,0) circle [radius=0.045];
\node  at (2.4,0) {$ \boldsymbol{\cdots}$};
\draw[fill] (3.3,0) circle [radius=0.045];

\node [below] at (-3.3,0) {-n};
\node  [below] at (-1.5,0) {-2} ;
\node [below] at (-0.75,0) {-1};
\node [below] at (.75,0) {1};
\node  [below] at (1.5,0) {2} ;
\node [below] at (3.3,0) {n};

\draw [thick,blue] (-2.925, 1.35) -- (-2.925, -1.35);
\draw [thick,blue] (-1.875, 1.75) -- (-1.875, -1.75);
\draw [thick,blue](-1.125, 1.93) -- (-1.125, -1.93);
\draw [thick, blue] (0.375, 2) -- (0.375, -2);
\draw [thick,blue](1.125, 1.93) -- (1.125, -1.93);
\draw [thick,blue] (1.875, 1.75) -- (1.875, -1.75);
\draw [thick,blue] (2.925, 1.35) -- (2.925, -1.35);

\node at (-3.4,0.55) {$\cD_0$};
\node at (3.4,0.55) {$\cD_{2n}$};
\node at (-.6,0.75) {$\cD_{n-1}$};
\node at (-.6,-0.75) {$\cD_{n-1}$};
\node at (.6,0.75) {$\cD_{n}$};
\node at (.6,-0.75) {$\cD_{n}$};
\node at (1.5,0.7) {$\cD_{n+2}$};
\node at (1.5,-0.75) {$\cD_{n+2}$};
\node at (-1.5,0.7) {$\cD_{n-2}$};
\node at (-1.5,-0.75) {$\cD_{n-2}$};

\node[left,blue] at (-2.925, 1.1) {$\theta_1$};
\node[left,blue] at (-1.875, 1.3) {$\theta_{n-2}$};
\node[left,blue] at (-1.125, 1.4) {$\theta_{n-1}$};
\node[left,blue] at (0.375, 1.5) {$\theta_n$};
\node[left,blue] at (1.125, 1.4) {$\theta_{n+1}$};
\node[left,blue] at (1.875, 1.3) {$\theta_{n+2}$};
\node[left,blue] at (2.925, 1.1) {$\theta_{2n-1}$};

\draw [thick,red] (-3.3,0)--(-2.8,0);
\draw [thick,red] (-1.5,0)--(-2.1,0);
\draw [thick,red] (-1.5,0)--(-0.75,0);
\draw [thick,red](-0.75,0) -- (.75,0);
\draw [thick,red] (3.3,0)--(2.8,0);
\draw [thick,red] (1.5,0)--(2.1,0);
\draw [thick,red] (1.5,0)--(.75,0);

\node[above right,red] at (-3.3,0) {$\varrho_1$};
\node[above left,red] at (-1.5,0) {$\varrho_{n-2}$};
\node[above left,red] at (-0.75,0) {$\varrho_{n-1}$};
\node[above right,red] at (1.6,0) {$\varrho_{n+2}$};
\node[above left,red] at (0,0) {$\varrho_{n}$};
\node[above left,red] at (1.5,0) {$\varrho_{n+1}$};
\node[above left,red] at (3.3,0) {$\varrho_{2n-1}$};

\end{tikzpicture}

\caption{The basic curves $\theta_i$ and $\varrho_i$ in the aligned configuration with regions $\cD_i$ for $\DA.$ } \label{NormalformA}

\end{figure}
\begin{lemma}

The bigradings we choose for the set of basic curves in $\DA$ satisfy the following by properties:
	\begin{align}
	I^{bigr}(\ddot{\theta}_j, \ddot{\varrho}_j) &= 1 + q_1^{-1}q_2 & \text{ for $ 1 \leq j \leq 2n-1;$}\\
	I^{bigr}(\ddot{\varrho}_{j}, \ddot{\varrho}_{j+1}) &= 1   & \text{ for $ n \leq j \leq 2n-2;$} \\
	I^{bigr}(\ddot{\varrho}_{j}, \ddot{\varrho}_{j-1}) &=1 & \text{ for $ 2 \leq j \leq n$.}
	\end{align}

\end{lemma}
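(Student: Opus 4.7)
The strategy is to deduce each bigraded intersection number from the corresponding trigraded intersection number in $\DB$, using the defining conditions $I^{trigr}(\check{d}_k, \check{b}_k) = (1+q_3)(1+q_1^{-1}q_2)$ and $I^{trigr}(\check{b}_k, \check{b}_{k+1}) = 1 + q_3$ for the basic trigraded curves. The compatibility needed comes from the observation that the cohomology classes $C_0$ and $C$ classifying the coverings $\wRDAz \to \RDAz$ and $\wRDA \to \RDA$ agree on $[point \times \lambda_i]$ for $i \neq 0$ and on $[\RP \times point]$; consequently, the induced map $\wt{\sF}: \wRDAz \to \wRDA$ is a map of $\Z \times \Z$-coverings. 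It follows that a trigraded local index $(\mu_1, \mu_2, \mu_3) \in \Z \times \Z \times \Z/2\Z$ at a point $z \in c_0 \cap c_1 \subset \DB$ yields bigraded local index $(\mu_1, \mu_2)$ at each preimage $\wt{z}$ or $\ut{z}$ of $z$ in $\DA$.

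Next I would extract local index information from the given trigraded formulas. Since $d_k$ and $b_k$ meet at a single non-marked point in $\DB$, and since the trigraded intersection formula packages a non-marked intersection of index $(\mu_1, \mu_2, \mu_3)$ as $(1+q_3)(1+q_1^{-1}q_2) q_1^{\mu_1} q_2^{\mu_2} q_3^{\mu_3}$, the identity forces the local index at $d_k \cap b_k$ to be $(0,0,0)$. Similarly, the identity $I^{trigr}(\check{b}_k, \check{b}_{k+1}) = 1 + q_3$ combined with the geometric fact that $b_k$ and $b_{k+1}$ meet at a single non-origin marked endpoint forces the local index there to be $(0,0,0)$ as well.

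Now I would verify each of the three bigraded identities by direct computation using the bigraded intersection formula. For the first identity: the pair $(\theta_j, \varrho_j)$ is either of the form $(\wt{d_k}, \wt{b_k})$ or $(\ut{d_k}, \ut{b_k})$ for some $k \geq 2$, or of the form $(\wt{d_1}, \wt{\ut{b_1}})$ when $j = n$. In every case, the two curves meet transversely at a single non-marked point with bigraded local index $(0,0)$, yielding $I^{bigr} = 1 + q_1^{-1}q_2$. For the second and third identities: the pair $(\varrho_j, \varrho_{j \pm 1})$ consists of lifts of consecutive $b_k, b_{k+1}$ (possibly involving the branched lift $\wt{\ut{b_1}}$) sharing a single marked endpoint with bigraded local index $(0,0)$, yielding $I^{bigr} = 1$.

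The main technical difficulty is the case $j = n$, where the two lifts of $b_1$ merge into the single connected curve $\wt{\ut{b_1}}$ through the origin. Here one must verify that the bigrading on $\wt{\ut{b_1}}$ obtained via the continuous extension in the definition of $\mathfrak{m}$ produces local index $(0,0)$ at the non-marked crossings with $\wt{d_1}, \ut{d_1}$ and at the shared marked endpoints $\{\pm 1\}$ with $\wt{b_2}, \ut{b_2}$. This amounts to carefully tracking the trigrading of $b_1$ on both sides of the puncture $\{0\} \in \DB$ and the branch behaviour at the origin of $\DA$.
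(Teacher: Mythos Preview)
Your proposal is correct and is exactly a detailed unpacking of the paper's proof, which consists of the single sentence ``This follows immediately from the construction.'' The construction in question is precisely defining the bigraded basic curves as lifts via $\mathfrak{m}$, so verifying the lemma amounts to the local-index compatibility you describe (which the paper later records as \cref{tribilocin}); your careful treatment of the $j=n$ case is the only nonobvious point and you have identified it correctly.
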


\begin{proof}
This follows immediately from the construction.
\end{proof}

\begin{remark} \label{basic curves A}
Note that the set of basic curves chosen are the same as in \cite{KhoSei}, but the bigradings of these curves are different.
In particular, (1.14) in \cite{KhoSei} is replaced with $I^{bigr}(\ddot{\varrho}_{j}, \ddot{\varrho}_{j-1}) =1 \text{ for $ 2 \leq j \leq n$.}$
\end{remark}

We define crossings, essential segments, $j$-strings and bigraded $j$-strings in a similar fashion to the trigraded case (see \cref{local index function}, after \cref{injBA}).
In particular, given a $j$-crossing $x$ of a bigraded curve $\ddot{c}$, we fix $x_0 := j$ and $(x_1, x_2)$ is the local index $(\mu_1, \mu_2)$ of the crossing $y$.
	We can extend these notion that of multicurves and bigraded multicurves.
    
    Suppose $c_0$ and $c_1$ are two curves in $\DB$ intersecting at $z \in \DB$.
    If $z = 0,$ we require $c_0 \not\simeq c_1.$
    Then, their preimage  $q_{br}^{-1}(c_0)$ and $q_{br}^{-1}(c_1)$  in $\DA$ under the map $q_{br}: \DA \ra \DB$  would also intersect minimally. 
    However,  if $c_0 \cap c_1 \cap \{0\} \neq \emptyset,$ and $c_0 \simeq c_1$, they won't intersect minimally as illustrated below:

\begin{figure}[H]
\centering
\begin{tikzpicture}[scale = 0.6]
\draw (-3,0) circle (2cm);
\draw[fill] (-4.5,0) circle [radius=0.07];
\draw[fill] (-1.5,0) circle [radius=0.07];
\draw [thick, blue] (-4.5,0) arc (180:360:.75);
\draw [thick, blue] (-1.5,0) arc (0:180:.75);
\draw [thick, red] (-4.5, 0) -- (-1.5,0);

\draw (3,0) circle (2cm);
\node at (0,0) {$\simeq$};

\draw[fill] (4.5,0) circle [radius=0.07];
\draw[fill] (1.5,0) circle [radius=0.07];
\draw [thick, blue] (4.5,0) arc (0:180:1.5);
\draw [thick, red] (4.5, 0) -- (1.5,0);

\node [above] at (-2.25,.7) {{\scriptsize $q_{br}^{-1}(c_1)$}};
\node [above] at (-3.6,0) {{{\scriptsize $q_{br}^{-1}(c_0)$}}};
\node [below] at (-1.5,0) {{{\scriptsize $j$}}};
\node [below] at (-4.5,0) {{{\scriptsize $-j$}}};
\node [below] at (3,1.4) {{{\scriptsize $(q_{br}^{-1}(c_1))'$}}};
\node [below] at (3,0) {{{\scriptsize $q_{br}^{-1}(c_0)$}}};
\node [below] at (1.5,0) {{{\scriptsize $-j$}}};
\node [below] at (4.5,0) {{{\scriptsize $j$}}};

\end{tikzpicture}
\caption{{\small The preimages $q_{br}^{-1}(c_0), q_{br}^{-1}(c_1)$ in $\DA$.}} 
\end{figure}
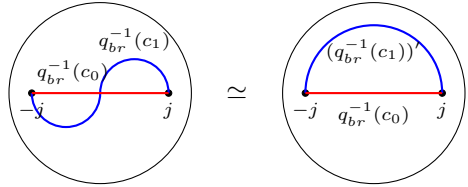
Using this fact, we obtain the following proposition:
\begin{proposition} \label{tribilocin}
Let $\check{c}_0$ and $\check{c}_1$ be two trigraded curves intersecting at $z \in \DB$, with local index $\mu^{trigr}(\check{c}_0, \check{c}_1, z) = (r_1, r_2, r_3)$. If $c_0 \cap c_1 \cap \{0\} \neq \emptyset,$ we require $c_0 \not\simeq c_1$.
	If $z \neq 0$, further suppose that $m({c_0 , \check{c}_0}) = (\wt{c_0}, \check{\wt{c}}_0) \amalg  (\ut{c_0}, \ut{\check{c}_0})$ and  $m({c_1 , \check{c}_1}) = (\wt{c_1}, \check{\wt{c_1}}) \amalg  (\ut{c_1}, \ut{\check{c}_1})$
	such that $\wt{c}_0 \cap \wt{c}_1 = \wt{z}$ and $\ut{c_0} \cap \ut{c_1} = \ut{z}.$ 
Then
$$
\begin{cases}
\mu^{bigr}(\ddot{\wt{c}}_0, \ddot{\wt{c}}_1, \wt{z}) = (r_1, r_2) 
= \mu^{bigr}({\undertilde{\ddot{c}_0}},{\undertilde {\ddot{c}_1}}, \undertilde{z}) &\text{for } z \neq 0; \\
\mu^{bigr}(\ddot{\ut{\wt{c_0}}}, \ddot{\ut{\wt{c_1}}}, 0  ) = (r_1, r_2) &\text{for } z=0.
\end{cases}
$$
\end{proposition}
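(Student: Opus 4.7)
The plan is to compare the definitions of $\mu^{trigr}$ and $\mu^{bigr}$ by pushing the data used to compute the former through the map $\wt{\sF} \colon \wRDAz \to \wRDA$ that underlies the construction of the bigradings $\ddot{\wt{c}}, \ddot{\ut{c}}, \ddot{\wuc}$ from $\check{c}$. First I would choose a small circle $\ell \subset \DB$ around $z$, a clockwise arc $\alpha \colon [0,1] \to \ell$ with $\alpha(0) \in c_0$ and $\alpha(1) \in c_1$, and a smooth family of tangent lines $\kappa \colon [0,1] \to \RDB$ along $\alpha$ transverse to $\ell$. Lifting $\kappa$ to $\check{\kappa} \colon [0,1] \to \wRDAz$ along $\fq \circ \fp$ with $\check{\kappa}(0) = \check{c}_0(\alpha(0))$ gives, by definition, $\check{c}_1(\alpha(1)) = \chi^B(r_1, r_2, r_3) \check{\kappa}(1)$.

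For the case $z \neq 0$, I would use that $q_{br}$ restricts to a diffeomorphism from a neighbourhood of $\wt{z}$ onto a neighbourhood of $z$, so that $(\ell, \alpha, \kappa)$ lifts uniquely to $(\wt{\ell}, \wt{\alpha}, \wt{\kappa})$ around $\wt{z}$. Two uniqueness-of-path-lifting arguments are the heart of the proof. First, viewing $\wt{\kappa}$ in $\RDA$ via $\sF$, both $\wt{\kappa}$ and $\sF \circ \fp \circ \check{\kappa}$ are lifts of $\kappa$ along $\fq$ and share the initial point $T_{\wt{\alpha}(0)} \wt{c}_0$, a fact forced by the definition $\ddot{\wt{c}}_0 = \wt{\sF} \circ \check{c}_0 \circ q_{br}|_{\wt{c}_0 \setminus \Delta}$ being a valid bigrading; hence $\sF \circ \fp \circ \check{\kappa} = \wt{\kappa}$. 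Second, the paths $\wt{\sF} \circ \check{\kappa}$ and $\ddot{\wt{\kappa}}$ in $\wRDA$ are both lifts of $\wt{\kappa}$ along the covering $\wRDA \to \RDA$ starting at $\ddot{\wt{c}}_0(\wt{\alpha}(0))$, so they agree. The hypothesis $\wt{c}_0 \cap \wt{c}_1 = \wt{z}$ now forces $r_3 = 0$: indeed $\chi^B(0,0,1)$ is the lift of the sheet swap of $\fq$, and a nonzero $r_3$ would place $\fp(\check{c}_1(\alpha(1)))$ over $T_{\ut{\alpha}(1)} \ut{c}_1$ rather than over $T_{\wt{\alpha}(1)} \wt{c}_1$, contradicting the definition of $\ddot{\wt{c}}_1$. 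Since the normalisations $C_0([\mathrm{pt} \times \lambda_i]) = (-2,1) = C([\mathrm{pt} \times \lambda_i])$ for $i \neq 0$ and $C_0([\RP \times \mathrm{pt}]) = (1,0) = C([\RP \times \mathrm{pt}])$ imply that $\wt{\sF}$ intertwines the $\Z \times \Z$-deck actions on $\wRDAz$ and $\wRDA$, applying $\wt{\sF}$ to $\check{c}_1(\alpha(1)) = \chi^B(r_1, r_2, 0) \check{\kappa}(1)$ yields $\ddot{\wt{c}}_1(\wt{\alpha}(1)) = \chi^A(r_1, r_2) \ddot{\wt{\kappa}}(1)$, that is, $\mu^{bigr}(\ddot{\wt{c}}_0, \ddot{\wt{c}}_1; \wt{z}) = (r_1, r_2)$. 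The equality for $(\ut{c}_0, \ut{c}_1, \ut{z})$ follows by the symmetric argument, equivalent to replacing $\check{c}_i$ with $\chi^B(0,0,1)\check{c}_i$ in the definition of $\m$.

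For the case $z = 0$, I would take $\wt{\ell} = q_{br}^{-1}(\ell)$, a small circle around $0 \in \DA$ that doubly covers $\ell$, and let $\wt{\alpha}$ be one of the two lifts of $\alpha$ to $\wt{\ell}$. The hypothesis $c_0 \not\simeq c_1$ guarantees a transverse intersection of $\wuc_0, \wuc_1$ at $0$ so that the small-circle model is well-defined, and the endpoints $\wt{\alpha}(0), \wt{\alpha}(1)$ still lie away from $0$. Therefore the formula $\ddot{\wuc}_i(\wt{\alpha}(k)) = \wt{\sF}(\check{c}_i(\alpha(k)))$ continues to hold at the boundary points (the continuous extension of $\ddot{\wuc}_i$ through $0$ itself is not used in the computation), and the remainder of the argument proceeds verbatim as in the case $z \neq 0$, the two uniqueness-of-lifting steps giving $\wt{\sF} \circ \check{\kappa} = \ddot{\wt{\kappa}}$ and the deck-equivariance of $\wt{\sF}$ yielding $\mu^{bigr}(\ddot{\wuc}_0, \ddot{\wuc}_1; 0) = (r_1, r_2)$.

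The main obstacle is the bookkeeping for the branched case $z = 0$: one must verify that $\fp \circ \check{\kappa}$, a priori a path in $\RDAz$, agrees with $\wt{\kappa}$ throughout the traverse, even though $\alpha$ on the small circle in $\DB$ winds halfway around $0$ while $\wt{\alpha}$ winds correspondingly on the double cover $\wt{\ell}$. The $r$-invariance of the cohomology class $C$ on $\RDA$ ensures that $\wt{\sF}$ is compatible with the $r$-actions on $\wRDAz$ and $\wRDA$, so that either choice of lift $\wt{\alpha}$ produces the same first two coordinates $(r_1, r_2)$ in the bigraded index. Hence no parasitic $\Z \times \Z$-shift is introduced by the branching, and the equality $\mu^{bigr} = (r_1, r_2)$ holds as claimed.
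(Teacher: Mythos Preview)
Your approach is correct and is essentially the natural one: lift the arc and tangent family defining $\mu^{trigr}$ through $q_{br}$ and push the trigrading data forward via $\wt{\sF}$, then invoke uniqueness of path lifting and the fact that $\wt{\sF}$ intertwines the $\Z\times\Z$ deck actions (since $C_0$ and $C$ agree on the common generators $[\RP\times\mathrm{pt}]$ and $[\mathrm{pt}\times\lambda_i]$ for $i\neq 0$). The paper itself omits the proof entirely; the argument was drafted and then commented out in the source, and that draft follows exactly the line you take. In fact your write-up is more careful on one point the draft glosses over: you explicitly argue that the hypothesis $\wt{c}_0\cap\wt{c}_1=\wt{z}$ forces $r_3=0$, by tracing how the $\Z/2\Z$ factor of $\chi^B$ acts as the sheet swap $r$ on $\RDAz$, so that $r_3=1$ would put $\fp(\check{c}_1(\alpha(1)))$ over the $\ut{}$-sheet, contradicting the definition of $\wt{c}_1$. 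The paper's draft simply asserts the conclusion ``by the construction of the covering space and the lifting of basic curves''. Your treatment of the branched case $z=0$ via the double-covered circle $\wt{\ell}=q_{br}^{-1}(\ell)$, with the computation carried out at $\wt{\alpha}(0),\wt{\alpha}(1)$ away from the branch point, is likewise the intended argument (the paper's draft says ``modify the above argument with $c_0\setminus\{0\}$ and $c_1\setminus\{0\}$ and then apply a continuous extension argument'').
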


Furthermore, this proposition allows us to relate trigraded intersection numbers and bigraded intersection number in the following way. 
    \begin{corollary}For any trigraded curves $(c_0, \check{c}_0),(c_1, \check{c}_1)$ in $\wRDAz,$	
	   \[ I^{trigr}(\check{c}_0,\check{c}_1) |_{q_3=1}  =   I^{bigr}\left(\m({\check{c}_0}),\m({\check{c}_1}) \right).\]
In particular,	
	   $ \frac{1}{2} I^{trigr}(\check{c}_0,\check{c}_1) |_{q_1 = q_2 = q_3=1}  = I(\m(c_0), \m(c_1)),
	   $
i.e. $ \frac{1}{2} I^{trigr}(\check{c}_0,\check{c}_1) |_{q_1 = q_2 = q_3=1}$ counts the geometric intersection number of the lift of $c_0$ and $c_1$ in $\DA$ under the map $\m.$
\end{corollary}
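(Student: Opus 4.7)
The plan is to reduce the identity to a local computation at each intersection point by directly comparing the defining formulas for $I^{trigr}$ and $I^{bigr}$, using \cref{tribilocin} as the bridge. I would begin by treating the generic case, where $\check{c}_0$ and $\check{c}_1$ are not related by a shift with a shared endpoint at $\{0\}$, and then handle the exceptional isotopic case by a separate direct computation.

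In the generic case, I would pick a representative $c_1' \simeq c_1$ with minimal intersection with $c_0$ and partition the intersections $c_0 \cap c_1'$ into three disjoint subsets based on the piecewise definition of $I^{trigr}$: (i) points $z \notin \Lambda$, (ii) points $z \in \Lambda \setminus \{0\}$, and (iii) $z = 0$. Upon specializing $q_3 = 1$, the corresponding weights become $2(1+q_1^{-1}q_2)\, q_1^{r_1}q_2^{r_2}$, $2\, q_1^{r_1}q_2^{r_2}$, and $(1+q_1^{-1}q_2)\, q_1^{r_1}q_2^{r_2}$ respectively, where $(r_1,r_2,r_3)$ is the local trigraded index at $z$. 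On the other side, I would examine how $\m$ lifts each such intersection to $\D^A_{2n}$: type (i) points lift to two distinct non-marked intersections $\wt{z}, \ut{z} \in \D^A_{2n} \setminus \Delta$; type (ii) lift to two marked intersections $\wt{z}, \ut{z} \in \Delta$; and the branch point $z = 0$ has a single preimage $0 \in \D^A_{2n}$ which lies \emph{outside} $\Delta$. Applying \cref{tribilocin} gives that the bigraded local indices at $\wt{z}$ and $\ut{z}$ (resp. at $0$) coincide with $(r_1, r_2)$. Substituting into the bigraded formula \eqref{bigrint} then reproduces exactly the three contributions obtained after setting $q_3 = 1$.

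For the exceptional case where $\check{c}_1 \simeq \chi^B(r_1,r_2,r_3)\check{c}_0$ and $c_0 \cap c_1 \cap \{0\} \neq \emptyset$, the trigraded intersection equals $q_1^{r_1}q_2^{r_2}q_3^{r_3}(1+q_2)$ by definition. Since such a curve $c_0$ must have $\{0\}$ as an endpoint, $\m(c_0,\check{c}_0) = (\wuc_0, \ddot{\wuc}_0)$ is a single connected bigraded curve through $0$; by the construction of $\m$ (which continuously glues $\check{c}_0$ and $\chi^B(0,0,1)\check{c}_0$ at $0$), the shift in the $q_3$-direction is absorbed, and $\m(\check{c}_1) \simeq \chi^A(r_1,r_2)\m(\check{c}_0)$ as bigraded curves in $\wRDA$. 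A direct computation of $I^{bigr}(\ddot{\wuc}_0, \ddot{\wuc}_1)$, using that the minimal-intersection representatives share both $\partial$-endpoints in $\Delta$ and the interior point $0 \notin \Delta$, yields $q_1^{r_1}q_2^{r_2}(1+q_2)$, matching $I^{trigr}|_{q_3=1}$.

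The ``in particular'' statement then follows by specializing further to $q_1 = q_2 = 1$: the bigraded formula \eqref{bigrint} at $q_1 = q_2 = 1$ equals $2|(c_0 \cap c_1')\setminus \Delta| + |(c_0 \cap c_1')\cap \Delta| = 2I(\m(c_0),\m(c_1))$ by \eqref{geoint}, so dividing by $2$ recovers the geometric intersection number. The main obstacle I anticipate is the bookkeeping around the branch point $\{0\}$: the trigraded formula treats $\{0\}$ asymmetrically (coefficient $1 + q_1^{-1}q_2 q_3$ instead of $(1+q_3)(1+q_1^{-1}q_2)$ or $(1+q_3)$), and this asymmetry is precisely what makes the $q_3 = 1$ specialization compatible with the single (unbranched) preimage of $0$ lying outside $\Delta$ in $\D^A_{2n}$; ensuring that the isotopic exceptional case falls into the same bookkeeping pattern requires careful unpacking of the definition of $\m$ at the branch point.
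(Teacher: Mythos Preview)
Your approach is exactly the paper's: invoke \cref{tribilocin} for the generic case and treat the isotopic-through-$\{0\}$ case by direct computation, then specialize for the geometric statement. Your generic-case bookkeeping (the three intersection types and their lifts) is correct and is simply a detailed unpacking of what the paper leaves implicit.

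However, your exceptional case contains a bookkeeping error. You claim that the minimal-intersection representatives of $\m(\check{c}_0)$ and $\m(\check{c}_1)$ share \emph{both $\partial$-endpoints in $\Delta$ and the interior point $0$}. This is precisely the configuration the paper rules out in the paragraph and figure immediately preceding \cref{tribilocin}: when $c_0 \simeq c_1$ with common endpoint $\{0\}$, the naive lifts $q_{br}^{-1}(c_0)$ and $q_{br}^{-1}(c_1)$ do meet at $0$, $j$, and $-j$, but this is \emph{not} minimal, and the picture shows how the intersection at $0$ is removed by isotopy. In minimal position the lifts meet only at the two marked points $\pm j \in \Delta$. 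Indeed, your claimed configuration (two marked points plus one unmarked) would produce four monomials via \eqref{bigrint}, not the two in $q_1^{r_1}q_2^{r_2}(1+q_2)$. The correct computation has only the two $\Delta$-contributions, and one then checks that their local indices are $(r_1,r_2)$ and $(r_1,r_2+1)$; this is the ``direct computation'' the paper alludes to.
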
 
\begin{proof}
The case when $c_0 \not\simeq c_1$ in $\DB$ or when at least one of $c_0$ and $c_1$ does not have its endpoint at $\{0\}$ follows directly from \cref{tribilocin}.
 The other case follows from a direct computation.
 The last statement relating trigraded intersection number and geometric intersection number follows from the property of bigraded intersection number (see \cite[pg.26, property (B1)]{KhoSei}).
\end{proof}

We shall abuse notation and define $\m$ to also lift crossings of a trigraded curve $(c,\check{c})$ to crossings of the bigraded multicurves $\m((c,\check{c})) = (\wt{c}, \ddot{\wt{c}})\amalg (\ut{c}, \ddot{\ut{c}}) .$
    Suppose $z$ is a $j$-crossing of $c$, for $j >1$. 
    Then, $q_{br}^{-1}(z) = \{ \wt{z}, \ut{z} \}$ where $\wt{z} \in \wt{c}$ and $\ut{z} \in \ut{c}.$
    If $z$ is a $1$-crossing of $c$,  then we also have $q_{br}^{-1} = \{ \wt{z}, \ut{z} \}$; in this case we shall pick $\wt{\ut{z}}$ to be the unique element in $ \{ \wt{z}, \ut{z} \} \cap \theta_n.$
    So, if $z$ is a $j$-crossing of $c$ with $\mu^{trigr}(\check{d}_k, \check{c}, z)=(r_1,r_2,r_3)$,
    We define $\m(z) = \{ \wt{z} , \ut{z} \}$, both $\wt{z}$ and $\ut{z}$ with local index $(r_1, r_2)$ for $j >1$, and similarly $\m(z) = \{\wt{\ut{z}}\}$ for $j =1$, with local index $(r_1, r_2)$ by \cref{tribilocin}. 
	Similarly, if $\check{h}$ is a connected subset of $\check{c}$ together with trigrading given by local indices of crossings of $\check{h}$ induced from crossings of $\check{c}$, we define $\m(\check{h})$ to consist of $q_{br}^{-1}(\check{h})$, with bigradings given by local indices of crossings of $q_{br}^{-1}(\check{h})$ induced from crossings of $\m(\check{c})$.



\section{Type $A_n$ and Type $B_n$ Zigzag Algebras}\label{define zigzag}

In this section, we recall the construction of type $A_{2n-1}$ zigzag algebra $\Aa_{2n-1}$ (with slight change in gradings) as given in \cite{KhoSei} and recall the $\cA(A_{2n-1})$ action on $\Kom^b(\Aa_{2n-1}$-$\text{p$_{r}$g$_{r}$mod})$, the homotopy category of complexes of projective graded modules over $\Aa_{2n-1}$. 
We then construct the type $B_n$ zigzag algebra $\Ba_n$, following a similar construction, and show that $\mathcal{A}(B_n)$ acts on $\Kom^b(\Ba_n$-$\text{p$_{r}$g$_{r}$mod})$.

\subsection{Type $A_{2n-1}$ zigzag algebra $\Aa_{2n-1}$} \label{A zigzag} 
Consider the following quiver $\Gamma_{2n-1}$:
\begin{figure}[H]
\centering
\begin{tikzcd}[column sep = 1.5cm]
1				\arrow[r,bend left,"1|2"]  														 &		 
2 			\arrow[l,bend left,"2|1"] \arrow[r,bend left, "2|3"] 
				&
3 			\arrow[l,bend left,"3|2"] \arrow[r,bend left, "3|4"] 
				 &
\cdots \arrow[l,bend left,"4|3"] \arrow[r,bend left, "2n-2|2n-1"] &
2n-1 		.	\arrow[l,bend left,"2n-1|2n-2"]				
\end{tikzcd}
\caption{{\small The quiver $\Gamma_{2n-1}$.}}
\label{B quiver}
\end{figure}
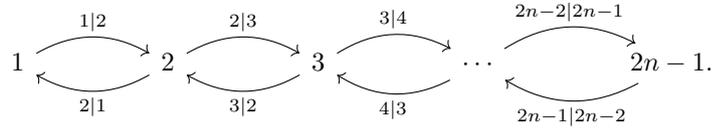
We can take its path algebra $\C \Gamma_{2n-1}$ over $\C$;
$\C \Gamma_{n}$ is the $\C$-vector space spanned by the set of all paths in $\Gamma_{n}$, with multiplication given by concatenation of paths (the multiplication is zero if the endpoints do not agree).
Note that the constant path of each vertex $j$ is denoted by $e_j$. 

The grading we would put on $\Gamma_n$ is slightly different to \cite{KhoSei}; we set 
 \begin{itemize}
 \item the degree of $(j|j-1)$ is $1$  for $j > n$ and $0$ for  $j \leq n$,
 \item the degree of $(j|j+1)$ is $1$ for  $j < n$ and $0 $ for $j \geq n,$ and
 \item the degree of $e_j = 0$ for all $j.$
 \end{itemize}
 In this way, this path algebra is graded with the shift denoted by $\{-\}$ and unital with a family of pairwise orthogonal primitive central idempotent $e_j$ summing up to the unit element.

	Let $\Aa_n$ be the quotient of the path algebra of the quiver $\Gamma_{n}$ by the relations:
	\begin{align*}
	(j|j+1|j) &= (j| j-1|j),  & \text{ for  $j = -n, -n+1, \cdots, n-1, n,$} \\
		(j|j+1|j+2) &= 0 = (j+2| j+1|j),  & \text{ for $ j = -n, -n+1, \cdots, n-1, n.$} 
	\end{align*}
 It is easy to see that these relations are homogenous with respect to the above grading, so that $\Aa_{2n-1}$ is a graded algebra. 
As a $\C$-vector space, it has dimension $8n-6$ with the following basis 
$$\{ e_1 , \ldots, e_{2n-1},  (1|2), \ldots, (2n-2 | 2n-1), (2|1), \ldots (2n-1 | 2n-2), (1|2|1), \ldots, (2n-1|2n-2|2n-1) \}.$$

It follows that the indecomposable projective $\Aa_{2n-1}$-modules are $P^A_j := \Aa_{2n-1}e_j$.
Recall the following results in \cite{KhoSei}:
\begin{theorem}
For each $j$, the following complex of $(\Aa_{2n-1}, \Aa_{2n-1})$-bimodule
\[
\mathcal{R}_j := 0 \ra P^A_j \otimes_\C {}_jP^A \xra{\beta_j} \Aa_{2n-1} \ra 0
\]
with $\Aa_{2n-1}$ in cohomological degree 0 is invertible in $\Kom^b((\Aa_{2n-1}, \Aa_{2n-1})\text{-bimod})$ and satisfies the following relations:
\begin{align*}
\cR_j \otimes \cR_k &\cong \cR_k \otimes \cR_j, \text{ for } |j-k|>1; \\
\cR_j \otimes \cR_{j+1} \otimes \cR_j &\cong \cR_{j+1} \otimes \cR_j \otimes \cR_{j+1}.
\end{align*}
\end{theorem}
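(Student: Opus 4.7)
The plan is to follow the standard Khovanov--Seidel strategy: exhibit an explicit inverse complex, then reduce the braid relations to an explicit identification of threefold tensor products in $\Kom^b((\Aa_{2n-1},\Aa_{2n-1})\text{-bimod})$.

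First I would write down a candidate inverse complex
\[
\cR'_j \;:=\; 0 \to \Aa_{2n-1} \xra{\beta'_j} P^A_j \otimes_\C {}_jP^A\{?\} \to 0
\]
(with $\Aa_{2n-1}$ in cohomological degree $0$ and the right-hand term in degree $+1$), where $\beta'_j$ is dual to $\beta_j$ in an appropriate sense -- concretely the map sending $1 \mapsto \sum e_j \otimes (j|j\pm 1|j)^* + \cdots$, i.e.\ it factors through the socle at vertex $j$. The grading shift must be chosen so that $\beta'_j$ is degree zero; this is where the modified grading convention introduced just before the statement (so that degrees of $(j|j\pm 1)$ depend on whether $j \lessgtr n$) has to be tracked carefully. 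I would then tensor $\cR_j \otimes_{\Aa_{2n-1}} \cR'_j$ and $\cR'_j \otimes_{\Aa_{2n-1}} \cR_j$ out to get total complexes concentrated in three cohomological degrees, and construct an explicit contracting homotopy using the multiplication-by-$e_j$ idempotent and the internal socle element of $e_j \Aa_{2n-1} e_j$. Because $e_j \Aa_{2n-1} e_j$ is a two-dimensional local algebra ($\C \cdot e_j \oplus \C \cdot (j|j\pm 1|j)$), this contraction is essentially a finite calculation.

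Second, for the commutation relation $\cR_j \otimes \cR_k \simeq \cR_k \otimes \cR_j$ for $|j-k|>1$: I would observe that $e_j \Aa_{2n-1} e_k = 0$ whenever $|j-k|>1$, so in the bimodule double complex both corners $P^A_j \otimes {}_jP^A \otimes P^A_k \otimes {}_kP^A$ on the two sides are naturally identified after swapping tensor factors, and the differentials match up. So this reduces to bookkeeping, and an isomorphism of complexes (not just a homotopy equivalence) will do.

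Third, and this is the main obstacle, is the braid relation $\cR_j \otimes \cR_{j+1} \otimes \cR_j \simeq \cR_{j+1} \otimes \cR_j \otimes \cR_{j+1}$. I would expand each side as a three-dimensional hypercube of bimodules with four nonzero corners (the all-$\Aa$ corner in degree $0$, three single-twist corners in degree $-1$, three double-twist corners in degree $-2$, and one triple-twist corner $P^A_j \otimes {}_jP^A \otimes P^A_{j+1} \otimes {}_{j+1}P^A \otimes P^A_j \otimes {}_jP^A$ in degree $-3$, and similarly with roles of $j$ and $j{+}1$ swapped on the other side). Using the zigzag relations $(j|j\pm 1|j\pm 2)=0$ and $(j|j+1|j) = (j|j-1|j)$, the tensor factor ${}_jP^A \otimes P^A_{j+1} \otimes {}_{j+1}P^A \otimes P^A_j$ (respectively its swap) simplifies dramatically: many multiplication maps become zero and others become isomorphisms after grading shift. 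The strategy is then to identify subcomplexes and quotient complexes that are contractible on both sides, cancel them using Gaussian elimination of contractible summands (as in \cite[Prop.\ 2.8]{KhoSei}), and check that the two resulting minimal complexes are isomorphic bimodule complexes. The careful tracking of the $\{-\}$ grading shifts under the new grading convention of this chapter is where I expect most of the work to lie; once that is done, the combinatorial identification of the surviving corners is a direct computation using the basis of $\Aa_{2n-1}$ given in Section~\ref{A zigzag}. Finally, invertibility of $\cR_j$ together with the braid relations yields a weak $\cA(A_{2n-1})$-action on $\Kom^b(\Aa_{2n-1}\text{-}p_r g_r\text{mod})$ by tensoring.
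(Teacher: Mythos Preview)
Your outline is correct and follows the standard Khovanov--Seidel strategy. The paper itself does not reprove this statement at all: its entire proof is the citation ``See \cite[Proposition 2.4 and Theorem 2.5]{KhoSei},'' so you are in fact sketching the argument that the cited reference carries out. Your three steps (explicit inverse $\cR'_j$ with contracting homotopy, commutation from $e_j\Aa_{2n-1}e_k=0$ for $|j-k|>1$, and the braid relation via cube expansion plus Gaussian elimination of contractible summands) match what Khovanov--Seidel do, and your caution about tracking the modified grading convention is warranted but does not affect the structure of the argument.
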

\begin{proof}
See \cite[Proposition 2.4 and Theorem 2.5]{KhoSei}.
\end{proof}
\begin{proposition}
There is a (weak) $\cA(A_{2n-1})$-action on $\Kom^b(\Aa_{2n-1}$-$\text{p$_{r}$g$_{r}$mod})$, where each standard generator $\sigma_j^A$ of $\cA(A_{2n-1})$ acts on a complex $M \in \Kom^b(\Aa_{2n-1}$-$\text{p$_{r}$g$_{r}$mod})$ via $\mathcal{R}_j$:
\[
\sigma^A_j(M) := \mathcal{R}_j\otimes_{\Aa_{2n-1}}M.
\]
\end{proposition}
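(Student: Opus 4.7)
The plan is to deduce the $\cA(A_{2n-1})$-action from the bimodule-level statements recorded in the preceding theorem. The key observation is that ``weak action'' means we only need to exhibit autoequivalences $\sigma^A_j(-) := \mathcal{R}_j \otimes_{\Aa_{2n-1}} -$ of $\Kom^b(\Aa_{2n-1}\text{-}\text{p}_r\text{g}_r\text{mod})$ satisfying the defining relations of $\cA(A_{2n-1})$ up to natural isomorphism; no coherence data is required.

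First I would check that each endofunctor $\mathcal{R}_j \otimes_{\Aa_{2n-1}} -$ is well-defined on the homotopy category of projective graded modules. Since $P^A_j \otimes_{\C} {}_jP^A$ is a projective bimodule and $\Aa_{2n-1}$ is a projective bimodule over itself (acting regularly on the right), tensoring with $\mathcal{R}_j$ sends projective graded modules to bounded complexes of projective graded modules; this preserves chain homotopy equivalences, hence descends to an endofunctor of $\Kom^b(\Aa_{2n-1}\text{-}\text{p}_r\text{g}_r\text{mod})$. Invertibility of $\mathcal{R}_j$ in the bounded homotopy category of bimodules then guarantees that $\mathcal{R}_j \otimes_{\Aa_{2n-1}} -$ is an autoequivalence, with quasi-inverse given by tensoring with $\mathcal{R}_j^{-1}$.

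Next, I would transport the two families of bimodule isomorphisms from the theorem along the tensor functor. Concretely, given the homotopy equivalences of bimodules
\begin{align*}
\mathcal{R}_j \otimes_{\Aa_{2n-1}} \mathcal{R}_k &\simeq \mathcal{R}_k \otimes_{\Aa_{2n-1}} \mathcal{R}_j, \qquad |j-k|>1, \\
\mathcal{R}_j \otimes_{\Aa_{2n-1}} \mathcal{R}_{j+1} \otimes_{\Aa_{2n-1}} \mathcal{R}_j &\simeq \mathcal{R}_{j+1} \otimes_{\Aa_{2n-1}} \mathcal{R}_j \otimes_{\Aa_{2n-1}} \mathcal{R}_{j+1},
\end{align*}
one applies $- \otimes_{\Aa_{2n-1}} M$ for each $M \in \Kom^b(\Aa_{2n-1}\text{-}\text{p}_r\text{g}_r\text{mod})$ and invokes associativity of the tensor product to obtain corresponding natural isomorphisms of endofunctors. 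Consequently the composites $\sigma_j^A \sigma_k^A$ and $\sigma_k^A \sigma_j^A$ (resp.\ $\sigma_j^A \sigma_{j+1}^A \sigma_j^A$ and $\sigma_{j+1}^A \sigma_j^A \sigma_{j+1}^A$) are isomorphic as endofunctors, which are exactly the relations defining $\cA(A_{2n-1})$.

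The routine step here is associativity of tensor product together with functoriality; the genuine content has already been absorbed into the preceding theorem, which establishes invertibility of $\mathcal{R}_j$ and the bimodule braid relations. Thus there is essentially no obstacle at this stage: the proposition is a formal consequence of the theorem combined with the standard fact that a (weak) group action on a category can be produced from generators, relations, and invertible endofunctors satisfying those relations up to natural isomorphism.
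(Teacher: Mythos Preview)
Your proposal is correct and is exactly the standard deduction of a weak action from the bimodule-level invertibility and braid relations established in the preceding theorem. The paper itself does not give an independent argument here: it simply cites \cite[Proposition 2.7]{KhoSei}, so your write-up is in fact more detailed than what appears in the paper, but the underlying reasoning is the same.
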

\begin{proof}
See \cite[Proposition 2.7]{KhoSei}.
\end{proof}
We will abuse notation and use $\sigma^A_j$ in place of $\cR_j$ whenever the context is clear.

\subsection{Type $B_n$ zigzag algebra $\Ba_n$} \label{B zigzag}
Consider the following quiver $Q_n$:
\begin{figure}[H]
\centering
\begin{tikzcd}[column sep = 1.5cm]
1				\arrow[r,bend left,"1|2"]  														 &		 
2 			\arrow[l,bend left,"2|1"] \arrow[r,bend left, "2|3"] 
				\arrow[color=blue,out=70,in=110,loop,swap,"ie_2"] &
3 			\arrow[l,bend left,"3|2"] \arrow[r,bend left, "3|4"] 
				\arrow[color=blue,out=70,in=110,loop,swap,"ie_3"] &
\cdots \arrow[l,bend left,"4|3"] \arrow[r,bend left, "n-1|n"] &
n 		.	\arrow[l,bend left,"n|n-1"]
				\arrow[color=blue,out=70,in=110,loop,swap,"ie_n"]
\end{tikzcd}
\caption{{\small The quiver $Q_n$.}}
\label{B quiver}
\end{figure}
%
Take its path algebra $\R Q_n$ over $\R$ and consider the two gradings on $\R Q_n$ given as follows:
	\begin{enumerate} [(i)]
	\item the first grading is defined following the convention in  \cite{KhoSei} where we set \begin{itemize}
	\item 	the degree of $(j+1|j)$ to be $1$ for all $j$, and
	\item   the degree of $e_j$ and of $(j|j+1)$ to be $0$ for all $j$.
\end{itemize}	 
	\item the second grading is a $\Z/2\Z$-grading defined by setting 
	\begin{itemize}
	\item  the degree of $ie_j$ (all blue paths in \cref{B quiver}) as 1 for all $j$, and
	\item the degree of all other paths in \cref{B quiver} and the constant paths as zero. 
	\end{itemize}
	\end{enumerate}

\noindent We denote  a shift in the first grading by  $\{-\}$ and a shift in the second grading by $\< - \>.$

We are now ready to define the zigzag algebra of type $B_n$:
\begin{definition}
The zigzag path algebra of $B_n$, denoted by $\Ba_n$, is the quotient algebra of the path algebra $\R Q_n$ modulo the usual zigzag relations given by
\begin{align}
(j|j-1)(j-1|j) &= (j|j+1)(j+1|j) \qquad (=: X_j);\\
(j-1|j)(j|j+1) = & 0 = (j+1|j)(j|j-1);
\end{align}
 ~ for $2\leq j \leq n-1$, in addition to the relations
\begin{align}
(ie_j)(ie_j) &= -e_j, \qquad \text{for } j \geq 2 \label{imaginary};\\
(ie_{j-1})(j-1|j) &= (j-1|j)(ie_j), \qquad \text{for } j\geq 3; \label{complex symmetry 1}\\
(ie_{j})(j|j-1) &= (j|j-1)(ie_{j-1}), \qquad \text{for } j\geq 3; \label{complex symmetry 2}\\
(1|2)(ie_2)(2|1) &= 0, \\
(ie_2) X_2 &= X_2 (ie_2).
\end{align}
\end{definition}
Since the relations are all homogeneous with respect to the given gradings, $\Ba_n$ is also a bigraded algebra.
As a $\R$-vector space, $\Ba_n$ has dimension $8  n-6$, with basis $\{ e_1 , \ldots, e_{n}, ie_2 , \ldots, ie_{n},  
(1|2), \ldots, (n-1 | n), (2|1), \ldots (n | n-1),  (ie_2)(2|1), (1|2)(ie_2), (ie_2)(2|3), \ldots, (ie_{n-1})(n-1|n), (3|2)(ie_2), \ldots, (n|n-1)(ie_{n-1}), \newline (1|2|1), \ldots, (2n-1|2n-2|2n-1), (ie_2)(2|1|2), \ldots, (ie_n)(n|n-1|n) \}$.

The indecomposable (left) projective $\Ba_n$-modules are given by $P^B_j := \Ba_n e_j$.
For $j=1$, $P^B_j$ is naturally a $(\Ba_n, \R)$-bimodule; there is a natural left $\Ba_n$-action given by multiplication of the algebra and the right $\R$-action induced by the natural left $\R$-action.
But for $j\geq 2$, we shall endow $P^B_j$ with a right $\C$-action.
Note that \cref{imaginary} is the same relation satisfied by the complex imaginary number $i$.
We define a right $\C$-action on $P^B_j$ by $p * (a+ib) = ap + bp(ie_j)$ for $p \in P^B_j, a+ib\in \C$.
Further note that this right action restricted to $\R$ agrees with the natural right (and left) $\R$-action.
This makes $P^B_j$ into a $(\Ba_n,\C)$-bimodule for $j\geq 2$.
Dually, we shall define ${}_jP^B := e_j\Ba_n$, where we similarly consider it as a ($\R,\Ba_n)$-bimodule for $j=1$ and as a $(\C,\Ba_n)$-bimodule for $j\geq 2$.

It is easy to check that we have the following isomorphisms of $\Z$-graded bimodules:
\begin{proposition} \label{Bcat}
\[
  {}_jP^B_k := {}_jP^B\otimes_{\Ba_n}P^B_k \cong 
  \begin{cases}
  		\vspace{1mm}
  		\ _{\C}\C_{\C} & \text{as } (\C,\C)\text{-$g_r$bimod, for } j,k \in \{2,\hdots, n\} \text{ and }  k-j=1;\\
  		\vspace{1mm}
  		\ _{\C}\C_{\C}\{1\} & \text{as } (\C,\C)\text{-$g_r$bimod, for } j,k \in \{2,\hdots, n\} \text{ and }  j-k=1;\\
  		\vspace{1mm}
  		\ _{\C}\C_{\C} \oplus \ _{\C}\C_{\C}\{1\} & \text{as } (\C,\C)\text{-$g_r$bimod, for } j=k=2,3,\hdots,n; \\
  		\vspace{1mm}
        \ _{\R}\C_{\C} & \text{as } (\R,\C)\text{-$g_r$bimod, for } j=1 \text{ and } k=2; \\
        \vspace{1mm}
        \ _{\C}\C_{\R}\{1\} & \text{as } (\C,\R)\text{-$g_r$bimod, for } j=2 \text{ and } k=1; \\ 
        \vspace{1mm}
        \ _{\R}\R_{\R} \oplus \ _{\R}\R_{\R}\{1\} & \text{as } (\R,\R)\text{-$g_r$bimod, for } j=k=1. \\    
  \end{cases}
\]
\label{bimodule isomorphism}
\end{proposition}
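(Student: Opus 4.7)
The plan is to identify ${}_jP^B \otimes_{\Ba_n} P^B_k$ with the sub-bimodule $e_j \Ba_n e_k \subset \Ba_n$ via the standard canonical isomorphism $e_j \alpha \otimes \beta e_k \mapsto e_j \alpha \beta e_k$, and then read off the answer in each case directly from the explicit $\R$-basis of $\Ba_n$ listed just before the statement. For each pair $(j,k)$, I would single out the basis paths $\gamma$ with source $k$ and target $j$, record their degrees in the first grading, and verify that the left and right actions of the relevant $\R$ or $\C$ agree with the claimed model bimodule. The right $\C$-action on $P^B_k$ is by definition multiplication on the right by $ie_k$, and the analogously defined left $\C$-action on ${}_jP^B$ is multiplication on the left by $ie_j$.

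First I would dispense with the degenerate cases. For $j=k=1$, the paths from $1$ to $1$ are just $e_1$ (degree $0$) and $(1|2|1)$ (degree $1$), giving $\R\oplus\R\{1\}$ as $(\R,\R)$-bimodule. For $j=k\in\{2,\dots,n\}$, the paths from $j$ to $j$ are $e_j, ie_j$ (degree $0$) and $X_j, (ie_j)X_j$ (degree $1$); the relation $(ie_j)^2 = -e_j$ makes the degree-$0$ part a copy of $\C$, and the relation $(ie_2)X_2 = X_2(ie_2)$ (together with its consequences for $j\geq 3$ coming from \cref{complex symmetry 1,complex symmetry 2}) forces the left and right $\C$-actions to coincide on $X_j$, producing another copy of $\C$ in degree $1$. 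Hence the bimodule is $\C\oplus\C\{1\}$ as $(\C,\C)$-bimodule.

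Next I would handle the adjacent cases. For $j=1,k=2$, the only paths are $(1|2)$ and $(1|2)(ie_2)$, both of degree $0$; the right $\C$-action sends $(1|2)\cdot i=(1|2)(ie_2)$, so as $(\R,\C)$-bimodule this is $\C$ in degree $0$. The case $j=2,k=1$ is symmetric, with basis $(2|1),(ie_2)(2|1)$ in degree $1$, yielding $\C\{1\}$ as $(\C,\R)$-bimodule. For $j,k\in\{2,\dots,n\}$ with $k-j=1$, the only nonzero paths are $(j|k)$ and $(ie_j)(j|k)=(j|k)(ie_k)$ (equality by \cref{complex symmetry 1}), both of degree $0$; the identification $(ie_j)(j|k)=(j|k)(ie_k)$ is exactly what says the two $\C$-actions agree, giving $\C$ as $(\C,\C)$-bimodule. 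The case $j-k=1$ runs identically with \cref{complex symmetry 2}, except that the paths have degree $1$, yielding $\C\{1\}$.

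There is no genuine obstacle: the content of the proposition is a direct computation with an explicit finite basis. The one point requiring care is that the relations \cref{complex symmetry 1,complex symmetry 2}, together with $(ie_2)X_2=X_2(ie_2)$, are precisely what is needed to ensure that the left $\C$-module structure (via $ie_j$) and the right $\C$-module structure (via $ie_k$) agree on the one-dimensional (over $\C$) pieces, so that the answer is $\C$ rather than $\C\otimes_\R\C$. Once these compatibilities are flagged, matching each basis list to the claimed bimodule is immediate.
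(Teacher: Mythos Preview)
Your proposal is correct and is exactly the direct verification the paper has in mind: the paper does not give an explicit proof, merely prefacing the proposition with ``It is easy to check that we have the following isomorphisms of $\Z$-graded bimodules.'' Your identification ${}_jP^B\otimes_{\Ba_n}P^B_k\cong e_j\Ba_n e_k$ followed by reading off the listed basis and invoking relations \eqref{imaginary}--\eqref{complex symmetry 2} to match the $\C$-actions is precisely the computation being left to the reader.
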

\begin{remark}
Note that all the graded bimodules in \cref{bimodule isomorphism} can be restricted to a $(\R, \R)$-bimodule.
By identifying ${}_\R \C_\R \cong \R\oplus \R\<1\>$ as $\Z/2\Z$-graded $(\R, \R)$-bimodules, the bimodules in \cref{bimodule isomorphism} restricted to just the $\R$ actions are also isomorphic as bigraded ($\Z$ and $\Z/2\Z$) $(\R, \R)$-bimodule.
For example, ${}_1 P_2^B$ as a $(\R, \R)$-bimodule is generated by $(1|2)$ and $(1|2)i$, so it is isomorphic to $\R \oplus \R\<1\> \cong {}_\R \C_\R$.
\end{remark}

\begin{lemma}
Denote $k_j := \R$ when $j = 1$ and $k_j := \C$ when $j \geq 2$.
The maps \[\beta_j: P^B_j \otimes_{k_j} {}_jP^B  \to \Ba_n \text{ and } \gamma_j: \Ba_n  \to  P^B_j \otimes_{k_j} {}_jP^B  \{-1\}\] defined by:
\begin{align*} 
\beta_j(x\otimes y) &:= xy, \\
\gamma_j(1) &:= 
\begin{cases}
X_j \otimes e_j + e_j \otimes X_j + (j+1|j) \otimes (j|j+1) \\
 \hspace{8mm} + (-ie_{j+1})(j+1|j) \otimes (j|j+1)(ie_{j+1}), &\text{for } j=1;\\
X_j \otimes e_j + e_j \otimes X_j + (j-1|j) \otimes (j|j-1) + (j+1|j) \otimes (j|j+1), &\text{for } 1<j < n; \\
X_j \otimes e_j + e_j \otimes X_j + (j-1|j) \otimes (j|j-1), &\text{for } j = n,
\end{cases}
\end{align*}
are $(\Ba_n,\Ba_n)$-bimodule maps.
\end{lemma}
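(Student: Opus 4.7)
The plan is to verify the two required properties in sequence: first, that each map is well-defined on the relevant tensor, and second, that it respects both the left and right $\Ba_n$-actions. For $\beta_j$, well-definedness over $k_j$ follows from the fact that for $k_j = \C$ (i.e.\ $j \geq 2$), the right $\C$-action on $P^B_j$ and the left $\C$-action on ${}_jP^B$ are both implemented by multiplication by elements of $\{e_j, ie_j\} \subset e_j\Ba_n e_j$; associativity in $\Ba_n$ then gives $(xc)y = x(cy)$ for $c \in \C$. The bimodule property of $\beta_j$ is immediate from the associativity of multiplication in $\Ba_n$, since $\beta_j(a(x \otimes y)) = (ax)y = a(xy) = a\beta_j(x \otimes y)$, and similarly on the right.

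For $\gamma_j$ I would first observe that a $(\Ba_n, \Ba_n)$-bimodule map out of $\Ba_n$ is uniquely determined by its value on $1$, and that the assignment $1 \mapsto \gamma_j(1)$ yields a well-defined bimodule map if and only if $a \cdot \gamma_j(1) = \gamma_j(1) \cdot a$ for every $a \in \Ba_n$. I would then verify this centrality condition on a generating set of $\Ba_n$: the idempotents $e_k$, the arrows $(k|k{\pm}1)$, and the imaginary loops $ie_k$ (for $k \geq 2$). The check on idempotents is immediate, as both sides extract the terms of $\gamma_j(1)$ whose first tensor factor ends at $k$ (respectively whose second factor starts at $k$). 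For the arrows, the computation reduces to the zigzag relations $X_j = (j|j{-}1)(j{-}1|j) = (j|j{+}1)(j{+}1|j)$ together with the vanishing relations $(j{-}1|j)(j|j{+}1) = 0 = (j{+}1|j)(j|j{-}1)$; for example, when $a = (j{-}1|j)$ with $1 < j < n$, both $a \cdot \gamma_j(1)$ and $\gamma_j(1) \cdot a$ collapse to the single term $(j{-}1|j) \otimes X_j$.

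The main obstacle is handling the imaginary generators $ie_k$ correctly, since this is where the $\R/\C$ distinction becomes delicate. For $k \in \{j{-}1, j{+}1\}$ with $k \geq 2$, the complex-symmetry relations (\ref{complex symmetry 1}) and (\ref{complex symmetry 2}) allow the imaginary unit to pass through the arrow generators, and the derived identity $(ie_k)X_k = X_k(ie_k)$ (obtained by iterating these two relations) disposes of the loop terms. For $k = j$ with $j \geq 2$, the crucial point is precisely that the tensor $P^B_j \otimes_{k_j} {}_jP^B$ is over $\C$, so that $x(ie_j) \otimes y = x \otimes (ie_j)y$; this is exactly what is needed for $\gamma_j(1)$ to centralise $ie_j$, since both sides produce $X_j \otimes (ie_j) + (ie_j) \otimes X_j$. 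Finally, the boundary case $j = 1$ requires special care: here $k_1 = \R$, so the tensor cannot absorb $i$, and this forces the inclusion of the compensating term $(-ie_{2})(2|1) \otimes (1|2)(ie_{2})$ in $\gamma_1(1)$. Verifying centrality against $ie_2$ in this case uses the relation $(ie_2)^2 = -e_2$ to convert the two $ie_2$-heavy terms into each other, giving matching contributions on both sides; the boundary case $j = n$ is analogous but simpler, requiring only the truncation of the $(j{+}1|j) \otimes (j|j{+}1)$ summand. Once centrality is established on all generators, extending multiplicatively over $\Ba_n$ confirms that $\gamma_j$ is a bimodule map.
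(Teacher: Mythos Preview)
Your proposal is correct and fills in precisely the ``tedious check on each basis element'' that the paper explicitly omits; the reduction of the $\gamma_j$ verification to the centrality of $\gamma_j(1)$ against algebra generators, and the careful handling of the $ie_k$ terms via the relations \eqref{complex symmetry 1}, \eqref{complex symmetry 2}, and $(ie_2)^2=-e_2$, are exactly the right ingredients. The paper's own proof says only that $\beta_j$ is obvious and that $\gamma_j$ ``follows from a tedious check on each basis elements, which we shall omit,'' so your argument is the same approach carried out in detail.
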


\begin{proof}
It is obvious that $\beta_j$ are for all $j$. 
The fact that $\gamma_j$ is a $(\Ba_n,\Ba_n)$-bimodule map also follows from a tedious check on each basis elements, which we shall omit and leave it to the reader.
 \end{proof}

\begin{definition}
Define the following complexes of graded $(\Ba_n,\Ba_n)$-bimodules:
\begin{align*}
R_j &:= (0 \to P^B_j \otimes_{\F_j} {}_jP^B \xra{\beta_j} \Ba_n \to 0), \text{and} \\
R_j' &:= (0 \to \Ba_n  \xra{\gamma_j}  P^B_j \otimes_{\F_j} {}_jP^B\{-1 \} \to 0).
\end{align*}
for each $j \in \{1,2, \cdots, n\},$ with both $\Ba_n$ in cohomological degree 0, $\F_1 = \R$ and $\F_j = \C$ for $j \geq 2$.
\end{definition}

\begin{proposition} \label{standard braid relations}
There are isomorphisms in the homotopy category, $\Kom^b ((\Ba_n, \Ba_n )$-$\text{bimod})$, of complexes of projective graded $(\Ba_n,\Ba_n)$-bimodules:
\begin{align}
R_j \otimes R_j^{'} \cong & \Ba_n \cong R_j^{'} \otimes R_j; \\
R_j \otimes R_k & \cong R_k \otimes R_j, \quad \text{for } |k-j| > 1;\\
R_j \otimes R_{j+1} \otimes R_j &\cong R_{j+1} \otimes R_j \otimes R_{j+1}, \quad \text{for } j \geq 2.
\end{align}
\end{proposition}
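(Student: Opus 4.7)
The overall strategy is to adapt the argument of Khovanov--Seidel (\cite[Theorem 2.5 and Proposition 2.7]{KhoSei}) for each of the three relations, with the main adjustment being to carry the scalar field $k_j$ ($=\R$ for $j=1$, $=\C$ for $j\geq 2$) through every tensor computation. In each case I would compute the relevant tensor products of complexes explicitly using the bimodule isomorphisms of \cref{Bcat} and then exhibit either a deformation retract or an explicit chain map realising the isomorphism. Throughout, tensor products are over $\Ba_n$ unless otherwise stated.

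For the invertibility $R_j \otimes R_j' \cong \Ba_n$, the plan is to write down $R_j \otimes R_j'$ as a three-term complex supported in cohomological degrees $-1,0,1$, whose degree $0$ term is $(P_j^B \otimes_{k_j} {}_jP_j^B \otimes_{k_j} {}_jP^B)\{-1\} \oplus \Ba_n$. Using \cref{Bcat}, we have ${}_jP_j^B \cong k_j \oplus k_j\{1\}$ as graded $(k_j, k_j)$-bimodules; this identifies the complex with
\[
\Bigl( P_j^B \otimes_{k_j} {}_jP^B \xra{\text{id}} P_j^B \otimes_{k_j} {}_jP^B \Bigr) \oplus \Bigl( P_j^B \otimes_{k_j} {}_jP^B\{-1\} \xra{0} P_j^B \otimes_{k_j} {}_jP^B\{-1\} \Bigr) \oplus \Ba_n
\]
up to an explicit change of basis induced by $\beta_j \circ \gamma_j$; contracting the first summand leaves $\Ba_n$. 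The symmetric computation works for $R_j' \otimes R_j$. The routine verification is checking that $\beta_j \circ \gamma_j$ acts as $X_j \cdot$ plus an identity-on-the-$k_j\{1\}$-summand, which is forced by the form of $\gamma_j(1)$.

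For the distant commutativity $R_j \otimes R_k \cong R_k \otimes R_j$ with $|j-k|>1$, the key observation from the quiver $Q_n$ is that ${}_jP^B \otimes P^B_k = 0 = {}_kP^B \otimes P^B_j$ whenever $|j-k|>1$, because there are no paths between vertices $j$ and $k$ in $Q_n$. Hence the ``mixed'' term $(P_j^B \otimes_{k_j} {}_jP^B) \otimes (P_k^B \otimes_{k_k} {}_kP^B)$ simplifies: the inner tensor contraction vanishes in the off-diagonal blocks, so both $R_j \otimes R_k$ and $R_k \otimes R_j$ identify with the same complex $(P_j^B \otimes_{k_j} {}_jP^B) \oplus (P_k^B \otimes_{k_k} {}_kP^B) \to \Ba_n$, giving the isomorphism on the nose.

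The braid relation $R_j \otimes R_{j+1} \otimes R_j \cong R_{j+1} \otimes R_j \otimes R_{j+1}$ for $j\geq 2$ will be the main obstacle. Here I would first expand both triple tensor products into four-term complexes in cohomological degrees $-3, -2, -1, 0$ and simplify each term using ${}_jP^B \otimes P^B_{j+1} \cong \C$ and ${}_jP_j^B \cong \C \oplus \C\{1\}$ as $(\C,\C)$-bimodules, which reduces both complexes to the same shape: a degree $-3$ term $P_j^B \otimes_\C {}_{j+1}P^B$ (respectively $P_{j+1}^B \otimes_\C {}_jP^B$), a degree $-2$ term with three indecomposable summands, a degree $-1$ term with two indecomposable summands, and $\Ba_n$ in degree $0$. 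Then I would construct an explicit map of complexes between the two sides, diagonal in the obvious projective summands, with components built from the maps $(j\mid j{+}1), (j{+}1\mid j)$ and idempotents $e_j, e_{j+1}$, and check it is a chain map using the zigzag relations and the ``complex symmetry'' relations \eqref{complex symmetry 1}, \eqref{complex symmetry 2}. The final step is to produce an explicit homotopy inverse and verify that the two composites are chain-homotopic to the identity; the heart of the work is showing that the contracting homotopies live genuinely over $\C$ (not just $\R$), which is the genuinely new point compared to \cite{KhoSei} and where I expect most of the bookkeeping to concentrate.
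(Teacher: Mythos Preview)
Your proposal is correct and follows essentially the same approach as the paper, which simply defers to \cite[Theorem~2.5]{KhoSei}; your outline is a faithful expansion of what that citation entails. One minor remark: for the braid relation with $j\geq 2$, your worry about homotopies living over $\C$ rather than $\R$ is unnecessary, since for $j\geq 2$ all the relevant modules $P^B_j, P^B_{j+1}$ and their tensor products are genuinely $\C$-bimodules (the $ie_j$ loops give the $\C$-structure), so the Khovanov--Seidel argument applies verbatim without modification.
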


\begin{proof} 
These relations can be verified similarly as in \cite[Theorem 2.5]{KhoSei}.
\end{proof}

\subsection{Biadjunction and Dehn twist}
To show the last type $B_n$ relation, we shall introduce a larger family of invertible complexes that will aid us in our calculation. 
The construction mirrors the notion of Dehn Twists in topology and uses the theory on biadjunctions (see \cite{KhoBiadj} for an amazing exposition on biadjunctions in terms of string diagram).
Throughout this section we will denote $\mathbb{K}_1 := \R$ and $\mathbb{K}_j := \C$ for $j \geq 2$.

\begin{definition}
Let $X\in \Kom^{b}((\Ba_n,\mathbb{K}_j)$-$bimod)$ and $X^\vee \in \Kom^{b}((\mathbb{K}_j,\Ba_n)\text{-bimod})$ such that $(X,X^\vee)$ is a biadjoint pair of objects.
We define the complex of $(\Ba_n,\Ba_n)$-bimodule
\[
\sigma_X:= \cone\left( X\otimes_{\mathbb{K}_j} X^\vee \xrightarrow{\varepsilon} \Ba_n \right)
\]
with $\varepsilon$ the counit of the adjunction $X \dashv X^\vee$.
\end{definition}
One can verify from the definition of biadjunctions that $\sigma_X$ is uniquely defined up to isomorphism, i.e. if $X \cong Y$, then $\sigma_X \cong \sigma_{Y}$ .
Moreover, $\Sigma_{X} = \Sigma_{X[r]\{s\}\<t\>}$.
If $(Y, Y^\vee)$ is a biadjoint pair and $X \cong \Sigma \otimes_{\Ba_n} Y$ with $\Sigma$ an invertible object in $\Kom^b((\Ba_n, \Ba_n)\text{-mod})$, then $(X,X^\vee)$ forms a biadjoint pair with $X^\vee := Y^\vee \otimes_{\Ba_n} \Sigma^{-1}$.
Furthermore, $\sigma_X \cong \Sigma \otimes_{\Ba_n} \sigma_Y \otimes_{\Ba_n} \Sigma^{-1}$.

\begin{lemma} \label{P_i biadjoint}
The objects $P_j^B$ and ${}_{j}{P}^B$ are biadjoints to each other.
\end{lemma}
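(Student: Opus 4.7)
The plan is to exhibit explicit unit and counit morphisms for both adjunctions $P_j^B \dashv {}_j P^B$ and ${}_j P^B \dashv P_j^B$ (with appropriate grading shifts) and to verify the triangle (zigzag) identities directly. The key input is that $e_j \Ba_n e_j$ is a graded Frobenius algebra over $\mathbb{K}_j$: as a $\mathbb{K}_j$-vector space it has basis $\{e_j, X_j\}$ (with $X_j$ in internal degree $\{1\}$), and thus the isomorphism ${}_j P^B \otimes_{\Ba_n} P_j^B \cong \mathbb{K}_j \oplus \mathbb{K}_j\{1\}$ from \cref{Bcat} is exactly the bimodule decomposition of this Frobenius algebra.

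First I would package the four structure morphisms. For the adjunction $P_j^B \otimes_{\mathbb{K}_j} - \ \dashv\ {}_j P^B \otimes_{\Ba_n} -$, the counit $\varepsilon : P_j^B \otimes_{\mathbb{K}_j} {}_j P^B \to \Ba_n$ is the multiplication map $xe_j \otimes e_j y \mapsto xy$ (this is precisely the map $\beta_j$ appearing earlier in the section), while the unit $\eta : \mathbb{K}_j \to {}_j P^B \otimes_{\Ba_n} P_j^B$ is the inclusion of the top degree component $\mathbb{K}_j\cdot(e_j\otimes e_j)$ into $e_j\Ba_n e_j$, suitably shifted. For the reverse adjunction ${}_j P^B \otimes_{\Ba_n} - \ \dashv\ P_j^B \otimes_{\mathbb{K}_j} -$ (with a shift by $\{-1\}$), the unit $\eta' : \Ba_n \to P_j^B \otimes_{\mathbb{K}_j} {}_j P^B\{-1\}$ is the bimodule map $\gamma_j$ defined earlier (which is, up to sign, the Frobenius comultiplication summed along the neighbours of $j$), and the counit $\varepsilon' : e_j\Ba_n e_j \to \mathbb{K}_j\{-1\}$ is the Frobenius trace picking out the coefficient of $X_j$.

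Second, I would verify the two zigzag identities for each adjunction by evaluating on the generators $e_j$ (in $P_j^B$ or ${}_jP^B$): the composition $P_j^B \to P_j^B \otimes_{\mathbb{K}_j} {}_j P^B \otimes_{\Ba_n} P_j^B \to P_j^B$ lands $e_j \mapsto e_j \otimes e_j\otimes X_j + \ldots \mapsto e_j$ after using $\eta$ and $\varepsilon$, and likewise for ${}_j P^B$; the dual zigzag identities for the other adjunction reduce to the Frobenius identity $\mu \circ \Delta = X_j\cdot \mathrm{id}$ on $e_j\Ba_n e_j$, which holds by direct inspection of the basis. Since all maps are defined on the generator $e_j$ and are $\Ba_n$-bilinear (or $\mathbb{K}_j$-bilinear in the relevant slot), checking on $e_j$ and on $X_j$ suffices.

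The main subtlety — more than a genuine obstacle — is bookkeeping of the two gradings and of the ground ring: when $j=1$ the relevant ground ring is $\mathbb{K}_1=\R$ and the loop $X_1$ is real, whereas for $j\geq 2$ one must check that the Frobenius trace and the associated unit/counit are $\C$-bilinear and not merely $\R$-bilinear, i.e.\ that they commute with the right multiplication by $ie_j$. This is where the relation $(ie_j)X_j=X_j(ie_j)$ plays its role, and it is the one place where the non-simply-laced nature of $B_n$ enters. Once this compatibility is in place, the biadjunction is formally identical to the type $A$ case of \cite{KhoSei}.
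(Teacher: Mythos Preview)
Your approach is essentially the same as the paper's: you use $\beta_j$ and the inclusion $1\mapsto e_j\otimes e_j$ for the adjunction $P_j^B\dashv {}_jP^B$, and $\gamma_j$ together with the Frobenius trace $e_j\otimes e_j\mapsto 0$, $X_j\otimes e_j\mapsto 1$ for the reverse adjunction, exactly as the paper does. Your Frobenius-algebra packaging and the remark about $\C$-bilinearity for $j\geq 2$ are pleasant additions, but note a small slip in your zigzag sketch: the composite using $\eta$ and $\varepsilon=\beta_j$ sends $e_j\mapsto e_j\otimes e_j\otimes e_j\mapsto e_j$ with no $X_j$ appearing; it is the \emph{other} zigzag (the one built from $\gamma_j$ and the trace $\varepsilon'$) where $X_j$ enters, via the term $e_j\otimes X_j$ in $\gamma_j(1)$.
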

\begin{proof}
To show that the $P_j^B$ is a left adjoint to ${}_{j}{P}^B$, take the counit to be the functor induced by $P_j^B\otimes_{\mathbb{K}_j} {}_{j}{P}^B \xrightarrow{\beta_j} \Ba_n$ and the unit is instead induced by $\mathbb{K}_j \xrightarrow{\varphi} {}_{j}{P}^B\otimes_{\Ba_n} P_j^B$ where $\varphi$ is defined by $\varphi(1) = e_j\otimes_{\Ba_n} e_j$.
To show ${}_{j}{P}^B$ is a left adjoint to $P_j^B$, take the counit to be the functor induced by ${}_{j}{P}^B \otimes_{\Ba_n} P_j^B \xrightarrow{\varphi'} \mathbb{K}_j$ where $\varphi'$ is defined by $\varphi'(e_j\otimes e_j) = 0, \varphi'(X_j\otimes e_j) = 1$ (note that $X_j \otimes e_j = e_j \otimes X_j$), and the unit is instead induced by ${\Ba_n} \xrightarrow{\gamma_j} P_j^B\otimes_{\mathbb{K}_j} {}_{j}{P}^B$.
It is an easy exercise to verify the conditions for both adjunctions.
\end{proof}

Using this, we shall now prove the last type $B_n$ relation required:
\begin{proposition} \label{type B relation}
We have the following isomorphism of $\Kom^b((\Ba_n,\Ba_n)-bimod)$:
\[
R_2 \otimes_{\Ba_n} R_1 \otimes_{\Ba_n} R_2 \otimes_{\Ba_n} R_1 \cong 
R_1 \otimes_{\Ba_n} R_2 \otimes_{\Ba_n} R_1 \otimes_{\Ba_n} R_2 .
\]
\end{proposition}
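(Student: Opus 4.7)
The four-strand braid relation $R_1 R_2 R_1 R_2 \cong R_2 R_1 R_2 R_1$ is equivalent, after left-multiplying by $R_2^{-1}$ and right-multiplying by $R_2^{-1} R_1^{-1} R_2^{-1}$, to the conjugation identity
\[
(R_2 R_1 R_2) \otimes_{\Ba_n} R_1 \otimes_{\Ba_n} (R_2 R_1 R_2)^{-1} \;\cong\; R_1.
\]
By the Dehn twist formula $\sigma_{\Sigma \otimes_{\Ba_n} P_j^B} \cong \Sigma \otimes_{\Ba_n} R_j \otimes_{\Ba_n} \Sigma^{-1}$ derived from \cref{P_i biadjoint}, the left-hand side is exactly $\sigma_{R_2 R_1 R_2 \otimes_{\Ba_n} P_1^B}$ while the right-hand side is $\sigma_{P_1^B}$. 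Since $\sigma_X$ depends only on the isomorphism class of $X$ up to biadjoint equivalence and internal grading/cohomological shifts, the entire proposition reduces to exhibiting an isomorphism
\[
R_2 \otimes_{\Ba_n} R_1 \otimes_{\Ba_n} R_2 \otimes_{\Ba_n} P_1^B \;\cong\; P_1^B \quad \text{(up to grading and cohomological shifts)}
\]
in $\Kom^b(\Ba_n\text{-mod})$.

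The plan is to establish this key isomorphism by iterative computation. Beginning with $R_2 \otimes P_1^B$, the decomposition ${}_2P_1^B \cong {}_\C\C_\R\{1\}$ from \cref{bimodule isomorphism} gives $R_2 \otimes P_1^B = \cone\bigl(P_2^B\{1\} \to P_1^B\bigr)$, whose differential is determined by the counit $\beta_2$. Tensoring successively with $R_1$ and then $R_2$ produces three- and four-term convolutions whose terms decompose via \cref{bimodule isomorphism} into shifted copies of $P_1^B$ and $P_2^B$, connected by differentials extracted from the $\beta_j$'s. At each stage one identifies the identity-isomorphism components of these differentials and applies Gaussian elimination to cancel contractible pairs $Q \xrightarrow{\id} Q$; after all cancellations, the complex should collapse to a single shifted copy of $P_1^B$, establishing the claim.

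The main obstacle is the careful bookkeeping imposed by the mixed scalar structure characteristic of the non-simply-laced type $B$ setting. The complex $R_1$ is constructed via a tensor over $\R$ (since $P_1^B$ carries only an $\R$-action on the right), whereas $R_j$ for $j \geq 2$ uses tensor over $\C$; consequently, each passage of an $R_1$ across an $R_2$ doubles or halves the relevant summands via the identification $\C \cong \R \oplus \R\langle 1\rangle$. Propagating both grading shifts $\{\cdot\}$ (path length) and $\langle\cdot\rangle$ ($\Z/2\Z$ from the imaginary loops) through three nested cones, and correctly identifying the identity components of each $\beta_j$-induced differential so as to enable the right Gaussian eliminations, is the principal technical challenge—precisely where the non-simply-laced nature of the type $B$ braid relation manifests itself and distinguishes this calculation from the type $A$ braid relation in \cite[Theorem 2.5]{KhoSei}.
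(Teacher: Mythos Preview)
Your approach is essentially identical to the paper's: reduce via conjugation and the Dehn-twist formula $\sigma_{\Sigma\otimes P_1^B}\cong \Sigma R_1\Sigma^{-1}$ to the statement $R_2R_1R_2(P_1^B)\cong P_1^B$ up to shifts, then compute this by successively tensoring and applying Gaussian elimination. The paper carries out exactly the three-step calculation you outline (finding $R_2(P_1^B)\cong(P_2^B\{1\}\xrightarrow{2|1}P_1^B)$, then $R_1R_2(P_1^B)\cong(P_1^B\{1\}\langle1\rangle\xrightarrow{(1|2)i}P_2^B\{1\})$, then $R_2R_1R_2(P_1^B)\cong P_1^B\{1\}\langle1\rangle$); your description of the algebraic manipulation giving the conjugation identity is slightly garbled (one only needs to right-multiply by $(R_2R_1R_2)^{-1}$), but the target identity you state is correct.
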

\begin{proof}
We shall drop the tensor product for the sake of readability:
$
 R_2 R_1 R_2 R_1 \cong R_1 R_2 R_1 R_2. 
 $
Using \cref{standard braid relations}, note that this relation is equivalent to
$
(R_2 R_1 R_2) R_1 (R_2' R_1' R_2') \cong 
R_1.
$
By \cref{P_i biadjoint}, we get $R_1 = \sigma_{P_1^B}$, giving us
$$
(R_2 R_1 R_2) \sigma_{P_1^B} (R_2' R_1' R_2') \cong 
\sigma_{P_1^B}.
$$
Thus, it is equivalent to show that
$$
\sigma_{R_2 R_1 R_2(P_1^B)} \cong \sigma_{P_1^B}.
$$
It is now sufficient to show $R_2 R_1 R_2(P_1^B)$ and $P_1^B$ are isomorphic in $\Kom^b((\Ba_n, \R)$-bimod) up to cohomological or internal gradings shifts.
We shall show this in the computation that follows.
Since the cohomological grading does not matter, we shall omit it during the computation.
\begin{align*}
R_2(P_1^B) &= 0 \ra P_2^B\otimes_\C {}_2P_1^B \ra P_1^B \ra 0 
\cong 0 \ra P_2^B \{1\} \xra{2|1} P_1^B \ra 0 \qquad (\text{by } \cref{bimodule isomorphism})
\end{align*}
\begin{align*}
R_1R_2(P_1^B) &\cong R_1 \left( 0 \ra P_2^B \{1\} \xra{2|1} P_1^B \ra 0 \right)\\
&= \text{cone } \left\{
\begin{tikzcd}[row sep = large, column sep = large, ampersand replacement=\&]
P_1^B\{1\}\otimes_\R {}_1P_2^B \ar{r}{\id\otimes (2|1)} \ar{d}{}\& 
P_1^B \otimes_\R {}_1P_1^B \ar{d}{} \\
P_2^B \{1\} \ar{r}{2|1} \& 
P_1^B
\end{tikzcd} \right\}\\
&\cong \text{cone }\left\{
\begin{tikzcd}[row sep = large, column sep = large, ampersand replacement=\&]
P_1^B\{1\}\oplus P_1^B\{1\}\<1\> 
\ar{r}
	{
	\begin{bmatrix}
	0 & 0 \\
	\id & 0
	\end{bmatrix}
} 
\ar{d}
	{
	\begin{bmatrix}
	1|2 & 1|2i
	\end{bmatrix}
}\& 
P_1^B \oplus P_1^B\{1\} 
\ar{d}{
	\begin{bmatrix}
	\id & X_1
	\end{bmatrix}
} \\
P_2^B \{1\} \ar{r}{2|1} \& 
P_1^B
\end{tikzcd} \right\} \qquad (\text{by } \cref{bimodule isomorphism})\\
&\cong 0 \ra P_1^B\{1\}\<1\> \xra{1|2i} P_2^B\{1\} \ra 0 \\
R_2R_1R_2(P_1^B) &\cong \text{cone }\left\{
\begin{tikzcd} [row sep = large, column sep = large, ampersand replacement=\&]
P_2^B\otimes_\C {}_2P_1^B\{1\} \<1\> 
\ar{r}{} 
\ar{d}{}
\&
P_2^B \otimes_\C {}_2P_2^B\{1\}
\ar{d}{} \\
P_1^B\{1\} \<1\> \ar{r}{1|2i} \&
P_2^B\{1\}
\end{tikzcd}
\right\}\\
&\cong \text{cone }\left\{
\begin{tikzcd} [row sep = large, column sep = large, ampersand replacement=\&]
P_2^B\{2\} \<1\> 
\ar{r}{
	\begin{bmatrix}
	0 \\
	\id
	\end{bmatrix}
} 
\ar{d}{2|1}
\&
P_2^B\{1\} \oplus P_2^B\{2\}\<1\>
\ar{d}{
	\begin{bmatrix}
	\id & X_2i
	\end{bmatrix}
} \\
P_1^B\{1\} \<1\> \ar{r}{1|2i} \&
P_2^B\{1\}.
\end{tikzcd}
\right\} \qquad (\text{by } \cref{bimodule isomorphism})\\
&\cong P_1^B\{1\}\<1\>
\end{align*}
\end{proof}
%
%
%

\begin{theorem} \label{Cat B action}
We have a (weak) $\mathcal{A}(B_n)$-action on $\Kom^b(\Ba_n$-$p_r g_r mod)$, where each standard generator $\sigma^B_j$ for $j \geq 2$ of $\mathcal{A}(B_n)$ acts on a complex $M \in \Kom^b(\Ba_n$-$p_rg_rmod)$ via $R_j$, and $\sigma^B_1$ acts via $R_1 \<1\>$:
$$\sigma^B_j(M):= R_j \otimes_{\Ba_n} M, \text{ and } {\sigma^B_j}^{-1}(M):= R_j' \otimes_{\Ba_n} M,$$
\[
\sigma^B_1(M):= R_1 \<1\> \otimes_{\Ba_n} M, \text{ and } {\sigma^B_1}^{-1}(M):= R_1' \<1\> \otimes_{\Ba_n} M.
\]
\end{theorem}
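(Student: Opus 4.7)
The plan is to verify that the assignment $\sigma_j^B \mapsto R_j$ (for $j \geq 2$) and $\sigma_1^B \mapsto R_1 \langle 1 \rangle$ respects each of the defining relations of $\cA(B_n)$ listed in Section~3.2, using the invertibility, far commutativity, and three-strand braid relations already established in \cref{standard braid relations}, together with the four-strand ``type $B$'' relation proved in \cref{type B relation}. Since bimodule tensor product preserves isomorphism classes, verifying each relation up to isomorphism in $\Kom^b((\Ba_n,\Ba_n)\text{-bimod})$ is exactly what is needed for a weak group action on $\Kom^b(\Ba_n\text{-}p_rg_rmod)$ via $-\otimes_{\Ba_n}(-)$.

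First I would handle invertibility. For $j \geq 2$, the isomorphism $R_j \otimes R_j' \cong \Ba_n \cong R_j' \otimes R_j$ is already in \cref{standard braid relations}. For $j=1$, I need that $R_1 \langle 1 \rangle$ and $R_1' \langle 1 \rangle$ are mutually inverse. Because $\langle - \rangle$ is a $\Z/2\Z$-shift, $\langle 1 \rangle \otimes \langle 1 \rangle \cong \langle 0 \rangle$, so
\[
(R_1\langle 1\rangle) \otimes_{\Ba_n} (R_1'\langle 1 \rangle) \;\cong\; (R_1 \otimes_{\Ba_n} R_1')\langle 2\rangle \;\cong\; \Ba_n,
\]
and similarly for the other composition. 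Second, far commutativity: for $|j-k|>1$ with $j,k\geq 2$ this is immediate from \cref{standard braid relations}. When $j=1$ and $k\geq 3$, the $\Z/2\Z$-shift $\langle 1 \rangle$ is central with respect to tensor product, so $R_1\langle 1\rangle \otimes R_k \cong (R_1\otimes R_k)\langle 1\rangle \cong (R_k\otimes R_1)\langle 1\rangle \cong R_k \otimes R_1\langle 1\rangle$, again reducing to \cref{standard braid relations}.

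Third, the three-strand relations $\sigma_j^B\sigma_{j+1}^B\sigma_j^B \cong \sigma_{j+1}^B\sigma_j^B\sigma_{j+1}^B$ for $j\geq 2$ are exactly the $j\geq 2$ case of \cref{standard braid relations}. Finally, the four-strand relation at the labelled edge: one needs
\[
R_1\langle 1\rangle \otimes R_2 \otimes R_1\langle 1\rangle \otimes R_2 \;\cong\; R_2 \otimes R_1\langle 1\rangle \otimes R_2 \otimes R_1\langle 1\rangle .
\]
Pulling the two copies of $\langle 1\rangle$ through the tensor product on each side and using $\langle 1\rangle \otimes \langle 1\rangle \cong \langle 0\rangle$, both sides become the corresponding unshifted product tensored with $\langle 2\rangle\cong \langle 0\rangle$, and the desired isomorphism is precisely \cref{type B relation}.

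There is no real obstacle here: all the computational content is contained in \cref{standard braid relations,type B relation}. The only point requiring care is the $\Z/2\Z$-shift $\langle 1\rangle$ attached to $\sigma_1^B$, and the key observations are that $\langle 1\rangle$ is central and squares to the identity, so it is invisible in every relation involving an even number of $\sigma_1^B$'s (which are exactly the relations in which $\sigma_1^B$ appears in the $B_n$ presentation).
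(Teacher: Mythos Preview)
Your proposal is correct and follows the same approach as the paper: the paper's proof simply cites \cref{standard braid relations} and \cref{type B relation} and remarks that the relations still hold with the extra $\langle 1\rangle$ shift on $R_1$ and $R_1'$. You have spelled out in detail exactly why the $\Z/2\Z$-shift is harmless in each relation, which is a faithful expansion of the paper's one-sentence argument.
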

\begin{proof}
This follows directly from \cref{standard braid relations} and \cref{type B relation}, where the required relations still hold with the extra third grading shift $\<1\>$ on $R_1$ and $R_1'$.
\end{proof}
\noindent From now on, we will abuse notation and use $\sigma^B_j$ and ${\sigma^B_j}^{-1}$ in place of $R_j$ and $R_j'$ (with an extra grading shift $\<1\>$ for $j=1$) respectively  whenever it is clear from the context what we mean.
\subsection{Functor realisation of the type $B$ Temperley-Lieb algebra} \label{Functorial TL}
In \cite[section 2b]{KhoSei}, Khovanov-Seidel showed that the $(\Aa_m, \Aa_m)$-bimodules $U_j := P_j^A \otimes_\C {}_j P^A$ provides a functor realisation of the type $A_m$ Temperley-Lieb algebra. 
Though they don't really care about the grading matching in that particular paper, in \cite[section 1.3]{LinkHom}, Khovanov introduced the path length grading, denoted by $(-),$ on $\Aa_m$-modules. In which case, when $U_j := P_j^A \otimes_\C {}_j P^A (-1),$ we really see the multiplication by $q$ in type $A$ Temperley-Lieb algebra over the ring $\Z[q, q^{-1}]$ become the grading shift $(1).$
We will see that we have a similar type $B_n$ analogue of this.

Recall that the type $B_n$ Temperley-Lieb algebra $TL_v(B_n)$ over $\Z[v,v^{-1}]$ can be described explictly as (see \cite[Proposition 1.3]{Green}) the algebra generated by $E_1, ..., E_n$ with relation
\begin{align*}
E_j^2 &= vE_j + v^{-1}E_j; \\
E_j E_k &= E_k E_j, \quad \text{if } |j-k| > 1; \\
E_j E_k E_j &= E_k, \quad \text{if } |j-k| = 1 \text{ and } j,k > 1; \\
E_j E_k E_j E_k &= 2E_j E_k, \quad \text{if } \{j, k\} = \{1,2\}.
\end{align*}

To match with the above, for this subsection only, we shall adopt the path length grading on our algebra $\Ba_n$ instead, where we insist that the blue paths $i e_j$ in \ref{B quiver} have length 0. We shall denote this path length grading shift by $(-)$ to avoid confusion.
Define $\cU_j := P_j^B \otimes_{\mathbb{F}_j} {}_j P^B (-1)$, where $\mathbb{F}_1 = \R$ and $\mathbb{F}_j = \C$ when $j \geq 2$.
It is easy to check that:
\begin{proposition}
The following are isomorphic as (path length graded) $(\Ba_n, \Ba_n)$-bimodules:
\begin{align*}
\cU_j^2 &\cong \cU_j(1) \oplus \cU_j(-1) \\
\cU_j \cU_k &\cong 0 \quad \text{if } |j-k| > 1, \\
\cU_j \cU_k \cU_j &\cong \cU_k \quad \text{if } |j-k| = 1 \text{ and } i,j > 1, \\
\cU_j \cU_k \cU_j \cU_k &\cong \cU_j \cU_k \oplus \cU_j \cU_k \quad \text{if } \{j, k\} = \{1,2\}.
\end{align*}
\end{proposition}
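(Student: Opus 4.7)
The plan is to reduce each claimed isomorphism to a direct computation involving the small bimodules ${}_j P^B_k \cong {}_j P^B \otimes_{\Ba_n} P_k^B$ listed in \cref{bimodule isomorphism}. Writing $\cU_j = P_j^B \otimes_{\mathbb{F}_j} {}_j P^B (-1)$ with $\mathbb{F}_1 = \R$ and $\mathbb{F}_j = \C$ for $j \geq 2$, repeated application of the tensor identification turns a chain $\cU_{j_1} \cU_{j_2} \cdots \cU_{j_r}$ into
\[
P_{j_1}^B \otimes_{\mathbb{F}_{j_1}} \left( {}_{j_1} P^B_{j_2} \otimes_{\mathbb{F}_{j_2}} \cdots \otimes_{\mathbb{F}_{j_{r-1}}} {}_{j_{r-1}} P^B_{j_r} \right) \otimes_{\mathbb{F}_{j_r}} {}_{j_r} P^B \, (-r),
\]
so everything reduces to computing the bracketed middle factor while keeping careful track of the gradings and of the $\R$-versus-$\C$ structure of each tensor slot.

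First I would translate \cref{bimodule isomorphism} into the path length grading used in this subsection. The only change is that each loop $X_j$ now sits in degree $2$ and each arrow $(j|j \pm 1)$ in degree $1$, so one obtains ${}_j P^B_j \cong \mathbb{F}_j \oplus \mathbb{F}_j(2)$ as $(\mathbb{F}_j,\mathbb{F}_j)$-bimodules, ${}_j P^B_k$ concentrated in path length $1$ when $|j-k| = 1$, and ${}_j P^B_k = 0$ when $|j-k| > 1$. For the first isomorphism one needs $X_j$ to commute with $ie_j$ for $j \geq 2$; this is immediate from $(ie_2) X_2 = X_2(ie_2)$ when $j = 2$, and for $j \geq 3$ follows by pushing $ie_j$ across each of the two edges of $X_j$ using relations \eqref{complex symmetry 1}--\eqref{complex symmetry 2}. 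Cases (1)--(3) then follow by substitution: inserting ${}_j P^B_j \cong \mathbb{F}_j \oplus \mathbb{F}_j(2)$ yields $\cU_j^2 \cong \cU_j(-1) \oplus \cU_j(1)$; the vanishing of ${}_j P^B_k$ when $|j-k| > 1$ gives $\cU_j \cU_k = 0$; and for $j, k > 1$ with $|j-k| = 1$, the middle factor ${}_j P^B_k \otimes_\C {}_k P^B_j$ collapses to $\C(2)$, which combined with the overall shift $(-3)$ from the three $\cU$-factors recovers the outer indecomposable.

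The main obstacle is case (4), where the hybrid $\R$/$\C$ structure at vertex~$1$ is precisely what produces the multiplicity $2$. The key computation is that the middle factor ${}_1 P^B_2 \otimes_\C {}_2 P^B_1 \otimes_\R {}_1 P^B_2$ splits as $({}_\R \C_\C \oplus {}_\R \C_\C)(3)$ as a graded $(\R, \C)$-bimodule, and symmetrically for ${}_2 P^B_1 \otimes_\R {}_1 P^B_2 \otimes_\C {}_2 P^B_1$. To obtain this, I would first verify the collapse ${}_\R \C_\C \otimes_\C {}_\C \C_\R \cong {}_\R \C_\R$ via the multiplication map $x \otimes y \mapsto xy$, and then the crucial splitting ${}_\R \C_\R \otimes_\R {}_\R \C_\C \cong {}_\R \C_\C \oplus {}_\R \C_\C$; this last step reduces to the observation that $\C \otimes_\R \C$ is a rank-$2$ free right $\C$-module, generated for instance by $1 \otimes 1$ and $i \otimes 1$. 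For the symmetric calculation one uses instead that ${}_\C \C \otimes_\R \C_\C$ decomposes via the idempotents $\tfrac{1}{2}(1 \otimes 1 \mp i \otimes i)$, after which further tensoring against ${}_\C \C_\R$ converts both summands into copies of ${}_\C \C_\R$. Substituting the decomposition back and tracking the total grading shift $(1) + (1) + (1) + (-4) = 0$ then produces the two required copies of $\cU_j \cU_k$.
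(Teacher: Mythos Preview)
Your approach is precisely what the paper intends: the proposition is stated after the phrase ``It is easy to check that'' with no further argument, and the intended verification is exactly the reduction to the small bimodules ${}_j P^B_k$ of \cref{bimodule isomorphism} that you carry out. Your handling of cases (1), (2), and (4) is correct; in particular, the observation that $\C \otimes_\R \C$ is free of rank~$2$ as a one-sided $\C$-module is the mechanism behind the doubling in the type~$B$ relation, and your grading bookkeeping lands in the right place.

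One point worth flagging in case (3): your computation correctly collapses the middle factor ${}_j P^B_k \otimes_\C {}_k P^B_j$ to a single copy of $\C$, so that $\cU_j \cU_k \cU_j$ is isomorphic to the \emph{outer} bimodule $\cU_j$, as you say. The displayed statement, however, reads $\cU_k$ (and also writes ``$i,j>1$'' where ``$j,k>1$'' is meant). This is a typo in the paper --- the Temperley--Lieb relation recalled just above the proposition in the Chapter~2 version is $E_j E_k E_j = E_j$, and the bimodule computation unambiguously gives $\cU_j$ --- so your argument is correct and it is the statement that should be amended.
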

Comparing this with the relations of $TL_v(B_n)$ above, it follows that $\cU_j$ provides a (degenerate) functor realisation of the type $B$ Temperley-Lieb algebra.

\section{Relating Categorical Type $B_n$ and Type $ {A_{2n-1}}$ actions} \label{relating categorical b a action}

In \cref{topology}, we have defined $\m$ that lifts isotopy classes of trigraded curves in $\D^B_{n+1}$ to isotopy classes of bigraded curves in $\D^A_{2n}$. Furthermore, we showed that the map $\mathfrak{m}$ is equivariant under the $\cA(B_n)$-action.
In this section, we shall develop the algebraic version of this story.
We will first relate our type $B_n$ zigzag algebra $\Ba_n$ to the type $A_{2n-1}$ zigzag algebra $\Aa_{2n-1}$ by showing that $\C\otimes_\R \Ba_n \cong \Aa_{2n-1}$ as graded $\C$-algebras (forgetting the $\<-\>$ grading in $\Ba_n$).
Through this, we have an injection $\Ba_n \hookrightarrow \C\otimes_\R \Ba_n \cong \Aa_{2n-1}$ as graded $\R$-algebras.
Thus, we can relate the two categories $\Kom^b(\Ba_n$-$\text{p$_{r}$g$_{r}$mod})$ and $\Kom^b(\Aa_{2n-1}$-$\text{p$_{r}$g$_{r}$mod})$  through an extension of scalar $\Aa_{2n-1} \otimes_{\Ba_n} -$. 
We end  this section by showing that the functor $\Aa_{2n-1} \otimes_{\Ba_n} -$ is $\cA(B_n)$-equivariant, which also allows us to deduce that the $\cA(B_n)$ action on $\Kom^b(\Ba_n\text{-}p_rg_rmod)$ is faithful.

\begin{proposition} \label{isomorphic algebras}
Consider the graded $\R$-algebra $\ddot{\Ba}_n$, where $\ddot{\Ba}_n$ is just $\Ba_n$ without the $\Z/2\Z$ grading $\<-\>$.
The graded $\C$-algebras $\C \otimes_{\R} \ddot{\Ba}_n $ and $\Aa_{2n-1}$ are isomorphic.
\end{proposition}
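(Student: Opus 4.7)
The plan is to exhibit an explicit graded $\C$-algebra homomorphism $\phi: \C\otimes_\R \ddot{\Ba}_n \to \Aa_{2n-1}$ and argue that it is an isomorphism. The whole construction is motivated by the folding of the type $A_{2n-1}$ Dynkin diagram under the $\Z/2\Z$-symmetry $k\leftrightarrow 2n-k$ that fixes the central vertex $n$: the orbit $\{n\}$ corresponds to vertex $1$ of $B_n$, and the orbit $\{n+j-1,\,n-j+1\}$ corresponds to vertex $j\geq 2$. In other words, I expect that when one extends scalars to $\C$, the ``local imaginary unit'' $ie_j$ together with the central $\C$-scalar $i$ produces enough orthogonal idempotents to split each orbit idempotent $e_j$ into a pair, recovering the $A$-side idempotents $e_{n+j-1}$ and $e_{n-j+1}$.

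Concretely, I would define $\phi$ on generators by $\phi(e_1)=e_n$, $\phi(e_j)=e_{n+j-1}+e_{n-j+1}$, $\phi(ie_j)=i(e_{n+j-1}-e_{n-j+1})$ for $j\geq 2$, $\phi(j|j{+}1)=(n{+}j{-}1|n{+}j)+(n{-}j{+}1|n{-}j)$, and $\phi(j{+}1|j)=(n{+}j|n{+}j{-}1)+(n{-}j|n{-}j{+}1)$ for $j\geq 1$, and extend $\C$-linearly. The first step is then to check that $\phi$ respects every defining relation of $\ddot{\Ba}_n$: the zigzag relations and vanishing of length-$2$ ``same-direction'' paths follow from the corresponding type $A$ zigzag relations applied at both $n+j-1$ and $n-j+1$; the crucial relation $(ie_j)^2=-e_j$ is verified by $[i(e_{n+j-1}-e_{n-j+1})]^2=-(e_{n+j-1}+e_{n-j+1})$; the complex symmetry relations $(ie_{j-1})(j{-}1|j)=(j{-}1|j)(ie_j)$ reduce to matching up the $\pm$ signs on the two orbit components; and the relation $(1|2)(ie_2)(2|1)=0$ is a direct cancellation $i X^A_n - i X^A_n = 0$ after multiplying out. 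A parallel check verifies that each relation is homogeneous of the correct $\Z$-degree under the gradings on both algebras (in particular $(j{+}1|j)$, whose image lives entirely in degree-$1$ $A$-type arrows).

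For bijectivity, I would first note that $\dim_\C(\C\otimes_\R \ddot{\Ba}_n)=\dim_\R\ddot{\Ba}_n = 8n-6 = \dim_\C \Aa_{2n-1}$, so it is enough to prove surjectivity. The $\C$-scalars from the extension are exactly what is needed to separate the two pieces of each orbit: $\tfrac{1}{2}\bigl(\phi(e_j)\mp \tfrac{1}{i}\phi(ie_j)\bigr) = e_{n\pm(j-1)}$ recovers each $A$-idempotent individually, and then $\phi(j|j{+}1)\cdot e_{n+j}=(n{+}j{-}1|n{+}j)$ and $e_{n-j}\cdot\phi(j|j{+}1)=(n{-}j{+}1|n{-}j)$ (and analogously for the other direction) pick out each $A$-type arrow. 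Since the image contains all idempotents and arrows generating $\Aa_{2n-1}$, $\phi$ is surjective and therefore an isomorphism of graded $\C$-algebras.

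The main obstacle, such as it is, is conceptual rather than computational: one must see why the $ie_j$'s were put into $\Ba_n$ in the first place, namely to ensure that after extending scalars the idempotents $e_j$ for $j\geq 2$ decompose into two orthogonal idempotents corresponding to the two $A$-side orbit representatives. The relation $(1|2)(ie_2)(2|1)=0$ and the complex symmetries $(ie_{j-1})(j{-}1|j)=(j{-}1|j)(ie_j)$ are precisely the constraints needed so that this decomposition is compatible with the bimodule structure coming from the arrows. Verifying these intertwine correctly with the type $A$ arrows is the only place where care is required; everything else is bookkeeping on the Dynkin folding.
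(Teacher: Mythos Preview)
Your proposal is correct and follows essentially the same approach as the paper: the map $\phi$ you write down coincides with the paper's $\Phi$ (the paper packages the definition via the idempotents $\nu_j=\tfrac12(1\otimes e_j+i\otimes ie_j)$, but unwinding this gives exactly your formulas), and both proofs conclude by the identical dimension count $\dim_\C=8n-6$ together with surjectivity. Your surjectivity argument, recovering each $A$-idempotent and arrow individually from $\phi(e_j),\phi(ie_j)$ and the arrow images, is slightly more explicit than the paper's, which simply asserts surjectivity is easy to see.
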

\begin{proof}
Note that as $\C$-vector space, we have the following decomposition:
\begin{align*}
\C \otimes_\R \ddot{\Ba}_n 
& \cong \bigoplus_{j=2}^{n} \C \otimes_\R \left(  {}_jP^B_j \ \oplus {}_jP^B_{j-1} \oplus  {}_{j-1}P^B_{j} \right) \oplus (\C \otimes_\R {}_1P^B_1) \\ 
& \cong 
\bigoplus_{j=2}^{n} \big(\C \otimes_\R  {}_jP^B_j \big) \oplus  \bigoplus_{j=2}^{n} \big( \C \otimes_\R {}_jP^B_{j-1} \big) \oplus \bigoplus_{j=2}^{n} \big(\C \otimes_\R {}_{j-1}P^B_{j} \big) \oplus 
(\C \otimes_\R {}_1P^B_1). 
\end{align*}

Firstly, for each $j\geq 2$, note that ${}_jP^B_j$ is itself a $\C$-algebra with unit $e_j\otimes 1$.
Moreover, after tensoring with $\C$ over $\R$, $\C \otimes_\R {}_jP^B_j$ has idempotent $\nu_j := \frac{1}{2}(1 \otimes e_j + i \otimes ie_j)$.
We shall define a $\C$-linear morphism $\Phi: \C\otimes_\R \ddot{\Ba}_n \ra \Aa_{2n-1}$ by specifying the image of the basis elements of $\C\otimes_\R \ddot{\Ba}_n$.
It is easy to see that $\Phi$ is grading preserving and we leave the routine check that $\Phi$ is an algebra morphism to the reader.
\\
For $j=1$,
\begin{align*}
  \C \otimes_\R {}_1P^B_1 &\to {}_n P^A_n 
& \\
 1 \otimes \frac{1}{2} X_1  & \mapsto X_n
\\
  1 \otimes e_1 & \mapsto e_n
\end{align*}
For $2 \leq j \leq n$,
\begin{align*}
\C \otimes_\R {}_jP^B_j &\to {}_{n-j+1}P^A_{n-j+1} \oplus   {}_{n+j-1}P^A_{n+j-1}\\
 (1 \otimes X_j ) \nu_j & \mapsto X_{n-j+1}\\
 (1 \otimes X_j )(1\otimes e_j - \nu_j) & \mapsto X_{n+j-1}\\
  \nu_j & \mapsto e_{n-j+1} \\
  (1\otimes e_j - \nu_j)& \mapsto e_{n+j-1}
\end{align*}
\begin{align*}
 \C \otimes_\R {}_{j-1}P^B_{j} &\to {}_{n-j+2}P^A_{n-j+1} \oplus {}_{n+j-2}P^A_{n+j-1}\\
 \big(1 \otimes (j-1|j)\big) \nu_j & \mapsto ((n-j+2) \mid \big(n-j+1)\big) \\
   \big(1 \otimes (j-1|j)\big)(1 \otimes e_j - \nu_j) & \mapsto \big((n+j-2) \mid (n+j-1) \big)
\end{align*}
\begin{align*}
   \C \otimes_\R {}_jP^B_{j-1} &\to {}_{n-j+1}P^A_{n-j+2} \oplus {}_{n+j-1}P^A_{n+j-2}\\
  v_j( 1 \otimes  (j|j-1) ) & \mapsto \big((n-j+1) \mid (n-j+2) \big) \\
   ( 1 \otimes  e_j -v_j)( 1 \otimes  (j|j-1)) & \mapsto \big((n+j-1) \mid (n+j-2) \big)
\end{align*}
It is easy to see that this map is surjective and $\dim_\C \left( \C \otimes_\R \ddot{\Ba}_n \right) = \dim_\C \left( \Aa_{2n-1} \right)$.
\end{proof} 

Let $i:\ddot{\Ba}_n \hookrightarrow \C \otimes_\R \ddot{\Ba}_n$ be the canonical injection of graded $\R$-algebras.
\cref{isomorphic algebras} allows us to view $\ddot{\Ba}_n$ as a graded subalgebra over $\R$ of $\Aa_{2n-1}$ through $\Phi\circ i$.
Thus, every $\Aa_{2n-1}$ module can be restricted to a $\ddot{\Ba}_n$ module.
In particular, $\Aa_{2n-1}$ as a graded $(\Aa_{2n-1}, \Aa_{2n-1})$-bimodule can be restricted to a graded $(\Aa_{2n-1}, \ddot{\Ba}_n)$-bimodule.
This gives us an extension of scalar functor $\Aa_{2n-1} \otimes_{\ddot{\Ba}_n} -$, sending graded $\ddot{\Ba}_n$-modules to graded $\Aa_{2n-1}$-modules. 
Furthermore, $\Aa_{2n-1} \otimes_{\ddot{\Ba}_n} -$ sends projectives to projectives.
Therefore, we can extend $\Aa_{2n-1} \otimes_{\ddot{\Ba}_n} -$ to a functor from $\Kom^b(\ddot{\Ba}_n$-$\text{p$_{r}$g$_{r}$mod})$ to $\Kom^b(\Aa_{2n-1}$-$\text{p$_{r}$g$_{r}$mod})$.
We will denote the functor
$$
\Aa_{2n-1} \otimes_{\Ba_n} - := \Aa_{2n-1} \otimes_{\ddot{\Ba}_n} (\mathfrak{F}(-)) : \Kom^b(\Ba_n\text{-p$_{r}$g$_{r}$mod})\ra \Kom^b(\Aa_{2n-1}\text{-p$_{r}$g$_{r}$mod}),
$$
where $\mathfrak{F}$ is the functor which forgets the $\Z/2\Z$ grading $\<-\>$ of the bigraded $\Ba_n$-modules.
The following proposition identifies the indecomposable projectives under the functor $\Aa_{2n-1} \otimes_{\Ba_n} -$.

\subsection*{Notation}
Let $Q$ be a left $\C$-module. We shall denote ${}_{\bar{\C}} Q$ to be the left $\C$-module with a deformed left action, given by multiplication its with complex conjugate:
\[
a(c) = \bar{a}c.
\]
Similarly for $Q$ a right $\C$-module, we use $Q_{\bar{\C}}$ to denote the right $\C$-module with the deformed action.

\begin{proposition} \label{B tensor to A}
We have the following isomorphisms of graded, left $\Aa_{2n-1}$-modules:
\begin{align*}
\Aa_{2n-1} \otimes_{\Ba_n} P^B_1 &\cong P^A_n, \text{ and } \quad \Aa_{2n-1} \otimes_{\Ba_n} P^B_j \cong P^A_{n-(j-1)} \oplus P^A_{n+(j-1)}, \quad \text{for } j \geq 2. 
\end{align*}
\end{proposition}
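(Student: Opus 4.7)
The plan is to leverage the algebra isomorphism $\Phi : \C \otimes_\R \ddot{\Ba}_n \xrightarrow{\cong} \Aa_{2n-1}$ from \cref{isomorphic algebras} and reduce the tensor product to a straightforward left ideal computation inside $\Aa_{2n-1}$.

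First, I would identify $P^B_j = \Ba_n e_j$ and use the standard isomorphism of left $\Aa_{2n-1}$-modules
\[
  \Aa_{2n-1} \otimes_{\Ba_n} \Ba_n e_j \;\xrightarrow{\cong}\; \Aa_{2n-1}\cdot (\Phi\circ i)(e_j), \qquad a \otimes (be_j) \longmapsto a\,(\Phi\circ i)(b)\,(\Phi\circ i)(e_j),
\]
where $i : \ddot{\Ba}_n \hookrightarrow \C \otimes_\R \ddot{\Ba}_n$ is the canonical inclusion. Grading preservation is immediate from the fact that $\Phi$ and $i$ preserve the $\Z$-grading, and the forgetful functor $\mathfrak{F}$ only discards the $\Z/2\Z$-grading.

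Next, I would compute $(\Phi\circ i)(e_j)$ explicitly using the formulas in the proof of \cref{isomorphic algebras}. For $j=1$, one has $i(e_1) = 1\otimes e_1$ and $\Phi(1\otimes e_1) = e_n$, so
\[
  \Aa_{2n-1}\cdot (\Phi\circ i)(e_1) \;=\; \Aa_{2n-1}\, e_n \;=\; P^A_n.
\]
For $j \geq 2$, the idempotent $1\otimes e_j \in \C \otimes_\R \ddot{\Ba}_n$ splits as
\[
  1\otimes e_j \;=\; \nu_j \;+\; (1\otimes e_j - \nu_j),
\]
and under $\Phi$ these orthogonal idempotents go to $e_{n-j+1}$ and $e_{n+j-1}$ respectively. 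Hence
\[
  (\Phi\circ i)(e_j) \;=\; e_{n-j+1} + e_{n+j-1},
\]
and since these two idempotents in $\Aa_{2n-1}$ are orthogonal,
\[
  \Aa_{2n-1}\cdot (e_{n-j+1} + e_{n+j-1}) \;=\; \Aa_{2n-1}\,e_{n-j+1} \;\oplus\; \Aa_{2n-1}\,e_{n+j-1} \;=\; P^A_{n-(j-1)} \oplus P^A_{n+(j-1)}.
\]

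There is essentially no hard step here: once the algebra identification of \cref{isomorphic algebras} is in hand, the proof reduces to tracking idempotents. The only point worth double-checking is that the tensor-to-ideal map is well-defined and bijective as graded left $\Aa_{2n-1}$-modules, but this is the standard fact that for an inclusion of rings $R \hookrightarrow S$ and an idempotent $e\in R$, the multiplication map $S \otimes_R Re \to Se$ is an isomorphism of left $S$-modules, which applies verbatim here with $R = \ddot{\Ba}_n$ and $S = \Aa_{2n-1}$.
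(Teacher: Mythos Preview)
Your proof is correct and follows essentially the same approach as the paper: both define the map $a \otimes b \mapsto a\,\Phi(1\otimes b)$ and identify the target as the left ideal generated by the image of $e_j$, which is $e_n$ for $j=1$ and $e_{n-j+1}+e_{n+j-1}$ for $j\geq 2$. The paper additionally records the stronger $(\Aa_{2n-1},\C)$-bimodule version (with the conjugate $\C$-action on the $P^A_{n-(j-1)}$ summand), which it uses in the proof of \cref{tensor equivariant}; your argument proves exactly what the proposition asserts.
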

\begin{proof}
We will prove a slightly stronger result, which will come in handy later.
We show that
\[
\Aa_{2n-1} \otimes_{\Ba_n} P^B_1 \cong (P^A_n)_\R
\]
as graded $(\Aa_{2n-1}, \R)$-bimodules, and for $j \geq 2$ we will show that
\[
\Aa_{2n-1} \otimes_{\Ba_n} P^B_j \cong 
		\left( P^A_{n-(j-1)} \right)_{\bar{\C}} \oplus 
		P^A_{n+(j-1)}
\]
as graded $(\Aa_{2n-1}, \C)$-bimodules.

Define $\Phi_1: \Aa_{2n-1} \otimes_{\Ba_n} P^B_1 \ra (P^A_n)_\R$ and $\Phi_j: \Aa_{2n-1} \otimes_{\Ba_n} P^B_j \ra 
		\left( P^A_{n-(j-1)} \right)_{\bar{\C}} \oplus 
		P^A_{n+(j-1)}$
as the maps given on the basis elements by $a\otimes b \mapsto a \Phi(1\otimes b)$ and extend linearly.
It easy to check that $\Phi_1$ is a graded $(\Aa_{2n-1},\R)$-bimodule morphism and $\Phi_j$ is a graded $(\Aa_{2n-1},\C)$-bimodule morphism; the only detail that one should beware of is that $\Phi_j$ maps into $\left( P^A_{n-(j-1)} \right)_{\bar{\C}} \oplus 
		P^A_{n+(j-1)}$ instead of $P^A_{n-(j-1)}\oplus P^A_{n+(j-1)}$.
The fact that they are isomorphisms follows easily from looking at the dimensions.
\end{proof}

Recall the injection $\Psi: \mathcal{A}(B_n) \ra \mathcal{A}(A_{2n-1}) $  as defined in \cref{injB}; the image of standard generators are explicitly given by:
\begin{align*}
\Psi(\sigma^B_j) &= 
\begin{cases}
\sigma^A_n, & \text{for } j = 1; \\
\sigma^A_{n-(j-1)} \sigma^A_{n+(j-1)}, & \text{otherwise}.
\end{cases}
\end{align*}
We have previously shown that $\mathcal{A}(B_n)$ acts on $\Kom^b(\Ba_n$-$\text{p$_{r}$g$_{r}$mod})$ and similarly $\mathcal{A}(A_{2n-1})$ acts on $\Kom^b(\Aa_{2n-1}$-$\text{p$_{r}$g$_{r}$mod})$.
Through $\Psi$, we have an induced action of $\mathcal{A}(B_n)$ on   $\Kom^b(\Aa_{2n-1}$-$\text{p$_{r}$g$_{r}$mod})$.
We will now prove the algebraic version of \cref{topological equivariant}.

\begin{theorem}\label{tensor equivariant}
For all $1 \leq j \leq n$, we have that $
\Aa_{2n-1} \otimes_{\Ba_n} \sigma^B_j \cong \Psi(\sigma^B_j)_{\Ba_n}$
in $\Kom^b((\Aa_{2n-1},\Ba_n)\text{-bimod})$.
In particular, the functor $\Aa_{2n-1} \otimes_{\Ba_n} -$ is $\mathcal{A}(B_n)$-equivariant:
\begin{center}
\begin{tikzpicture} [scale=0.8]
\node (tbB) at (-2.5,1.5) 
	{$\mathcal{A}(B_n)$};

\node (tbA) at (10,1.5) 
	{$\mathcal{A}(A_{2n-1})\xleftarrow{\Psi} \mathcal{A}(B_n) $};

\node[align=center] (cB) at (0,0) 
	{$\Kom^b(\Ba_n$-$p_rg_rmod)$};
\node[align=center] (cA) at (7,0) 
	{$\Kom^b(\Aa_{2n-1}$-$p_rg_rmod),$};

\coordinate (tbB') at ($(tbB.east) + (0,-1)$);

\coordinate (tbA') at ($(tbA.west) + (0,-1)$);

\draw [->,shorten >=-1.5pt, dashed] (tbB') arc (245:-70:2.5ex);

\draw [->, shorten >=-1.5pt, dashed] (tbA') arc (-65:250:2.5ex);

\draw[->] (cB) -- (cA) node[midway,above]{$\Aa_{2n-1}\otimes_{\Ba_n} -$}; 

\end{tikzpicture}
\end{center}
i.e. for any $\sigma \in \mathcal{A}(B_n)$ and any complex $C \in \Kom^b(\Ba_n$-$p_rg_rmod)$,
$
\Aa_{2n-1} \otimes_{\Ba_n} (\sigma \otimes_{\Ba_n} C) \cong \Psi(\sigma)\otimes_\Aa (\Aa_{2n-1} \otimes_{\Ba_n} C)
$.
\end{theorem}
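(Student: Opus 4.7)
The plan is to first reduce the equivariance statement to the case of standard generators using the monoidal compatibility of $\Aa_{2n-1}\otimes_{\Ba_n}-$, and then to establish the bimodule isomorphism $\Aa_{2n-1}\otimes_{\Ba_n}\sigma^B_j \cong \Psi(\sigma^B_j)_{\Ba_n}$ for each generator by direct computation, splitting into the cases $j=1$ and $j\geq 2$.

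For the reduction, associativity of tensor products yields, for any $(\Ba_n,\Ba_n)$-bimodule complex $M$ and any complex $C$ of projective graded $\Ba_n$-modules, the natural isomorphism
\[
\Aa_{2n-1}\otimes_{\Ba_n}(M\otimes_{\Ba_n}C) \;\cong\; (\Aa_{2n-1}\otimes_{\Ba_n}M)\otimes_{\Aa_{2n-1}}(\Aa_{2n-1}\otimes_{\Ba_n}C),
\]
where the right $\Ba_n$-action on $\Aa_{2n-1}\otimes_{\Ba_n}M$ is restricted from its right $\Aa_{2n-1}$-action via the inclusion $\Ba_n\hookrightarrow\Aa_{2n-1}$. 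Iterating along any word $\sigma = \sigma^B_{j_1}\cdots\sigma^B_{j_k}$ in the generators reduces the equivariance claim to the generator isomorphism.

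For the generator case with $j\geq 2$, I would expand both sides as two-term complexes. Applying $\Aa_{2n-1}\otimes_{\Ba_n}-$ to $R_j = \mathrm{cone}(P^B_j\otimes_\C {}_jP^B\xrightarrow{\beta_j}\Ba_n)$ gives $\mathrm{cone}\bigl((\Aa_{2n-1}\otimes_{\Ba_n}P^B_j)\otimes_\C {}_jP^B \to \Aa_{2n-1}\bigr)$. Meanwhile, expanding $\mathcal{R}_{n-(j-1)}\otimes\mathcal{R}_{n+(j-1)}$ as a total complex produces a three-term complex whose degree $(-2)$ summand vanishes, since $|n-(j-1) - (n+(j-1))| = 2(j-1) \geq 2$ forces ${}_{n-(j-1)}P^A\otimes_{\Aa_{2n-1}}P^A_{n+(j-1)} = 0$ (there are no paths in $\Aa_{2n-1}$ between vertices at distance at least two). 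This leaves $\mathrm{cone}\bigl((P^A_{n-(j-1)}\otimes_\C {}_{n-(j-1)}P^A)\oplus(P^A_{n+(j-1)}\otimes_\C {}_{n+(j-1)}P^A) \to \Aa_{2n-1}\bigr)$. Using the strengthened form of \cref{B tensor to A}, which identifies $\Aa_{2n-1}\otimes_{\Ba_n}P^B_j \cong (P^A_{n-(j-1)})_{\bar\C}\oplus P^A_{n+(j-1)}$ as $(\Aa_{2n-1},\C)$-bimodules, together with the analogous dual identification of ${}_jP^B\otimes_{\Ba_n}\Aa_{2n-1}$, proved by the same embedding computation as in \cref{isomorphic algebras}, one obtains $(P^A_{n-(j-1)})_{\bar\C}\otimes_\C {}_jP^B \cong P^A_{n-(j-1)}\otimes_\C {}_{n-(j-1)}P^A$, the two complex conjugations cancelling against each other, and similarly for the untwisted summand. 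The case $j = 1$ is handled analogously using the $\R$-version of \cref{B tensor to A}; the extra $\<1\>$ shift in $\sigma^B_1 = R_1\<1\>$ is absorbed by the forgetful functor $\mathfrak{F}$ that drops the $\Z/2\Z$-grading, so $\Aa_{2n-1}\otimes_{\Ba_n}\sigma^B_1 \cong \Aa_{2n-1}\otimes_{\Ba_n}R_1 \cong \mathcal{R}_n$.

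The main obstacle is the careful bookkeeping of the right $\Ba_n$-action and the various $\C$-structures throughout, in particular verifying that the complex conjugation twists appearing on both sides of the tensor product cancel properly. Once the underlying $(\Aa_{2n-1},\Ba_n)$-bimodules in degree $(-1)$ are identified, matching the counit maps to $\Aa_{2n-1}$ reduces to an explicit computation using that $\Phi$ is an algebra morphism together with the defining formula for $\beta_j$.
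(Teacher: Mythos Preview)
Your approach is essentially the same as the paper's: identify the degree $(-1)$ bimodule using the strengthened \cref{B tensor to A} together with the $(\C,\Ba_n)$-bimodule identifications of ${}_jP^B$ (the paper packages these as \cref{lemma bimod iso}), observe that the conjugate $\C$-structures cancel via ${}_\C\C\otimes_{\bar\C}\C_\C\cong{}_\C\C_\C$, and then verify the differentials agree by an explicit computation with $\Phi$.

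One correction to your reduction step: the displayed isomorphism $\Aa_{2n-1}\otimes_{\Ba_n}(M\otimes_{\Ba_n}C)\cong(\Aa_{2n-1}\otimes_{\Ba_n}M)\otimes_{\Aa_{2n-1}}(\Aa_{2n-1}\otimes_{\Ba_n}C)$ is false for a general $(\Ba_n,\Ba_n)$-bimodule $M$, since $\Aa_{2n-1}\otimes_{\Ba_n}M$ has no natural right $\Aa_{2n-1}$-action. The reduction actually runs the other way: once you have established the generator isomorphism $\Aa_{2n-1}\otimes_{\Ba_n}\sigma^B_j\cong\Psi(\sigma^B_j)_{\Ba_n}$ in $\Kom^b((\Aa_{2n-1},\Ba_n)\text{-bimod})$, the right-hand side is the restriction of an honest $(\Aa_{2n-1},\Aa_{2n-1})$-bimodule complex, and then for any $C$ you get $\Aa_{2n-1}\otimes_{\Ba_n}\sigma^B_j\otimes_{\Ba_n}C\cong\Psi(\sigma^B_j)\otimes_{\Ba_n}C\cong\Psi(\sigma^B_j)\otimes_{\Aa_{2n-1}}\Aa_{2n-1}\otimes_{\Ba_n}C$ and iterate. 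Also, what you call ``the analogous dual identification of ${}_jP^B\otimes_{\Ba_n}\Aa_{2n-1}$'' is not what is used; rather one needs the direct $(\C,\Ba_n)$-bimodule isomorphisms ${}_jP^B\cong({}_{n+(j-1)}P^A)_{\Ba_n}$ and ${}_jP^B\cong{}_{\bar\C}({}_{n-(j-1)}P^A)_{\Ba_n}$, obtained from $\Phi$ without any scalar extension on the right.
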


Before we start with the proof, we will need the following lemma:
\begin{lemma} \label{lemma bimod iso}
\begin{align}
\Aa_{2n-1} \otimes_{\Ba_n} P^B_1 &\cong (P^A_n)_\R  \quad &\text{as graded $(\Aa_{2n-1},\R)$-bimodules}; 
	\label{lemma 1}\\
\C\otimes_\R {}_1P^B &\cong ({}_nP^A)_{\Ba_n} \quad &\text{as graded $(\C, \Ba_n)$-bimodules}; 
	\label{lemma 2}\\
\Aa_{2n-1} \otimes_{\Ba_n} P^B_j &\cong \left( P^A_{n-(j-1)} \right)_{\bar{\C}} \oplus P^A_{n+(j-1)}\quad &\text{as graded $(\Aa_{2n-1},\C)$-bimodules}; 
	\label{lemma 3}\\
{}_jP^B &\cong \left({}_{n+ (j-1)}P^A \right)_{\Ba_n}
	\quad &\text{as graded $(\C, \Ba_n)$-bimodules;}
	\label{lemma 4}\\
{}_jP^B &\cong 
	{}_{\bar{\C}} \left( {}_{n- (j-1)}P^A
		\right)_{\Ba_n}
	\quad &\text{as graded $(\C, \Ba_n)$-bimodules;}
	\label{lemma 5} \\
{}_\C \C \otimes_{\bar{\C}} \C_\C &\cong {}_\C \C_\C
	\quad &\text{as graded $(\C,\C)$-bimodues.}
	\label{lemma 6}
\end{align}
\end{lemma}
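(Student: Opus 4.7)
The plan is to leverage the graded algebra isomorphism $\Phi : \C \otimes_\R \ddot{\Ba}_n \xrightarrow{\cong} \Aa_{2n-1}$ of \cref{isomorphic algebras}, under which $\ddot{\Ba}_n$ sits as a graded $\R$-subalgebra of $\Aa_{2n-1}$ via $\Phi \circ i$. In fact, parts (\ref{lemma 1}) and (\ref{lemma 3}) are already established in the proof of \cref{B tensor to A}: the stronger bimodule statements with right $\R$- and $\C$-actions were proved there via the map $a \otimes b \mapsto a \cdot \Phi(1 \otimes b)$, so I would simply quote that argument. The remaining statements concern right $\Ba_n$-module structures, and the point is that the right $\Ba_n$-action on any $\Aa_{2n-1}$-module via $\Phi \circ i$ makes restrictions of $\Phi$ tautologically right $\Ba_n$-equivariant.

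For part (\ref{lemma 2}), I would restrict $\Phi$ to the summand $\C \otimes_\R {}_1P^B \subset \C \otimes_\R \ddot{\Ba}_n$. Since $\Phi(1 \otimes e_1) = e_n$ and $\Phi$ is an algebra morphism, the image lies in $e_n \Aa_{2n-1} = {}_nP^A$; a dimension count ($\dim_\C = 4$ on both sides, readable off the bases of $\Ba_n$ and $\Aa_{2n-1}$) then forces this restriction to be a $(\C, \Ba_n)$-bimodule isomorphism.

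For parts (\ref{lemma 4}) and (\ref{lemma 5}), the key tool is the pair of orthogonal idempotents
\[
\nu_j^+ := \tfrac{1}{2}(1 \otimes e_j + i \otimes ie_j), \qquad \nu_j^- := 1 \otimes e_j - \nu_j^+
\]
in $\C \otimes_\R {}_jP^B_j$, which by construction of $\Phi$ satisfy $\Phi(\nu_j^+) = e_{n-j+1}$ and $\Phi(\nu_j^-) = e_{n+j-1}$. I would introduce the projections
\[
\phi_{\pm j} : {}_jP^B \longrightarrow {}_{n \mp (j-1)}P^A, \qquad \phi_{\pm j}(x) := e_{n \mp (j-1)} \cdot \Phi(1 \otimes x),
\]
each of which is visibly right $\Ba_n$-equivariant. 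The key check is compatibility with the left $\C$-action: inside $\C \otimes_\R \ddot{\Ba}_n$ one verifies directly the identity
\[
a \otimes e_j + b \otimes ie_j = (a - bi)\nu_j^+ + (a + bi)\nu_j^-.
\]
Applying $\phi_{+j}$ to $(a + bi) \cdot x = ax + b(ie_j) x$ and using this identity then yields $\phi_{+j}((a + bi) x) = (a + bi) \phi_{+j}(x)$, genuine $\C$-linearity, proving (\ref{lemma 4}). The parallel computation gives $\phi_{-j}((a + bi) x) = (a - bi) \phi_{-j}(x)$, which is precisely the statement that the map becomes $\C$-linear after conjugating the action on the left, as required for (\ref{lemma 5}). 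Both maps are surjective on the generating idempotents, hence isomorphisms by dimension.

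Finally, part (\ref{lemma 6}) is elementary: the multiplication map $a \otimes b \mapsto ab$ is well-defined over $\bar{\C}$ because the conjugate-twisted right $\C$-action on the left factor and the conjugate-twisted left $\C$-action on the right factor each introduce a complex conjugate, and these cancel across the tensor product. It is manifestly a $(\C, \C)$-bimodule map for the undeformed outer actions, and is an isomorphism by a dimension count. The main obstacle throughout is purely bookkeeping: keeping straight which side of each bimodule carries a conjugate-twisted action, and correctly matching the idempotents $\nu_j^\pm$ with $e_{n \mp (j-1)}$ on the type $A$ side; once this dictionary is set, every verification collapses to a one-line computation.
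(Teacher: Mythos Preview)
Your approach is essentially identical to the paper's: quote \cref{B tensor to A} for parts (\ref{lemma 1}) and (\ref{lemma 3}), restrict $\Phi$ for (\ref{lemma 2}), project via the idempotents $e_{n\pm(j-1)}$ for (\ref{lemma 4})--(\ref{lemma 5}), and take the multiplication map $1\otimes 1 \mapsto 1$ for (\ref{lemma 6}). The paper defines the very same maps (with the opposite $\pm$ labelling, $\phi_{+j}(b) = e_{n+(j-1)}\Phi(1\otimes b)$) and simply declares the remaining verifications routine.

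However, you commit exactly the bookkeeping slip you warn against in your closing sentence. With your conventions $\phi_{\pm j}(x) = e_{n\mp(j-1)}\Phi(1\otimes x)$ and $\Phi(\nu_j^+) = e_{n-(j-1)}$, your own identity $a \otimes e_j + b \otimes ie_j = (a-bi)\nu_j^+ + (a+bi)\nu_j^-$ yields
\[
\phi_{+j}\bigl((a+bi)x\bigr) \;=\; e_{n-(j-1)}\cdot\bigl[(a-bi)e_{n-(j-1)} + (a+bi)e_{n+(j-1)}\bigr]\cdot\Phi(1\otimes x) \;=\; (a-bi)\,\phi_{+j}(x),
\]
which is \emph{conjugate}-linear, not $\C$-linear. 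Thus your $\phi_{+j}$ (target ${}_{n-(j-1)}P^A$) actually proves (\ref{lemma 5}), while your $\phi_{-j}$ (target ${}_{n+(j-1)}P^A$) is the genuinely $\C$-linear map proving (\ref{lemma 4}). The maps and the overall argument are correct; only the final assignments of which map proves which statement are swapped.
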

\begin{proof}
Both \ref{lemma 1} and \ref{lemma 3} follows from the proof of \cref{B tensor to A}.
For the rest of the lemma we will only define the maps; the proofs that they are isomorphisms respecting the required structures follows from a simple verification.

For \ref{lemma 2} take the morphism $\phi_1 : \C \otimes_\R {}_1P^B \ra ({}_nP^A)_{\Ba_n}$ as the restriction of $\Phi$ given  by $ c\otimes b \mapsto \Phi(c\otimes b)$.
Note that $\phi_1$ does indeed map into $({}_nP^A)_{\Ba_n}$ since
\[
\Phi(c\otimes b) = \Phi(c\otimes e_1b)= \Phi((1\otimes e_1)(c\otimes b)) =  \Phi(1\otimes e_1)\Phi(c\otimes b) = e_n\Phi(c\otimes b).
\]

For \ref{lemma 4} and \ref{lemma 5}, consider the morphisms
\begin{align*}
\phi_{+j} : {}_jP^B  &\ra 
	\left({}_{n+ (j-1)}P^A \right)_{\Ba_n} &and  &&\phi_{-j} : {}_jP^B  &\ra 
	{}_{\bar{\C}}\left({}_{n- (j-1)}P^A \right)_{\Ba_n} \\
b &\mapsto 
	e_{n+ (j-1)}\Phi(1\otimes b),  &    &&b &\mapsto 
	e_{n-(j-1)}\Phi(1\otimes b). 
\end{align*}

Finally for \ref{lemma 6}, consider the morphism $c: {}_\C \C \otimes_{\bar{\C}} \C_\C \ra \C$ uniquely defined by $
1\otimes 1 \mapsto 1.$
\end{proof}


\begin{proof}(of \cref{tensor equivariant})
Let $j\geq 2$.
Before we proceed, note that
\begin{align*}
P_{n-(j-1)}^A \otimes_\C {}_{n-(j-1)}P^A 
&\cong P_{n-(j-1)}^A \otimes_\C \C \otimes_\C {}_{n-(j-1)}P^A \\
&\cong P_{n-(j-1)}^A \otimes_\C \C \otimes_{\bar{\C}} \C \otimes_\C {}_{n-(j-1)}P^A \qquad (\text{by } \ref{lemma 6} \text{ in } \cref{lemma bimod iso})\\
&\cong P_{n-(j-1)}^A \otimes_{\bar{\C}} {}_{n-(j-1)}P^A.
\end{align*}
Denote this composition of isomorphisms as $h : P_{n-(j-1)}^A \otimes_{\bar{\C}} {}_{n-(j-1)}P^A \ra P_{n-(j-1)}^A \otimes_\C {}_{n-(j-1)}P^A$, which is simply defined by $h(a\otimes b) = a\otimes b$.
We obtain the following chain of isomorphisms
\begin{align*}
\Psi(\sigma_j^B)_\Ba 
&= \left(\sigma_{n-(j-1)}^A\sigma_{n+(j-1)}^A \right)_\Ba\\
&= \left(P_{n-(j-1)}^A \otimes_\C \left({}_{n-(j-1)}P^A\right)_\Ba \right) \oplus \left(P_{n+(j-1)}^A \otimes_\C \left( {}_{n+(j-1)}P^A\right)_\Ba \right) 
	\xra{
		\begin{bmatrix}
		\beta_{n-(j-1)}^A & \beta_{n+(j-1)}^A
		\end{bmatrix}
	} 
	\Aa_\Ba \\
&\cong \left(P_{n-(j-1)}^A \otimes_{\bar{\C}} \left( {}_{n-(j-1)}P^A \right)_\Ba \right) \oplus \left(P_{n+(j-1)}^A \otimes_\C \left( {}_{n+(j-1)}P^A \right)_\Ba \right) 
	\xra{
		\begin{bmatrix}
		\beta_{n-(j-1)}^A h & \beta_{n+(j-1)}^A
		\end{bmatrix}
	} 
	\Aa_\Ba \\
&\cong \left( \left( P_{n-(j-1)}^A \right)_{\bar{\C}} \otimes_\C {}_{j}P^B \right) \oplus \left(P_{n+(j-1)}^A \otimes_\C {}_{j}P^B \right) 
	\xra{
		\begin{bmatrix}
		\beta_{n-(j-1)}^A h (\id \otimes_\C \phi_{-j}) &
		\beta_{n+(j-1)}^A(\id \otimes_\C \phi_{+j})
		\end{bmatrix}
	} 
	\Aa_\Ba \\
&\cong \left( \left( P_{n-(j-1)}^A \right)_{\bar{\C}} \oplus P_{n+(j-1)}^A \right) \otimes_\C {}_{j}P^B
	\xra{
		\beta_{n-(j-1)}^A h (e_{n-(j-1)} \otimes_\C \phi_{-j}) +
		\beta_{n+(j-1)}^A(e_{n+(j-1)} \otimes_\C \phi_{+j})
	} 
	\Aa_\Ba \\ 
&\cong \Aa \otimes_\Ba P_j^B \otimes_\C {}_jP^B 
	\xra{
		\left(
		\beta_{n-(j-1)}^A h (e_{n-(j-1)} \otimes_\C \phi_{-j}) +
		\beta_{n+(j-1)}^A(e_{n+(j-1)} \otimes_\C \phi_{+j})
		\right)
		(\Phi_j \otimes_\C \id)
	} 
\Aa_\Ba \\
&\cong \Aa \otimes_\Ba P_j^B \otimes_\C {}_jP^B 
	\xra{
		g
		\left(
		\beta_{n-(j-1)}^A h (e_{n-(j-1)} \otimes_\C \phi_{-j}) +
		\beta_{n+(j-1)}^A(e_{n+(j-1)} \otimes_\C \phi_{+j})
		\right)
		(\Phi_j\otimes_\C \id)
	} 
\Aa \otimes_\Ba \Ba
\end{align*}
where $g$ is the canonical isomorphism $\Aa_\Ba \xra{\cong} \Aa \otimes_\Ba \Ba$.
We claim that 
\[
g
		\left(
		\beta_{n-(j-1)}^A h (e_{n-(j-1)} \otimes_\C \phi_{-j}) +
		\beta_{n+(j-1)}^A(e_{n+(j-1)} \otimes_\C \phi_{+j})
		\right)
		(\Phi_j\otimes_\C \id)
=
\id\otimes_\Ba \beta_j^B,
\]
which can be done by showing that they have the same image as follows:
\begin{align*}
a \otimes_\Ba p \otimes_\C p'
&\xmapsto{\Phi_j \otimes_\C \id}
	a\Phi(1\otimes p)\otimes_\C p' \\
&\xmapsto{
		\beta_{n-(j-1)}^A h (e_{n-(j-1)} \otimes_\C \phi_{-j}) +
		\beta_{n+(j-1)}^A(e_{n+(j-1)} \otimes_\C \phi_{+j})
	} \\
	&{} \qquad a\Phi(1\otimes p) e_{n-(j-1)}\Phi(1\otimes p') 
	+ a\Phi(1\otimes p) e_{n+(j-1)}\Phi(1\otimes p')
\\
&\quad = a\Phi(1\otimes p)(e_{n-(j-1)}+ e_{n+(j-1)})\Phi(1\otimes p') \\
&\quad = a\Phi(1\otimes p)\Phi(1\otimes e_j)\Phi(1\otimes p') \\
&\quad = a\Phi(1\otimes pp') \\
&\xmapsto{g} a\Phi(1\otimes pp')\otimes_\Ba 1 = a \otimes_\Ba pp', 
\end{align*}
Thus we get
\begin{align*}
\Psi(\sigma_j^B)_\Ba 
\cong \left( \Aa \otimes_\Ba P_j^B \otimes_\C {}_jP^B 
	\xra{\id \otimes_\Ba \beta_j^B
	} 
\Aa \otimes_\Ba \Ba \right)
= \Aa \otimes_\Ba \left( P_j^B \otimes_\C {}_jP^B 
	\xra{\beta_j^B} \Ba \right) 
= \Aa \otimes_\Ba \sigma_j^B.
\end{align*}
The proof for the case where $j = 1$ is a simpler version of the proof above, which we shall omit and leave it to the interested reader.

These cases combined are sufficient to show that the action intertwines, since we have that for any $\sigma \in \mathcal{A}(B_n)$ and any complex $C \in \Kom^b(\Ba_n$-$\text{p$_{r}$g$_{r}$mod})$,
\[
\Aa\otimes_\Ba \sigma \otimes_\Ba C \cong \Psi(\sigma)\otimes_\Ba C \cong \Psi(\Sigma)\otimes_\Aa \Aa \otimes_\Ba C.
\]
\end{proof}
\begin{remark}
Recall the definitions of \ $\mathcal{U}_j$ and \ $\cU_j$ from \cref{Functorial TL}.
In particular, this proposition implies that $\Aa_{2n-1} \otimes_{\Ba_n} U_j \cong
\begin{cases}
\left( \mathcal{U}_1 \right)_{\Ba_n}, &\text{ for } j = 1; \\
\left( \mathcal{U}_{n-(j-1)} \oplus \mathcal{U}_{n+(j-1)} \right)_{\Ba_n}, &\text{ otherwise.}
\end{cases}$

\noindent When $n=2$, this also relate our bimodules construction to the bimodules construction given in \cite{MackTubb} (see Example 2.12 in particular).
\end{remark}
We may now use this relation to deduce that the categorical action of $\cA(B_n)$ is faithful:
\begin{theorem}\label{faithful action}
The (weak) action of $\cA(B_n)$ on the category $\Kom(\Ba_n$-$\text{p$_{r}$g$_{r}$mod})$ given in \ref{Cat B action} is faithful.
\end{theorem}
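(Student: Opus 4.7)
The plan is to deduce the faithfulness of the $\cA(B_n)$-action on $\Kom^b(\Ba_n$-$\text{p$_{r}$g$_{r}$mod})$ from the faithfulness of the $\cA(A_{2n-1})$-action on $\Kom^b(\Aa_{2n-1}$-$\text{p$_{r}$g$_{r}$mod})$ proven in \cite{KhoSei}, by leveraging the equivariance of the scalar-extension functor $F := \Aa_{2n-1}\otimes_{\Ba_n} -$ established in \cref{tensor equivariant} and the injectivity of $\Psi : \cA(B_n) \hookrightarrow \cA(A_{2n-1})$ from \cref{injB}. Concretely, I would chase $\sigma$ around the commutative square of \cref{tensor equivariant} to convert triviality of its type $B$ action into triviality of the type $A$ action of $\Psi(\sigma)$.

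First I would assume that $\sigma \in \cA(B_n)$ lies in the kernel of the categorical action, i.e.\ $\sigma\otimes_{\Ba_n} C \cong C$ for every object $C$. Applying $F$ and invoking \cref{tensor equivariant} yields $\Psi(\sigma)\otimes_{\Aa_{2n-1}} F(C) \cong F(C)$ for every such $C$. The next step is to upgrade this to $\Psi(\sigma)\otimes_{\Aa_{2n-1}} D \cong D$ for every $D\in\Kom^b(\Aa_{2n-1}$-$\text{p$_{r}$g$_{r}$mod})$. By \cref{B tensor to A}, every indecomposable projective $P_k^A$ occurs as a direct summand of some $F(P_j^B)$: indeed $F(P_1^B)\cong P_n^A$, while $F(P_j^B)\cong P_{n-(j-1)}^A\oplus P_{n+(j-1)}^A$ for $j\ge 2$. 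Since $\Psi(\sigma)\otimes_{\Aa_{2n-1}}-$ is an autoequivalence, it preserves indecomposability, so Krull--Schmidt forces $\Psi(\sigma)\otimes P_n^A\cong P_n^A$, and for each $j\ge 2$ it either fixes each of $P_{n-(j-1)}^A$ and $P_{n+(j-1)}^A$, or swaps them.

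The main obstacle will be ruling out these potential swaps. One clean route is to descend to the Grothendieck group: the $\cA(A_{2n-1})$-action on $K_0$ is the (reduced) Burau representation, and the constraints that $\Psi(\sigma)$ fixes $[P_n^A]$ together with each sum $[P_{n-(j-1)}^A]+[P_{n+(j-1)}^A]$ should combine with the explicit form of the Burau matrices to force $\Psi(\sigma)$ to fix each $[P_k^A]$ individually, thereby excluding swaps also at the categorical level. A second route is to probe $\Psi(\sigma)\otimes-$ with more refined complexes $F(C)$ built from cones and shifts of various $P_j^B$'s, where morphism data distinguishes the two summands; a third, more structural route is to exploit the fact that the image of $F$ is closed under direct sums, shifts and cones, so the natural isomorphism $\Psi(\sigma)\otimes F(-)\Rightarrow F(-)$ (which inherits naturality from the assumed natural isomorphism $\sigma\otimes- \Rightarrow \id$) extends across the triangulated closure of the essential image of $F$, which is all of $\Kom^b(\Aa_{2n-1}$-$\text{p$_{r}$g$_{r}$mod})$.

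Once $\Psi(\sigma)\otimes-\cong \id$ is established on all of $\Kom^b(\Aa_{2n-1}$-$\text{p$_{r}$g$_{r}$mod})$, the faithfulness of the type $A_{2n-1}$ categorical action \cite{KhoSei} forces $\Psi(\sigma)=1$ in $\cA(A_{2n-1})$, and the injectivity of $\Psi$ from \cref{injB} finally yields $\sigma=1$ in $\cA(B_n)$, completing the proof.
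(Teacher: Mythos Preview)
Your proposal is correct and follows the same route as the paper: use the $\cA(B_n)$-equivariance of $\Aa_{2n-1}\otimes_{\Ba_n}-$ together with the injectivity of $\Psi$ to reduce to Khovanov--Seidel's type~$A$ faithfulness. The paper is slightly more streamlined---it feeds the single object $C=\bigoplus_{j=1}^{n} P^B_j$ through the functor to obtain $\Psi(\sigma)\bigl(\bigoplus_{j=1}^{2n-1} P^A_j\bigr)\cong\bigoplus_{j=1}^{2n-1} P^A_j$ and then invokes \cite[Corollary~1.2]{KhoSei} directly, without isolating the swap ambiguity you highlight; your three proposed routes for ruling out swaps are all viable but more than is strictly needed here.
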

\begin{proof}
Assume that we are given $\sigma \in \cA(B_n)$ such that
$ \sigma\left(C\right) \cong C $
for all $C \in \Kom^b(\Ba_n$-$\text{p$_{r}$g$_{r}$mod})$.
We will show that this implies $\sigma$ is the identity.
In particular, take $C = \oplus_{j=1}^n P^B_j$ so that we have

 \[\sigma\left(\oplus_{j=1}^n P^B_j \right) \cong \oplus_{j=1}^n P^B_j.\]

\noindent Applying the functor $\Aa_{2n-1} \otimes_{\Ba_n} -$, we obtain

\begin{equation}\label{faithful 1}
\Aa_{2n-1}\otimes_{\Ba_n} \sigma\left(\oplus_{j=1}^n P^B_j \right) \cong 
	\Aa_{2n-1}\otimes_{\Ba_n} \left(\oplus_{j=1}^n P^B_j \right) \cong 
	\oplus_{j=1}^{2n-1} P^A_j.
\end{equation}

\noindent Applying \cref{tensor equivariant} to the LHS of \ref{faithful 1}, we get 

\begin{equation}\label{faithful 2}
\Aa_{2n-1}\otimes_{\Ba_n} \sigma \left(\oplus_{j=1}^n P^B_j \right)
	\cong \Psi(\sigma) \left( \Aa_{2n-1} \otimes_{\Ba_n} \left(\oplus_{j=1}^n P^B_j \right) \right) \cong \Psi(\sigma) \left( \oplus_{j=1}^{2n-1} P^A_j \right).
\end{equation}

\noindent Combining the two equations \ref{faithful 1} and \ref{faithful 2} above we deduce that

\[
\Psi(\sigma) \left( \oplus_{j=1}^{2n-1} P^A_j \right) \cong \oplus_{j=1}^{2n-1} P^A_j.
\]

Since it was shown in \cite[Corollary 1.2]{KhoSei} that the type $A$ categorical action is faithful,  we conclude that $\Psi(\sigma) = \id$.
But $\Psi$ is injective, so we must have that $\sigma = \id$ as required.
\end{proof}


\section{Main Theorem} \label{main theorem}

This section contains the main result of this paper, which relates the topological action of $\mathcal{A}(B_n)$ on isotopy classes of admissable curves in $\D^B_{n+1}$ defined in \cref{topology} and the categorical action of $\mathcal{A}(B_n)$ on $\Kom^b(\Ba_n$-$\text{p$_{r}$g$_{r}$mod})$ defined in \cref{define zigzag}.
This will be done in a similar fashion as in \cite{KhoSei}:
they define a map that assigns a complex in $\Kom^b(\Aa_{2n-1}$-$\text{p$_{r}$g$_{r}$mod})$ for each isotopy classes of curves in $\DA$.
Moreover, they show that this assignment is $\cA(A_{2n-1})$-equivariant.
Following a similar idea, we will also define a similar map $L_B$ for type $B_n$, which assigns a complex in $\Kom^b(\Ba_n$-$\text{p$_{r}$g$_{r}$mod})$ for each isotopy classes of curves in $\D^B_{n+1}$.
We will also show that this assignment is $\cA(B_n)$-equivariant.
Instead of proving this directly, we will make use of the topological relation between isotopy classes of curves in $\D^B_{n+1}$ and in $\D^A_{2n}$, and the algebraic relation between $\Kom^b(\Ba_n$-$\text{p$_{r}$g$_{r}$mod})$ and $\Kom^b(\Aa_{2n-1}$-$\text{p$_{r}$g$_{r}$mod})$.

\subsection{Complexes associated to admissible multicurves and categorical action (Type $A$)}

We start by recalling the constructions and results shown in \cite{KhoSei} for admissable curves, which can be extended easily to admissable multicurves.
Note that we use $L_A$ in place of $L$ in \cite{KhoSei} to differentiate between the maps for type $A$ and type $B$ later on.
Let $\ddot{c}$ be a bigraded admissable curve.
We associate to $\ddot{c}$ an object $L_A(\ddot{c})$ in the category $\Kom^b(\Aa_{2n-1}$-$\text{p$_{r}$g$_{r}$mod})$.
Start by defining $L_A(\ddot{c})$ as a bigraded $\Aa_{2n-1}$-module:

\begin{align}
L_A(\ddot{c}) = \bigoplus_{x \in cr(\ddot{c})}  P(x)
\end{align}

where $P(x) = P^A_{x_0}[-x_1]\{x_2\}$ (see the paragraph after \cref{basic curves A} for the definition of $(x_0, x_1, x_2)$).
For every $x,y \in cr(\ddot{c})$ define
$
\partial_{yx}: P(x) \ra P(y)
$
by the following rules:
\begin{itemize}
\item If $x$ and $y$ are the endpoints of an essential segment and $y_1 = x_1 + 1$, then
\begin{enumerate}
\item If $x_0 = y_0$ (then it must be that $x_2 = y_2 + 1$) then
\[
\partial_{yx}: P(x) \ra P(y) \cong P(x)[-1]\{1\}
\]
is the multiplication on the right by $X_{x_0} \in \Aa_{2n-1}$.
\item If $x_0 = y_0 \pm 1$, then $\partial_{yx}$ is the right multiplication by $(x_0|y_0) \in \Aa_{2n-1}$;
\end{enumerate}
\item otherwise $\partial_{yx} = 0$.
\end{itemize}
We define the differential $\partial$ as
$
\partial := \sum_{x,y} \partial_{yx}.
$
See \cite[Lemma 4.1]{KhoSei} for a proof that this defines a complex.
Moreover, it follows easily that 

\begin{equation} \label{action shift comm A}
L_A(\chi_A(r_1, r_2)\ddot{c}) \cong L_A(\ddot{c})[-r_1]\{r_2\}.
\end{equation}

For $\check{g}$ a bigraded $j$-string of $\check{c}$, we can also assign a complex $L_A(\check{g})$ to $\check{g}$, where as a bigraded abelian group, $L_A(\check{g}) = \bigoplus_{x \in cr{g}} P^A(x)$ and the differentials are obtained from essential segments of $\check{g}$ the same way as for admissible curves.
We can easily extend this to define $L_A(\check{h})$ for $h \subseteq c$ a connected subset of $c$ such that $h = \cup g_{\alpha, j}$ with each $g_{\alpha, j}$ some bigraded $j$-string of $c$.
The following theorem is proven in \cite[Theorem 4.3]{KhoSei}:
\begin{theorem} \label{L_A equivariant}
For a braid $\sigma \in \cA(A_{m})$ and a bigraded admissible curve $\ddot{c}$ in $\D^A_{m+1}$, we have $\sigma L_A(\ddot{c}) \cong L_A(\sigma(\ddot{c}))$ in the category $\Kom^b(\Aa_m$-$p_rg_rmod)$, i.e. $L_A$ is $\cA(A_m)$-equivariant.
\end{theorem}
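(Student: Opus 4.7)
The plan is to reduce to a generator-by-generator, and then string-by-string, verification. Since $\cA(A_m)$ is generated by the half-twists $\sigma_j^A$ and the assignment $L_A$ together with the categorical action are compatible with composition, it suffices to prove $\sigma_j^A \cdot L_A(\ddot{c}) \cong L_A(\sigma_j^A(\ddot{c}))$ for each $1 \leq j \leq m$ and each bigraded admissible curve $\ddot{c}$. Since $\sigma_j^A$ acts via the complex $\cR_j = (P^A_j \otimes_\C {}_jP^A \xrightarrow{\beta_j} \Aa_m)$, tensoring is cohomologically trivial away from vertex $j$, so the entire comparison is concentrated in a neighbourhood of the arc $b_j$, i.e.\ inside $D_{j-1} \cup D_j$. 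This observation is what makes the argument local and finite.

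First I would place $c$ in normal form with respect to the vertical cuts $d_1, \ldots, d_{m}$, so that $c$ decomposes into $j$-strings $\{\check g_\alpha\}_\alpha$. Define $L_A(\check g_\alpha)$ on each $j$-string as in the paragraph preceding the theorem, and note that the underlying module $L_A(\ddot c)$ splits as a direct sum indexed by crossings, with differentials supported on essential segments; in particular $L_A(\ddot c)$ decomposes as a sum of $L_A(\check g_\alpha)$ together with contributions of segments lying entirely outside $D_{j-1} \cup D_j$, which are untouched by $\sigma_j^A$. After applying $\sigma_j^A$ topologically, the curve $\sigma_j^A(c)$ may fail to be in normal form; I would first renormalise via an isotopy inside $D_{j-1} \cup D_j$ (the analogue of the normal-form renormalisation used for type $B$ in \cref{normal DB}) so that the $j$-strings of $\sigma_j^A(c)$ are again of the listed types, then identify how each type transforms under $\sigma_j^A$.

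Next comes the heart of the proof: for each of the finitely many types of bigraded $j$-strings $\check g$ (the types $I_w, II_w, II'_w, III_w, III'_w$ and $IV, IV', V, V', VI$, with appropriate grading shifts), I would directly compute both sides. On one hand I expand $\cR_j \otimes_{\Aa_m} L_A(\check g)$ using the definitions of $P^A_j, {}_jP^A$ and $\beta_j$; on the other hand I compute $L_A(\sigma_j^A(\check g))$ by reading off the crossings and essential segments of the renormalised string. In every case the resulting two-term complex of projectives simplifies, via Gaussian elimination of contractible direct summands, to the same complex (up to the grading shift recorded in \cref{action shift comm A}). The invertibility of $\cR_j$ established in Section \ref{A zigzag} guarantees that no information is lost, and one can carry out the elimination uniformly.

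Finally I would assemble the local isomorphisms into a global isomorphism: the untouched part of $L_A(\ddot c)$ (crossings and essential segments outside $D_{j-1} \cup D_j$) matches verbatim on both sides, and the boundary differentials connecting the local pieces to the global ones are encoded by the behaviour of $\partial_{yx}$ at the boundary crossings $c \cap (d_{j-1} \cup d_j)$, whose types are preserved under the renormalisation. The main obstacle I anticipate is purely bookkeeping: tracking the $\Z\times\Z$ bigrading carefully across each type (in particular the $(r_1+1, r_2-1)$ shift that appears in type $II$-like pieces) and matching it to the cohomological and internal shifts coming from tensoring with $\cR_j$. Once the local type-by-type check is done, the theorem follows, and by induction on word length in the generators we obtain equivariance for arbitrary $\sigma \in \cA(A_m)$.
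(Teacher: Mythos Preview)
Your proposal is correct and is essentially the strategy used in the cited reference: the paper itself does not give a proof but simply refers to \cite[Theorem~4.3]{KhoSei}, and Khovanov--Seidel's argument there proceeds exactly as you outline—reduce to a single generator $\sigma_j^A$, localise to $j$-strings in $D_{j-1}\cup D_j$, and carry out a finite type-by-type comparison of $\cR_j\otimes L_A(\check g)$ with $L_A(\sigma_j^A(\check g))$ using Gaussian elimination.
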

We extend $L_A$ to admissable multicurves as follows:
given bigraded multicurves $\coprod_j \ddot{c}_j$,

\[
L_A\left(\coprod_j \ddot{c}_j \right) := \bigoplus_j L_A(\ddot{c}_j).
\]

It follows easily that this defines a complex, and both \cref{action shift comm A} and \cref{L_A equivariant} still hold for admissable multicurves.

\subsection{Complexes associated to admissible curves and categorical action (Type $B$)}
Consider a trigraded admissable  curve $\check{c}$.
We associate to $\check{c}$ an object $L_B(\check{c})$ in the category $\Kom^b(\Ba_n$-$p_rg_rmod)$.
Start by defining $L_B(\check{c})$ as a trigraded $\Ba_n$-module:

\[
L_B(\check{c}) = \bigoplus_{y \in cr(\check{c})}  P(y)
\]

where $P(y) = P^B_{y_0}[-y_1]\{y_2\}\<y_3\>$ (see the second paragraph after \cref{injBA} in \cref{local index function} for definition of $y_0, y_1, y_2$ and $y_3$).

We now define maps $\partial_{zy}: P(y) \ra P(z)$ for each $y,z \in cr(\check{c})$ using the following rules (note that these are \emph{\textbf{not}} the differentials yet):

\begin{itemize}
\item If $y$ and $z$ are the endpoints of an essential segment in $D_j$ for $j \geq 1$ and $z_1 = y_1 + 1$, then
	\begin{enumerate}[1.]
 	\item If $y_0 = z_0 $ (then also $y_2 = z_2 + 1$ and $y_3 = z_3$), then
 	
 	$$ \partial_{zy}: P(y) \ra P(z) \cong P(y)[-1]\{1\} $$

 	is the right multiplication by the element $X_{y_0} \in \Ba_n.$
 	\item If  $y_0 = z_0 \pm 1$ then $\partial_{zy}$
  	is the right multiplication by $(y_0 | z_0) \in \Ba_n$;
\end{enumerate}
\item otherwise $\partial_{yz} = 0.$
\end{itemize}

We will modify some of these maps before using them as differentials.
Define the following equivalence relation on the set of 1-crossings:
\[
y \sim y' \iff y \text{ and } y' \text{ are connected by an essential segment in } D_0.
\]
Consider the partitioning of the set of 1-crossings using the equivalence relation above.
Note that every equivalence classes under this relation consists of either 1 or 2 elements.
For each $[y]$ an equivalence class of 1-crossing, we modify the some of the maps given previously by the following rule:
\begin{itemize}
\item If $[y] = \{y \}$, we modify nothing;
\item otherwise, we have $[y] = \{y, y'\}$ with $y'$ a distinct 1-crossing. 
Note that at least one of $y$ or $y'$ must be an endpoint of some essential segment in $D_1$; without lost of generality let this 1-crossing be $y$, with the other endpoint of the essential segment in $D_1$ be $z$.
Consider the two possible cases for $z$:
	\begin{enumerate}[1.]
	\item $z$ is a 2-crossing:
		\begin{enumerate}[(a)]
		\item if $y_1 = z_1 + 1$, then we have that $\partial_{yz}: P(z) \ra P(y)$ is given by the right multiplication by $(2|1)$. 
		We modify 
		$\partial_{y'z}: P(z) \ra P(y')\cong P(y)\<1\>$ (which was necessarily 0 previously) so that it is now the right multiplication by $-i(2|1)$;
		\item otherwise, we have instead $z_1 = y_1 + 1$.
		In this case, $\partial_{zy}: P(y) \ra P(z)$ is given by the right multiplication by $(1|2)$. 
		We modify $\partial_{zy'}: P(y') \cong P(y)\<1\> \ra P(z)$ (which was necessarily 0 previously) so that it is now the right multiplication by $(1|2)i$.
		\end{enumerate}
	\item $z$ is a 1-crossing: 
		\begin{enumerate}[(a)]
		\item if $y_1 = z_1 + 1$, we modify nothing;
		\item otherwise, we have instead $z_1 = y_1 + 1$.
		In this case $\partial_{zy}: P(y) \ra P(z)$ is given by the right multiplication by $X_1$.
		Once again consider the two possible cases of the equivalence class $[z]$:  
  			\begin{enumerate}[(i)] 
 			\item If $[z] = \{z \}$, we modify nothing;
			\item otherwise $[z] = \{z, z'\}$ with $z'$ a distinct 1-crossing.
 			We then modify $\partial_{z'y'}: P(y)\<1\> \cong P(y') \ra P(z') \cong P(z)\<1\>$ (which was necessarily $0$ previously) so that it is now the right multiplication by $X_1$;
			\end{enumerate}
		\end{enumerate}
	\end{enumerate} 
	Repeat the process above for $y'$ if $y'$ is also an endpoint of some essential segment in $D_1$.
\end{itemize}
 %
 %
 %
%
%
%
Finally, we define the differential as
$
\partial = \sum_{x,y \in cr(\check{c})} \partial_{xy},
$
where $\partial_{xy}$ are the modified version above.

\begin{lemma}\label{check complex}
$(L_B(\check{c}), \partial)$ is a complex of projective  graded $\Ba_n$-modules with a grading-preserving differential.
\end{lemma}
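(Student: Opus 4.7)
The plan is to verify three things in the following order: (i) each summand $P(y) = P^B_{y_0}[-y_1]\{y_2\}\<y_3\>$ is projective and graded, which is immediate from the definition; (ii) the map $\partial$ is grading-preserving; and (iii) $\partial \circ \partial = 0$. I expect (i) and (ii) to be routine, while (iii) is the main content of the lemma.

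For (ii), grading-preservation decomposes into three compatibility checks, one per grading. The cohomological degree always increases by one across any nonzero arrow because $\partial_{zy} \neq 0$ only when $z_1 = y_1 + 1$. The internal $\{-\}$ degree matches: right multiplication by $X_{y_0}$ has internal degree $1$ (and indeed $z_2 = y_2 + 1$ in that case), while right multiplication by $(y_0|z_0)$ has internal degree $0$ or $1$ depending on the direction of the arrow in the quiver $Q_n$, and these shifts agree with the local index computation encoded in \cref{compute tri int}. For the $\Z/2\Z$ grading $\<-\>$, the unmodified arrows use algebra elements $X_j$, $(j|j\pm 1)$, all of $\<-\>$-degree $0$, so no shift is introduced; the modified arrows at $1$-crossings multiply on the right by $(1|2)i$, $-i(2|1)$ or $X_1$, where the factor of $i$ contributes $\<-\>$-degree $1$, exactly compensating for the $\<1\>$-shift in $P(y') \cong P(y)\<1\>$ produced by the fact that $[y] = \{y, y'\}$ comes with $y' = \chi^B(0,0,1)y$.

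For (iii), my plan is to reduce to the known type $A$ statement (\cite[Lemma 4.1]{KhoSei}) via the scalar extension functor $\Aa_{2n-1} \otimes_{\Ba_n} -$ and the lifting map $\mathfrak{m}$. The key intermediate claim is the existence of an isomorphism of graded $\Aa_{2n-1}$-modules
\[
\Aa_{2n-1} \otimes_{\Ba_n} L_B(\check{c}) \;\cong\; L_A(\mathfrak{m}(\check{c}))
\]
that intertwines the proposed differential on the left with the type $A$ differential on the right. On the module level the isomorphism is dictated by \cref{B tensor to A}: a $j$-crossing $y$ with $j \geq 2$ gives $P^B_j$, which after scalar extension splits as $P^A_{n-(j-1)} \oplus P^A_{n+(j-1)}$, matching the two lifts $\tilde{y}, \underline{y}$ whose local indices agree with $y$ by \cref{tribilocin}; a $1$-crossing equivalence class $[y] = \{y, y'\}$ contributes $P^B_1 \oplus P^B_1\<1\>$, which after forgetting $\<-\>$ and tensoring becomes two copies of $P^A_n$, matching the two lifts on $\tilde{c}$ and $\underline{c}$; a singleton class $[y] = \{y\}$ corresponds to a $1$-crossing lying on the curve $\tilde{\underline{c}}$ that passes through $\{0\}$, contributing one copy of $P^A_n$. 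Essential segments in $D_j$ for $j \geq 2$ lift to a disjoint pair of essential segments on the two sheets of the branched cover, and the algebra isomorphism $\Phi$ of \cref{isomorphic algebras} translates right multiplication by $X_j$ and $(j|j\pm 1)$ into the correct type $A$ differentials. Essential segments in $D_0$ or $D_1$ touching $d_1$ lift to essential segments crossing the branch line $\theta_n$, and here the $i$-twists in the modified maps, together with the complex conjugation identification $\left(P^A_{n-(j-1)}\right)_{\bar\C}$ from the proof of \cref{B tensor to A}, conspire to reproduce exactly the type $A$ differentials of $L_A(\mathfrak{m}(\check{c}))$.

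Once this identification of pre-complexes is in place, $\partial^2 = 0$ for $L_B(\check{c})$ follows formally: the scalar extension of $\partial^2$ vanishes because it equals the square of the type $A$ differential on $L_A(\mathfrak{m}(\check{c}))$, which is zero by \cite[Lemma 4.1]{KhoSei}; and $\Aa_{2n-1} \otimes_{\Ba_n} -$ is faithful on morphisms between projective $\Ba_n$-modules (since $\Aa_{2n-1}$ is free of rank $2$ over $\Ba_n$ via $\Phi \circ i$), so $\partial^2 = 0$ already at the $\Ba_n$-level. The main obstacle is the case analysis near $d_1$: one has to check carefully that the unusual prescriptions for $\partial_{y'z}, \partial_{zy'}, \partial_{z'y'}$ (the ones written with $\pm i$ or with $X_1$ across equivalence classes) land in the right summand of $\Aa_{2n-1} \otimes_{\Ba_n} P^B_1 \cong P^A_n$ and produce the correct sign. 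Working through the three subcases (1.a, 1.b, 2.a, 2.b.i, 2.b.ii in the construction) against the corresponding configurations of essential segments in $\DA$ crossing the basic curve $\theta_n$ is the most delicate part of the proof, and should be organised by a table indexed by the types of $1$-strings from \cref{B 1-string}.
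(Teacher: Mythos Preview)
Your logical framework for (iii) is sound, but your ``key intermediate claim'' is substantially harder than you suggest, and in fact is essentially the content of \cref{diagram commutes}, which the paper proves \emph{after} this lemma with several pages of case analysis. The natural module isomorphism $\eta$ assembled from \cref{B tensor to A} does \emph{not} intertwine the differentials. For instance, under $\Phi$ one has $X_1 \mapsto 2X_n$, so even the simplest $1$-crossing differentials become $2X_n$ rather than the $X_n$ appearing in $L_A(\mathfrak{m}(\check{c}))$; more seriously, for $1$-strings of type $III'_k$ or $II'_k$ with $k \neq 0$ the pushed-forward map mixes the two sheets non-diagonally (see the matrices $F$ and $E$ in the paper's proof of \cref{diagram commutes}). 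Fixing this requires conjugating by non-trivial automorphisms $\mu_i$ built from $M = \begin{bmatrix} 1 & -i \\ 1 & i \end{bmatrix}$ and powers of $2$, case by case over all $1$-string types. So your phrase ``conspire to reproduce exactly the type $A$ differentials'' is not correct for the isomorphism you have in mind; making it correct means redoing the full argument of \cref{diagram commutes}, and you would also need to observe that that argument nowhere uses $\partial^2 = 0$.

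By contrast, the paper's proof is a direct two-line computation. For crossings with $x_0, y_0, z_0 \geq 2$ the type $A$ argument applies verbatim via the zigzag relations. The only composition $\partial_{zy}\partial_{yx}$ that is not killed by the relations arises from a $1$-string of type $V'$: there are four crossings $a,b,c,d$ forming a square, and the two paths $b \to c \to a$ and $b \to d \to a$ contribute $(2|1)\cdot(1|2)i = X_2 i$ and $(-i(2|1))\cdot(1|2) = -X_2 i$, which cancel. This is all that is needed; your route through scalar extension, while salvageable in principle, is a much longer detour.
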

\begin{proof}
For $x, y, z \in cr(\check{c})$ with $x_0,y_0,z_0 \geq 2$, the same argument as in the type $A$ shows that the product of $\partial_{zy} \partial_{yx}:P_x \ra P_z$ is always 0.
%
%
The only occurrence of $\partial_{zy} \partial_{yx} \neq 0$ is when $\partial_{zy} = \partial_{ac}, \partial_{ad}$ and $\partial_{yx} = \partial_{db}, \partial_{cb}$ with $a,b,c,d$ the crossings of the following type of 1-string labelled below:
\begin{figure}[H]
\centering
\begin{tikzpicture}  [scale=1]
\draw[thick]  (5.475,1)--(5.7,1);
\draw[thick]  (5.475,3)--(5.7,3);  
\draw[thick]  (5.475,1)--(5.475,3);
\draw[thick,red]  (3.25,2.5)--(5.475,2.5);
\draw[thick,red]  (3.25,1.5)--(5.475,1.5);
\draw[thick]  (3.25,3)--(5.7,3);  
\draw[thick]  (3.25,1)--(5.7,1);
\draw[thick]  (4,1)--(4,3);
\draw[thick,green, dashed]  (3.25,3)--(3.25,2.5);
\draw[thick, green, dashed]   (3.25,1)--(3.25,1.5);
\draw[thick, green, dashed, ->]   (3.25,2)--(3.25,2.6);
\draw[thick,  green, dashed, ->]   (3.25,2)--(3.25,1.4);\filldraw[color=black!, fill=yellow!, very thick] (3.25,2) circle [radius=0.1]  ;
\draw[fill] (4.75,2) circle [radius=0.1]  ;

\node [above right] at (5.475,2.5) {{\small $a$}};
\node [below right] at (5.475,1.5) {{\small $b$}};
\node [below right] at (4,1.5) {{\small $c$}};
\node [above right] at (4,2.5) {{\small $d$}};
\end{tikzpicture}
\end{figure}
Note that the two non-zero composition $\partial_{ad}\partial_{db}$ and $\partial_{ac}\partial_{cb}$ always occur as a pair.
Moreover, we see that their sum is equal to 0: $\partial_{ad}\partial_{db} + \partial_{ac}\partial_{cb} = X_2i - X_2i = 0$, thus showing that $\partial^2 = 0$ as required.
%
%
%
%
%
%
%
%
\end{proof}

\begin{lemma}

 For any triple $(r_1,r_2,r_3)$ of integers and any  trigraded admissible curve $\check{c}$ we have:
 $$ L_B(\chi(r_1,r_2,r_3)\check{c}) \cong L_B(\check{c})[-r_1]\{r_2\}\<r_3\>. $$

\end{lemma}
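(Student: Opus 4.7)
The plan is to use the fact that $L_B(\check{c})$ depends only on two kinds of data: the combinatorial data of the crossings (position $y_0$, local index $(y_1,y_2,y_3)$) and the topological data of the underlying curve $c$ (which essential segments connect which crossings, and which $1$-crossings are equivalent under $\sim$). The shift action $\chi^B(r_1,r_2,r_3)$ leaves the second kind of data invariant because the underlying curve $c$ is unchanged, and it shifts the first kind of data in a controlled way, so everything should propagate through the definition of $L_B$ cleanly.

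First, I would set up the bijection between $cr(\check{c})$ and $cr(\chi^B(r_1,r_2,r_3)\check{c})$ which is just the identity on the underlying set of crossings in the disc. Next, from the definition of $\chi^B$ as a deck transformation on $\wRDAz$, it is immediate that for every crossing $y$ the $4$-tuple transforms as $y_0 \mapsto y_0$ and $(y_1,y_2,y_3) \mapsto (y_1+r_1, y_2+r_2, y_3+r_3)$ (the last addition taken in $\Z/2\Z$). Under the assignment $P(y) = P^B_{y_0}[-y_1]\{y_2\}\<y_3\>$, this gives
\begin{equation*}
P_{\chi(r_1,r_2,r_3)\check{c}}(y) \;=\; P^B_{y_0}[-y_1-r_1]\{y_2+r_2\}\<y_3+r_3\> \;\cong\; P_{\check{c}}(y)[-r_1]\{r_2\}\<r_3\>,
\end{equation*}
so as a bigraded $\Ba_n$-module, $L_B(\chi(r_1,r_2,r_3)\check{c}) \cong L_B(\check{c})[-r_1]\{r_2\}\<r_3\>$.

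Next I would verify that the differential matches. The rules that define $\partial_{zy}$ before the $D_0$-modification depend only on (i) the underlying topology (whether $y,z$ are endpoints of an essential segment) and on (ii) the \emph{differences} $z_1-y_1$ and $z_0 - y_0$, not on the absolute values $y_1,y_2,y_3$. Both (i) and (ii) are preserved by a global shift of the trigrading. The subsequent modifications for $1$-crossings depend only on the equivalence relation $\sim$ (essential segments in $D_0$) and on the topological type of essential segments in $D_1$ adjacent to a $1$-crossing; again these data are intrinsic to $c$ and unchanged by $\chi^B(r_1,r_2,r_3)$. Hence the modified differential on $L_B(\chi(r_1,r_2,r_3)\check{c})$ is the same collection of right-multiplication maps by the same algebra elements as on $L_B(\check{c})$, only now living between the uniformly shifted modules. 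This gives the desired isomorphism of complexes.

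The only step requiring some care — and which I would flag as the main (mild) obstacle — is checking that the $\Z/2\Z$-shift $\<r_3\>$ interacts correctly with the modification rules near the puncture $0$, in particular for the cases $[y]=\{y,y'\}$ where $y' = \chi^B(0,0,1)y$ in the sense that $y'_3 = y_3+1$. Here one must confirm that the auxiliary arrows (multiplications by $\pm i(2|1)$, $(1|2)i$, and by $X_1$ between $y$ and $y'$) are defined purely in terms of the pair structure $\{y,y'\}$ and the essential segments in $D_0,D_1$, and not in terms of the absolute value of $y_3$. Once this is observed, the isomorphism is just the identity on underlying modules, with the global grading shift $[-r_1]\{r_2\}\<r_3\>$ absorbed into the $P(y)$'s.
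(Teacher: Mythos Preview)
Your proposal is correct and is essentially the paper's approach spelled out in full detail: the paper simply says ``This follows directly from the definition.'' Your careful unpacking of why the crossing set, the shifted $4$-tuples, and the (modified) differential maps are all compatible with the global shift is exactly what underlies that one-line proof.
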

\begin{proof}
This follows directly from the definition.
\end{proof}

\subsection{Main result}
Let us recall the main theorem that we aim to prove:

\begin{figure}[H] 
\centering
\begin{tikzpicture} [scale = 0.9]
\node (tbB) at (-3,1.5) 
	{$\mathcal{A}(B_n)$};
\node (cbB) at (-3,-3.5) 
	{$\mathcal{A}(B_n)$};
\node (tbA) at (10,1.5) 
	{$\mathcal{A}(A_{2n-1})$}; 
\node (cbA) at (10.5,-3.5) 
	{$\mathcal{A}(A_{2n-1})$};

\node[align=center] (cB) at (0,0) 
	{Isotopy classes of admissible \\ trigraded curves $\check{\fC}^{adm}$  in $\D^A_{n+1}$};
\node[align=center] (cA) at (7,0) 
	{Isotopy classes of admissible \\ bigraded multicurves $\ddot{\udfC}^{adm}$  in $\D^B_{2n}$};
\node (KB) at (0,-2)
	{$\Kom^b(\Ba_n$-$\text{p$_{r}$g$_{r}$mod})$};
\node (KA) at (7,-2) 
	{$\Kom^b(\Aa_{2n-1}$-$\text{p$_{r}$g$_{r}$mod})$};

\coordinate (tbB') at ($(tbB.east) + (0,-1)$);
\coordinate (cbB') at ($(cbB.east) + (0,1)$);
\coordinate (tbA') at ($(tbA.west) + (1,-1)$);
\coordinate (cbA') at ($(cbA.west) + (0,1)$);

\draw [->,shorten >=-1.5pt, dashed] (tbB') arc (245:-70:2.5ex);
\draw [->,shorten >=-1.5pt, dashed] (cbB') arc (-245:70:2.5ex);
\draw [->, shorten >=-1.5pt, dashed] (tbA') arc (-65:250:2.5ex);
\draw [->,shorten >=-1.5pt, dashed] (cbA') arc (65:-250:2.5ex);

\draw[->] (cB) -- (KB) node[midway, left]{$L_B$};
\draw[->] (cB) -- (cA) node[midway,above]{$\mathfrak{m}$}; 
\draw[->] (cA) -- (KA) node[midway,right]{$L_A$};
\draw[->] (KB) -- (KA) node[midway,above]{$\Aa_{2n-1} \otimes_{\Ba_n} -$};
\end{tikzpicture}
\caption{Main theorem}
\label{full picture}
\end{figure}

We aim to show that the diagram is commutative and the four maps on the square are $\mathcal{A}(B_n)$-equivariant; recall that the $\mathcal{A}(B_n)$-actions on curves of $\D^A_{2n}$ and $\Kom^b(A_{2n-1}$-$\text{p$_{r}$g$_{r}$mod})$ are given through the injection $\mathcal{A}(B_n) \xra{\Psi} \mathcal{A}(A_{2n-1})$ introduced in \cref{topology}.

In \cref{topology}, we introduced and showed that $\mathfrak{m}$ is $\cA(B_n)$-equivariant.
In \cref{relating categorical b a action}, we showed that the functor $(\Aa_{2n-1} \otimes_{\Ba_n} -)$ is also $\cA(B_n)$-equivariant.
By \cref{L_A equivariant}, the map $L_A$ is $\cA(A_{2n-1})$-equivariant and therefore $\cA(B_n)$-equivariant.
Thus, we are left to show that the diagram commutes and that $L_B$ is $\cA(B_n)$-equivariant.
The proof of these two statements, first of which is technical, will occupy the rest of this section.

\begin{proposition} \label{diagram commutes}
The diagram in \cref{full picture} commutes, i.e. for each trigraded admissible curve $\check{c}$ in $\D^B_{n+1}$ we have that 
$
\Aa_{2n-1}\otimes_{\Ba_n} L_B(\check{c})\cong L_A (\mathfrak{m}(\check{c}))
$
in $\Kom^b(\Aa_{2n-1}$-$p_rg_rmod)$.
\end{proposition}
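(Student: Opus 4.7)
The plan is to construct an explicit chain isomorphism
\[
\Phi_{\check{c}} : \Aa_{2n-1}\otimes_{\Ba_n}L_B(\check{c}) \xrightarrow{\cong} L_A(\mathfrak{m}(\check{c}))
\]
of bigraded complexes of projective $\Aa_{2n-1}$-modules, built summand-by-summand from the algebra isomorphism $\Phi$ of \cref{isomorphic algebras} together with the module identifications of \cref{B tensor to A}. For each crossing $y$ of $\check{c}$ with $y_0\geq 2$, I identify $\Aa_{2n-1}\otimes_{\Ba_n}P^B(y)$ with $P^A(\wt{y})\oplus P^A(\ut{y})$ via the idempotents $\nu_{y_0}$ and $1\otimes e_{y_0}-\nu_{y_0}$; for each $1$-crossing $y$, I identify $\Aa_{2n-1}\otimes_{\Ba_n} P^B_1$ with $P^A(\wt{\ut{y}})$. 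The bigradings agree by \cref{tribilocin}, and the $\Z/2\Z$-grading $\<-\>$ on the left is absorbed on tensoring with $\C$. This gives the isomorphism of underlying bigraded $\Aa_{2n-1}$-modules.

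The bulk of the argument is then to verify that $\Phi_{\check{c}}$ intertwines the differentials, which I do by case analysis on essential segments of $\check{c}$. For any essential segment in a region $D_j$ whose two endpoints are $k$-crossings with $k\ge 2$, the lift in $\DA$ is a disjoint pair of essential segments, one in $\wt{c}$ and one in $\ut{c}$, living in the basic regions $\cD_{n-(k-1)}$ and $\cD_{n+(k-1)}$; the explicit formulas for $\Phi$ on $X_k$ and $(k|k\pm 1)$ decompose, under the idempotent decomposition, as the sum of a term at index $n-(k-1)$ and a term at index $n+(k-1)$, which shows directly that the tensored $L_B$-differential equals the sum of the two corresponding $L_A$-differentials produced by this pair. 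The remaining and more delicate cases involve essential segments with at least one endpoint on $d_1$, or type $2'$ segments in $D_0$. The topological point is that the second lift $\ut{d_1}\subset \DA$ is not a basic curve: it sits in the interior of the basic region $\cD_{n-1}$ and partitions it into $q_{br}^{-1}(D_0)$ and the left component of $q_{br}^{-1}(D_1)$. Hence a single essential segment of $\mathfrak{m}(\check{c})$ in $\cD_{n-1}$ running from $\theta_n$ to $\theta_{n-1}$ corresponds on the $B$-side to a \emph{chain} consisting of a type $2'$ segment in $D_0$ followed by an essential segment in $D_1$, and the modifications by $(1|2)i$, $-i(2|1)$, and $X_1$ in the definition of $L_B$ are arranged precisely to encode this transfer across a non-basic crossing.

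On each equivalence class $[y]=\{y,y'\}$ of paired $1$-crossings, I do not take the naive identification of \cref{B tensor to A}, but compose it with the invertible $2\times 2$ change of basis
\[
\tfrac{1}{2}\begin{pmatrix}1 & 1 \\ i & -i\end{pmatrix},
\]
which is the transition matrix between the basis $\{1\otimes e_2,\, 1\otimes ie_2\}$ and the idempotent basis $\{\nu_2,\, 1\otimes e_2-\nu_2\}$ of the relevant piece of $\C\otimes_\R\Ba_n$. A direct matrix computation then shows that on each such pair the tensored $L_B$-differential matrix, including its modifications, is conjugated to the block-diagonal $L_A$-differential matrix. The main obstacle will be the bookkeeping at this final step: one must enumerate the handful of local configurations of $\check{c}\cap(D_0\cup D_1)$ -- whether a $1$-crossing sits in a singleton or paired class, whether its $D_1$-neighbour is itself a $1$- or $2$-crossing, and the direction of the bigrading jump -- and verify in each that the combined effect of the $L_B$-modifications and the change of basis produces the correct $L_A$ matrix. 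Each individual verification is mechanical once the change-of-basis picture is in place.
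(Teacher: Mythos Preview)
Your overall architecture matches the paper's: build the module isomorphism out of the $\Phi_j$ from \cref{B tensor to A}, observe that the differentials agree automatically on essential segments living in $D_j$ for $j\ge 2$, and then do a local correction near $D_0\cup D_1$. The gap is in the last step. Your proposal is to apply a \emph{single} fixed $2\times 2$ change of basis to each paired equivalence class of $1$-crossings and then check a finite list of local configurations. That is not enough, for a reason visible already in the algebra map: $\Phi(1\otimes X_1)=2X_n$, not $X_n$. So when a $1$-string of type $II'_k$ or $III'_k$ winds $|k|$ times around the puncture $0$, the tensored $L_B$-differential carries a chain of maps $2X_n$ between consecutive pairs of $P_n^A$'s, while $L_A(\mathfrak{m}(\check{c}))$ has $X_n$ there. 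A uniform local change of basis cannot convert a string of $2X_n$'s into a string of $X_n$'s; you are forced into a \emph{position-dependent} rescaling, and in the paper this manifests as applying $2^{k-1}M,\,2^{k-2}M,\,\dots,\,2M,\,M,\,I$ (or the analogous powers of $N$) along the successive columns of a single $1$-string.

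There is a second, related issue your local picture does not capture. For $1$-strings of types $V'$, $II'_{k+\frac12}$, $III'_{k+\frac12}$, the two boundary $2$-crossings $\wt{r_i},\ut{r_i}$ attach to the two \emph{separate} lifted curves $\wt{h_i},\ut{h_i}$ in $\DA$. The correction the paper uses is not confined to the $1$-string: it multiplies everything in $\wt{\nabla}$ and $\undertilde{\nabla}$ by distinct scalars (e.g.\ $-2i$ and $2i$, or $\pm i$), precisely to absorb the sign and factor-of-$2$ discrepancies introduced at the boundary of the $1$-string. So the correct statement is not ``a local change of basis on each pair of $1$-crossings'' but rather ``for each $1$-string, an isomorphism depending on its type and winding number, which may rescale the two halves of the complex outside the $1$-string by different constants.'' Your enumeration of local configurations at a single pair $[y]=\{y,y'\}$ would work only for $1$-strings of type $II'_0$, $III'_0$, or $VI$; for all other types you need the global-along-the-$1$-string construction the paper carries out.
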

\begin{proof}
Let $x$ be any $j$-crossing of $\check{c}$.
If $j \geq 2$, we have that $\mathfrak{m}(x)$ consists of a $(n+(j-1))$-crossing $\wt{x}$ and a $(n-(j-1))$-crossing $\undertilde{x}$ of $\mathfrak{m}(\check{c})$; 
if $j=1$, $\mathfrak{m}(x)$ consists of a single $n$-crossing $\undertilde{\wt{x}}$ of $\mathfrak{m}(\check{c})$.
In either cases, we have isomorphisms $\Phi_j: \Aa_{2n-1} \otimes_{\Ba_n} P^B(x) \ra P^A(\wt{x}) \oplus P^A(\undertilde{x}) = P^A_{n+(j-1)} \oplus P^A_{n-(j-1)}$ or $\Phi_1: \Aa_{2n-1} \otimes_{\Ba_n} P^B(x) \ra P^A(\undertilde{\wt{x}}) = P^A_n$ given in the proof of \cref{B tensor to A}.
Putting together these isomorphisms for each crossing $x$ of $\check{c}$, we obtain a cohomological and internal grading preserving isomorphism of $\Aa_{2n-1}$-modules between the underlying modules of $\Aa_{2n-1} \otimes_{\Ba_n} L_B(\check{c})$ and $L_A (\mathfrak{m}(\check{c}))$; denote this isomorphism as $\eta$.
Denoting the complexes $\Aa_{2n-1} \otimes_{\Ba_n} L_B(\check{c})$ as $(Q, \d)$ and $L_A (\mathfrak{m}(\check{c}))$ as $(Q', \d')$ (so $\eta$ is an isomorphism from $Q$ to $Q'$), it follows that $\eta$ induces an isomorphism of complexes:
\[
(Q, \d) \cong (Q',\d_0),
\]
with $\d_0 = \eta \d \eta^{-1}$.
We now aim to show that $(Q', \d_0) \cong (Q', \d')$ in $\Kom^b(\Aa_{2n-1}$-$\text{p$_{r}$g$_{r}$mod})$.
In fact, we will show that they are isomorphic in the ordinary category $\Com^b(\Aa_{2n-1}$-$\text{p$_{r}$g$_{r}$mod})$ of complexes in $\Aa_{2n-1}$-$\text{p$_{r}$g$_{r}$mod}$.

\paragraph{(\textbf{Slicing $c$})}\label{slicing}
Recall that $c$, being admissable, must have both of its end points at two \emph{distinct} marked points; so at least one of its end points is at a marked point in $\Delta \setminus \{0\}$.
Fix such an end point and call it $m$.
Orient the curve $c$ so that it starts from $m$ and ends at its other end point.
Following this orientation, we can slice $c$ into distinct connected components $c_j \subset c \cap \left( \cup_{j \geq 2} D_j \right)$ and $g_{j'} \subset c \cap \left( D_0 \cup D_1 \right)$ (note that $g_{j'}$ are the 1-strings of $c$), where we start from $c_0$ to $g_0$ to $c_1$ and so on if $m \in \Delta \setminus \{0,1\}$;
for $m = 1$, we instead start from $g_0$ to $c_0$ to $g_1$ and so on. 
Following the same orientation, for $m \in \Delta \setminus \{0,1\}$ we will also enumerate the $2$-crossings of $c$ as $r_0, r_1$ and so on, whereas for $m=1$ we will enumerate the $2$-crossings as $r_1, r_2, r_2$ and so on instead. 
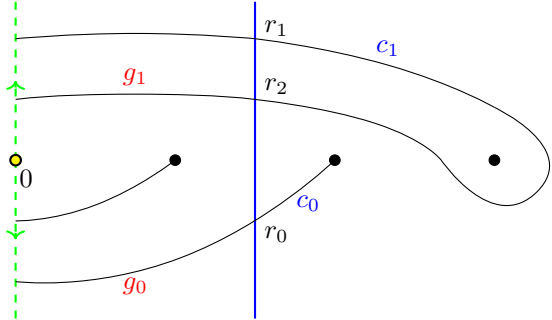
\begin{figure}[H] 
\centering 
\begin{tikzpicture} [scale=.7]

\draw[thick,green, dashed]  (0,5.5)--(0,3.875);
\draw[thick, green, dashed]   (0,-.5)--(0,1.125);
\draw[thick, green, dashed, ->]   (0,2.5)--(0,4);
\draw[thick,  green, dashed, ->]   (0,2.5)--(0,1);
\draw[fill] (3,2.5) circle [radius=0.1]  ;
\draw[fill] (6,2.5) circle [radius=0.1]  ;
\draw[fill] (9,2.5) circle [radius=0.1]  ;
\draw[thick, blue] (4.5,-.5) -- (4.5,5.5);

\draw  plot[smooth, tension=1]coordinates { (0,4.8) (2.25, 4.9) (4.5,4.8)  };
\draw  plot[smooth, tension=1.3]coordinates { (0, 3.65) (2.25, 3.75) (4.5,3.65) };
\draw  plot[smooth, tension=1]coordinates { (0, 1.35) (1.5, 1.65) (3, 2.5)};
\draw  plot[smooth, tension=1]coordinates { (0, .2) (3.1, .65) (6, 2.5)};
\draw   plot[smooth, tension=3]coordinates { (8, 2.5) (9.8, 1.9)  (9, 3.5)};
\draw   plot[smooth, tension=1]coordinates { (9, 3.5) (7, 4.3) (4.5,4.8) };
\draw   plot[smooth, tension=1]coordinates { (8, 2.5) (6.75, 3.2) (4.5,3.65)};

\filldraw[color=black!, fill=yellow!, thick]  (0,2.5) circle [radius=0.1];

\node [above, blue] at (7, 4.3) {$c_1$};
\node [below, red] at (2.25, .45) {$g_0$};
\node [above, red] at (2.25,3.75) {$g_1$};
\node [below] at (0.2, 2.5) {$0$};
\node [below, blue] at (5.5, 2) {$c_0$};
\node [right] at (4.5, 5) {$r_1$};
\node [right] at (4.5, 3.9) {$r_2$};
\node [right] at (4.5, 1.1) {$r_0$};

\end{tikzpicture}
\caption{{\small Example of slicing a curve with one of its endpoints in $\Delta \setminus \{0,1\}$.}} \label{notend01}
\end{figure}
\begin{figure}[H]  
\centering
\begin{tikzpicture} [scale=.7]

\draw[thick,green, dashed]  (12,5.5)--(12,3.875);
\draw[thick, green, dashed]   (12,-.5)--(12,1.125);
\draw[thick, green, dashed, ->]   (12,2.5)--(12,4);
\draw[thick,  green, dashed, ->]   (12,2.5)--(12,1);
\draw[fill] (15,2.5) circle [radius=0.1]  ;
\draw[fill] (18,2.5) circle [radius=0.1]  ;
\draw[fill] (21,2.5) circle [radius=0.1]  ;
\draw[thick, blue] (16.5,-.5) -- (16.5,5.5);

\draw  plot[smooth, tension=1]coordinates { (12,4.5) (14.25, 4.9) (16.5,5)  };
\draw  plot[smooth, tension=1.3]coordinates { (12, 3.65) (14.25, 4.1) (16.5,4.2) };
\draw  plot[smooth, tension=1]coordinates { (12, 1.35) (13.5, 1.65) (15, 2.5)};
\draw  plot[smooth, tension=1]coordinates { (12,.5 ) (13.8, .7) (16.5, 1.2)};
\draw   plot[smooth, tension=3]coordinates { (20.25, 2.5) (21.8, 1.9)  (21, 3.5)};
\draw   plot[smooth, tension=1]coordinates { (21, 3.5) (19, 4.5) (16.5,5) };
\draw   plot[smooth, tension=1]coordinates { (20.25, 2.5) (18.75, 3.7) (16.5,4.2)};
\draw  plot[smooth, tension=1.3]coordinates { (12, 2.5) (13.8, 3.1) (16.5,3.4) };
\draw  plot[smooth, tension=1.5]coordinates { (16.5, 3.4) (19, 2.5) (16.5,1.2) };

\filldraw[color=black!, fill=yellow!, thick]  (12,2.5) circle [radius=0.1];

\node [above, blue] at (19, 3.7) {$c_1$};
\node [above, red] at (13.8, 1.8) {$g_0$};
\node [above, red] at (14.25,4.9) {$g_1$};
\node [above, red] at (14.25,3.2) {$g_2$};
\node [below] at (12.2, 2.5) {$0$};
\node [below, blue] at (18, 1.5) {$c_2$};
\node [right] at (16.5, 5.2) {$r_1$};
\node [right] at (16.5, 4.4) {$r_2$};
\node [right] at (16.5, 1) {$r_0$};
\node [right] at (16.5, 3.6) {$r_3$};

\end{tikzpicture}
\caption{{\small Example of slicing a curve with its endpoints in $ \{0,1\}$.}} \label{end01}
\end{figure}
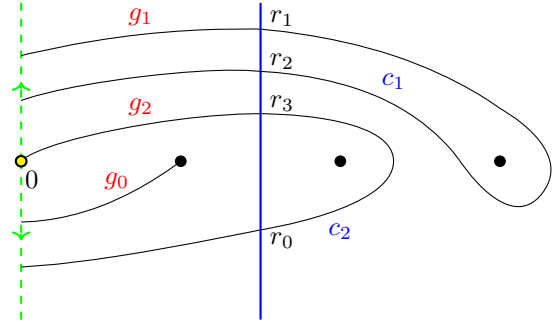
Now consider the following subsets of (graded) crossings of $\mathfrak{m}(\check{c})$:

\begin{equation} \label{subset crossing}
\begin{cases}
C_j := \mathfrak{m}(\check{c}_j) \cap (\bigcup_i \ddot{\theta}_i); \\
\bar{C}_j := \mathfrak{m}(\check{c}_j) \cap (\bigcup_{i\neq n-1, n+1} \ddot{\theta}_i);\\
G_{j'} := \mathfrak{m}(\check{g}_{j'}) \cap (\ddot{\theta}_{n-1} \cup \ddot{\theta}_n \cup \ddot{\theta}_{n+1}); \\
\bar{G}_{j'} := \mathfrak{m}(\check{g}_{j'}) \cap \ddot{\theta}_n; \text{ and} \\
R_j := \mathfrak{m}(\check{r}_j).
\end{cases}
\end{equation}

Note that by definition, the subsets of crossings $\bar{C}_i, R_j$ and $\bar{G}_k$ are pairwise disjoint, and  $\left( \coprod_j \bar{C}_j\right) \amalg \left( \coprod_j R_j \right) \amalg \left( \coprod_j \bar{G}_j \right)$ is the set of all crossings of $\mathfrak{m}(\check{c})$.

When $m \in \Delta \setminus \{0, 1\}$, the slicing of $c$ is of the form $c_0$ to $g_0$ to $c_1$ to $g_1$ and so on, so we get that

\[
\bar{C}_0 \amalg R_0 = C_0, \quad 
R_0 \amalg \bar{G}_0 \amalg R_1 = G_0, \quad
R_1 \amalg \bar{C}_1 \amalg R_2 = C_1, \quad \dots
\]

Following this slicing, we decompose $Q'$ as
\begin{equation}\label{R decomp}
Q' = 
\rlap{$\underbrace{\phantom{Q'_{\bar{C}_0} \oplus Q'_{R_0}}}_{Q'_{C_0}}$} 
	Q'_{\bar{C}_0} \oplus 
\rlap{$\overbrace{\phantom{Q'_{R_0} \oplus Q'_{\bar{G}_0} \oplus Q'_{R_1}}}^{Q'_{G_0}}$}
	Q'_{R_0} \oplus Q'_{\bar{G}_0} \oplus 
\rlap{$\underbrace{\phantom{Q'_{R_1} \oplus Q'_{\bar{C}_1} \oplus Q'_{R_2}}}_{Q'_{C_1}}$}
	Q'_{R_1} \oplus Q'_{\bar{C}_1} \oplus Q'_{R_2} \oplus \dots
\end{equation}
where $Q'_K := \bigoplus_{x \in K} P^A(x) \subseteq Q'$.
If $m =1$, the slicing of $c$ is instead of the form $g_0 $ to $c_1$ to $g_1$ to $c_2$ and so on, so we get that

\[
\bar{G}_0 \amalg R_1 = G_0, \quad 
R_1 \amalg \bar{C}_1 \amalg R_2 = C_1, \quad
R_2 \amalg \bar{G}_2 \amalg R_3 = G_2, \quad \dots
\]

\noindent Following this slicing, we instead decompose $Q'$ as

\begin{equation}\label{R' decomp}
Q' = 
\rlap{$\underbrace{\phantom{Q'_{\bar{G}_0} \oplus Q'_{R_1}}}_{Q'_{G_0}}$} 
	Q'_{\bar{G}_0} \oplus 
\rlap{$\overbrace{\phantom{Q'_{R_1} \oplus Q'_{\bar{C}_1} \oplus Q'_{R_2}}}^{Q'_{C_1}}$}
	Q'_{R_1} \oplus Q'_{\bar{C}_1} \oplus 
\rlap{$\underbrace{\phantom{Q'_{R_2} \oplus Q'_{\bar{G}_2} \oplus Q'_{R_3}}}_{Q'_{G_2}}$}
	Q'_{R_2} \oplus Q'_{\bar{G}_2} \oplus Q'_{R_3} \oplus \dots
\end{equation}

\noindent where $Q'_K := \bigoplus_{x \in K} P^A(x) \subseteq Q'$.

In general, given a decomposition of modules $M = \oplus_{i\in Y} M_{K_i}$ and a complex $(M, \partial)$, we can then write $\partial$ as a block matrix.
We will use the notation $\partial_{K_i K_j}$ to denote the block of $\partial$ that maps from $M_{K_j}$ to $M_{K_i}$, where we use the shorthand notation $\partial_{K_i}$ for the block of $\partial$ that maps from $M_{K_i}$ to itself.
We will also use the notation $\partial_{\oplus_{i \in X \subseteq Y} K_i}$ for the block of $\partial$ that maps from $\oplus_{i \in X} M_{K_i}$ to itself.
To illustrate, consider the decomposition of $Q'$ as in \ref{R decomp} and let $(Q', \partial)$ be a cochain complex with differential $\partial$.
We will then write the differential $\partial$ as the matrix
\begin{center}
\begin{tikzpicture}
\matrix (m) [mymatrix, inner sep = 2pt, column sep = 7pt, row sep = 5pt] {
\partial_{\bar{C}_0} & \partial_{R_0 \bar{C}_0} & \partial_{\bar{G}_0 \bar{C}_0} & \partial_{R_1 \bar{C}_0}  & \partial_{\bar{C}_1 \bar{C}_0} & \dots \\
 \partial_{\bar{C}_0 R_0}& \partial_{R_0} & \partial_{\bar{G}_0 R_0} & \partial_{R_1 R_0} &  \partial_{\bar{C}_1 R_0} & \dots \\
\partial_{\bar{C}_0 \bar{G}_0}  & \partial_{R_0 \bar{G}_0}  & \partial_{\bar{G}_0} & \partial_{R_1 \bar{G}_0} &  \partial_{\bar{C}_1 \bar{G}_0} & \dots \\
\partial_{\bar{C}_0 R_1} & \partial_{R_0 R_1}  & \partial_{\bar{G}_0 R_1}   & \partial_{R_1} & \partial_{\bar{C}_1 R_1} & \dots \\
\partial_{\bar{C}_0 \bar{C}_1}  & \partial_{R_0 \bar{C}_1}&\partial_{\bar{G}_0 \bar{C}_1} &\partial_{R_1 \bar{C}_1}  & \partial_{\bar{C}_1} & \dots \\
\vdots & \vdots & \vdots & \vdots & \vdots & \ddots \\
};

\draw[red] (m-1-1.north-|m-2-1.west) rectangle (m-2-2.east|-m-2-2.south);
\draw[blue] (m-2-2.north-|m-4-2.west) rectangle (m-4-4.east|-m-4-4.south);
\draw[red] (m-4-5.north east)-|(m-5-4.south west);
\end{tikzpicture}.
\end{center}
The blocks $\partial_{C_j}$ corresponding to the components $Q'_{C_j}$ are the blocks in red and similarly blocks $\partial_{G_j}$ corresponding to the components $Q'_{G_j}$ are the blocks in blue.

Let us now analyse the difference between the two differentials $\d_0$ and $\d'$.
First consider the case when $m \in \Delta\setminus \{0,1\}$ and we have a decomposition $Q'$ as in \ref{R decomp}.
By looking at how the components $c_i, r_j$ and $g_k$ are connected, it follows that $\d_0$ and $\d'$ are given by the matrices
\[
\d_0 = 
\begin{bmatrix}
\d_{0_{\bar{C}_0}} & \d_{0_{R_0} Q'_{\bar{C}_0}} 
	& 0 & 0 & 0 & \dots \\
\d_{0_{\bar{C}_0 R_0}} & 0 & \d_{0_{\bar{G}_0} Q'_{R_0}}
	    & 0 & 0 & \dots \\
0 
	& \d_{0_{R_0} {\bar{G}_0}}   & \d_{0_{\bar{G}_0}} & \d_{0_{R_1 \bar{G}_0}}
		    & 0 & \dots \\
0 & 0 
	& \d_{0_{\bar{G}_0 R_1}}  & 0 & \d_{0_{\bar{C}_1 R_1}}  & \dots \\
0 & 0 & 0 
	& \d_{0_{R_1 \bar{C}_1}} & \d_{0_{\bar{C}_1}} & \dots \\
\vdots & \vdots & \vdots & \vdots & \vdots & \ddots
\end{bmatrix}
\]
and
\[
\d' = \begin{bmatrix}
\d'_{\bar{C}_0} & \d'_{R_0 \bar{C}_0}  & 0 & 0 & 0 & \dots \\
\d'_{\bar{C}_0 R_0}& 0 & \d'_{\bar{G}_0 R_0}  & 0 & 0 & \dots \\
0 & \d'_{R_0 \bar{G}_0}  & \d'_{\bar{G}_0}  & \d'_{R_1 \bar{G}_0} & 0 & \dots \\
0 & 0 &  \d'_{\bar{G}_0 R_1}  & 0 &  \d'_{\bar{C}_1 R_1} & \dots \\
0 & 0 & 0 &  \d'_{R_1 \bar{C}_1} & \d'_{\bar{C}_1}  & \dots \\
\vdots & \vdots & \vdots & \vdots & \vdots & \ddots
\end{bmatrix}
\]
respectively.
Furthermore, a direct computation shows that

\begin{equation} \label{C diff component}
\d_{0_{C_j}} = \d'_{C_j}, \quad \text{for all } j.
\end{equation}

So between the differentials $\d_0$ and $\d'$, only the differential components $\d_{0_{G_i}}$ and $\d'_{0_{G_i}}$ may differ.
The case for when $m = 1$ follows with the same argument.
If we were in the case when $c$ has no 1-string, i.e. $c = c_0$, then we are done.
We shall assume otherwise for the rest of the proof.

Let $\check{g}_0, \dots , \check{g}_{s-1}$ be the 1-strings of $\check{c}$.
To show $(Q', \d_0) \cong (Q', \d')$, \cref{C diff component} shows that it is sufficient to construct a cohomological and internal grading preserving isomorphism of modules $\mu_i: Q' \ra Q'$ for each $ 0 \leq i \leq s-1$, such that we have an induced chain of isomorphisms in $\Kom^b(\Aa_{2n-1}\text{p$_{r}$g$_{r}$mod})$(these will actually be isomorphism of complexes without homotopy):
$
(Q', \d_0)  \cong (Q', \d_1) \cong \cdots  \cong (Q', \d_{s-1}) \cong (Q', \d_s) = (Q', \d');
$
with $\d_{i+1} := \mu_i \d_i \mu_i^{-1}$ and where each $\d_{i+1}$ for $0 \leq i \leq s-1$ satisfies the following property:

\[
(*)
\begin{cases}
\d_{{i+1}_{G_i}} = \d'_{G_i}; \\
\d_{{i+1}_{G_j}} = \d_{i_{G_j}}, \qquad \text{for all $j\neq i$; and}\\
\d_{{i+1}_{C_j}} = \d_{i_{C_j}}, \qquad \text{for all $j$.} \\
\end{cases}
\]

In other words, $\mu_i$ will be constructed in a way that conjugation of $\d_i$ by $\mu_i^{-1}$ \emph{only} alters the differential component $\d_{i_{G_i}}$, so that $\d_{{i+1}_{G_i}} = \mu\d_{i_{G_i}}\mu_i^{-1} = \d'_{G_i}$.
In particular, property $(*)$ will guarantee that $\d_s = \d'$.

What remains is to define the required $\mu_i : Q' \ra Q'$. 
We will define $\mu_i$ according to the type of 1-string $\check{g}_i$.
Within each possible types of 1-string $\check{g}_i$, we will show the following:
\begin{enumerate}
\item For $i=0$, we show that can always construct $\mu_0$ to get $(Q'_0, \d_0) \cong (Q'_1, \d_1)$ such that $\d_1 = \mu_0\d_0 \mu_0^{-1}$ satisfies $(*)$.
\item For $i\geq 1$, we show that given $(Q', \d_0) \cong \cdots \cong (Q'_i,\d_i)$ with $\d_1, ..., \d_i$ satisfying $(*)$ for $i \geq 1$, we can construct $ \mu_i: Q' \ra Q' $ such that $\d_{i+1} = \mu_i \d_i \mu_i^{-1}$ satisfies $(*)$.
\end{enumerate}
By an induction on $i$, this shows that we can always construct a chain of isomorphisms $\mu_i$ and hence completes the proof of this theorem.

As from now on we will be focusing on the module component $Q'_{G_i}$, let us simplify both the decompositions in \ref{R decomp} and \ref{R' decomp} as 

\begin{equation} \label{new R decomp}
Q' = Q'_{V_i} \oplus 
	\rlap{$\underbrace{
		\phantom{
			Q'_{R_i} \oplus Q'_{\bar{G_i}} \oplus Q'_{R_{i+1}}
		}
	}_{Q'_{G_i}}$}
	Q'_{R_i} \oplus Q'_{\bar{G_i}} \oplus Q'_{R_{i+1}} \oplus Q'_{W_i},
\end{equation}

\noindent where $Q'_{V_i}$ (resp. $Q'_{W_i}$) consists of all the modules before $Q'_{R_i}$ (resp. after $Q'_{R_{i+1}}$) for both decompositions \ref{R decomp} and \ref{R' decomp}.

The list below shows that given each type of $\check{g}_i$, we can construct $ \mu_i: Q' \ra Q' $  accordingly such that $\d_{i+1} = \mu_i \d_i \mu_i^{-1}$ satisfies $(*)$.
We shall start with the simple cases: types IV, III$'_k$ and II$'_k$, followed by the types V$'$, III$'_{k + \frac{1}{2}}$ and II$'_{k + \frac{1}{2}}$ that requires some further analysis (see \cref{B 1-string} for the list of possible types).
Within the list, we will omit the gradings when writing out the modules in $Q'$ since the maps chosen for $\mu_i$ will always be cohomological and internal grading preserving.
We will also use black arrows for differentials and {\color{blue} blue} arrows for the isomorphism $\mu_i : Q' {\color{blue}\ra} Q'$.
\begin{enumerate} 
\item $\check{g}_i$ is of Type VI: \\
This case is only possible when $i=0$, and $g_i = c$.
But in this case $(Q', \d_0)$ is given by $0 \ra P_n^A \ra 0$, which is already the complex $(Q', \d')$ as required.

\item $\check{g}_i$ is of Type III$'_k$ for $k \in \Z$: \\
The case when $k=0$ is straightforward, where we just pick $\mu_i$ to be the identity.

Now consider the case when $k > 0$.
If $i=0$, we get that $\d_{i_{G_i}} = \d_{0_{G_i}}$.
For $i \geq 1$, assuming that $\d_i$ satisfies $(*)$, we can conclude that $\d_{i_{G_i}} = \d_{0_{G_i}}$.
Thus for all $i$, we can draw the part of $(Q', \d_i)$ that contains $(Q'_{G_i}, \d_{i_{G_i}}) = (Q'_{G_i}, \d_{0_{G_i}})$ as follows:

\begin{center}
\begin{tikzpicture}[>=stealth, baseline]
\matrix (M) [matrix of math nodes, column sep=7mm]
{
P_n^A  & P_n^A  & \cdots & P_n^A  & P_n^A  & P_{n-1}^A & \nabla\\
\oplus & \oplus &        & \oplus & \oplus & \oplus    &       & = (Q', \d_i)\\
P_n^A  & P_n^A  & \cdots & P_n^A  & P_n^A  & P_{n+1}^A &       \\
};

\draw[-> ,font=\small](M-1-1.east |- M-1-2) -- (M-1-2) 
	node[midway,above] {$2X_n$};
\draw[-> ,font=\small](M-1-2.east |- M-1-3) -- (M-1-3)
	;
\draw[-> ,font=\small](M-1-3.east |- M-1-4) -- (M-1-4)
	;
\draw[-> ,font=\small](M-1-4.east |- M-1-5) -- (M-1-5)
	node[midway,above] {$2X_n$}; 
\draw[-> ,font=\small](M-1-5) -- (M-1-6)
	node[midway,above] {$F$}; 
\draw[-> ,font=\small](M-1-5) -- (M-3-6)
	;
\draw[<->](M-1-6.east |- M-1-7) -- (M-1-7)
	;
\draw[<->](M-3-6) -- (M-1-7)
	;

\draw[-> ,font=\small](M-3-1.east |- M-3-2) -- (M-3-2) 
	node[midway,above] {$2X_n$};
\draw[-> ,font=\small](M-3-2.east |- M-3-3) -- (M-3-3)
	;
\draw[-> ,font=\small](M-3-3.east |- M-3-4) -- (M-3-4)
	;
\draw[-> ,font=\small](M-3-4.east |- M-3-5) -- (M-3-5)
	node[midway,above] {$2X_n$};
\draw[-> ,font=\small](M-3-5) -- (M-1-6)
	; 
\draw[-> ,font=\small](M-3-5) -- (M-3-6)
	;
\end{tikzpicture}
\end{center}

where $ F =
\begin{bmatrix}
(n|n-1) & -(n|n-1)i \\
(n|n+1) &  (n|n+1)i
\end{bmatrix}
=
\begin{bmatrix}
(n|n-1) & 0 \\
0       & (n|n+1)
\end{bmatrix}
\begin{bmatrix}
1 & -i \\
1 &  i
\end{bmatrix}
$
and where $\nabla$ denotes the rest of the complex $(Q', \d_i)$ containing the modules complement to $Q'_{G_i}$.
In particular, for $i=0$, $\nabla$ is the part of $(Q', \d_0)$ that contains the module $Q'_{W_0}$;
if $i\geq 1$, then this case is only possible when $i=s-1$ and $\nabla$ is the part of $(Q', \d_0)$ that contains the module $Q'_{V_{s-1}}$.
Nevertheless, the construction of $\mu_i$ below depends only on the form above, so the construction will work for all $i$.

Denote $M =
\begin{bmatrix}
1 & -i \\
1 &  i
\end{bmatrix}$ 
and $I$ the $2\times 2$ identity matrix.
We define $\mu_i|_{Q'_{G_i}}$ to be the following map in {\color{blue}blue}, with $\mu_i$ acting as the identity map on $Q'_{V_i}$:

\begin{center}
\begin{tikzpicture}[scale = 0.8][>=stealth, baseline]
\matrix (M) [matrix of math nodes, column sep=7mm]
{
P_n^A  & P_n^A  & \cdots & P_n^A  & P_n^A  & P_{n-1}^A & \nabla\\
\oplus & \oplus &        & \oplus & \oplus & \oplus    &       & = (Q', \d_i)\\
P_n^A  & P_n^A  & \cdots & P_n^A  & P_n^A  & P_{n+1}^A &       \\
{}     &        &        &        &        &           &       \\
{}     &        &        &        &        &           &       \\
{}     &        &        &        &        &           &       \\
P_n^A  & P_n^A  & \cdots & P_n^A  & P_n^A  & P_{n-1}^A & \nabla\\
\oplus & \oplus &        & \oplus & \oplus & \oplus    &       & =: (Q', \d_{i+1})\\
P_n^A  & P_n^A  & \cdots & P_n^A  & P_n^A  & P_{n+1}^A &       \\
};

\draw[-> ,font=\small](M-1-1.east |- M-1-2) -- (M-1-2) 
	node[midway,above] {$2X_n$};
\draw[-> ,font=\small](M-1-2.east |- M-1-3) -- (M-1-3)
	;
\draw[-> ,font=\small](M-1-3.east |- M-1-4) -- (M-1-4)
	;
\draw[-> ,font=\small](M-1-4.east |- M-1-5) -- (M-1-5)
	node[midway,above] {$2X_n$}; 
\draw[-> ,font=\small](M-1-5) -- (M-1-6)
	; 
\draw[-> ,font=\small](M-1-5) -- (M-3-6)
	;
\draw[<->](M-1-6.east |- M-1-7) -- (M-1-7)
	;
\draw[<->](M-3-6) -- (M-1-7)
	;

\draw[-> ,font=\small](M-3-1.east |- M-3-2) -- (M-3-2) 
	node[midway,above] {$2X_n$};
\draw[-> ,font=\small](M-3-2.east |- M-3-3) -- (M-3-3)
	;
\draw[-> ,font=\small](M-3-3.east |- M-3-4) -- (M-3-4)
	;
\draw[-> ,font=\small](M-3-4.east |- M-3-5) -- (M-3-5)
	node[midway,above] {$2X_n$};
\draw[-> ,font=\small](M-3-5) -- (M-1-6)
	; 
\draw[-> ,font=\small](M-3-5) -- (M-3-6)
	;

\draw[-> ,font=\small](M-7-1.east |- M-7-2) -- (M-7-2) 
	node[midway,above] {$X_n$};
\draw[-> ,font=\small](M-7-2.east |- M-7-3) -- (M-7-3)
	;
\draw[-> ,font=\small](M-7-3.east |- M-7-4) -- (M-7-4)
	;
\draw[-> ,font=\small](M-7-4.east |- M-7-5) -- (M-7-5)
	node[midway,above] {$X_n$}; 
\draw[-> ,font=\small](M-7-5) -- (M-7-6)
	;
\draw[<->](M-7-6.east |- M-7-7) -- (M-7-7)
	;
\draw[<->](M-9-6) -- (M-7-7)
	;

\draw[-> ,font=\small](M-9-1.east |- M-9-2) -- (M-9-2) 
	node[midway,above] {$X_n$};
\draw[-> ,font=\small](M-9-2.east |- M-9-3) -- (M-9-3)
	;
\draw[-> ,font=\small](M-9-3.east |- M-9-4) -- (M-9-4)
	;
\draw[-> ,font=\small](M-9-4.east |- M-9-5) -- (M-9-5)
	node[midway,above] {$X_n$};
\draw[-> ,font=\small](M-9-5) -- (M-9-6)
	; 

\draw[-> ,font=\small, color=blue](M-3-1) -- (M-7-1) 
	node[midway,left,scale=0.8] {$2^{k-1}M$};
\draw[-> ,font=\small, color=blue](M-3-2) -- (M-7-2) 
	node[midway,right,scale=0.8] {$2^{k-2}M$};
\draw[-> ,font=\small, color=blue](M-3-4) -- (M-7-4) 
	node[midway,left,scale=0.8] {$2M$};
\draw[-> ,font=\small, color=blue](M-3-5) -- (M-7-5) 
	node[midway,left,scale=0.8] {$M$};
\draw[-> ,font=\small, color=blue](M-3-6) -- (M-7-6) 
	node[midway,left,scale=0.8] {$I$};
\end{tikzpicture}
\end{center}

The black arrows in the last two rows shows the differential component $\d_{{i+1}_{G_i}}$ in $\d_{i+1}$, induced by the conjugation of $\mu_i^{-1}$.
Hence, the required condition $(*)$ follows directly.

Now consider when $k<0$.
As before, we have that $\d_{i_{G_i}} = \d_{0_{G_i}}$ for all $i$, so the analysis of the part of $(Q', \d_i)$ that contains $(Q'_{G_i}, \d_{i_{G_i}})$ will be the same. 
Similarly the construction of $\mu_i$ below will work for both cases.
We draw the part of $(Q', \d_i)$ that contains $(Q'_{G_i}, \d_{i_{G_i}}) = (Q'_{G_i}, \d_{0_{G_i}})$ as follows:

\begin{center}
\begin{tikzpicture} [scale= 0.8][>=stealth, baseline]
\matrix (M) [matrix of math nodes, column sep=7mm]
{
       & P_{n+1}^A & P_n^A  & P_n^A  & \cdots & P_n^A  & P_n^A  \\
       & \oplus    & \oplus & \oplus &        & \oplus & \oplus & =(Q', \d_i)\\
\nabla & P_{n-1}^A & P_n^A  & P_n^A  & \cdots & P_n^A  & P_n^A  \\
};

\draw[-> ,font=\small](M-1-2.east |- M-1-3) -- (M-1-3) 
	node[midway,above] {$E$};
\draw[-> ,font=\small](M-1-2) -- (M-3-3) 
	;
\draw[-> ,font=\small](M-1-3.east |- M-1-4) -- (M-1-4) 
	node[midway,above] {$2X_n$};
\draw[-> ,font=\small](M-1-4.east |- M-1-5) -- (M-1-5)
	;
\draw[-> ,font=\small](M-1-5.east |- M-1-6) -- (M-1-6)
	;
\draw[-> ,font=\small](M-1-6.east |- M-1-7) -- (M-1-7)
	node[midway,above] {$2X_n$};
\draw[<->](M-3-1) -- (M-1-2)
	;

\draw[-> ,font=\small](M-3-2) -- (M-1-3)
	;
\draw[-> ,font=\small](M-3-2) -- (M-3-3)
	;
\draw[-> ,font=\small](M-3-3.east |- M-3-4) -- (M-3-4) 
	node[midway,above] {$2X_n$};
\draw[-> ,font=\small](M-3-4.east |- M-3-5) -- (M-3-5)
	;
\draw[-> ,font=\small](M-3-5.east |- M-3-6) -- (M-3-6)
	;
\draw[-> ,font=\small](M-3-6.east |- M-3-7) -- (M-3-7)
	node[midway,above] {$2X_n$};
\draw[<->](M-3-1) -- (M-3-2)
	;

\end{tikzpicture}
\end{center}

with $
E = 
\begin{bmatrix}
-(n+1|n)i &  (n-1|n)i \\
(n+1|n)   &  (n-1|n)
\end{bmatrix}
$ 
and where $\nabla$ denotes the rest of the complex $(Q', \d_i)$ containing the modules complement to $Q'_{G_i}$.
Denote $N =
\begin{bmatrix}
i  & 1 \\
-i & 1
\end{bmatrix}$ 
and $I$ the $2\times 2$ identity matrix.
Note that 

\[
N
\begin{bmatrix}
-(n+1|n)i &  (n-1|n)i \\
(n+1|n)   &  (n-1|n)
\end{bmatrix}
=
2
\begin{bmatrix}
(n+1|n) &   0      \\
0       &  (n-1|n)
\end{bmatrix}.
\]

We define $\mu_i|_{Q'_{G_i}}$ to be the following map in {\color{blue}blue}, with $\mu_i$ act as the identity map on $Q'_{V_i}$:

\begin{center}
\begin{tikzpicture}[scale=0.8][>=stealth, baseline]
\matrix (M) [matrix of math nodes, column sep=7mm]
{
       & P_{n+1}^A & P_n^A  & P_n^A  & \cdots & P_n^A  & P_n^A  \\
       & \oplus    & \oplus & \oplus &        & \oplus & \oplus & =(Q', \d_i)\\
\nabla & P_{n-1}^A & P_n^A  & P_n^A  & \cdots & P_n^A  & P_n^A  \\
{}     &           &        &        &        &        &        \\
{}     &           &        &        &        &        &        \\
{}     &           &        &        &        &        &        \\
       & P_{n+1}^A & P_n^A  & P_n^A  & \cdots & P_n^A  & P_n^A  \\
       & \oplus    & \oplus & \oplus &        & \oplus & \oplus & =:(Q', \d_{i+1})\\
\nabla & P_{n-1}^A & P_n^A  & P_n^A  & \cdots & P_n^A  & P_n^A .\\
};

\draw[-> ,font=\small](M-1-2.east |- M-1-3) -- (M-1-3) 
	;
\draw[-> ,font=\small](M-1-2) -- (M-3-3) 
	;
\draw[-> ,font=\small](M-1-3.east |- M-1-4) -- (M-1-4) 
	node[midway,above] {$2X_n$};
\draw[-> ,font=\small](M-1-4.east |- M-1-5) -- (M-1-5)
	;
\draw[-> ,font=\small](M-1-5.east |- M-1-6) -- (M-1-6)
	;
\draw[-> ,font=\small](M-1-6.east |- M-1-7) -- (M-1-7)
	node[midway,above] {$2X_n$};

\draw[-> ,font=\small](M-3-2) -- (M-1-3)
	;
\draw[-> ,font=\small](M-3-2) -- (M-3-3)
	;
\draw[-> ,font=\small](M-3-3.east |- M-3-4) -- (M-3-4) 
	node[midway,above] {$2X_n$};
\draw[-> ,font=\small](M-3-4.east |- M-3-5) -- (M-3-5)
	;
\draw[-> ,font=\small](M-3-5.east |- M-3-6) -- (M-3-6)
	;
\draw[-> ,font=\small](M-3-6.east |- M-3-7) -- (M-3-7)
	node[midway,above] {$2X_n$};
\draw[<->](M-3-1.east |- M-3-2) -- (M-3-2);
\draw[<->](M-3-1) -- (M-1-2); 

\draw[-> ,font=\small](M-7-2.east |- M-7-3) -- (M-7-3) 
	node[midway,above,scale=0.7] {$(n+1|n)$};
\draw[-> ,font=\small](M-7-3.east |- M-7-4) -- (M-7-4) 
	node[midway,above] {$X_n$};
\draw[-> ,font=\small](M-7-4.east |- M-7-5) -- (M-7-5)
	;
\draw[-> ,font=\small](M-7-5.east |- M-7-6) -- (M-7-6)
	;
\draw[-> ,font=\small](M-7-6.east |- M-7-7) -- (M-7-7)
	node[midway,above] {$X_n$};

\draw[-> ,font=\small](M-9-2) -- (M-9-3)
	node[midway,above,scale=0.7] {$(n-1|n)$};
\draw[-> ,font=\small](M-9-3.east |- M-9-4) -- (M-9-4) 
	node[midway,above] {$X_n$};
\draw[-> ,font=\small](M-9-4.east |- M-9-5) -- (M-9-5)
	;
\draw[-> ,font=\small](M-9-5.east |- M-9-6) -- (M-9-6)
	;
\draw[-> ,font=\small](M-9-6.east |- M-9-7) -- (M-9-7)
	node[midway,above] {$X_n$};
\draw[<->](M-9-1.east |- M-9-2) -- (M-9-2);
\draw[<->](M-9-1) -- (M-7-2); 

\draw[-> ,font=\small, color=blue](M-3-2) -- (M-7-2) 
	node[midway,left,scale=0.8] {$I$};
\draw[-> ,font=\small, color=blue](M-3-3) -- (M-7-3) 
	node[midway,right,scale=0.8] {$2^{-1}N$};
\draw[-> ,font=\small, color=blue](M-3-4) -- (M-7-4) 
	node[midway,right,scale=0.8] {$2^{-2}N$};
\draw[-> ,font=\small, color=blue](M-3-6) -- (M-7-6) 
	node[midway,left,scale=0.8] {$2^{k+1}N$};
\draw[-> ,font=\small, color=blue](M-3-7) -- (M-7-7) 
	node[midway,left,scale=0.8] {$2^{k}N$};

\end{tikzpicture}
\end{center}

The black arrows in the last two rows shows the differential component $\d_{{i+1}_{G_i}}$ in $\d_{i+1}$, induced by the conjugation of $\mu_i^{-1}$.
It follows directly that the required condition $(*)$ is satisfied.

\item $\check{g}_i$ is of Type II$'_k$ for $k \in \Z$: \\
As in Type III$'_k$, we have that $\d_{i_{G_i}} = \d_{0_{G_i}}$ for all $i$, so the part of $(Q', \d_i)$ that contains $(Q'_{G_i}, \d_{i_{G_i}})$ will be of the same form. 
It follows similarly that the construction of $\mu_i$ below will work for all $i$.

We shall start with $k=0$.
We draw the part of $(Q', \d_i)$ that contains $(Q'_{G_i}, \d_{i_{G_i}}) = (Q'_{G_i}, \d_{0_{G_i}})$ as follows:

\begin{center}
\begin{tikzpicture}[>=stealth, baseline]
\matrix (M) [matrix of math nodes, column sep=7mm]
{
\nabla & P_{n+1}^A & P_{n+1}^A & \nabla' \\
       & \oplus    & \oplus    &       & = (Q', \d_i) \\
       & P_{n-1}^A & P_{n-1}^A &        \\
};

\draw[<->](M-1-1) -- (M-1-2); 
\draw[-> ,font=\small](M-1-2.east |- M-1-3) -- (M-1-3)
	node[midway,above] {$X_{n+1}$};
\draw[<->](M-1-3) -- (M-1-4); 

\draw[<->](M-1-1) -- (M-3-2); 
\draw[-> ,font=\small](M-1-2.east |- M-3-3) -- (M-3-3)
	node[midway,above] {$X_{n-1}$};
\draw[<->](M-3-3) -- (M-1-4); 
\end{tikzpicture}
\end{center}

where either $\nabla$ or $\nabla'$ is the part of $(Q', \d_i)$ that contains the module $Q'_{V_i}$ and the other contains $Q'_{W_i}$.
But in this case we already have that $\d_{i_{G_i}} = \d'_{G_i}$, thus we just choose $\mu_i$ to be the identity map.

For $k > 0$ ,we draw the part of $(Q', \d_i)$ containing $(Q'_{G_i}, \d_{i_{G_i}}) = (Q'_{G_i}, \d_{0_{G_i}})$ as follows:

\begin{center}
\begin{tikzpicture}[>=stealth, baseline]
\matrix (M) [matrix of math nodes, column sep=7mm]
{
P_n^A  & P_n^A  & P_n^A  & \cdots & P_n^A  & P_n^A  & P_{n-1}^A & \nabla \\
\oplus & \oplus & \oplus &        & \oplus & \oplus & \oplus    & \\
P_n^A  & P_n^A  & P_n^A  & \cdots & P_n^A  & P_n^A  & P_{n+1}^A &        \\
       & \oplus & \oplus &        & \oplus & \oplus &       &  &=(Q', \d_i)\\ 
       & P_n^A  & P_n^A  & \cdots & P_n^A  & P_{n-1}^A & \nabla' \\
       & \oplus & \oplus &        & \oplus & \oplus    &        \\
       & P_n^A  & P_n^A  & \cdots & P_n^A  & P_{n+1}^A &        \\
};

\draw[-> ,font=\small](M-1-1) -- (M-1-2) 
	node[midway,above] {$2X_n$};
\draw[-> ,font=\small](M-1-1) -- (M-5-2) 
	node[midway,below] {$2X_n$};
\draw[-> ,font=\small](M-1-2) -- (M-1-3) 
	node[midway,above] {$2X_n$};
\draw[-> ,font=\small](M-1-3) -- (M-1-4) 
	;
\draw[-> ,font=\small](M-1-4) -- (M-1-5)
	;
\draw[-> ,font=\small](M-1-5) -- (M-1-6)
	node[midway,above] {$2X_n$};
\draw[-> ,font=\small](M-1-6) -- (M-1-7)
	node[midway, above] {$F$};
\draw[-> ,font=\small](M-1-6) -- (M-3-7)
	;
\draw[<-> ,font=\small](M-1-7) -- (M-1-8)
	;
\draw[<-> ,font=\small](M-3-7) -- (M-1-8)
	;

\draw[-> ,font=\small](M-3-1) -- (M-3-2)
	node[midway,above] {$2X_n$};
\draw[-> ,font=\small](M-3-2) -- (M-3-3)
	node[midway,above] {$2X_n$};
\draw[-> ,font=\small](M-3-1) -- (M-7-2)
	node[midway,above] {$2X_n$};
\draw[-> ,font=\small](M-3-3) -- (M-3-4) 
	;
\draw[-> ,font=\small](M-3-4) -- (M-3-5)
	;
\draw[-> ,font=\small](M-3-5) -- (M-3-6)
	node[midway,above] {$2X_n$};
\draw[-> ,font=\small](M-3-6) -- (M-1-7)
	;
\draw[-> ,font=\small](M-3-6) -- (M-3-7)
	;

\draw[-> ,font=\small](M-5-2) -- (M-5-3)
	node[midway,above] {$2X_n$};
\draw[-> ,font=\small](M-5-3) -- (M-5-4) 
	;
\draw[-> ,font=\small](M-5-4) -- (M-5-5)
	;
\draw[-> ,font=\small](M-5-5) -- (M-5-6)
	node[midway, above] {$F$};
\draw[-> ,font=\small](M-5-5) -- (M-7-6)
	;
\draw[<-> ,font=\small](M-5-6) -- (M-5-7)
	;
\draw[<-> ,font=\small](M-7-6) -- (M-5-7)
	;

\draw[-> ,font=\small](M-7-2) -- (M-7-3)
	node[midway,above] {$2X_n$};
\draw[-> ,font=\small](M-7-3) -- (M-7-4) 
	;
\draw[-> ,font=\small](M-7-4) -- (M-7-5)
	;
\draw[-> ,font=\small](M-7-5) -- (M-5-6)
	;
\draw[-> ,font=\small](M-7-5) -- (M-7-6)
	;
\end{tikzpicture}
\end{center}

where either $\nabla$ or $\nabla'$ is the part of $(Q', \d_i)$ that contains the module $Q'_{V_i}$ and the other contains $Q'_{W_i}$.
We define $\mu_i|_{Q'_{G_i}}$ to be the following map in {\color{blue}blue}:

\begin{center}
\begin{tikzpicture}[>=stealth, baseline]
\matrix (M) [matrix of math nodes, column sep=7mm]
{
P_n^A  & P_n^A  & P_n^A  & \cdots & P_n^A  & P_n^A  & P_{n-1}^A & \nabla \\
\oplus & \oplus & \oplus &        & \oplus & \oplus & \oplus    \\
P_n^A  & P_n^A  & P_n^A  & \cdots & P_n^A  & P_n^A  & P_{n+1}^A &        \\
       & \oplus & \oplus &        & \oplus & \oplus &       & & =(Q', \d_i)\\ 
       & P_n^A  & P_n^A  & \cdots & P_n^A  & P_{n-1}^A & \nabla'\\
       & \oplus & \oplus &        & \oplus & \oplus    &       \\
       & P_n^A  & P_n^A  & \cdots & P_n^A  & P_{n+1}^A &       \\
{}     &        &        &        &        &        &          \\
{}     &        &        &        &        &        &          \\ {}     &        &        &        &        &        &          \\
P_n^A  & P_n^A  & P_n^A  & \cdots & P_n^A  & P_n^A  & P_{n-1}^A & \nabla\\
\oplus & \oplus & \oplus &        & \oplus & \oplus & \oplus    \\
P_n^A  & P_n^A  & P_n^A  & \cdots & P_n^A  & P_n^A  & P_{n+1}^A &       \\
       & \oplus & \oplus &        & \oplus & \oplus &       & & =: (Q', \d_{i+1})\\ 
       & P_n^A  & P_n^A  & \cdots & P_n^A  & P_{n-1}^A & \nabla'\\
       & \oplus & \oplus &        & \oplus & \oplus    &       \\
       & P_n^A  & P_n^A  & \cdots & P_n^A  & P_{n+1}^A &       \\
};

\draw[-> ,font=\small](M-1-1) -- (M-1-2) 
	node[midway,above] {$2X_n$};
\draw[-> ,font=\small](M-1-1) -- (M-5-2) 
	node[midway,below] {$2X_n$};
\draw[-> ,font=\small](M-1-2) -- (M-1-3) 
	node[midway,above] {$2X_n$};
\draw[-> ,font=\small](M-1-3) -- (M-1-4) 
	;
\draw[-> ,font=\small](M-1-4) -- (M-1-5)
	;
\draw[-> ,font=\small](M-1-5) -- (M-1-6)
	node[midway,above] {$2X_n$};
\draw[-> ,font=\small](M-1-6) -- (M-1-7)
	node[midway, above] {$F$};
\draw[-> ,font=\small](M-1-6) -- (M-3-7)
	;
\draw[<-> ,font=\small](M-1-7) -- (M-1-8)
	;
\draw[<-> ,font=\small](M-3-7) -- (M-1-8)
	;

\draw[-> ,font=\small](M-3-1) -- (M-3-2)
	node[midway,above] {$2X_n$};
\draw[-> ,font=\small](M-3-2) -- (M-3-3)
	node[midway,above] {$2X_n$};
\draw[-> ,font=\small](M-3-1) -- (M-7-2)
	node[midway,above] {$2X_n$};
\draw[-> ,font=\small](M-3-3) -- (M-3-4) 
	;
\draw[-> ,font=\small](M-3-4) -- (M-3-5)
	;
\draw[-> ,font=\small](M-3-5) -- (M-3-6)
	node[midway,above] {$2X_n$};
\draw[-> ,font=\small](M-3-6) -- (M-1-7)
	;
\draw[-> ,font=\small](M-3-6) -- (M-3-7)
	;

\draw[-> ,font=\small](M-5-2) -- (M-5-3)
	node[midway,above] {$2X_n$};
\draw[-> ,font=\small](M-5-3) -- (M-5-4) 
	;
\draw[-> ,font=\small](M-5-4) -- (M-5-5)
	;
\draw[-> ,font=\small](M-5-5) -- (M-5-6)
	node[midway, above] {$F$};
\draw[-> ,font=\small](M-5-5) -- (M-7-6)
	;
\draw[<-> ,font=\small](M-5-6) -- (M-5-7)
	;
\draw[<-> ,font=\small](M-7-6) -- (M-5-7)
	;

\draw[-> ,font=\small](M-7-2) -- (M-7-3)
	node[midway,above] {$2X_n$};
\draw[-> ,font=\small](M-7-3) -- (M-7-4) 
	;
\draw[-> ,font=\small](M-7-4) -- (M-7-5)
	;
\draw[-> ,font=\small](M-7-5) -- (M-5-6)
	;
\draw[-> ,font=\small](M-7-5) -- (M-7-6)
	;

\draw[-> ,font=\small](M-11-1) -- (M-11-2) 
	node[midway,above] {$X_n$};
\draw[-> ,font=\small](M-11-1) -- (M-15-2) 
	node[midway,below] {$X_n$};
\draw[-> ,font=\small](M-11-2) -- (M-11-3) 
	node[midway,above] {$X_n$};
\draw[-> ,font=\small](M-11-3) -- (M-11-4) 
	;
\draw[-> ,font=\small](M-11-4) -- (M-11-5)
	;
\draw[-> ,font=\small](M-11-5) -- (M-11-6)
	node[midway,above] {$X_n$};
\draw[-> ,font=\small](M-11-6) -- (M-11-7)
	node[midway,above,scale=0.7] {$(n|n-1)$};
\draw[<-> ,font=\small](M-11-7) -- (M-11-8)
	;
\draw[<-> ,font=\small](M-13-7) -- (M-11-8)
	;

\draw[-> ,font=\small](M-13-1) -- (M-13-2)
	node[midway,above] {$X_n$};
\draw[-> ,font=\small](M-13-2) -- (M-13-3)
	node[midway,above] {$X_n$};
\draw[-> ,font=\small](M-13-1) -- (M-17-2)
	node[midway,left] {$X_n$};
\draw[-> ,font=\small](M-13-3) -- (M-13-4) 
	;
\draw[-> ,font=\small](M-13-4) -- (M-13-5)
	;
\draw[-> ,font=\small](M-13-5) -- (M-13-6)
	node[midway,above] {$X_n$};
\draw[-> ,font=\small](M-13-6) -- (M-13-7)
	node[midway,above,scale=0.7] {$(n|n+1)$};

\draw[-> ,font=\small](M-15-2) -- (M-15-3)
	node[midway,above] {$X_n$};
\draw[-> ,font=\small](M-15-3) -- (M-15-4) 
	;
\draw[-> ,font=\small](M-15-4) -- (M-15-5)
	;
\draw[-> ,font=\small](M-15-5) -- (M-15-6)
	node[midway,above,scale=0.7] {$(n|n-1)$};
\draw[<-> ,font=\small](M-15-6) -- (M-15-7)
	;
\draw[<-> ,font=\small](M-17-6) -- (M-15-7)
	;

\draw[-> ,font=\small](M-17-2) -- (M-17-3)
	node[midway,above] {$X_n$};
\draw[-> ,font=\small](M-17-3) -- (M-17-4) 
	;
\draw[-> ,font=\small](M-17-4) -- (M-17-5)
	;
\draw[-> ,font=\small](M-17-5) -- (M-17-6)
	node[midway,above,scale=0.7] {$(n|n+1)$};

\draw[-> ,font=\small, color=blue](M-3-1) -- (M-11-1)
	node[midway,left] {$2M$};
\draw[-> ,font=\small, color=blue](M-7-2) -- (M-11-2)
	node[midway,left,scale=0.6] {$
		\begin{bmatrix}
		M & 0 \\
		0 & M \\
		\end{bmatrix}
		$};
\draw[-> ,font=\small, color=blue](M-7-3) -- (M-11-3)
	node[midway, left,scale=0.6] {$ 2^{-1}
		\begin{bmatrix}
		M & 0 \\
		0 & M \\
		\end{bmatrix}
		$};
\draw[-> ,font=\small, color=blue](M-7-5) -- (M-11-5)
	node[midway, left,scale=0.6] {$2^{-(k-2)}
		\begin{bmatrix}
		M & 0 \\
		0 & M \\
		\end{bmatrix}
		$};
\draw[-> ,font=\small, color=blue](M-7-6) -- (M-11-6)
	node[midway, right,scale=0.6] {$
		\begin{bmatrix}
		2^{-(k-1)}M & 0 \\
		0 & 2^{-(k-2)}I \\
		\end{bmatrix}
		$};
\draw[-> ,font=\small, color=blue](M-3-7) to [out=-30,in=30]node[midway,right] {$2^{-(k-1)}I$}
	(M-11-7);

\end{tikzpicture}
\end{center}

For the rest of the modules in $Q'$, $\mu_i$ sends $v$ to $2^{-(k-1)}v$ (resp. $v$ to $2^{-(k-2)}v$) for any $v$ belonging to the modules in $\nabla$ (resp. $\nabla'$).
The black arrows in the last four rows shows the differential component $\d_{{i+1}_{G_i}}$ in $\d_{i+1}$, induced by the conjugation of $\mu_i^{-1}$.
It is easy to see that the required condition $(*)$ is satisfied.
The construction for $k < 0$ is similar, using the map $N$ in place of $M$.

\item $\check{g}_i$ is of Type V': \\
Recall the definitions of $g_i, c_i$ and $r_i$ from the paragraph \nameref*{slicing} and recall the subsets of crossings of $\mathfrak{m}(\check{c})$ as defined in \ref{subset crossing}.
Let $h_i$ be the connected component of $(c\setminus g_i) \cup (g_i \cap d_2)$ that contains the point $m$, so $g_i$ and $h_i$ intersects at the point $r_i$.
To illustrate, in \cref{notend01}, $h_0 = c_0$ and $h_1 = c_0 \cup g_0 \cup c_1$ whereas in \cref{end01}, $h_0 = \emptyset,$ $h_1 = g_0 \cup c_1,$ and $h_2 = g_0 \cup c_1 \cup g_1 \cup c_2.$ 
Let $\mathfrak{m}(\check{h}_i) =  \wt{h}_i \coprod \undertilde{h}_i$ and $\mathfrak{m}(r_i) = \wt{r}_i \coprod \undertilde{r}_i$ so that the curves of $\mathfrak{m}(\check{g}_i)$ and $ \wt{h_i}$ intersect at the point $\wt{r}_i \in \theta_{n+1}$ and the curves of $\mathfrak{m}(\check{g}_i)$ and $\undertilde{h_i}$ intersect at the point $\undertilde{r_i}\in \theta_{n-1}$.
Now recall the decomposition of $Q'$ given in \ref{new R decomp}.
By definition, $\{ \wt{r}_i, \undertilde{r}_i \}$ is the subset of crossings $R_i \subseteq G_i$.
We get that

\begin{equation*}
P^A(\wt{r_i}) \oplus P^A(\undertilde{r_i}) = Q'_{R_i}
\end{equation*}

Now first consider when $i=0$.
Then $g_0$ is of this type only when $m \in \Delta\setminus \{0, 1\}$ since $g_0 \cap \{1 \} = \emptyset$, so we have $Q'_{V_0 \oplus R_0} = Q'_{C_0}$.
Furthermore, the equation \ref{C diff component} implies that $\d_{0_{C_0}} = \d'_{C_0}$, giving us

\begin{equation*}
(Q'_{V_0}\oplus Q'_{R_0}, \d_{i_{V_0 \oplus R_0}}) 
= (Q'_{C_0}, \d_{i_{C_0}}) 
= (Q'_{C_0}, \d'_{C_0}) 
= L_A(\mathfrak{m}(\check{c}_0))
\end{equation*}

with the last equality following from the definition of $(Q', \d') = L_A(\mathfrak{m}(\check{c}))$.
By definition of $h_i$, it follows that $c_0 = h_0$, so we can conclude that

\begin{equation*}
(Q'_{V_0}\oplus Q'_{R_0}, \d_{i_{V_0 \oplus R_0}}) = L_A(\mathfrak{m}(\check{h}_0))
= L_A(\wt{h_0}) \oplus L_A(\undertilde{h_0})
\end{equation*}

where the last equaltiy follows from the fact that $\mathfrak{m}(\check{h}_i) =  \wt{h_i} \coprod \undertilde{h_i}$.

Now consider when $i \geq 1$.
In this case we are given $\d_i$ with $\d_i$ satisfying property $(*)$, so we can conclude that
\begin{align*}
\d_{i_{V_i}} = \d'_{V_i}, \quad
\d_{i_{V_i R_i}} = \d'_{V_i R_i}, \text{ and }
\d_{i_{R_i V_i}} = \d'_{R_i V_i}.
\end{align*}

Thus we have that $\d_{i_{V_i \oplus R_i}} = \d'_{V_i \oplus R_i}$, giving us

\begin{align*}
(Q'_{V_i}\oplus Q'_{R_i}, \d_{i_{V_i \oplus R_i}}) 
= (Q'_{V_i}\oplus Q'_{R_i}, \d'_{V_i \oplus R_i}) 
= L_A(\mathfrak{m}(\check{h}_i)) 
= L_A(\wt{h_i}) \oplus L_A(\undertilde{h_i}),
\end{align*}

where the second equality follows from the definition of $(Q', \d') = L_A(\mathfrak{m}(\check{c}))$ and the third equality follows from the fact that $\mathfrak{m}(\check{h}_i) =  \wt{h_i} \coprod \undertilde{h_i}$.

Thus for all $i$, we obtain

\begin{equation} \label{VR splits}
(Q'_{V_i}\oplus Q'_{R_i}, \d_{i_{V_i \oplus R_i}}) = L_A(\wt{h_i}) \oplus L_A(\undertilde{h_i}).
\end{equation}

Furthermore, note that $\wt{h_i}$ and $\undertilde{h_i}$ contain the points $\wt{r}_i$ and $\undertilde{r_i}$ respectively, so $L_A(\wt{h}_i)$ contains $P^A(\wt{r_i})$ as a submodule and $L_A(\undertilde{h_i})$ contains $P^A(\undertilde{r_i})$ as a submodule.
Let us now understand the relation between $(Q'_{G_i}, \d_{i_{G_i}})$, $(Q'_{V_i}\oplus Q'_{R_i}, \d_{i_{V_i \oplus R_i}})$ and $(Q', \d_i)$.
As in Type III$'_k$, we have $\d_{i_{G_i}} = \d_{0_{G_i}}$ for all $i$.
So the the part of $(Q', \d_i)$ that contains $(Q'_{G_i}, \d_{i_{G_i}})$ will be the same for all $i$, and it has either of the following two forms:

\begin{center}
\begin{tikzpicture}[>=stealth, baseline]
\matrix (M) [matrix of math nodes, column sep=7mm]
{
       & P^A(\wt{r_i}) & P_n^A  & P_{n-1}^A & \nabla' \\
       & \oplus    & \oplus & \oplus    &        & =(Q', \d_i)\\
\nabla & P^A(\undertilde{r_i}) & P_n^A  & P_{n+1}^A &        \\
};

\draw[-> ,font=\small](M-1-2) -- (M-1-3) 
	node[midway,above] {$E$};
\draw[-> ,font=\small](M-1-2) -- (M-3-3) 
	;
\draw[-> ,font=\small](M-1-3) -- (M-1-4) 
	node[midway,above] {$F$};
\draw[-> ,font=\small](M-1-3) -- (M-3-4) 
	;
\draw[<->](M-1-4) -- (M-1-5)
	;
\draw[<-> ,font=\small](M-3-4) -- (M-1-5)
	;

\draw[<->](M-3-1) -- (M-1-2)
	;
\draw[<->](M-3-1) -- (M-3-2)
	;
\draw[-> ,font=\small](M-3-2) -- (M-3-3) 
	;
\draw[-> ,font=\small](M-3-2) -- (M-1-3) 
	;
\draw[-> ,font=\small](M-3-3) -- (M-3-4)
	;
\draw[-> ,font=\small](M-3-3) -- (M-1-4)
	;

\draw[BurntOrange] (M-3-1.south west) rectangle (M-1-2.north east)
	node[midway, xshift = -3cm] 
  	{$(Q'_{V_i}\oplus Q'_{R_i}, \d_{i_{V_i \oplus R_i}})$};
  	
\draw[OliveGreen] (M-1-2.north west)++(0,5pt) rectangle 
	(M-3-4.south east)
		node[midway, xshift=3.2cm , yshift = -22pt] 
  		{$(Q'_{G_i}, \d_{i_{G_i}})$};
\end{tikzpicture}
\end{center}

{\LARGE or}

\begin{center}
\begin{tikzpicture}[>=stealth, baseline]
\matrix (M) [matrix of math nodes, column sep=7mm]
{
       & P^A_{n+1} & P_n^A  & P^A(\undertilde{r_i}) & \nabla \\
       & \oplus    & \oplus & \oplus    &   &  & & &   & =(Q', \d_i)\\
\nabla' & P^A_{n-1} & P_n^A  & P^A(\wt{r_i}) &        \\
};

\draw[-> ,font=\small](M-1-2) -- (M-1-3) 
	node[midway,above] {$E$};
\draw[-> ,font=\small](M-1-2) -- (M-3-3) 
	;
\draw[-> ,font=\small](M-1-3) -- (M-1-4) 
	node[midway,above] {$F$};
\draw[-> ,font=\small](M-1-3) -- (M-3-4) 
	;
\draw[<->](M-1-4) -- (M-1-5)
	;
\draw[<-> ,font=\small](M-3-4) -- (M-1-5)
	;

\draw[<->](M-3-1) -- (M-1-2)
	;
\draw[<->](M-3-1) -- (M-3-2)
	;
\draw[-> ,font=\small](M-3-2) -- (M-3-3) 
	;
\draw[-> ,font=\small](M-3-2) -- (M-1-3) 
	;
\draw[-> ,font=\small](M-3-3) -- (M-3-4)
	;
\draw[-> ,font=\small](M-3-3) -- (M-1-4)
	;

\draw[BurntOrange] (M-3-4.south west) rectangle (M-1-5.north east)
	node[midway, xshift = 3cm, yshift = 26pt] 
  	{$(Q'_{V_i}\oplus Q'_{R_i}, \d_{i_{V_i \oplus R_i}})$};
  	
\draw[OliveGreen] (M-1-2.north west)++(0,5pt) rectangle 
	(M-3-4.south east)
		node[midway, xshift=-3.2cm, yshift = 22pt] 
  		{$(Q'_{G_i}, \d_{i_{G_i}})$};
\end{tikzpicture}
\end{center}

where $\nabla$ denotes the rest of the complex $(Q', \d_i)$ that contains the module $Q'_{V_i}$ and $\nabla'$ denotes the rest of the complex $(Q', \d_i)$ that contains the module $Q'_{W_i}$.
%
%
%
%
%

Using \cref{VR splits}, which holds for all $i$ as well, the part of $(Q', \d_i)$ that contains $(Q'_{G_i}, \d_{i_{G_i}})$, $L_A(\wt{h_i})$ and $L_A(\undertilde{h_i})$ has either of the following two forms
\begin{center}
\begin{tikzpicture}[>=stealth, baseline]
\matrix (M) [matrix of math nodes, column sep=7mm]
{
\wt{\nabla} & P^A(\wt{r_i}) & P_n^A  & P_{n-1}^A & \nabla' \\
       & \oplus    & \oplus & \oplus    &        & =(Q', \d_i)\\
\undertilde{\nabla} & P^A(\undertilde{r_i}) & P_n^A  & P_{n+1}^A &        \\
};

\draw[<->](M-1-1) -- (M-1-2)
	;
\draw[-> ,font=\small](M-1-2) -- (M-1-3) 
	node[midway,above] {$E$};
\draw[-> ,font=\small](M-1-2) -- (M-3-3) 
	;
\draw[-> ,font=\small](M-1-3) -- (M-1-4) 
	node[midway,above] {$F$};
\draw[-> ,font=\small](M-1-3) -- (M-3-4) 
	;
\draw[<->](M-1-4) -- (M-1-5)
	;
\draw[<-> ,font=\small](M-3-4) -- (M-1-5)
	;

\draw[<->](M-3-1) -- (M-3-2)
	;
\draw[-> ,font=\small](M-3-2) -- (M-3-3) 
	;
\draw[-> ,font=\small](M-3-2) -- (M-1-3) 
	;
\draw[-> ,font=\small](M-3-3) -- (M-3-4)
	;
\draw[-> ,font=\small](M-3-3) -- (M-1-4)
	;
	
\draw[red] (M-1-1.north west) rectangle (M-1-2.south east)
	node[midway, xshift = -2cm] 
  	{$L_A\left(\wt{h_i}\right)$};
\draw[Brown] (M-3-1.north west) rectangle (M-3-2.south east)
	node[midway, xshift = -2cm] 
  	{$L_A\left(\undertilde{h_i}\right)$};
  	
\draw[OliveGreen] (M-1-2.north west) rectangle 
	(M-3-4.south east)
		node[midway, xshift= 3.2cm, yshift = -23pt] 
  		{$(Q'_{G_i}, \d_{i_{G_i}})$};
\end{tikzpicture}
\end{center}

{\LARGE or}

\begin{center}
\begin{tikzpicture}[>=stealth, baseline]
\matrix (M) [matrix of math nodes, column sep=7mm]
{
       & P^A_{n+1} & P_n^A  & P^A(\undertilde{r_i}) & \undertilde{\nabla} \\
       & \oplus    & \oplus & \oplus    &    &    & =(Q', \d_i).\\
\nabla' & P^A_{n-1} & P_n^A  & P^A(\wt{r_i}) & \wt{\nabla} \\
};

\draw[<->](M-3-1) -- (M-1-2)
	;
\draw[-> ,font=\small](M-1-2) -- (M-1-3) 
	node[midway,above] {$E$};
\draw[-> ,font=\small](M-1-2) -- (M-3-3) 
	;
\draw[-> ,font=\small](M-1-3) -- (M-1-4) 
	node[midway,above] {$F$};
\draw[-> ,font=\small](M-1-3) -- (M-3-4) 
	;
\draw[<->](M-1-4) -- (M-1-5)
	;
\draw[<-> ,font=\small](M-3-4) -- (M-3-5)
	;

\draw[<->](M-3-1) -- (M-3-2)
	;
\draw[-> ,font=\small](M-3-2) -- (M-3-3) 
	;
\draw[-> ,font=\small](M-3-2) -- (M-1-3) 
	;
\draw[-> ,font=\small](M-3-3) -- (M-3-4)
	;
\draw[-> ,font=\small](M-3-3) -- (M-1-4)
	;

\draw[Brown] (M-1-4.south west) rectangle (M-1-5.north east)
	node[midway, xshift = 2cm] 
  	{$L_A\left(\undertilde{h_i}\right)$};
\draw[red] (M-3-4.north west) rectangle (M-3-5.south east)
	node[midway, xshift = 2cm] 
  	{$L_A\left(\wt{h_i}\right)$};
  	
\draw[OliveGreen] (M-1-2.north west) rectangle 
	(M-3-4.south east)
		node[midway, xshift = -3.3cm, yshift= 0.8cm] 
  		{$(Q'_{G_i}, \d_{i_{G_i}})$};
\end{tikzpicture}
\end{center}
Thus for all $i$, we conclude that $(Q', \d_i)$ must be of the above two possible forms.
It is now sufficient to give a construction of $\mu_i$ for each form.

We begin with the construction of $\mu_i$ for the first possible form of $(Q', \d_i)$.
Firstly, note that 

\begin{align*}
ME
=
	\begin{bmatrix}
	1 & -i \\
	1 &  i
	\end{bmatrix}
	\begin{bmatrix}
	-(n+1|n)i &  (n-1|n)i \\
	(n+1|n)   &  (n-1|n)
	\end{bmatrix}
&= 2i
	\begin{bmatrix}
	-(n+1|n) & 0      \\
	0        & (n-1|n)
	\end{bmatrix}
\\
 &=
	2i
	\begin{bmatrix}
	(n+1|n) & 0 \\
	0       & (n-1|n)
	\end{bmatrix}
	\begin{bmatrix}
	-1 & 0 \\
	0  & 1
	\end{bmatrix}.
\end{align*}

We define $\mu_i|_{Q'_{G_i}}$ to be the following map in {\color{blue}blue}:

\begin{center}
\begin{tikzpicture}[>=stealth, baseline]
\matrix (M) [matrix of math nodes, column sep=7mm]
{
\wt{\nabla} & P^A(\wt{r_i}) & P_n^A  & P_{n-1}^A & \nabla' \\
       & \oplus       & \oplus & \oplus    &        & =(Q', \d_i) \\
\undertilde{\nabla} & P^A(\undertilde{r_i}) & P_n^A  & P_{n+1}^A &        \\
{}     &           &        &           &        \\
{}     &           &        &           &        \\
{}     &           &        &           &        \\
\wt{\nabla} & P^A(\wt{r_i}) & P_n^A  & P_{n-1}^A & \nabla' \\
       & \oplus       & \oplus & \oplus    &        & =:(Q', \d_{i+1})\\
\undertilde{\nabla} & P^A(\undertilde{r_i}) & P_n^A  & P_{n+1}^A &        \\
};

\draw[<->](M-1-1) -- (M-1-2)
	;
\draw[-> ,font=\small](M-1-2) -- (M-1-3) 
	;
\draw[-> ,font=\small](M-1-2) -- (M-3-3) 
	;
\draw[-> ,font=\small](M-1-3) -- (M-1-4) 
	;
\draw[-> ,font=\small](M-1-3) -- (M-3-4) 
	;
\draw[<->](M-1-4) -- (M-1-5)
	;
\draw[<-> ,font=\small](M-3-4) -- (M-1-5)
	;

\draw[<->](M-3-1) -- (M-3-2)
	;
\draw[-> ,font=\small](M-3-2) -- (M-3-3) 
	;
\draw[-> ,font=\small](M-3-2) -- (M-1-3) 
	;
\draw[-> ,font=\small](M-3-3) -- (M-3-4)
	;
\draw[-> ,font=\small](M-3-3) -- (M-1-4)
	;
	
\draw[<->](M-7-1) -- (M-7-2)
	;
\draw[-> ,font=\small](M-7-2) -- (M-7-3) 
	node[midway,above,scale=0.7] {$(n+1|n)$};
\draw[-> ,font=\small](M-7-3) -- (M-7-4) 
	node[midway,above,scale=0.7] {$(n|n-1)$};
\draw[<->](M-7-4) -- (M-7-5)
	;
\draw[<-> ,font=\small](M-9-4) -- (M-7-5)
	;

\draw[<->](M-9-1) -- (M-9-2)
	;
\draw[-> ,font=\small](M-9-2) -- (M-9-3) 
	node[midway,above,scale=0.7] {$(n-1|n)$};
\draw[-> ,font=\small](M-9-3) -- (M-9-4)
	node[midway,above,scale=0.7] {$(n|n+1)$};
	
\draw[-> ,font=\small, color=blue](M-3-2) -- (M-7-2)
	node[midway,left,scale=0.7] {$
		2i
		\begin{bmatrix}
		-1 & 0 \\
		0  & 1
		\end{bmatrix}
		$};
\draw[-> ,font=\small, color=blue](M-3-3) -- (M-7-3)
	node[midway,left,scale=0.7] {$M$};
\draw[-> ,font=\small, color=blue](M-3-4) -- (M-7-4)
	node[midway,left,scale=0.7] {$I$};
\end{tikzpicture}
\end{center}

For the rest of the modules in $Q'$, we define $\mu_i$ as the identity for modules contained in $\nabla'$, i.e. $\mu_i|_{Q'_{W_i}} = \id$, and $\mu_i$ sends $v$ to $-2iv$ (resp. $v$ to $2iv$) for any $v$ belonging to the modules in $\wt{\nabla}$ (resp. $\undertilde{\nabla}$).
The black arrows in the last two rows shows the differential component $\d_{{i+1}_{G_i}}$ in $\d_{i+1}$, induced by the conjugation of $\mu_i^{-1}$.
It is easy to see that $\d_{i+1}$ does indeed satisfy the required property $(*)$.

The construction of $\mu_i$ for the second form is similar, changing $\mu_i|_{P^A_n \oplus P^A_n}$ to $N$ instead of $M$.

\item $\check{g}_i$ is of Type III$'_{k+\frac{1}{2}}$ for $k\in Z$:\\
Note that the case for $k=0$ is straightforward; $\mu_i$ is just the identity in this case.
We will provide the analysis and construction of $\mu_i$ for $k>0$ and $k <0$.

Using the same argument in Type V$'_k$, the equations $\d_{i_{G_i}} = \d_{0_{G_i}}$ and \cref{VR splits} hold for all $i$.
So the part of $(Q', \d_i)$ that contains $(Q'_{G_i}, \d_{i_{G_i}})$, $L_A(\wt{h_i})$ and $L_A(\undertilde{h_i})$ will be of the same form for all $i$.
 So, the construction of $\mu_i$ below will also work for all $i$.

Let us start with $k > 0$.
Using the same notation as in the analysis of Type V$'$ we can draw the part of $(Q', \d_i)$ that contains $(Q'_{G_i}, \d_{i_{G_i}}) = (Q'_{G_i}, \d_{0_{G_i}})$, $L_A(\wt{h}_i)$ and $L_A(\undertilde{h_i})$ as either of the two forms:
\begin{figure}[H]
\begin{tikzpicture}[>=stealth, baseline]
\matrix (M) [matrix of math nodes, column sep=7mm]
{
       & P_n^A  & \cdots & P_n^A  & P_n^A  & P^A(\undertilde{r_i}) & \undertilde{\nabla}\\
       & \oplus &        & \oplus & \oplus & \oplus  &   & & =(Q', \d_i)\\
P_n^A  & P_n^A  & \cdots & P_n^A  & P_n^A  & P^A(\wt{r_i}) & \wt{\nabla}\\
};

\draw[-> ,font=\small](M-1-2.east |- M-1-3) -- (M-1-3)
	;
\draw[-> ,font=\small](M-1-3.east |- M-1-4) -- (M-1-4)
	;
\draw[-> ,font=\small](M-1-4.east |- M-1-5) -- (M-1-5)
	node[midway,above] {$2X_n$}; 
\draw[-> ,font=\small](M-1-5) -- (M-1-6)
	node[midway,above] {$F$}; 
\draw[-> ,font=\small](M-1-5) -- (M-3-6)
	;
\draw[<->](M-1-6.east |- M-1-7) -- (M-1-7)
	;

\draw[-> ,font=\small](M-3-1.east |- M-3-2) -- (M-3-2) 
	node[midway,above] {$2X_n$};
\draw[-> ,font=\small](M-3-2.east |- M-3-3) -- (M-3-3)
	;
\draw[-> ,font=\small](M-3-3.east |- M-3-4) -- (M-3-4)
	;
\draw[-> ,font=\small](M-3-4.east |- M-3-5) -- (M-3-5)
	node[midway,above] {$2X_n$};
\draw[-> ,font=\small](M-3-5) -- (M-1-6)
	; 
\draw[-> ,font=\small](M-3-5) -- (M-3-6)
	;
\draw[<->](M-3-6.east |- M-3-7) -- (M-3-7)
	;

\draw[Brown] (M-1-6.north west) rectangle (M-1-7.south east)
	node[midway, xshift = 2cm] 
  	{$L_A\left(\undertilde{h_i}\right)$};
\draw[red] (M-3-6.north west) rectangle (M-3-7.south east)
	node[midway, xshift = 2cm] 
  	{$L_A\left(\wt{h_i}\right)$};
  	
\draw[OliveGreen] (M-3-1.south west) rectangle (M-1-6.north east)
		node[midway,  xshift = -5.2cm] 
  		{$(Q'_{G_i}, \; \d_{i_{G_i}})$};
\end{tikzpicture}

\vspace*{2mm}{\LARGE or}

\begin{tikzpicture}[>=stealth, baseline]
\matrix (M) [matrix of math nodes, column sep=7mm]
{
P_n^A  & P_n^A  & \cdots & P_n^A  & P_n^A  & P^A(\undertilde{r_i}) & \undertilde{\nabla}\\
       & \oplus &        & \oplus & \oplus & \oplus   & & & =(Q', \d_i)\\
       & P_n^A  & \cdots & P_n^A  & P_n^A  & P^A(\wt{r_i}) & \wt{\nabla}\\
};

\draw[-> ,font=\small](M-1-1.east |- M-1-2) -- (M-1-2) 
	node[midway,above] {$2X_n$};
\draw[-> ,font=\small](M-1-2.east |- M-1-3) -- (M-1-3)
	;
\draw[-> ,font=\small](M-1-3.east |- M-1-4) -- (M-1-4)
	;
\draw[-> ,font=\small](M-1-4.east |- M-1-5) -- (M-1-5)
	node[midway,above] {$2X_n$}; 
\draw[-> ,font=\small](M-1-5) -- (M-1-6)
	node[midway,above] {$F$}; 
\draw[-> ,font=\small](M-1-5) -- (M-3-6)
	;
\draw[<->](M-1-6.east |- M-1-7) -- (M-1-7)
	;

\draw[-> ,font=\small](M-3-2.east |- M-3-3) -- (M-3-3)
	;
\draw[-> ,font=\small](M-3-3.east |- M-3-4) -- (M-3-4)
	;
\draw[-> ,font=\small](M-3-4.east |- M-3-5) -- (M-3-5)
	node[midway,above] {$2X_n$};
\draw[-> ,font=\small](M-3-5) -- (M-1-6)
	; 
\draw[-> ,font=\small](M-3-5) -- (M-3-6)
	;
\draw[<->](M-3-6.east |- M-3-7) -- (M-3-7)
	;
	
\draw[Brown] (M-1-6.north west) rectangle (M-1-7.south east)
	node[midway, xshift = 2cm] 
  	{$L_A\left(\undertilde{h_i}\right)$};
\draw[red] (M-3-6.north west) rectangle (M-3-7.south east)
	node[midway, xshift = 2cm] 
  	{$L_A\left(\wt{h_i}\right)$};
  	
\draw[OliveGreen] (M-1-1.north west) rectangle (M-3-6.south east)
		node[midway, xshift = -5.2cm] 
  		{$(Q'_{G_i}, \; \d_{i_{G_i}})$};
\end{tikzpicture}
\end{figure}
We will construct $\mu_i$ for the first form; the second form is just a mirrored construction.
Define $\mu_i|_{Q'_{G_i}}$ to be the following map in {\color{blue}blue}:

\begin{center}
\begin{tikzpicture}[>=stealth, baseline]
\matrix (M) [matrix of math nodes, column sep=7mm]
{
       & P_n^A  & \cdots & P_n^A  & P_n^A  & P^A(\undertilde{r_i}) & \undertilde{\nabla}\\
       & \oplus &        & \oplus & \oplus & \oplus    & & =(Q', \d_i)\\
P_n^A  & P_n^A  & \cdots & P_n^A  & P_n^A  & P^A(\wt{r_i}) & \wt{\nabla}\\
{}     &        &        &        &        &           &       \\
{}     &        &        &        &        &           &       \\
{}     &        &        &        &        &           &       \\
       & P_n^A  & \cdots & P_n^A  & P_n^A  & P^A(\undertilde{r_i}) & \undertilde{\nabla}\\
       & \oplus &        & \oplus & \oplus & \oplus    & & =:(Q', \d_{i+1})\\
P_n^A  & P_n^A  & \cdots & P_n^A  & P_n^A  & P^A(\wt{r_i}) & \wt{\nabla}\\
};

\draw[-> ,font=\small](M-1-2.east |- M-1-3) -- (M-1-3)
	;
\draw[-> ,font=\small](M-1-3.east |- M-1-4) -- (M-1-4)
	;
\draw[-> ,font=\small](M-1-4.east |- M-1-5) -- (M-1-5)
	node[midway,above] {$2X_n$}; 
\draw[-> ,font=\small](M-1-5) -- (M-1-6)
	; 
\draw[-> ,font=\small](M-1-5) -- (M-3-6)
	;
\draw[<->](M-1-6.east |- M-1-7) -- (M-1-7);

\draw[-> ,font=\small](M-3-1.east |- M-3-2) -- (M-3-2) 
	node[midway,above] {$2X_n$};
\draw[-> ,font=\small](M-3-2.east |- M-3-3) -- (M-3-3)
	;
\draw[-> ,font=\small](M-3-3.east |- M-3-4) -- (M-3-4)
	;
\draw[-> ,font=\small](M-3-4.east |- M-3-5) -- (M-3-5)
	node[midway,above] {$2X_n$};
\draw[-> ,font=\small](M-3-5) -- (M-1-6)
	; 
\draw[-> ,font=\small](M-3-5) -- (M-3-6)
	;
\draw[<->](M-3-6.east |- M-3-7) -- (M-3-7)
	;

\draw[-> ,font=\small](M-7-2.east |- M-7-3) -- (M-7-3)
	node[midway,above] {$X_n$};
\draw[-> ,font=\small](M-7-3.east |- M-7-4) -- (M-7-4)
	;
\draw[-> ,font=\small](M-7-4.east |- M-7-5) -- (M-7-5)
	node[midway,above] {$X_n$}; 
\draw[-> ,font=\small](M-7-5) -- (M-7-6)
	;
\draw[<->](M-7-6.east |- M-7-7) -- (M-7-7)
	;

\draw[-> ,font=\small](M-9-1) -- (M-7-2) 
	node[midway,above,transform canvas={xshift=-2mm}] {$X_n$};
\draw[-> ,font=\small](M-9-1.east |- M-9-2) -- (M-9-2) 
	node[midway,below] {$X_n$};
\draw[-> ,font=\small](M-9-2.east |- M-9-3) -- (M-9-3)
	node[midway,above] {$X_n$};
\draw[-> ,font=\small](M-9-3.east |- M-9-4) -- (M-9-4)
	;
\draw[-> ,font=\small](M-9-4.east |- M-9-5) -- (M-9-5)
	node[midway,above] {$X_n$};
\draw[-> ,font=\small](M-9-5) -- (M-9-6)
	; 
\draw[<->](M-9-6.east |- M-9-7) -- (M-9-7)
	;

\draw[-> ,font=\small, color=blue](M-3-1) -- (M-9-1) 
	node[midway,left,scale=0.8, color=blue] {$2^{k-1}i$};
\draw[-> ,font=\small, color=blue](M-3-2) -- (M-7-2) 
	node[midway,left,scale=0.8, color=blue] 
		{$2^{k-2}JM$};
\draw[-> ,font=\small, color=blue](M-3-4) -- (M-7-4) 
	node[midway,left,scale=0.8, color=blue] 
		{$2JM$};
\draw[-> ,font=\small, color=blue](M-3-5) -- (M-7-5) 
	node[midway,left,scale=0.8, color=blue] {$JM$};
\draw[-> ,font=\small, color=blue](M-3-6) -- (M-7-6) 
	node[midway,left,scale=0.8, color=blue] {$J$};
\end{tikzpicture}
\end{center}

with $J = \begin{bmatrix}
			-1 & 0 \\
			0  & 1
			\end{bmatrix}$.
For the rest of the modules in $Q'$, $\mu_i$ sends $v$ to $-v$ (resp. $v$ to $v$) for any $v$ belonging to the modules in $\undertilde{\nabla}$ (resp. $\wt{\nabla}$).
The black arrows in the last two rows shows the differential component $\d_{{i+1}_{G_i}}$ in $\d_{i+1}$, induced by the conjugation of $\mu_i^{-1}$.
It is easy to see that $\mu_i$ does indeed satisfy the required property $(*)$.

Similarly for $k < 0$, we draw the part of $(Q', \d_i)$ that contains $(Q'_{G_i}, \d_{i_{G_i}}) = (Q'_{G_i}, \d_{0_{G_i}})$, $L_A(\wt{h}_i)$ and $L_A(\undertilde{h_i})$ as either of the two forms:

\begin{figure}[H]
\begin{tikzpicture}[>=stealth, baseline]
\matrix (M) [matrix of math nodes, column sep=7mm]
{
\wt{\nabla} & P^A(\wt{r_i}) & P_n^A  & P_n^A  & \cdots & P_n^A  &        \\
       & \oplus    & \oplus & \oplus &        & \oplus &  & &  &=(Q', \d_i)\\
\undertilde{\nabla} & P^A(\undertilde{r_i}) & P_n^A  & P_n^A  & \cdots & P_n^A  & P_n^A  \\
};

\draw[-> ,font=\small](M-1-2.east |- M-1-3) -- (M-1-3) 
	node[midway,above] {$E$};
\draw[-> ,font=\small](M-1-2) -- (M-3-3) 
	;
\draw[-> ,font=\small](M-1-3.east |- M-1-4) -- (M-1-4) 
	node[midway,above] {$2X_n$};
\draw[-> ,font=\small](M-1-4.east |- M-1-5) -- (M-1-5)
	;
\draw[-> ,font=\small](M-1-5.east |- M-1-6) -- (M-1-6)
	;

\draw[-> ,font=\small](M-3-2) -- (M-1-3)
	;
\draw[-> ,font=\small](M-3-2) -- (M-3-3)
	;
\draw[-> ,font=\small](M-3-3.east |- M-3-4) -- (M-3-4) 
	node[midway,above] {$2X_n$};
\draw[-> ,font=\small](M-3-4.east |- M-3-5) -- (M-3-5)
	;
\draw[-> ,font=\small](M-3-5.east |- M-3-6) -- (M-3-6)
	;
\draw[-> ,font=\small](M-3-6.east |- M-3-7) -- (M-3-7)
	node[midway,above] {$2X_n$};
\draw[<->](M-3-1.east |- M-3-2) -- (M-3-2);
\draw[<->](M-1-1) -- (M-1-2); 

\draw[red] (M-1-1.north west) rectangle (M-1-2.south east)
	node[midway, xshift = -2cm] 
  	{$L_A\left(\wt{h_i}\right)$};
\draw[Brown] (M-3-1.north west) rectangle (M-3-2.south east)
	node[midway, xshift = -2cm] 
  	{$L_A\left(\undertilde{h_i}\right)$};
  	
\draw[OliveGreen] (M-1-2.north west) rectangle 
	(M-3-7.south east)
		node[midway, xshift=5.2cm, yshift = 22pt] 
  		{$(Q'_{G_i}, \d_{i_{G_i}})$};
\end{tikzpicture}

\vspace*{2mm}{\LARGE or}

\begin{tikzpicture}[>=stealth, baseline]
\matrix (M) [matrix of math nodes, column sep=7mm]
{
\wt{\nabla} & P^A(\wt{r_i}) & P_n^A  & P_n^A  & \cdots & P_n^A  & P_n^A \\
       & \oplus    & \oplus & \oplus &        & \oplus & & & & =:(Q', \d_{i+1})\\
\undertilde{\nabla} & P^A(\undertilde{r_i}) & P_n^A  & P_n^A  & \cdots & P_n^A  &       \\
};

\draw[-> ,font=\small](M-1-2.east |- M-1-3) -- (M-1-3) 
	node[midway,above] {$E$};
\draw[-> ,font=\small](M-1-2) -- (M-3-3) 
	;
\draw[-> ,font=\small](M-1-3.east |- M-1-4) -- (M-1-4) 
	node[midway,above] {$2X_n$};
\draw[-> ,font=\small](M-1-4.east |- M-1-5) -- (M-1-5)
	;
\draw[-> ,font=\small](M-1-5.east |- M-1-6) -- (M-1-6)
	;
\draw[-> ,font=\small](M-1-6.east |- M-1-7) -- (M-1-7)
	node[midway,above] {$2X_n$};

\draw[-> ,font=\small](M-3-2) -- (M-1-3)
	;
\draw[-> ,font=\small](M-3-2) -- (M-3-3)
	;
\draw[-> ,font=\small](M-3-3.east |- M-3-4) -- (M-3-4) 
	node[midway,above] {$2X_n$};
\draw[-> ,font=\small](M-3-4.east |- M-3-5) -- (M-3-5)
	;
\draw[-> ,font=\small](M-3-5.east |- M-3-6) -- (M-3-6)
	;
\draw[<->](M-3-1.east |- M-3-2) -- (M-3-2);
\draw[<->](M-1-1) -- (M-1-2); 

\draw[red] (M-1-1.north west) rectangle (M-1-2.south east)
	node[midway, xshift = -2cm] 
  	{$L_A\left(\wt{h_i}\right)$};
\draw[Brown] (M-3-1.north west) rectangle (M-3-2.south east)
	node[midway, xshift = -2cm] 
  	{$L_A\left(\undertilde{h_i}\right)$};
  	
\draw[OliveGreen] (M-1-2.north west) rectangle 
	(M-3-7.south east)
		node[midway, xshift=5.2cm, yshift = -.8cm] 
  		{$(Q'_{G_i}, \d_{i_{G_i}})$};
\end{tikzpicture}
\end{figure}

Once again we construct $\mu_i$ for the first form; the second form is just a mirror construction.
Note that 

\[
N
\begin{bmatrix}
-(n+1|n)i &  (n-1|n)i \\
(n+1|n)   &  (n-1|n)
\end{bmatrix}
=
2
\begin{bmatrix}
(n+1|n) &   0      \\
0       &  (n-1|n)
\end{bmatrix}.
\]

We define $\mu_i|_{Q'_{G_i}}$ to be the following map in {\color{blue}blue}, with $\mu_i$ acting as the identity for the rest of the modules in both $\wt{\nabla}$ and $\undertilde{\nabla}$:
\begin{center}
\begin{tikzpicture}[>=stealth, baseline]
\matrix (M) [matrix of math nodes, column sep=7mm]
{
\wt{\nabla} & P^A(\wt{r_i}) & P_n^A  & P_n^A  & \cdots & P_n^A  &        \\
       & \oplus    & \oplus & \oplus &        & \oplus & & =(Q', \d_i) \\
\undertilde{\nabla} & P^A(\undertilde{r_i}) & P_n^A  & P_n^A  & \cdots & P_n^A  & P_n^A  \\
{}     &           &        &        &        &        &        \\
{}     &           &        &        &        &        &        \\
{}     &           &        &        &        &        &        \\
\wt{\nabla} & P^A(\wt{r_i}) & P_n^A  & P_n^A  & \cdots & P_n^A  &        \\
       & \oplus    & \oplus & \oplus &        & \oplus & & =:(Q', \d_{i+1}) \\
\undertilde{\nabla} & P^A(\undertilde{r_i}) & P_n^A  & P_n^A  & \cdots & P_n^A  & P_n^A. \\
};

\draw[-> ,font=\small](M-1-2.east |- M-1-3) -- (M-1-3) 
	;
\draw[-> ,font=\small](M-1-2) -- (M-3-3) 
	;
\draw[-> ,font=\small](M-1-3.east |- M-1-4) -- (M-1-4) 
	node[midway,above] {$2X_n$};
\draw[-> ,font=\small](M-1-4.east |- M-1-5) -- (M-1-5)
	;
\draw[-> ,font=\small](M-1-5.east |- M-1-6) -- (M-1-6)
	;

\draw[-> ,font=\small](M-3-2) -- (M-1-3)
	;
\draw[-> ,font=\small](M-3-2) -- (M-3-3)
	;
\draw[-> ,font=\small](M-3-3.east |- M-3-4) -- (M-3-4) 
	node[midway,above] {$2X_n$};
\draw[-> ,font=\small](M-3-4.east |- M-3-5) -- (M-3-5)
	;
\draw[-> ,font=\small](M-3-5.east |- M-3-6) -- (M-3-6)
	;
\draw[-> ,font=\small](M-3-6.east |- M-3-7) -- (M-3-7)
	node[midway,above] {$2X_n$};
\draw[<->](M-3-1.east |- M-3-2) -- (M-3-2);
\draw[<->](M-1-1) -- (M-1-2); 

\draw[-> ,font=\small](M-7-2.east |- M-7-3) -- (M-7-3) 
	node[midway,above,scale=0.7] {$(n+1|n)$};
\draw[-> ,font=\small](M-7-3.east |- M-7-4) -- (M-7-4) 
	node[midway,above] {$X_n$};
\draw[-> ,font=\small](M-7-4.east |- M-7-5) -- (M-7-5)
	;
\draw[-> ,font=\small](M-7-5.east |- M-7-6) -- (M-7-6)
	;
\draw[-> ,font=\small](M-7-6) -- (M-9-7)
	node[midway, above, transform canvas={xshift=2mm}] {$X_n$};

\draw[-> ,font=\small](M-9-2) -- (M-9-3)
	node[midway,above,scale=0.7] {$(n-1|n)$};
\draw[-> ,font=\small](M-9-3.east |- M-9-4) -- (M-9-4) 
	node[midway,above] {$X_n$};
\draw[-> ,font=\small](M-9-4.east |- M-9-5) -- (M-9-5)
	;
\draw[-> ,font=\small](M-9-5.east |- M-9-6) -- (M-9-6)
	;
\draw[-> ,font=\small](M-9-6.east |- M-9-7) -- (M-9-7)
	node[midway,above] {$X_n$};
\draw[<->](M-9-1.east |- M-9-2) -- (M-9-2);
\draw[<->](M-7-1) -- (M-7-2); 

\draw[-> ,font=\small, color=blue](M-3-2) -- (M-7-2) 
	node[midway,left,scale=0.8] {$I$};
\draw[-> ,font=\small, color=blue](M-3-3) -- (M-7-3) 
	node[midway,right,scale=0.8] {$2^{-1}N$};
\draw[-> ,font=\small, color=blue](M-3-4) -- (M-7-4) 
	node[midway,right,scale=0.8] {$2^{-2}N$};
\draw[-> ,font=\small, color=blue](M-3-6) -- (M-7-6) 
	node[midway,left,scale=0.8] {$2^{k+1}N$};
\draw[-> ,font=\small, color=blue](M-3-7) -- (M-9-7) 
	node[midway,right,scale=0.8] {$2^{k+1}$};
\end{tikzpicture}
\end{center}
The black arrows in the last two rows shows the differential component $\d_{{i+1}_{G_i}}$ in $\d_{i+1}$, induced by the conjugation of $\mu_i^{-1}$.
Thus, the required condition $(*)$ follows directly.

\item $\check{g}_i$ is of Type II$'_{k+\frac{1}{2}}$ for $k \in \Z$: \\
By the same argument in Type V$'_k$, we have that the equations $\d_{i_{G_i}} = \d_{0_{G_i}}$ and \cref{VR splits} hold for all $i$.
So the part of $(Q', \d_i)$ that contains $(Q'_{G_i}, \d_{i_{G_i}})$, $L_A(\wt{h}_i)$ and $L_A(\undertilde{h_i})$ will be of the same form for all $i$.
It then follows similarly that the construction of $\mu_i$ will also work for all $i$.

Let us start with $k=0$.
With the same notation as in the analysis of Type V$'$, we can draw the part of $(Q', \d_i)$ that contains $(Q'_{G_i}, \d_{i_{G_i}}) = (Q'_{G_i}, \d_{0_{G_i}})$, $L_A(\wt{h}_i)$ and $L_A(\undertilde{h_i})$ as
\begin{center}
\begin{tikzpicture}[>=stealth, baseline]
\matrix (M) [matrix of math nodes, column sep=7mm]
{
{}       & P_{n-1}^A & \nabla' \\
         & \oplus    &        \\
 P_{n}^A & P_{n+1}^A &        \\
 \oplus  & \oplus    & & & &  =(Q', \d_i) \\
 P_{n}^A & P^A(\wt{r_i}) & \wt{\nabla} \\
         & \oplus    &        \\
         & P^A(\undertilde{r_i}) & \undertilde{\nabla} \\
};

\draw[<->](M-1-2) -- (M-1-3)
	;
\draw[<->](M-3-2) -- (M-1-3)
	;
\draw[<->](M-5-2) -- (M-5-3)
	; 
\draw[<->](M-7-2) -- (M-7-3)
	; 

\draw[-> ,font=\small](M-3-1) -- (M-1-2.west)
	;
\draw[-> ,font=\small](M-3-1) -- (M-3-2)
	;
\draw[-> ,font=\small](M-5-1) -- (M-1-2)
	;
\draw[-> ,font=\small](M-5-1) -- (M-3-2)
	;
	
\draw[-> ,font=\small](M-3-1) -- (M-5-2)
	;
\draw[-> ,font=\small](M-3-1) -- (M-7-2)
	;
\draw[-> ,font=\small](M-5-1) -- (M-5-2)
	;
\draw[-> ,font=\small](M-5-1) -- (M-7-2.west)
	;

\draw[red] (M-5-2.north west) rectangle (M-5-3.south east)
	node[midway, xshift = 2.3cm] 
  	{$L_A\left(\wt{h_i}\right)$};
\draw[Brown] (M-7-2.north west) rectangle (M-7-3.south east)
	node[midway, xshift = 2.3cm] 
  	{$L_A\left(\undertilde{h_i}\right)$};
  	
\draw[OliveGreen] (M-1-1.north west) ++(-10pt,10pt) rectangle 
	(M-7-2.south east)
		node[midway, xshift =  -2.3cm] 
  		{$(Q'_{G_i}, \d_{i_{G_i}})$};
\end{tikzpicture}
\end{center}
with the first map given by $
	\begin{bmatrix}
	F \\
	F'
	\end{bmatrix}
	$,
where $\begin{bmatrix}
	-i & 0 \\
	 0 & i 
	\end{bmatrix}
	F' =
	\begin{bmatrix}
	(n|n+1) & 0 \\
	 0      & (n|n-1) 
	\end{bmatrix}
	M
	$.
Denote $P = 
	\begin{bmatrix}
	-i & 0 \\
	 0 & i 
	\end{bmatrix}
	$.
We define $\mu_i|_{Q'_{G_i}}$ to be the following map in {\color{blue}blue}:
\begin{center}
\begin{tikzpicture}[>=stealth, baseline]
\matrix (M) [matrix of math nodes, column sep=7mm]
{
         & P_{n-1}^A & \nabla' \\
         & \oplus    &        \\
 P_{n}^A & P_{n+1}^A &        \\
 \oplus  & \oplus    & & =(Q', \d_i)  \\
 P_{n}^A & P^A(\wt{r_i}) & \wt{\nabla} \\
         & \oplus    &        \\
         & P^A(\undertilde{r_i}) & \undertilde{\nabla} \\
{}       &           &        \\
{}       &           &        \\
{}       &           &        \\         
         & P_{n-1}^A & \nabla' \\
         & \oplus    &        \\
 P_{n}^A & P_{n+1}^A &        \\
 \oplus  & \oplus    & & =(Q', \d_{i+1}) \\
 P_{n}^A & P^A(\wt{r_i}) & \wt{\nabla} \\
         & \oplus    &        \\
         & P^A(\undertilde{r_i}) & \undertilde{\nabla} \\
};

\draw[<->](M-1-2) -- (M-1-3)
	;
\draw[<->](M-3-2) -- (M-1-3)
	;
\draw[<->](M-5-2) -- (M-5-3)
	; 
\draw[<->](M-7-2) -- (M-7-3)
	; 

\draw[-> ,font=\small](M-3-1) -- (M-1-2.west)
	;
\draw[-> ,font=\small](M-3-1) -- (M-3-2)
	;
\draw[-> ,font=\small](M-5-1) -- (M-1-2)
	;
\draw[-> ,font=\small](M-5-1) -- (M-3-2)
	;
	
\draw[-> ,font=\small](M-3-1) -- (M-5-2)
	;
\draw[-> ,font=\small](M-3-1) -- (M-7-2)
	;
\draw[-> ,font=\small](M-5-1) -- (M-5-2)
	;
\draw[-> ,font=\small](M-5-1) -- (M-7-2.west)
	;

\draw[<->](M-11-2) -- (M-11-3)
	;
\draw[<->](M-13-2) -- (M-11-3)
	;
\draw[<->](M-15-2) -- (M-15-3)
	; 
\draw[<->](M-17-2) -- (M-17-3)
	; 

\draw[-> ,font=\small](M-13-1) -- (M-11-2)
	;
\draw[-> ,font=\small](M-15-1) -- (M-13-2)
	;
\draw[-> ,font=\small](M-13-1) -- (M-15-2)
	;
\draw[-> ,font=\small](M-15-1) -- (M-17-2)
	;
	
\draw[-> ,font=\small, color=blue](M-5-1) -- (M-13-1)
	node[midway, left,scale=0.6] {$M$};
\draw[-> ,font=\small, color=blue](M-7-2) -- (M-11-2)
	node[midway, left,scale=0.6] {$
	\begin{bmatrix}
	I & 0 \\
	0 & P
	\end{bmatrix}		
	$};	
\end{tikzpicture}
\end{center}
and $\mu_i$ sends $v$ to $-iv$ (resp. $iv$) for any $v$ belonging to the modules in $\wt{\nabla}$ (resp. $\undertilde{\nabla}$).
The black arrows in the last four rows shows the differential component $\d_{{i+1}_{G_i}}$ in $\d_{i+1}$, induced by the conjugation of $\mu_i^{-1}$.
Thus, the required condition $(*)$ is easily satisfied.

For $k > 0$ , we can draw the part of $(Q', \d_i)$ that contains $(Q'_{G_i}, \d_{i_{G_i}}) = (Q'_{G_i}, \d_{0_{G_i}})$, $L_A(\wt{h}_i)$ and $L_A(\undertilde{h_i})$ as
\begin{center}
\begin{tikzpicture}[>=stealth, baseline]
\matrix (M) [matrix of math nodes, column sep=7mm]
{
P_n^A  & P_n^A  & P_n^A  & \cdots & P_n^A  & P_n^A  & P_{n-1}^A & \nabla' \\
\oplus & \oplus & \oplus &        & \oplus & \oplus & \oplus    & \\
P_n^A  & P_n^A  & P_n^A  & \cdots & P_n^A  & P_n^A  & P_{n+1}^A &        \\
       & \oplus & \oplus &        & \oplus & \oplus & \oplus & & & =(Q', \d_i)\\ 
       & P_n^A  & P_n^A  & \cdots & P_n^A  & P_n^A  & P^A(\wt{r_i})& \wt{\nabla} \\
       & \oplus & \oplus &        & \oplus & \oplus & \oplus    & \\
       & P_n^A  & P_n^A  & \cdots & P_n^A  & P_n^A  & P^A(\undertilde{r_i})& \undertilde{\nabla} \\
};

\draw[-> ,font=\small](M-1-1) -- (M-1-2) 
	node[midway,above] {$2X_n$};
\draw[-> ,font=\small](M-1-1) -- (M-5-2) 
	node[midway,above] {$2X_n$};
\draw[-> ,font=\small](M-1-2) -- (M-1-3) 
	node[midway,above] {$2X_n$};
\draw[-> ,font=\small](M-1-3) -- (M-1-4) 
	;
\draw[-> ,font=\small](M-1-4) -- (M-1-5)
	;
\draw[-> ,font=\small](M-1-5) -- (M-1-6)
	node[midway,above] {$2X_n$};
\draw[-> ,font=\small](M-1-6) -- (M-1-7)
	node[midway,above] {$F$};
\draw[-> ,font=\small](M-1-6) -- (M-3-7)
	;
\draw[<-> ,font=\small](M-1-7) -- (M-1-8)
	;
\draw[<-> ,font=\small](M-3-7) -- (M-1-8)
	;

\draw[-> ,font=\small](M-3-1) -- (M-3-2)
	node[midway,below] {$2X_n$};
\draw[-> ,font=\small](M-3-2) -- (M-3-3)
	node[midway,above] {$2X_n$};
\draw[-> ,font=\small](M-3-1) -- (M-7-2)
	node[midway,above] {$2X_n$};
\draw[-> ,font=\small](M-3-3) -- (M-3-4) 
	;
\draw[-> ,font=\small](M-3-4) -- (M-3-5)
	;
\draw[-> ,font=\small](M-3-5) -- (M-3-6)
	node[midway,above] {$2X_n$};
\draw[-> ,font=\small](M-3-6) -- (M-1-7)
	;
\draw[-> ,font=\small](M-3-6) -- (M-3-7)
	;

\draw[-> ,font=\small](M-5-2) -- (M-5-3)
	node[midway,above] {$2X_n$};
\draw[-> ,font=\small](M-5-3) -- (M-5-4) 
	;
\draw[-> ,font=\small](M-5-4) -- (M-5-5) 
	;
\draw[-> ,font=\small](M-5-5) -- (M-5-6)
	node[midway,above] {$2X_n$};
\draw[-> ,font=\small](M-5-6) -- (M-5-7)
	node[midway,above] {$F'$};
\draw[-> ,font=\small](M-5-6) -- (M-7-7)
	;
\draw[<-> ,font=\small](M-5-7) -- (M-5-8)
	;

\draw[-> ,font=\small](M-7-2) -- (M-7-3)
	node[midway,above] {$2X_n$};
\draw[-> ,font=\small](M-7-3) -- (M-7-4) 
	;
\draw[-> ,font=\small](M-7-4) -- (M-7-5) 
	;
\draw[-> ,font=\small](M-7-5) -- (M-7-6)
	node[midway,above] {$2X_n$};
\draw[-> ,font=\small](M-7-6) -- (M-5-7)
	;
\draw[-> ,font=\small](M-7-6) -- (M-7-7)
	;
\draw[<-> ,font=\small](M-7-7) -- (M-7-8)
	;
	
\draw[red] (M-5-7.north west) rectangle (M-5-8.south east)
	node[midway, above, xshift = 1cm, yshift = 5pt] 
  	{$L_A\left(\wt{h_i}\right)$};
\draw[Brown] (M-7-7.north west) rectangle (M-7-8.south east)
	node[midway, xshift = 2cm ] 
  	{$L_A\left(\undertilde{h_i}\right)$};
  	
\draw[OliveGreen] (M-1-1.north west)++(0,5pt) rectangle 
	(M-7-7.south east)
		node[midway, above, yshift = 2.1cm] 
  		{$(Q'_{G_i}, \d_{i_{G_i}})$};
\end{tikzpicture}
\end{center}
Similarly, we define $\mu_i|_{Q'_{G_i}}$ to be the following map in {\color{blue}blue}:
\begin{center}
\begin{tikzpicture}[>=stealth, baseline]
\matrix (M) [matrix of math nodes, column sep=7mm]
{
P_n^A  & P_n^A  & P_n^A  & \cdots & P_n^A  & P_n^A  & P_{n-1}^A & \nabla' \\
\oplus & \oplus & \oplus &        & \oplus & \oplus & \oplus    & \\
P_n^A  & P_n^A  & P_n^A  & \cdots & P_n^A  & P_n^A  & P_{n+1}^A &        \\
       & \oplus & \oplus &        & \oplus & \oplus & \oplus & & =(Q', \d_i)\\ 
       & P_n^A  & P_n^A  & \cdots & P_n^A  & P_n^A  & P^A(\wt{r_i})& \wt{\nabla} \\
       & \oplus & \oplus &        & \oplus & \oplus & \oplus    & \\
       & P_n^A  & P_n^A  & \cdots & P_n^A  & P_n^A  & P^A(\undertilde{r_i})&\undertilde{\nabla} \\
{}     &        &        &        &        &        &          \\
{}     &        &        &        &        &        &          \\ 
{}     &        &        &        &        &        &          \\
P_n^A  & P_n^A  & P_n^A  & \cdots & P_n^A  & P_n^A  & P_{n-1}^A & \nabla' \\
\oplus & \oplus & \oplus &        & \oplus & \oplus & \oplus    & \\
P_n^A  & P_n^A  & P_n^A  & \cdots & P_n^A  & P_n^A  & P_{n+1}^A &        \\
       & \oplus & \oplus &        & \oplus & \oplus & \oplus & & =:(Q', \d_{i+1})\\ 
       & P_n^A  & P_n^A  & \cdots & P_n^A  & P_n^A  & P^A(\wt{r_i})& \wt{\nabla} \\
       & \oplus & \oplus &        & \oplus & \oplus & \oplus    & \\
       & P_n^A  & P_n^A  & \cdots & P_n^A  & P_n^A  & P^A(\undertilde{r_i})&\undertilde{\nabla} \\
};

\draw[-> ,font=\small](M-1-1) -- (M-1-2) 
	node[midway,above] {$2X_n$};
\draw[-> ,font=\small](M-1-1) -- (M-5-2) 
	node[midway,above] {$2X_n$};
\draw[-> ,font=\small](M-1-2) -- (M-1-3) 
	node[midway,above] {$2X_n$};
\draw[-> ,font=\small](M-1-3) -- (M-1-4) 
	;
\draw[-> ,font=\small](M-1-4) -- (M-1-5)
	;
\draw[-> ,font=\small](M-1-5) -- (M-1-6)
	node[midway,above] {$2X_n$};
\draw[-> ,font=\small](M-1-6) -- (M-1-7)
	node[midway,above] {$F$};
\draw[-> ,font=\small](M-1-6) -- (M-3-7)
	;
\draw[<-> ,font=\small](M-1-7) -- (M-1-8)
	;
\draw[<-> ,font=\small](M-3-7) -- (M-1-8)
	;

\draw[-> ,font=\small](M-3-1) -- (M-3-2)
	node[midway,below] {$2X_n$};
\draw[-> ,font=\small](M-3-2) -- (M-3-3)
	node[midway,above] {$2X_n$};
\draw[-> ,font=\small](M-3-1) -- (M-7-2)
	node[midway,above] {$2X_n$};
\draw[-> ,font=\small](M-3-3) -- (M-3-4) 
	;
\draw[-> ,font=\small](M-3-4) -- (M-3-5)
	;
\draw[-> ,font=\small](M-3-5) -- (M-3-6)
	node[midway,above] {$2X_n$};
\draw[-> ,font=\small](M-3-6) -- (M-1-7)
	;
\draw[-> ,font=\small](M-3-6) -- (M-3-7)
	;

\draw[-> ,font=\small](M-5-2) -- (M-5-3)
	node[midway,above] {$2X_n$};
\draw[-> ,font=\small](M-5-3) -- (M-5-4) 
	;
\draw[-> ,font=\small](M-5-4) -- (M-5-5) 
	;
\draw[-> ,font=\small](M-5-5) -- (M-5-6)
	node[midway,above] {$2X_n$};
\draw[-> ,font=\small](M-5-6) -- (M-5-7)
	node[midway,above] {$F'$};
\draw[-> ,font=\small](M-5-6) -- (M-7-7)
	;
\draw[<-> ,font=\small](M-5-7) -- (M-5-8)
	;

\draw[-> ,font=\small](M-7-2) -- (M-7-3)
	node[midway,above] {$2X_n$};
\draw[-> ,font=\small](M-7-3) -- (M-7-4) 
	;
\draw[-> ,font=\small](M-7-4) -- (M-7-5) 
	;
\draw[-> ,font=\small](M-7-5) -- (M-7-6)
	node[midway,above] {$2X_n$};
\draw[-> ,font=\small](M-7-6) -- (M-5-7)
	;
\draw[-> ,font=\small](M-7-6) -- (M-7-7)
	;
\draw[<-> ,font=\small](M-7-7) -- (M-7-8)
	;
\draw[-> ,font=\small](M-11-1) -- (M-11-2) 
	node[midway,above] {$X_n$};
\draw[-> ,font=\small](M-11-1) -- (M-15-2) 
	node[midway,below] {$X_n$};
\draw[-> ,font=\small](M-11-2) -- (M-11-3) 
	node[midway,above] {$X_n$};
\draw[-> ,font=\small](M-11-3) -- (M-11-4) 
	;
\draw[-> ,font=\small](M-11-4) -- (M-11-5)
	;
\draw[-> ,font=\small](M-11-5) -- (M-11-6)
	node[midway,above] {$X_n$};
\draw[-> ,font=\small](M-11-6) -- (M-11-7)
	node[midway,above,scale=0.7] {$(n|n-1)$};
\draw[<-> ,font=\small](M-11-7) -- (M-11-8)
	;
\draw[<-> ,font=\small](M-13-7) -- (M-11-8)
	;

\draw[-> ,font=\small](M-13-1) -- (M-13-2)
	node[midway,above] {$X_n$};
\draw[-> ,font=\small](M-13-2) -- (M-13-3)
	node[midway,above] {$X_n$};
\draw[-> ,font=\small](M-13-1) -- (M-17-2)
	node[midway,left] {$X_n$};
\draw[-> ,font=\small](M-13-3) -- (M-13-4) 
	;
\draw[-> ,font=\small](M-13-4) -- (M-13-5)
	;
\draw[-> ,font=\small](M-13-5) -- (M-13-6)
	node[midway,above] {$X_n$};
\draw[-> ,font=\small](M-13-6) -- (M-13-7)
	node[midway,above,scale=0.7] {$(n|n+1)$};

\draw[-> ,font=\small](M-15-2) -- (M-15-3)
	node[midway,above] {$X_n$};
\draw[-> ,font=\small](M-15-3) -- (M-15-4) 
	;
\draw[-> ,font=\small](M-15-4) -- (M-15-5)
	;
\draw[-> ,font=\small](M-15-5) -- (M-15-6)
	node[midway,above] {$X_n$};
\draw[-> ,font=\small](M-15-6) -- (M-15-7)
	node[midway,above,scale=0.7] {$(n|n+1)$};
\draw[<-> ,font=\small](M-15-7) -- (M-15-8)
	;

\draw[-> ,font=\small](M-17-2) -- (M-17-3)
	node[midway,above] {$X_n$};
\draw[-> ,font=\small](M-17-3) -- (M-17-4) 
	;
\draw[-> ,font=\small](M-17-4) -- (M-17-5)
	;
\draw[-> ,font=\small](M-17-5) -- (M-17-6)
	node[midway,above] {$X_n$};
\draw[-> ,font=\small](M-17-6) -- (M-17-7)
	node[midway,above,scale=0.7] {$(n|n-1)$};
\draw[<-> ,font=\small](M-17-7) -- (M-17-8)
	;

\draw[-> ,font=\small, color=blue](M-3-1) -- (M-11-1)
	node[midway,left] {$2^{k}M$};
\draw[-> ,font=\small](M-7-2) -- (M-11-2)
	node[midway,left,scale=0.6, color=blue] {$2^{k-1}
		\begin{bmatrix}
		M & 0 \\
		0 & M \\
		\end{bmatrix}
		$};
\draw[-> ,font=\small, color=blue](M-7-3) -- (M-11-3)
	node[midway, left,scale=0.6] {$ 2^{k-2}
		\begin{bmatrix}
		M & 0 \\
		0 & M \\
		\end{bmatrix}
		$};
\draw[-> ,font=\small, color=blue](M-7-5) -- (M-11-5)
	node[midway, left,scale=0.6] {$2
		\begin{bmatrix}
		M & 0 \\
		0 & M \\
		\end{bmatrix}
		$};
\draw[-> ,font=\small, color=blue](M-7-6) -- (M-11-6)
	node[midway, left,scale=0.6] {$
		\begin{bmatrix}
		M & 0 \\
		0 & M \\
		\end{bmatrix}
		$};
\draw[-> ,font=\small, color=blue](M-7-7) -- (M-11-7)
	node[midway, right,scale=0.6]{$
	\begin{bmatrix}
	I & 0 \\
	0 & P 
	\end{bmatrix}		
	$};
\end{tikzpicture}
\end{center}
and $\mu_i$ sends $v$ to $-iv$ (resp. $iv$) for any $v$ belonging to the $\nabla$ connected to $P^A(r_{n+1})$ (resp. $P^A(r_{n+1})$).
The black arrows in the last two rows shows the differential component $\d_{i+1}(Q'_{G_i})$ in $T_{i+1}$, given by the conjugation of $\mu_i^{-1}$.
Thus,  the required condition $(*)$ is easily satisfied.
The construction for $k < 0$ is similar, using the map $N$ in place of $M$.
\end{enumerate}
This completes the list of $\mu_i$ for all possible types of 1-string $\check{g}_i$ for all $i$, and thus completing the proof.
\end{proof}
\begin{remark}
\cref{diagram commutes} was stated to be true in the (triangulated) homotopy category $\Kom^b(\Ba_n$-$\text{p$_{r}$g$_{r}$mod})$.
However, the proof shows that it is indeed true in the (additive) category of complexes (not up to homotopy) $\Com^b(\Ba_n$-$\text{p$_{r}$g$_{r}$mod})$.
\end{remark}
Using this result, we can now deduce the type $B_n$ version of \cref{L_A equivariant}:
\begin{theorem} \label{L_B equivariant}
For $\sigma^B \in \cA({B_n})$ and an admissible trigraded curve $\check{c}$ in $\D^B_{n+1}$, we have that $$\sigma^B(L_B(\check{c})) \cong L_B(\sigma^B(\check{c}))$$ in the category of $\Kom^b(\Ba_n$-$p_rg_rmod)$, i.e. the map $L_B$ is $\mathcal{A}(B_n)$-equivariant.
\end{theorem}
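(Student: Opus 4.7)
The plan is to deduce the equivariance of $L_B$ from the commutativity of the square in \cref{full picture} together with the equivariance of the other three sides already established: the topological lift $\mathfrak{m}$ (\cref{topological equivariant}), the extension-of-scalars functor $\Aa_{2n-1} \otimes_{\Ba_n} -$ (\cref{tensor equivariant}), and the type $A$ map $L_A$ (\cref{L_A equivariant}). The key step is a reflection-of-isomorphisms lemma for the functor $\Aa_{2n-1}\otimes_{\Ba_n}-$.

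The first step is a diagram chase. For any $\sigma^B \in \mathcal{A}(B_n)$ and any admissible trigraded curve $\check{c}$, I would apply $\Aa_{2n-1}\otimes_{\Ba_n}-$ to the two complexes $\sigma^B(L_B(\check{c}))$ and $L_B(\sigma^B(\check{c}))$. On the one hand,
\[
\Aa_{2n-1}\otimes_{\Ba_n}\sigma^B(L_B(\check{c})) \;\cong\; \Psi(\sigma^B)\bigl(\Aa_{2n-1}\otimes_{\Ba_n}L_B(\check{c})\bigr) \;\cong\; \Psi(\sigma^B)\,L_A(\mathfrak{m}(\check{c}))
\]
by \cref{tensor equivariant} and \cref{diagram commutes}, and this in turn is isomorphic to $L_A(\Psi(\sigma^B)\mathfrak{m}(\check{c})) \cong L_A(\mathfrak{m}(\sigma^B\check{c}))$ by \cref{L_A equivariant} and \cref{topological equivariant}. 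On the other hand, using \cref{diagram commutes} directly, $\Aa_{2n-1}\otimes_{\Ba_n}L_B(\sigma^B(\check{c})) \cong L_A(\mathfrak{m}(\sigma^B\check{c}))$. So after applying the extension-of-scalars functor, the two sides become isomorphic in $\Kom^b(\Aa_{2n-1}\text{-}\mathrm{p}_r\mathrm{g}_r\mathrm{mod})$.

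The main obstacle is descending this isomorphism from type $A$ back to type $B$. I would prove, as a separate lemma, that the functor
\[
\Aa_{2n-1}\otimes_{\Ba_n}-\;:\;\Kom^b(\Ba_n\text{-}\mathrm{p}_r\mathrm{g}_r\mathrm{mod})\longrightarrow\Kom^b(\Aa_{2n-1}\text{-}\mathrm{p}_r\mathrm{g}_r\mathrm{mod})
\]
reflects isomorphisms. By \cref{isomorphic algebras} this functor factors as $\mathfrak{F}$ (forgetting the $\Z/2\Z$-grading) followed by extension of scalars $\C\otimes_\R-$ along the inclusion $\ddot\Ba_n\hookrightarrow \C\otimes_\R\ddot\Ba_n\cong\Aa_{2n-1}$. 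Since $\C$ is faithfully flat over $\R$, the underlying module functor reflects isomorphisms; passing to minimal complexes (which exist in $\Kom^b$ of projectives over our finite-dimensional graded algebras, and which $\Aa_{2n-1}\otimes_{\Ba_n}-$ preserves because it sends indecomposable projectives to sums of indecomposable projectives by \cref{B tensor to A}) upgrades this to reflection of isomorphisms of complexes. Finally, the $\Z/2\Z$-grading information is recoverable: the shift $\<1\>$ on $P_j^B$ corresponds, via the explicit isomorphism $\Phi_j$ in the proof of \cref{B tensor to A}, to right-multiplication by $ie_j \in \Ba_n$, which under $\Phi$ is the involution swapping the summands $P^A_{n-(j-1)}\oplus P^A_{n+(j-1)}$; hence the $\Z/2\Z$-grading data is determined by the $\Aa_{2n-1}$-module structure together with this canonical involution.

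Combining the diagram chase with the reflection lemma gives $\sigma^B L_B(\check{c})\cong L_B(\sigma^B\check{c})$ in $\Kom^b(\Ba_n\text{-}\mathrm{p}_r\mathrm{g}_r\mathrm{mod})$, as required.
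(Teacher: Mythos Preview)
Your diagram chase is correct and matches the paper exactly: after applying $\Aa_{2n-1}\otimes_{\Ba_n}-$ the two sides become isomorphic in $\Kom^b(\Aa_{2n-1}\text{-}\mathrm{p}_r\mathrm{g}_r\mathrm{mod})$.

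The gap is in your reflection lemma. The functor $\Aa_{2n-1}\otimes_{\Ba_n}-$ does \emph{not} reflect isomorphisms. By definition it factors through the forgetful functor $\mathfrak{F}$ that kills the $\Z/2\Z$-grading $\<-\>$, so $P_1^B$ and $P_1^B\<1\>$ are sent to the same object $P_n^A$; yet $P_1^B\not\cong P_1^B\<1\>$ in $\Kom^b(\Ba_n\text{-}\mathrm{p}_r\mathrm{g}_r\mathrm{mod})$ (the $\Z\times\Z\times\Z/2\Z$-action on isotopy classes of trigraded curves is free, \cref{freeact}). Your recovery argument via ``right multiplication by $ie_j$'' works for $j\ge2$ because $P_j^B\<1\>\cong P_j^B$ there, but there is no element $ie_1$ in $\Ba_n$ and $P_1^B$ is only a $(\Ba_n,\R)$-bimodule, so the argument breaks exactly where it matters.

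What the paper does instead is accept that the Krull--Schmidt argument only yields
\[
L_B(\sigma^B(\check{c}))\;\cong\;\sigma^B(L_B(\check{c}))\<s\>,\qquad s\in\{0,1\},
\]
and then eliminates $s=1$ case by case. If $c\cap\{0\}=\emptyset$, every indecomposable summand $P_j^B$ appearing satisfies $P_j^B\cong P_j^B\<1\>$, so the ambiguity is harmless. If $c$ has an endpoint at $0$, the paper introduces a $\Z/2\Z$-valued invariant $\sgn(C)$ (the sum of the third gradings over all copies of $P_1^B$ in $C$), checks it is preserved under isomorphism, and proves by a direct computation on generators (\cref{gradinginvariant}) that $\sgn(\sigma_j^B L_B(\check{c}))=\sgn(L_B(\sigma_j^B\check{c}))$, forcing $s=0$. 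This last step is genuinely needed and is missing from your plan.
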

\begin{proof}
Let $\check{c}$ be a trigraded curve in $\D^B_{n+1}$ and $\sigma^B$ be an element of $\mathcal{A}(B_n)$.
By \cref{diagram commutes}, the diagram in \cref{full picture} commutes.
Together with the three other maps being equivariant, we can conclude that
\begin{equation} \label{commute iso}
\Aa_{2n-1} \otimes_{\Ba_n} \left( L_B(\sigma^B(\check{c}))\right) \cong \Aa_{2n-1} \otimes_{\Ba_n} (\sigma^B(L_B(\check{c}))).
\end{equation}
Recall that the functor $\Aa_{2n-1} \otimes_{\Ba_n} -$ was defined as $\Aa_{2n-1} \otimes_{\ddot{\Ba}_n} \mathfrak{F}(-)$ (see paragraph after \cref{isomorphic algebras} for the definitions).
Since $\Aa_{2n-1} \cong \C\otimes_\R \ddot{\Ba}_n$ as graded $\C$-algebras, \cref{commute iso} together with the fact that both categories $\Kom^b(\ddot{\Ba}_n\text{-}p_rg_rmod)$ and $\Kom^b(\Aa_{2n-1}\text{-}p_rg_rmod)$ are Krull-Schmidt implies that
\begin{equation}\label{iso shift}
L_B(\sigma^B(\check{c})) \cong \sigma^B (L_B(\check{c}))\<s\>,
\end{equation}
with $s = 0$ or $1$.

We aim to show that $s = 0$ for all cases.
First consider the case when $c \cap \{0\} = \emptyset$.
As $\check{c} \simeq \chi(r_1,r_2,r_3)\beta(\check{b}_2)$ and $P_2^B \cong P_2^B \<1\>$, it follows easily that $(L_B(\check{c}))\<0\> \cong (L_B(\check{c}))\<1\>$, and so we are done.
Now consider when $c$ has one of its end points at $0$.
Note that it is sufficient to prove the statement for $\sigma^B = \sigma^B_j$ for each $j$.
We assign to each complex $C$ in $\Kom^b(\Ba_n$-$\text{p$_{r}$g$_{r}$mod})$ an element of $\Z/2\Z$ denoted by $\sgn(C)$, by taking the sum of the third grading $\<-\>$ over all modules $P_1$ in $C$.
   It is easy to show that $\sgn$ is invariant under isomorphisms in  $\Kom^b(\Ba_n$-$\text{p$_{r}$g$_{r}$mod})$, and so \cref{gradinginvariant} below shows that $s$ must be 0.
   \end{proof}
%
\begin{lemma} \label{gradinginvariant}
For any trigraded curve $\check{c}$ with one of its endpoint at $\{0\}$ and any generating braid $\sigma^B_j,$  we have $\sgn \left( \sigma^B_j L_B(\check{c}) \right) = \sgn (L_B(\sigma^B_j(\check{c}))).$
\end{lemma}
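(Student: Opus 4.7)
The plan is to compute both sides of the claimed equality directly, observing that $\sgn$ depends only on the $P_1$-summands of the complex together with their $\<-\>$-gradings. Concretely, an inspection of the definition of $L_B$ shows that $\sgn(L_B(\check{c})) = \sum_{y \in c\cap d_1} y_3 \pmod{2}$, so $\sgn$ is a local invariant built from the $1$-crossings of $\check{c}$. This reduces the question to tracking how both sides of the purported equality alter the $1$-crossings of $\check{c}$ (topologically) versus the $P_1$-summands with $\<1\>$-shift appearing in the tensor product $R_j \otimes_{\Ba_n} L_B(\check{c})$ (algebraically). We put $\check{c}$ and $\sigma^B_j(\check{c})$ in normal form throughout, so that the analysis localises to the $1$-strings classified in \cref{B 1-string}.

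The verification splits by $j$. For $j \geq 3$, the half twist $\sigma^B_j$ is supported in a small neighbourhood of $b_j$, which is disjoint from $d_1$; hence the $1$-crossings and their trigradings are unchanged, and the topological $\sgn$ is preserved. Algebraically, since ${}_jP_1^B = 0$ for $j \geq 3$, tensoring the complex $L_B(\check{c})$ with $R_j$ leaves every $P_1\{a\}\<b\>$-summand fixed (the new cone-term involves only $P_j$'s), so the $P_1$-content with its $\<-\>$-grading is unchanged. Thus both sides equal $\sgn(L_B(\check{c}))$ in this case.

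For $j=2$, the half twist can create, destroy, or re-grade $1$-strings near $\{1\}$, and simultaneously $R_2 \otimes_{\Ba_n} -$ can introduce or cancel $P_1$-summands via the differential $\beta_2$. We proceed by case analysis on the type of $1$-string $\check{g}_i$ of $\check{c}$ (in the sense of \cref{B 1-string}): for each type we read off its image under $\sigma^B_2$, compute the resulting $1$-crossings and their third local index using \cref{compute tri int}, and compare with the $P_1\<b\>$-summands appearing in $R_2 \otimes L_B(\check{g}_i)$ via the explicit isomorphisms constructed in the proof of \cref{diagram commutes}. The parity of the $\<1\>$-shifts agrees on both sides in each case. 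For $j=1$, by \cref{action}(2) the full twist $(t^B_{b_1})^2$ shifts the trigrading of any curve with endpoint $\{0\}$ by $\chi^B(-1,1,1)$, so on the topological side the contribution to $\sgn$ picks up a summand equal to the number of $P_1$-modules in $L_B(\check{c})$ modulo $2$; on the algebraic side this is precisely compensated by the explicit $\<1\>$-shift built into the definition $\sigma^B_1 = R_1\<1\>\otimes -$, matched against the change in $P_1$-content caused by the tensor with $R_1$.

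The main obstacle is the case $j=2$: although the $1$-string classification is finite, each case requires checking that the $\<1\>$-parity of the new $P_1$-summands produced by $R_2 \otimes-$ coincides with the sum of $y_3$-values of the newly created $1$-crossings. This is routine but delicate because the non-trivial $(ie_2)$-relations in $\Ba_n$ govern both the algebraic tensor and, through the definitions in \cref{normal DB} and \cref{compute tri int}, the topological contributions; the compatibility mirrors the detailed chain-level computations already carried out in the proof of \cref{diagram commutes}, which we will invoke rather than redo.
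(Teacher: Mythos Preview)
Your proposal has the difficulty located in the wrong place, and this leads to a genuine gap.

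For $j=2$ you announce an elaborate case analysis on $1$-string types as the ``main obstacle''. In fact $j=2$ is as easy as $j\geq 3$, and the paper treats all of $j\geq 2$ in one line: both sides equal $\sgn(L_B(\check{c}))$. Topologically, the half twist $t_{b_2}^B$ is supported in a neighbourhood of $b_2$, which lies in $D_1\cup D_2$ and is disjoint from $d_1$; hence the $1$-crossings of $\check{c}$ and their local indices are unchanged. Algebraically, $R_2\otimes_{\Ba_n}P^B_k$ introduces only $P^B_2$-summands (since $P_2^B\otimes_\C{}_2P_k^B$ is a sum of copies of $P_2^B$), so the $P_1^B$-content with its $\<-\>$-grading survives intact. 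No case analysis is needed.

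The real work is at $j=1$, and here your argument is incorrect. You invoke \cref{action}(2) to claim that $(t_{b_1}^B)^2$ shifts the trigrading of \emph{any} curve with endpoint at $\{0\}$ by $\chi^B(-1,1,1)$. But that lemma says only that the full twist $t_c$ along $c$ itself sends $\check{c}$ to $\chi^B(-1,1,1)\check{c}$; it says nothing about $t_{b_1}$ acting on an arbitrary curve $c\neq b_1$. For a general $\check c$ with endpoint at $0$, the curve $\sigma_1^B(\check c)$ is a genuinely different curve, not a grading shift of $\check c$, so your topological computation of $\sgn$ collapses. The paper handles $j=1$ by a case analysis on $1$-string types (exactly the kind of analysis you proposed for $j=2$): for each type of $1$-string $\check g$ one checks whether $\sgn(L_B(\sigma_1^B(\check g)))-\sgn(L_B(\check g))$ is $0$ or $1$, then sums over the $1$-strings of $\check c$. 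On the algebraic side, the key computation is that $P_1^B\otimes_\R{}_1P_1^B$ contributes no $\<1\>$-shift while $P_1^B\otimes_\R{}_1P_2^B\cong P_1^B\oplus P_1^B\<1\>$ contributes exactly one; hence $\sgn(\sigma_1^B L_B(\check c))-\sgn(L_B(\check c))$ equals the parity of the number of $P_2^B$-summands, i.e.\ the number of $2$-crossings. Matching these parities finishes the proof.
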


\begin{proof} (of \cref{gradinginvariant})
For $j \geq 2,$ it is clear that
$\sgn \left( \sigma^B_j L_B(\check{c}) \right) = \sgn \left( L_B(\check{c}) \right) = \sgn (L_B(\sigma^B_j(\check{c}))).$

  Now fix $j=1.$ 
  First consider the case when $\check{c}$ is of type VI, i.e.  $\check{c} = \chi^B(r_1, r_2, r_3)\check{b}_1$.
  Then $\sigma^B_1(\check{c}) = \chi^B(r_1-1, r_2+1, r_3+1)\check{b}_1 $ by \cref{action}(2). 
  So, $L_B(\sigma^B_1(\check{c})) = P_1 [-r_1+1] \{r_2 +1\} \< r_3+1\>$.
  On the other hand, 
\begin{align*}
\sigma^B_1 L_B((\check{c})) 
&\cong  P_1 [-r_1+1] \{r_2 +1 \} \< r_3+1\> \oplus P_1 [-r_1 +1] \{r_2 \} \< r_3 + 1\> \xra{[X_1 \ id]} P_1 [-r_1] \{r_2 \} \< r_3 + 1\> \\
&\cong  P_1 [-r_1+1] \{r_2 +1 \} \< r_3+1\>. 
\end{align*}  
\noindent Thus, $\sgn \left( \sigma^B_j L_B(\check{c}) \right) = r_3 +1 = \sgn (L_B(\sigma^B_j(\check{c}))).$
  
  Otherwise, we analyse $\sgn$ based on the number of $2$-crossing in $\check{c}.$
  Note that,  for $1$-strings $\check{g},$
 \[\sgn \left( L_B(\sigma^B_1 (\check{g})) \right)  =  \begin{cases} 
     \sgn \left( L_B(\check{g}) \right),  & \text{when $\check{g}$ is of type $II'_w$, $II'_{w+ \frac{1}{2}}$, $III'_{w+ \frac{1}{2}}$ and $V'$;} \\
     \sgn \left( L_B(\check{g}) \right) + 1, & \text{when $\check{g}$ is of type $III'_w$.}\\       
   \end{cases}
    \]  
   Note that $\sigma^B_1$ won't change the number of $2$-crossings of $\check{c}$ and as $\check{c}\cap \{0\} = \{0\}$, $\check{c}$ contains $1$-string of type  $III'_w$ if and only if $\check{c}$ has even number of 2-crossings.
   Since $\sgn \left(  L_B(\sigma^B_1 (\check{c})) \right)$ can be computed by summing over all 1-strings of $\check{c}$, we conclude that
    \[\sgn \left(  L_B(\sigma^B_1 (\check{c})) \right)  =  \begin{cases} 
     \sgn \left( L_B(\check{c}) \right),  & \text{if $\check{c}$ has even number of $2$-crossings;} \\
     \sgn \left( L_B(\check{c}) \right) + 1, & \text{if $\check{c}$ has odd number of $2$-crossings,}\\       
   \end{cases}
    \]
 \noindent 
 On the other hand,  it follows that 
  \[\sgn \left( \sigma^B_1  L_B(\check{c}) \right)  =  \begin{cases} 
     \sgn \left( L_B(\check{c}) \right),  & \text{if $L_B(\check{c}$) has even number of modules $P^B_2$;} \\
     \sgn \left( L_B(\check{c}) \right) + 1, & \text{if $L_B(\check{c}$) has odd number of modules $P^B_2$,}\\       
   \end{cases}
    \]
    since ${}_1P^B\otimes_\R P^B_1 \cong \R \oplus \R$, ${}_1P^B\otimes_\R P^B_2 \cong \R \oplus \R \<1\>$ and ${}_1P^B \otimes_\R P^B_j = 0$ for all $j \geq 3$.
 Thus, we get $\sgn \left( \sigma^B_1 ( L_B(\check{c}) )\right) = \sgn \left(   L_B(\sigma^B_1(\check{c})) \right). $                                                                                                                                                                                                                                                                                                                                                                                                                                                                                                                                                                                                                                                                                                                                                                                                                                                                                                                                                           
  \end{proof}

Collecting all the results, we have the following main theorem of this paper:
\begin{theorem} \label{fullmaintheorem}
The diagram in \cref{full picture} is commutative, where the maps $\mathfrak{m}, L_B, L_A$ and $\Aa_{2n-1} \otimes_{\Ba_n} -$ are all $\mathcal{A}(B_n)$-equivariant.
\end{theorem}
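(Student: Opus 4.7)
The plan is to assemble the theorem directly from the four pieces already proved in the preceding sections, rather than to prove anything new. First I would invoke \cref{topological equivariant}, which establishes $\cA(B_n)$-equivariance of the topological lift $\mathfrak{m}: \check{\fC} \to \ddot{\undertilde{\wt{\fC}}}$ along the embedding $\Psi: \cA(B_n) \hookrightarrow \cA(A_{2n-1})$. Next I would use \cref{L_A equivariant}, which gives $\cA(A_{2n-1})$-equivariance of $L_A$; restricting along $\Psi$ immediately yields $\cA(B_n)$-equivariance of $L_A$ on the image of $\mathfrak{m}$. For the bottom horizontal arrow, \cref{tensor equivariant} already provides $\cA(B_n)$-equivariance of the extension-of-scalars functor $\Aa_{2n-1}\otimes_{\Ba_n}-$, again through $\Psi$.

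The only genuine piece of work is the left vertical arrow, $L_B$, and the commutativity of the square. Here I would quote \cref{diagram commutes} to obtain the natural isomorphism $\Aa_{2n-1}\otimes_{\Ba_n} L_B(\check{c}) \cong L_A(\mathfrak{m}(\check{c}))$ in $\Com^b(\Aa_{2n-1}\text{-p$_r$g$_r$mod})$. With this commutativity in hand, $\cA(B_n)$-equivariance of $L_B$ follows formally from the equivariance of the other three maps combined with the fact that $\Aa_{2n-1}\otimes_{\Ba_n}-$ reflects isomorphisms up to the $\<1\>$-grading shift (since $\C$ is a degree two extension of $\R$); this is exactly what is carried out in \cref{L_B equivariant}, using \cref{gradinginvariant} to eliminate the ambiguous $\<s\>$ shift via the parity invariant $\sgn(-)$.

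Therefore the proof of \cref{fullmaintheorem} consists of two sentences: commutativity of the square is \cref{diagram commutes}, and equivariance of each of the four maps is given respectively by \cref{topological equivariant}, \cref{L_A equivariant}, \cref{tensor equivariant}, and \cref{L_B equivariant}. The main conceptual obstacle for the entire program lies earlier, in the proof of \cref{diagram commutes}: one has to construct, for every admissible trigraded curve $\check{c}$ in $\DB$, an explicit isomorphism of complexes between $\Aa_{2n-1}\otimes_{\Ba_n} L_B(\check{c})$ and $L_A(\mathfrak{m}(\check{c}))$, and the differentials differ precisely along the $1$-strings of $\check{c}$, requiring a type-by-type change-of-basis argument using the matrices $M$ and $N$ that diagonalise the counit relations $E$ and $F$. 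Once that bookkeeping is done, the present theorem is a pure assembly statement.
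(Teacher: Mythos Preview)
Your proposal is correct and matches the paper's approach exactly: the paper states \cref{fullmaintheorem} immediately after \cref{L_B equivariant} with the phrase ``Collecting all the results, we have the following main theorem of this paper'' and gives no further proof. You have correctly identified the four ingredients (\cref{topological equivariant}, \cref{L_A equivariant}, \cref{tensor equivariant}, \cref{L_B equivariant}) for equivariance and \cref{diagram commutes} for commutativity, and your additional commentary on how \cref{L_B equivariant} is deduced from the other pieces accurately reflects the logical dependencies in the paper.
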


\section{Categorification of Homological Representations}\label{categorification hom rep}
In this section, we shall relate the categorical representations of type $A_{2n-1}$ and type $B_n$ Artin groups to their representations on the first homologies of surfaces.

Throughout this section we will denote $\cK_A := \Kom^b(\Aa_{2n-1}$-$\text{p$_{r}$g$_{r}$mod})$ and $\cK_B := \Kom^b(\Ba_n$-$\text{p$_{r}$g$_{r}$mod})$.
We denote the Grothendieck group of a triangulated category $\mathcal{C}$ as $K_0(\mathcal{C})$; recall that it is the abelian group freely generated by objects in $\mathcal{C}$, quotient by the relation
$
[B] = [A] + [C]
$
for each exact triangle $A \ra B \ra C \ra$.

\subsection{Categorical representations}

First, consider the Grothendieck group $K_0(\cK_A)$.
The functor $\{1\}$ make $K_0(\cK_A)$ into a $\Z[\Z] \cong \Z[q,q^{-1}] =: \mathcal{Z}_A$-module.
It is easy to see that $K_0(\cK_A)$ is a free module over $\mathcal{Z}_A$ of rank $2n-1$, generated by $\{[P^A_j] \mid 1 \leq j \leq 2n-1\}$.
Since $\cA(B_n)$ acts on $\cK_B$, we have an induced representation $\rho_{KA}$ of $\cA(A_{2n-1})$ on $K_0(\cK_A)$, given by the following on each of the generators of $\cA(A_{2n-1})$:

\begin{align*}
\rho_{KA}(\sigma^A_1) &= 
\begin{bmatrix}
-q & -q & 0 \\
0  &  1 & 0 \\
0  &  0 & I_{n-2}
\end{bmatrix}, \\
\rho_{KA}(\sigma^A_j) &=
\begin{cases}
\vspace{3mm}
\begin{bmatrix}
I_{j-2} & 0  & 0  & 0  & 0 \\
0       & 1  & 0  & 0  & 0 \\
0       & -1 & -q & -q & 0 \\
0       & 0  & 0  & 1  & 0 \\
0       & 0  & 0  & 0  & I_{n-j-1}
\end{bmatrix}, \qquad \text{for } 2 \leq j \leq n-1; \\
\vspace{3mm}
\begin{bmatrix}
I_{j-2} & 0  & 0  & 0  & 0 \\
0       & 1  & 0  & 0  & 0 \\
0       & -1 & -q & -1 & 0 \\
0       & 0  & 0  & 1  & 0 \\
0       & 0  & 0  & 0  & I_{n-j-1}
\end{bmatrix}, \qquad \text{for }  j = n; \\
\begin{bmatrix}
I_{j-2} & 0  & 0  & 0  & 0 \\
0       & 1  & 0  & 0  & 0 \\
0       & -q& -q & -1 & 0 \\
0       & 0  & 0  & 1  & 0 \\
0       & 0  & 0  & 0  & I_{n-j-1}
\end{bmatrix}, \qquad \text{for } n+1 \leq j \leq 2n-2;
\end{cases} \\
\rho_{KA}(\sigma^A_{2n-1}) &= 
\begin{bmatrix}
I_{n-2} & 0  & 0 \\
0       & 1  & 0 \\
0       & -q & -q
\end{bmatrix}
\end{align*}

Now consider the Grothendieck group $K_0(\cK_B)$.
The functors $\{1\}$ and $\<1\>$ make $K_0(\cK_B)$ into a $\Z[\Z\times \Z/2\Z] \cong \Z[q,q^{-1},s]/\<s^2-1\> =: \mathcal{Z}_B$-module.
As $P^B_j\<1\> \cong P^B_j$ for all $j \geq 2$, we have that $s[P_j] = [P_j]$ for all $j\geq 2$.
It is easy to see now that

$$
K_0(\cK_B) \cong \mathcal{Z}_B \oplus \left( \bigoplus_{j=2}^n \mathcal{Z}_B/\<s-1\> \right)
$$

\noindent as $\mathcal{Z}_B$-modules, generated by $\{[P^B_j] | 1 \leq j \leq n\}$.
Since $\cA(B_n)$ acts on $\cK_B$, we have an induced representation $\rho_{BK}$ of $\cA(B_n)$ on $K_0(\cK_B)$, given by the following on each of the generators of $\cA(B_n)$ 

\begin{align*}
\rho_{KB}(\sigma^B_1) &= 
\begin{bmatrix}
-sq & -(1+s) & 0 \\
0  &  1     & 0 \\
0  &  0     & I_{n-2}
\end{bmatrix}; \\
\rho_{KB}(\sigma^B_j) &=
\begin{bmatrix}
I_{j-2} & 0  & 0  & 0  & 0 \\
0       & 1  & 0  & 0  & 0 \\
0       & -q & -q & -1 & 0 \\
0       & 0  & 0  & 1  & 0 \\
0       & 0  & 0  & 0  & I_{n-j-1}
\end{bmatrix}, \qquad \text{for } j \neq 1, n; \\
\rho_{KB}(\sigma^B_n) &= 
\begin{bmatrix}
I_{n-2} & 0  & 0 \\
0       & 1  & 0 \\
0       & -q & -q
\end{bmatrix}.
\end{align*}

\subsection{Homological representations}
Let us first recall the homological representation of $\cA(A_{2n-1})$ that is isomorphic to the reduced Burau representation.
Note that it has been shown in \cite{KhoSei} that the categorical representation categorifies the reduced Burau representation (and hence this homological representation), but we shall spell out the construction here as it will shed some light on the construction of the homological representation for type $B_n$ and also make clear the relationship between them.

Consider the covering space $\cD_{2n}$ classified by the cohomology class $C_{\cD} \in H^1(\DA \setminus \Delta, \Z)$ defined by
$
[\lambda_j] \mapsto 1, \text{for all } j \in \Delta = \{-n, ..., -1, 1, ..., n\},
$
where each $\lambda_j$ is a closed loop around the puncture $j$.
It is easy to see that $H_1(\cD_{2n}, \Z)$ is a free module over $\Z[\Z] \cong \mathcal{Z}_A$ of rank $2n-1$, with basis $\{[\gamma_1], ..., [\gamma_{2n-1}] \}$ defined by:
\[
[\gamma_{j}] := 
\begin{cases}
[\lambda_{-1}] - [\lambda_{1}], &\text{ for } j = n; \\
(-1)^{n-j}([\lambda_{-n+j-1}] - [\lambda_{-n+j}]), &\text{ for } j \leq n-1; \\
(-q)^{n-j}([\lambda_{-n+j}] - [\lambda_{-n+j+1}], &\text{ for } j \geq n+1,
\end{cases}
\]

\noindent and the space $\cD_{2n}$ is homotopy equivalent to the infinite graph $\Gamma_{\Z}$ given below:
\begin{figure}[H]  
\centering
\begin{tikzpicture} [scale=0.7]

\draw[fill] (3,2.5) circle [radius=0.08] ;
\draw[fill] (6,2.5) circle [radius=0.08]  ;
\draw[fill] (9,2.5) circle [radius=0.08]  ;
\draw[fill] (12,2.5) circle [radius=0.08]  ;
\draw[fill] (0,2.5) circle [radius=0.08];

\draw  [cyan, dashed] plot[smooth, tension=1]coordinates { (0,2.5) (-.55, 3) (-1.25, 3.2)  };
\draw  [cyan, dashed] plot[smooth, tension=1]coordinates { (0,2.5) (-.55, 2.85) (-1.25, 3)  };
\draw  [cyan, dashed] plot[smooth, tension=1]coordinates { (0,2.5) (-.55, 2.75) (-1.25, 2.8)  };
\draw  [cyan, dashed] plot[smooth, tension=1]coordinates { (0,2.5) (-.55, 2) (-1.25, 1.8)  };
\draw  [cyan, dashed] plot[smooth, tension=1]coordinates { (0,2.5) (-.55, 2.15) (-1.25, 2)  };
\draw  [cyan, dashed] plot[smooth, tension=1]coordinates { (0,2.5) (-.55, 2.25) (-1.25, 2.2)  };

\draw  [cyan, dashed] plot[smooth, tension=1]coordinates { (12,2.5) (12.55, 3) (13.25, 3.2)  };
\draw  [cyan, dashed] plot[smooth, tension=1]coordinates { (12,2.5) (12.55, 2.85) (13.25, 3)  };
\draw  [cyan, dashed] plot[smooth, tension=1]coordinates { (12,2.5) (12.55, 2.75) (13.25, 2.8)  };
\draw  [cyan, dashed] plot[smooth, tension=1]coordinates { (12,2.5) (12.55, 2) (13.25, 1.8)  };
\draw  [cyan, dashed] plot[smooth, tension=1]coordinates { (12,2.5) (12.55, 2.15) (13.25, 2)  };
\draw  [cyan, dashed] plot[smooth, tension=1]coordinates { (12,2.5) (12.55, 2.25) (13.25, 2.2)  };

\draw  [cyan] plot[smooth, tension=1]coordinates { (0,2.5) (1.5, 3.2) (3,2.5)};
\draw  [cyan] plot[smooth, tension=1]coordinates { (0,2.5) (1.5, 3) (3,2.5)};
\draw  [cyan] plot[smooth, tension=1]coordinates { (0,2.5) (1.5, 2.8) (3,2.5)};
\draw  [cyan] plot[smooth, tension=1]coordinates { (0,2.5) (1.5, 2) (3,2.5)};
\draw  [cyan] plot[smooth, tension=1]coordinates { (0,2.5) (1.5, 2.2) (3,2.5)};
\draw  [cyan] plot[smooth, tension=1]coordinates { (0,2.5) (1.5, 1.8) (3,2.5)};

\draw  [cyan] plot[smooth, tension=1]coordinates { (3,2.5) (4.5, 3.2) (6,2.5)};
\draw  [cyan] plot[smooth, tension=1]coordinates { (3,2.5) (4.5, 3) (6,2.5)};
\draw  [cyan] plot[smooth, tension=1]coordinates { (3,2.5) (4.5, 2.8) (6,2.5)};
\draw  [cyan] plot[smooth, tension=1]coordinates { (3,2.5) (4.5, 2) (6,2.5)};
\draw  [cyan] plot[smooth, tension=1]coordinates { (3,2.5) (4.5, 2.2) (6,2.5)};
\draw  [cyan] plot[smooth, tension=1]coordinates { (3,2.5) (4.5, 1.8) (6,2.5)};

\draw  [cyan] plot[smooth, tension=1]coordinates { (6,2.5) (7.5, 3.2) (9,2.5)};
\draw  [cyan] plot[smooth, tension=1]coordinates { (6,2.5) (7.5, 3) (9,2.5)};
\draw  [cyan] plot[smooth, tension=1]coordinates { (6,2.5) (7.5, 2.8) (9,2.5)};
\draw  [cyan] plot[smooth, tension=1]coordinates { (6,2.5) (7.5, 2) (9,2.5)};
\draw  [cyan] plot[smooth, tension=1]coordinates { (6,2.5) (7.5, 2.2) (9,2.5)};
\draw  [cyan] plot[smooth, tension=1]coordinates { (6,2.5) (7.5, 1.8) (9,2.5)};

\draw  [cyan] plot[smooth, tension=1]coordinates { (9,2.5) (10.5, 3.2) (12,2.5)};
\draw  [cyan] plot[smooth, tension=1]coordinates { (9,2.5) (10.5, 3) (12,2.5)};
\draw  [cyan] plot[smooth, tension=1]coordinates { (9,2.5) (10.5, 2.8) (12,2.5)};
\draw  [cyan] plot[smooth, tension=1]coordinates { (9,2.5) (10.5, 2) (12,2.5)};
\draw  [cyan] plot[smooth, tension=1]coordinates { (9,2.5) (10.5, 2.2) (12,2.5)};
\draw  [cyan] plot[smooth, tension=1]coordinates { (9,2.5) (10.5, 1.8) (12,2.5)};

\node [cyan] at (1.5,3.2) {$>$};
\node [cyan] at (1.5,3) {$>$};
\node [cyan] at (1.5,2.8) {$>$};
\node [cyan] at (1.5,2.2) {$>$};
\node [cyan] at (1.5,2) {$>$};
\node [cyan] at (1.5,1.8) {$>$};

\node [cyan] at (4.5,3.2) {$>$};
\node [cyan] at (4.5,3) {$>$};
\node [cyan] at (4.5,2.8) {$>$};
\node [cyan] at (4.5,2.2) {$>$};
\node [cyan] at (4.5,2) {$>$};
\node [cyan] at (4.5,1.8) {$>$};

\node [cyan] at (7.5,3.2) {$>$};
\node [cyan] at (7.5,3) {$>$};
\node [cyan] at (7.5,2.8) {$>$};
\node [cyan] at (7.5,2.2) {$>$};
\node [cyan] at (7.5,2) {$>$};
\node [cyan] at (7.5,1.8) {$>$};

\node [cyan] at (10.5,3.2) {$>$};
\node [cyan] at (10.5,3) {$>$};
\node [cyan] at (10.5,2.8) {$>$};
\node [cyan] at (10.5,2.2) {$>$};
\node [cyan] at (10.5,2) {$>$};
\node [cyan] at (10.5,1.8) {$>$};

\node  at (1.5,2.6) {$ \boldsymbol{\vdots}$};
\node  at (4.5,2.6) {$ \boldsymbol{\vdots}$};
\node  at (10.5,2.6) {$ \boldsymbol{\vdots}$};
\node  at (7.5,2.6) {$ \boldsymbol{\vdots}$};

\node[above,cyan] at (1.5,3.2) {\scriptsize {$q^{-1}\lambda_n$}};
\node[above,cyan] at (4.5,3.2) {\scriptsize {$\lambda_n$}};
\node[above,cyan] at (7.5,3.2) {\scriptsize {$q\lambda_n$}};
\node[above,cyan] at (10.5,3.2) {\scriptsize {$q^{2}\lambda_n$}};
\node[below,cyan] at (10.5,1.8) {\scriptsize {$q^{2}\lambda_{-n}$}};
\node[below,cyan] at (1.5,1.8) {\scriptsize {$q^{-1}\lambda_{-n}$}};
\node[below,cyan] at (4.5,1.8) {\scriptsize {$\lambda_{-n}$}};
\node[below,cyan] at (7.5,1.8) {\scriptsize {$q\lambda_{-n}$}};

\node[below] at  (0,2.5) {\scriptsize {$q^{-1}p$}};
\node[below] at  (3,2.5) {\scriptsize {$p$}};
\node[below] at  (6,2.5) {\scriptsize {$qp$}};
\node[below] at  (9,2.5) {\scriptsize {$q^2p$}};
\node[below] at  (12,2.5) {\scriptsize {$q^3p$}};

\end{tikzpicture}

\caption{{\small The infinite graph $\Gamma_\Z.$} } 
\end{figure}
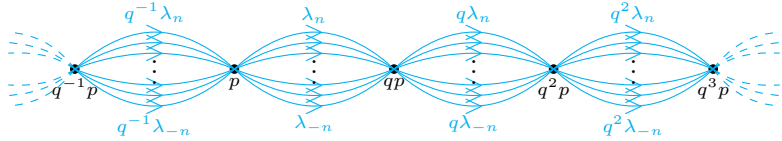
The action of $\cA(A_{2n-1})$ on $\DA \setminus \Delta$ lifts to an action on $\cD_{2n}$ that commutes with the deck transformation $\Z$, which then induces an action on the $\Z[q,q^{-1}]$-module $H_1(\cD_{2n}, \Z)$.
Through this we obtain a linear representation:
\[
\rho_{RHA} : \cA(A_{2n-1}) \ra GL_{2n-1}(\Z[q,q^{-1}]),
\]
where a simple calculation shows that $\rho_{RHA}$ agrees with the linear representation $\rho_{KA}$.
Thus we have the following result:
\begin{proposition}\label{Hom and cat rep A}
The $\mathcal{Z}_A$-linear map $\Theta_A: H_1(\cD_{2n}, \Z) \ra K_0(\cK_A)$ defined by
\[
\Theta_A([\gamma_j]) = [P^A_j]
\]
is an isomorphism of $\cA(A_{2n-1})$-representations.
\end{proposition}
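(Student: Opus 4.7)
The plan is to verify this proposition in two stages: first check that $\Theta_A$ is a $\mathcal{Z}_A$-module isomorphism, then check that it intertwines the two $\cA(A_{2n-1})$-actions. Since the categorical Grothendieck group $K_0(\cK_A)$ is a free $\mathcal{Z}_A$-module of rank $2n-1$ with basis $\{[P^A_j]\}_{j=1}^{2n-1}$, and $H_1(\cD_{2n},\Z)$ has already been identified as a free $\mathcal{Z}_A$-module of rank $2n-1$ with basis $\{[\gamma_j]\}_{j=1}^{2n-1}$, the first stage is automatic: $\Theta_A$ is the unique $\mathcal{Z}_A$-linear map sending one basis to the other, hence an isomorphism of $\mathcal{Z}_A$-modules.

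The substantive step is to compute $\rho_{RHA}(\sigma^A_j)$ on each basis vector $[\gamma_k]$ and check it matches the explicit matrix for $\rho_{KA}(\sigma^A_j)$ given just above the statement. First I would describe how the half-twist $\sigma^A_j$ lifts to the infinite cyclic covering $\cD_{2n} \simeq \Gamma_\Z$: choosing a basepoint $p$ above the basepoint of $\DA\setminus\Delta$, the lift of $\sigma^A_j$ is uniquely determined by its value on $p$. Under this lift, the small loop $\lambda_{p_k}$ around the $k$-th puncture (with the enumeration $p_1,\dots,p_{2n}$ of $\Delta$ by increasing index) is fixed for $k \neq j, j+1$, and is interchanged with $\lambda_{p_{j+1}}$ up to the appropriate deck-transformation factor for $k=j$. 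Concretely, a routine picture-chase of the half-twist (as in \cite[\S3e]{KhoSei}) yields
\[
[\sigma^A_j(\lambda_{p_j})] = q^{\epsilon_j}[\lambda_{p_{j+1}}], \qquad [\sigma^A_j(\lambda_{p_{j+1}})] = [\lambda_{p_j}] + (1-q^{\epsilon_j})[\lambda_{p_{j+1}}],
\]
for a sign $\epsilon_j \in \{\pm 1\}$ which depends on whether $j < n$, $j = n$, or $j > n$ (the asymmetry arising from the way the basepoint of the covering sits relative to the cut line between $-1$ and $1$, and matching the asymmetric grading convention adopted in Section~\ref{A zigzag}).

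Substituting these formulas into the definition of $[\gamma_k]$ as a signed $\mathcal{Z}_A$-linear combination of the $[\lambda_{p_i}]$ and collecting terms, one obtains a matrix with rows indexed by $[\gamma_k]$ that is block-diagonal away from the $2\times 2$ block at positions $(k,k+1)$, just as in $\rho_{KA}(\sigma^A_j)$. A case analysis on whether $j < n$, $j = n$, or $j > n$ (matching the three cases in the displayed formula for $\rho_{KA}(\sigma^A_j)$) and on whether the $\gamma_k$ involved straddles the cut, reproduces the entries $-q,-1,-q$ in the various positions. The computation for the boundary generators $\sigma^A_1$ and $\sigma^A_{2n-1}$ is analogous but shorter, because only one pair of loops is moved.

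The main obstacle will be bookkeeping: the definition of $[\gamma_j]$ bundles together signs $(-1)^{n-j}$ and powers of $q$ that depend on whether $j\leq n-1$, $j = n$, or $j \geq n+1$, and the lifted action of $\sigma^A_j$ also splits into three cases depending on how $j$ compares to $n$. Getting these conventions to line up so that $\rho_{RHA}$ equals $\rho_{KA}$ on the nose (rather than up to an overall normalisation) is the only real content; once the matrices are seen to coincide on all generators of $\cA(A_{2n-1})$, equivariance of $\Theta_A$ on generators follows, hence on all of $\cA(A_{2n-1})$ since $\Theta_A$ is $\mathcal{Z}_A$-linear, completing the proof.
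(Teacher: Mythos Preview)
Your proposal is correct and follows exactly the same approach as the paper: the paper's argument is simply that both sides are free $\mathcal{Z}_A$-modules of rank $2n-1$ with the indicated bases, and then ``a simple calculation shows that $\rho_{RHA}$ agrees with the linear representation $\rho_{KA}$''. You have supplied considerably more detail on that calculation than the paper does (the paper leaves it entirely to the reader), but the strategy is identical.
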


Now consider the covering space $\sD_{2n+1}$ of $\DA \setminus \Delta_0$ classified by the cohomology class $C_\sD \in H^1(\DA \setminus \Delta_0, \Z)$ defined by
\[
[\lambda_j] \mapsto 
\begin{cases}
1, &\text{ for } j \neq 0 \\
0, &\text{ for } j = 0;
\end{cases}
\]
where each $\lambda_j$ is a closed loop around the puncture $j$.
Note that the composition of coverings
\[
\sD_{2n+1} \ra \DA \setminus \Delta_0 \ra \DB \setminus \Lambda
\]
is a normal covering space of $\DB \setminus \Lambda$, with its group of deck transformation isomorphic to $\Z \times \Z/2\Z$.
Note that $H_1(\sD_{2n+1}, \Z)$ is a module over $\Z[\Z\times\Z/2\Z] \cong \Z[q,q^{-1}, r]/\< r^2-1\> =: \mathcal{Z}_{B,r}$, and the space $\sD_{2n+1}$ is homotopy equivalent to the space given below:

\begin{figure}[H] 
\centering
\begin{tikzpicture}  [scale=0.8]

\draw[fill] (3,2.5) circle [radius=0.08] ;
\draw[fill] (6,2.5) circle [radius=0.08]  ;
\draw[fill] (9,2.5) circle [radius=0.08]  ;
\draw[fill] (12,2.5) circle [radius=0.08]  ;
\draw[fill] (0,2.5) circle [radius=0.08];

\draw[fill] (3,5) circle [radius=0.08] ;
\draw[fill] (6,5) circle [radius=0.08]  ;
\draw[fill] (9,5) circle [radius=0.08]  ;
\draw[fill] (12,5) circle [radius=0.08]  ;
\draw[fill] (0,5) circle [radius=0.08];

\draw  [green] plot[smooth, tension=1]coordinates { (0,2.5) (0.25, 3.75) (0,5)};
\draw  [green] plot[smooth, tension=1]coordinates { (0,2.5) (-0.25, 3.75) (0,5)};
\node [green] at  (-0.25, 3.75) {$\wedge$};
\node [green] at  (0.25, 3.75) {$\vee$};

\draw  [green] plot[smooth, tension=1]coordinates { (3,2.5) (2.75, 3.75) (3,5)};
\draw  [green] plot[smooth, tension=1]coordinates { (3,2.5) (3.25, 3.75) (3,5)};
\node [green] at  (2.75, 3.75) {$\wedge$};
\node [green] at  (3.25, 3.75) {$\vee$};

\draw  [green] plot[smooth, tension=1]coordinates { (6,2.5) (5.75, 3.75) (6,5)};
\draw  [green] plot[smooth, tension=1]coordinates { (6,2.5) (6.25, 3.75) (6,5)};
\node [green] at  (5.75, 3.75) {$\wedge$};
\node [green] at  (6.25, 3.75) {$\vee$};

\draw  [green] plot[smooth, tension=1]coordinates { (9,2.5) (8.75, 3.75) (9,5)};
\draw  [green] plot[smooth, tension=1]coordinates { (9,2.5) (9.25, 3.75) (9,5)};
\node [green] at  (8.75, 3.75) {$\wedge$};
\node [green] at  (9.25, 3.75) {$\vee$};

\draw  [green] plot[smooth, tension=1]coordinates { (12,2.5) (11.75, 3.75) (12,5)};
\draw  [green] plot[smooth, tension=1]coordinates { (12,2.5) (12.25, 3.75) (12,5)};
\node [green] at  (11.75, 3.75) {$\wedge$};
\node [green] at  (12.25, 3.75) {$\vee$};

\node [green, left] at  (-0.25, 3.75) {\scriptsize {$q^{-1}l_0$}};
\node [green, right] at  (0.25, 3.75) {\scriptsize {$rq^{-1}l_0$}};
\node [green, left] at  (2.75, 3.75) {\scriptsize {$l_0$}};
\node [green, right] at  (3.25, 3.75) {\scriptsize {$rl_0$}};
\node [green, left] at  (5.75, 3.75) {\scriptsize {$ql_0$}};
\node [green, right] at  (6.25, 3.75) {\scriptsize {$rql_0$}};
\node [green, left] at  (8.75, 3.75) {\scriptsize {$q^2l_0$}};
\node [green, right] at  (9.25, 3.75) {\scriptsize {$rq^2l_0$}};
\node [green, left] at  (11.75, 3.75) {\scriptsize {$q^3l_0$}};
\node [green, right] at  (12.25, 3.75) {\scriptsize {$rq^3l_0$}};

\node [cyan] at (1.5,5.3) {$>$};
\node [cyan] at (1.5,4.7) {$>$};

\node [cyan] at (4.5,5.3) {$>$};
\node [cyan] at (4.5,4.7) {$>$};

\node [cyan] at (7.5,5.3) {$>$};
\node [cyan] at (7.5,4.7) {$>$};

\node [cyan] at (10.5,5.3) {$>$};
\node [cyan] at (10.5,4.7) {$>$};

\draw  [cyan, dashed] plot[smooth, tension=1]coordinates { (0,5) (-.55, 5.2) (-1.25, 5.3)  };
\draw  [cyan, dashed] plot[smooth, tension=1]coordinates { (0,5) (-.55, 4.8) (-1.25, 4.7)  };

\draw  [cyan, dashed] plot[smooth, tension=1]coordinates { (0,2.5) (-.55, 2.7) (-1.25, 2.8)  };
\draw  [cyan, dashed] plot[smooth, tension=1]coordinates { (0,2.5) (-.55, 2.3) (-1.25, 2.2)  };

\draw  [cyan, dashed] plot[smooth, tension=1]coordinates { (12,5) (12.55, 5.2) (13.25, 5.3)  };
\draw  [cyan, dashed] plot[smooth, tension=1]coordinates { (12,5) (12.55, 4.8) (13.25, 4.7)  };

\draw  [cyan, dashed] plot[smooth, tension=1]coordinates { (12,2.5) (12.55, 2.7) (13.25, 2.8)  };
\draw  [cyan, dashed] plot[smooth, tension=1]coordinates { (12,2.5) (12.55, 2.3) (13.25, 2.2)  };

\draw  [cyan] plot[smooth, tension=1]coordinates { (0,5) (1.5, 5.3) (3,5)};
\draw  [cyan] plot[smooth, tension=1]coordinates { (0,5) (1.5, 4.7) (3,5)};

\draw  [cyan] plot[smooth, tension=1]coordinates { (3,5) (4.5, 5.3) (6,5)};
\draw  [cyan] plot[smooth, tension=1]coordinates { (3,5) (4.5, 4.7) (6,5)};

\draw  [cyan] plot[smooth, tension=1]coordinates { (6,5) (7.5, 5.3) (9,5)};
\draw  [cyan] plot[smooth, tension=1]coordinates { (6,5) (7.5, 4.7) (9,5)};

\draw  [cyan] plot[smooth, tension=1]coordinates { (9,5) (10.5, 5.3) (12,5)};
\draw  [cyan] plot[smooth, tension=1]coordinates { (9,5) (10.5, 4.7) (12,5)};

\draw  [cyan] plot[smooth, tension=1]coordinates { (0,2.5) (1.5, 2.8) (3,2.5)};
\draw  [cyan] plot[smooth, tension=1]coordinates { (0,2.5) (1.5, 2.2) (3,2.5)};

\draw  [cyan] plot[smooth, tension=1]coordinates { (3,2.5) (4.5, 2.8) (6,2.5)};
\draw  [cyan] plot[smooth, tension=1]coordinates { (3,2.5) (4.5, 2.2) (6,2.5)};

\draw  [cyan] plot[smooth, tension=1]coordinates { (6,2.5) (7.5, 2.8) (9,2.5)};
\draw  [cyan] plot[smooth, tension=1]coordinates { (6,2.5) (7.5, 2.2) (9,2.5)};

\draw  [cyan] plot[smooth, tension=1]coordinates { (9,2.5) (10.5, 2.8) (12,2.5)};
\draw  [cyan] plot[smooth, tension=1]coordinates { (9,2.5) (10.5, 2.2) (12,2.5)};

\node [cyan] at (1.5,2.8) {$>$};
\node [cyan] at (1.5,2.2) {$>$};

\node [cyan] at (4.5,2.8) {$>$};
\node [cyan] at (4.5,2.2) {$>$};

\node [cyan] at (7.5,2.8) {$>$};
\node [cyan] at (7.5,2.2) {$>$};

\node [cyan] at (10.5,2.8) {$>$};
\node [cyan] at (10.5,2.2) {$>$};

\node  at (1.5,2.6) {$ \boldsymbol{\vdots}$};
\node  at (4.5,2.6) {$ \boldsymbol{\vdots}$};
\node  at (10.5,2.6) {$ \boldsymbol{\vdots}$};
\node  at (7.5,2.6) {$ \boldsymbol{\vdots}$};

\node  at (1.5,5.1) {$ \boldsymbol{\vdots}$};
\node  at (4.5,5.1) {$ \boldsymbol{\vdots}$};
\node  at (10.5,5.1) {$ \boldsymbol{\vdots}$};
\node  at (7.5,5.1) {$ \boldsymbol{\vdots}$};

\node[above,cyan] at (1.5,5.3) {\scriptsize {$rq^{-1}l_1$}};
\node[above,cyan] at (4.5,5.3) {\scriptsize {$rl_1$}};
\node[above,cyan] at (7.5,5.3) {\scriptsize {$rql_1$}};
\node[above,cyan] at (10.5,5.3) {\scriptsize {$rq^{2}l_1$}};
\node[below,cyan] at (10.5,4.7) {\scriptsize {$rq^{2}l_{n}$}};
\node[below,cyan] at (1.5,4.7) {\scriptsize {$rq^{-1}l_{n}$}};
\node[below,cyan] at (4.5,4.7) {\scriptsize {$rl_{n}$}};
\node[below,cyan] at (7.5,4.7) {\scriptsize {$rql_{n}$}};

\node[above,cyan] at (1.5,2.8) {\scriptsize {$q^{-1}l_1$}};
\node[above,cyan] at (4.5,2.8) {\scriptsize {$l_1$}};
\node[above,cyan] at (7.5,2.8) {\scriptsize {$ql_1$}};
\node[above,cyan] at (10.5,2.8) {\scriptsize {$q^{2}l_1$}};
\node[below,cyan] at (10.5,2.2) {\scriptsize {$q^{2}l_{n}$}};
\node[below,cyan] at (1.5,2.2) {\scriptsize {$q^{-1}l_{n}$}};
\node[below,cyan] at (4.5,2.2) {\scriptsize {$l_{n}$}};
\node[below,cyan] at (7.5,2.2) {\scriptsize {$ql_{n}$}};

\node[below] at  (0,2.5) {\scriptsize {$q^{-1}p$}};
\node[below] at  (3,2.5) {\scriptsize {$p$}};
\node[below] at  (6,2.5) {\scriptsize {$qp$}};
\node[below] at  (9,2.5) {\scriptsize {$q^2p$}};
\node[below] at  (12,2.5) {\scriptsize {$q^3p$}};

\node[above] at  (0,5) {\scriptsize {$rq^{-1}p$}};
\node[above] at  (3,5) {\scriptsize {$rp$}};
\node[above] at  (6,5) {\scriptsize {$rqp$}};
\node[above] at  (9,5) {\scriptsize {$rq^2p$}};
\node[above] at  (12,5) {\scriptsize {$rq^3p$}};

\end{tikzpicture}

\caption{{\small The infinite graph $\Gamma_{\Z \times \Z/ 2\Z}$.} }
\end{figure}

Let $l_j$ be a closed loop around each puncture $j \in \DB \setminus \Lambda$.
Note that in $H_1(\sD_{2n+1}, \Z)$, we have that
\begin{equation} \label{loops in B and A}
\begin{cases}
[\lambda_0] = [l_0] + r[l_0]; \\
[\lambda_{-j}] = r[l_j] &\text{for } j > 0; \\
[\lambda_{j}]  =  [l_j] &\text{for } j > 0.
\end{cases}
\end{equation}
Consider the sub $\mathcal{Z}_B$-module $\sH_n \subseteq H_1(\sD_{2n+1}, \Z)$ generated by $\{ [\xi_1], ... , [\xi_n] \}$, where:
\[
[\xi_j] = \begin{cases}
(1-q)[l_0] - (1-r)[l_1] &\text{ for } j =1; \\
(-q)^{1-j}(1-r)([l_{j-1}] - [l_j]) &\text{ for } j \geq 2.
\end{cases}
\]
Note that $[\xi_j] = -r[\xi_j]$ for all $j \geq 2$, and it is easy to see that
\[
\sH_n \cong \mathcal{Z}_{B,r} \oplus \left( \mathcal{Z}_{B,r}/\<r+1\> \right)^{\oplus n-1}
\]
as $\mathcal{Z}_{B,r}$-module, generated by $\{ [\xi_1], ..., [\xi_n]\}$.
The action of $\cA(B_n)$ on $\DB \setminus \Lambda$ lifts to an action on $\sD_{2n+1}$ that commutes with deck transformations, which then induces an action on the $\mathcal{Z}_{B,r}$-module $H_1(\sD_{2n+1}, \Z)$.
It is easy to see that $\sH_n \subseteq H_1(\sD_{2n+1}, \Z)$ is closed under the action of $\cA(B_n)$, so we obtain a subrepresentation:
\[
\rho_{RHB}: \cA(B_n) \ra \text{Aut}_{\mathcal{Z}_{B,r} \text{-mod}} \left(\mathcal{Z}_{B,r} \oplus \left( \mathcal{Z}_{B,r}/\<r+1\> \right)^{\oplus n-1} \right).
\]
A simple computation shows that $\rho_{RHB}|_{r\mapsto -s, q \mapsto q}$ agrees with the representation $\rho_{KB}$.
Thus we have the following result:
\begin{proposition}\label{Hom and cat rep B}
Let $\sH_n$ be a $\mathcal{Z}_B$-module under the identification $\mathcal{Z}_{B,r} \cong \mathcal{Z}_B$ given by $r \mapsto -s$ and $q \mapsto q$.
Then the $\mathcal{Z}_B$-linear map $\Theta_B: \sH_n \ra K_0(\cK_B)$ defined by
\[
\Theta_B([\xi_j]) = [P^B_j]
\]
is an isomorphism of $\cA(B_n)$-representations.
\end{proposition}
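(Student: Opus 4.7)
The proposition makes two claims: that $\Theta_B$ is a well-defined $\mathcal{Z}_B$-module isomorphism, and that it intertwines $\rho_{RHB}$ (under the identification $r\mapsto -s$, $q\mapsto q$) with $\rho_{KB}$. My plan is to handle these in turn.

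First I would verify that $\Theta_B$ is a well-defined isomorphism of $\mathcal{Z}_B$-modules. The identification $r\mapsto -s$ turns the relation $[\xi_j] = -r[\xi_j]$ (given in the paragraph before the statement, for $j\geq 2$) into $[\xi_j] = s[\xi_j]$, which matches $[P_j^B] = s[P_j^B]$ for $j\geq 2$ (since $P_j^B\<1\>\cong P_j^B$). Thus sending $[\xi_j]\mapsto [P_j^B]$ respects the defining module relations. Combined with the explicit presentations $\sH_n \cong \mathcal{Z}_{B,r}\oplus (\mathcal{Z}_{B,r}/\langle r+1\rangle)^{n-1}$ and $K_0(\cK_B)\cong \mathcal{Z}_B\oplus (\mathcal{Z}_B/\langle s-1\rangle)^{n-1}$ recorded in the excerpt, this shows $\Theta_B$ is an isomorphism of $\mathcal{Z}_B$-modules.

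Next I would prove equivariance by directly computing $\rho_{RHB}(\sigma^B_j)$ on the basis $\{[\xi_1],\dots,[\xi_n]\}$ and matching with the matrices $\rho_{KB}(\sigma^B_j)$ given in the excerpt. For each generator $\sigma^B_j$ one:
\begin{enumerate}
\item expresses $[\xi_k]$ in terms of the loops $[l_i]$ using the definitions listed just after~\eqref{loops in B and A};
\item lifts the Dehn twist (for $j\geq 2$) or the full twist (for $j=1$) to $\sD_{2n+1}$ and tracks the induced action on each $[l_i]$;
\item rewrites the result back in the $[\xi_k]$-basis to read off the matrix entries.
\end{enumerate}
For $j\geq 2$, the curve $b_j$ does not meet $\{0\}$, so the corresponding half twist lifts compatibly to the ordinary double cover, and the computation reduces to the standard type-$A$ Burau calculation, giving exactly $\rho_{KB}(\sigma^B_j)$. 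For $j=1$, the relevant curve is $b_1$ joining $\{0\}$ and $\{1\}$, and here the deck transformation $r$ interacts nontrivially with the twist.

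There are two natural routes to finish, and I would pursue whichever is cleaner in detail. The direct route is to perform the explicit topological computation of $\sigma^B_1$ on $[l_0]$ and $[l_1]$ using a small-loop analysis at $\{0\}$, confirming that the off-diagonal entry $-(1+s)$ and diagonal entry $-sq$ in $\rho_{KB}(\sigma^B_1)$ arise from the $(1+r)$-factor in $[\lambda_0]=[l_0]+r[l_0]$ together with the Burau-type monodromy around $\{1\}$. The indirect route is to exploit \cref{fullmaintheorem} together with \cref{Hom and cat rep A}: the injection $\Psi:\cA(B_n)\hookrightarrow \cA(A_{2n-1})$ combined with the branched-cover map $\m$ yields a $\cA(B_n)$-equivariant map $\sH_n \to H_1(\cD_{2n},\Z)$ using~\eqref{loops in B and A}, which is intertwined by $\Theta_A$ with the extension-of-scalars on $K_0$; since $\Aa_{2n-1}\otimes_{\Ba_n}-$ reflects isomorphisms up to the $\<-\>$-shift, equivariance of $\Theta_B$ descends from that of $\Theta_A$.

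The main obstacle I anticipate is the case $j=1$: the full twist about a curve ending at the branch point $\{0\}$ forces careful bookkeeping of the $\Z/2\Z$-deck transformation that distinguishes $[\lambda_0]$ from the pair $[l_0],r[l_0]$. Once this is handled, every other generator acts by a straightforward type-$A$-style formula, and the matrix match with $\rho_{KB}$ becomes a direct comparison.
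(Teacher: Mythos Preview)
Your proposal is correct and follows essentially the same approach as the paper: the paper simply asserts that ``a simple computation shows that $\rho_{RHB}|_{r\mapsto -s, q \mapsto q}$ agrees with the representation $\rho_{KB}$,'' and you are outlining precisely that computation (module-isomorphism check plus generator-by-generator matrix comparison). Your indirect route via \cref{fullmaintheorem} and \cref{Hom and cat rep A} is not how the paper proves this proposition---the paper establishes the type $B$ result directly and only afterwards assembles the commutative square in \cref{decat main theorem}---but it is a valid alternative.
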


\subsection{Relating categorical and homological representations}
We shall now show a ``decategorified'' version of our main result in \cref{full picture}.
Recall that $\sH_n$ and $K_0(\cK_B)$ are $\mathcal{Z}_B = \Z[q,q^{-1}, s]/\<s^2 - 1\>$-modules, whereas $H_1(\cD_{2n}, \Z)$ and $K_0(\cK_A)$ are $\mathcal{Z}_A = \Z[q,q^{-1}]$-modules.
Denote $ev_1, ev_{-1} : \mathcal{Z}_B \ra \mathcal{Z}_A$ as the two evaluation maps defined by $s \mapsto 1, -1$ respectively.
Throughout the rest of this section, we shall view $H_1(\cD_{2n}, \Z)$ as a $\mathcal{Z}_B$ module through $ev_{-1}$ and $K_0(\cK_A)$ as a $\mathcal{Z}_B$-module through $ev_1$.

Using \cref{B tensor to A}, we obtain a $\mathcal{Z}_B$-linear map $K_0(\Aa_{2n-1} \otimes_{\Ba_n} -)$, given by
\[
K_0(\Aa_{2n-1} \otimes_{\Ba_n} -)([P^B_j]) = 
\begin{cases}
	[P^A_n], &\text{for } j = 1; \\
	[P^A_{n-(j-1)}] + [P^A_{n+(j-1)}], &\text{ otherwise}.
\end{cases}
\]
On the other hand, the natural inclusion map $\iota : \DA \setminus \Delta_0 \ra \DA \setminus \Delta$ induces a map on the homology $\iota: H_1(\DA \setminus \Delta_0, \Z) \ra H_1(\DA \setminus \Delta,\Z)$, which sends
\[
[\lambda_j] \mapsto \begin{cases}
0, &\text{ for } j = 0; \\
[\lambda_j], &\text{ otherwise}.
\end{cases}
\]
Thus, $\iota$ lifts uniquely to $\wt{\iota} : \sD_{2n+1} \ra \cD_{2n}$, which induces a map on the homology $\wt{\iota}: H_1(\sD_{2n+1}, \Z) \ra H_1(\cD_{2n}, \Z)$.
One easily sees from \cref{loops in B and A} that the restriction of $\wt{\iota}$ to $\sH_n$ is given by
\[
\wt{\iota}([\xi_j]) = \begin{cases}
[\gamma_n], &\text{for } j=1; \\
[\gamma_{n-(j-1)}] + [\gamma_{n+(j-1)}], &\text{for } j \geq 2. 
\end{cases}
\]
and $\wt{\iota}$ is a $\mathcal{Z}_B$-linear map.

Finally, it follows immediately that $\Theta_A \circ \wt{\iota} = K_0(\Aa_{2n-1} \otimes_{\Ba_n} -) \circ \Theta_B$, where $\Theta_A$ and $\Theta_B$ are as defined in \cref{Hom and cat rep A} and \cref{Hom and cat rep B} respectively.
Thus, we have the following ``decategorified'' version of our main result in \cref{full picture}:
\begin{theorem}\label{decat main theorem}
The following diagram is commutative:
\begin{center}
\begin{tikzpicture} [scale= 0.8]
\node (tbB) at (-3,1.5) 
	{$\mathcal{A}(B_n)$};
\node (cbB) at (-3,-3.5) 
	{$\mathcal{A}(B_n)$};
\node (tbA) at (10.5,1.5) 
	{$\mathcal{A}(A_{2n-1})$}; 
\node (cbA) at (10.5,-3.5) 
	{$\mathcal{A}(A_{2n-1})$};

\node[align=center] (cB) at (0,0) 
	{$\sH_n$};
\node[align=center] (cA) at (7,0) 
	{$H_1(\cD_{2n}, \Z)$};
\node (KB) at (0,-2)
	{$K_0(\cK_B)$};
\node (KA) at (7,-2) 
	{$K_0(\cK_A)$};

\coordinate (tbB') at ($(tbB.east) + (1,-1)$);
\coordinate (cbB') at ($(cbB.east) + (1,1)$);
\coordinate (tbA') at ($(tbA.west) + (-1,-1)$);
\coordinate (cbA') at ($(cbA.west) + (-1,1)$);

\draw [->,shorten >=-1.5pt, dashed] (tbB') arc (245:-70:2.5ex);
\draw [->,shorten >=-1.5pt, dashed] (cbB') arc (-245:70:2.5ex);
\draw [->, shorten >=-1.5pt, dashed] (tbA') arc (-65:250:2.5ex);
\draw [->,shorten >=-1.5pt, dashed] (cbA') arc (65:-250:2.5ex);

\draw[->] (cB) -- (KB) node[midway, left]{$\cong$} node[midway, right]{$\Theta_B$};
\draw[->] (cB) -- (cA) node[midway,above]{$\wt{\iota}$}; 
\draw[->] (cA) -- (KA) node[midway,left]{$\cong$} node[midway, right]{$\Theta_A$};
\draw[->] (KB) -- (KA) node[midway,above]{$K_0(\Aa_{2n-1} \otimes_{\Ba_n} -)$};
\end{tikzpicture}
\end{center}
with all four maps $\mathcal{Z}_B$-linear and $\cA(B_n)$-equivariant.
\end{theorem}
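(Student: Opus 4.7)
The plan is to verify the four pieces separately, since the commutativity itself essentially reduces to matching formulas on generators, and each of the four edges of the square is equivariant for reasons that have been assembled in the preceding pages of this section.

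First, I would check commutativity on the distinguished $\mathcal{Z}_B$-generators $[\xi_j]$ of $\sH_n$. The explicit formulas for $\Theta_B$, $\Theta_A$, $\wt{\iota}$, and $K_0(\Aa_{2n-1}\otimes_{\Ba_n}-)$ already assembled just before the statement of the theorem give, for $j = 1$,
\[
\Theta_A\wt{\iota}([\xi_1]) = \Theta_A([\gamma_n]) = [P_n^A] = K_0(\Aa_{2n-1}\otimes_{\Ba_n}-)([P_1^B]) = K_0(\Aa_{2n-1}\otimes_{\Ba_n}-)\Theta_B([\xi_1]),
\]
and for $j\geq 2$,
\[
\Theta_A\wt{\iota}([\xi_j]) = [P^A_{n-(j-1)}] + [P^A_{n+(j-1)}] = K_0(\Aa_{2n-1}\otimes_{\Ba_n}-)\Theta_B([\xi_j]),
\]
which settles commutativity as both composites are $\mathcal{Z}_B$-linear and agree on a generating set.

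Next I would verify that each of the four maps is $\mathcal{Z}_B$-linear. For $\Theta_B$ this is built into \cref{Hom and cat rep B}, while for $\Theta_A$ we first note that $H_1(\cD_{2n},\Z)$ and $K_0(\cK_A)$ are $\mathcal{Z}_A$-linear isomorphic via \cref{Hom and cat rep A}, and then observe that pulling back along $ev_1:\mathcal{Z}_B\to \mathcal{Z}_A$ turns the $\mathcal{Z}_A$-linearity into $\mathcal{Z}_B$-linearity. The map $\wt\iota$ is $\mathcal{Z}_B$-linear because the deck transformation $q$ upstairs maps to $q$ downstairs, while $r\in\mathcal{Z}_{B,r}$ sends $[\lambda_j]\mapsto[\lambda_{-j}]$ in $\sD_{2n+1}$, which under $\wt\iota$ corresponds to the action of $-1$ on $\cD_{2n}$ as dictated by the basis $\{[\gamma_j]\}$ (compare the signs built into the definition of $[\gamma_j]$); this matches the identification $r\mapsto-s$ through $ev_{-1}$. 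Finally, $K_0(\Aa_{2n-1}\otimes_{\Ba_n}-)$ is $\mathcal{Z}_B$-linear because the tensor functor commutes with the internal shift $\{1\}$ and is $\mathcal{Z}_B$-linear under the relation $P_j^A\cong P_j^A\langle 1\rangle$ (which forces $s=1$ to act trivially on the image).

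Finally, I would assemble $\cA(B_n)$-equivariance of each edge. For $\Theta_B$ this is exactly \cref{Hom and cat rep B}; for $\Theta_A$ we use $\cA(A_{2n-1})$-equivariance from \cref{Hom and cat rep A} together with the injection $\Psi:\cA(B_n)\hookrightarrow\cA(A_{2n-1})$ of \cref{injB} (the $\cA(B_n)$-actions on both $H_1(\cD_{2n},\Z)$ and $K_0(\cK_A)$ being defined by restriction along $\Psi$). For the functor $K_0(\Aa_{2n-1}\otimes_{\Ba_n}-)$, $\cA(B_n)$-equivariance is the decategorification of \cref{tensor equivariant}. For $\wt\iota$, equivariance follows because any $f\in\Diff(\D^B_{n+1},\{0\})$ lifts through the branched cover to $\Diff(\D^A_{2n})$ by \cref{injB}, and the lifted diffeomorphism commutes with the inclusion $\iota:\DA\setminus\Delta_0\hookrightarrow \DA\setminus\Delta$ (the puncture at $0$ is preserved because the first marked point of $\DB$ is fixed), so uniqueness of the lift to covering spaces forces $\wt\iota$ to intertwine the actions.

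The only step that carries any subtlety is the last equivariance claim for $\wt\iota$: one has to be careful that the lift of the $\cA(B_n)$-action to $\sD_{2n+1}$ is compatible with its lift to $\cD_{2n}$ in a way that respects the subgroup $\sH_n$. This reduces to the observation that $\sH_n$ is defined as the span of the images of certain pre-images of basic curves downstairs, and these pre-images are exactly the lifts $\mathfrak{m}(b_j)$ that appear in \cref{fullmaintheorem}; equivariance of $\mathfrak{m}$ then transports to equivariance of $\wt\iota|_{\sH_n}$.
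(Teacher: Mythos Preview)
Your proposal is correct and follows essentially the same approach as the paper: the paper assembles all the formulas for $\Theta_A$, $\Theta_B$, $\wt\iota$, and $K_0(\Aa_{2n-1}\otimes_{\Ba_n}-)$ on generators in the paragraphs immediately preceding the theorem, notes that $\wt\iota$ is $\mathcal{Z}_B$-linear, and then simply states that commutativity ``follows immediately'' from matching the formulas on generators. Your write-up is more explicit than the paper's in checking the $\mathcal{Z}_B$-linearity and $\cA(B_n)$-equivariance of each edge separately, but the underlying logic and the ingredients invoked (\cref{Hom and cat rep A}, \cref{Hom and cat rep B}, \cref{tensor equivariant}, and the covering-space lift of the mapping class action) are identical.
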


\section{Trigraded Intersection Numbers, Graded Dimensions of Homomorphism Spaces}\label{int num and hom}
In this section, we shall relate the trigraded intersection number and the Hom spaces between the corresponding complexes.
Through out this section, we will fix the following notations: $\cK_B := \Kom^b(\sB_n$-$\text{p$_{r}$g$_{r}$mod})$, $\cK_A := \Kom^b(\Aa_{2n-1}$-$\text{p$_{r}$g$_{r}$mod})$, $\Ba m := \Ba_n$-$\text{mod}$ and $\Aa m := \Aa_{2n-1}$-$\text{mod}$.

For $V = \oplus_{(r,s) \in \Z \times \Z/2\Z} V_{(r,s)}\{r\}\<s\>$ a $(\Z\times\Z/2\Z)$-graded $\R$-vector space, we denote its \emph{bigraded dimension} as
\[
\bigrdim(V) := \sum_{(r,s) \in \Z \times \Z/2\Z} \dim(V_{(r,s)})q_2^r q_3^s.
\]

Recall that for each pair of objects $(C^*, \partial_C), (D^*, \partial_D)$ in  $\cK_B$, one can consider the internal (bigraded) Hom complex $\HOM^*_{\cK_B}(C,D)$ defined as follows:
for each cohomological degree $s_1 \in \Z$,
\[
\HOM^{s_1}_{\cK_B}(C,D) := \bigoplus_{\substack{(s_2,s_3) \in \Z \times \Z /2\Z \\ m + n = s_1}} \Hom_{\Ba m}(C^m,D^n\{s_2\}\<s_3\>)\{-s_2\}\<s_3\>
\]
is a $\Z \times \Z/2\Z$-graded $\R$-vector space and the differentials are given by 
\[
d(f) = \partial_D \circ f - (-1)^{s_1}f \circ \partial_C
\]
for each $f \in \HOM^{s_1}_{\cK_B}(C,D)$.
It follows that each $H^{n}\left( \HOM^*_{\cK_B}(C,D) \right)$ is a ($\Z \times \Z/2\Z$)-graded $\R$-vector space.
We define the \emph{Poincar\'e polynomial} $\mathfrak{P}(C,D) \in \Z[q_1, q_1^{-1}, q_2, q_2^{-1}, q_3]/\<q_3^2 -1\>$ of $\HOM^*_{\cK_B}(C,D)$ as 
\[ 
\mathfrak{P}(C,D) := \sum_{s_1 \in \Z} q_1^{s_1}\bigrdim_\R\left( H^{s_1}\left( \HOM^*_{\cK_B}(C,D) \right) \right).
\]

\begin{lemma}\label{poly c g}
For any trigraded admissible curve $\check{c},$ the following internal Hom complexes are quasi-isomorphic:
$$ (\HOM^*_{\cK_B}(P^B_j,L_B(\check{c})), d_C^*) \cong  \bigoplus_{\check{g} \in st(\check{c}, j)} (\HOM^*_{\cK_B}(P^B_j,L_B(\check{g})), d_G^*) $$
for all $1 \leq j \leq n$ and  $(s_1,s_2,s_3) \in \Z \times \Z \times \Z/2.$  
\end{lemma}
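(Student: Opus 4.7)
The plan is to realise the stated quasi-isomorphism as an honest direct-sum decomposition of complexes, obtained by grouping the summands of $L_B(\check{c})$ according to the $j$-string they belong to.

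First I would exploit the crucial vanishing $\Hom_{\Ba_n}(P^B_j,P^B_k)\cong {}_jP^B_k=0$ whenever $|j-k|>1$, which is immediate from \cref{Bcat}. Since $L_B(\check{c}) = \bigoplus_{y \in cr(\check{c})} P^B_{y_0}[-y_1]\{y_2\}\<y_3\>$, applying $\Hom_{\Ba_n}(P^B_j,-)$ at the level of underlying graded modules kills every summand coming from a $k$-crossing with $|j-k|>1$. The only surviving summands are those indexed by crossings on $d_{j-1}$, $d_j$, and $d_{j+1}$, with the evident modifications at the boundary cases $j=1$ and $j=n$. Each such crossing lies in a unique $j$-string of $\check{c}$, because $j$-strings are by definition the connected components of $\check{c}\cap(D_{j-1}\cup D_j)$; this gives a partition of the surviving summands into blocks indexed by $st(\check{c},j)$.

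Next I would check that the differential $d_C^\ast$ respects this partition. Every unmodified differential $\partial_{zy}$ in $L_B(\check{c})$ comes from an essential segment of $\check{c}$ lying in some region $D_k$, and connects either two $k$-crossings (a U-turn) or a $k$-crossing to a $(k+1)$-crossing. For such a differential to survive after applying $\Hom(P^B_j,-)$, both $y_0$ and $z_0$ must lie in $\{j-1,j,j+1\}$, which forces the underlying essential segment into $D_{j-1}\cup D_j$, and hence into a single $j$-string. The induced map on Hom complexes therefore stays inside the block corresponding to that $j$-string and agrees, summand by summand, with the differential $d_G^\ast$ obtained by applying $L_B$ to $\check{g}$ directly.

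The main obstacle I expect is verifying compatibility with the \emph{modified} differentials introduced for pairs of $1$-crossings joined by an essential segment in $D_0$, which are only relevant when $j=1$. My plan here is to observe that any such pair $\{y,y'\}$ lies, by construction, inside the same $1$-string (the connecting segment already sits in $D_0\subset D_0\cup D_1$), so the additional $X_1$-, $(1|2)i$-, and $-i(2|1)$-terms are automatically internal to one $1$-string. A type-by-type inspection over the isotopy classes of trigraded $1$-strings listed in the figure for $1$-strings then confirms that these modifications are precisely the ones dictated by the direct construction of $L_B(\check{g})$. Once this local check is done, $d_C^\ast$ splits as a direct sum of the $d_G^\ast$, producing an isomorphism of complexes and \emph{a fortiori} the claimed quasi-isomorphism.
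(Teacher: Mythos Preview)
Your argument is essentially correct for $j\neq 2$, but it has a genuine gap at $j=2$, and this is precisely where the paper's proof does most of its work.

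The first issue is minor: your claim that ``both $y_0$ and $z_0$ lying in $\{j-1,j,j+1\}$ forces the underlying essential segment into $D_{j-1}\cup D_j$'' is false as stated. Two $(j{+}1)$-crossings can be joined by an essential segment in $D_{j+1}$ (a U-turn), and likewise two $(j{-}1)$-crossings by a segment in $D_{j-2}$. These give rise to differentials $X_{j+1}$ and $X_{j-1}$, and you still need to observe that post-composing with $X_{j\pm 1}$ kills $\Hom(P^B_j,P^B_{j\pm 1})={}_jP^B_{j\pm 1}$ (any path of length $\ge 3$ in $\Ba_n$ vanishes). Once you add this, your argument does yield $d_C=d_G$ and hence an honest isomorphism of complexes for $j\neq 2$.

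The serious gap is your assertion that the modified differentials are ``only relevant when $j=1$.'' For $j=1$ your reasoning is fine: a modified differential links a $1$-crossing $y'$ to a $2$-crossing $z$ via $y$, and the path $y'\!-\!y$ (in $D_0$) followed by $y\!-\!z$ (in $D_1$) lies entirely in $D_0\cup D_1$, so everything stays in one $1$-string. But for $j=2$ the same crossings $y',z$ are still visible to $\Hom(P^B_2,-)$, while the connecting segment in $D_0$ now lies \emph{outside} the $2$-string region $D_1\cup D_2$. Hence $y'$ and $z$ can belong to different $2$-strings. Worse, the modified maps $(1|2)i$ and $-i(2|1)$ do \emph{not} induce zero on $\Hom(P^B_2,-)$: for instance $(2|1)\cdot(1|2)i = X_2 i \neq 0$ in ${}_2P^B_2$. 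These differentials are also absent from each $L_B(\check g)$ for a $2$-string $\check g$, because the modification rule in the definition of $L_B$ is triggered only by essential segments in $D_0$, which a $2$-string never contains. So $d_C\neq d_G$, the complexes are not isomorphic, and your splitting strategy breaks down.

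The paper's proof confronts this head-on: for $j=2$ it writes $d_C = d_G + (\delta\circ -)$ where $\delta$ is the sum of all surviving type-(ii) maps, groups the summands of $G^m$ and $G^{m+1}$ into blocks along which $d_C$ and $d_G$ are simultaneously block-diagonal, classifies the finitely many local shapes each block can take (by analysing the possible continuations of the curve near a type-(ii) modification), and then checks by explicit matrix computation that $d_C$ and $d_G$ have isomorphic kernels and images on each block. This is what reduces the statement to a quasi-isomorphism rather than an isomorphism in the $j=2$ case, and it is not something that falls out of a direct-sum decomposition.
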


\begin{proof}
To simplify notation, denote
$
(C^*, \partial_C^*) := L_B(\check{c})
$
and
$
(G^*, \partial_G^*) := \bigoplus_{\check{g} \in st(\check{c}, j)} L_B(\check{g}).
$
Note that $G^*$ can be obtained from $C^*$ by discarding the modules $P^B_k$ in $L_B(\check{c})$ for $| k - j| > 1$.
In particular, for all $m \in \Z$, 
$
C^m = G^m \oplus U^m
$
where $U^m$ consists of all indecomposable $P^B_k$ in $C^m$ with $|k - j|>1$.
Using the decomposition above, let us write $\partial^m_C: C^m = G^m \oplus U^m \ra G^{m+1} \oplus U^{m+1} = C^{m+1}$ as:
\[
\partial_C^m = \begin{bmatrix}
	\tau^m & * \\
	*      & *
	\end{bmatrix}
\]
so that $\tau^m : G^m \ra G^{m+1}$.
Also note that the differential $\partial_G^m: G^m \ra G^{m+1}$ can be obtained from $\tau^m$ by modifying the differentials
$
P^B(x) \xra{\partial_{yx}} P^B(y)
$
to $0$ whenever $x$ and $y$ are crossings of two different $j$-strings of $\check{c}$.
Since 

\begin{equation} \label{Hom zero}
\bigoplus_{(s_1, s_2, s_3) \in \Z \times \Z \times \Z/2\Z} \Hom_{\Ba m}(P^B_j, P^B_k[s_1]\{s_2\}\<s_3\>) = 0
\end{equation}

\noindent for all $k$ such that $|j - k|>1$, it follows that

\begin{equation}\label{Hom zero U}
\bigoplus_{(s_1, s_2, s_3) \in \Z \times \Z \times \Z/2\Z} \Hom_{\Ba m}(P^B_j, U^{s_1}\{s_2\}\<s_3\>) = 0
\end{equation}

\noindent and thus

\begin{equation} \label{Hom zero conc}
\HOM^m_{\cK_B}(P_j^B, C^*)= \HOM^m_{\cK_B}(P_j^B, G^*)
\end{equation}

\noindent for each $m \in \Z$ as underlying graded vector space.
Moreover, we know that $d^m_G = (\partial^m_G\circ -)$ and $d^m_C = (\partial_C^m \circ -)$ by definition of the hom complex.
But \cref{Hom zero U} allows us to conclude that $d_C^m = (\tau^m\circ -)$.
Therefore to prove the proposition, it is sufficient to show that $d_C^m = (\tau^m\circ -)$ and $d^m_G = (\partial^m_G\circ -)$ have isomorphic kernels and isomorphic images for each $m \in \Z$.
For the rest of the proof let $m \in \Z$ be arbitrary.

Let us first consider the simple case when $j \neq 2$.
We claim that $d_C^m = d_G^m$.
Note that when $j\neq 2$, $\partial_{yx}$ in $\tau^m$ that are modifed to $0$ in $\partial_G^m$ are always right multiplication by loops $X_{j-1}$ or $X_{j+1}$.
But for such maps, the corresponding induced maps on the hom complex $(\partial_{yx} \circ -)$ are always 0, so $(\tau^m\circ -) = (\partial^m_G \circ -)$ as required.

Now let us consider the case when $j = 2$.
The types of maps $\partial_{yx} : P(x) \ra P(y)$ in $\tau^m$ that are modifed to $0$ in $\partial_G^m$ are of the following types:
\begin{enumerate}[(i)]
\item $\partial_{yx}=X_1$ or $X_3$;
\item $\partial_{yx}=(1|2)i$ or $\partial_{yx}=-i(2|1)$.
\end{enumerate}
Moreover, $\partial_{yx}$ of type (ii) does not exist in $\partial_G^m$ by definition of $L_B$.
By the same argument in the case $j \neq 2$, the induced differential in the hom complex by $\partial_{yx}$ of type (i) is 0.
So can relate $d_C^m$ and $d_G^m$ as follows:
\begin{equation} \label{d_C in d_G}
d_C^m = (\tau^m \circ -) = (\partial^m_G\circ -) + (\d\circ -) = d_G^m + (\d\circ -)
\end{equation}
where $\d := \sum \partial_{yx}$, summing over all $\partial_{yx}$ in $\tau^m$ that are of type (ii).

Before we analyse the kernel and image of both $d^m_C$ and $d^m_G$, we shall consider a decomposition of $G^m$ and $G^{m+1}$ using $\tau^m$.
Denote $\mathfrak{G}^m$ and $\mathfrak{G}^{m+1}$ as the subset of all crossings of $\check{c}$ such that $G^m = \bigoplus_{z \in \mathfrak{G}^m} P(z)$ and $G^{m+1} = \bigoplus_{z \in \mathfrak{G}^{m+1}} P(z)$.
We shall reorganise the direct summands of $G^m$ and $G^{m+1}$ in the following way:
\begin{enumerate}
\item Set 
\begin{itemize}
\item $\alpha = 1$
\item $\epsilon := \d$,
\item $X := \mathfrak{G}^m$,
\item $H^m := G^m$, 
\item $Y = \mathfrak{G}^{m+1}$ and 
\item $H^{m+1} = G^{m+1}$.
\end{itemize}
\item If $\epsilon = 0$, then skip to step (3);
otherwise let $\partial_{yx}$ be one of the summands in $\d$. 
Consider the smallest subset $X' \subseteq X$ and $Y' \subseteq Y$ such that 
\begin{itemize}
\item $x \in X'$, 
\item $y \in Y'$ and 
\item $\partial_{zw} = 0, X_1$ or $X_3$ whenever $w \in (X')^c, z\in Y'$ or $w\in X', z\in (Y')^c$.
\end{itemize}
We organise the direct summands of $H^m$ in the following way:
\[
H^m = Q^m_{\alpha} \oplus \left( \bigoplus_{x\in (X')^c} P(x) \right),
\text{ and }
H^{m+1} = Q^{m+1}_{\alpha} \oplus \left( \bigoplus_{y \in (Y')^c} P(y) \right).
\]
where $Q^m_{\alpha} := \bigoplus_{x \in X'} P(x)$ and $Q^{m+1}_{\alpha} :=  \bigoplus_{y \in Y'} P(y)$.
Let $e = \sum \partial_{yx}$, summing over all $\partial_{yx} = (1|2)i, -i(2|1)$  with $x \in X'$ and $y \in Y'$. \\
Redefine 
\begin{itemize}
\item $\alpha := \alpha +1$,
\item $\epsilon:= \epsilon - \gamma$, 
\item $H^m := \bigoplus_{x\in (X')^c} P(x)$ and 
\item $H^{m+1} := \bigoplus_{y \in (Y')^c} P(y)$.
\end{itemize} 
Repeat step (2).
\item If $H^m \neq 0$, then set $Q^m_{\alpha} := H^m$, else if $H^{m+1} \neq 0$, then set $Q^{m+1}_{\alpha} := H^{m+1}$.
\item Output $G^m = \bigoplus_{s \in S} Q^m_s$ and $G^{m+1} = \bigoplus_{s' \in S'} Q^m_{s'}$ with the appropriate index sets $S = \{1, ..., M\}$ and $S' = \{1, ..., M'\}$.
\end{enumerate}
Now consider $\tau^m$ and $\partial_G^m$ as block matrices corresponding to the decomposition obtained above:
\[
\tau^m = [(\tau^m)_{s',s}]_{(s',s) \in S'\times S}, \quad
\partial_G^m = [(\partial_G^m)_{s',s}]_{(s',s) \in S'\times S}.
\]
Note that by the construction of the decomposition we have that the block $(\tau^m)_{s',s}$ only have entries $X_1, X_3$ or 0 for all $s \neq s'$.
On the hom complexes, the decompositions also give us
\[
\HOM^m_{\cK_B}(P_j^B, C^*) = \HOM^m_{\cK_B}(P_j^B, G^*) = \bigoplus_{s \in S} \Hom_{\Ba m} (P_j^B, Q^m_s), \text{ and }
\]
\[\HOM^{m+1}_{\cK_B}(P_j^B, C^*) = \HOM^{m+1}_{\cK_B}(P_j^B, G^*) = \bigoplus_{s' \in S'} \Hom_{\Ba m} (P_j^B, Q^{m+1}_{s'}).
\]
Similarly consider the two differentials $d^m_C$ and $d^m_G$ written as block matrices corresponding to the decompositions:
$
d^m_C = [(d^m_{C})_{s',s}]_{(s',s) \in S' \times S}, \quad
d^m_G = [(d^m_{G})_{s',s}]_{(s', s) \in S'\times S}.
$
The construction of the decomposition guarantees the property that $(d^m_{C})_{s',s} = (d^m_{G})_{s',s}= 0$ whenever $s \neq s'$ (recall that the induced maps $(X_1\circ -)$ and $(X_3\circ -)$ are 0).
So to show that $d^m_C = (\tau^m \circ -)$ and $d^m_G = (\partial_G^m \circ -)$ have isomorphic images and isomorphic kernels, it is now sufficient to show them for each block $(d^m_{C})_{s',s}= ((\tau^m)_{s',s}\circ -)$ and $(d^m_{G})_{s',s} = ((\partial_G^m)_{s',s} \circ -)$  where $s = s'$.

To simplify notation, for the rest of this proof we shall drop the subscript $s$ and denote: 
$$
Q^m := Q^m_s; \quad
Q^{m+1} := Q^{m+1}_s;\quad
d_C := (d^m_{C})_{s,s};\quad
d_G := (d^m_{G})_{s,s};\quad
\tau := \tau^m_{s,s} ; \text{ and }
\partial_G := (\partial^m_G)_{s,s}.
$$

\noindent We shall now look at the possible types of maps $\tau: Q^m \ra Q^{m+1}$ which gives us all possible $d_{C} = (\tau \circ -)$, where $d_{G} = (\partial_G \circ -)$ can be obtained by $d_G = (\tau\circ -) - (\d \circ -)$ (following from \cref{d_C in d_G}).

If $d_C = d_G$, i.e. $\tau$ has no entry of type (ii) so that $\d = 0$, then there is nothing left to show.
Otherwise, $\tau$ contains at least one entry $\partial_{yx}$ of type (ii).
The two possibilities of $\partial_{yx}$ of type (ii) are $(1|2)i$ and $-i(2|1)$.
We will only explicitly show the classification method used to obtain all possible types of $Q^m \xra{\tau} Q^{m+1}$ when $\partial_{yx} = (1|2)i$, where the same method can be applied to the case when $\partial_{yx} = -i(2|1)$.

So let us consider the case when $\tau$ has an entry with $\partial_{yx} = (1|2)i$.
Recall that by the definition of $L_B$, for any $\partial_{yx}$ of type (ii), there must be a corresponding 1-crossing $x'$ of $\check{c}$ such that 
\begin{itemize}
\item $x'$ and $x$ are connected by an essential segment in $D_0$; and
\item $x'$ and $y$ are endpoints of an essential segment of $\check{c}$ in $D_1$.  
\end{itemize}
So in the case $\partial_{yx} = (1|2)i$, $x'$ and $y$ are connected through an essential segment in $D_1$ of type 1 (refer to \cref{sixtype}) and the map $\partial_{yx'}: P(x') \ra P(y)$ is the right multiplication by $(1|2)$.
By the construction of $Q^m$ and $Q^{m+1}$, $Q^m$ must then at least contain the direct summands $P(x)$ and $P(x')$ and $Q^{m+1}$ must at least contain the direct summand $P(y)$, so the differential $Q^m \xra{\tau} Q^{m+1}$ must contain at least two entries $\partial_{yx} = (1|2)i$ and $\partial_{yx'} = (1|2)$.
Thus the crossings $x,x'$ and $y$ must be contained in the corresponding partial curve of $\check{c}$ below:
\begin{figure}[H]
\centering
\begin{tikzpicture} [scale=.9]

\draw[thick]  (13.475,8)--(13.7,8);
\draw[thick]  (13.475,10)--(13.7,10); 
\draw[thick, blue]  (13.475,8)--(13.475,10);
\draw[thick,red]  (11.25,9.65)--(13.475,9.65);
\draw[thick]  (11.25,10)--(13.7,10);  
\draw[thick]  (11.25,8)--(13.7,8);
\draw[thick,green, dashed]  (11.25,10)--(11.25,9.5);
\draw[thick, green, dashed]   (11.25,8)--(11.25,8.5);
\draw[thick, green, dashed, ->]   (11.25,9)--(11.25,9.6);
\draw[thick,  green, dashed, ->]   (11.25,9)--(11.25,8.4);
\filldraw[color=black!, fill=yellow!, very thick]  (11.25,9) circle [radius=0.1]  ;
\draw[fill] (12.75,9) circle [radius=0.1]  ;

\draw[blue] (12,8) -- (12,10);
\draw[thick, red] (11.25,8.35) -- (12,8.35) ;

\node[below right] at (13.475,9.65) {y};
\node[below right] at (12,9.65) {x'};
\node[right] at (12,8.35) {x};
\end{tikzpicture}
\caption{{\small The Crossings $x,x'$ and $y$.}} \label{must have form}
\end{figure}
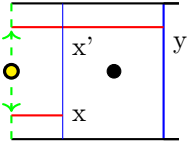
As seen from \cref{must have form} above, $x$ and $y$ are the only crossings that can be in a another distinct essential segment of $\check{c}$.

Let us now first consider the subcase when $x$ is not in another distinct essential segment.
If $y$ is also not in another distinct essential segment, then we have that $Q^m \xra{\tau} Q^{m+1}$ is of the form:
\begin{figure}[H]
\centering
\begin{tikzcd}
P(x') \ar[r, "(1|2)"  ] & P(y) \\
P(x ) \ar[ru,"(1|2)i",swap] \ar[u, phantom, "\oplus" ]
\end{tikzcd}.
\end{figure}
If instead $y$ is part of another essential segment of $\check{c}$ with its other endpoint some crossing $w$, then the essential segment must be in $D_2$.
Since $P(y)$ is a direct summand of $Q^{m+1}$, $w$ must have the property $w_1 = y_1 - 1$ so that $P(w)$ is an entry of $Q^m$ and $\partial_{yw}$ is an entry of $\tau$.
The only two such possibilities are:
\begin{figure}[H]
\centering
\begin{tikzpicture} [scale=.9]

\draw[thick]  (13.475,8)--(15,8);
\draw[thick]  (13.475,10)--(15,10); 
\draw[thick, blue]  (13.475,8)--(13.475,10);
\draw[thick,red]  (11.25,9.65)--(13.475,9.65);
\draw[thick]  (11.25,10)--(13.7,10);  
\draw[thick]  (11.25,8)--(13.7,8);
\draw[thick,green, dashed]  (11.25,10)--(11.25,9.5);
\draw[thick, green, dashed]   (11.25,8)--(11.25,8.5);
\draw[thick, green, dashed, ->]   (11.25,9)--(11.25,9.6);
\draw[thick,  green, dashed, ->]   (11.25,9)--(11.25,8.4);
\filldraw[color=black!, fill=yellow!, very thick]  (11.25,9) circle [radius=0.1]  ;
\draw[fill] (12.75,9) circle [radius=0.1]  ;
\draw[fill] (14.25,9) circle [radius=0.1]  ;

\draw[blue] (12,8) -- (12,10);
\draw[thick, red] (11.25,8.35) -- (12,8.35) ;

\node[below left] at (13.475,9.65) {y};
\node[below right] at (12,9.65) {x'};
\node[below right] at (12,8.45) {x};
\node[left] at (13.475,9.85)  {w};
\draw[thick,red] plot[smooth,tension=.8] coordinates { (13.475,9.65)  (13.9,9.45)  (14.25, 8.7)  ( 14.625, 9 )  (14.35,9.5) (13.475,9.85) };
\draw[thick,red] plot[smooth,tension=.8] coordinates { (12,8.35) (12.4, 8.5)   (12.75,9)  };

\draw[thick]  (18.475,8)--(20.2,8);
\draw[thick]  (18.475,10)--(20.2,10); 
\draw[thick, blue]  (18.475,8)--(18.475,10);
\draw[thick,red]  (16.25,9.65)--(18.475,9.65);
\draw[thick]  (16.25,10)--(18.7,10);  
\draw[thick]  (16.25,8)--(18.7,8);
\draw[thick,green, dashed]  (16.25,10)--(16.25,9.5);
\draw[thick, green, dashed]   (16.25,8)--(16.25,8.5);
\draw[thick, green, dashed, ->]   (16.25,9)--(16.25,9.6);
\draw[thick,  green, dashed, ->]   (16.25,9)--(16.25,8.4);
\filldraw[color=black!, fill=yellow!, very thick]  (16.25,9) circle [radius=0.1]  ;
\draw[fill] (17.75,9) circle [radius=0.1]  ;
\draw[fill] (19.25,9) circle [radius=0.1]  ;

\draw[blue] (17,8) -- (17,10);
\draw[blue] (20,8) -- (20,10);
\draw[thick, red] (16.25,8.35) -- (17,8.35) ;

\node[above right] at (18.475,9.5) {{ y}};
\node[below right] at (17,9.65) {x'};
\node[below right] at (17,8.45) {x};
\node[right] at (20,8.35) {w};
\draw[thick,red] plot[smooth,tension=.5] coordinates { (18.475,9.65)  (18.75, 9.5) (19.25,8.4) (20,8.35)   };
\draw[thick,red] plot[smooth,tension=1] coordinates { (17,8.35) (17.4, 8.5) (17.75,9)  };
\end{tikzpicture}
\caption{{\small The two possible essential segments from $y$.}} \label{two ess y}
\end{figure}
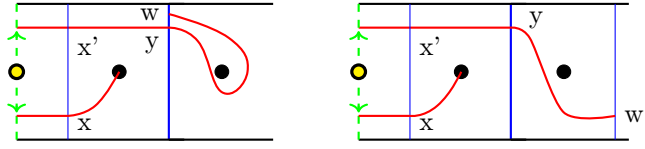

%
%
Now note that if $w$ is a 2-crossing (left picture in \cref{two ess y}, then one sees that $w$ can not be connected to any other crossing $z$ through another distinct essential segment in $\check{c}$ with $P(z)$ a direct summand of $Q^{m+1}$;
if instead $w$ a 3-crossing (right picture in \cref{two ess y}, then the only possibility for $P(z)$ to be a direct summand of $Q^{m+1}$ is when $w$ is also a 3-crossing, with $w$ and $z$ endpoints of an essential segment of $\check{c}$ in $D_3$ of type 2, giving us $\partial_{wz} = X_3$.
Recall the chosen decomposition of $G^m$ and $G^{m+1}$, where $Q^{m+1} \subseteq G^{m+1}$ corresponds to the smallest subset of crossings in $\mathfrak{G}^{m+1}$ which contains $y$, with the property that maps between the direct summands of the decompositions of $G^m$ and $G^{m+1}$ are either 0 or $X_1$ or $X_3$.
Thus $P(z)$ must be excluded from $Q^{m+1}$.
We can therefore conclude that for the subcase when $x$ is not connected to any other distinct essential segments, we have 3 possible forms for $Q^m \xra{\tau} Q^{m+1}$:
\begin{center}
\begin{tikzcd}
P(x') \ar[r, "(1|2)"  ] & P(y) \\
P(x ) \ar[ru,"(1|2)i",swap] \ar[u, phantom, "\oplus" ]
\end{tikzcd} 
, or
\begin{tikzcd}
P(w) \ar[rd, "X_2 \text{ or } (3|2)"] \ar[d, phantom, "\oplus"]\\
P(x') \ar[r, "(1|2)"  ] & P(y) \\
P(x ) \ar[ru,"(1|2)i",swap] \ar[u, phantom, "\oplus" ]
\end{tikzcd}
\end{center}
To analyse the maps $d_C$ and $d_G$, let us identify the morphism spaces as

\begin{align*}
\HOM^{m}_{\cK_B}(P_2^B, G^*) = & 
\bigoplus_{(s, t) \in \Z \times \Z/\Z}\Hom_{\Ba m}(P_2^B, (P(x')\oplus P(x) \oplus P(w) )\{s\}\<t\>)\{s\}\<t\> \\ 
\cong & \left(\R\{(2|1)\}\<x_3\> \oplus \R\{i(2|1)\}\<x_3+1\>\right)\{x_2+1\} 
	\\ &\oplus \left(\R\{(2|1)\}\<x'_3\> \oplus \R\{i(2|1)\}\<x'_3+1\>\right)\{x'_2\}
	\\ &\oplus Z	
\end{align*}

where $Z = 0$ for the first type of $Q^m \xra{\tau} Q^{m+1}$, and

\begin{align*}
\HOM^{m+1}_{\cK_B}(P_2^B, G^*) = & 
\bigoplus_{(s, t) \in \Z \times \Z/\Z}\Hom_{\Ba m}(P_2^B, P(y)\{s\}\<t\>)\{s\}\<t\> \\ 
\cong & \left(\R\{X_2\}\<y_3\> \oplus \R\{X_2 i\}\<y_3+1\>\right)\{y_2+1\} 
	\\ &\oplus \left(\R\{\id\}\<y_3\> \oplus \R\{i\}\<y_3+1\>\right)\{y_2\}.
\end{align*}

Using this identification, we can write $d_C$ and $d_G$  as the corresponding matrices:

$$
d_C =
\begin{bmatrix}
1 &  0 & 0 & -1 & e \\
0 &  1 & 1 & 0  & f \\
0 &  0 & 0 & 0  & 0 \\
0 &  0 & 0 & 0  & 0 \\
\end{bmatrix}
\text{ and }
d_G =
\begin{bmatrix}
1 &  0 & 0 & 0  & e \\
0 &  1 & 0 & 0  & f \\
0 &  0 & 0 & 0  & 0 \\
0 &  0 & 0 & 0  & 0 \\
\end{bmatrix},
$$

\noindent where $d_G$ is obtained from $d_C$ by removing maps that were induced by $(1|2)i$.
It follows that $d_C$ and $d_G$ have the same image and have isomorphic kernels.

Now consider the other subcase where $x$ is in another essential segment of $\check{c}$ with its other endpoint some crossing $y'$.
Note that since $x$ is already part of an essential segment in $D_0$, the essential segment connecting $x$ and $y'$ can only be in $D_1$.
As before, we must have $y'_1 = x_1 - 1$ so that $P(y')$ is a direct summand of $Q^{m+1}$ and that $\partial_{y'x}$ is an entry of $\tau$.
Furthermore, if $x$ and $y'$ is connected by the essential segment of Type 2 in \cref{sixtype}, then $\partial_{y'x} = X_1$.
Therefore such $P(y')$ is excluded from $Q^{m+1}$.
Collecting the results, the only possible essential segment connecting $x$ and $y'$ with $y'_1 = x_1 - 1$ and $\partial_{y'x} \neq X_1$ is the essential segment of Type 1.
Thus the crossings $x,x',y$ and $y'$ must be contained in the 
\begin{figure}[H]
\centering
\begin{tikzpicture} [scale=.9]

\draw[thick]  (13.475,8)--(13.7,8);
\draw[thick]  (13.475,10)--(13.7,10); 
\draw[thick, blue]  (13.475,8)--(13.475,10);
\draw[thick,red]  (11.25,9.65)--(13.475,9.65);
\draw[thick]  (11.25,10)--(13.7,10);  
\draw[thick]  (11.25,8)--(13.7,8);
\draw[thick,green, dashed]  (11.25,10)--(11.25,9.5);
\draw[thick, green, dashed]   (11.25,8)--(11.25,8.5);
\draw[thick, green, dashed, ->]   (11.25,9)--(11.25,9.6);
\draw[thick,  green, dashed, ->]   (11.25,9)--(11.25,8.4);
\filldraw[color=black!, fill=yellow!, very thick]  (11.25,9) circle [radius=0.1]  ;
\draw[fill] (12.75,9) circle [radius=0.1]  ;

\draw[blue] (12,8) -- (12,10);
\draw[thick, red] (11.25,8.35) -- (12,8.35) ;

\node[right] at (13.475,9.65) {y};
\node[below right] at (12,9.65) {x'};
\node[below right] at (12,8.45) {x};
\node[right] at (13.475,9.45) {y'};

\draw[thick,red] plot[smooth,tension=.8] coordinates { (12,8.35) (12.2,8.5)  (12.45, 9.1) (12.9, 9.35) (13.475,9.45)};
\end{tikzpicture}
\caption{The crossings $x,x'$ and $y$ when $x$ is in another essential segment.} 
\end{figure}
The same analysis in the previous subcase on the possible essential segments connected to $y$ can be applied similarly to the crossings $y$ and $y'$ here.
Thus we conclude that for this subcase, $Q^m \xra{\partial^m_C} Q^{m+1}$ is equal to one of the following 6 types (there are 3 possible combinations of $X_2$ and $(3|2)$ maps in the rightmost diagram):
\begin{center}
\begin{tikzcd}
P(x') \ar[rd," " description] \ar[r, "(1|2)"  ] & P(y) \\
P(x ) \ar[ru,"(1|2)i" description] \ar[r,"(1|2)", swap]
	\ar[u, phantom, "\oplus" ] 
	& P(y') \ar[u, phantom, "\oplus"] 
\end{tikzcd}
, \quad
\begin{tikzcd}
P(z) \ar[rd, "X_2 \text{ or } (3|2)"] \ar[d, phantom, "\oplus"]\\
P(x') \ar[rd," " description] \ar[r, "(1|2)"  ] & P(y) \\
P(x ) \ar[ru,"(1|2)i" description] \ar[r,"(1|2)", swap]
	\ar[u, phantom, "\oplus" ] 
	& P(y') \ar[u, phantom, "\oplus"] 
\end{tikzcd}
, or
\begin{tikzcd}
P(z) \ar[rd, "X_2 \text{ or } (3|2)"] \ar[d, phantom, "\oplus"]\\
P(x') \ar[rd," " description] \ar[r, "(1|2)"  ] & P(y) \\
P(x ) \ar[ru,"(1|2)i" description] \ar[r,"(1|2)", swap]
	\ar[u, phantom, "\oplus" ] 
	& P(y') \ar[u, phantom, "\oplus"] \\
P(z') \ar[ru, "X_2 \text{ or } (3|2)", swap] \ar[u, phantom, "\oplus"]
\end{tikzcd}
\end{center}
swapping $x$ with $x'$ (and correspondingly $y$ with $y'$) if necessary.
Let us again identify the morphism spaces as:
\begin{align*}
\HOM^{m}_{\cK_B}(P_2^B, G^*) = & 
\bigoplus_{(s, t) \in \Z \times \Z/\Z}\Hom_{\Ba m}(P_2^B, (P(x')\oplus P(x) \oplus P(z) )\{s\}\<t\>)\{s\}\<t\> \\ 
\cong & \left(\R\{(2|1)\}\<x_3\> \oplus \R\{i(2|1)\}\<x_3+1\>\right)\{x_2+1\} 
	\\ &\oplus \left(\R\{(2|1)\}\<x'_3\> \oplus \R\{i(2|1)\}\<x'_3+1\>\right)\{x'_2\}
	\\ &\oplus Z	
\end{align*}

\noindent where $Z = 0$ for the first type of $Q^m \xra{\tau} Q^{m+1}$, and
\begin{align*}
\HOM^{m+1}_{\cK_B}(P_2^B, G^*) = & 
\bigoplus_{(s, t) \in \Z \times \Z/\Z}\Hom_{\Ba m}(P_2^B, P(y)\{s\}\<t\>)\{s\}\<t\> \\ 
\cong & \left(\R\{X_2\}\<y_3\> \oplus \R\{X_2 i\}\<y_3+1\>\right)\{y_2+1\}
	\\ &\oplus \left(\R\{X_2\}\<y'_3\> \oplus \R\{X_2 i\}\<y'_3+1\>\right)\{y'_2+1\}
	\\ &\oplus \left(\R\{\id\}\<y_3\> \oplus \R\{i\}\<y_3+1\>\right)\{y_2\}
	\\ &\oplus \left(\R\{\id\}\<y'_3\> \oplus \R\{i\}\<y'_3+1\>\right)\{y'_2\}
	\\
	= & \left(\R\{X_2\}\<y_3\> \oplus \R\{X_2 i\}\<y_3+1\>\right)\{y_2+1\}
	\\ &\oplus \left(\R\{X_2\}\<y'_3\> \oplus \R\{X_2 i\}\<y'_3+1\>\right)\{y'_2+1\}
	\\ &\oplus V.
\end{align*}

\noindent Writing $d_C$ and $d_G$  as the corresponding matrix, we get

$$
d_C =
\begin{bmatrix}
1 &  0  & 0 & -1 & e' \\
0 &  1  & 1 & 0  & f' \\
0 &  -1 & 1 & 0  & g' \\
1 &  0  & 0 & 1  & h' \\
0 &  0  & 0 & 0  & 0 
\end{bmatrix}
\text{ and }
d_G =
\begin{bmatrix}
1 &  0 & 0 & 0  & e' \\
0 &  1 & 0 & 0  & f' \\
0 &  0 & 1 & 0  & g' \\
0 &  0 & 0 & 1  & h' \\
0 &  0 & 0 & 0  & 0 
\end{bmatrix},
$$

\noindent which also have the same image and same kernel.
Thus for $\partial_{yx} = (1|2)i$, all possible cases of $d_C$ and $d_G$ have isomorphic images and isomorphic kernels as required.

Applying the same classification method to the case when $\partial_{yx} = -i(2|1)$, the posssible types of $Q^m \xra{\tau} Q^{m+1}$ is given by:
\begin{center}
\begin{tikzcd}
 & P(y) \\
P(x ) \ar[ru,"-i(2|1)"] \ar[r, "(2|1)"]& P(y') \ar[u, phantom, "\oplus" ]
\end{tikzcd} 
, or
\begin{tikzcd}
 	& P(y) \\
P(x ) \ar[ru,"-i(2|1)"] \ar[r, "(2|1)"] \ar[rd, "X_2 \text{ or } (2|3)", swap]
	& P(y') \ar[u, phantom, "\oplus" ] \\
	& P(z) \ar[u, phantom, "\oplus"]
\end{tikzcd} 
\end{center}
when $y$ is not part of another distinct essential segment of $\check{c}$, and
\begin{center}
\begin{tikzcd}
P(x')\ar[r, "(2|1)"] \ar[dr, " " description]
	& P(y) \\
P(x ) \ar[ru,"-i(2|1)" description] \ar[r, "(2|1)"] \ar[u, phantom, "\oplus"]
	& P(y') \ar[u, phantom, "\oplus" ]
\end{tikzcd} 
,
\begin{tikzcd}
P(x')\ar[r, "(2|1)"] \ar[dr, " " description]
 	& P(y) \\
P(x ) \ar[ru,"-i(2|1)" description] \ar[r, "(2|1)"] \ar[rd, "X_2 \text{ or } (2|3)", swap] \ar[u, phantom, "\oplus"]
	& P(y') \ar[u, phantom, "\oplus" ] \\
	& P(z) \ar[u, phantom, "\oplus"]
\end{tikzcd} 
, or
\begin{tikzcd}
	& P(z') \ar[d, "\oplus"] \\
P(x')\ar[r, "(2|1)"] \ar[dr, " " description] \ar[ru, "X_2 \text{ or } (2|3)"]
 	& P(y) \\
P(x ) \ar[ru,"-i(2|1)" description] \ar[r, "(2|1)"] \ar[rd, "X_2 \text{ or } (2|3)", swap] \ar[u, phantom, "\oplus"]
	& P(y') \ar[u, phantom, "\oplus" ] \\
	& P(z) \ar[u, phantom, "\oplus"]
\end{tikzcd} 
\end{center}
when $y$ is part of another distinct essential segment of $\check{c}$, where as before we swap $y$ with $y'$ (correspondingly $x$ with $x'$) if neccessary.
By identifying the morphism spaces and comparing the corresponding matrices of $d_C$ and $d_G$ as before, it follows that $d_C$ and $d_G$ have isomorphic images and isomorphic kernels.
This covers all cases of $\d_C$ and $\d_G$, completing the proof.
\end{proof}

\begin{lemma}\label{poly tri int b g}
The Poincar\'e polynomial $\mathfrak{P}(P^B_j, L_B(\check{g}))$  of
$\HOM^*_{\cK_B}(P^B_j,L_B(\check{g}))$ is equal to the trigraded intersection number $I^{trigr}(\check{b}_j, \check{g})$ for any trigraded $j$-string $\check{g}$.
\end{lemma}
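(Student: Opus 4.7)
The plan is to prove the identity by a reduction to the finite list of base types of trigraded $j$-strings listed in \cref{B j-string}, \cref{B 1-string}, and \cref{B n-string}, and then to verify it case-by-case on each base type, matching the explicit contributions tabulated in \cref{compute tri int}. First I would establish the shift-compatibility: from the definition of $L_B$ it is immediate that $L_B(\chi(r_1,r_2,r_3)\check{g}) \cong L_B(\check{g})[-r_1]\{r_2\}\<r_3\>$, so the Poincar\'e polynomial $\mathfrak{P}(P^B_j, L_B(\chi(r_1,r_2,r_3)\check{g})) = q_1^{r_1}q_2^{r_2}q_3^{r_3}\mathfrak{P}(P^B_j, L_B(\check{g}))$, which matches the behaviour of $I^{trigr}(\check{b}_j, \chi(r_1,r_2,r_3)\check{g})$ under property (T2) of \cref{triintpro}. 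This reduces the lemma to the ten (resp.\ six) base types with $(r_1,r_2,r_3)=(0,0,0)$.

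For each base type $\check{g}$, the strategy is: (a) write down $L_B(\check{g})$ explicitly using the construction defining $L_B$, including the special corrections on $1$-strings; (b) replace each term $P^B_k[-s_1]\{s_2\}\<s_3\>$ in the complex by the bimodule $\Hom_{\Ba_n\text{-mod}}(P^B_j, P^B_k\{s_2\}\<s_3\>) = {}_jP^B_k\{s_2\}\<s_3\>$, using the identifications of \cref{Bcat} (in particular $_jP^B_j \cong \C\oplus \C\{1\}$ as $(\C,\C)$-bimodules for $j\geq 2$, which as bigraded $\R$-vector space has bigraded dimension $(1+q_2)(1+q_3)$, while $_1P^B_1 \cong \R \oplus \R\{1\}$ has bigraded dimension $1+q_2$); (c) compute the cohomology of the resulting complex and assemble the Poincar\'e polynomial. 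The contractible types (IV, IV$'$, V, V$'$ for $j\geq 2$ and V$'$ for $j=1$) correspond to essential segments whose endpoints are connected in a ``boundary-parallel'' fashion; a direct computation shows the Hom complex is acyclic in these cases, matching the zero contribution. For type VI with $j\geq 2$ we get $L_B(\check{g}) = P^B_j$ in degree zero and $\mathfrak{P} = (1+q_2)(1+q_3)$; for type VI with $j=1$ we get $\mathfrak{P}=1+q_2$, matching the table. Types I, II, II$'$, III, III$'$ produce short two-term complexes whose differentials are right multiplication by generators $(j|j\pm 1)$ or $X_j$; the induced maps on Hom spaces are computed directly from the multiplication in $\Ba_n$ and the cohomology is read off.

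The main obstacle will be the half-integer types II$'_{k+\frac{1}{2}}$ and III$'_{k+\frac{1}{2}}$ for $1$-strings, where the non-standard entries $(1|2)i$ and $-i(2|1)$ introduced by the correction rule in the definition of $L_B$ couple two copies of the Hom complex into a $2\times 2$ matrix of maps involving complex multiplication. Writing out the matrix acting on the $4$-dimensional real vector space $_jP^B_k$ decomposed as $\R\<0\>\oplus \R\<1\>\oplus \R\{1\}\<0\>\oplus \R\{1\}\<1\>$, one verifies that the rank of this matrix drops by the correct amount to produce the extra $q_1^{-1}q_2 q_3$ or $q_3$ term in the contribution. Once these half-integer types are handled carefully, the integer types II$'_k$ and III$'_k$ follow by the shift relation, and the tabulated contributions in \cref{compute tri int} match the computed Poincar\'e polynomials on the nose, completing the proof.
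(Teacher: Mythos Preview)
Your proposal is correct and follows exactly the strategy the paper has in mind: the paper's proof is simply ``This follows exactly as in \cite{KhoSei}'', and what you have written is precisely the type-$B$ adaptation of the Khovanov--Seidel case-by-case computation on $j$-string types, reduced via the shift compatibility of $L_B$ and property (T2). Your explicit treatment of the half-integer $1$-string types (where the modified differentials $(1|2)i$ and $-i(2|1)$ appear) is the only genuinely new feature beyond \cite{KhoSei}, and you have identified it correctly.
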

\begin{proof}
This follows exactly as in \cite{KhoSei}.
\end{proof}

\begin{proposition} \label{poin poly equals tri int}
For any $\sigma$ and  $\tau$ in $\cA(B_n),$ and any $1 \leq j,k \leq n,$ the Poincar\'e polynomial of 
$$HOM^*_{\cK_B}( \sigma(P^B_j), \tau(P^B_k)) $$
is equal to the trigraded intersection number $I^{trigr}(\check{\sigma}(\check{b}_j), \check{\tau}(\check{b}_k)).$
\end{proposition}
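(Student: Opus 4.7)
The plan is to reduce this statement to the two key lemmas just established, namely \cref{poly c g} and \cref{poly tri int b g}, by exploiting the $\cA(B_n)$-equivariance of $L_B$ (\cref{L_B equivariant}) together with the invariance of the trigraded intersection number under the mapping class group action (property (T1) of \cref{triintpro}). The first observation I would record is that, directly from the definition of $L_B$, one has $L_B(\check{b}_j) = P_j^B$ for each $1 \leq j \leq n$, so the basic projectives correspond to the basic trigraded curves.

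Next, since $\sigma$ acts on $\cK_B$ by an autoequivalence, tensoring with its inverse complex $\sigma^{-1}$ yields an isomorphism
\[
\HOM^*_{\cK_B}(\sigma(P_j^B), \tau(P_k^B)) \;\cong\; \HOM^*_{\cK_B}(P_j^B, \sigma^{-1}\tau(P_k^B))
\]
in $\cK_B$, which preserves Poincar\'e polynomials. Using \cref{L_B equivariant}, the right-hand complex is isomorphic to $\HOM^*_{\cK_B}(P_j^B, L_B(\check{\sigma}^{-1}\check{\tau}(\check{b}_k)))$, where $\check{\sigma}^{-1}\check{\tau}(\check{b}_k)$ is an admissible trigraded curve (being the image of the admissible $\check{b}_k$ under a mapping class lift). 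Bringing this curve into normal form via an isotopy does not change the associated complex up to isomorphism in $\cK_B$, so I may apply \cref{poly c g} to decompose the hom complex as a direct sum over the $j$-strings of $\check{\sigma}^{-1}\check{\tau}(\check{b}_k)$, and then apply \cref{poly tri int b g} to each summand. This yields
\[
\mathfrak{P}(\sigma(P_j^B), \tau(P_k^B)) \;=\; \sum_{\check{g} \in st(\check{\sigma}^{-1}\check{\tau}(\check{b}_k),\, j)} I^{trigr}(\check{b}_j, \check{g}).
\]

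To finish, I would invoke \cref{compute tri int}, which says precisely that the right-hand sum equals $I^{trigr}(\check{b}_j, \check{\sigma}^{-1}\check{\tau}(\check{b}_k))$. Finally, applying property (T1) of \cref{triintpro} with the diffeomorphism $\sigma$ gives
\[
I^{trigr}(\check{b}_j, \check{\sigma}^{-1}\check{\tau}(\check{b}_k)) \;=\; I^{trigr}(\check{\sigma}(\check{b}_j), \check{\tau}(\check{b}_k)),
\]
completing the identification. I do not expect any genuine obstacle beyond verifying that the normal form reduction of $\check{\sigma}^{-1}\check{\tau}(\check{b}_k)$ is compatible with the $j$-string decomposition used in \cref{poly c g} (this is already implicit in the statement of that lemma, which is phrased for arbitrary admissible curves); all other ingredients fit together formally from the preceding results.
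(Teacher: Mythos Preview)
Your proof is correct and follows essentially the same approach as the paper: both use \cref{poly c g}, \cref{poly tri int b g}, and \cref{compute tri int} to establish $\mathfrak{P}(P_j^B, L_B(\check{c})) = I^{trigr}(\check{b}_j, \check{c})$, and then pass to the general case via the equivariance of $L_B$ (\cref{L_B equivariant}) together with property (T1). The only cosmetic difference is order: the paper first proves the base identity for arbitrary $\check{c}$ and then invokes equivariance in one sentence, whereas you perform the equivariance reduction first and then appeal to the three lemmas.
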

\begin{proof}
By \cref{compute tri int}, we get that
$
I^{trigr}(\check{b}_j, \check{c}) = \sum_{\check{g} \in st(\check{c},j)} I^{trigr}(\check{b}_j, \check{g}).
$
Using \cref{poly c g}, we instead get that
$
\mathfrak{P}(P^B_j, L_B(\check{c})) = \sum_{\check{g} \in st(\check{c},j)}\mathfrak{P}(P^B_j, L_B(\check{g})).
$
By \cref{poly tri int b g}, each $\mathfrak{P}(P^B_j, L_B(\check{g})) = I^{tri}(\check{b}_j, \check{g})$, thus we can conclude that
$
I^{trigr}(\check{b}_j, \check{c}) =  \mathfrak{P}(P^B_j, L_B(\check{c})).
$
The proposition now follows from the fact that the categorical action of $\cA(B_n)$ respects morphism spaces and similarly the topological action of $\cA(B_n)$ respects trigraded intersection number.
\end{proof}
\begin{remark} \label{faithtop}
Note that we can also use \cref{poin poly equals tri int} to prove the faithfulness of the $\cA(B_n)$ categorical action.
The proof is similar to \cite{KhoSei} modulo the center of $\cA(B_n)$, which is an easy check that elements of the centre act by shifting degrees and therefore are not isomorphic to the identity functor.
\end{remark}

\section{What is actually Type $B$ Zigzag Algebra?} \label{InvCat}

\subsection{Type $B$ zigzag algebra as a subalgebra of type $A$ zigzag algebra}

One might wonder if the the Type $B$ zigzag algebra has any geometric incarnation. 
  Is Type $B$ zigzag algebra ``Fukaya algebra'' in some Fukaya category?
  Here is one answer to it on the cohomology level.
  As in \cite{Seibook}, we will consider the Floer cohomology of type $A$ Milnor fibre in complex coefficient.
  Then, the type $A$ zigzag algebra we considering would be over $\C :$
  \begin{equation} \label{ISO}
A_{n} \cong \bigoplus^n_{i,j =1} HF^* (L_i, L_j; \C).
\end{equation} 
by associating \begin{align*}
 e_i & \mapsto HF^0 (L_i, L_i) \\
 X_i := (i | i+1 | i) = (i | i-1 |i) & \mapsto HF^n (L_i, L_i) \\
 (i | i-1) & \mapsto HF^0 ( L_i, L_{i-1} ),  i \leq n \\
 (i | i-1) & \mapsto HF^n(L_i, L_ {i-1}),  i>n \\
  (i | i+1) & \mapsto HF^0 ( L_i, L_{i+1} ),  i \geq n \\
 (i | i+1) & \mapsto HF^n(L_i, L_ {i+1}),  i < n. \\
\end{align*}
Some words of caution here: This isomorphism assume that the $A_\infty$-algebra of Floer cochain complexes underlying the right hand side is formal (see \cite{SeiTho,Seibook} for more treatment),   type $A$ zigzag algebra in the left hand side \cref{ISO} has to be regarded as a differential graded algebra, and although the associations on the right hand side are vector spaces, they are of dimension one.

     Consider the type $A_{2n-1}$ Milnor fibre
\begin{center}
$ M^d_{A_{2n-1}} = {\left\lbrace(x_0, \ldots , x_{d}) \in \C^{d+1}  \mid x_0^2+  \ldots + x_{d-1}^2+ x_{d}^{2n} = 0 \right\rbrace} $
\end{center}
     The $\Z / 2\Z$-action on the last coordinate $x_d \mapsto - x_d$ of type $A$ Milnor fibre induces a $\Z / 2\Z$-action on the Lagrangian sphere 
     \begin{align*}
     & L_n  \mapsto L_n;  \\
     \text{For $2 \leq j \leq n$,} \\
     & L_{n-j+1}  \mapsto  L_{n+j-1}, \\
     & L_{n+j-1}  \mapsto  L_{n-j+1}.
     \end{align*}
Please refer to \cite{KhoSei} for the construction of Lagrangian spheres from the bifurcation diagram of Milnor fibre which is proven to form a disc as the topological model for the mapping class group to act on. 
     Note that if you consider the symplectic Dehn twist generated by the disjoint union of Lagrangian spheres which are in the same orbit under the action above, they forms the type $B$ symplectomorphism on the Type $A$ Milnor fibre. 
     This is the \cite{BirHil} embedding on $\cA(B_n)$ in $\cA{(A_{2n-1})}.$
     Based on \cite{SeiSym}, the symplectic Dehn twist defined is dependent on the Lagrangian spheres, but not its orientation.

     On the cohomology level, we could also consider a conjugation action $i \mapsto -i$ on the coefficient field, that is given by the non-trivial element $\sigma \in$ Gal$(\C / \R) \cong \Z / 2 \Z.$
     
\begin{theorem} \label{invariant algebra}
The type $B$ zigzag algebra $\Ba_n$ is the $\R$-algebra of the invariant algebra  
$$ \Aa^{inv}_{2n-1} \cong HF^{\text{inv}} \left( \bigoplus^{2n-1}_{i=1} L_i, \bigoplus^{2n-1}_{i=1} L_i \right)$$ under the $\Z / 2\Z$-diagonal action of the product group of $\Z / 2\Z \times \Z / 2\Z$ where the first group factor is coming from the symplectic consideration and the second group factor is coming from the algebraic consideration as explained in the preceding paragraph; 
$$   \Ba_n  \cong_{\R}  \Aa^{inv}_{2n-1}. $$
\end{theorem}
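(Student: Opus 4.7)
The plan is to reduce everything to Proposition 1.5.1 (the isomorphism $\C \otimes_\R \Ba_n \cong \Aa_{2n-1}$) and then identify $\Ba_n$ with the $\R$-points of $\Aa_{2n-1}$ under a suitable anti-$\C$-linear involution. First I would make the $\Z/2\Z \times \Z/2\Z$-action explicit on $\Aa_{2n-1}$. The first factor acts by the $\C$-algebra automorphism $\tau$ induced by the symplectic swap $L_n \mapsto L_n$, $L_{n-j+1} \leftrightarrow L_{n+j-1}$; on generators this is $e_k \mapsto e_{2n-k}$ together with the compatible reindexing of the arrows $(k\mid k\pm 1)$ and loops $X_k$. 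The second factor acts by the anti-$\C$-linear involution $c$ given by complex conjugation of scalars, fixing all path generators. The diagonal involution $\sigma := \tau \circ c = c \circ \tau$ is then an anti-$\C$-linear involution of $\Aa_{2n-1}$, and $\Aa_{2n-1}^{\mathrm{inv}} := \{a \in \Aa_{2n-1} \mid \sigma(a)=a\}$ is an $\R$-subalgebra.

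Next, using Proposition 1.5.1 I would consider the composite $\R$-algebra homomorphism
\[
\Psi \;=\; \Phi \circ i \;\colon\; \Ba_n \;\hookrightarrow\; \C \otimes_\R \Ba_n \;\xrightarrow{\;\Phi\;}\; \Aa_{2n-1},
\]
which is injective since $i$ is injective and $\Phi$ is a $\C$-algebra isomorphism. Reading off the formulas in the proof of Proposition 1.5.1, one has $\Psi(e_1) = e_n$, $\Psi(e_j) = e_{n-j+1} + e_{n+j-1}$ for $j \ge 2$, $\Psi(ie_j) = i(e_{n+j-1} - e_{n-j+1})$, together with the analogous symmetric/antisymmetric combinations for the arrows $(j\mid j\pm 1)$. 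The key verification is then to show $\Psi(\Ba_n) \subseteq \Aa_{2n-1}^{\mathrm{inv}}$ by checking invariance on each of these generators: the symmetric combinations such as $e_{n-j+1} + e_{n+j-1}$ have real coefficients and are $\tau$-invariant, hence $\sigma$-invariant; the antisymmetric combinations such as $i(e_{n+j-1} - e_{n-j+1})$ are negated by $\tau$ and by $c$ separately, hence fixed by $\sigma$. The same bookkeeping for the image of $ie_j \cdot (j\mid j\pm 1)$ and the loops $X_j$ completes the check.

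Finally, I would close the loop by a dimension count. Since $\sigma$ is an anti-$\C$-linear involution of the $\C$-vector space $\Aa_{2n-1}$, standard Galois descent for $\C/\R$ yields a canonical decomposition $\Aa_{2n-1} \cong \Aa_{2n-1}^{\mathrm{inv}} \otimes_\R \C$ and hence $\dim_\R \Aa_{2n-1}^{\mathrm{inv}} = \dim_\C \Aa_{2n-1} = 8n-6 = \dim_\R \Ba_n$. Combined with injectivity of $\Psi$, the containment $\Psi(\Ba_n) \subseteq \Aa_{2n-1}^{\mathrm{inv}}$ upgrades to an equality, proving $\Ba_n \cong_\R \Aa_{2n-1}^{\mathrm{inv}}$.

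The main obstacle I anticipate is purely bookkeeping: one must be careful about signs and which arrow gets sent where under $\tau$, because the symplectic swap reverses the orientation of the zigzag, so $\tau$ may send $(k\mid k+1)$ to $\pm(2n-k\mid 2n-k-1)$ with a sign depending on the grading conventions adopted in Section 1.3. Getting these signs right is essential so that $\tau$ is genuinely an algebra automorphism (in particular so that $\tau(X_k) = X_{2n-k}$ and the two types of arrows with different path-length gradings match up correctly). Once the action is nailed down unambiguously, each remaining step is a direct verification.
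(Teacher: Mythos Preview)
Your proposal is correct and follows essentially the same route as the paper: build the injective $\R$-algebra map $\Ba_n \hookrightarrow \Aa_{2n-1}$ from Proposition~\ref{isomorphic algebras}, and then check on generators that the image lands in the $\sigma$-invariants. The paper writes out the map $\Phi_\R$ explicitly (with what appears to be a harmless sign discrepancy on $ie_j$ relative to Proposition~\ref{isomorphic algebras}) and verifies invariance elementwise, whereas you phrase the same map as the composite $\Phi \circ i$ and organise the invariance check by the symmetric/antisymmetric dichotomy; these are the same argument.

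Your Galois-descent dimension count at the end is a genuine improvement in rigour: the paper's proof only establishes $\Psi(\Ba_n) \subseteq \Aa_{2n-1}^{\mathrm{inv}}$ and then concludes, without explicitly arguing surjectivity onto the invariants. Your observation that $\sigma$ is anti-$\C$-linear, hence $\dim_\R \Aa_{2n-1}^{\mathrm{inv}} = \dim_\C \Aa_{2n-1} = 8n-6 = \dim_\R \Ba_n$, cleanly closes that gap. The sign bookkeeping you flag as the main obstacle is real but routine, and in fact the paper's grading conventions allow $\tau$ to act by the naive reindexing with no extra signs.
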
      
\begin{proof}
We split 
$$ \Aa_{2n-1} \cong HF \left( \bigoplus^{2n-1}_{i=1} L_i, \bigoplus^{2n-1}_{i=1} L_i; \C \right) = HF \left( L_n, L_n \right) \oplus \bigoplus_{j=2}^n HF( L_{n-j+1}, L_{n+j-1} ) .$$
 We then consider the invariant part of the $\Z / 2 \Z$-diagonal action from  $\Z /2 \Z$-symplectic action and for the $\Z / 2 \Z$-algebraic action.
        
     Then, the $\R$-linear morphism $\Phi_{\R} : \Ba_n \ra \Aa_{2n-1} $ defined as follow:
    \begin{align*}
     e_1 & \mapsto e_n  \\
     X_1 & \mapsto X_n   \\
     \\
     \text{For $2 \leq j \leq n$,} \\
     e_j & \mapsto e_{n-j+1} + e_{n+j-1}  \\
     X_j & \mapsto X_{n-j+1} + X_{n+j-1} \\
     {ie}_j & \mapsto ie_{n-j+1} - ie_{n+j-1}  \\
      iX_j & \mapsto iX_{n-j+1} - iX_{n+j-1} \\
      (j-1|j) & \mapsto    ((n-j+2) | (n-j+1)) + ((n+j-2)|(n+j-1))           \\  
       (j-1|j)i & \mapsto   i((n-j+2) | (n-j+1)) - i((n+j-2)|(n+j-1))           \\   
      (j|j-1) & \mapsto     ((n-j+1) | (n-j+2)) + ((n+j-1)|(n+j-2))        \\  
           i(j|j-1) & \mapsto     i((n-j+1) | (n-j+2)) - i ((n+j-1)|(n+j-2))        \\   
    \end{align*}
  It is not hard to verify that $\Phi_{\R}$ is a grading preserving $\R$-algebra morphism (It is actually implicit in the proof of \cref{isomorphic algebras}). 
  
 This map is particularly injective as the images of the map are linearly independent.
  The diagonal $\Z / 2\Z$-action of the product group on the Type A Fukaya algebra is 
  \begin{align*}
     e_n & \mapsto e_n  \\
     X_n & \mapsto X_n   \\
     \\
     \text{For $2 \leq j \leq n$,} \\
    e_{n-j+1} + e_{n+j-1}  & \mapsto e_{n+j-1} + e_{n-j+1}  \\
    X_{n-j+1} + X_{n+j-1} & \mapsto   X_{n+j-1} + X_{n-j+1}  \\
    ie_{n-j+1} - ie_{n+j-1} & \mapsto ie_{n+j-1} - ie_{n-j+1} = -( ie_{n-j+1} - ie_{n+j-1})  \\
     iX_{n-j+1} - iX_{n+j-1} & \mapsto  iX_{n+j-1} - iX_{n-j+1} = - (  iX_{n-j+1} - iX_{n+j-1} )  \\
    ((n-j+2) | (n-j+1)) + ((n+j-2)|(n+j-1))   & \mapsto    ((n+j-2)|(n+j-1)) + ((n-j+2) | (n-j+1))           \\  
     i((n-j+2) | (n-j+1)) - i((n+j-2)|(n+j-1))  & \mapsto   i((n+j-2)|(n+j-1)) - i((n-j+2) | (n-j+1))            \\   
     ((n-j+1) | (n-j+2)) + ((n+j-1)|(n+j-2))  & \mapsto    ((n+j-1)|(n+j-2))    + ((n-j+1) | (n-j+2))    \\  
          i((n-j+1) | (n-j+2)) - i ((n+j-1)|(n+j-2))  & \mapsto   i ((n+j-1)|(n+j-2))  -  i((n-j+1) | (n-j+2))      \\   
    \end{align*}  
which is, notably invariant under the diagonal action.    
  
  Hence, $\Ba_n$ sits as an invariant $\R$-algebra inside $  \Aa_{2n-1} \cong HF^{\text{inv}} \left( \bigoplus^{2n-1}_{i=1} L_i, \bigoplus^{2n-1}_{i=1} L_i \right).$
\end{proof}

\subsection{Categorifying intersection form and Cartan matrix}

 In \cite{ArnB}, Arnold studied the degenerate critical points of functions on a manifold with boundary. 
 Here, a manifold with boundary is a smooth manifold with a fixed smooth hypersurface.
 More accounts on the subject can be found in \cite{Arn1} and \cite{Arn2}.
 One theorem Arnold achieved in \cite{ArnB} is that the simple crititcal points of functions on the boundary of a manifold with boundary is classified by the simple Lie algebras $B_n,$ $C_n,$ and $F_4.$
  The Type $B_n$ singularities is the function
   \begin{equation} \label{typeB}
   g(x_0,x_1, \hdots,  x_{d}^n ) = x_0^2 + x_1^2 + \hdots + x_{d-1}^2 + x_{d}^n 
   \end{equation}
   
\noindent   on manifold with boundary $x_d = 0.$
   The diffeomorphisms of a manifold with boundary are the diffeomorphisms of the manifold that fix the boundary.
   Let $\widehat{\C}^{d+1} \cong \C^{d+1}$ be the double branched covering of the space $\C^{d+1}$ along the hyperplane $\{ x_d=0 \} \cong \C^d$ defined by, on coordinate, 
$$ x_0 = \hat{x}_0, x_1 = \hat{x}_1, \hdots   ,x_d = \hat{x}_d^2  $$ 
 Under the natural involution $(\hat{x}_0, \hat{x}_1, \hdots, \hat{x}_d) \mapsto (\hat{x}_0, \hat{x}_1, \hdots, -\hat{x}_d),$
the function $ g(x_0,x_1, \hdots,  x_{d}^n )$ induces a function $ \hat{g}(\hat{x}_0, \hat{x}_1, \hdots, \hat{x}_d) = g(\hat{x}_0, \hat{x}_1, \hdots, \hat{x}_d^2 )$  on the space $\widehat{\C}^{d+1} $ which is invariant under the action of the group $\Z / 2 \Z.$
 A locally analytic automorphisms of the space $\C^{d+1}$ preserving the subspace $\C^d$ induces a locally analytic automorphisms of the space $\widehat{\C}^{d+1}$ which commutes with the action of the group $\Z / 2 \Z.$
   As a result, a singularity of a function on a manifold with boundary is equivalent to a singularity of a function on its covering which is invariant under the invariant.
   So, studying \cref{typeB} is equivalent to study 
   \begin{equation}
   \hat{g}(\hat{x}_0, \hat{x}_1, \hdots, \hat{x}_d) = \hat{x}_0 + \hat{x}_1, \hdots + \hat{x}_d^{2n}
   \end{equation}
on $\widehat{\C}^{d+1}.$
  Then, the $(d-1)$-th homology of the non-singular level manifold 
  \begin{equation}
  \widehat{V}_\epsilon = \{ x \in \C^n \mid \hat{g}= \epsilon, \|x\| \leq \delta \}
  \end{equation}
contain a subspace $H^-$ which is anti-invariant under the involution.
     The basis of $H^-$ which are the long vanishing cycles corresponding to the preimage of the vanishing cycles constructed by the  paths joining the critical values of the perturbed $g$ and the short vanishing cycles corresponding to the preimage of the vanishing hemicycles constructed by paths joining the critical values of the perturbed $g|_{x_n=0}$
   The Type $B$ intersection form is defined using the type $B$ Dynkin diagram.
     Following \cite[Chapter VI]{Bourbaki}, the type $B$ root system has the length of the long root is $\sqrt{2}$ of the length of the short root with the associated Dynkin diagram
     \begin{figure}[H]
     \centering
\begin{tikzpicture}
\draw[thick] (0,0.1) -- (1,0.1) ;
\draw[thick] (0,-0.1) -- (1,-0.1) ;
\draw[thick] (1,0) -- (2,0) ;
\draw[thick] (2,0) -- (3.2,0) ;
\draw[thick,dashed] (3.2,0) -- (4,0) ;
\draw[thick,dashed] (4,0) -- (4.8,0) ;
\draw[thick] (4.8,0) -- (6,0) ;
\draw[thick] (6,0) -- (7,0) ;
\filldraw[color=black!, fill=white!]  (0,0) circle [radius=0.1];
\filldraw[color=black!, fill=white!]  (1,0) circle [radius=0.1];
\filldraw[color=black!, fill=white!]  (2,0) circle [radius=0.1];
\filldraw[color=black!, fill=white!]  (3,0) circle [radius=0.1];
\filldraw[color=black!, fill=white!]  (5,0) circle [radius=0.1];
\filldraw[color=black!, fill=white!]  (6,0) circle [radius=0.1];
\filldraw[color=black!, fill=white!]  (7,0) circle [radius=0.1];

\node[below] at (0,-0.1) {1};
\node[below] at (1,-0.1) {2};
\node[below] at (2,-0.1) {3};
\node[below] at (3,-0.1) {4};
\node at (0.5, 0) {{\Huge $<$}};
\node[below] at (5,-0.1) {$n$-2};
\node[below] at (6,-0.1) {$n$-1};
\node[below] at (7,-0.1) {$n$};

\end{tikzpicture}
\end{figure}
\noindent where the inequality sign is oriented to the shorter root. 
 Now, the the long vanishing cycles corresponding to the long roots have self-intersection number $-4$ and the short vanishing cycle corresponding to the short root has self intersection number $-2.$
 The intersection number of distinct vanishing cycles corresponding to the edges joining two roots is two times the multiplicity of the edge which is $2$ if both roots are long and equals to the multiplicity of the edge which $2$ if one root is short and the other is long.
  In short, the type $B$ intersection form is 
  
  $$
  \begin{pmatrix}
-2 &  2 & 0 & & \cdots & 0 \\ 
2  & -4 & 2 & 0 & \cdots &  0 \\
0  &2 &  -4 & 2 & \ddots & \vdots \\
\vdots  & \ddots & \ddots  & \ddots & \ddots  & 0 \\ 
\vdots  &\cdots  & 0 & 2 & -4  & 2  \\ 
0  & 0 &\cdots  &  0 & 2 & -4  \\
\end{pmatrix}.
$$

\noindent  On the other hand, the type $B$ Cartan matrix (\cite[Chapter IV.1.3]{Bourbaki}, \cite[Chapter 4.3]{ComCox}) is 
 
  $$
  \begin{pmatrix}
2 &  -1 & 0 & & \cdots & 0 \\ 
-2  & 2 & -1 & 0 & \cdots &  0 \\
0  & -1 &  2 & -1 & \ddots & \vdots \\
\vdots  & \ddots & \ddots  & \ddots & \ddots  & 0 \\ 
\vdots  &\cdots  & 0 & -1 & 2  & -1  \\ 
0  & 0 &\cdots  &  0 & -1 & 2  \\
\end{pmatrix}.
$$
 
\begin{corollary} \label{catint}
The type $B$ zigzag algebra categorify the intersection form of the type $B$ boundary singularities.
\end{corollary}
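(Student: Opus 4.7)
The plan is to establish the corollary by a direct computation matching the entries of the type $B$ intersection form with $\R$-dimensions of the $(k_j, k_k)$-bimodules ${}_jP^B_k = {}_jP^B \otimes_{\Ba_n} P^B_k$ listed in Proposition~\ref{bimodule isomorphism}. Concretely, for each simple vanishing (hemi)cycle $L_j$ of the type $B_n$ boundary singularity, I will associate the indecomposable projective $P^B_j$, and then show that the $(j,k)$-entry of the intersection matrix equals $\pm \dim_\R {}_jP^B_k$, with a negative sign on the diagonal coming from the standard convention that self-intersection numbers of vanishing cycles are negative.

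First, I would read off the $\R$-dimensions of the bimodules ${}_jP^B_k$ directly from Proposition~\ref{bimodule isomorphism}. For the short vertex $j = k = 1$, the bimodule ${}_1P^B_1 \cong \R \oplus \R\{1\}$ has $\dim_\R = 2$, matching $|I_{11}| = 2$. For each long vertex $j = k \geq 2$, the bimodule ${}_jP^B_j \cong \C \oplus \C\{1\}$ has $\dim_\R = 4$, matching $|I_{jj}| = 4$; the fact that $P^B_j$ is a $(\Ba_n, \C)$-bimodule (as opposed to just an $(\Ba_n, \R)$-bimodule for $j = 1$) is the algebraic shadow of the long/short root distinction. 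For adjacent vertices $|j - k| = 1$, the bimodule ${}_jP^B_k$ is isomorphic to $\C$ (with the appropriate $(k_j, k_k)$-bimodule structure) and therefore has $\dim_\R = 2$, matching the off-diagonal entries $2$. For $|j - k| > 1$, the bimodule ${}_jP^B_k$ vanishes, matching the zero entries. This covers every case of the matrix.

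Second, I would interpret the sign conventions. The self-intersection numbers of vanishing cycles are $-2$ (short) and $-4$ (long) in Arnold's type $B$ setup, while the mutual intersection numbers between adjacent short/long or long/long cycles are $+2$. Thus the matching rule is uniformly
\[
I_{jk} \;=\; \begin{cases} -\dim_\R {}_jP^B_j, & j=k, \\ +\dim_\R {}_jP^B_k, & j\ne k. \end{cases}
\]
Writing this out for every $(j,k)$ reproduces exactly the displayed intersection matrix.

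The main conceptual point — and the only place where a small argument rather than bookkeeping is required — is to justify why $\dim_\R {}_jP^B_k$ is the correct numerical invariant to compare with the geometric intersection number, rather than, say, the graded dimension or a $\C$-dimension. This is where Theorem~\ref{invariant algebra} becomes the guiding principle: under the identification $\Ba_n \cong \Aa^{\mathrm{inv}}_{2n-1}$, the bimodule ${}_jP^B_k$ sits inside $\bigoplus_{a,b} HF^{\mathrm{inv}}(L_a, L_b; \C)$ and the $\R$-dimension records exactly the number of invariant generators, i.e. one contribution per orbit of vanishing cycles under the $\Z/2\Z$-symplectic action. The short vertex $j = 1$ corresponds to a fixed Lagrangian (hence the real, rather than complex, structure) and contributes a self-intersection of $-2$, while each long vertex corresponds to a pair of swapped Lagrangians whose combined contribution doubles the count to $-4$. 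Once this interpretation is in place, the corollary follows from the explicit dimension count above.
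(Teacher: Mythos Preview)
Your numerical computation is correct: the $\R$-dimensions of ${}_jP^B_k$ do match the absolute values of the entries of the type $B$ intersection matrix. However, your treatment of signs differs substantially from the paper's, and this is where the content of ``categorification'' actually lies.

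The paper does not use the ungraded $\R$-dimension together with an ad hoc sign rule (minus on the diagonal, plus off it). Instead, it computes the \emph{path-length graded} $\R$-dimension of each ${}_jP^B_k$ as a polynomial in $t$ --- for instance $g_rdim_\R\,{}_1P^B_1 = 1+t^2$, $g_rdim_\R\,{}_jP^B_j = 2(1+t^2)$ for $j\ge 2$, and $g_rdim_\R\,{}_jP^B_k = 2t$ for $|j-k|=1$ --- and then specialises $t=-1$. This single, uniform Euler-characteristic-style decategorification produces
\[
\begin{pmatrix}
2 & -2 & 0 & \cdots \\
-2 & 4 & -2 & \cdots \\
0 & -2 & 4 & \cdots \\
\vdots & & & \ddots
\end{pmatrix},
\]
which is exactly the negative of the type $B$ intersection form. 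The signs are not imposed by convention; they arise because the off-diagonal bimodules are concentrated in odd path-length degree (the single path $(j|k)$ has length~$1$), so $2t\big|_{t=-1}=-2$.

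Your sign rule happens to agree with this outcome, but you have not explained where it comes from categorically, and your appeal to Theorem~\ref{invariant algebra} does not do this job: that theorem identifies $\Ba_n$ as an invariant subalgebra, but nothing in the invariant-algebra picture singles out a minus sign on self-intersections versus a plus sign on adjacent ones. The paper's proof makes no use of Theorem~\ref{invariant algebra}; it is a direct graded-dimension computation from Proposition~\ref{Bcat}. If you want your argument to stand as a categorification statement rather than a dimension coincidence, replace the ungraded $\dim_\R$ by the path-length Poincar\'e polynomial and evaluate at $t=-1$.
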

\begin{proof}
By \cref{Bcat}, the path-length-graded dimension matrix associated to the Type $B$ zigzag algebra when $t=-1$ is 
\begin{align*}
& \begin{pmatrix}
g_rdim_{\R} \ _1P_1  & g_rdim_{\R} \ _2P_1 & g_rdim_{\R} \ _3P_1   &\cdots & g_rdim_{\R} \ _nP_1 \\ 
g_rdim_{\R} \ _1P_2  & g_rdim_{\R} \ _2P_2 & g_rdim_{\R} \ _3P_2  &\cdots & g_rdim_{\R} \ _nP_2 \\
g_rdim_{\R} \ _1P_3  & g_rdim_{\R} \ _2P_3 & g_rdim_{\R} \ _3P_3   &\cdots & g_rdim_{\R} \ _nP_3  \\
\vdots  &  && \ddots & \vdots  \\ 
g_rdim_{\R} \ _1P_n  & g_rdim_{\R} \ _2P_n & g_rdim_{\R} \ _3P_n  &\cdots & g_rdim_{\R} \ _nP_n  \\
\end{pmatrix} |_{t=-1}
\\ =&
\begin{pmatrix}
1+t^2  &  2t & 0 &  & \cdots & 0 \\ 
2t  & 2(1+t^2) & 2t & 0 & \cdots &  0 \\
0  &2t &  2(1+t^2) & 2t & \ddots & \vdots \\
\vdots  & \ddots & \ddots  & \ddots & \ddots  & 0 \\ 
\vdots  &\cdots  & 0 & 2t & 2(1+t^2)  & 2t  \\ 
0  & 0 &\cdots  &  0 & 2t & 2(1+t^2)  \\
\end{pmatrix} |_{t=-1}
\\ =&
\begin{pmatrix}
2 &  -2 & 0 &  & \cdots & 0 \\ 
-2  & 4 & -2 & 0 & \cdots &  0 \\
0  &-2 &  4 & -2 & \ddots & \vdots \\
\vdots  & \ddots & \ddots  & \ddots & \ddots  & 0 \\ 
\vdots  &\cdots  & 0 & -2 & 4  & -2  \\ 
0  & 0 &\cdots  &  0 & -2 & 4  \\
\end{pmatrix}
\end{align*}
which is the minus of the Type $B$ intersection form.
\end{proof}

\begin{remark}
This is in accordance with the idea the singularities corresponding to the Lie algebras $B_n$ arises as an invariant germ of a function relative to the linear action of a finite group $G$ on the space $\C^n.$
  It would be best if the algebraic action can come from geometry as well, perhaps one could naturally consider the Pin structure on the Lagrangian submanifold. 
  In particular, Seidel considered, in his book,  a $\Z / 2 \Z$-action on Fukaya category which induces a non-trivial action on the Pin structure of the Lagrangian submanifold. 
  There we have a pair of isomorphisms bearing the identities $f \circ f' = -e$ where $f$ and $f'$ are isomorphisms transforming the Lagrangian submanifold with its Pin structure and $-e$ is the nontrivial element in the Ker(Pin $\ra O_n$).
   The conjugation action on the cohomology seems more natural on K-theory. 
   Since there is a Chern character map from the K-theory to cohomology theory, are there any connection between them?
   Note that the category of vector bundle and category of coherent sheaves are equivalent.
    Otherwise, we need to make sense of a negative of a generator in Floer cohomology, in order to give a geometric interpretation. 
\end{remark}

\begin{corollary} \label{catCar}
The type $B$ zigzag algebra categorify the type $B$ Cartan matrix.
\end{corollary}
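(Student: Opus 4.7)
The plan is to mirror the proof of \cref{catint} but, crucially, to measure $\Hom$-spaces using the \emph{field appropriate to each projective module} rather than the uniform $\R$-dimension. Set $\F_1 := \R$ and $\F_j := \C$ for $j \geq 2$, matching the endomorphism fields of $P_j^B$, and for each pair $(i,j)$ define
\[
C_{ij}(t) \;:=\; g_r\!\dim_{\F_j}\!\Hom_{\Ba_n}\!\left(P_i^B,\, P_j^B\right)
\;=\; g_r\!\dim_{\F_j}\, {}_iP^B_j,
\]
using the path-length grading employed in \cref{catint} (with the blue loops $ie_j$ of length $0$, so that $e_j$ lives in degree $0$ and $X_j$ in degree $2$). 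The assertion to prove is then that the matrix $C(-1)$ equals the type $B_n$ Cartan matrix displayed just before the corollary.

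First I would read off the entries of $C(t)$ directly from \cref{Bcat}. The diagonal entries give ${}_jP^B_j \cong \F_j \oplus \F_j(2)$ as right $\F_j$-modules, so $C_{jj}(t) = 1 + t^2$ for every $j$. For the off-diagonal entries between vertices of $\{2,\dots,n\}$, \cref{Bcat} yields ${}_jP^B_{j\pm 1} \cong \C$ as right $\C$-module concentrated in path-length $1$, giving $C_{j,j\pm 1}(t) = t$. The crucial asymmetric entries are those involving vertex $1$: from \cref{Bcat} one has ${}_1P^B_2 \cong {}_\R\C_\C$, which is one-dimensional as a right $\C$-module (in degree $1$), so $C_{12}(t) = t$; whereas ${}_2P^B_1 \cong {}_\C\C_\R(1)$, which is two-dimensional as a right $\R$-module (in degree $1$), so $C_{21}(t) = 2t$. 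All other entries vanish because the corresponding bimodules are zero.

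Assembling this data gives
\[
C(t) \;=\;
\begin{pmatrix}
1+t^2 & t & 0 & \cdots & 0 \\
2t & 1+t^2 & t & \cdots & 0 \\
0 & t & 1+t^2 & \ddots & \vdots \\
\vdots & \vdots & \ddots & \ddots & t \\
0 & 0 & \cdots & t & 1+t^2
\end{pmatrix},
\]
and substituting $t = -1$ reproduces the Cartan matrix displayed before the statement. I would close by remarking on the conceptual point this makes: the only difference between the computation here and the one in the proof of \cref{catint} is a change of scalars in the dimension count at vertices $j \geq 2$, which divides those entries by $2$; this is precisely the effect that distinguishes the (symmetric) intersection form from the (non-symmetric) Cartan matrix in type $B$, and reflects the fact that the non-simply-laced structure of $B_n$ is encoded in the paper's construction through the field extension $\C/\R$ at the long-root vertices.

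There is essentially no technical obstacle here; the proof is a direct bookkeeping exercise once one has committed to the correct dimension convention. The only point requiring care is to be consistent about which side of the bimodule one uses to form the dimension: the right $\F_j$-action on $\Hom(P_i^B, P_j^B) \cong {}_iP^B_j$ is the one inherited from $P_j^B \cong \Ba_n e_j$, and it is this action that produces the asymmetry $C_{12} = t$ versus $C_{21} = 2t$ at $t = -1$.
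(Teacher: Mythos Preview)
Your proposal is correct and follows essentially the same approach as the paper: both compute the graded $\F_j$-dimension of the bimodules ${}_iP^B_j$ (you via the right $\F_j$-action on ${}_iP^B_j$, the paper via the left $\F_j$-action on ${}_jP^B_i$, which give identical matrices by the anti-involution) and evaluate at $t=-1$ to recover the Cartan matrix. Your explicit emphasis on why the $(1,2)$ and $(2,1)$ entries differ is a nice addition that the paper leaves implicit.
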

\begin{proof}
By \cref{Bcat}, the path-length-graded left module dimension matrix associated to the Type $B$ zigzag algebra when $t=-1$ is 
\begin{align*}
& \begin{pmatrix}
g_rdim_{\R} \ _1P_1  & g_rdim_{\C} \ _2P_1 & g_rdim_{\C} \ _3P_1   &\cdots & g_rdim_{\C} \ _nP_1 \\ 
g_rdim_{\R} \ _1P_2  & g_rdim_{\C} \ _2P_2 & g_rdim_{\C} \ _3P_2  &\cdots & g_rdim_{\C} \ _nP_2 \\
g_rdim_{\R} \ _1P_3  & g_rdim_{\C} \ _2P_3 & g_rdim_{\C} \ _3P_3   &\cdots & g_rdim_{\C} \ _nP_3  \\
\vdots  &  && \ddots & \vdots  \\ 
g_rdim_{\R} \ _1P_n  & g_rdim_{\C} \ _2P_n & g_rdim_{\C} \ _3P_n  &\cdots & g_rdim_{\C} \ _nP_n  \\
\end{pmatrix} |_{t=-1}
\\ =&
\begin{pmatrix}
1+t^2  &  t & 0 & & \cdots & 0 \\ 
2t  & 1+t^2 & t & 0 & \cdots &  0 \\
0  &t &  1+t^2 & t & \ddots & \vdots \\
\vdots  & \ddots & \ddots  & \ddots & \ddots  & 0 \\ 
\vdots  &\cdots  & 0 & t & 1+t^2  & t  \\ 
0  & 0 &\cdots  &  0 & t & 1+t^2  \\
\end{pmatrix} |_{t=-1}
\\ =&
\begin{pmatrix}
2 &  -1 & 0 & & \cdots & 0 \\ 
-2  & 2 & -1 & 0 & \cdots &  0 \\
0  &-1 &  2 & -1 & \ddots & \vdots \\
\vdots  & \ddots & \ddots  & \ddots & \ddots  & 0 \\ 
\vdots  &\cdots  & 0 & -1 & 2  & -1  \\ 
0  & 0 &\cdots  &  0 & -1 & 2  \\
\end{pmatrix}
\end{align*}
which is the minus of the type $B$ Cartan matrix.
\end{proof}

\begin{remark}
There is a subtle difference between the choice of Type $B$ Cartan matrix above and in the next chapter (see \cref{maintheorem}), but they "play" the same number game in \cite{ComCox}.

\end{remark}

\chapter{The Faithfulness of the Type B 2-Braid Group.} \label{Chap2}

\section{Background and Outline of the Chapter}
From the first chapter, we know that a categorical braid group action realised by Khovanov-Seidel \cite{KhoSei} for type $A$ braid group uses complexes of modules over a zigzag algebra.
Following their work, Licata-Queffelec \cite{LicQuef} similarly defined the simply-laced type $ADE$ zigzag algebras, which provides categorical actions for the corresponding generalised braid groups.
In our previous chapter, we suggested a suitable candidate for the type $B$ zigzag algebra, which similarly allows a categorical action of the type $B$ generalised braid group.
Note that in all of the above cases, the faithfulness of the braid group actions is known.

Another way to construct the categorical actions of Artin groups is through the theory of Soergel bimodules.
This was introduced by Rouquier in \cite{Rouq} as the \emph{2-braid groups}, where the braid generators act on the category of Soergel bimodules via tensoring with the Rouquier complexes.
 Rouquier also conjectured that this categorification is faithful, which was later proven by Brav-Thomas \cite{BravTho} for type $ADE$, and then generalised to all finite types by Jensen \cite{jensen_master,jensen_2016}.
In type $A$, the categorical actions  via zigzag algebras can be described as a ``quotient'' of the ones via Soergel bimodules, as shown in the work of Jensen in his Master thesis \cite{jensen_master}.
We shall briefly describe this relationship here.
Firstly, note that the zigzag algebra for type $A$ leads to a categorification of (a further quotient of) the type $A$ Temperley-Lieb algebra, using certain (monoidal, additive) subcategory of $\Aa$-bimodules \cite{KhoSei}.
Just as Temperley-Lieb algebras are quotients of Hecke algebras (categorified by Soergel bimodules), this relation can be understood categorically through a full and essentially surjective (quotient) functor from the category of Soergel bimodules to this subcategory of $\Aa$-bimodules, as described in \cite{jensen_master}.
Our construction of the type $B$ zigzag algebra $\Ba$ also fits into this whole construction, where a certain subcategory of $\Ba$-bimodules categorifies (a further quotient of) the type $B$ Temperley-Lieb algebra and is also a quotient category of Soergel bimodules.
    This provides an alternate proof to Rouquier's conjecture for the type $B$ case, in addition to serving as a piece of evidence for the type $B$ zigzag algebra constructed being a correct candidate.

\subsection*{Outline of the chapter}

In \cref{Soergel Bimodules}, we recall the definition of Soergel bimodules. 
 We first define Coxeter systems by generators and relations as well as the corresponding generalised braid groups.
 A combinatorial tool known as the two-coloured quantum number is introduced before defining a realisation of the Coxeter system. 
 Demazure surjectivity is mentioned for general theory to hold and for the purpose of computation later.

 \cref{TL2C} is where we introduce the Temperley-Lieb category, and subsequently, its souped-up version, the two-coloured Temperley-Lieb $2$-category. 
  We also provide a recursive formula for the Jones-Wenzl projectors and the two-coloured Jones-Wenzl projectors.

In \cref{DiaCatSB}, we recall the whole definition of diagrammatic category for Soergel bimodules from \cite{EliGeoSoe} where they proved the equivalence between the diagrammatic category with the original algebraic combinatorial Soergel bimodule category introduced by Sorgel himself. 

\cref{2BraidGrp} is where we recall the construction  of $2$-braid groups introduced by Rouquier \cite{Rouq}.

To make this chapter self-contained, in \cref{CatTypeB}, we extract the algebraic construction of the type $B$ $2$-braid group from \cref{B zigzag} in \cref{Chap1}.

 \cref{Faith2BraidGrp} is where we present the faithfulness of the type $B$ $2$-braid group by constructing a monoidal functor from the diagrammatic Soergel homotopy category to type $B$ bimodule homotopy category.

\newpage

	\section{Soergel Bimodules} \label{Soergel Bimodules}

\subsection{Coxeter systems by generators and relations and its corresponding generalised braid groups}	
	
	  Recalling that \cite{BraidGroups} a \textit{Coxeter matrix} $M= (m_{s,t})_{s,t \in S}$ is a symmetric matrix where $S$ is a finite set, $m_{s,s} = 1$ for every $s$ in $S$ and $m_{s,t} \in \{ 2, 3, \cdots, \infty \}$. 
	  Equivalently, $m$ can be represented by a \textit{Coxeter graph}, or \textit{Coxeter diagram} with its node set as $S$ and its edges determined by the unordered pairs ${s,t}$ such that $m_{s,t} \geq 3.$
	For edges with $m_{s,t} \geq 4,$ we will label them by that number where the label will be dropped if $m_{s,t} = 3.$
	When $s$ and $t$ commute,  we have $m_{s,t}= 2,$  and they are not connected in the Coxeter graph.
	  
	 To every Coxeter matrix $M,$ we can associate a group $W_M$ with presentation:  the generators being the elements of $S$ and the relations being $(st)^{m_{s,t}} = e,$ for all $s,t$ in $S$ such that $m_{s,t} \neq \infty$ where $ e $ is the identity element  in $W_M.$
	 By convention, $m_{s,s} = 1$
 Subsequently,  we will call the pair $(W_M,S)$,  a Coxeter system where $W_M$ is a group, or more specifically a Coxeter group, with  a set $S$ of its generators subject to the following relations: 
\begin{align}
&& s^2 &= e && \text{for all $s \in S,$}  \label{sqid}\\
\text{(Braid relation)} && \underbrace{sts \cdots}_{m_{st}} &= \underbrace{tst \cdots} _{m_{st}}  && \text{for all $s,t \in S,$} \label{braidrel}
\end{align}
where $ 2 \leq m_{s,t} < \infty $ and $m_{s,t}= m_{t,s}$.
If $m_{s,t} = \infty,$ then the braid relation is omitted.
 The rank of $(W,S)$ is defined to be the cardinality of $S$ .
We might use the words \textit{index} and \textit{colour} to refer to an element $s \in S.$ 
 An \textit{expression} for an element $w \in W$ is a finite sequence of indices, denoted by $\underline{w}.$
 
 If we now remove the relation \cref{sqid}, then the group $\cA(W_M)$ generated by $S$ and the braid relations \cref{braidrel} is called the \textit{generalised braid group}\footnote{Also known as \textit{Artin group} or \textit{Artin-Tits group}.} associated to $W_M.$
 In which case, there is a surjective map $\pi: \cA(W_M) \ra W_M.$

\begin{example}
For $n \geq 2,$ the type $B_n$ braid group $\cA({B_n})$ associated to the type $B_n$ Coxeter graph 

\begin{figure}[H]
\centering
\begin{tikzpicture}
\draw[thick] (0,0) -- (1,0) ;
\draw[thick] (1,0) -- (2,0) ;
\draw[thick] (2,0) -- (3.2,0) ;
\draw[thick,dashed] (3.2,0) -- (4,0) ;
\draw[thick,dashed] (4,0) -- (4.8,0) ;
\draw[thick] (4.8,0) -- (6,0) ;
\draw[thick] (6,0) -- (7,0) ;
\filldraw[color=black!, fill=white!]  (0,0) circle [radius=0.1];
\filldraw[color=black!, fill=white!]  (1,0) circle [radius=0.1];
\filldraw[color=black!, fill=white!]  (2,0) circle [radius=0.1];
\filldraw[color=black!, fill=white!]  (3,0) circle [radius=0.1];
\filldraw[color=black!, fill=white!]  (5,0) circle [radius=0.1];
\filldraw[color=black!, fill=white!]  (6,0) circle [radius=0.1];
\filldraw[color=black!, fill=white!]  (7,0) circle [radius=0.1];

\node[below] at (0,-0.1) {1};
\node[below] at (1,-0.1) {2};
\node[below] at (2,-0.1) {3};
\node[below] at (3,-0.1) {4};
\node[below] at (5,-0.1) {$n$-2};
\node[below] at (6,-0.1) {$n$-1};
\node[below] at (7,-0.1) {$n$};
\node[above] at (.5,0.1) {4};
\end{tikzpicture}
\end{figure}

\noindent or the type $ B_n$ Coxeter matrix

$$
\begin{bmatrix}
1 & 3 &  2 & 2 & \cdots & 2 \\
3 & 1 & 3  &  2 & &  \vdots           \\     
2 & 3 & 1  &  3 &  \ddots &   2       \\
2 & 2 & 3 & 1 &  \ddots &  2 \\
\vdots  & & \ddots &   \ddots   & \ddots  &  3 \\
2 &  \cdots  & 2 & 2 &  3 &1  
\end{bmatrix}
$$
\noindent is generated by
$$ \sigma_1^B,\sigma_2^B,\ldots, \sigma_{n}^B $$

\noindent subject to the relations

\begin{align}
\sigma_1^B \sigma_2^B \sigma_1^B \sigma_2^B & = \sigma_2^B \sigma_1^B \sigma_2^B \sigma_1^B;  \\
      \sigma_j^B  \sigma_k^B  &= \sigma_k^B  \sigma_j^B   & \text{for} \ |j-k|> 1;\\    
      \sigma_j^B  \sigma_{j+1}^B  \sigma_j^B  &=  \sigma_{j+1}^B  \sigma_{j}^B  \sigma_{j+1}^B & \text{for} \ j= 2,3, \ldots, n.
\end{align} 
\end{example}

\subsection{Two-coloured quantum numbers}  \label{sec2}

Remember that the quantum numbers, as defined by $[m] = [m]_q := \frac{q^m -q^{-m}}{q-q^{-1}}$ for $m \in \Z_{\geq 0}.$
 In particular, $[2]_q = q + q^{-1}, [1]_q = 1$, and $[0]_q = 0.$
 The subscript $q$ will be suppressed in general, but one point to bear in mind is that if $q = e^{i \theta}$ is a root of unity, we have a familiar number $[2]^2_q = 4$cos$^2\theta$ from the representation theory of Coxeter graphs.
 
 Let $\delta$ be an indeterminate, such that the ring $\Z[\delta]$ is a subring of $\Z[q,q^{-1}]$ under the specialization $\delta \mapsto [2].$
 Next, we introduced the subring $\Z[x,y]$ of $\Z[\delta]$ where the corresponding subring $\Z[x,y]$ can be identified with the subring $\Z[\delta^2]$ via $xy = \delta^2$.
 Subsequently, the \textit{two-coloured quantum numbers} is defined to be the elements in $\Z[x,y]$ inductively as follows: 
 \begin{enumerate} [(i)]
   \item $[0]_x = [0]_y = 0$ and $[1]_x = [1]_y = 1;$ 
 \item $[2]_x [m]_y = [m+1]_x + [m-1]_x;$
 \item $[2]_y [m]_x = [m+1]_y + [m-1]_y.$
 \end{enumerate}
 For $m$ odd, since $[m]_x = [m]_y,$ we shorten the notation to $[m].$

We also have the following two-coloured analogue of quantum number equations: for $m \geq a,$
\begin{equation} \label{Desurj}
[m-a]_y + [m+1]_x[a]_y = [m]_y[a+1]_x  \text{ and }
[m-a]_x + [m+1]_y[a]_x = [m]_x[a+1]_y.
\end{equation}

In particular, we can take 
$[2]_x=x$ and $[2]_y = y;$ $[3]_x = xy-1=[3]_y;$ $[4]_x =x^2y-2x$ and $[4]_y = xy^2 - 2y;$ $[5]_x = x^2 y^2 - 3xy + 1= [5]_y;$ $[6]_x = x^3 y^2 - 4x^2 y +2x$ and $[6]_x = x^2 y^3 -4xy^2 +2y.$

\subsection{Definition of Soergel bimodules}

\subsubsection{Realisation of Coxeter System}

Let $\Bbbk$ be a commutative ring and $(W,S)$ be a Coxeter system. 
A \textit{realisation} of $(W,S)$ over $\Bbbk$ is a free, finite rank $\Bbbk$-module $\h,$ together with a choice of simple co-roots $\{\alpha_s^\vee \mid s \in S \} \subset \h$ and roots $\{ \alpha_s \mid s \in S \} \subset \h^* = \Hom_\Bbbk(\h, \Bbbk)$ having a Cartan matrix $(a_{s,t})_{{s,t} \in S}$ where $ a_{s,t} := \< \alpha_s^\vee, \alpha_t \> $ satisfying:
\begin{enumerate}[(i)]
\item $a_{s,s} =2$ for all $s \in S;$
\item for any $s,t \in S$ with $m_{st} < \infty,$ 
let $x = a_{s,t}$ and $y=a_{t,s},$ then $[m_{st}]_x = [m_{st}]_y = 0;$
\item the assignment $s(v) := v - \< v, \alpha_s \> \alpha_s^\vee$ for all $v \in \h$ yields a representation of $W.$
\end{enumerate}

We call a realization \textit{symmetric} if $a_{s,t} = a_{t,s}$ for all $s,t \in S.$

A realization is \textit{balanced} if for each $s,t \in S$ with $m_{st} < \infty,$ one has $[m_{st}-1]_x = [m_{st}-1]_y = -1$ for $m_{st}$ odd or $[m_{st}-1] = 1$ for $m_{st}$ even.

\begin{remark} Note that we use a slightly different definition of a balanced realization from \cite{BenTwoColor, BenGeo} because our circle in the Temperley-Lieb algebra (refer \cref{TLd}) evaluates to $\delta$ instead of -$\delta.$

\end{remark}

Given a realization $(\h, \{\alpha^\vee_s\},\{ \alpha_s \})$ of $(W,S)$ over $\Bbbk,$ we can look at its \textit{Cartan matrix} $(\<\alpha^\vee_s, \alpha_t\>)_{s,t \in S}$ which is symmetric if and only if the realization is.
  Conversely, suppose we start with a matrix $(a_{s,t})_{s,t \in S}$ such that $a_{s,s} = 2.$ 
  We can construct the free $\Bbbk$-module $\h = \bigoplus_{s \in S} \Bbbk \alpha^\vee_s,$ and define the roots $\alpha_s \in \h^*$ by the formula $\< \alpha^\vee_s, \alpha_t \> = a_{s,t}.$
  If this yields a realization of $(W,S),$ then the matrix $(a_{s,t})_{s,t \in S}$ is the \textit{Cartan matrix} for $(W,S)$ over $\Bbbk.$
   We call a realization minimal if the basis $\{ \alpha^\vee_s\}$ of $\h$ can be recontructed from its Cartan matrix from the above way.

 With such a realization $(\h, \{ \alpha_s^\vee \}, \{ \alpha_s \})$ of $(W,S),$ we define $R := \bigoplus_{k \geq 0} S^k(\h^*)$ to be the $\Z$-graded $\Bbbk$-symmetric algebra on $\h^*$ with deg $\h^*$ = 2.
	We consider the action of $W$ on $\h^*$ via its geometric representation, that is, for all,$ \sigma_s (\gamma) = \gamma - \< \alpha_s^\vee, \gamma \> \alpha_s,$ and subsequently, extend it to an action on the whole $R$ by graded automorphisms. 
	In one way, we can see $R$ as the algebra of regular functions on $\h.$

\begin{example}
Let  $(W,S)$ be a Coxeter system of finite rank. 
	Let $\Bbbk= \R$ and $\h = \bigoplus_{s \in S} \R \alpha^\vee_s.$
	 Consider the following realization by defining the elements $\{ \alpha_s \} \subset \h^*$ by
	
	\begin{equation}
	\< \alpha^\vee_s, \alpha_t \> = -2 cos \left( \frac{\pi}{m_{s,t}} \right)
\end{equation}	 

\noindent with the convention that $m_{s,s} = 1$ and $\pi / \infty = 0.$
We get the \textit{geometric representation} in \cite{Hump} 
 Moreover, this is a balanced symmetric realization of $(W_M,S)$.
\end{example}
	
\begin{example} \label{Type B Geo Rep}
Consider the geometric representation of the type B Coxeter graph, as defined in \cite{ComCox}. 
Let $\Bbbk = \R.$ 
 For $m_{s,t} = 4,$ we define $a_{s,t} = -2$ and $a_{t,s} = -1$ whereas for $m_{s,t} = 3,$ we define $a_{s,t} = -1 = a_{t,s} .$ 
 This is a balanced non-symmetric realization of the type $B_n$ Coxeter system.
  
\end{example}

\subsubsection{Demazure Surjectivity and Soergel bimodules}

For $n \in \Z,$ we denote the grading shift by $(n)$ such that given a $\Z$-graded (bi)module $M= \bigoplus_i M^i,$ the shifted object is defined by the formula $M(n)^i = M^{i+n}.$
	We wish to view the category of $(R,R)$-bimodules, denoted by \textit{$(R,R)$-bimod}, as the graded, abelian, monoidal category of $\Z$-graded $(R,R)$-bimodules that are finitely generated as left and as right $R$-modules with grading-preserving bimodule homomorphisms as morphism.
    For any two graded $(R,R)$-bimodules $M$ and $N,$ we denote the graded Hom space by $\Hom^*(M,N) = \bigoplus_{k \in \Z} \Hom_{(R,R)-bimod} (M,N(k)).$
  
  Fix a $s \in S,$ Let $R^s \subseteq R$ be the invariant subring under the action of $s,$ and we will extend the evaluation map $\< \alpha_s^\vee, \cdot \> : \h^* \ra \Bbbk$ to the \textit{Demazure operator} $ \partial_s:R \ra R^s(-2),$ by the formula 

\begin{equation}
\partial_s(f) = \frac{f-\sigma_s(f)}{\alpha_s}.
\end{equation}
  
\noindent Observe that both the numerator and denominator is $s$-antiinvariant, so that the fraction will lie in the subring $R^s$ of $s$-invariants.
  But, in order for the fraction to be well-defined, we still need Demazure Surjectivity on our realization, or rather on the base ring $\Bbbk$, so that $\Bbbk$ contains certain algebraic integers. 
  Since the base rings we considered in this paper are $\R$ and $\C,$ we have the following assumption:
  
  \begin{assumption}[Demazure Surjectivity]
  For all $s \in S,$ the map $\alpha_s : \h \ra \Bbbk$ and the  the map $\partial_s : \h^* \ra \Bbbk$ are surjective. 
  \end{assumption}    
    
   As long as Demazure Surjectivity  holds, $\alpha_s \neq 0,$ 
   so that there exist some $\delta \in \h^*$ such that $\< \alpha^\vee_s, \delta \> = 1$ and $\sigma_s(\delta) = \delta - \alpha_s \neq \delta.$
   The well-definedness of Demazure Operator will be answered by following lemma: 
   
   \begin{lemma} \label{uniquedelta}
     Any element $f \in R$  can be written uniquely as $f = g \delta + h$ for $g,h \in R^s.$
   \end{lemma}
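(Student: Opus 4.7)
The plan is to prove existence and uniqueness separately, using the element $\delta$ as a kind of ``dual basis element'' to $\alpha_s$. The whole argument is driven by the two defining properties of $\delta$: $\sigma_s(\delta) = \delta - \alpha_s$ and $\langle \alpha_s^\vee, \delta\rangle = 1$ (equivalently $\partial_s(\delta) = 1$), together with the well-definedness of $\partial_s$ on all of $R$, which itself rests on the standard fact that any $s$-antiinvariant element of $R$ is divisible by $\alpha_s$ in $R$.

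For existence, given $f \in R$, I would set $g := \partial_s(f)$ and $h := f - g\delta$. By construction $g \in R^s$, so I only need to check $h \in R^s$. Applying $\sigma_s$ gives
\[
\sigma_s(h) = \sigma_s(f) - g\,\sigma_s(\delta) = \sigma_s(f) - g\delta + g\alpha_s,
\]
so $\sigma_s(h) = h$ is equivalent to $g\alpha_s = f - \sigma_s(f)$, which is exactly the definition of $g = \partial_s(f)$.

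For uniqueness, suppose $g\delta + h = g'\delta + h'$ with $g,g',h,h' \in R^s$. Then $(g-g')\delta = h' - h$ lies in $R^s$. Applying $\sigma_s$ to $(g-g')\delta$ yields $(g-g')(\delta - \alpha_s)$, and equating with $(g-g')\delta$ gives $(g-g')\alpha_s = 0$. Since $R = S(\h^*)$ is a polynomial ring over the field $\Bbbk$ (which in our applications is $\R$ or $\C$) and Demazure surjectivity ensures $\alpha_s \neq 0$, the element $\alpha_s$ is a nonzerodivisor in $R$, so $g = g'$ and then $h = h'$.

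There is no deep obstacle; the only point requiring a moment's care is that $\alpha_s$ act as a nonzerodivisor, which is why Demazure surjectivity is invoked (to rule out $\alpha_s = 0$) and why the working assumption that $\Bbbk$ is an integral domain is essential. If one later wants this lemma over more general $\Bbbk$, one would instead cite the freeness of $R$ over $R^s$ with basis $\{1,\delta\}$, which is the content of (a relative version of) the Chevalley--Shephard--Todd type statement for a single reflection; but for the rings considered here the direct computation above is cleaner.
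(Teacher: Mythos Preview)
Your argument is correct, and since the paper itself only cites \cite[Claim 3.9]{BenGeo} rather than giving a proof, there is no in-paper argument to compare against directly.

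There is, however, one point of logical order worth flagging. In the paper's presentation, this lemma is introduced precisely \emph{to} establish that the Demazure operator $\partial_s$ is well-defined: the sentence immediately after the lemma reads ``Using this lemma, we could have just defined $\partial_s(f) := g$.'' Your existence step begins with ``set $g := \partial_s(f)$'', which in that narrative is circular. You do recognise this and point to the underlying fact that any $s$-antiinvariant element of $R$ is divisible by $\alpha_s$, but you take it as known rather than proving it. To make your proof fit the paper's order cleanly, replace the appeal to $\partial_s$ by the one-line direct argument: since $\sigma_s$ acts as the identity on $R/(\alpha_s)$ (because $\sigma_s(\gamma) \equiv \gamma \pmod{\alpha_s}$ for all $\gamma \in \h^*$), the element $f - \sigma_s(f)$ vanishes modulo $\alpha_s$, hence is divisible by $\alpha_s$ in the polynomial ring $R$; now set $g := (f-\sigma_s(f))/\alpha_s$ and proceed as you did. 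With that adjustment, your existence and uniqueness arguments are exactly the standard ones and require nothing beyond $\Bbbk$ being a domain and $\alpha_s \neq 0$, both of which hold here.
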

 
\begin{proof} Please refer to \cite[Claim 3.9]{BenGeo}.
\end{proof}

  Using this lemma, we could could have just defined $\partial_s(f) := g.$
    One can also see that the above definition is well-defined irrespective of the choice of $\delta$ by a similar argument.
    
    What is interesting about \cref{uniquedelta} is that it also shows $R$ is free of rank $2$ over $R^s$ with basis $\{1, \delta\}.$

%

    For all $s,$ we define the $(R,R)$-bimodule $B_s:= R \otimes_{R^s} R(1)$
   Given an expression $\underline{w} = s_1 s_2 \cdots s_k,$ 
   the corresponding \textit{Bott-Samelson bimodule} is the tensor product 
   
   $$B_{\underline{w}} := B_{s_1} \otimes_R B_{s_2} \otimes_R  \cdots \otimes B_{s_k} \cong R \otimes_{R^{s_1}} R \otimes_{R^{s_2}} R \otimes_{R^{s_3}} \cdots \otimes_{R^{s_k}} R(k).$$
   
\noindent    The category  of Bott-Samelson bimodules, denoted by $\B\bS$-bimod, is the full monoidal subcategory of $(R,R)$-bimod generated by $B_s$ for $s \in S.$
   Take the additive closure of $\B\bS$-bimod and then its Karoubi envelope, we form the category of Soergel bimodules, denoted by $\bS$-bimod.

\section{ Temperley-Lieb 2-Category}	 \label{TL2C}
	
\subsection{The (uncoloured) Temperley-Lieb category}	 \label{TLd}
	
	We define \textit{(uncoloured) Temperley-Lieb category $\cT \cL$} as having coefficients which lie in $\Z[\delta].$ 
	It is a monoidal category with objects $n \in \N$ which can be seen as $n$ points on a line.
	Its morphisms from $n$ to $m$ are the $\Z[\delta]$-module spanned by crossingless matchings with $n$ points on bottom to $m$ points on top. 
	The multiplication is given by vertical concatenation of diagrams, and resolving any circles with the scalar $\delta.$
%
%
Note that End$(n)= TL_n$ is called the uncolored Temperley-Lieb algebra on $n$ strands with the identity element $\mathds{1}_n.$

	\subsubsection{Jones-Wenzyl Projectors}
Consider the representation of the quantum group $U_q(\fs \fl  _2)$ of $\fs \l_2$ with irreducible representations $V_k$ having highest weight $q^k.$ 
 In \cite{FrenKho}, the category $\cT \cL$ governs the representations of quantum group $U_q(\fs \fl  _2)$ by serving as the algebra of intertwining operators of tensor products of the fundamental representation $V_1$. 
 As a result, after doing the base change to $\Q(q)$ under the map $\delta \mapsto q+ q^{-1},$ the category $\cT \cL$ is equivalent to the full subcategory of $U_q(\fs \fl  _2)$-representations,  such that $\Hom_{\cT \cL}(n,m) \otimes_{\Z} \Q(q) = \Hom(V_1^{\otimes n},V_1^{\otimes m}).$
  Since every $V_1^{\otimes n}$ contains an indecomposable representation $V_n$ with multiplicity one as one of its summands, there exists a canonical idempotent $JW_n \in \End_{\cT \cL}(n) \otimes \Q(q) = \End_{U_q(\fs \fl_2)}  (V_1 ^{\otimes n})$  called the \textit{Jones-Wenzyl projectors} projecting from $V_1^{\otimes n}$ to $V_n.$
  Performing another base change $\delta \mapsto [2]_q$ again, we can equally use quantum numbers to express them, provided certain $n$-th quantum numbers appearing as quantum binomial coefficients are invertible.

\begin{proposition}
The Temperley-Lieb $TL_n,$ after extension of scalers, contains canonical idempotents which project $V_1^{\otimes n}$ to each isotypic component. 
	Given a choice of primitive idempotents, $TL_n$ contains maps which realize the isomorphisms between the different irreducible summands of the same isotypic component.
	These maps can be defined in any extension of $\Z[\delta]$ where the quantum numbers $[2],[3], \hdots, [n]$ are invertible. 
\end{proposition}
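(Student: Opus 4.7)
The plan is to construct the required idempotents and intertwiners recursively, building on the Jones-Wenzl projectors $JW_k$. First I would define $JW_k \in TL_k$ after base extension by Wenzl's recursion, namely $JW_k := JW_{k-1} \otimes \mathds{1}_1 + \tfrac{[k-1]}{[k]}\, (JW_{k-1} \otimes \mathds{1}_1)\, e_{k-1}\, (JW_{k-1} \otimes \mathds{1}_1)$, starting from $JW_1 := \mathds{1}_1$. Here $e_{k-1}$ denotes the standard cup-cap generator of $TL_k$. Since this recursion only introduces the denominator $[k]$ at the $k$-th step, the projector $JW_k$ is defined over any extension of $\Z[\delta]$ in which $[2],\dots,[k]$ are invertible, and one verifies by induction that $JW_k$ is an idempotent which is annihilated (on both sides) by every $e_i$ for $1\le i\le k-1$. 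These two properties characterize $JW_k$ uniquely, and they identify it with the categorical projector onto the top summand $V_k \subset V_1^{\otimes k}$ after base change to $\Q(q)$.

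Next I would enumerate the irreducible summands of $V_1^{\otimes n}$ using the Clebsch–Gordan rule: copies of $V_k$ in $V_1^{\otimes n}$ are in bijection with crossingless ``half-diagrams'' on $n$ points having $k$ through-strands, or equivalently with Catalan-type paths of length $n$ from $0$ to $k$. For each such half-diagram $\tau$ I would build a morphism $u_\tau : k \to n$ in $\cT\cL$ by placing $JW_k$ at the bottom and following with the caps dictated by $\tau$, and dually a morphism $d_\tau : n \to k$ by reflecting this picture. A routine check, using the defining annihilation property of $JW_k$, shows that $d_\sigma u_\tau = \delta_{\sigma,\tau}\, c_\tau\, JW_k$ for an explicit nonzero scalar $c_\tau \in \Z[\delta, [k]^{-1}]$ expressible as a product of quantum integers coming from the recursive $JW$ identity applied at each cap. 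Rescaling, I would set $p_\tau := c_\tau^{-1}\, u_\tau\, d_\tau \in \End_{\cT\cL}(n)$, producing a family of pairwise orthogonal primitive idempotents, one for each irreducible summand.

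With the $p_\tau$ in hand, the isotypic projector onto the $V_k$-component is $\sum_{\tau} p_\tau$ where $\tau$ ranges over half-diagrams with $k$ through-strands, and the intertwiners realizing an isomorphism between the two copies of $V_k$ indexed by $\sigma$ and $\tau$ are given by $u_\sigma d_\tau / c_\tau$ (composition in the other order being $c_\sigma^{-1} u_\tau d_\sigma$). The composition rules $(u_\sigma d_\tau)(u_\tau d_\rho) = c_\tau (u_\sigma d_\rho)$ follow again from $d_\sigma u_\tau = \delta_{\sigma,\tau}\, c_\tau\, JW_k$, exhibiting the endomorphism algebra of each isotypic block as a matrix algebra over the base ring and giving the claimed isomorphisms. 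Since each $c_\tau$ only involves $[2],\dots,[k]$ with $k\le n$, the entire construction is defined over any extension of $\Z[\delta]$ in which $[2],[3],\dots,[n]$ are invertible.

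The main obstacle is the bookkeeping of the scalars $c_\tau$ and showing that the $u_\tau,\, d_\tau$ are mutually orthogonal after normalization; this relies crucially on the absorption identity $JW_k e_i = 0$ together with a careful induction on the number of caps in $\tau$, repeatedly invoking the partial-trace formula $\operatorname{tr}_{\text{right}}(JW_k) = \tfrac{[k+1]}{[k]}\, JW_{k-1}$ that follows from Wenzl's recursion. Once this identity is established, the rest of the proof is essentially formal, and the Jones-Wenzl projectors furnish the backbone of the entire decomposition.
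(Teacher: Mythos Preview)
Your construction is correct and follows the standard route to this result: Wenzl's recursion for $JW_k$, indexing of the irreducible summands by half-diagrams (equivalently paths), and the cellular-type basis $\{u_\tau d_\sigma\}$ whose orthogonality and matrix-unit behavior come from the absorption property $JW_k e_i = 0$ together with the partial-trace identity. The only minor imprecision is in the last paragraph: the scalars $c_\tau$ arise from partial traces of intermediate projectors $JW_m$ along the path $\tau$, so they involve quantum integers up to $[n]$ rather than only up to $[k]$; your final conclusion about the required ring extension is nonetheless correct.

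The paper itself does not give a proof here: it simply refers to \cite[Prop 4.2]{BenTwoColor}. So there is no substantive comparison to make beyond noting that the paper records, immediately after the proposition, a different recursive formula for $JW_n$ (a sum over possible cup positions with coefficients $[a][n-a]/[n]$) rather than Wenzl's two-term recursion; both recursions are standard and yield the same projectors, with the same denominators $[2],\dots,[n]$.
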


\begin{proof}
Please refer to \cite[Prop 4.2]{BenTwoColor}.
\end{proof}

For more properties of Jones-Wenzyl projectors, please refer to \cite{EliTwo,Scott,EliDiGr}.
 But,  we will write down one recursive formula which allows us to get our desired Jones-Wenzyl projectors which sums over the possible positions of cups:

\begin{equation}
\includegraphics[scale=1]{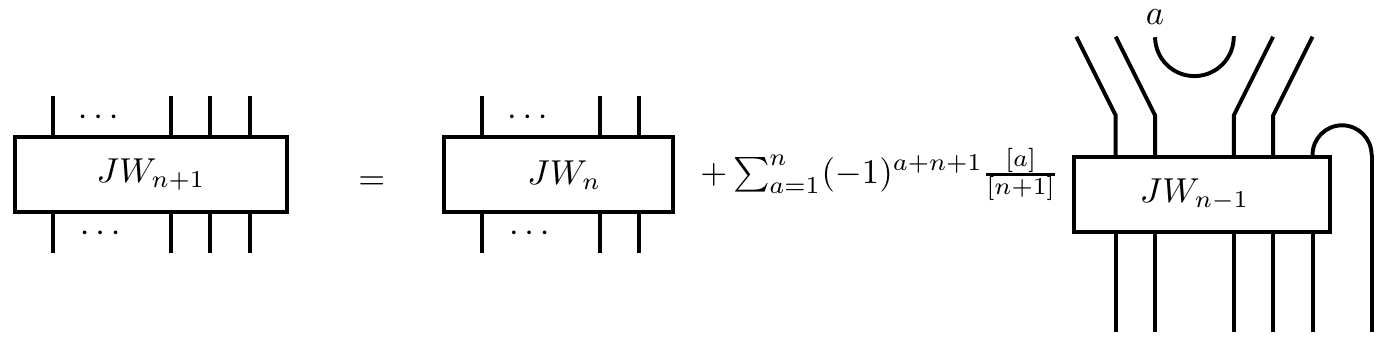}
\end{equation}

\begin{example} 
\begin{align*} 
JW_1 &= \ \ \ 
\begin{tikzcd}  
\draw[line width=0.8mm] (2,-.5) -- (2,.5);
\end{tikzcd}
 && JW_2= \ \ \ 
 \begin{tikzcd}  
\draw[line width=0.8mm] (2,-.5) -- (2,.5);
\draw[line width=0.8mm] (2.5,-.5) -- (2.5,.5);
\end{tikzcd}
\ \ - \ \frac{1}{[2]} \ \ 
 \begin{tikzcd}  
\draw[line width=0.8mm] (0,0.5) arc(0 : -180: 0.3);
\draw[line width=0.8mm] (-.6,-0.5) arc(180 : 0: 0.3);
\end{tikzcd}
\end{align*}
\begin{align*} 
 JW_3= \ \ \ 
 \begin{tikzcd}  
\draw[line width=0.8mm] (2,-.5) -- (2,.5);
\draw[line width=0.8mm] (2.5,-.5) -- (2.5,.5);
\draw[line width=0.8mm] (3,-.5) -- (3,.5);
\end{tikzcd}
\ \ - \ \frac{[2]}{[3]} \ \ 
 \begin{tikzcd}  
 \draw[line width=0.8mm] (-1,-.5) -- (-1,.5);
\draw[line width=0.8mm] (0,0.5) arc(0 : -180: 0.3);
\draw[line width=0.8mm] (-.6,-0.5) arc(180 : 0: 0.3);
\end{tikzcd}
\ \ - \ \frac{[2]}{[3]} \ \ 
 \begin{tikzcd} 
 \draw[line width=0.8mm] (.5,-.5) -- (.5,.5);
\draw[line width=0.8mm] (0,0.5) arc(0 : -180: 0.3);
\draw[line width=0.8mm] (-.6,-0.5) arc(180 : 0: 0.3);
\end{tikzcd}
\ \ + \ \frac{1}{[3]} \ \ 
 \begin{tikzcd} 
 \draw[line width=0.8mm] (.5,-.5) -- (-.5,.5);
\draw[line width=0.8mm] (.6,0.5) arc(0 : -180: 0.3);
\draw[line width=0.8mm] (-.6,-0.5) arc(180 : 0: 0.3);
\end{tikzcd}
\ \ + \ \frac{1}{[3]} \ \ 
 \begin{tikzcd} 
 \draw[line width=0.8mm] (.5,.5) -- (-.5,-.5);
\draw[line width=0.8mm] (0,0.5) arc(0 : -180: 0.3);
\draw[line width=0.8mm] (0,-0.5) arc(180 : 0: 0.3);
\end{tikzcd}
\end{align*}

\end{example}
	
\subsection{The two-coloured Temperley-Lieb 2-category}	 \label{twocolorTL}
	
We define \textit{two-coloured Temperley-Lieb 2-category $2\cT \cL$} as having coefficients which lie in $\Z[x,y].$ 
   This category $2\cT \cL$ has two objects: red and blue.
   It has two generating 1-morphisms: a map from red to blue and  a map from blue to red.
   The 2-morphisms are the $Z[x,y]$-module spanned by appropriately-coloured crossingless matchings.
   The multiplication is given by vertical concatenation of diagrams, and replacing any closed component (i.e. circle) with a scalar indeterminate;  a circle with red (resp. blue) interior evaluates to $x$ (resp. $y$). 
   
   We can obtain the usual uncoloured Temperley-Lieb category  $\cT \cL$ by letting $x = y = \delta$ and removing the colours.
  On the other hand, any crossingless matchings in $JW_{m-1}$ will divide the planar strip into $m+1$ regions which can be coloured alternatingly across strands with two different colours, say red and blue.

\subsubsection{Two-coloured Jones-Wenzyl Projectors}

  The corresponding two-coloured Temperley-Lieb algebra in $2 \cT \cL$ also contains Jones-Wenzl projectors, but with each coming in two flavours with different coefficients depending on whether it is left-blue-aligned or left-red-aligned. 
  The other flavour can be acquired by switching the colours as well as switching $x$ and $y.$
  

\begin{example}
\begin{align*} 
JW_1 &= \ \ \ 
\begin{tikzcd}  
\draw[fill=red!30] (2,-.5) -- (2,.5) -- (2.25, .5) -- (2.25,-.5) -- (2,-.5);
\draw[fill=blue!30] (2.25,-.5) -- (2.25,.5) -- (2.5, .5) -- (2.5,-.5) -- (2.25,-.5);
\end{tikzcd} \\
 JW_2 &= \ \ \ 
 \begin{tikzcd}  
 \draw[fill=red!30] (2,-.5) -- (2,.5) -- (2.25, .5) -- (2.25,-.5) -- (2,-.5);
\draw[fill=red!30] (2.75,-.5) -- (2.75,.5) -- (3, .5) -- (3,-.5) -- (2.75,-.5);
\draw[fill=blue!30] (2.25,-.5) -- (2.25,.5) -- (2.75, .5) -- (2.75,-.5) -- (2.25,-.5);
\end{tikzcd}
\ \  - \frac{1}{y} \ \ 
 \begin{tikzcd}  
 \draw [fill=red!30]  (-.85,.5) -- (0,0.5) arc(0 : -180: 0.3) -- (.25,.5) -- (.25,-.5) -- (-.6,-0.5) arc(180 : 0: 0.3) -- (-.85,-.5) --(-.85,.5);
\draw[fill=blue!30] (0,0.5) arc(0 : -180: 0.3) -- (-.85,.5) -- (.25,.5);
\draw[fill=blue!30] (-.6,-0.5) arc(180 : 0: 0.3)-- (-.85,-.5) -- (.25,-.5);
\end{tikzcd} \\
 JW_3 &= \ 
  \begin{tikzcd}  
\draw[fill=red!30] (2,-.5) -- (2,.5) -- (2.25, .5) -- (2.25,-.5) -- (2,-.5);
\draw[fill=blue!30] (2.25,-.5) -- (2.25,.5) -- (2.75, .5) -- (2.75,-.5) -- (2.25,-.5);
\draw[fill=red!30] (2.75,-.5) -- (2.75, .5)-- (3.25,.5) --  (3.25,-.5)  -- (2.75,-.5);
\draw[fill=blue!30] (3.25,-.5) -- (3.25,.5) -- (3.5, .5) -- (3.5,-.5) -- (3.25,-.5);
\end{tikzcd}
\  - \frac{y}{xy-1} \ \ 
 \begin{tikzcd}  
 \draw[fill=red!30] (-1.25,-.5) -- (-1.25,.5) -- (-1,.5) -- (-1,-.5) -- (-1.25,-.5) ;
 \draw[fill=blue!30] (-1,-.5) -- (-1,.5) -- (.25,.5) -- (.25, -.5) -- (-1,-.5);
\draw[fill=red!30] (0,0.5) arc(0 : -180: 0.3) -- (0,.5) -- (.25,.5);
\draw[fill=red!30] (-.6,-0.5) arc(180 : 0: 0.3) -- (-0.6,-.5) -- (0,-.5);
\end{tikzcd}
\ - \frac{x}{xy-1} \ \ 
 \begin{tikzcd}  
 \draw[fill=blue!30] (.4,-.5) -- (.4,.5) -- (.65,.5) -- (.65,-.5) -- (.45,-.5);
  \draw[fill=red!30] (.4,-.5) -- (.4,.5) -- (-.85, .5) -- (-.85, -.5)-- (.5,-.5);
\draw[fill=blue!30] (0,0.5) arc(0 : -180: 0.3) -- (0,.5) -- (-.6,.5);
\draw[fill=blue!30] (-.6,-0.5) arc(180 : 0: 0.3) -- (-.6,-.5) -- (0,-.5);
\end{tikzcd} 
\  + \frac{1}{xy-1} \ \ 
 \begin{tikzcd} 
 \draw[fill=red!30] (.5,-.5) -- (-.5,.5) -- (-.75,.5) -- (-.75,-.5) -- (.5,-.5);
  \draw[fill=blue!30] (.5,-.5) -- (-.5,.5) -- (.75,.5) -- (.75,-.5) -- (.5,-.5);
\draw[fill=red!30] (.6,0.5) arc(0 : -180: 0.3) -- (0,.5) -- (.6,.5);
\draw[fill=blue!30] (-.6,-0.5) arc(180 : 0: 0.3) -- (-.6,-.5) -- (0,-.5);
\end{tikzcd}
\   + \frac{1}{xy-1} \ \ 
 \begin{tikzcd} 
 \draw[fill=red!30] (.5,.5) -- (-.5,-.5) -- (-.75, -.5) -- (-.75, .5)-- (.5,.5);
  \draw[fill=blue!30] (.5,.5) -- (-.5,-.5) -- (.75, -.5) -- (.75, .5)-- (.5,.5);
\draw[fill=blue!30] (0,0.5) arc(0 : -180: 0.3) -- (-.6,.5) -- (0,.5);
\draw[fill=red!30] (0,-0.5) arc(180 : 0: 0.3) -- (0.6,-.5) -- (0,-.5);
\end{tikzcd}
\end{align*}
\end{example}

As dicussed before, the fact that $[m]_x = [m]_y = 0$ guaranteed that $JW_{m-1}$ is well-defined, that is, the denominators  $[m-1]_x,[m-1]_y $ are invertible.
 
 \subsubsection{Rotation of Jones-Wenzyl Projectors}
 A \textit{rotation} of $JW_{m-1}$ by a single strand is obtained by cupping on the leftmost bottom and capping on the rightmost top.
 It is known that a rotation of $JW_{m-1}$ by two strands will produce $JW_{m-1}.$
 If a rotation of an uncolored morphism $JW_{m-1}$	by a single strand returns $JW_{n-1},$ then $JW_{m-1}$	is said to be \textit{rotation-invariant}.
 Note that the following property is provided by a balanced realization.
 
 \begin{proposition}
 Rotating the left-blue-aligned Jones-Wenzl projectors $JW_{m-1}$ by one strand yields the left-red-aligned Jones-Wenzl projectors
if and only if $[m-1]=1$ for $m$ even or $[m-1]_x = [m-1]_y = -1$ for $m$ odd.

 \end{proposition}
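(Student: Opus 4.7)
The plan is to exploit the uniqueness characterization of the two-coloured Jones--Wenzl projector. Recall that $JW_{m-1}^{\text{blue}}$ is the unique element in the appropriate endomorphism space of $2\cT\cL$ satisfying (i) the coefficient of the identity diagram equals $1$ and (ii) it is annihilated by pre-composition with any cup and post-composition with any cap. Let $R$ denote rotation by one strand. Since $R$ is induced by a rigid rotation of the disk carrying the $2(m-1)$ boundary points, it globally exchanges the two colours and sends cups to caps (and vice versa). Consequently, $R(JW_{m-1}^{\text{blue}})$ still satisfies property (ii), now in the left-red-aligned colouring convention. By the uniqueness part of the characterization,
\[
R(JW_{m-1}^{\text{blue}}) \;=\; c \cdot JW_{m-1}^{\text{red}}
\]
for some scalar $c$ in the relevant localization of $\Z[x,y]$. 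The proposition is therefore equivalent to showing $c = 1$ precisely when the balance condition holds.

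To extract $c$, I would pair both sides with the $\Z[x,y]$-linear functional that reads off the coefficient of the identity diagram $\mathds{1}_{m-1}$. On the right this gives $c$, while on the left it returns the coefficient in $JW_{m-1}^{\text{blue}}$ of the unique crossingless matching $D^{\ast}$ with $R(D^{\ast}) = \mathds{1}_{m-1}$. Geometrically $D^{\ast}$ is the ``half-rainbow'' obtained by un-rotating the identity by one strand: all $m-1$ arcs of $D^{\ast}$ nest concentrically, with one long cup on the bottom-left and one long cap on the top-right, the remaining arcs sitting inside. The coefficient of $D^{\ast}$ can then be computed by induction on $m$ using the recursive formula of \cref{twocolorTL}, where the identities
\[
[m-a]_y + [m+1]_x[a]_y = [m]_y[a+1]_x, \qquad [m-a]_x + [m+1]_y[a]_x = [m]_x[a+1]_y
\]
from \cref{Desurj} will be used repeatedly to collapse the telescoping sums arising from cup-position choices.

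The final step is to interpret the equation $c = 1$. For $m$ even, $m-1$ is odd so $[m-1]_x = [m-1]_y = [m-1]$, and the induction collapses the scalar to $[m-1]$, giving the condition $[m-1] = 1$. For $m$ odd, $m-1$ is even, the two colour variants $[m-1]_x$ and $[m-1]_y$ may differ, and sign tracking through the recursion forces each of them to equal $-1$. Conversely, assuming the balance condition one checks the identity $c=1$ directly by unwinding the same induction.

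\emph{Main obstacle.} The principal difficulty lies in the combinatorial identification of $D^{\ast}$ and the careful bookkeeping of its coefficient through the two-coloured recursion. Because each recursive step sums over the positions of a cup with colour-sensitive scalars $[k]_x$ or $[k]_y$, and because the left/right alignment convention determines which summands contribute to $D^{\ast}$, the inductive step requires a parity-based case analysis on $m$. I expect precisely this parity dichotomy to produce the two-case form of the statement, with the balanced-realization hypothesis being exactly what is needed for the telescoping simplifications to yield $\pm 1$.
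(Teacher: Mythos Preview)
The paper does not actually prove this proposition; it simply defers to the reference \cite{BenTwoColor} with the line ``See \cite{BenTwoColor} for more information.'' So there is no argument in the paper to compare against.

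Your outline is essentially the standard proof and is the approach taken in that reference. The key steps---that rotation preserves the annihilation-by-cups/caps property, that uniqueness then forces $R(JW_{m-1}^{\text{blue}}) = c \cdot JW_{m-1}^{\text{red}}$, and that $c$ is read off as the coefficient of a specific diagram---are all correct. One refinement: rather than tracking the coefficient of the rotated identity through the full recursion, it is cleaner in practice to use Morrison's single-clasp expansion (the recursion you quote), which directly gives the coefficient of the ``through-strand plus nested cap/cup'' diagram as $\pm [m-1]_x^{-1}$ or $\pm [m-1]_y^{-1}$; the rotation-invariance condition then reduces immediately to the stated equalities without a lengthy telescoping argument.

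One caution on signs: the paper explicitly remarks (just above the proposition) that its convention differs from \cite{BenTwoColor} because circles evaluate to $\delta$ rather than $-\delta$. This is precisely what produces the particular signs in the balance condition ($+1$ for $m$ even, $-1$ for $m$ odd) rather than the opposite. Your inductive bookkeeping will need to respect this convention to land on the correct statement.
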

 
 \begin{proof}
 See \cite{BenTwoColor} for more information.
  \end{proof}
  
 This property of rotational invariant is to ensure the well-definedness of certain Soergel graphs whose definition will be made explicit in the next section.

\section{Diagrammatic Category for Soergel Bimodules} \label{DiaCatSB}

\subsection{Soergel graphs}

A \textit{planar graph in the strip} is a finite graph $(\Gamma, \partial \Gamma)$ embedded in $(\R \times [0,1], \R \times \{0,1\}).$ 
	In other words, all vertices of the graph $\Gamma$ lie in the interior $\R \times (0,1),$ and removing the vertices we have a $1$-manifold with boundary whose intersection with $\R \times \{0,1\}$ is precisely its boundary.
	In particular, this allows edges that connect two vertices, edges that connect a vertex to the boundary, edges that connect two points on the boundary, and edges that form circles (closed 1-manifolds embedded in the plane).
	
	The \textit{boundary} $\R \times \{0,1\}$ consists of two components, the \textit{top boundary} $\R \times \{1\}$ and the \textit{bottom boundary}  $\R \times \{0\}.$
	A \textit{boundary edge} is a local segment of an edge which hits the boundary; there is one such boundary edge for every point on the boundary of the graph.
	A connected component of a graph with boundary might just be referred as \textit{component}.


A \textit{Soergel graph} is an isotopy class of a planar graph in the strip, though the isotopy we considered must preserve the top and bottom boundary. 
	The edges in this graph are labelled/coloured by an element $s$ of $S.$
	The edge labels meeting the boundary provide two sequences of colours labelling the top and bottom boundary of the Soergel graph.
	 The vertices in this graph are of three types:
	 \begin{enumerate} [(i)]
	 \item Univalent vertices (dots);
	 \item Trivalent vertices connecting three edges of the same colour;
	 \item $2 m_{st}$-valent vertices connecting edges which alternate in colour between two elements $s,t$ of $S$ with $m_{st} < \infty.$
	 \end{enumerate}

\begin{figure} [H]
\centering
\begin{tikzpicture}[scale=0.7]
\draw[ purple, line width=0.8mm] (0,.7) -- (0,0);
\filldraw[purple] (0,.7) circle (3pt) ;
\node at (0,-.7) {$(i)$};
\draw[ purple, line width=0.8mm] (2,1.15) -- (2,.5);
\draw[ purple, line width=0.8mm] (2,.5) -- (1.5,0);
\draw[ purple, line width=0.8mm] (2,.5) -- (2.5,0);
\node at (2,-.7) {$(ii)$};
\draw[color=blue, line width=0.8mm] (4,1.15) -- (5.3,-.15);
\draw[color=blue, line width=0.8mm] (5.3,1.15) -- (4,-.15);
\draw[color=red, line width=0.8mm] (3.75,0.5) -- (5.5,0.5);
\draw[color=red, line width=0.8mm] (4.65,1.3) -- (4.65,-.3);
\node at (5.3,-.7) {$(iii)$ {\small For $m_{\textcolor{red}{s}\textcolor{blue}{t}} = 3$}};
\end{tikzpicture}
\begin{caption} {The vertices in a Soergel graph.}
\end{caption}
\end{figure}

	 A \textit{region} of the graph is a connected component component of the complement of the Soergel graph in $\R \times [0,1].$
	 We may place a decoration of boxes each labelled with a homogeneous polynomial $f$ in $R$ inside every region of the Soergel graph;  they can be regarded as $0$-valent vertices with labels.
	 A Soergel graph has a \textit{degree}, which accumulates $+1$ for each dots $(i)$, $-1$ for each trivalent vertices $(ii),$ $0$ for each $2m_{st}$-valent vertices $(iii)$, and the degree of each polynomials.


\subsection{Jones-Wenzyl projectors as Soergel graphs} Continuing the spirit of \cref{twocolorTL}, we do the following construction.
 Fix two indices $s,t \in S$ and a $2$-coloured crossingless matching.
 We produce a Soergel graph  on the planar disc by first deformation retracting each region into a tree composed out of trivalent and univalent vertices and colouring these trees appropriately.
 With this process, we will always obtain a Soergel graph of degree $+2.$  
 We also specialise $\Z[x,y]$ to $\Bbbk$ under the map sending $x \mapsto a_{s,t}$ and $y \mapsto a_{t,s}.$
  We then call the linear combination of Soergel graphs associated to a Jones-Wenzl projector, the \textit{Jones-Wenzl morphism}.
  As usual  it comes in two colour-flavours.

  \begin{example}

\begin{align*} 
JW_1 &= 
\begin{tikzcd}  
\draw[dashed,color=black!60] (0,0) circle (0.7);
\draw[red, line width=0.8mm] (-.15,0) -- (-.7,0);
\draw[blue, line width=0.8mm] (.15,0) -- (.7,0);
\filldraw[red] (-.25,0) circle (3pt) ;
\filldraw[blue] (.25,0) circle (3pt) ;
\end{tikzcd}
 \\
  JW_2 &= 
\begin{tikzcd}
\draw[dashed,color=black!60] (0,0) circle (0.7);
\draw[blue, line width=0.8mm] (0,.7) -- (0,-.7);
\draw[red, line width=0.8mm] (-.2,0) -- (-.7,0);
\draw[red, line width=0.8mm] (.2,0) -- (.7,0);
\filldraw[red] (-.3,0) circle (3pt) ;
\filldraw[red] (.3,0) circle (3pt) ;
\end{tikzcd} 
- \frac{1}{a_{t,s}}
\begin{tikzcd}
\draw[dashed,color=black!60] (0,0) circle (0.7);
\draw[red, line width=0.8mm] (.7,0) -- (-.7,0);
\draw[blue, line width=0.8mm] (0,-.2) -- (0,-.7);
\draw[blue, line width=0.8mm] (0,.2) -- (0,.7);
\filldraw[blue] (0,-.3) circle (3pt) ;
\filldraw[blue] (0,.3) circle (3pt) ;
\end{tikzcd} 
\\
JW_3 &= 
\begin{tikzcd}
\draw[dashed,color=black!60] (0,0) circle (0.7);
\draw[red, line width=0.8mm] (.125,.68) -- (.125,-.68);
\draw[blue, line width=0.8mm] (-.125,.68) -- (-.125,-.68);
\draw[red, line width=0.8mm] (-.5,0) -- (-.7,0);
\draw[blue, line width=0.8mm] (.45,0) -- (.7,0);
\filldraw[red] (-.425,0) circle (3pt) ;
\filldraw[blue] (.425,0) circle (3pt) ;
\end{tikzcd} 
- \frac{a_{t,s}}{a_{s,t}a_{t,s}-1} 
\begin{tikzcd}
\draw[dashed,color=black!60] (0,0) circle (0.7);
\draw[red, line width=0.8mm] (-.5,0) -- (-.7,0);
\draw[red, line width=0.8mm] (.23,0.4) -- (.23,0.66);
\draw[red, line width=0.8mm] (.23,-0.4) -- (.23,-0.66);
\draw[blue, line width=0.8mm] (0,0) -- (-.45,.58);
\draw[blue, line width=0.8mm] (0,0) -- (-.45,-.58);
\draw[blue, line width=0.8mm] (0,0) -- (.7,0);
\filldraw[red] (-.4,0) circle (3pt) ;
\filldraw[red] (.23,0.3) circle (3pt) ;
\filldraw[red] (.23,-0.3) circle (3pt) ;
\end{tikzcd} 
- \frac{a_{s,t}}{a_{s,t}a_{t,s}-1} 
\begin{tikzcd}
\draw[dashed,color=black!60] (0,0) circle (0.7);
\draw[blue, line width=0.8mm] (.5,0) -- (.7,0);
\draw[blue, line width=0.8mm] (-.23,0.4) -- (-.23,0.66);
\draw[blue, line width=0.8mm] (-.23,-0.4) -- (-.23,-0.66);
\draw[red, line width=0.8mm] (0,0) -- (.45,.58);
\draw[red, line width=0.8mm] (0,0) -- (.45,-.58);
\draw[red, line width=0.8mm] (0,0) -- (-.7,0);
\filldraw[blue] (.4,0) circle (3pt) ;
\filldraw[blue] (-.23,0.3) circle (3pt) ;
\filldraw[blue] (-.23,-0.3) circle (3pt) ;
\end{tikzcd} 
+ \frac{1}{a_{s,t}a_{t,s}-1} 
\begin{tikzcd}
\draw[dashed,color=black!60] (0,0) circle (0.7);
\draw[blue, line width=0.8mm] (.13,0) -- (.7,0);
\draw[blue, line width=0.8mm] (.13,0) -- (-.45,.58);
\draw[red, line width=0.8mm] (-.15,0) -- (.45,-.58);
\draw[red, line width=0.8mm] (-.15,0) -- (-.7,0);
\draw[red, line width=0.8mm] (.23,0.4) -- (.25,0.66);
\draw[blue, line width=0.8mm] (-.23,-0.4) -- (-.25,-0.66);
\filldraw[red] (.23,0.3) circle (3pt) ;
\filldraw[blue] (-.23,-0.3) circle (3pt) ;
\end{tikzcd} + \frac{1}{a_{s,t}a_{t,s}-1}
\begin{tikzcd}
\draw[dashed,color=black!60] (0,0) circle (0.7);
\draw[blue, line width=0.8mm] (.13,0) -- (.7,0);
\draw[red, line width=0.8mm] (.23,-0.4) -- (.23,-0.66);
\draw[blue, line width=0.8mm] (-.23,0.4) -- (-.23,0.66);
\draw[red, line width=0.8mm] (-.13,0) -- (.45,.58);
\draw[red, line width=0.8mm] (-.13,0) -- (-.7,0);
\draw[blue, line width=0.8mm] (.13,0) -- (-.45,-.58);
\filldraw[red] (.23,-0.3) circle (3pt) ;
\filldraw[blue] (-.23,0.3) circle (3pt) ;
\end{tikzcd} \\	
\end{align*}

  \end{example}

\subsection{Soergel bimodules as a diagrammatic category}

Let $\cD_S,$ or simply $\cD$ denote the $\Bbbk$-linear monoidal category defined as follows;
\begin{enumerate} [i.]
\item Objects: finite sequences $\underline{w} = s_{i_1}s_{i_2}\hdots s_{i_k}$, sometimes denoted $B_{\underline{w}},$ with a monoidal structure given by concatenation.
\item  Homomorphisms: the Hom space $Hom_{\cD}(\uw, \uv)$ is the free $\Bbbk$-module generated by Soergel graphs with bottom boundary $\uw$ and top boundary $\uv$ modulo the relations listed below. 
 Hom spaces will be graded by the degree of the Soergel graphs, and all the relations below are homogeneous. 

\begin{enumerate} [(a)]
\item The polynomial relations: 

\begin{itemize}  
\item {the barbell relation} 
\begin{equation}
 \label{barbell} 
 \centering
\begin{tikzpicture}[scale=0.7]
\draw[dashed,color=black!60] (0,0) circle (1.0);
\draw[dashed,color=black!60] (3,0) circle (1.0);
\draw[ purple, line width=0.8mm] (0,.55) -- (0,-.55);
\filldraw[purple] (0,.55) circle (3pt) ;
\filldraw[purple] (0,-.55) circle (3pt) ;
\node at (1.5,0) {=};
\node at (3,0) {{\LARGE $\alpha_{\textcolor{purple}{s}}$}};
\end{tikzpicture}, 
\end{equation}
\item {the polynomial forcing relation}
\begin{equation}   \label{polyforce}
\centering
\begin{tikzpicture} [scale = 0.8]
\draw[dashed,color=black!60] (0,0) circle (1.0);
\draw[dashed,color=black!60] (3,0) circle (1.0);
\draw[dashed,color=black!60] (6,0) circle (1.0);
\draw[color=purple, line width=0.8mm] (0,-1) -- (0,1);
\draw[purple, line width=0.8mm] (3,-1) -- (3,1);
\draw[purple, line width=0.8mm] (6,1) -- (6,.55);
\draw[purple, line width=0.8mm] (6,-1) -- (6,-.55);
\filldraw[purple] (6,.55) circle (3pt) ;
\filldraw[purple] (6,-.55) circle (3pt) ;
\node at (1.5,0) {=};
\node at (4.5,0) {+};
\node at (-.5,0) {{\LARGE f}};
\node at (3.5,0) {{\large $\textcolor{purple}{s} (f)$}};
\node at (6,0) {{\Large $\partial_{\textcolor{purple}{s}} f$}};
\end{tikzpicture}, 
\end{equation}
\end{itemize}

\item The one colour relations:

\begin{itemize}
\item the needle relation
\begin{equation} \label{needle}
\centering
\begin{tikzpicture} [scale=0.7]
\draw[dashed,color=black!60] (0,0) circle (1.0);
\draw[purple, line width=0.8mm] (0,0) circle (.35);
\draw[purple, line width=0.8mm] (0,-.35) -- (0,-1);
\node at (1.5,0) {=};
\node at (2,0) {{\Large $0$}};
\end{tikzpicture}
\end{equation}
\item the Frobenius relations
\begin{equation} \label{Frobenius}
\centering
\begin{tikzpicture} [scale = .7]
\draw[ purple, line width=0.8mm] (0,.25) -- (0,-.25);
\draw[ purple, line width=0.8mm] (0,.25) -- (-.5,.75);
\draw[ purple, line width=0.8mm] (0,.25) -- (.5,.75);
\draw[ purple, line width=0.8mm] (0,-.25) -- (-.5,-.75);
\draw[ purple, line width=0.8mm] (0,-.25) -- (.5,-.75);

\draw[ purple, line width=0.8mm] (2.75,0) -- (3.25,0);
\draw[ purple, line width=0.8mm] (2.75,0) -- (2.35,0.3);
\draw[ purple, line width=0.8mm] (2.35,0.3) -- (2.35,0.75);
\draw[ purple, line width=0.8mm] (3.25,0) -- (3.65,0.3);
\draw[ purple, line width=0.8mm] (3.65,0.3) -- (3.65,0.75);
\draw[ purple, line width=0.8mm] (2.75,0) -- (2.35,-0.3);
\draw[ purple, line width=0.8mm] (2.35,-0.3) -- (2.35,-0.75);
\draw[ purple, line width=0.8mm] (3.25,0) -- (3.65,-0.3);
\draw[ purple, line width=0.8mm] (3.65,-0.3) -- (3.65,-0.75);
\node at (1.5,0) {=};
\node at (8,0) {\text{general associativity}};

\end{tikzpicture}
\end{equation}
\begin{equation} \label{wall}
\centering
\begin{tikzpicture} [scale=.7]
\draw[ purple, line width=0.8mm] (0,.75) -- (0,-.75);
\draw[ purple, line width=0.8mm] (0,0) -- (0.45,0);
\filldraw[purple] (0.45,0) circle (3pt) ;
\node at (1,0) {=};
\draw[ purple, line width=0.8mm] (1.75,.75) -- (1.75,-.75);
\node at (6,0) {\text{general unit}};
\end{tikzpicture}
\end{equation}
\end{itemize}

\item The two colour relations:
\begin{itemize}
\item the two colour associativity
\begin{equation}  \label{assoc3}
\centering
\begin{tikzpicture} [scale = 0.7]
\draw[color=blue, line width=0.8mm] (-1.45,0) -- (0,0);

\draw[color=red, line width=0.8mm] (.55,0) -- (1.25,0.75);
\draw[color=red, line width=0.8mm] (.55,0) -- (1.25,-0.75);

\draw[color=blue, line width=0.8mm] (.75,.75) -- (0,0);

\draw[color=blue, line width=0.8mm] (0,0) -- (.75,-0.75);

\draw[color=red, line width=0.8mm] (0,0) -- (-.75,-0.75);

\draw[color=red, line width=0.8mm] (-.75,.75) -- (0,0);

\draw[color=red, line width=0.8mm] (0,0) -- (.55,0);

\node at (2,0) {=};

\draw[color=red, line width=0.8mm] (3.85,0.35) -- (4.35,0.35);

\draw[color=red, line width=0.8mm] (4.35,0.35) -- (4.7,0.7);

\draw[color=red, line width=0.8mm] (3.85,-0.35) -- (4.35, -0.35);
\draw[color=red, line width=0.8mm] (4.35,-0.35) -- (4.7, -0.7);
\draw[color=red, line width=0.8mm] (3.35,1) -- (3.85,.35);
\draw[color=red, line width=0.8mm] (3.85,.35) -- (3.55,-.02);
\draw[color=red, line width=0.8mm] (3.35,-1) -- (3.85,-.35);
\draw[color=red, line width=0.8mm] (3.85,-.35) -- (3.55,0.01);

\draw[color=blue, line width=0.8mm] (2.5,0) -- (3,0);
\draw[color=blue, line width=0.8mm] (3,0) -- (3.35,0.35);
\draw[color=blue, line width=0.8mm] (3.35,0.35) -- (3.85,0.35);
\draw[color=blue, line width=0.8mm] (3,0) -- (3.35, -0.35);
\draw[color=blue, line width=0.8mm] (3.35,-0.35) -- (3.85, -0.35);
\draw[color=blue, line width=0.8mm] (3.85,.35) -- (4.15,-.02);
\draw[color=blue, line width=0.8mm] (4.35,1) -- (3.85,.35);
\draw[color=blue, line width=0.8mm] (3.85,-.35) -- (4.15,0.01);
\draw[color=blue, line width=0.8mm] (4.35,-1) -- (3.85,-.35);

\node at (7,0) {\text{if $m_{\textcolor{red}{s} \textcolor{blue}{t}} = 3 $ }};

\end{tikzpicture}
\end{equation}
\begin{equation}  \label{assoc4}
\centering
\begin{tikzpicture} [scale = 0.7]
\draw[color=red, line width=0.8mm] (-1.45,0) -- (.55,0);
\draw[color=red, line width=0.8mm] (.55,0) -- (1.25,0.75);
\draw[color=red, line width=0.8mm] (0,0.75) -- (0,-0.75);
\draw[color=red, line width=0.8mm] (.55,0) -- (1.25,-0.75);
\draw[color=blue, line width=0.8mm] (.75,.75) -- (-.75,-0.75);
\draw[color=blue, line width=0.8mm] (-.75,.75) -- (.75,-0.75);
\node at (2,0) {=};

\draw[color=red, line width=0.8mm] (2.5,0) -- (3,0);
\draw[color=red, line width=0.8mm] (3,0) -- (3.35,0.35);
\draw[color=red, line width=0.8mm] (3.35,0.35) -- (4.35,0.35);
\draw[color=red, line width=0.8mm] (4.35,0.35) -- (4.7,0.7);

\draw[color=red, line width=0.8mm] (3,0) -- (3.35, -0.35);
\draw[color=red, line width=0.8mm] (3.35,-0.35) -- (4.35, -0.35);
\draw[color=red, line width=0.8mm] (4.35,-0.35) -- (4.7, -0.7);

\draw[color=red, line width=0.8mm] (3.85,1) -- (3.85,-1);
\draw[color=blue, line width=0.8mm] (3.35,1) -- (4.15,-.02);
\draw[color=blue, line width=0.8mm] (4.35,1) -- (3.55,-.02);
\draw[color=blue, line width=0.8mm] (3.35,-1) -- (4.15,0.01);
\draw[color=blue, line width=0.8mm] (4.35,-1) -- (3.55,0.01);

\node at (7,0) {\text{if $m_{\textcolor{red}{s} \textcolor{blue}{t}} = 4 $ }};

\end{tikzpicture}
\end{equation}

\item the dot-crossing relations

\end{itemize}
\begin{equation} \label{mst2}
\begin{tikzpicture} 
\draw[color=red, line width=0.8mm] (-1.45,0) -- (.55,0);

\draw[color=blue, line width=0.8mm] (0,0.75) -- (0,-0.75);

\filldraw[red]  (0.55,0) circle (3pt) ;

\node at (2,0) {=};

\draw[color=red, line width=0.8mm] (2.5,0) -- (3.5,0);
\filldraw[red]  (3.5,0) circle (3pt) ;

\filldraw[blue]  (4.3,0) circle (3pt) ;
\draw[color=blue, line width=0.8mm] (4.3,0) -- (4.85,0);
\draw[color=blue, line width=0.8mm] (4.85,-.75) -- (4.85,.75);

\draw[color=red, line width=0.8mm] (2.5,0) -- (3,0);

\node at (7,0) {\text{if $m_{\textcolor{red}{s} \textcolor{blue}{t}} = 2 $ }};

\end{tikzpicture}
\end{equation}
\begin{equation} \label{mst3}
\centering
\begin{tikzpicture} 
\draw[color=blue, line width=0.8mm] (-1.45,0) -- (0,0);

\filldraw[red]  (0.55,0) circle (3pt) ;

\draw[color=blue, line width=0.8mm] (.75,.75) -- (0,0);

\draw[color=blue, line width=0.8mm] (0,0) -- (.75,-0.75);

\draw[color=red, line width=0.8mm] (0,0) -- (-.75,-0.75);

\draw[color=red, line width=0.8mm] (-.75,.75) -- (0,0);

\draw[color=red, line width=0.8mm] (0,0) -- (.55,0);

\node at (2,0) {=};

\draw[color=red, line width=0.8mm] (3.825,.45) -- (3.825,.75);
\draw[color=red, line width=0.8mm] (3.825,-.45) -- (3.825,-.75);

\draw[color=blue, line width=0.8mm] (4.65,0) -- (4.85,0);
\draw[color=blue, line width=0.8mm] (4.85,-.75) -- (4.85,.75);

\draw[color=blue, line width=0.8mm] (2.5,0) -- (3,0);
\filldraw[fill=white, draw=black,rounded corners] (3,-.45) rectangle (4.65,.45);

\node at (3.85,0) {$JW_{m_{\textcolor{red}{s} \textcolor{blue}{t}}-1}$}; 

\node at (7,0) {\text{if $m_{\textcolor{red}{s} \textcolor{blue}{t}} = 3 $ }};

\end{tikzpicture}
\end{equation}
\begin{equation} \label{mst4}
\begin{tikzpicture} 
\draw[color=red, line width=0.8mm] (-1.45,0) -- (.55,0);

\draw[color=red, line width=0.8mm] (0,0.75) -- (0,-0.75);

\filldraw[red]  (0.55,0) circle (3pt) ;
\draw[color=blue, line width=0.8mm] (.75,.75) -- (-.75,-0.75);
\draw[color=blue, line width=0.8mm] (-.75,.75) -- (.75,-0.75);
\node at (2,0) {=};

\draw[color=red, line width=0.8mm] (2.5,0) -- (3,0);

\draw[color=blue, line width=0.8mm] (3.55,.45) -- (3.55,.75);
\draw[color=blue, line width=0.8mm] (3.55,-.45) -- (3.55,-.75);

\draw[color=red, line width=0.8mm] (4.1,.45) -- (4.1,.75);
\draw[color=red, line width=0.8mm] (4.1,-.45) -- (4.1,-.75);

\draw[color=blue, line width=0.8mm] (4.6,0) -- (4.85,0);
\draw[color=blue, line width=0.8mm] (4.85,-.75) -- (4.85,.75);

\draw[color=red, line width=0.8mm] (2.5,0) -- (3,0);
\filldraw[fill=white, draw=black,rounded corners] (3,-.45) rectangle (4.65,.45);

\node at (3.85,0) {{ $JW_{m_{\textcolor{red}{s} \textcolor{blue}{t}}-1}$}};

\node at (7,0) {\text{if $m_{\textcolor{red}{s} \textcolor{blue}{t}} = 4 $ }};

\end{tikzpicture}
\end{equation}
\item The three colour relations or  ``Zamolodzhiko'' relations:
\begin{itemize}
\item A sub-Coxeter system of type $A_1 \times I_2(m),$ that is, the Coxeter graph of the parabolic subgroup genetated by $( $\textcolor{blue}{s}$,$\textcolor{red}{t}$,$\textcolor{green}{u}$)$ is 
\begin{tikzcd}
\filldraw[black]  (0,0) circle (2pt) ;
\filldraw[black]  (0.7,0) circle (2pt) ;
\filldraw[black]  (1.4,0) circle (2pt) ;
\draw [line width = 0.2mm] (0,0) -- (0.7,0);
\draw  (0.35,.2) node {{m}};
\draw (0,-.35) node {{\textcolor{blue}{s}}};
\draw  (0.7,-.35) node {{\textcolor{red}{t}}};
\draw  (1.4,-.35) node {{\textcolor{green}{u}}};
\end{tikzcd}
\begin{equation}    \label{Zamo4}
\centering
\begin{tikzpicture} [scale = 0.7]

\draw[color=red, line width=0.8mm] (0,0) -- (.3,0.9);
\draw[color=red, line width=0.8mm] (0,0) -- (-.55,0.75);
\draw[color=red, line width=0.8mm] (0,0) -- (.65,-0.75);
\draw[color=red, line width=0.8mm] (-.2,-.9) -- (0,0);

\draw[color=blue, line width=0.8mm] (0,0) -- (-.55,-0.75);
\draw[color=blue, line width=0.8mm] (0,0) -- (.3,-0.9);
\draw[color=blue, line width=0.8mm] (0,0) -- (-.2,.9);
\draw[color=blue, line width=0.8mm] (.65,.75) -- (0,0);
\draw[color=green!200, line width=0.8mm]  (.9, .3) .. controls (-.1, .7) and (.1,.7) .. (-.8, .3) ;

\node at (2,0) {=};

\draw[color=red, line width=0.8mm] (4,0) -- (4.3,0.9);
\draw[color=red, line width=0.8mm] (4,0) -- (3.45,0.75);
\draw[color=red, line width=0.8mm] (4,0) -- (4.65,-0.75);
\draw[color=red, line width=0.8mm] (3.8,-.9) -- (4,0);

\draw[color=blue, line width=0.8mm] (4,0) -- (3.45,-0.75);
\draw[color=blue, line width=0.8mm] (4,0) -- (4.3,-0.9);
\draw[color=blue, line width=0.8mm] (4,0) -- (3.8,.9);
\draw[color=blue, line width=0.8mm] (4.65,.75) -- (4,0);
\draw[color=green!200, line width=0.8mm]  (4.9, -.3) .. controls (3.9, -.7) and (4.1,-.7) .. (3.2, -.3) ;

\node at (8,0) {\text{if $m_{\textcolor{red}{s} \textcolor{blue}{t}} = 4 $ }};
\node at (-4,0) {$(B_2 \times A_1)$};
\end{tikzpicture}
\end{equation}
\begin{equation}  \label{Zamo3}
\centering
\begin{tikzpicture} [scale = 0.7]

\draw[color=red, line width=0.8mm] (0,0) -- (0,.85);
\draw[color=red, line width=0.8mm] (0,0) -- (-.5,-0.75);
\draw[color=red, line width=0.8mm] (0,0) -- (.5,-0.75);

\draw[color=blue, line width=0.8mm] (0,0) -- (0,-.85);
\draw[color=blue, line width=0.8mm] (0,0) -- (-.5,0.75);
\draw[color=blue, line width=0.8mm] (0,0) -- (.5,0.75);

\draw[color=green!200, line width=0.8mm]  (.9, .3) .. controls (-.1, .7) and (.1,.7) .. (-.8, .3) ;

\node at (2,0) {=};

\draw[color=red, line width=0.8mm] (4,0) -- (4,.85);
\draw[color=red, line width=0.8mm] (4,0) -- (3.5,-0.75);
\draw[color=red, line width=0.8mm] (4,0) -- (4.5,-0.75);

\draw[color=blue, line width=0.8mm] (4,0) -- (4,-.85);
\draw[color=blue, line width=0.8mm] (4,0) -- (3.5,0.75);
\draw[color=blue, line width=0.8mm] (4,0) -- (4.5,0.75);
\draw[color=green!200, line width=0.8mm]  (4.9, -.3) .. controls (3.9, -.7) and (4.1,-.7) .. (3.2, -.3) ;

\node at (8,0) {\text{if $m_{\textcolor{red}{s} \textcolor{blue}{t}} = 3 $ }};
\node at (-4,0) {$(A_2 \times A_1)$};
\end{tikzpicture}
\end{equation}
\begin{equation}  \label{Zamo2}
\centering
\begin{tikzpicture} [scale = 0.7]

\draw[color=red, line width=0.8mm] (-.5,-.75) -- (0.5,.75);
\draw[color=blue, line width=0.8mm] (.5,-.75) -- (-0.5,.75);

\draw[color=green!200, line width=0.8mm]  (.9, .3) .. controls (-.1, .7) and (.1,.7) .. (-.8, .3) ;

\node at (2,0) {=};

\draw[color=red, line width=0.8mm] (3.5,-.75) -- (4.5,.75);
\draw[color=blue, line width=0.8mm] (4.5,-.75) -- (3.5,.75);

\draw[color=green!200, line width=0.8mm]  (4.9, -.3) .. controls (3.9, -.7) and (4.1,-.7) .. (3.2, -.3) ;

\node at (8,0) {\text{if $m_{\textcolor{red}{s} \textcolor{blue}{t}} =  2 $ }};

\node at (-3.5,0) {$(A_1 \times A_1 \times A_1)$};
\end{tikzpicture}
\end{equation}
\begin{equation} \label{A3}
\centering
\includegraphics[scale=1]{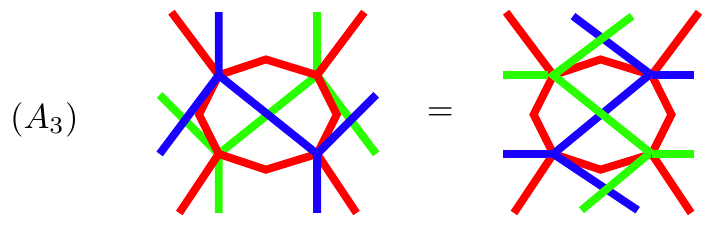}
\end{equation} 
\begin{equation} \label{B3}
\centering
\includegraphics[scale=1]{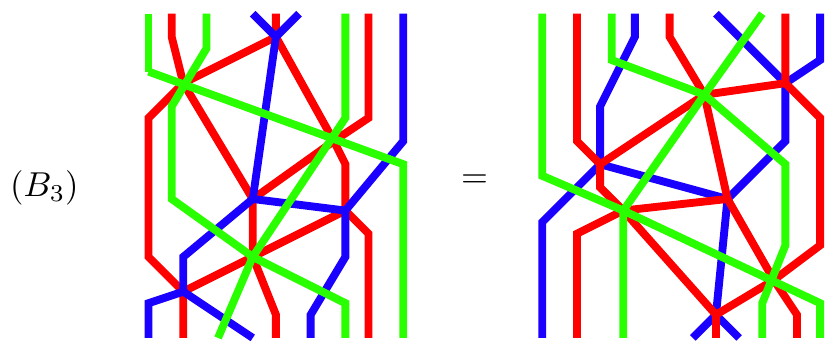}
\end{equation}
\end{itemize}
\end{enumerate} 
 \end{enumerate} 
 
This concludes the definition of $\cD.$ 
 
 Note that the one colour relations from \cref{Frobenius} to \cref{wall} encode the data of $B_s$ being a Frobenius object.
 For practical reason, we will provide those one colour relations required for a Frobenius object.
 These relations are expressed in term of cups and caps defined as follows:
 \begin{center}
 \begin{tikzcd}
 \draw[purple, line width=0.8mm] (-.6,-0.5) arc(180 : 0: 0.3);
 \draw[purple, line width=0.8mm] (-.6,-0.5) -- (-.6, -.7);
  \draw[purple, line width=0.8mm] (0,-0.5) -- (0, -.7);

\node at (.5, -0.5) {:=};
\draw[ purple, line width=0.8mm] (1.5,0) -- (1.5,-.45);
\draw[ purple, line width=0.8mm] (1.5,-.4) .. controls (1.3,-.5) and (1.25, -.55)..  (1.2,-0.8);
\draw[ purple, line width=0.8mm] (1.5,-.4) .. controls (1.7,-.5) and (1.75, -.55)..  (1.8,-0.8);
\filldraw[purple] (1.5,0) circle (3pt) ;

 \node at (-3,-.5) {\text{The definition of cap}};
  \end{tikzcd} 
  \end{center}
  \begin{center}
   \begin{tikzcd}
 \draw[purple, line width=0.8mm] (-.6,0.5) arc(-180 : -0: 0.3);
 \draw[purple, line width=0.8mm] (-.6,0.5) -- (-.6, .8);
  \draw[purple, line width=0.8mm] (0,0.5) -- (0, .8);

\node at (.5, 0.5) {:=};
\draw[ purple, line width=0.8mm] (1.5,0) -- (1.5,.45);
\draw[ purple, line width=0.8mm] (1.5,.4) .. controls (1.3,.5) and (1.25, .55)..  (1.2,0.8);
\draw[ purple, line width=0.8mm] (1.5,.4) .. controls (1.7,.5) and (1.75, .55)..  (1.8,0.8);
\filldraw[purple] (1.5,0) circle (3pt) ;

 \node at (-3,.5) {\text{The definition of cup}};
 
  \end{tikzcd}
  \end{center}
 \noindent The trivalent vertices give multiplication or comultiplication while the dots provide unit or counit.
   To obtain \cref{Frobenius}, we can rotate \cref{assocmult} (resp. \cref{coassoccomult}) using a cup (resp. a cap) and apply \cref{comult rot mult} (resp. \cref{mult rot comult}).
   On the other hand, \cref{mult rot comult} to \cref{unit rot counit} records the cyclicity of all morphisms. 
   Therefore, \cref{wall} can be replaced by \cref{enddot counit comult} and \cref{startdot unit multi}.
Here is the list of them:
 \begin{equation} \label{assocmult} \begin{tikzcd}
\draw[ purple, line width=0.8mm] (1.5,1.2) -- (1.5,.65);
\draw[ purple, line width=0.8mm] (1.5,.7) .. controls (1.2,.6) and (1, .3)..  (.9,-.4);
\draw[ purple, line width=0.8mm] (1.5,.7) .. controls (1.7,.55) and (1.75, .45)..  (1.8,.2);

\draw[ purple, line width=0.8mm] (1.8,.2) .. controls (1.45,0) and (1.5, -.55)..  (1.45,-.4);
\draw[ purple, line width=0.8mm] (1.8,.2) .. controls (2.15,0) and (2.1, -.55)..  (2.15,-.4);

\node at (2.55, 0.55) {=};

\draw[ purple, line width=0.8mm] (3.6,1.2) -- (3.6,.65);
\draw[ purple, line width=0.8mm] (3.6,.7) .. controls (3.9,.6) and (4.1, .3)..  (4.2,-.4);
\draw[ purple, line width=0.8mm] (3.6,.7) .. controls (3.4,.55) and (3.35, .45)..  (3.3,.2);

\draw[ purple, line width=0.8mm] (3.3,.2) .. controls (3.65,0) and (3.6, -.55)..  (3.65,-.4);
\draw[ purple, line width=0.8mm] (3.3,.2) .. controls (2.95,0) and (2.9, -.55)..  (2.95,-.4);

 \node at (8,.55) {\text{ the associativity of the multiplication}.};
 
  \end{tikzcd}
\end{equation}

 \begin{equation}  \label{coassoccomult}
 \begin{tikzcd}
\draw[ purple, line width=0.8mm] (1.5,-0.1) -- (1.5,.45);
\draw[ purple, line width=0.8mm] (1.5,.4) .. controls (1.2,.5) and (1, .8)..  (.9,1.5);
\draw[ purple, line width=0.8mm] (1.5,.4) .. controls (1.7,.55) and (1.75, .65)..  (1.8,.9);

\draw[ purple, line width=0.8mm] (1.8,.9) .. controls (1.45,1.1) and (1.5, 1.45)..  (1.45,1.5);
\draw[ purple, line width=0.8mm] (1.8,.9) .. controls (2.15,1.1) and (2.1, 1.45)..  (2.15,1.5);

\node at (2.55, 0.55) {=};

\draw[ purple, line width=0.8mm] (3.6,-0.1) -- (3.6,.45);
\draw[ purple, line width=0.8mm] (3.6,.4) .. controls (3.9,.5) and (4.1, .8)..  (4.2,1.5);
\draw[ purple, line width=0.8mm] (3.6,.4) .. controls (3.4,.55) and (3.35, .65)..  (3.3,.9);

\draw[ purple, line width=0.8mm] (3.3,.9) .. controls (3.65,1.1) and (3.6, 1.45)..  (3.65,1.5);
\draw[ purple, line width=0.8mm] (3.3,.9) .. controls (2.95,1.1) and (2.9, 1.45)..  (2.95,1.5);

 \node at (8,.55) {\text{ the coassociativity of the comultiplication.}};
 
  \end{tikzcd}
\end{equation} 

 \begin{equation} \label{enddot counit comult}\begin{tikzcd}
\draw[ purple, line width=0.8mm] (0,0) -- (0,.65);
\draw[ purple, line width=0.8mm] (0,.6) .. controls (-.2,.7) and (-.25, .75)..  (-.3,1.1);
\draw[ purple, line width=0.8mm] (0,.6) .. controls (.2,.7) and (.25, .75)..  (.25,.75);

\node at (.9, 0.4) {=};
\draw[ purple, line width=0.8mm] (2.8,0) -- (2.8,.65);
\draw[ purple, line width=0.8mm] (2.8,.6) .. controls (2.6,.7) and (2.55, .75)..  (2.55,0.75);
\draw[ purple, line width=0.8mm] (2.8,.6) .. controls (3.0,.7) and (3.05, .75)..  (3.1,1.1);

\draw[ purple, line width=0.8mm] (1.4, 0) -- (1.4, 1);

\filldraw[purple] (.25,.75) circle (3pt) ;

\filldraw[purple] (2.55,0.75) circle (3pt) ;

\node at (2, 0.4) {=};

 \node at (7.5,.5) {\text{the enddot as counit for the comultiplication.}};
 
  \end{tikzcd}
\end{equation} 

\begin{equation} \label{startdot unit multi}    
\begin{tikzcd}
\draw[ purple, line width=0.8mm] (0,1) -- (0,.35);
\draw[ purple, line width=0.8mm] (0,.4) .. controls (-.2,.3) and (-.25, .25)..  (-.3,-0.1);
\draw[ purple, line width=0.8mm] (0,.4) .. controls (.2,.3) and (.25, .25)..  (.25,.25);

\node at (.9, 0.4) {=};
\draw[ purple, line width=0.8mm] (2.8,1) -- (2.8,.35);
\draw[ purple, line width=0.8mm] (2.8,.4) .. controls (2.6,.3) and (2.55, .25)..  (2.55,0.25);
\draw[ purple, line width=0.8mm] (2.8,.4) .. controls (3.0,.3) and (3.05, .25)..  (3.1,-0.1);

\draw[ purple, line width=0.8mm] (1.4, 0) -- (1.4, 1);

\filldraw[purple] (.25,.25) circle (3pt) ;

\filldraw[purple] (2.55,0.25) circle (3pt) ;

\node at (2, 0.4) {=};

 \node at (7.5,.5) {\text{the startdot as unit for the multiplication.}};
 
  \end{tikzcd}
\end{equation} 
 \begin{equation} \label{selfadjoint}    \begin{tikzcd}
\draw[purple, line width=0.8mm] (-.3,0.5) arc(180 : 0: 0.25);
\draw[ purple, line width=0.8mm] (.2,.5) .. controls (0.1,-.1) and (-.2, 0)..  (-.3,-.5);
\draw[purple, line width=0.8mm] (-.3,0.5) arc(0 : -180: 0.25);
\draw[ purple, line width=0.8mm] (-.8,.5) .. controls (-.7,1.1) and (-.4, 1)..  (-.3,1.5);

\node at (.75, 0.4) {=};
\draw[ purple, line width=0.8mm] (1.5,1.5) -- (1.5,-.5);

\node at (2.1, 0.4) {=};

\draw[purple, line width=0.8mm] (3.2,0.5) arc(-180 : 0: 0.25);
\draw[ purple, line width=0.8mm] (3.7,.5) .. controls (3.6,1.1) and (3.3, 1)..  (3.2,1.5);
\draw[purple, line width=0.8mm] (3.2,0.5) arc(0 : 180: 0.25);
\draw[ purple, line width=0.8mm] (2.7,.5) .. controls (2.8,-.1) and (3.1, 0)..  (3.2,-.5);

 \node at (7,.5) {\text{the self-biadjointness of $B_s$.}};
 
  \end{tikzcd}
\end{equation} 
 \begin{equation} \label{mult rot comult} 
   \begin{tikzcd}
\draw[ purple, line width=0.8mm] (-.3,0) -- (-.3,0.45);
\draw[ purple, line width=0.8mm] (-.3,.4) .. controls (-.5,.5) and (-.55, .55)..  (-.6,0.8);
\draw[ purple, line width=0.8mm] (-0.3,.4) .. controls (-.1,.7) and (0.17, .7)..  (0.2,0);

\node at (.6, 0.4) {=};
\draw[ purple, line width=0.8mm] (1.5,0.8) -- (1.5,.35);
\draw[ purple, line width=0.8mm] (1.5,.4) .. controls (1.3,.3) and (1.25, .25)..  (1.2,0);
\draw[ purple, line width=0.8mm] (1.5,.4) .. controls (1.7,.3) and (1.75, .25)..  (1.8,0);

\node at (2.3, 0.4) {=};

\draw[ purple, line width=0.8mm] (3.2,0) -- (3.2,0.45);
\draw[ purple, line width=0.8mm] (3.2,.4) .. controls (3.4,.5) and (3.45, .55)..  (3.5,0.8);
\draw[ purple, line width=0.8mm] (3.2,.4) .. controls (3,.7) and (2.8, .7)..  (2.7,0);

 \node at (8,.5) {\text{the mult. is a  ``half rotation'' of the comult.}};
 
  \end{tikzcd}
\end{equation} 
\begin{equation}  \label{comult rot mult}
   \begin{tikzcd}
\draw[ purple, line width=0.8mm] (-.3,0.8) -- (-.3,0.35);
\draw[ purple, line width=0.8mm] (-.3,.4) .. controls (-.5,.3) and (-.55, .25)..  (-.6,0);
\draw[ purple, line width=0.8mm] (-0.3,.4) .. controls (-.1,.1) and (0.17, .1)..  (0.2,0.8);

\node at (.6, 0.4) {=};
\draw[ purple, line width=0.8mm] (1.5,0) -- (1.5,.45);
\draw[ purple, line width=0.8mm] (1.5,.4) .. controls (1.3,.5) and (1.25, .55)..  (1.2,0.8);
\draw[ purple, line width=0.8mm] (1.5,.4) .. controls (1.7,.5) and (1.75, .55)..  (1.8,0.8);

\node at (2.3, 0.4) {=};

\draw[ purple, line width=0.8mm] (3.2,0.8) -- (3.2,0.35);
\draw[ purple, line width=0.8mm] (3.2,.4) .. controls (3.4,.3) and (3.45, .25)..  (3.5,0);
\draw[ purple, line width=0.8mm] (3.2,.4) .. controls (3,.1) and (2.8, .1)..  (2.7,0.8);

 \node at (8,.5) {\text{the comult. is a half ``rotation'' of the mult.}};
 
  \end{tikzcd}
\end{equation} 
 \begin{equation} \label{counit rot unit}
\begin{tikzcd}
 \draw[purple, line width=0.8mm] (-.6,0.5) arc(180 : 0: 0.3);
  \draw[purple, line width=0.8mm] (0,0.5) -- (0, 0);
\filldraw[purple] (-.6,0.45) circle (3pt) ;

 \draw[purple, line width=0.8mm] (2,0.5) arc(180 : 0: 0.3);
 \draw[purple, line width=0.8mm] (2,0.5) -- (2, 0);
\filldraw[purple] (2.6,0.45) circle (3pt) ;

\node at (.5, 0.45) {=};
\draw[ purple, line width=0.8mm] (1,0) -- (1,.9);
\filldraw[purple] (1,0.9) circle (3pt) ;

\node at (1.5, 0.5) {=};

 \node at (7,.5) {\text{the counit is a ``rotation'' of the unit}};
 \end{tikzcd}
\end{equation}  
\begin{equation} \label{unit rot counit}
\begin{tikzcd}
 \draw[purple, line width=0.8mm] (-.6,0.5) arc(-180 : -0: 0.3);
  \draw[purple, line width=0.8mm] (0,0.5) -- (0, 1);
  \draw[purple, line width=0.8mm] (-.6,0.5) -- (-.6, .6);
\filldraw[purple] (-.6,0.55) circle (3pt) ;
 \draw[purple, line width=0.8mm] (2,0.5) arc(-180 : -0: 0.3);
 \draw[purple, line width=0.8mm] (2,0.5) -- (2, 1);
\filldraw[purple] (2.6,0.55) circle (3pt) ;
\node at (.5, 0.5) {=};
\draw[ purple, line width=0.8mm] (1,0.2) -- (1,1);
\filldraw[purple] (1,0.2) circle (3pt) ;
\node at (1.5, 0.5) {=};
 \node at (7,.5) {\text{the unit is a ``rotation'' of the counit}};
 \end{tikzcd}
\end{equation} 

\noindent This marks the end of the supplementary one colour relations.

Consider a monoidal functor $\cF: \cD \ra \B \bS$-bimod defined by sending the sequences $\uw$ in $\cD$ to the Bott-Samelson bimodules $B_{\uw}$ in $\B \bS$-bimod on generators. 
On morphism, it is given by the table belows:

\begin{table} [H]
\centering
\begin{tabular}{c c c c l}
\begin{tikzcd}
\draw[dashed,color=black!60] (0,0) circle (0.5);
\draw[ purple, line width=0.8mm] (0,0) -- (0,-.5);
\filldraw[purple] (0,0) circle (3pt) ;
\end{tikzcd} 
& deg 1
& $\longmapsto$ 
& $B_{\textcolor{purple}{s}} \ra R$ 
& $f \otimes g \mapsto fg$
\\
\\
\begin{tikzcd}
\draw[dashed,color=black!60] (0,0) circle (0.5);
\draw[ purple, line width=0.8mm] (0,0) -- (0,.5);
\filldraw[purple] (0,0) circle (3pt) ;
\end{tikzcd}
& deg 1
& $\longmapsto$
& $ R \ra B_{\textcolor{purple}{s}} $
& $1 \mapsto \frac{1}{2} (\alpha_s \otimes 1 + 1 \otimes \alpha_s)$
\\
\\
\begin{tikzcd}
\draw[dashed,color=black!60] (0,0) circle (0.5);
\draw[ purple, line width=0.8mm] (0,0) -- (0,-.5);
\draw[ purple, line width=0.8mm] (0,0) -- (-.35,.35);
\draw[ purple, line width=0.8mm] (0,0) -- (.35,.35);
\end{tikzcd}
& deg -1
&$\longmapsto$
&  $ B_{\textcolor{purple}{s}} \ra B_{\textcolor{purple}{s}}B_{\textcolor{purple}{s}} $
& $1 \otimes 1 \mapsto 1 \otimes 1 \otimes 1$
\\
\\
\begin{tikzcd}
\draw[dashed,color=black!60] (0,0) circle (0.5);
\draw[ purple, line width=0.8mm] (0,0) -- (0,.5);
\draw[ purple, line width=0.8mm] (0,0) -- (-.35,-.35);
\draw[ purple, line width=0.8mm] (0,0) -- (.35,-.35);
\end{tikzcd}
& deg -1
&$\longmapsto$
&$ B_{\textcolor{purple}{s}}B_{\textcolor{purple}{s}} \ra B_{\textcolor{purple}{s}} $
& $1 \otimes g \otimes 1 \mapsto \partial_s g \otimes 1$
\\
\\
\begin{tikzcd}
\draw[dashed,color=black!60] (0,0) circle (0.5);
\draw (0,-.1) node {{\Huge f}} ;
\end{tikzcd}
& deg $f$
&$\longmapsto$
&$R \ra R$
& $1 \mapsto f$ 
\\
\\
\begin{tikzcd}
\draw[dashed,color=black!60] (0,0) circle (0.5);

\draw[ blue] (-.2,.45) -- (.2,-.45);
\draw[ blue] (.2,.45) -- (-.2,-.45);
\draw[ blue] (.45,.2) -- (-.45,-.2);
\draw[ blue] (-.45,.2) -- (.45,-.2);

\draw[ red ] (0,0) -- (0,0.5);
\draw[ red] (0,0) -- (0.5,0);
\draw[ red] (0,0) -- (0,-.5);
\draw[ red] (0,0) -- (-0.5,0);
\draw[ red] (.35,.35) -- (-.35,-.35);
\draw[ red] (-.35,.35) -- (.35,-.35);

\end{tikzcd}
& deg 0
&$\longmapsto$
& $ \underbrace{B_{\textcolor{red}{s}}B_{\textcolor{blue}{t}} \cdots}_{m_{\textcolor{red}{s}\textcolor{blue}{t}}} \xra{i \circ p (B_{\textcolor{red}{s} \textcolor{blue}{t}})} \underbrace{B_{\textcolor{blue}{t}}B_{\textcolor{red}{s}}  \cdots}_{m_{\textcolor{red}{s}\textcolor{blue}{t}}} $
& 

\end{tabular}
\caption{The correspondence between generating morphisms in $\cD$ and $\B \bS$-bimod. } \label{generating morphism}
\end{table}

Note that the bimodule homomorphism for $2 m_{st}$-valent vertex is not presented because its formula is difficult and unenlightening to write down in general.
  Suppose $B_{s,t}$ is the indecomposable Soergel bimodule indexed by the longest element of the finite rank two parabolic subgroup generated by $s$ and $t.$ 
  Then, $B_{s,t}$ appears as a direct summand of both $B_{\textcolor{red}{s}}B_{\textcolor{blue}{t}} \cdots$ and $B_{\textcolor{blue}{t}}B_{\textcolor{red}{s}} \cdots$ with multiplicity one.
   In principal, the $2 m_{st}$-valent vertex is just the projection and inclusion of this indecomposable summand $B_{s,t}.$

By construction, this functor is essentialy surjective, so it suffices to show that it is fully-faithful. 
	By a Light Leaves Basis theorem from Libedinsky \cite{Lightleaves}, the collection of the morphisms in the  right hand side of \cref{generating morphism} forms a basis of Bott-Samelson bimodule homomorphisms.
	As a result, $\cF$ is full.
	It can be argued that the graded dimensions of the homomorphism spaces in $\cD$ and $\B \bS$-bimod coincide, and therefore $\cF$ induces an isomorphism on Hom spaces as a consequence of being a surjection betweeen graded vector spaces of the same finite dimension in each graded component.
	Finally, $\cF$ is fully-faithful, as desired. 
	Since idempotent completion preserves equivalences, we get the following theorem:

\begin{theorem}
Consider the Karoubi envelope $\cK ar(\overline{\cD})$ of the graded additive closure $\overline{\cD}$ of $\cD.$
  Then, the two idempotent completions  $\cK ar(\overline{\cD})$ and $\bS$-bimod are equivalent as monoidal categories.
\end{theorem}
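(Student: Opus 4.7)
The plan is to proceed exactly as outlined in the paragraphs preceding the theorem, fleshing out each step carefully. First, I would verify that the assignments in \cref{generating morphism} descend to a well-defined monoidal functor $\cF \colon \cD \to \B\bS\text{-bimod}$; this reduces to checking that each of the defining relations \cref{barbell}--\cref{B3} is satisfied by the corresponding composite of bimodule maps. The polynomial, Frobenius, and one-colour relations are short explicit computations from the formulas $1 \mapsto \tfrac12(\alpha_s\otimes 1 + 1\otimes\alpha_s)$ for the startdot and $1\otimes g\otimes 1 \mapsto \partial_s g \otimes 1$ for the trivalent vertex. The two-colour and Zamolodchikov relations are more involved but are standard verifications, and Demazure surjectivity is exactly what is needed for the two-colour JW projectors (which define the $2m_{st}$-valent vertex via projection to, and inclusion from, the indecomposable summand $B_{s,t}$ of multiplicity one) to be well-defined over $\Bbbk$.

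Once $\cF$ is constructed, essential surjectivity onto $\B\bS\text{-bimod}$ is immediate by definition, since $\cF(\uw) = B_{\uw}$ and Bott-Samelson bimodules are generated monoidally by the $B_s$. For fullness, I would invoke Libedinsky's Light Leaves Basis theorem, which produces, for each pair of expressions $\uw,\uv$ and each element $w$ of the Coxeter group supported in the subexpressions, an explicit ``double leaf'' morphism $B_{\uw} \to B_{\uv}$ built from the generating morphisms of $\cD$. By construction these double leaves lie in the image of $\cF$, and Libedinsky proves they span $\Hom^*_{\B\bS\text{-bimod}}(B_{\uw}, B_{\uv})$ as a graded $R$-module; hence $\cF$ is full on Hom spaces.

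The faithfulness step is the main obstacle, and it is where the real content sits. The strategy is to show that the corresponding double leaves, viewed now as morphisms in $\cD$, span $\Hom^*_{\cD}(\uw,\uv)$ over $R$, so that $\cF$ is a surjection between graded $R$-modules which, on the Soergel side, are known (via the Soergel hom formula and the proof of Soergel's conjecture in our setting) to have graded dimension equal to the predicted polynomial $\sum_w h^{\uw}_w \overline{h^{\uv}_w}$. One then argues dimension by dimension, in each graded component, that a surjection between free modules of the same finite graded rank is an isomorphism. The technical heart here is the spanning statement inside $\cD$ itself: given an arbitrary Soergel graph, one must use the full force of the one-colour Frobenius relations, the polynomial forcing relation \cref{polyforce}, the two-colour relations, and the Zamolodchikov relations to reduce it to a linear combination of double leaves. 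This rewriting algorithm is exactly the content of \cite{EliGeoSoe}, and simply quoting it gives the faithfulness.

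Having established that $\cF$ is a fully faithful, essentially surjective monoidal functor, it descends to an equivalence $\overline{\cD} \xrightarrow{\;\sim\;} \B\bS\text{-bimod}$ of graded additive monoidal categories. Finally, since the Karoubi envelope is a 2-functor which sends equivalences of additive monoidal categories to equivalences of monoidal categories, and since by definition $\bS\text{-bimod} = \cK ar(\B\bS\text{-bimod})$, applying $\cK ar(-)$ yields the asserted equivalence $\cK ar(\overline{\cD}) \simeq \bS\text{-bimod}$. The only subtlety in this last step is that one must check that the additive closure appearing in $\overline{\cD}$ is compatible with the one implicit in $\B\bS\text{-bimod}$ before idempotent completion, which is automatic from the definition of $\overline{\cD}$.
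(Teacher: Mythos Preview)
Your proposal is correct and follows exactly the strategy the paper sketches in the paragraphs immediately preceding the theorem; the paper's own proof is simply a citation to \cite[Theorem 6.28]{BenGeo}, and your write-up is a faithful expansion of that argument (well-definedness of $\cF$, essential surjectivity by construction, fullness via Libedinsky's light leaves, faithfulness by matching graded dimensions using the rewriting algorithm of \cite{EliGeoSoe}, then passing to Karoubi envelopes).
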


\begin{proof} \cite[Theorem 6.28]{BenGeo}
\end{proof}

\section{The 2-Braid Group} \label{2BraidGrp}

Let $(M,e)$ be a monoid with unit $e$ and $\cC$ be a category. A \textit{weak action} of $M$ on $\cC$ consists of a family of functors $\{F_f: \cC \ra \cC\ \mid f \in M \}$ and a natural isomorphism $c_e: F_e \xra{\cong} id $ from $F_e$ to the identity functor such that $F_{gh} \cong F_g \circ F_h$ for all $g,h \in M.$

An \textit{action} of $M$ on $\cC$ is a weak action of $M$ on $\cC$ together with natural isomorphisms of functors $c_{f,g}: F_f F_g \xra{\cong} F_{fg}$ for all $f,g \in M$ such that $c_{f,e}$ and $c_{e,f}$ are induced by $c_e$ and such that the following diagram commutes:
$$
\begin{tikzcd}
F_{ghk}  \arrow[r, "\cong"] \arrow[d, "\cong"] & F_{gh} \circ F_k  \arrow[d, "\cong"] \\
F_g \circ F_{hk}  \arrow[r, "\cong"] & F_g \circ F_h \circ F_k
\end{tikzcd}
$$

An action of $M$ on $\cC$ is called \textit{faithful}, if the functors $F_g$ and $F_h$ are not isomorphic for $g \neq h \in G.$
An action of a group $G$ on $\cC$ is an action of the underlying unital monoid on $\cC.$


We define the elementary Rouquier complexes corresponding to a simple reflection $s \in S$ as in \cite{Rouq}:
\begin{align*}
F_s &:= 0 \ra B_s \xra{\begin{tikzcd}
\draw[ purple, line width=0.7mm] (0,0) -- (0,-.35);
\filldraw[purple] (0,0) circle (3pt) ;
\end{tikzcd} } R(1) \ra 0 \\
E_s = F_{s^{-1}} &:= 0 \ra R(-1) \xra{\begin{tikzcd}
\draw[ purple, line width=0.7mm] (0,0) -- (0,-.35);
\filldraw[purple] (0,-.35) circle (3pt) ;
\end{tikzcd} } B_s \ra 0
\end{align*}
with both $B_s$ in cohomological degree $0.$

Rouquier proves the following result:

\begin{proposition}
Suppose $s \neq t \in S$ with $m_{st} \leq \infty.$
Then, we obtain the following morphisms:
\begin{align*}
F_s \otimes E_s \cong &R \cong E_s \otimes F_s; \\
\underbrace{F_s \otimes F_t \otimes F_s \otimes  \cdots}_{m_{st} \text{terms}} &\cong \underbrace{F_t \otimes F_s \otimes F_t \otimes \cdots}_{m_{st} \text{terms}}
\end{align*}
in the bounded homotopy category $K^b(\bS$-bimod) of Soergel bimodules. 
Moreover, the map $s \mapsto F_s$ extends to a morphism from $\cA(W)$ to the group of isomorphism classes of invertible objects of $K^b(\bS$-bimod). 
 The composition with the group homomorphism into the group of isomophism classes of autoequivalences on $K^b(\bS$-bimod) defines a weak action of $B_{(W,S)}$ on $K^b(\bS$-bimod).
\end{proposition}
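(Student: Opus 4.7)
\section*{Proof Plan}

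The plan is to prove the three assertions in order: first the invertibility $F_s \otimes E_s \cong R \cong E_s \otimes F_s$, then the braid relations, and finally the passage from the braid group to a weak action.

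For the invertibility statement, I would compute $F_s \otimes_R E_s$ as the total complex of the obvious double complex, giving the three-term complex concentrated in cohomological degrees $-1,0,1$ with terms $B_s(-1)$, $B_s \otimes_R B_s \oplus R$, $B_s(1)$ respectively. The four structure maps are (up to signs) the unit $R \to B_s$, the counit $B_s \to R$, the multiplication $B_s \otimes_R B_s \to B_s$, and the comultiplication $B_s \to B_s \otimes_R B_s$. Now use the standard Soergel decomposition $B_s \otimes_R B_s \cong B_s(1) \oplus B_s(-1)$, rewrite the differentials in terms of this decomposition, and perform Gaussian elimination (cancelling the two contractible summands $B_s(1) \xrightarrow{\mathrm{id}} B_s(1)$ and $B_s(-1) \xrightarrow{\mathrm{id}} B_s(-1)$) to collapse the complex to $R$ concentrated in degree $0$. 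The calculation for $E_s \otimes_R F_s$ is symmetric.

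For the braid relations, I would treat the cases $m_{st} = 2$ and $m_{st} \geq 3$ separately. When $m_{st} = 2$, the bimodules $B_s$ and $B_t$ commute up to canonical isomorphism (because $s,t$ act on disjoint sets of variables in $R$), so $F_s \otimes F_t \cong F_t \otimes F_s$ term-by-term. For $m_{st} \geq 3$ the hard part is to exhibit an explicit homotopy equivalence between the two tensor-product complexes; the approach I would take is to identify both sides with a common \emph{minimal complex} whose terms are, up to shifts, the Bott-Samelson bimodules $B_{\underline w}$ for subwords $\underline w$ of the two reduced expressions for the longest element $w_{st}$ of the rank-two parabolic subgroup $\langle s,t\rangle$. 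Concretely, one computes both $F_s F_t F_s \cdots$ and $F_t F_s F_t \cdots$ as iterated mapping cones, uses the Soergel decomposition theorems to split off contractible pieces at each stage, and observes that in the minimal model the top cohomological term of each side is the indecomposable Soergel bimodule $B_{w_{st}}$ indexed by $w_{st}$ (with multiplicity one), and the lower terms are determined by the subexpressions of a reduced word via a light-leaves-type basis. The key structural isomorphism $\underbrace{B_s B_t B_s \cdots}_{m_{st}} \cong B_{w_{st}} \oplus (\text{lower terms})$ coming from the $2m_{st}$-valent vertex of Section~\ref{DiaCatSB} produces, after tracking through the differentials, the required homotopy equivalence between the two Rouquier complexes. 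The main technical obstacle is the bookkeeping in the Gaussian elimination: one must check that the signs and degree shifts match on both sides simultaneously, which requires the Frobenius-extension structure on $R/R^s$ (equivalently, the balanced realization hypothesis) to ensure the unit/counit and multiplication/comultiplication maps satisfy the symmetries used during cancellation.

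Having established the invertibility and braid relations, the extension to a weak $\cA(W)$-action is essentially formal. For an arbitrary element $g \in \cA(W)$ presented as a word $g = s_{i_1}^{\epsilon_1} \cdots s_{i_k}^{\epsilon_k}$ with $\epsilon_j \in \{\pm 1\}$, define
\[
F_g := F_{s_{i_1}}^{\epsilon_1} \otimes_R \cdots \otimes_R F_{s_{i_k}}^{\epsilon_k},
\]
where $F_s^{+1} := F_s$ and $F_s^{-1} := E_s$. The invertibility $F_s \otimes E_s \cong R$ shows the class of $F_g$ in the group of isomorphism classes of invertible objects of $K^b(\bS\text{-bimod})$ depends only on the reduction of the word modulo the elementary cancellations $s s^{-1} = e = s^{-1} s$, while the braid relations verified above show it depends only on the class modulo the braid relations defining $\cA(W)$. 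Together these give a well-defined group homomorphism $\cA(W) \to \mathrm{Pic}(K^b(\bS\text{-bimod}))$; composing with the canonical map into the group of isomorphism classes of autoequivalences (sending an invertible bimodule complex to the functor of tensoring with it on the left) yields the weak action on $K^b(\bS\text{-bimod})$, with the natural isomorphism $c_e : F_e \xrightarrow{\cong} \mathrm{id}$ induced by the canonical identification $R \otimes_R - \cong \mathrm{id}$.
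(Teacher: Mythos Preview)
The paper does not give its own proof of this proposition: it is stated as ``Rouquier proves the following result'' and attributed to \cite{Rouq}, with the upgrade to a genuine action deferred to \cite[Thm~9.5]{Rouq}. So there is no paper proof to compare against; your proposal is an attempt to reconstruct Rouquier's argument.

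Your treatment of the invertibility $F_s \otimes E_s \cong R$ via the decomposition $B_sB_s \cong B_s(1)\oplus B_s(-1)$ and Gaussian elimination is the standard argument and is correct. Likewise, the passage from the generator relations to a well-defined map $\cA(W)\to\mathrm{Pic}(K^b(\bS\text{-bimod}))$ and thence to a weak action is, as you say, formal once the relations are in hand.

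The braid relation for $m_{st}\geq 3$ is where the content lies, and here your proposal is more of an outline than a proof. The strategy you describe---reduce both sides to a minimal complex whose top term is $B_{w_{st}}$, then match the lower terms via the $2m_{st}$-valent morphism---is a reasonable line of attack, and is close in spirit to how the result is often re-derived in the diagrammatic literature. But the sentence ``after tracking through the differentials, the required homotopy equivalence'' hides exactly the nontrivial step: one must actually produce a chain map inducing this equivalence, not merely observe that the graded pieces agree. Rouquier's original argument is somewhat different in emphasis: rather than working through minimal models and light leaves, he constructs the chain map more directly using the structure of $R^{s,t}$-bimodules and properties of the rank-two Soergel category. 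Your sketch would work if completed, but as written it does not yet establish that the identification of terms is compatible with the differentials, which is precisely the point.
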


Moreover, the weak action of $B_{(W,S)}$ can be upgraded to an action on $K^b({\bS}$-bimod) as explained in  \cite[Thm 9.5]{Rouq}.

   This implies that we have a monoidal functor from the strict monoidal category with set of objects $B_{(W,S)},$ with only arrows the identity mapds and with tensor product given by multiplication to the strict monoidal category of endofunctors of $K^b({\bS}$-bimod).
We then define the \textit{2-braid group}, denoted by $2$-$\cB r$ as the full monoidal subcategory of  $K^b({\bS}$-bimod) generated by $E_s$ and $F_s$ for $s \in S.$
     Observe that the set of isomorphism classes of objects in $2$-$\cB r,$ denoted by Pic$(2$-$\cB r)$, forms a group called the \textit{Picard group} of the monoidal category $2$-$\cB r$   under tensor product composition.
     Finally, Rouquier conjectures the following:

  \begin{conjecture}
  Pic$(2$-$\cB r)$ is isomorphic to $B_{(W,S)}.$
  \end{conjecture}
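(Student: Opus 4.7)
The plan is to prove the type $B$ case of Rouquier's conjecture by following the strategy outlined by Jensen in \cite{jensen_master} for type $A$, adapted to exploit the type $B$ zigzag algebra $\Ba_n$ constructed in \cref{Chap1}. The key observation is that we already have, by \cref{faithful action}, a faithful weak action of $\cA(B_n)$ on $\Kom^b(\Ba_n\text{-}\text{p}_r\text{g}_r\text{mod})$ via the complexes $R_j$ (with the extra $\<1\>$-shift for $j=1$). Since the $2$-braid group for type $B$ sits inside $\Kom^b(\bS\text{-bimod})$ and its Picard group surjects onto $\cA(B_n)$, it suffices to factor the categorical action through the $2$-braid group: concretely, we aim to construct a monoidal functor $\cQ : \cD_{B_n} \to (\Ba_n,\Ba_n)\text{-bimod}$ from the diagrammatic Soergel category (with respect to the balanced non-symmetric type $B$ geometric realisation of \cref{Type B Geo Rep}) that sends the elementary Rouquier complex $F_{s_j}$ to $R_j$ (respectively $R_1\<1\>$) and $E_{s_j}$ to $R'_j$ (respectively $R'_1\<1\>$). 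Given such a $\cQ$, faithfulness of the $2$-braid categorification in type $B$ follows immediately from \cref{faithful action}.

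First I would fix the realisation and identify the base ring: take $R$ to be the polynomial ring attached to the type $B$ geometric representation, and arrange $\Ba_n$ as a graded $R$-bimodule via a suitable $R \to Z(\Ba_n)$ (after quotienting by a certain ideal that kills the action away from $\{X_1,\ldots,X_n\}$; the shifts $\{-\}$ and $\<-\>$ on $\Ba_n$ allow one to match the $\Z$-grading conventions). On objects, I would set $\cQ(s_j) := \cU_j$, the Temperley–Lieb bimodule of \cref{Functorial TL} (with the appropriate grading shift so that $\cQ(s_1) = \cU_1\<1\>$), which is consistent with the fact that the $\cU_j$ already provide a functor realisation of the type $B_n$ Temperley–Lieb algebra. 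On morphisms, I would define $\cQ$ on each of the generating Soergel diagrams (dots, trivalent vertices, $2m_{s,t}$-valent vertices, polynomial boxes) by explicit $(\Ba_n,\Ba_n)$-bimodule maps built out of the multiplication maps $\beta_j$ and the coproduct maps $\gamma_j$ introduced in \cref{B zigzag}, exactly mimicking the type $A$ assignments in Jensen's thesis. The $4$-valent vertex for the bond $\{s_1,s_2\}$ of label $4$ is the delicate one: I would construct it from the projection/inclusion of the summand $\cU_1\cU_2\cU_1 \oplus \cU_2\cU_1\cU_2$ inside $(\cU_1\cU_2)^2 \cong (\cU_2\cU_1)^2$, using the explicit calculations in the proof of \cref{type B relation}.

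The main steps are then: (i) verify the one-colour Frobenius relations \cref{Frobenius}--\cref{wall}, the barbell relation \cref{barbell}, the polynomial forcing relation \cref{polyforce}, and the needle relation \cref{needle}, each of which reduces to a direct computation in $\Ba_n$ and its bimodules and is essentially identical to the type $A$ check once one accounts for the third grading $\<-\>$; (ii) verify the two-colour associativity \cref{assoc3}--\cref{assoc4} and dot-crossing relations \cref{mst2}--\cref{mst4}, which in the $m_{st}=4$ case involve the two-coloured Jones–Wenzl projector $JW_3$ and require that the coefficients $\tfrac{y}{xy-1}, \tfrac{x}{xy-1}, \tfrac{1}{xy-1}$ specialise correctly under $x = a_{s_1,s_2} = -2$, $y = a_{s_2,s_1} = -1$; and (iii) verify the three-colour Zamolodchikov relations, of which the only non-trivial new case is the $B_3$ relation \cref{B3}. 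Finally, I would check directly from the definition of $\cQ$ on the dot maps that $\cQ(F_{s_j}) \cong R_j$ (respectively $R_1\<1\>$), so that the induced monoidal functor $\Kom^b(\cQ): \Kom^b(\cD_{B_n}) \to \Kom^b(\Ba_n\text{-}\text{p}_r\text{g}_r\text{mod})$ intertwines the Rouquier and zigzag braid actions.

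The main obstacle I expect is step (iii), namely the explicit verification of the type $B$ Zamolodchikov relation \cref{B3}: this is a genuinely non-trivial combinatorial identity between two large linear combinations of Soergel diagrams, and its image under $\cQ$ must be shown to vanish inside $\cU_1\cU_2\cU_1\cU_2\cU_1\cU_2 \otimes \cU_3$ (and cyclic variants), where the presence of the degree-two field extension $\C/\R$ in the $\Ba_n$-bimodule structure introduces extra signs and $i$-factors that do not appear in the simply-laced case. A secondary obstacle is the non-symmetry of the type $B$ Cartan matrix, which forces an asymmetric choice of rescaling in defining $\cQ$ on the $4$-valent vertex; getting the scalars right is what allows the polynomial forcing relation to work simultaneously for both colours. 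Once (i)--(iii) are established, the conclusion is formal: the composition $\cA(B_n) \to \mathrm{Pic}(2\text{-}\cB r_{B_n}) \xrightarrow{\mathrm{Pic}(\Kom^b(\cQ))} \mathrm{Pic}(\Kom^b(\Ba_n\text{-}\text{p}_r\text{g}_r\text{mod}))$ agrees with the faithful action of \cref{faithful action}, forcing $\cA(B_n) \to \mathrm{Pic}(2\text{-}\cB r_{B_n})$ to be injective, which combined with Rouquier's surjectivity result yields the desired isomorphism.
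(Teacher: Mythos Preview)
Your overall strategy---constructing a monoidal functor from the diagrammatic Soergel category $\cD_{B_n}$ to the subcategory $\Ba$ of $(\Ba_n,\Ba_n)$-bimodules generated by the $\cU_j$, and then deducing faithfulness of $2$-$\cB r$ from \cref{faithful action}---is exactly the approach of the paper. However, you are making your life substantially harder than necessary in one crucial respect, and this changes the shape of the computation entirely.

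The key simplification you are missing is that \emph{all $2m_{st}$-valent vertices are sent to zero}. The $4$-valent vertex (for $m_{st}=2$) has zero domain and codomain since $\cU_j\cU_k=0$ for $|j-k|>1$; the $6$-valent and $8$-valent vertices are simply declared to map to zero. Because $\Ba$ categorifies (a further quotient of) the type $B$ Temperley--Lieb algebra rather than the Hecke algebra, the indecomposable Soergel bimodule $B_{s_1,s_2}$ for the longest rank-two element has no analogue in $\Ba$, and there is no natural nontrivial projection/inclusion map to construct. Your proposed construction of the $8$-valent vertex from summands of $(\cU_1\cU_2)^2$ is therefore both unnecessary and not well-motivated.

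Once the $2m_{st}$-valent vertices are killed, the two-colour associativity relations \cref{assoc3}, \cref{assoc4} and \emph{all} Zamolodchikov relations (including the $B_3$ relation \cref{B3} you flagged as the main obstacle) are satisfied trivially, since every diagram appearing in them contains a $2m_{st}$-valent vertex. What survives as a genuine constraint is the dot-crossing relations \cref{mst3}, \cref{mst4}: their left-hand sides are now zero, so the images of the Jones--Wenzl morphisms on the right-hand sides must vanish in $\Ba$. This, together with the barbell, polynomial-forcing, and one-colour Frobenius relations, reduces to a system of scalar equations in the rescaling constants $a_j,b_j,c_j,d_j,f^j_k$ attached to the generating morphisms (dots $\mapsto b_j\beta_j, c_j\gamma_j$; trivalent vertices $\mapsto a_j\alpha_j, d_j\delta_j$; simple-root boxes $\mapsto \sum_k f^j_k X_k$). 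The paper solves this system explicitly: $a_j=b_j=(-1)^{j+1}$, $c_j=d_j=1$, with the $f^j_k$ determined accordingly. The non-symmetric Cartan entries $a_{s_1,s_2}=-2$, $a_{s_2,s_1}=-1$ enter precisely when checking that the $JW_3$ morphism in \cref{mst4} vanishes under these scalars. So your instinct about where the Cartan asymmetry matters is correct, but it appears in the \emph{vanishing} of the Jones--Wenzl morphism, not in building a nontrivial $8$-valent map.
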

  
  Yensen proved this conjecture for type A Coxeter group in his master's thesis following the work of \cite{KhoSei} and for all arbitrary finite type Coxeter groups in \cite{jensen_2016} inspired by the work of \cite{BravTho}.

     \section{Categorified Action of Type $B$} \label{CatTypeB}
    
For completeness of this chapter, we recall some essential construction and result from chapter 1.

\subsection{Type $B_n$ zigzag algebra $\Ba_n$} 
Consider the following quiver $Q_n$:
\begin{figure}[H]
\centering
\begin{tikzcd}[column sep = 1.5cm]
1				\arrow[r,bend left,"1|2"]  														 &		 
2 			\arrow[l,bend left,"2|1"] \arrow[r,bend left, "2|3"] 
				\arrow[color=blue,out=70,in=110,loop,swap,"ie_2"] &
3 			\arrow[l,bend left,"3|2"] \arrow[r,bend left, "3|4"] 
				\arrow[color=blue,out=70,in=110,loop,swap,"ie_3"] &
\cdots \arrow[l,bend left,"4|3"] \arrow[r,bend left, "n-1|n"] &
n 		.	\arrow[l,bend left,"n|n-1"]
				\arrow[color=blue,out=70,in=110,loop,swap,"ie_n"]
\end{tikzcd}
\caption{{\small The quiver $Q_n$.}}
\label{B quiver}
\end{figure}
%
Take its path algebra $\R Q_n$ over $\R$ and consider the path gradings on $\R Q_n$ denoted by $(-)$ where the "imaginary" path $ie_j$ has grading $0.$
 For the purpose of this paper, $(1)$ is a grading shift down by $1.$

We are now ready to define the zigzag algebra of type $B_n$:
\begin{definition}
The zigzag path algebra of $B_n$, denoted by $\Ba_n$, is the quotient algebra of the path algebra $\R Q_n$ modulo the usual zigzag relations given by
\begin{align}
(j|j-1)(j-1|j) &= (j|j+1)(j+1|j) \qquad (=: X_j);\\
(j-1|j)(j|j+1) = & 0 = (j+1|j)(j|j-1);
\end{align}

\noindent for $2\leq j \leq n-1$, in addition to the relations
\begin{align}
(ie_j)(ie_j) &= -e_j, \qquad \text{for } j \geq 2 \label{imaginary};\\
(ie_{j-1})(j-1|j) &= (j-1|j)(ie_j), \qquad \text{for } j\geq 3; \label{complex symmetry 1}\\
(ie_{j})(j|j-1) &= (j|j-1)(ie_{j-1}), \qquad \text{for } j\geq 3; \label{complex symmetry 2}\\
(1|2)(ie_2)(2|1) &= 0, \\
(ie_2) X_2 &= X_2 (ie_2).
\end{align}
\end{definition}
Since the relations are all homogeneous with respect to the given gradings, $\Ba_n$ is also a bigraded algebra.
As a $\R$-vector space, $\Ba_n$ has dimension $8  n-6$, with basis $\{ e_1 , \ldots, e_{n}, ie_2 , \ldots, ie_{n},  
\newline (1|2), \ldots, (n-1 | n), (2|1), \ldots (n | n-1), (ie_2)(2|1), (1|2)(ie_2), (ie_2)(2|3), \ldots, (ie_{n-1})(n-1|n), (3|2)(ie_2),
\newline \ldots, (n|n-1)(ie_{n-1}), (1|2|1), \ldots, (2n-1|2n-2|2n-1), (ie_2)(2|1|2), \ldots, (ie_n)(n|n-1|n) \}$.

The indecomposable (left) projective $\Ba_n$-modules are given by $P^B_j := \Ba_n e_j$.
For $j=1$, $P^B_j$ is naturally a $(\Ba_n, \R)$-bimodule; there is a natural left $\Ba_n$-action given by multiplication of the algebra and the right $\R$-action induced by the natural left $\R$-action.
But for $j\geq 2$, we shall endow $P^B_j$ with a right $\C$-action.
Note that \cref{imaginary} is the same relation satisfied by the complex imaginary number $i$.
We define a right $\C$-action on $P^B_j$ by $p * (a+ib) = ap + bp(ie_j)$ for $p \in P^B_j, a+ib\in \C$.
Further note that this right action restricted to $\R$ agrees with the natural right (and left) $\R$-action.
This makes $P^B_j$ into a $(\Ba_n,\C)$-bimodule for $j\geq 2$.
Dually, we shall define ${}_jP^B := e_j\Ba_n$, where we similarly consider it as a ($\R,\Ba_n)$-bimodule for $j=1$ and as a $(\C,\Ba_n)$-bimodule for $j\geq 2$.

It is easy to check that we have the following isomorphisms of $\Z$-graded bimodules:
\begin{proposition}
\[
  {}_jP^B_k := {}_jP^B\otimes_{\Ba_n}P^B_k \cong 
  \begin{cases}
  		\vspace{1mm}
  		\ _{\C}\C_{\C} & \text{as } (\C,\C)\text{-$g_r$bimod, for } j,k \in \{2,\hdots, n\} \text{ and }  k-j=1;\\
  		\vspace{1mm}
  		\ _{\C}\C_{\C}(-1) & \text{as } (\C,\C)\text{-$g_r$bimod, for } j,k \in \{2,\hdots, n\} \text{ and }  j-k=1;\\
  		\vspace{1mm}
  		\ _{\C}\C_{\C} \oplus \ _{\C}\C_{\C}(-2) & \text{as } (\C,\C)\text{-$g_r$bimod, for } j=k=2,3,\hdots,n; \\
  		\vspace{1mm}
        \ _{\R}\C_{\C} & \text{as } (\R,\C)\text{-$g_r$bimod, for } j=1 \text{ and } k=2; \\
        \vspace{1mm}
        \ _{\C}\C_{\R}(-1) & \text{as } (\C,\R)\text{-$g_r$bimod, for } j=2 \text{ and } k=1; \\ 
        \vspace{1mm}
        \ _{\R}\R_{\R} \oplus \ _{\R}\R_{\R}(-2) & \text{as } (\R,\R)\text{-$g_r$bimod, for } j=k=1. \\    
  \end{cases}
\]
\label{bimodule isomorphism}
\end{proposition}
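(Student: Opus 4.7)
The plan is to verify these bimodule isomorphisms by directly computing $e_j \Ba_n e_k$ on each side using the explicit basis of $\Ba_n$ described just before the proposition, and then exhibiting explicit bimodule isomorphisms to the claimed targets. Since ${}_j P^B \otimes_{\Ba_n} P^B_k \cong e_j \Ba_n e_k$ via the multiplication map (a well-known fact for idempotent truncations of a ring), the task reduces to describing the algebra cell $e_j \Ba_n e_k$ and checking that the natural left/right actions match.

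First I would tabulate, for each pair $(j,k)$, the $\R$-basis of $e_j \Ba_n e_k$ by reading off paths from $j$ to $k$ modulo the zigzag relations: for $j=k=1$ the basis is $\{e_1, X_1\}$; for $j=k\geq 2$ it is $\{e_j, ie_j, X_j, iX_j\}$; for $|j-k|=1$ with $j,k\geq 2$ it is $\{(j|k), (j|k)(ie_k)\}$, which equals $\{(j|k), (ie_j)(j|k)\}$ by \cref{complex symmetry 1,complex symmetry 2}; for $(j,k)=(1,2)$ it is $\{(1|2), (1|2)(ie_2)\}$; and for $(j,k)=(2,1)$ it is $\{(2|1), (ie_2)(2|1)\}$. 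With these bases in hand, the bimodule maps are then the obvious ones: in the diagonal cases $j=k$, send $e_j \mapsto 1$ into the degree-0 summand and $X_j \mapsto 1$ into the degree-shifted summand; in the off-diagonal cases, send the path $(j|k)$ to $1 \in \C$ (or $\R$) in the appropriate degree, and extend using the actions.

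The essential checks are then: (i) the right $\C$-action defined on $P^B_j$ (for $j\geq 2$) by $p \cdot (a+bi) = ap + bp(ie_j)$ turns $(j|k)(ie_k) = (ie_j)(j|k)$ into the element $i\cdot (j|k)$ for the left $\C$-structure as well, so that the $(\C,\C)$-bimodule structure is well-defined and one-dimensional; (ii) in the case $(j,k) = (1,2)$, no $\C$-action exists on the left because there is no loop $ie_1$ in the quiver, forcing the $(\R,\C)$-bimodule structure; (iii) the exceptional relation $(1|2)(ie_2)(2|1) = 0$ is precisely what prevents an inherited $\C$-action on ${}_1 P^B_1$, ensuring this cell is genuinely only $\R$-bilinear, so that ${}_1P^B_1 \cong \R \oplus \R(\text{shift})$ rather than anything with a $\C$-structure; (iv) the relation $(ie_2)X_2 = X_2(ie_2)$, combined with the fact that $X_j$ is central in the local endomorphism ring, makes the $(\C,\C)$-bimodule structure on the $X_j$-summand compatible with that on the $e_j$-summand in the diagonal case.

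The anticipated main obstacle, modest as it is, lies in checking the grading shifts consistently with the path-length convention in which $ie_j$ has degree $0$: one must verify that each generator ends up in the correct degree after shifting, and that the $\C$-action (which relates generators differing by multiplication by $ie_j$, hence of the same degree) does not accidentally conflate elements of different degrees. Once the bases and grading conventions are tracked, the isomorphisms become manifest from the structure of paths in the quiver, and the verification in each of the six cases is a direct and elementary computation that I would leave to the reader after spelling out a representative case (say $j=k\geq 2$, from which the other cases follow by the same pattern).
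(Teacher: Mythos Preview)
Your proposal is correct and matches the paper's approach: the paper simply declares that ``it is easy to check that we have the following isomorphisms'' and gives no further argument, so your plan of identifying ${}_jP^B \otimes_{\Ba_n} P^B_k \cong e_j \Ba_n e_k$ and reading off the explicit path bases is precisely the direct verification the paper leaves to the reader. Your enumeration of the bases in each case and the checks (i)--(iv) on how the $\C$-actions via $ie_j$ interact with the paths are exactly what is needed.
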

Note that all the graded bimodules in \cref{bimodule isomorphism} can be restricted to a $(\R, \R)$-bimodule.
By identifying ${}_\R \C_\R \cong \R\oplus \R$ as $\Z/2\Z$-graded $(\R, \R)$-bimodules, the bimodules in \cref{bimodule isomorphism} restricted to just the $\R$ actions are also isomorphic as bigraded ($\Z$ and $\Z/2\Z$) $(\R, \R)$-bimodule.
For example, ${}_1 P_2^B$ as a $(\R, \R)$-bimodule is generated by $(1|2)$ and $(1|2)i$, so it is isomorphic to $\R \oplus \R \cong {}_\R \C_\R$.

\begin{lemma} \label{bimodulemaps}
Denote $k_j := \R$ when $j = 1$ and $k_j := \C$ when $j \geq 2$.
The maps 

$$\beta_j: P^B_j \otimes_{k_j} {}_jP^B  \to \Ba_n \text{ and } \gamma_j: \Ba_n  \to  P^B_j \otimes_{k_j} {}_jP^B  (2)$$ 

\noindent defined by:
\begin{align*} 
\beta_j(x\otimes y) &:= xy, \\
\gamma_j(1) &:= 
\begin{cases}
X_j \otimes e_j + e_j \otimes X_j + (j+1|j) \otimes (j|j+1) \\
 \hspace{8mm} + (-ie_{j+1})(j+1|j) \otimes (j|j+1)(ie_{j+1}), &\text{for } j=1;\\
X_j \otimes e_j + e_j \otimes X_j + (j-1|j) \otimes (j|j-1) + (j+1|j) \otimes (j|j+1), &\text{for } 1<j < n; \\
X_j \otimes e_j + e_j \otimes X_j + (j-1|j) \otimes (j|j-1), &\text{for } j = n,
\end{cases}
\end{align*}
are $(\Ba_n,\Ba_n)$-bimodule maps.
\end{lemma}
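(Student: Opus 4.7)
The plan is to verify the two claims separately, with the first being essentially formal and the second being the substantive check.

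For $\beta_j$, I would first verify that the definition descends to the tensor product over $k_j$. When $k_1 = \R$ this is automatic. When $k_j = \C$ with $j \geq 2$, I need $x(ie_j) \otimes y = x \otimes (ie_j)y$ to map to the same element; since $\beta_j$ is ordinary multiplication inside $\Ba_n$, both sides give $x(ie_j)y$ by associativity. The $(\Ba_n, \Ba_n)$-bimodule property is then immediate: left multiplication by $a \in \Ba_n$ on $P^B_j \otimes_{k_j} {}_jP^B$ acts on the left tensorand, and $\beta_j(ax \otimes y) = (ax)y = a(xy) = a\beta_j(x\otimes y)$, and similarly on the right. Finally, $\beta_j$ preserves the path-length grading since all generators used to write $\gamma_j(1)$ are homogeneous.

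For $\gamma_j$, the main work is to show that the specified image $\gamma_j(1)$ is central with respect to the $(\Ba_n,\Ba_n)$-bimodule structure on $P^B_j \otimes_{k_j} {}_jP^B(2)$; once this holds, the assignment extends uniquely and linearly to a bimodule map. Because $\Ba_n$ is generated (as an $\R$-algebra) by the idempotents $e_k$, the elements $ie_k$ for $k \geq 2$, and the arrows $(k|k\pm 1)$, it suffices to verify $a \cdot \gamma_j(1) = \gamma_j(1) \cdot a$ for each such generator $a$. The idempotent check localises the computation: $e_k \cdot \gamma_j(1)$ retains only those summands whose left factor begins with $e_k$, and $\gamma_j(1) \cdot e_k$ only those whose right factor ends with $e_k$, and by inspection of each listed summand, these subsets match for every $k$ (they are nonzero only for $k \in \{j-1, j, j+1\}$). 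For the arrows $(k|k\pm1)$, one pushes the arrow through each summand using the zigzag relations $(j|j\pm 1)(j\pm 1|j) = X_j$ together with the vanishing relations $(k|k\pm 1)(k\pm 1|k\mp 1) = 0$, verifying equality summand by summand. For the $ie_k$ generators, one uses the intertwining relations \eqref{complex symmetry 1} and \eqref{complex symmetry 2}, together with $(ie_k)^2 = -e_k$ and $(ie_2)X_2 = X_2(ie_2)$, and in the special summand involving $(-ie_{j+1})(j+1|j) \otimes (j|j+1)(ie_{j+1})$ when $j=1$, the relation $(1|2)(ie_2)(2|1) = 0$ is what ensures compatibility on the $j=1$ side.

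The hard part, and where the subtlety of the type $B$ construction really appears, is the $j=1$ case of $\gamma_1$: because $P^B_1 \otimes_\R {}_1P^B$ is only balanced over $\R$ (not over $\C$), one has more tensors to account for, and the fourth summand involving the "imaginary" factors on both sides is designed precisely to make centrality hold in the presence of the relation $(1|2)(ie_2)(2|1) = 0$. I would treat this case last and most carefully, checking the generators $(1|2)$, $(2|1)$, $ie_2$, $X_1$, and $X_2$ explicitly, and verifying that the right-$\R$ (as opposed to right-$\C$) structure on $P^B_1$ is what lets the two "imaginary" summands avoid collapsing against each other. The boundary case $j=n$ is easier and handled in the same spirit, using only three summands because there is no arrow $(n|n+1)$.
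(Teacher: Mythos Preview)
Your approach is correct and is exactly what the paper has in mind; the paper's own proof simply says $\beta_j$ is obvious and that $\gamma_j$ ``follows from a tedious check on each basis elements, which we shall omit and leave it to the reader.'' Your reduction to verifying $a\cdot\gamma_j(1)=\gamma_j(1)\cdot a$ on algebra generators (rather than on all basis elements) is a modest efficiency gain, and your identification of the $j=1$ case as the one requiring real care is accurate.
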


\begin{proof}
It is obvious that $\beta_j$ are for all $j$. 
The fact that $\gamma_j$ is a $(\Ba_n,\Ba_n)$-bimodule map also follows from a tedious check on each basis elements, which we shall omit and leave it to the reader.
 \end{proof}

\begin{definition} \label{defbraid}
Define the following complexes of graded $(\Ba_n,\Ba_n)$-bimodules:
\begin{align*}
R_j &:= (0 \to P^B_j \otimes_{\F_j} {}_jP^B \xra{\beta_j} \Ba_n \to 0), \text{and} \\
R_j' &:= (0 \to \Ba_n  \xra{\gamma_j}  P^B_j \otimes_{\F_j} {}_jP^B(2) \to 0).
\end{align*}
for each $j \in \{1,2, \cdots, n\},$ with both $\Ba_n$ in cohomological degree 0, $\F_1 = \R$ and $\F_j = \C$ for $j \geq 2$.
\end{definition}

\begin{proposition} 
There are isomorphisms in the homotopy category, $\Kom^b ((\Ba_n, \Ba_n )$-$\text{bimod})$, of complexes of projective graded $(\Ba_n,\Ba_n)$-bimodules:
\begin{align}
R_j \otimes R_j^{'} \cong & \Ba_n \cong R_j^{'} \otimes R_j; \\
R_j \otimes R_k & \cong R_k \otimes R_j, \quad \text{for } |k-j| > 1;\\
R_j \otimes R_{j+1} \otimes R_j &\cong R_{j+1} \otimes R_j \otimes R_{j+1}, \quad \text{for } j \geq 2;\\
R_2 \otimes_{\Ba_n} R_1 \otimes_{\Ba_n} R_2 \otimes_{\Ba_n} R_1 &\cong 
R_1 \otimes_{\Ba_n} R_2 \otimes_{\Ba_n} R_1 \otimes_{\Ba_n} R_2.
\end{align}
\end{proposition}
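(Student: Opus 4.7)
The plan is to verify each of the four families of braid relations by working in the homotopy category $\Kom^b((\Ba_n,\Ba_n)\text{-bimod})$ and applying Gaussian elimination to cancel acyclic summands of the form $0\to M\xrightarrow{\id} M\to 0$, combined with the bimodule identifications listed in the earlier Proposition on ${}_jP^B\otimes_{\Ba_n}P^B_k$. The first three relations proceed by essentially the same direct computations as in the simply-laced Khovanov-Seidel case; only the genuinely type $B$ $4$-braid relation requires a new idea.

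For the invertibility $R_j\otimes R_j'\cong \Ba_n\cong R_j'\otimes R_j$, I would form the tensor product double complex and use the fact that $\beta_j$ and $\gamma_j$ from the preceding lemma are built from complementary components of the natural idempotent decomposition of $P_j^B\otimes{}_jP^B$, so that after Gaussian elimination every term except the central $\Ba_n$ summand cancels. For the distant commutation $R_j\otimes R_k\cong R_k\otimes R_j$ with $|k-j|>1$, the key observation is that ${}_jP^B\otimes_{\Ba_n}P_k^B=0$ when $j$ and $k$ are non-adjacent in the quiver $Q_n$; the cross-terms in either iterated tensor therefore vanish and both total complexes reduce to the same symmetric shape $P_j^B\otimes{}_jP^B\oplus P_k^B\otimes{}_kP^B\oplus\Ba_n$ with matching differential. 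For the ordinary $3$-braid relation $R_jR_{j+1}R_j\cong R_{j+1}R_jR_{j+1}$ with $j\geq 2$, no summand of the expansion ever involves $P_1^B$, so the entire calculation is confined to the ``type $A$-like'' subalgebra of $\Ba_n$ and carries over verbatim from Khovanov-Seidel: both iterated tensor products, written as triple complexes via the bimodule Proposition, share a common minimal representative after cancellation.

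The main obstacle is the $4$-braid relation $R_2R_1R_2R_1\cong R_1R_2R_1R_2$, which is the only place where the non-simply-laced edge at vertex $1$ appears and where the mismatch between $P_1^B$ (an $(\Ba_n,\R)$-bimodule) and $P_j^B$ for $j\geq 2$ (an $(\Ba_n,\C)$-bimodule) is genuinely tested. I plan to handle this with the biadjunction formalism: for any biadjoint pair $(X,X^\vee)$ set $\sigma_X:=\cone(X\otimes X^\vee\xrightarrow{\varepsilon}\Ba_n)$, so that $\sigma_X\cong\sigma_Y$ whenever $X\cong \Sigma\otimes Y$ for an invertible $\Sigma$, and $\sigma_X$ is preserved under conjugation by invertibles. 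Since $(P_j^B,{}_jP^B)$ is a biadjoint pair, $R_j=\sigma_{P_j^B}$. Using the already-proved relations (1)--(3), the $4$-braid relation rearranges into the statement $\sigma_{(R_2R_1R_2)(P_1^B)}\cong\sigma_{P_1^B}$, which by invariance of $\sigma$ under invertible twists reduces to proving $(R_2R_1R_2)(P_1^B)\cong P_1^B$ up to an invertible grading shift.

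The hard part will be the explicit three-step evaluation of $(R_2R_1R_2)(P_1^B)$: one applies $R_2$, then $R_1$, then $R_2$ again, at each stage substituting the bimodule isomorphisms and cancelling the resulting $P_j^B(k)\xrightarrow{\id}P_j^B(k)$ pieces via Gaussian elimination. The delicate point, and the one that really uses the twisted $\R$-versus-$\C$ bimodule structure of the type $B$ zigzag algebra, is that in the intermediate step one must retain a connecting map of the form $(1|2)i$ rather than the naive $(1|2)$; this is the analogue of the $-ie_{j+1}$ correction term in the definition of $\gamma_1$ from the bimodule-maps lemma, and it is what prevents the complex from collapsing incorrectly. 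After carefully executing the cancellations with these twisted differentials in place, the iterated complex will collapse to a single shifted copy of $P_1^B$, completing the verification and hence the proposition.
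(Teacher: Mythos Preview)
Your proposal is correct and follows essentially the same route as the paper. Relations (1)--(3) are dispatched by reference to the Khovanov--Seidel computations, and the $4$-braid relation is handled exactly as you describe: via the biadjunction formalism $\sigma_X=\cone(X\otimes X^\vee\to\Ba_n)$, the reduction to $\sigma_{R_2R_1R_2(P_1^B)}\cong\sigma_{P_1^B}$, and then an explicit three-step evaluation of $R_2R_1R_2(P_1^B)$ in which the key surviving differential is indeed $(1|2)i$, collapsing the complex to a single grading-shifted copy of $P_1^B$.
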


\begin{proof} 
Please refer to \cref{standard braid relations} and \cref{type B relation}.
\end{proof}

\begin{theorem} (=\cref{Cat B action})
We have a (weak) $\mathcal{A}(B_n)$-action on $\Kom^b(\Ba_n$-$p_r g_r mod)$, where each standard generator $\sigma^B_j$ for $j \geq 2$ of $\mathcal{A}(B_n)$ acts on a complex $M \in \Kom^b(\Ba_n$-$p_rg_rmod)$ via $R_j$, and $\sigma^B_1$ acts via $R_1 \<1\>$:
$$\sigma^B_j(M):= R_j \otimes_{\Ba_n} M, \text{ and } {\sigma^B_j}^{-1}(M):= R_j' \otimes_{\Ba_n} M,$$
\[
\sigma^B_1(M):= R_1 \<1\> \otimes_{\Ba_n} M, \text{ and } {\sigma^B_1}^{-1}(M):= R_1' \<1\> \otimes_{\Ba_n} M.
\]
\end{theorem}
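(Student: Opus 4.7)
The plan is straightforward since all the hard work has essentially been done in the preceding propositions (the one listing the standard braid relations and the one giving the type $B$ relation for the unshifted complexes $R_j$). What remains is to package these isomorphisms into a weak action of $\mathcal{A}(B_n)$ on $\Kom^b(\Ba_n\text{-}p_rg_rmod)$, verifying that the generators $\sigma_j^B$ with the prescribed grading shifts do satisfy all the defining relations of the type $B_n$ generalised braid group.

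First I would record the obvious fact that $R_j$ and $R_j'$ are, as claimed, mutually inverse in $\Kom^b((\Ba_n,\Ba_n)\text{-bimod})$ (this is part of the standard braid relations proposition), so that the would-be generators $\sigma_j^B$ and $(\sigma_j^B)^{-1}$ are indeed mutually inverse autoequivalences of $\Kom^b(\Ba_n\text{-}p_rg_rmod)$ via tensor product. In particular the extra shift $\<1\>$ placed on $R_1$ and $R_1'$ cancels: $R_1\<1\> \otimes R_1'\<1\> \cong (R_1\otimes R_1')\<2\> \cong \Ba_n$, because $\<-\>$ is $\Z/2\Z$-graded and so $\<2\>$ is naturally isomorphic to the identity shift.

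Next I would check each braid relation of $\mathcal{A}(B_n)$ separately:
\begin{enumerate}
\item For $|j-k|>1$ with $j,k\geq 2$, the commutativity $\sigma_j^B\sigma_k^B = \sigma_k^B\sigma_j^B$ is immediate from $R_j\otimes R_k \cong R_k\otimes R_j$ in the standard braid relations proposition. The cases involving $\sigma_1^B$ (so $|j-1|>1$, $j\geq 3$) follow in the same way: $R_1\<1\>\otimes R_j \cong (R_1\otimes R_j)\<1\> \cong (R_j\otimes R_1)\<1\> \cong R_j\otimes R_1\<1\>$ since shifts commute with tensor product.
\item For the length-$3$ braid relation $\sigma_j^B\sigma_{j+1}^B\sigma_j^B = \sigma_{j+1}^B\sigma_j^B\sigma_{j+1}^B$ with $j\geq 2$, neither side involves $\sigma_1^B$, so the assertion is precisely $R_j\otimes R_{j+1}\otimes R_j \cong R_{j+1}\otimes R_j\otimes R_{j+1}$ from the same proposition.
\item For the length-$4$ type $B$ braid relation $\sigma_1^B\sigma_2^B\sigma_1^B\sigma_2^B = \sigma_2^B\sigma_1^B\sigma_2^B\sigma_1^B$, both sides carry exactly two copies of the shift $\<1\>$, which cumulatively act as $\<2\>\cong \<0\>$. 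Thus both sides reduce (up to canonical isomorphism) to $R_1\otimes R_2\otimes R_1\otimes R_2$ and $R_2\otimes R_1\otimes R_2\otimes R_1$ respectively, and these are isomorphic by the type $B$ relation proposition.
\end{enumerate}

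The only mild subtlety (but not really an obstacle) is keeping careful track of the third grading $\<-\>$ and confirming that placing $\<1\>$ precisely on $\sigma_1^B$ (and not on any other generator) is consistent with all relations; this works because $\sigma_1^B$ appears with the same parity on both sides of every relation it participates in (zero times in the commutations of distant generators, and twice in the type $B$ tetragonal relation), so the shifts always assemble into an even total and disappear modulo $2$. Having verified these isomorphisms, the assignment $\sigma_j^B \mapsto (R_j \otimes_{\Ba_n} -)$ (with the $\<1\>$ correction for $j=1$) extends to a monoid map from the free monoid on the generators to the monoid of isomorphism classes of autoequivalences of $\Kom^b(\Ba_n\text{-}p_rg_rmod)$ that factors through $\mathcal{A}(B_n)$, which is by definition a weak action.
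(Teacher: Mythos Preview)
Your proposal is correct and takes essentially the same approach as the paper: the paper's proof simply cites the standard braid relations proposition and the type $B$ relation proposition, then remarks in one line that ``the required relations still hold with the extra third grading shift $\<1\>$ on $R_1$ and $R_1'$.'' Your write-up merely unpacks this remark, tracking explicitly why the $\Z/2\Z$-shift $\<1\>$ causes no trouble (since $\sigma_1^B$ appears with even multiplicity on each side of every relation it enters), which is exactly the content behind the paper's one-sentence justification.
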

\begin{proof}
This follows directly from \cref{standard braid relations} and \cref{type B relation}, where the required relations still hold with the extra third grading shift $\<1\>$ on $R_1$ and $R_1'$.
\end{proof}
\subsection{Functor realisation of the type $B$ Temperley-Lieb algebra} \label{Functorial TL}
In \cite[section 2b]{KhoSei}, Khovanov-Seidel showed that the $(\Aa_m, \Aa_m)$-bimodules $\mathcal{U}_j := P_j^A \otimes_\C {}_j P^A$ provides a functor realisation of the type $A_m$ Temperley-Lieb algebra.
We will see that we have a similar type $B_n$ analogue of this.

Recall that the type $B_n$ Temperley-Lieb algebra $TL_v(B_n)$ over $\Z[v,v^{-1}]$ can be described explictly as (see \cite[Proposition 1.3]{Green}) the algebra generated by $E_1, ..., E_n$ with relation
\begin{align*}
E_j^2 &= vE_j + v^{-1}E_j; \\
E_j E_k &= E_k E_j, \quad \text{if } |j-k| > 1; \\
E_j E_k E_j &= E_j, \quad \text{if } |j-k| = 1 \text{ and } j,k > 1; \\
E_j E_k E_j E_k &= 2E_j E_k, \quad \text{if } \{j, k\} = \{1,2\}.
\end{align*}

To match with the above, for this subsection only, we shall adopt the path length grading on our algebra $\Ba_n$ instead, where we insist that the blue paths $i e_j$ in \ref{B quiver} have length 0. We shall denote this path length grading shift by $(-)$ to avoid confusion.
Define $\cU_j := P_j^B \otimes_{\mathbb{F}_j} {}_j P^B (1)$, where $\mathbb{F}_1 = \R$ and $\mathbb{F}_j = \C$ when $j \geq 2$.
It is easy to check that:
\begin{proposition} \label{TBTemp}
The following are isomorphic as (path length graded) $(\Ba_n, \Ba_n)$-bimodules:
\begin{align*}
\cU_j^2 &\cong U_j(1) \oplus \cU_j(-1) \\
\cU_j \cU_k &\cong 0 \quad \text{if } |j-k| > 1, \\
\cU_j \cU_k \cU_j &\cong \cU_k \quad \text{if } |j-k| = 1 \text{ and } j,k > 1, \\
\cU_j \cU_k \cU_j \cU_k &\cong \cU_j \cU_k \oplus \cU_j \cU_k \quad \text{if } \{j, k\} = \{1,2\}.
\end{align*}
\end{proposition}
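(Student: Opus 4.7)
The plan is to verify each of the four bimodule isomorphisms by direct computation, using the contraction formulas of Proposition~\ref{bimodule isomorphism} together with associativity of the tensor product over $\Ba_n$. For a string $\cU_{j_1}\cdots\cU_{j_r}$, I would first rewrite it as
\[
P_{j_1}^B \otimes_{\mathbb{F}_{j_1}} {}_{j_1}P^B_{j_2} \otimes_{\mathbb{F}_{j_2}} \cdots \otimes_{\mathbb{F}_{j_{r-1}}} {}_{j_{r-1}}P^B_{j_r} \otimes_{\mathbb{F}_{j_r}} {}_{j_r}P^B\,(r),
\]
so that each interior factor ${}_{j_i}P^B_{j_{i+1}}$ can be replaced by its explicit description as a field bimodule from Proposition~\ref{bimodule isomorphism}. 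The whole product then becomes a direct sum of copies of a ``base'' bimodule of the form $P_{j_1}^B \otimes {}_{j_r}P^B$ with grading shifts controlled by the $(-1)$'s and $(-2)$'s appearing in the lemma.

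The first three identities are then immediate book-keeping. For $\cU_j^2$, the middle factor ${}_jP^B_j$ is $\mathbb{F}_j \oplus \mathbb{F}_j(-2)$, which distributes to give exactly $\cU_j(1) \oplus \cU_j(-1)$. For $|j-k|>1$, the vertices $j$ and $k$ are disconnected in the quiver $Q_n$, so ${}_jP^B_k = 0$ (it is absent from the list in Proposition~\ref{bimodule isomorphism}), and the product vanishes. For $|j-k|=1$ with $j,k>1$, exactly one of ${}_jP^B_k,\ {}_kP^B_j$ is $\C$ and the other is $\C(-1)$; their tensor over $\C$ contributes a single copy of $\C(-1)$, so the whole expression collapses to a single $\cU$-summand with the stated shift.

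The fourth identity, for $\{j,k\}=\{1,2\}$, is the real content of the proposition and the step I expect to require the most care. After the contractions, the interior tensor takes the shape
\[
{}_\R\C_\C \otimes_\C {}_\C\C_\R(-1) \otimes_\R {}_\R\C_\C \otimes_\C {}_\C\C_\R,
\]
in which the alternation of $\otimes_\R$ and $\otimes_\C$ is essential: the two $\otimes_\C$'s collapse harmlessly, but the middle $\otimes_\R$ produces $\C \otimes_\R \C$, which as a $(\C,\C)$-bimodule is \emph{not} $\C$ but rather the rank-two free bimodule $\C \oplus \C$. Distributing this decomposition through the surrounding tensors yields precisely the two copies of $\cU_1\cU_2$ required by the right-hand side.

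The main obstacle is therefore the careful bookkeeping of the mixed $\R/\C$ scalar actions in this final case. This is where the non-simply-laced character of type $B$ genuinely enters the computation: the coefficient $2$ in the defining $TL_v(B_n)$ relation $E_1E_2E_1E_2 = 2E_1E_2$ is categorified by the field extension $\C/\R$ being of degree two. The remaining graded pieces line up by routine checks of the $(-1)$ and $(-2)$ shifts recorded in Proposition~\ref{bimodule isomorphism}.
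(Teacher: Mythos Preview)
Your approach is correct and is exactly what the paper intends: it states the proposition with the preamble ``It is easy to check that'' and gives no further proof, so the direct computation via Proposition~\ref{bimodule isomorphism} that you outline is the expected argument.

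One small point of care in your fourth case: the $(\C,\C)$-bimodule $\C\otimes_\R\C$ is not literally $\C\oplus\C$ but rather $\C\oplus\overline{\C}$, where one summand carries the conjugate action on one side. This does not break your argument, because after tensoring on the left with the $(\Ba_n,\R)$-bimodule $P_1^B\otimes_\R{}_1P_2^B$, the twisted summand becomes isomorphic to the untwisted one (complex conjugation on ${}_\R\C_{\bar\C}$ gives an isomorphism with ${}_\R\C_\C$), so you still obtain two copies of $\cU_1\cU_2$. You might also recount the interior factors in your displayed expression --- for a length-four word $\cU_{j_1}\cU_{j_2}\cU_{j_3}\cU_{j_4}$ there are three contractions ${}_{j_i}P^B_{j_{i+1}}$, not four.
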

Comparing this with the relations of $TL_v(B_n)$ above, it follows that $U_j$ provides a (degenerate) functor realisation of the type $B$ Temperley-Lieb algebra.

\begin{theorem} (=\cref{faithful action})
The (weak) action of $\cA(B_n)$ on the category $\Kom(\Ba_n$-$\text{p$_{r}$g$_{r}$mod})$ given in \ref{Cat B action} is faithful.
\end{theorem}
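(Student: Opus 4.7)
The plan is to deduce the faithfulness of the categorical action of $\cA(B_n)$ from the known faithfulness of the Khovanov--Seidel categorical action of $\cA(A_{2n-1})$, by using the bridge between the two settings that has already been set up in the preceding sections. The key ingredients are: (i) the injective homomorphism $\Psi:\cA(B_n)\hookrightarrow\cA(A_{2n-1})$ from \cref{injB}, (ii) the extension--of--scalars functor $\Aa_{2n-1}\otimes_{\Ba_n} -:\Kom^b(\Ba_n\text{-p$_r$g$_r$mod})\to\Kom^b(\Aa_{2n-1}\text{-p$_r$g$_r$mod})$ arising from the algebra injection $\Ba_n\hookrightarrow\C\otimes_\R\Ba_n\cong\Aa_{2n-1}$ of \cref{isomorphic algebras}, and (iii) the equivariance statement $\Aa_{2n-1}\otimes_{\Ba_n}\sigma_j^B\cong \Psi(\sigma_j^B)_{\Ba_n}$ from \cref{tensor equivariant}.

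Suppose $\sigma\in\cA(B_n)$ acts isomorphically to the identity functor on $\Kom^b(\Ba_n\text{-p$_r$g$_r$mod})$. First I would specialise to the ``universal'' test object $C:=\bigoplus_{j=1}^n P_j^B$. By assumption $\sigma(C)\cong C$. Applying the exact functor $\Aa_{2n-1}\otimes_{\Ba_n} -$ and using \cref{B tensor to A} to identify
\[
\Aa_{2n-1}\otimes_{\Ba_n}\Big(\bigoplus_{j=1}^n P_j^B\Big)\;\cong\;\bigoplus_{k=1}^{2n-1}P_k^A,
\]
we obtain $\Aa_{2n-1}\otimes_{\Ba_n}\sigma(C)\cong\bigoplus_{k=1}^{2n-1}P_k^A$. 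By \cref{tensor equivariant} the functor intertwines the two braid actions through $\Psi$, so the left--hand side is isomorphic to $\Psi(\sigma)\big(\bigoplus_{k=1}^{2n-1}P_k^A\big)$. Hence
\[
\Psi(\sigma)\Big(\bigoplus_{k=1}^{2n-1}P_k^A\Big)\;\cong\;\bigoplus_{k=1}^{2n-1}P_k^A
\]
in $\Kom^b(\Aa_{2n-1}\text{-p$_r$g$_r$mod})$. Since the Khovanov--Seidel categorical action of $\cA(A_{2n-1})$ is faithful (\cite[Corollary 1.2]{KhoSei}) and can be detected on a single sufficient generator (or by repeating the same test on all objects, which follows automatically), one concludes $\Psi(\sigma)=\id$ in $\cA(A_{2n-1})$. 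Finally, injectivity of $\Psi$ (\cref{injB}) forces $\sigma=\id$ in $\cA(B_n)$.

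The main technical obstacle in executing this plan is not the final logical chain, which is formal once the bridge is in place, but the equivariance of the scalar--extension functor $\Aa_{2n-1}\otimes_{\Ba_n}-$ with respect to the Rouquier--style complexes $R_j$ and $R_j'$ in the presence of the mixed $\R/\C$ module structure on the $P_j^B$; this was the delicate step handled in \cref{tensor equivariant} and it is what makes the reduction to type $A$ go through. Once that equivariance is granted, the argument above is essentially a one--line ``pull--back'' through the commutative square of \cref{fullmaintheorem}. An alternative route, outlined in \cref{faithtop}, would instead use \cref{poin poly equals tri int} to detect non--triviality topologically via trigraded intersection numbers, modulo the central elements of $\cA(B_n)$ (which visibly act by non--trivial grading shifts and so cannot be isomorphic to the identity); either approach completes the proof.
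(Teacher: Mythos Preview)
Your proposal is correct and follows essentially the same argument as the paper: specialise to $C=\bigoplus_j P_j^B$, push through the scalar-extension functor using \cref{B tensor to A} and \cref{tensor equivariant}, invoke \cite[Corollary 1.2]{KhoSei} to conclude $\Psi(\sigma)=\id$, and finish with the injectivity of $\Psi$. Your remark on the alternative topological route via \cref{poin poly equals tri int} also matches the paper's \cref{faithtop}.
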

        
\begin{proof}
Please refer to \cref{faithful action} and \cref{faithtop}.
\end{proof}

\section{Faithfulness of the $2$-Braid Group in Type $B$} \label{Faith2BraidGrp}

The aim of this section is to construct a faithful 2-representation of the type $B$ 2-braid group using the action of $\cA(B_n)$ on $K^b(\Ba_n$-$\text{$g_r$mod}).$

   Fix $n \geq 2.$ 
   Denote by $\Ba$ the additive monoidal subcategory of $(\Ba_n,\Ba_n)$-bimod generated by the $U_i$ for $1 \leq i \leq n$ with morphisms being their grading shifts with bimodule homomorphisms of all degrees.
   Let $\ol{\Ba}$ be the graded version of $\Ba.$
   In this way,  $\ol{\Ba}$  is a full additive graded monoidal subcategory of $(\Ba_n,\Ba_n)$-bimod generated by the $U_i$ for $1 \leq i \leq n$ where objects are direct sums of tenor porducts of copies of the $U_i$'s and morphisms are grading-preserving bimodule maps.
   
   Fix the realization in \cref{Type B Geo Rep} and consider its diagrammatic category $\cD.$
   We want to construct a functor $G: \cD \ra \Ba$ which induces a grading-preserving functor $\ol{G}: K^b(Kar(\ol{\cD})) \ra K^b(Kar(\ol{\Ba}))$ macthing the Rouquier and the Khovanov Type B complexes associated to a braid word up to a perverse shift.
   Since $\cD$ is a strict monoidal category with generators and relations, it suffices to define the functor $G$ on the generating objects and morphisms as well as to verify that all relations in  $\cD$ hold in $\Ba.$

    In the following section, we use \textcolor{red}{red} or \textcolor{violet}{violet} for a simple reflection $s_j$, \textcolor{blue}{blue} for an adjacent simple reflection $s_{j \pm 1},$ and \textcolor{green!100}{green} for a distant simple reflection $s_k$ with $|j-k|>1.$
    We are going to define $G$ on generating objects by sending $s_j$ in $S$ to $\cU_j$ and on generating morphisms as in the proof of \cref{maintheorem} where the following $(\Ba_n, \Ba_n)$-bimodule homomorphisms are defined as follows:
\begin{align*}
\alpha_j:  & \ \ \ \ \  \cU_j && \ra \ \cU_j(-1) \oplus \cU_j(1) &&\xra{\cong} \ \ \ \ \ \   \cU_j \otimes_{\Ba_n} \cU_j    &&  \text{ of degree -1, } \\
 & \ e_j \otimes e_j && \mapsto  \ \ \  \ \ \ \ (0, e_j \otimes e_j) &&\mapsto \ e_j \otimes e_j \otimes e_j \otimes e_j \\
 \\
 \delta_j: &  \ \ \ \  \ \ \ \ \cU_j \otimes_{\Ba_n} \cU_j && \xra{\cong} \ \cU_j(-1) \oplus \cU_j(1) && \ra \ \  \ \ \cU_j  &&  \text{ of degree -1, } \\
& \ e_j \otimes X_j \otimes e_j \otimes e_j && \mapsto \ \  (e_j \otimes e_j,0 ) && \mapsto  e_j \otimes e_j \\
& \ e_j \otimes e_j \otimes e_j \otimes e_j && \mapsto  \ \ \ \ \ \ \ \ \ \ \  (0, e_j \otimes e_j)  && \mapsto \ \ \  \ \  0  
\end{align*}
\begin{align*}
 \epsilon_j:  \Ba_n & \ra \Ba_n  && \text{ of degree 2, } \\
  1 & \mapsto \begin{cases}
(-1)^{i+1} (2X_j + 2X_{j+1}), &\text{for } j=1;\\
(-1)^{i+1} (2X_j + X_{j-1} + X_{j+1}), &\text{for } 1<j < n; \\
(-1)^{i+1} ( 2X_j + X_{j-1}), &\text{for } j = n,
\end{cases}
\end{align*}

\noindent and, recall from \cref{bimodulemaps},
\begin{align*} 
\beta_j : \cU_j &\ra \Ba_n  \text{ of degree 1, }\\
 e_j \otimes e_j &\mapsto e_j
 \end{align*}
\begin{align*}
\gamma_j  :  \Ba_n &\ra  \cU_j  \text{ of degree 1, } \\
(1) &\mapsto
\begin{cases}
X_j \otimes e_j + e_j \otimes X_j + (j+1|j) \otimes (j|j+1) \\
 \hspace{8mm} + (-ie_{j+1})(j+1|j) \otimes (j|j+1)(ie_{j+1}), &\text{for } j=1;\\
X_j \otimes e_j + e_j \otimes X_j + (j-1|j) \otimes (j|j-1) + (j+1|j) \otimes (j|j+1), &\text{for } 1<j < n; \\
X_j \otimes e_j + e_j \otimes X_j + (j-1|j) \otimes (j|j-1), &\text{for } j = n,
\end{cases}
\end{align*}

\begin{lemma} Denote $k_j := \R$ when $j = 1$ and $k_j := \C$ when $j \geq 2$.
The space of $(\Ba_n,\Ba_n)$-bimodule homomorphism:
\begin{enumerate}
\item $\cU_j \ra \Ba_n$ of degree $1$ is isomorphic to $k_j \beta_j;$
\item $\Ba_n \ra \cU_j$ of degree $1$ is isomorphic to $k_j \gamma_j;$
\item $\cU_j \ra \cU_j \otimes_{\Ba_n} \cU_j $ of degree $-1$ is isomorphic to $k_j \alpha_j;$
\item $\cU_j \otimes_{\Ba_n} \cU_j \ra \cU_j $ of degree $-1$ is isomorphic to $k_j \delta_j;$
\item $\Ba_n \ra \Ba_n$ of degree 2 is isomorphic to $\oplus_{1 \leq j \leq n} k_jX_j,$ where $X_j$ is interpreted with left multiplication or right multiplication with $X_j$ giving an $\Ba_n$-bimodule homomorphism as all paths of lengths $\geq$ are killed in $\Ba_n.$
\end{enumerate}
\end{lemma}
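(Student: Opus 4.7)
The strategy is to pin down each graded Hom space by evaluating at a natural generator and invoking the structural decomposition ${}_jP_j^B \cong k_j \oplus k_j(-2)$ from \cref{bimodule isomorphism}, together with the vanishing of all length-$\geq 3$ paths in $\Ba_n$ (evident from its explicit basis).

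Parts (1)--(4) are essentially formal dimension counts. For (1), a bimodule map $f\colon \cU_j \to \Ba_n$ is determined by $f(e_j \otimes e_j) \in e_j \Ba_n e_j = {}_jP_j^B$, and must be $\F_j$-balanced when $j \geq 2$; the degree-$1$ condition then forces this image into the degree-$0$ summand $k_j \cdot e_j$, so the Hom space is precisely $k_j \beta_j$. For (2), bimodule maps $\Ba_n \to \cU_j$ correspond to central elements of $\cU_j$ of the appropriate degree; the explicit formula for $\gamma_j$ from \cref{bimodulemaps} provides a generator, and a dimension count in the degree-$1$ part of the centre of $\cU_j$ gives uniqueness up to a $k_j$-scalar. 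Parts (3) and (4) follow by using $\cU_j \otimes_{\Ba_n} \cU_j \cong \cU_j(-1) \oplus \cU_j(1)$ to reduce to graded endomorphisms of $\cU_j$: the degree-$0$ endomorphism space is $k_j \cdot \id$, while $\Hom^0(\cU_j, \cU_j(-2)) = 0$ because $\cU_j$ is supported in degrees $\geq -1$. Comparison with the stated formulas shows that each of $\alpha_j$ and $\delta_j$ generates the remaining one-dimensional piece over $k_j$.

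The substantive case is (5). A bimodule endomorphism of $\Ba_n$ of degree $2$ is the same datum as an element of the centre $Z(\Ba_n)^2$. An explicit $\R$-basis of the degree-$2$ part of $\Ba_n$ is $\{X_1, \ldots, X_n\} \cup \{iX_2, \ldots, iX_n\}$, and I claim each basis element is central: commutation with the idempotents $e_\ell$ is automatic from $X_j e_\ell = \delta_{j\ell} X_j = e_\ell X_j$; commutation with imaginary idempotents reduces to the defining relation $(ie_j) X_j = X_j (ie_j)$ together with orthogonality for $\ell \neq j$; and commutation with every length-$1$ arrow is immediate because any such product is a length-$3$ path, and therefore zero. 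Consequently $Z(\Ba_n)^2 = \bigoplus_{j=1}^n k_j X_j$, where the $\C$-action on the $j \geq 2$ components is realised by left (equivalently right) multiplication by the central element $iX_j$. The main care required across the proof is bookkeeping -- tracking cohomological degrees, the $\F_j$-balancing in tensor products, and distinguishing the real case $j=1$ from the complex case $j \geq 2$ -- but no genuinely conceptual obstacle arises once the length-$\geq 3$ vanishing is in hand.
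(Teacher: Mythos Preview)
Your proof is correct and takes the same approach as the paper, which simply records ``This is an easy verification''; you have supplied exactly the routine dimension counts and centrality checks that the paper leaves implicit. The only place worth a second glance is your reduction of (3)--(4) to $\Hom^0(\cU_j,\cU_j)\cong k_j$ and $\Hom^{-2}(\cU_j,\cU_j)=0$, but both follow immediately from the fact that $\cU_j$ is generated as a bimodule by $e_j\otimes e_j$ sitting in minimal degree $-1$, so your argument is complete.
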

\begin{proof}
This is an easy verification.
\end{proof}
Recall that, in previous section  (\cref{defbraid}), we define the following complexes of graded $(\Ba_n,\Ba_n)$-bimodules: for each $j \in \{1,2, \cdots, n\},$
\begin{align*}
R_j &:= (0 \to \cU_j(-1) \xra{\beta_j} \Ba_n \to 0), \text{and} \\
R_j' &:= (0 \to \Ba_n  \xra{\gamma_j}  \cU_j(1) \to 0).
\end{align*}

\noindent with both $\Ba_n$ in cohomological degree 0, $\F_1 = \R$ and $\F_j = \C$ for $j \geq 2$.

\begin{theorem} \label{maintheorem}
$G : \cD \ra \sB$ is a well-defined monoidal functor sending 
\begin{enumerate}[(i)]
\item the empty sequence in $S$ to  $\sB_n,$ the monoidal identity in $\sB,$ $s_i$ in $\sB;$
\item $s_i$ in $S$ to $U_i;$
\item a sequence $s_{i_1}s_{i_2} \cdots s_{i_k}$ in $S$ of length $k \geq 2$ to $G_0(s_{i_1}s_{i_2} \cdots s_{i_{k-1}}) \otimes_{\Ba_n} G_0(s_{i_k}).$
\end{enumerate}

For two sequences $\uw$ and $\uw'$ in S, the coherence morphism $G_0(\uw) \otimes_{A_n} G_0(w') \xra{\cong} G_0(\underline{ww'})$ is given by either a chosen sequence of associator morphisms moving all parentheses of the right block corresponding to the left to match the configuration of parenthese on the right hand side or a left or right unitor depending on whether $\uw$ or $\uw'$ is the empty sequence in $S.$
\end{theorem}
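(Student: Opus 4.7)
The plan is to verify that every defining relation of $\cD$ (as listed in \cref{DiaCatSB}) is satisfied in $\sB$ after applying $G$ to the generators, then invoke the universal property of $\cD$ as a monoidal category given by generators and relations. Since $\cD$ is strict monoidal and presented by finitely many generating 2-morphisms subject to the relations (a)--(d) of \cref{DiaCatSB}, it suffices to check each of them as a bimodule homomorphism equality between the images. For each relation, I will evaluate both sides on a small generating set of the relevant tensor product of $\cU_j$'s (or of $\Ba_n$), using the explicit formulas for $\alpha_j, \delta_j, \beta_j, \gamma_j, \epsilon_j$. The coherence part (that the stated associator/unitor data gives a genuine monoidal functor) follows from MacLane coherence together with the canonical associativity/unitality of tensor product of bimodules.

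First I would dispense with the polynomial and one-colour relations. The barbell \cref{barbell} reduces to computing $\beta_j\circ\gamma_j(1)$, which by the formula for $\gamma_j$ yields the expected image of $\alpha_{s_j}$ under the realization of \cref{Type B Geo Rep} via the action of $R$ on $\Ba_n$ encoded by $\epsilon_j$. The polynomial forcing \cref{polyforce} is immediate from the definition of $\partial_s$ and from the fact that the $(R,R)$-bimodule structure on $\cU_j$ factors through $R^{s_j}$. The needle \cref{needle} is $\beta_j\circ \delta_j\circ \gamma_j = 0$ (the composition is zero on $e_j\otimes e_j$), and the Frobenius/unit relations \cref{Frobenius}--\cref{wall} together with \cref{selfadjoint}--\cref{unit rot counit} follow from the fact that $\cU_j = P^B_j\otimes_{\F_j}{}_jP^B(1)$ is already a Frobenius bimodule with multiplication $\delta_j$, comultiplication $\alpha_j$, unit $\gamma_j$ and counit $\beta_j$; these are all direct computations.

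The main obstacle will be the two-colour relations \cref{assoc3}--\cref{mst4}, in particular the dot-crossing and the two-colour associativity at $m_{st}=4$, together with the $B_3$ Zamolodzhikov relation \cref{B3}. The issue is that the $2m_{st}$-valent vertex is defined abstractly as the inclusion-then-projection through the indecomposable summand $B_{s,t}$, so one must give a concrete candidate in $\sB$ and verify that it is well-defined. I would follow the strategy of \cite{jensen_master}: for $m_{s_js_k}=3$, take the morphism $\cU_j\cU_k\cU_j \ra \cU_k\cU_j\cU_k$ built from $\alpha, \delta, \beta, \gamma$ in the unique (up to scalar) way that has degree zero, and for $m_{s_1 s_2}=4$ do the same for $\cU_1\cU_2\cU_1\cU_2 \ra \cU_2\cU_1\cU_2\cU_1$, normalising using the non-symmetric Cartan entries $a_{12}=-2, a_{21}=-1$ of \cref{Type B Geo Rep}. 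The invertibility of $[2]_y = -1$ needed to write $JW_3$ matches exactly our scalar freedom (the Jones--Wenzl denominators are units in $\R$), so the dot-crossing relations become finite scalar identities that can be checked on the $\Ba_n$-module generators. The $A_2\times A_1$, $A_1^{\times 3}$ and $A_3$ Zamolodzhikov relations \cref{Zamo3}--\cref{A3} then reduce to the simply-laced case and follow from the type $A$ verification in \cite{jensen_master} applied to the relevant parabolic subdiagram, while the $A_1\times I_2(4)$ and $B_3$ relations \cref{Zamo4,B3} are checked by expanding $JW_3$ explicitly and evaluating on generators; the resulting algebraic identity in $\sB$ is consistent with, and in fact is a categorification of, the relation $R_2R_1R_2R_1\cong R_1R_2R_1R_2$ already established in \cref{type B relation}.

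Once every relation has been checked, $G$ extends uniquely to a strict monoidal functor out of $\cD$ by the presentation of $\cD$. The stated coherence isomorphisms for $G$ on multi-letter sequences are forced by the strict monoidal structure on $\sB$ together with MacLane coherence, so the functor is well-defined as claimed.
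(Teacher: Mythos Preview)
Your approach diverges from the paper's at the crucial step of handling the $2m_{st}$-valent vertices, and the divergence creates a genuine gap. You propose to send the $6$-valent and $8$-valent vertices to \emph{nonzero} degree-zero maps built out of $\alpha,\beta,\gamma,\delta$, and then verify the two-colour associativity and the Zamolodzhikov relations (including $B_3$) directly. The paper instead sends \emph{every} $2m_{st}$-valent vertex to the zero morphism. This is the natural choice, since $G$ is meant to be a quotient functor to a Temperley-Lieb-type category in which the indecomposable $B_{s,t}$ is killed; the $2m_{st}$-valent vertex is by definition the projection--inclusion through $B_{s,t}$, so its image should vanish. With this choice, the two-colour associativity \cref{assoc3}, \cref{assoc4} and all Zamolodzhikov relations \cref{Zamo4}--\cref{B3} hold trivially (both sides contain a $2m_{st}$-valent vertex), and the only nontrivial two-colour constraints come from the dot-crossing relations \cref{mst3}, \cref{mst4}: there the left-hand side is zero, so the Jones--Wenzl morphism on the right must vanish in $\sB$. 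Your claim that the $B_3$ relation ``is consistent with, and in fact is a categorification of'' the isomorphism $R_2R_1R_2R_1\cong R_1R_2R_1R_2$ is not an argument: that isomorphism lives in the homotopy category of complexes, whereas the Zamolodzhikov relation is an equality of honest bimodule maps, and you never actually perform the check.

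There is a second gap. The paper does not send the dots and trivalents to $\beta_j,\gamma_j,\alpha_j,\delta_j$ on the nose, but to $b_j\beta_j,\ c_j\gamma_j,\ a_j\alpha_j,\ d_j\delta_j$ with undetermined scalars, and sends the polynomial box $\alpha_{s_j}$ to $\sum_k f^j_k X_k$. The one-colour Frobenius relations force $a_jb_j=c_jd_j=1$; the barbell and polynomial-forcing relations then determine the $f^j_k$ (in particular $f^1_1=2b_1c_1$, $f^1_2=2b_1c_1$, $f^j_j=2b_jc_j$, $f^j_{j\pm1}=b_jc_j$), and the vanishing of the Jones--Wenzl sides in \cref{mst3}, \cref{mst4} gives further constraints relating the scalars at adjacent vertices. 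The paper then exhibits an explicit solution $a_j=b_j=(-1)^{j+1}$, $c_j=d_j=1$. Your argument that polynomial forcing ``is immediate from the fact that the $(R,R)$-bimodule structure on $\cU_j$ factors through $R^{s_j}$'' is not correct as stated: $\cU_j$ is a $(\Ba_n,\Ba_n)$-bimodule, not an $(R,R)$-bimodule, and the relation must be checked after pushing $\alpha_{s_j}$ forward to $\sum_k f^j_k X_k\in\Ba_n$, which is exactly what produces the constraints on the $f^j_k$.
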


\begin{proof}
The assignment on objects is done in the theorem statement whereas on generating morphisms is carried out as follows:

\begin{center}
\begin{tabular}{c c l}
\begin{tikzcd}
\draw[dashed,color=black!60] (0,0) circle (0.5);
\draw[ violet, line width=0.8mm] (0,0) -- (0,-.5);
\filldraw[violet] (0,0) circle (3pt) ;
\end{tikzcd} 
& $\longmapsto$ 
& $b_j \beta_j:\cU_j \ra \Ba_n   \text{ of degree 1}$ \\
\\
\begin{tikzcd}
\draw[dashed,color=black!60] (0,0) circle (0.5);
\draw[ violet, line width=0.8mm] (0,0) -- (0,.5);
\filldraw[violet] (0,0) circle (3pt) ;
\end{tikzcd}
& $\longmapsto$
& $ c_j \gamma_j:\Ba_n \ra \cU_j   \text{ of degree 1 }$
\\
\\
\begin{tikzcd}
\draw[dashed,color=black!60] (0,0) circle (0.5);
\draw[ violet, line width=0.8mm] (0,0) -- (0,-.5);
\draw[ violet, line width=0.8mm] (0,0) -- (-.35,.35);
\draw[ violet, line width=0.8mm] (0,0) -- (.35,.35);
\end{tikzcd}
&$\longmapsto$
& $a_j \alpha_j:\cU_j \ra \cU_j \otimes_{\Ba_n} \cU_j  \text{ of degree -1 }$
\\
\\
\begin{tikzcd}
\draw[dashed,color=black!60] (0,0) circle (0.5);
\draw[ violet, line width=0.8mm] (0,0) -- (0,.5);
\draw[ violet, line width=0.8mm] (0,0) -- (-.35,-.35);
\draw[ violet, line width=0.8mm] (0,0) -- (.35,-.35);
\end{tikzcd}
&$\longmapsto$
&$d_j \delta_j: \cU_j \otimes_{\Ba_n} \cU_j \ra \cU_j  \text{ of degree -1 }$ 
\\
\\
\begin{tikzcd}
\draw[dashed,color=black!60] (0,0) node{\textcolor{black}{\alpha}_{\textcolor{violet}{s_i}}} circle (0.5);
\end{tikzcd}
&$\longmapsto$
&$\epsilon_j:\Ba_n \ra \Ba_n, 1 \mapsto \sum\limits_{k=1}^n f^j_k X_k  \text{ of degree 2 }$ 
\\
\\
\begin{tikzcd}
\draw[dashed,color=black!60] (0,0) circle (0.5);

\draw[ violet, line width=0.8mm] (.35,.35) -- (-.35,-.35);
\draw[ green, line width=0.8mm] (-.35,.35) -- (.35,-.35);
\end{tikzcd}
&$\longmapsto$
&$0$ \text{  as $\cU_j \otimes_{\Ba_n} \cU_k = 0$, for $|j-k| > 1$} 
\\
\\
\begin{tikzcd}
\draw[dashed,color=black!60] (0,0) circle (0.5);
\draw[ red, line width=0.8mm] (0,0) -- (0,.5);
\draw[ red, line width=0.8mm] (0,0) -- (-.35,-.35);
\draw[ red, line width=0.8mm] (0,0) -- (.35,-.35);
\draw[ blue, line width=0.8mm] (0,0) -- (0,-.5);
\draw[ blue, line width=0.8mm] (0,0) -- (-.35,.35);
\draw[ blue, line width=0.8mm] (0,0) -- (.35,.35);
\end{tikzcd}
&$\longmapsto$
& $0$
\\
\\
\begin{tikzcd}
\draw[dashed,color=black!60] (0,0) circle (0.5);
\draw[ red, line width=0.8mm] (0,0) -- (0,0.5);
\draw[ red, line width=0.8mm] (0,0) -- (0.5,0);
\draw[ red, line width=0.8mm] (0,0) -- (0,-.5);
\draw[ red, line width=0.8mm] (0,0) -- (-0.5,0);

\draw[ blue, line width=0.8mm] (.35,.35) -- (-.35,-.35);
\draw[ blue, line width=0.8mm] (-.35,.35) -- (.35,-.35);
\end{tikzcd}
&$\longmapsto$
& $0$
\end{tabular}
\end{center}


\noindent where $a_j,$ $b_j,$ $c_j,$ $d_j,$ $f^j_k$ $\in \R$ for $j = 1$ and $\in \C$ for $j > 1.$ 
	Next, we want to find a set of scalars such that the restrictions imposed by the relations in $\cD$ are satisfied.

  To illustrate, let us consider the barbell relation (\cref{barbell}) for $s_1$:

\begin{equation*}
 \centering
\begin{tikzpicture}[scale=0.7]
\draw[dashed,color=black!60] (0,0) circle (1.0);
\draw[dashed,color=black!60] (3,0) circle (1.0);
\draw[ violet, line width=0.8mm] (0,.55) -- (0,-.55);
\filldraw[violet] (0,.55) circle (3pt) ;
\filldraw[violet] (0,-.55) circle (3pt) ;
\node at (1.5,0) {=};
\node at (3,0) {{\LARGE $\alpha_{\textcolor{violet}{s_1}}$}};
\end{tikzpicture}. 
\end{equation*}
By definition,
\begin{align*}
1 & \mapsto c_1[X_1 \otimes e_1 + e_1 \otimes X_1 + (2|1) \otimes (1|2) + (-ie_2)(2|1) \otimes (1|2)(ie_2)]\\
& \mapsto c_1b_1 [ X_1 + X_1 + X_2 + X_2 ] \\
& = c_1b_1[2X_1 + 2X_2] 
\end{align*}
\noindent Equating with the right hand side, we get $c_1b_1[2X_1 + 2X_2] = \sum^n_{k=1} f^1_k X_k$ which, in turn, implies $2c_1 b_1 = f^1_1 $ and $2c_1 b_1 = f^1_2.$

  By the same token, let us look at the type B 8-valences relation \cref{mst4} again: suppose \textcolor{red}{red} encodes \textcolor{red}{$s_1$} and \textcolor{blue}{blue} encodes \textcolor{blue}{$s_2$},
\begin{align*}
\begin{tikzcd}
\draw[color=red, line width=0.8mm] (-1.45,0) -- (.55,0);
\draw[color=red, line width=0.8mm] (0,0.75) -- (0,-0.75);
\filldraw[red]  (0.55,0) circle (3pt) ;
\draw[color=blue, line width=0.8mm] (.75,.75) -- (-.75,-0.75);
\draw[color=blue, line width=0.8mm] (-.75,.75) -- (.75,-0.75);
\end{tikzcd} 
 &= 
\begin{tikzcd}
\draw[red, line width=0.8mm] (.125,.68) -- (.125,-.68);
\draw[blue, line width=0.8mm] (-.125,.68) -- (-.125,-.68);
\draw[red, line width=0.8mm] (-.5,0) -- (-1,0);
\draw[blue, line width=0.8mm] (.45,0) -- (.7,0);
\draw[blue, line width=0.8mm] (.7,.68) -- (.7,-.68);
\filldraw[red] (-.425,0) circle (3pt) ;
\filldraw[blue] (.425,0) circle (3pt) ;
\end{tikzcd} 
- \frac{a_{s_1,s_2}}{a_{s_2,s_1}a_{s_1,s_2}-1} 
\begin{tikzcd}
\draw[red, line width=0.8mm] (-.5,0) -- (-1,0);
\draw[blue, line width=0.8mm] (.7,.68) -- (.7,-.68);
\draw[red, line width=0.8mm] (.23,0.4) -- (.23,0.66);
\draw[red, line width=0.8mm] (.23,-0.4) -- (.23,-0.66);
\draw[blue, line width=0.8mm] (0,0) -- (-.45,.58);
\draw[blue, line width=0.8mm] (0,0) -- (-.45,-.58);
\draw[blue, line width=0.8mm] (0,0) -- (.7,0);
\filldraw[red] (-.4,0) circle (3pt) ;
\filldraw[red] (.23,0.3) circle (3pt) ;
\filldraw[red] (.23,-0.3) circle (3pt) ;
\end{tikzcd} 
- \frac{a_{s_2,s_1}}{a_{s_2,s_1}a_{s_1,s_2}-1} 
\begin{tikzcd}
\draw[blue, line width=0.8mm] (.5,0) -- (.7,0);
\draw[blue, line width=0.8mm] (.7,.6) -- (.7,-.6);
\draw[blue, line width=0.8mm] (-.23,0.4) -- (-.23,0.66);
\draw[blue, line width=0.8mm] (-.23,-0.4) -- (-.23,-0.66);
\draw[red, line width=0.8mm] (0,0) -- (.45,.58);
\draw[red, line width=0.8mm] (0,0) -- (.45,-.58);
\draw[red, line width=0.8mm] (0,0) -- (-1,0);
\filldraw[blue] (.4,0) circle (3pt) ;
\filldraw[blue] (-.23,0.3) circle (3pt) ;
\filldraw[blue] (-.23,-0.3) circle (3pt) ;
\end{tikzcd}  \\
&+ \frac{1}{a_{s_2,s_1}a_{s_1,s_2}-1} 
\begin{tikzcd}
\draw[blue, line width=0.8mm] (.13,0) -- (.7,0);
\draw[blue, line width=0.8mm] (.7,.6) -- (.7,-.6);
\draw[blue, line width=0.8mm] (.13,0) -- (-.45,.58);
\draw[red, line width=0.8mm] (-.15,0) -- (.45,-.58);
\draw[red, line width=0.8mm] (-.15,0) -- (-1,0);
\draw[red, line width=0.8mm] (.23,0.4) -- (.25,0.66);
\draw[blue, line width=0.8mm] (-.23,-0.4) -- (-.25,-0.66);
\filldraw[red] (.23,0.3) circle (3pt) ;
\filldraw[blue] (-.23,-0.3) circle (3pt) ;
\end{tikzcd} 
+ \frac{1}{a_{s_2,s_1}a_{s_1,s_2}-1}
\begin{tikzcd}
\draw[blue, line width=0.8mm] (.13,0) -- (.7,0);
\draw[blue, line width=0.8mm] (.7,.68) -- (.7,-.68);
\draw[red, line width=0.8mm] (.23,-0.4) -- (.23,-0.66);
\draw[blue, line width=0.8mm] (-.23,0.4) -- (-.23,0.66);
\draw[red, line width=0.8mm] (-.13,0) -- (.45,.58);
\draw[red, line width=0.8mm] (-.13,0) -- (-1,0);
\draw[blue, line width=0.8mm] (.13,0) -- (-.45,-.58);
\filldraw[red] (.23,-0.3) circle (3pt) ;
\filldraw[blue] (-.23,0.3) circle (3pt) ;
\end{tikzcd} \\	
\end{align*}  
which, by construction, is equivalent to
\begin{align*}
0 
\  &= \ 
\begin{tikzcd}
\draw[red, line width=0.8mm] (.125,.68) -- (.125,-.68);
\draw[blue, line width=0.8mm] (-.125,.68) -- (-.125,-.68);
\draw[red, line width=0.8mm] (-.425,0) -- (-.425,-0.68);
\draw[blue, line width=0.8mm] (.45,.68) -- (.45,-.68);
\filldraw[red] (-.425,0) circle (3pt) ;
\end{tikzcd} 
 \ - \frac{a_{s_1,s_2}}{a_{s_2,s_1}a_{s_1,s_2}-1}  \  \
\begin{tikzcd}
\draw[red, line width=0.8mm] (-.4,0) -- (-.4,-.68);
\draw[blue, line width=0.8mm] (.7,.68) -- (.7,-.68);
\draw[red, line width=0.8mm] (.3,0.4) -- (.3,0.68);
\draw[red, line width=0.8mm] (.3,-0.4) -- (.3,-0.68);
\draw[blue, line width=0.8mm] (-0.05,0) -- (-.05,.68);
\draw[blue, line width=0.8mm] (-0.05,0) -- (-.05,-.68);
\draw[blue, line width=0.8mm] (0,0) -- (.7,0);
\filldraw[red] (-.4,0) circle (3pt) ;
\filldraw[red] (.3,0.3) circle (3pt) ;
\filldraw[red] (.3,-0.3) circle (3pt) ;
\end{tikzcd} 
 \ - \frac{a_{s_2,s_1}}{a_{s_2,s_1}a_{s_1,s_2}-1} \
\begin{tikzcd}
\draw[blue, line width=0.8mm] (.5,.68) -- (.5,-.68);
\draw[blue, line width=0.8mm] (-.23,0.4) -- (-.23,0.68);
\draw[blue, line width=0.8mm] (-.23,-0.4) -- (-.23,-0.68);
\draw[red, line width=0.8mm] (0.15,0) -- (0.15,.68);
\draw[red, line width=0.8mm] (0.15,0) -- (0.15,-.68);
\draw[red, line width=0.8mm] (0.15,0) .. controls (-.5,-.05) and (-.58,-.1) .. (-.6,-0.68);
\filldraw[blue] (-.23,0.3) circle (3pt) ;
\filldraw[blue] (-.23,-0.3) circle (3pt) ;
\end{tikzcd}  \\
 & \ + \frac{1}{a_{s_2,s_1}a_{s_1,s_2}-1} \ 
\begin{tikzcd}
\draw[blue, line width=0.8mm] (.7,0) .. controls (.05, .05) and (-.07,.1) .. (-.05,.68);
\draw[blue, line width=0.8mm] (.7,.68) -- (.7,-.68);
\draw[red, line width=0.8mm] (-.05,0) .. controls (0,-.009) and (.3,-.08) .. (.35,-.68);
\draw[red, line width=0.8mm] (-.05,0) .. controls (-0.1,-.009) and (-.35,-.08) .. (-.4,-.68);
\draw[red, line width=0.8mm] (.35,0.4) -- (.35,0.66);
\draw[blue, line width=0.8mm] (-.05,-0.4) -- (-.05,-0.66);
\filldraw[red] (.35,0.3) circle (3pt) ;
\filldraw[blue] (-.05,-0.3) circle (3pt) ;
\end{tikzcd} 
\ + \frac{1}{a_{s_2,s_1}a_{s_1,s_2}-1} \ 
\begin{tikzcd}
\draw[blue, line width=0.8mm] (.7,0) .. controls (.05, -.05) and (-.07,-.1) .. (-.05,-.68);
\draw[red, line width=0.8mm] (-.05,0) .. controls (0,.009) and (.3,.08) .. (.35,.68);
\draw[red, line width=0.8mm] (-.05,0) .. controls (-0.1,-.009) and (-.35,-.08) .. (-.4,-.68);
\draw[blue, line width=0.8mm] (.7,.68) -- (.7,-.68);
\draw[red, line width=0.8mm] (.35,-0.4) -- (.35,-0.66);
\draw[blue, line width=0.8mm] (-.05,0.4) -- (-.05,0.66);
\filldraw[red] (.35,-0.3) circle (3pt) ;
\filldraw[blue] (-.05,0.3) circle (3pt) ;
\end{tikzcd} \\	
\end{align*}  
\noindent Observe that there are five terms of Soergel graphs on the right hand side. One thing to note here is that $\cU_1 \otimes \cU_2 \otimes \cU_1 \otimes \cU_2$ is spanned by  $e_1 \otimes_\R (1|2) \otimes_{\Ba_n} e_2 \otimes_\C e_2 \otimes_{\Ba_n} (2|1) \otimes_\R (1|2) \otimes_{\Ba_n} e_2 \otimes_\C e_2  $ and $e_1 \otimes_\R (1|2) \otimes_{\Ba_n} e_2 \otimes_\C e_2 \otimes_{\Ba_n} (-ie_2)(2|1) \otimes_\R (1|2) \otimes_{\Ba_n} e_2 \otimes_\C e_2.$ 
 Without the coefficients,  looking at the second term in the above equation and applying appropriate,  we get
\begin{align*}
e_1 \otimes (1|2) \otimes e_2 \otimes e_2 \otimes (2|1) \otimes (1|2) \otimes e_2 \otimes e_2  
 & \xmapsto{\beta_1}  b_1  (1|2) \otimes e_2 \otimes (2|1) \otimes (1|2) \otimes e_2 \otimes e_2  \\
& \xmapsto{\beta_1} b_1^2  (1|2) \otimes X_2 \otimes e_2 \otimes e_2 \\
& \xmapsto{\delta_2} d_2 b_1^2  (1|2) \otimes e_2 \\
& \xmapsto{\alpha_2} a_2 d_2 b_1^2  (1|2) \otimes e_2 \otimes e_2 \otimes e_2 \\
& \xmapsto{\gamma_1} c_1 a_2 d_2 b_1^2  (1|2) \otimes e_2 \otimes (-ie_2)(2|1) \otimes (1|2)(ie_2) \otimes e_2 \otimes e_2  \\
&  \ \ \ \ \ + c_1 a_2 d_2 b_1^2  (1|2) \otimes e_2 \otimes (2|1) \otimes (1|2) \otimes e_2 \otimes e_2
\end{align*}
Similarly, 
\begin{align*}
e_1 & \otimes (1|2) \otimes e_2 \otimes e_2 \otimes (-ie_2)(2|1) \otimes (1|2) \otimes e_2 \otimes e_2  \\
& \mapsto c_1 a_2 d_2 b_1^2 \left[ (1|2) \otimes e_2 \otimes (2|1) \otimes (1|2)(-ie_2) \otimes e_2 \otimes e_2  
 + (1|2) \otimes e_2 \otimes (-ie_2)(2|1) \otimes (1|2) \otimes e_2 \otimes e_2 \right]
\end{align*}
Once the calculations for all the five Soergel graphs, comparing coefficients coming from four basis elements in the codomain will yield four defining equations.

Before giving you all the relations, we will deal with the unnecessary or overlapped relations. For the \cref{needle}, it says $b_j d_j a_j \beta_j \delta_j \alpha_j = 0$ which is true as $\delta_j \alpha_j = 0.$
 On the other hand, the \cref{Frobenius} (equiv. \cref{assocmult} and \cref{coassoccomult}) do not impose any restrictions on the coefficients whereas the \cref{wall} is replaced by \cref{enddot counit comult} and \cref{startdot unit multi}.
    In addition, the \cref{assoc3}, \cref{assoc4}, \cref{Zamo4} \cref{Zamo3}, \cref{Zamo2}, \cref{A3}, and \cref{B3} are all trivially satisfied as the $2m_{st}$-valents vertices are killed for every $s,t \in S.$
    Moreover, the \cref{mst2} has both side equal as a zero object in $\B_a.$

Now, we summarise all the relations: (Mostly, the same from the type A case, except the two colour relation.)

\cref{barbell} \  $\Ra$ \ \ $f^1_1 = 2b_1 c_1, f^1_2 =2 b_1 c_1, f^j_j = 2 b_j c_j, f^j_{j \pm 1} = b_j c_j, f^j_k=0$ for $j,k \geq 2$ and $|j-k| > 1,$

\cref{polyforce} \ $\Ra$  \  $f^1_1 = 2b_1 c_1, f^1_2 = - 2 b_2 c_2, f^j_j = 2 b_j c_j, f^{j \pm 1}_j = -b_j c_j, f^j_k=0$ for $j,k \geq 2$ and $|j-k| > 1,$

\cref{mst3} (Type A 6-valences relation) \ $\Ra$ \ $d_{j \pm 1} b_j c_j = -b_{j \pm 1}$ for suitable $j,$  

\cref{mst4} (Type B 8-valences relation) \ $\Ra$  \
Since $a_{s_2,s_1} a_{s_1,s_2}-1 = (-2)(-1) -1 =1,$ we will simplify the denominator first. 

For $U_1$ left-aligned, we get
$$b_1 - {a_{s_1,s_2}}b_1^2 d_2 a_2 c_1 - {a_{s_2,s_1}}b_2 d_1 c_2 + b_2 d_1 b_1 a_2 c_1 + b_1 d_2 c_2 = 0,$$
$$b_1 - a_{s_1,s_2} b_1^2 d_2 a_2 c_1 = 0, \ \
-a_{s_1,s_2} b_1^2 d_2 a_2 c_1 + b_2 d_1 b_1 a_2 c_1 = 0,  \ \
-a_{s_1,s_2} b_1^2 d_2 a_2 c_1 + b_1 d_2 c_2 = 0,$$
while for $U_2$ left-aligned, we get
$$b_2 - a_{s_2,s_1} b_2^2 d_1 a_1 c_2 - a_{s_1,s_2} b_1 d_2 c_1 + b_1 d_2 a_1 b_2 c_2 + b_2 d_1 c_1 = 0,$$
$$b_2 -  a_{s_1,s_2} b_1 d_2 c_1 = 0, \ \
 - a_{s_1,s_2} b_1 d_2 c_1 + b_1 d_2 a_1 b_2 c_2 = 0, \ \
-a_{s_1,s_2} b_1 d_2 c_1 + b_2 d_1 c_1 = 0, $$
\cref{enddot counit comult} \ \  $\Ra$ \ \ $a_jb_j = 1$  for all $1 \leq j \leq n,$

\cref{startdot unit multi} \ \ $\Ra$ \ \ $c_jd_j = 1$  for all $1 \leq j \leq n,$

\cref{selfadjoint} \ \ $\Ra$ \ \ $a_jb_jc_jd_j = 1$  for all $1 \leq j \leq n,$

\cref{mult rot comult} \ \ $\Ra$ \ \ $a_jb_jd_j = d_j$  for all $1 \leq j \leq n,$

\cref{comult rot mult} \ \ $\Ra$ \ \ $a_jc_jd_j = a_j$  for all $1 \leq j \leq n,$

\cref{counit rot unit} \ \ $\Ra$ \ \ $b_jc_jd_j = b_j$  for all $1 \leq j \leq n,$

\cref{unit rot counit} \ \ $\Ra$ \ \ $a_jb_jc_j = c_j$  for all $1 \leq j \leq n.$

The solution $a_j = b_j = (-1)^{j+1}, c_j = d_j = 1, f^1_2 = 2, f^j_j = (-1)^{j+1} 2, f^j_{j \pm 1} = (-1)^{j+1}$ and $f^j_k = 0$ for $|j-k|>1$ gives our desired functor.
\end{proof}

\begin{corollary}
There is a well-defined additive, monoidal functor $\ol{G}: K^b(Kar(\ol{\cD})) \ra K^b(Kar(\ol{\Ba}))$ which sends $F_{s_j}$ to $R_j[-1](1)$ and $E_{s_1}$ to  $R'_j[1](-1)$  and thus matches Rouquier and the Khovanov Type B complex associated to a braid word in $Br_{n+1}$ up to inner grading shift and cohomological shift.
\end{corollary}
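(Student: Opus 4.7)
The plan is to extend the strict monoidal functor $G:\cD \to \Ba$ (established in \cref{maintheorem}) through a sequence of universal constructions until we land in $K^b(Kar(\ol{\Ba}))$. First I would take the $\Z$-graded additive closure of $G$ to obtain a grading-preserving additive monoidal functor $\ol{G}:\ol{\cD}\to \ol{\Ba}$; this is formal because $\ol{\Ba}$ is already additive and the grading shifts on both sides come from the same grading group. Second, since $\ol{\Ba}$ sits inside $(\Ba_n,\Ba_n)$-bimod, which is idempotent complete, the universal property of the Karoubi envelope produces a (still additive, monoidal, grading-preserving) extension $Kar(\ol{\cD})\to Kar(\ol{\Ba})$. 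Finally, applying this functor termwise and on differentials gives $\ol{G}:K^b(Kar(\ol{\cD}))\to K^b(Kar(\ol{\Ba}))$. Monoidality and additivity are preserved at each step, and the tensor product of complexes on either side is computed termwise, so $\ol{G}$ is monoidal on complexes as well.

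Next I would compute the image of the generators of the $2$-braid group. The Rouquier complex is $F_{s_j} = (0\to B_{s_j}\xra{\bullet} R(1)\to 0)$ with $B_{s_j}$ in cohomological degree $0$. Under $\ol{G}$, $B_{s_j}\mapsto \cU_j$, $R\mapsto \Ba_n$, and the dot $\bullet$ maps to $b_j\beta_j=(-1)^{j+1}\beta_j$, so $\ol{G}(F_{s_j}) = (0\to \cU_j \xra{(-1)^{j+1}\beta_j}\Ba_n(1)\to 0)$ with $\cU_j$ in degree $0$. Recalling $\cU_j = P^B_j\otimes_{\F_j}{}_jP^B(1)$, a direct comparison with $R_j = (0\to \cU_j(-1)\xra{\beta_j}\Ba_n\to 0)$ (with $\Ba_n$ in degree $0$) shows $\ol{G}(F_{s_j}) \cong R_j[-1](1)$, the sign $(-1)^{j+1}$ being absorbed by an obvious termwise isomorphism. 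A symmetric computation using the startdot and the map $\gamma_j$ gives $\ol{G}(E_{s_j})\cong R'_j[1](-1)$.

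Finally, because Rouquier complexes are defined by tensoring over the braid word, $F_{\underline w}=F_{s_{i_1}}\otimes\cdots\otimes F_{s_{i_k}}$, and similarly the type $B$ Khovanov complex of \cref{defbraid} is built by tensoring $R_j$'s and $R'_j$'s, monoidality of $\ol{G}$ propagates the single-generator identification to arbitrary braid words, incurring only an overall cohomological shift and internal grading shift determined by the writhe and length of the word. The main obstacle is really bookkeeping: one must check that the grading conventions, the sign discrepancy $(-1)^{j+1}$ coming from the chosen scalars $a_j,b_j,c_j,d_j$ in \cref{maintheorem}, and the cohomological placement of $\Ba_n$ versus $B_{s_j}$ all assemble into the single uniform shift $[-1](1)$ (respectively $[1](-1)$) rather than generator-dependent shifts; this is a routine but careful verification using the formulas for $\beta_j,\gamma_j$, and the structure constants $f^j_k$ determined in the proof of \cref{maintheorem}.
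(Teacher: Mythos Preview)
Your proposal is correct and follows exactly the expected approach: the paper states this corollary without proof, treating it as an immediate consequence of \cref{maintheorem} via the standard chain of universal extensions (graded additive closure, Karoubi envelope, bounded homotopy category) together with the termwise computation of $\ol{G}(F_{s_j})$ and $\ol{G}(E_{s_j})$. Your shift bookkeeping and absorption of the sign $(-1)^{j+1}$ are accurate.
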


\begin{corollary}[Faithfullness of the 2-braid group in type B] \label{Faith2}
Let $(W,S)$ be a Coxeter group of type $B_n,$ for $n \geq 2$. 
 For any two distinct braids $\sigma \neq \beta \in Br_{(W,S)}$ the corresponding Rouquier complexes $F_\sigma$ and $F_\beta$ in the $2$-braid group are non-isomorphic.

\end{corollary}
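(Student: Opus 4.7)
The plan is to reduce the faithfulness of the type $B$ $2$-braid group to the faithfulness of the type $B$ $1$-categorical action established in Chapter~1 (\cref{faithful action}), using the monoidal functor $\bar{G}$ constructed in \cref{maintheorem} (together with the corollary immediately following it) as the bridge between the two categorifications.

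Assume for contradiction that there exist distinct $\sigma \neq \beta \in \cA(B_n)$ whose Rouquier complexes are isomorphic in $K^b(\bS\text{-bimod})$. Applying the additive monoidal functor $\bar{G}: K^b(Kar(\bar{\cD})) \to K^b(Kar(\bar{\Ba}))$ yields an isomorphism $\bar{G}(F_\sigma) \cong \bar{G}(F_\beta)$. By the preceding corollary, for any expression $\sigma = s_{i_1}^{\epsilon_1}\cdots s_{i_k}^{\epsilon_k}$ the image $\bar{G}(F_\sigma)$ is the tensor product
\[
R_\sigma := R_{i_1}^{\epsilon_1}\otimes_{\Ba_n}\cdots\otimes_{\Ba_n} R_{i_k}^{\epsilon_k}
\]
up to an overall cohomological and internal shift $[-\ell(\sigma)](\ell(\sigma))$, where $\ell: \cA(B_n) \to \Z$ is the (well-defined) exponent-sum homomorphism, and the right-hand side represents the action of $\sigma$ on $\Kom^b(\Ba_n\text{-}p_rg_rmod)$ from \cref{Cat B action}. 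A short argument with graded Euler characteristics in the Grothendieck group (or an inspection of the top and bottom nonzero cohomological degrees of minimal representatives) forces $\ell(\sigma) = \ell(\beta)$; cancelling the common shift yields an isomorphism $R_\sigma \cong R_\beta$ in $K^b((\Ba_n,\Ba_n)\text{-bimod})$, and hence an isomorphism of endofunctors $R_\sigma\otimes_{\Ba_n}(-) \cong R_\beta\otimes_{\Ba_n}(-)$ on $\Kom^b(\Ba_n\text{-}p_rg_rmod)$. Composing with $R_\beta^{-1}\otimes_{\Ba_n}(-)$ produces $(\beta^{-1}\sigma)(C) \cong C$ for every object $C$, and \cref{faithful action} then forces $\beta^{-1}\sigma = 1$, contradicting $\sigma \neq \beta$.

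The principal obstacle will be bookkeeping the gradings cleanly between the two chapters. The categorical action of Chapter~1 uses a path-length $\Z$-grading $\{-\}$ together with a $\Z/2\Z$-grading $\<-\>$ (with $\sigma^B_1$ acting via $R_1\<1\>$), whereas $\bar{G}$ here was built using a single path-length grading $(-)$. I would reconcile this by enhancing the target $\bar{\Ba}$ with the $\Z/2\Z$-grading and promoting $\bar{G}(F_{s_1})$ to $R_1\<1\>[-1](1)$; because $\<-\>$ acts trivially on every indecomposable module except $P_1^B$, the verifications of all the relations listed in the proof of \cref{maintheorem} go through with at most cosmetic changes. A secondary technical point is the invariance of the exponent sum $\ell$ under isomorphism of Rouquier complexes, which follows from the fact that every defining relation of $\cA(B_n)$ is homogeneous in the exponent sum, so that $\ell$ descends to a well-defined invariant of the isomorphism class $[F_\sigma]$ inside a suitable filtration of the Grothendieck group.
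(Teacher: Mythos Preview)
Your approach is correct and matches the paper's intended argument (the corollary is stated without proof, the deduction from $\ol{G}$ and \cref{faithful action} being routine after Jensen's type $A$ template). Your two self-flagged concerns are milder than you fear. The $\Z/2\Z$-grading $\<-\>$ is unnecessary: the proof of \cref{faithful action} factors through $\Aa_{2n-1}\otimes_{\Ba_n}-$, which forgets it anyway, so faithfulness already holds for the singly-graded action that $\ol{G}$ targets and no enhancement of the functor is needed. The exponent-sum issue is handled most cleanly not by first isolating $\ell(\sigma)=\ell(\beta)$ but by invoking the slightly stronger statement (implicit in \cite{KhoSei} via the free $\Z\times\Z$-action on bigraded curves) that no nontrivial braid acts as a pure shift; then $R_{\beta^{-1}\sigma}\cong\Ba_n[\ell](-\ell)$ forces $\beta^{-1}\sigma=e$ and $\ell=0$ simultaneously.
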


             \chapter{A Graded Fock Vector for a Crossingless Matching.} \label{Chap3}

\section{Background and Outline of the Chapter}
In \cite{Jones}, Vaughan Jones introduced a polynomial invariant for knots, thanks to the uniqueness of the trace on a class of von Neumann algebras called the type $II_1$ factors. 
  This is the well-known Jones polynomial.
  He introduced the Jones polynomial via a representation of braid groups through the representation of Temperley-Lieb algebras. 
  The goal of this chapter is to seek a new presentation of Jones polynomial via the Heisenberg algebra.
  This comes from a conjectural braid group action on the Fock space, which is a representation of the Heisenberg algebra.
  The braid group action has its origin in Lusztig's braid group action constructed from a quantum group \cite{Lusztig} in the form of vertex operators \cite{cautis2014vertex}, which is categorical in flavour.
  Though it still remains a basic conjectural statement that the newly-defined braid operators in this chapter indeed braid, we  start off the program by identifying a vector subspace of Fock space which is conjectured to be a Temperley-Lieb representation.
  Recall that the Temperley-Lieb algebra $\cT \cL_{n+1}$ has a graphical interpretation with the following assignment to its generator $u_i$:
     \begin{figure}[H]  
     \centering
   \begin{tikzpicture}
   \draw (-1,-.5) -- (-1,.5);
   \draw (-.5,-.5) -- (-.5,.5);
    \draw (0.25,-.5) -- (0.25,.5);
         \node at (-.1,0) {$\cdots$};
          \draw (1.75,-.5) -- (1.75,.5);
            \node at (2.15,0) {$\cdots$};
            \draw (2.5,-.5) -- (2.5,.5);
             \draw (3,-.5) -- (3,.5);
    \draw (1.25,0.5) -- (1.25,0.5) arc(0:-180:0.25);
      \draw (1.25,-0.5) -- (1.25,-0.5) arc(0:180:0.25);
     \node [below] at (-1,-0.5) {{\footnotesize 1}};
      \node [below] at (-.5,-0.5) {{\footnotesize 2}};
      \node at (-1.5,0) {$u_i =$};
        \node [below] at (0.25,-.5) {{\scriptsize i-1}};
        \node [below] at (.75,-.5) {{\footnotesize i}};
        \node [below] at (1.25,-.5) {{\scriptsize i+1}}; 
        \node [below] at (1.75,-.5) {{\scriptsize i+2}}; 
        \node [below] at (2.5,-.5) {{\scriptsize n}}; 
          \node [below] at (3,-.5) {{\scriptsize n+1}}; 
   \end{tikzpicture}
   \end{figure} 
\noindent and this generator acts on the set of crossingless matchings.
    We devise a way to assign each crossingless matchings a graded vector in the Fock space.

    On the other hand, the $(n+1)$-strand braid group or the type $A_n$ braid group, $\cA(A_n)$ has a graphical interpretation with the following assignment to its generators $s_i:$
         \begin{figure}[H]  
         \centering
   \begin{tikzpicture}
   \draw (-1,-.5) -- (-1,.5);
   \draw (-.5,-.5) -- (-.5,.5);
    \draw (0.25,-.5) -- (0.25,.5);
         \node at (-.1,0) {$\cdots$};
          \draw (1.75,-.5) -- (1.75,.5);
            \node at (2.15,0) {$\cdots$};
            \draw (2.5,-.5) -- (2.5,.5);
             \draw (3,-.5) -- (3,.5);
    \draw (0.75,0.5) -- (1.25,-.5);
    \draw (0.75, -0.5) -- (.95,-.1);
      \draw (1.05, 0.1) -- (1.25,.5);
     \node [below] at (-1,-0.5) {{\footnotesize 1}};
      \node [below] at (-.5,-0.5) {{\footnotesize 2}};
      \node at (-1.5,0) {$\sigma_i =$};
        \node [below] at (0.25,-.5) {{\scriptsize i-1}};
        \node [below] at (.75,-.5) {{\footnotesize i}};
        \node [below] at (1.25,-.5) {{\scriptsize i+1}}; 
        \node [below] at (1.75,-.5) {{\scriptsize i+2}}; 
        \node [below] at (2.5,-.5) {{\scriptsize n}}; 
          \node [below] at (3,-.5) {{\scriptsize n+1}}; 
   \end{tikzpicture}
   \end{figure}
  \noindent  Following \cite{Kauff}, note that there is a representation $\pi$ of the $(n+1)$-strand braid group into the Temperley-Lieb algebra which is given by 
  \begin{figure}[H]  
  \centering
   \begin{tikzpicture}
   \draw (-7,-.5) -- (-7,.5);
   \draw (-6.5,-.5) -- (-6.5,.5);
    \draw (-5.75,-.5) -- (-5.75,.5);
         \node at (-6.1,0) {$\cdots$};
          \draw (-4.25,-.5) -- (-4.25,.5);
            \node at (-3.85,0) {$\cdots$};
            \draw (-3.5,-.5) -- (-3.5,.5);
             \draw (-3,-.5) -- (-3,.5);
    \draw (-5.25,0.5) -- (-4.75,-.5);
    \draw (-5.25, -0.5) -- (-5.05,-.1);
      \draw (-4.95, 0.1) -- (-4.75,.5);
     \node [below] at (-7,-0.5) {{\footnotesize 1}};
      \node [below] at (-6.5,-0.5) {{\footnotesize 2}};
        \node [below] at (-5.75,-.5) {{\scriptsize i-1}};
        \node [below] at (-5.25,-.5) {{\footnotesize i}};
        \node [below] at (-4.75,-.5) {{\scriptsize i+1}}; 
        \node [below] at (-4.25,-.5) {{\scriptsize i+2}}; 
        \node [below] at (-3.5,-.5) {{\scriptsize n}}; 
          \node [below] at (-3,-.5) {{\scriptsize n+1}};

   \node at (-2.25,0) {$\mapsto$};
    \node at (-1.5,0) {$A$};

   \draw (-1,-.5) -- (-1,.5);
   \draw (-.5,-.5) -- (-.5,.5);
    \draw (0.25,-.5) -- (0.25,.5);
         \node at (-.1,0) {$\cdots$};
          \draw (1.75,-.5) -- (1.75,.5);
            \node at (2.15,0) {$\cdots$};
            \draw (2.5,-.5) -- (2.5,.5);
             \draw (3,-.5) -- (3,.5);
    \draw (1.25,0.5) -- (1.25,0.5) arc(0:-180:0.25);
      \draw (1.25,-0.5) -- (1.25,-0.5) arc(0:180:0.25);
     \node [below] at (-1,-0.5) {{\footnotesize 1}};
      \node [below] at (-.5,-0.5) {{\footnotesize 2}};
        \node [below] at (0.25,-.5) {{\scriptsize i-1}};
        \node [below] at (.75,-.5) {{\footnotesize i}};
        \node [below] at (1.25,-.5) {{\scriptsize i+1}}; 
        \node [below] at (1.75,-.5) {{\scriptsize i+2}}; 
        \node [below] at (2.5,-.5) {{\scriptsize n}}; 
          \node [below] at (3,-.5) {{\scriptsize n+1}}; 
          
\node at (3.5,0) {$+$};          
          
           \node at (4.5,0) {$A^{-1}$};
          
   \draw (5,-.5) -- (5,.5);
   \draw (5.5,-.5) -- (5.5,.5);
    \draw (6.25,-.5) -- (6.25,.5);
         \node at (5.9,0) {$\cdots$};
          \draw (7.75,-.5) -- (7.75,.5);
            \node at (8.15,0) {$\cdots$};
            \draw (8.5,-.5) -- (8.5,.5);
             \draw (9,-.5) -- (9,.5);
    \draw (6.75,0.5) -- (6.75,-0.5) ;
      \draw (7.25,-0.5) -- (7.25,0.5) ;
     \node [below] at (5,-0.5) {{\footnotesize 1}};
      \node [below] at (5.5,-0.5) {{\footnotesize 2}};
        \node [below] at (6.25,-.5) {{\scriptsize i-1}};
        \node [below] at (6.75,-.5) {{\footnotesize i}};
        \node [below] at (7.25,-.5) {{\scriptsize i+1}}; 
        \node [below] at (7.75,-.5) {{\scriptsize i+2}}; 
        \node [below] at (8.5,-.5) {{\scriptsize n}}; 
          \node [below] at (9,-.5) {{\scriptsize n+1}};

   \end{tikzpicture}
   \end{figure} 
\noindent  The choice of scalar here is due to the requirement of the invariant under type II Reidemeister move.

Hence, the vector subspace spanned by the crossingless matchings is conjectured to be preserved by the braid operators via the Temperley-Lieb representation.

 \subsection*{Outline of the chapter}

 In \cref{CombPar}, we introduce various definitions of combinatorics related to tableaux which are essential to the theory in the chapter.
 
\cref{QuaLatHei} is where we recall and reconcile the definition of type $A$ quantum lattice Heisenberg algebra from various literature.

 In \cref{Fockrep}, we present the main battlefield of the theory --  the Fock space representation of the Heisenberg algebra.
 
 \cref{braidop} is where we define the braid operator in the Fock space representation coming from private communication with my supervisor, Licata \cite{LicPrivate}.

 \cref{Fockinv} is the key section where we introduce a graded Fock vector for a crossingless matching. 
 We also make two conjectures on these graded Fock vectors.

 \newpage
  
    \section{Combinatorics of Partitions of a Number} \label{CombPar}

  For the general study of Young tableaux, the readers are encouraged to read \cite{Fulton, Sagan, Stan1}.
 In this section, we will also recall a specific combinatoric tool from \cite{ErikLi}. 
  
  Let $\lambda = (\lambda_1,  \lambda_2, \hdots,  \lambda_r, \hdots)$ with an  infinite number of non-zero terms satisfying $\lambda_1 \geq \lambda_2 \geq \cdots \geq  \lambda_r \geq \cdots$ be a partition of a natural number $n \in \N_{> 0}.$
  We identify a finite sequence  $(\lambda_1,  \lambda_2, \hdots,  \lambda_r) $ with the infinite sequence obtained by setting $\lambda_i =0$ for $i > r.$
   These $\lambda_i$ are called the \textit{parts} of $\lambda.$
   We then have the sum $|\lambda| := \lambda_1 + \lambda_2 + \cdots + \lambda_r = n.$  We also denote $\lambda \vdash n.$ 
   We could also write $\lambda = (1^{m_1} 2^{m_2} \cdots )$ where $m_i$ is the number of parts of $\lambda$ equals to $i.$
   For example, $\lambda = (3,2,2,1) = (1 \cdot 2^2 \cdot 3).$

  To each $\lambda,$ we can associate a \textit{Young diagram} (in English notation) which is a collection of boxes arranged in left-justified rows with a weakly decreasing number of boxes in each row.  
     We define the \textit{conjugate partition} or the \textit{transpose} of a partition $\lambda^t$ to be the Young diagram obtained by reflection in the main diagonal, in other words, interchanging rows and columns. 
     It follows that if $\lambda = (\lambda_1,  \lambda_2, \hdots,  \lambda_r),$ then the number of parts of $\lambda^t$ that equals to $i$  is $\lambda_{i+1} - \lambda_i.$
   For example, if $\lambda = (3,2,2,1) ,$ then $\lambda^t = (4,3,1).$
   We write $\mu \subset \lambda$ if the Young diagram of $\mu$ is contained in that of $\lambda$, or equivalently $\mu_i \leq \lambda_i$ for all $i.$
   If $\mu \subset \lambda,$ we can then form a skew diagram $\lambda - \mu$ by removing $\mu$ from $\lambda$ in such a way that the first $\mu_i$ boxes are removed from the part $\lambda_i $ for all $i$.
   
   Next, we want to introduce the lattice permutation.
   A \textit{Young tableau} or a \textit{semistandard Young tableau} is a filling of a Young diagram by  positive integers that is weakly increasing across each row and strictly increasing down each column and denote a Young tableau by $SSYT(\lambda)$.
   If the Young diagram is filled with the natural numbers $1,2, \hdots, k,$ then it is called the standard Young tableau and denoted by $SYT(\lambda).$  
   The \textit{word} $w(\lambda_T)$ of a tableau $T$ is the set of numbers in the diagram read from right to left and top to bottom.
   On the other hand, the \textit{content} $w(\lambda)$ of a partition is the word of the tableau of shape $\lambda$ with $i$-th row filled with the number $i.$ 
   The word $a_1a_2 \cdots a_r$ is a \textit{lattice permutation}\footnote{Also known as \textit{Yamanouchi word} or \textit{ballot sequence} in different literature.}   if for any $i$ and $k \leq r,$ the number of $i$'s in $a_1a_2 \cdots a_r$ is at least as great as the number of $i+1$'s.
     For example, 
   
      \begin{figure}[H]  
    \centering
  \begin{tikzpicture}
   \draw (-1,0) -- (.5,0);
   \draw (-1,-2) -- (-1,0);
   \draw (-.5,0) -- (-.5,-2);
   \draw ( -1,  -2 ) --(-.5, -2 );
     \draw (.5,-.5) -- (.5, 0);
     \draw (0, -1.5) -- (0,  0);
     \draw (-1,-1.5) -- (0,  -1.5);
     \draw (-1, -1) -- (0, -1);
     \draw (-1,-.5) -- (.5, -.5);
     \node [below right] at (-1,0) {$1$};
        \node [below right] at (-.5,0) {$1$};
        \node [below right] at (0,  0) {$2$};
        \node [below right] at (-1, -.5) {$2$};
      \node [below right] at  (-.5, -.5) {$3$};
       \node [below right] at  (-1, -1)  {$4$};
     \node [below right] at  (-.5, -1) {$4$};
        \node [below right] at  (-1, -1.5) {$5$};
         \node  at  (-1.5, -1) {$\lambda_T=$};
  \node  at  (-1, -3) {$w(\lambda_T)=21132445$};
 \node  at  (-6, -4) {It is a lattice permutation.};

   \draw (-6,0) -- (-4.5,0);
   \draw (-6,-2) -- (-6,0);
   \draw (-5.5,0) -- (-5.5,-2);
   \draw ( -6,  -2 ) --(-5.5, -2 );
     \draw (-4.5,-.5) -- (-4.5, 0);
     \draw (-5, -1.5) -- (-5,  0);
     \draw (-6,-1.5) -- (-5,  -1.5);
     \draw (-6, -1) -- (-5, -1);
     \draw (-6,-.5) -- (-4.5, -.5);
      \node  at  (-6.5, -1) {$\lambda=$};
        \node  at  (-6, -3) {$w(\lambda)=11122334$};
         \node  at  (-1, -4) {It is not a lattice permutation}; 
                \node  at  (-1, -4.5) {because of the first $2$.}; 
     
   \end{tikzpicture}
   \end{figure}     
   
   We denote the set of skew Young tableaux $(\lambda - \nu)_T$ with content $w( \mu - \kappa)$ whose concatenation of words $w(\kappa)w((\lambda - \nu)_T)$ are lattice permutation as $\fl \fp(\lambda- \nu, \mu- \kappa).$
   We also define \textit{$t$-Young tableau} $\lambda_{\fT}$ as a filling of monomials of the form $at^b$ for $a \geq 1$ and $b \in \{0,1\}$ with a total order given by 
   \begin{align}
   at^b \leq ct^d \iff b < d \text{ or } b=d \text{ and } a \leq c.
   \end{align}
   The word of $t$-Young tableau reads the coefficients of the monomials from right to left and top to bottom with the monomials of the form $at^0$ read before the monomials of the form $at^1$.  
   We denote the set of skew $t$-Young tableaux$(\lambda - \nu)_\fT$ with monomials having coefficient in content $w( \mu - \kappa)$ and whose concatenation of words $w(\kappa)w((\lambda - \nu)_\fT)$ are lattice permutation as $t$-$\fl \fp(\lambda- \nu, \mu- \kappa)$.
   We also define a statistic on $t$-Young tableau by 
      \begin{align}
      c(\lambda_\fT) = \prod_{at^b \in \lambda_\fT} t^b.
      \end{align}
 For example, let $\lambda = (3,2,2,1), \nu = (3,1), \mu = (3,1,1), \kappa = (1).$
 Then, $w(\mu- \kappa) = 1123.$
 Below is the list of elements in $t$-$\fl \fp(\lambda- \nu, \mu- \kappa)$ with the its first element in $\fl \fp(\lambda- \nu, \mu- \kappa).$
 
         \begin{figure}[H]   
         \begin{subfigure}{0.2 \textwidth}
           \centering
  \begin{tikzpicture}
     \draw[fill=gray!30]    (-1,0) -- (.5,0) -- (.5,-.5) -- (-1,-.5);
       \draw[fill=gray!30]    (-1,-.5) -- (-.5,-.5) -- (-.5,-1) -- (-1,-1);
   \draw (-1,0) -- (.5,0);
   \draw (-1,-2) -- (-1,0);
   \draw (-.5,0) -- (-.5,-2);
   \draw ( -1,  -2 ) --(-.5, -2 );
     \draw (.5,-.5) -- (.5, 0);
     \draw (0, -1.5) -- (0,  0);
     \draw (-1,-1.5) -- (0,  -1.5);
     \draw (-1, -1) -- (0, -1);
     \draw (-1,-.5) -- (.5, -.5);
      \node  at  (-.25, -.75) {$1$};     
          \node  at  (-.75, -1.25) {$1$};   
          \node  at  (-.25, -1.25) {$2$};   
               \node  at  (-.75, -1.75) {$3$};   
              \node  at  (-.25, -2.35) {$w = 11213$};      
               \node  at  (-.55, -2.65) {$c = 1$}; 
       \end{tikzpicture}
       \end{subfigure}
      \hfill
           \begin{subfigure}{0.2 \textwidth}
             \centering
  \begin{tikzpicture}
      \draw[fill=gray!30]    (-1,0) -- (.5,0) -- (.5,-.5) -- (-1,-.5);
       \draw[fill=gray!30]    (-1,-.5) -- (-.5,-.5) -- (-.5,-1) -- (-1,-1);
   \draw (-1,0) -- (.5,0);
   \draw (-1,-2) -- (-1,0);
   \draw (-.5,0) -- (-.5,-2);
   \draw ( -1,  -2 ) --(-.5, -2 );
     \draw (.5,-.5) -- (.5, 0);
     \draw (0, -1.5) -- (0,  0);
     \draw (-1,-1.5) -- (0,  -1.5);
     \draw (-1, -1) -- (0, -1);
     \draw (-1,-.5) -- (.5, -.5);
             \node  at  (-.25, -.75) {$1$};  
             \node  at  (-.75, -1.25) {$2$};   
              \node  at  (-.25, -1.25) {$t$};   
              \node  at  (-.75, -1.75) {$3$};   
              \node  at  (-.25, -2.35) {$w = 11231$};      
               \node  at  (-.55, -2.65) {$c = t$}; 
       \end{tikzpicture}
       \end{subfigure}
       \hfill
           \begin{subfigure}{0.2 \textwidth}
             \centering
  \begin{tikzpicture}
     \draw[fill=gray!30]    (-1,0) -- (.5,0) -- (.5,-.5) -- (-1,-.5);
       \draw[fill=gray!30]    (-1,-.5) -- (-.5,-.5) -- (-.5,-1) -- (-1,-1);
   \draw (-1,0) -- (.5,0);
   \draw (-1,-2) -- (-1,0);
   \draw (-.5,0) -- (-.5,-2);
   \draw ( -1,  -2 ) --(-.5, -2 );
     \draw (.5,-.5) -- (.5, 0);
     \draw (0, -1.5) -- (0,  0);
     \draw (-1,-1.5) -- (0,  -1.5);
     \draw (-1, -1) -- (0, -1);
     \draw (-1,-.5) -- (.5, -.5);
              \node  at  (-.25, -.75) {$1$};  
             \node  at  (-.75, -1.25) {$1$};   
              \node  at  (-.25, -1.25) {$2$};   
              \node  at  (-.75, -1.75) {$3t$};   
              \node  at  (-.25, -2.35) {$w = 11213$};      
               \node  at  (-.55, -2.65) {$c = t$}; 
       \end{tikzpicture}
       \end{subfigure}
        \hfill
           \begin{subfigure}{0.2 \textwidth}
             \centering
  \begin{tikzpicture}
     \draw[fill=gray!30]    (-1,0) -- (.5,0) -- (.5,-.5) -- (-1,-.5);
       \draw[fill=gray!30]    (-1,-.5) -- (-.5,-.5) -- (-.5,-1) -- (-1,-1);
   \draw (-1,0) -- (.5,0);
   \draw (-1,-2) -- (-1,0);
   \draw (-.5,0) -- (-.5,-2);
   \draw ( -1,  -2 ) --(-.5, -2 );
     \draw (.5,-.5) -- (.5, 0);
     \draw (0, -1.5) -- (0,  0);
     \draw (-1,-1.5) -- (0,  -1.5);
     \draw (-1, -1) -- (0, -1);
     \draw (-1,-.5) -- (.5, -.5);
          \node  at  (-.25, -.75) {$1$};  
             \node  at  (-.75, -1.25) {$1$};   
              \node  at  (-.25, -1.25) {$3t$};   
              \node  at  (-.75, -1.75) {$2$};   
              \node  at  (-.25, -2.35) {$w = 11123$};      
               \node  at  (-.55, -2.65) {$c = t$}; 
       \end{tikzpicture}
       \end{subfigure}
   \end{figure}

        \begin{figure}[H]   
         \begin{subfigure}{0.2 \textwidth}
           \centering
  \begin{tikzpicture}
     \draw[fill=gray!30]    (-1,0) -- (.5,0) -- (.5,-.5) -- (-1,-.5);
       \draw[fill=gray!30]    (-1,-.5) -- (-.5,-.5) -- (-.5,-1) -- (-1,-1);
   \draw (-1,0) -- (.5,0);
   \draw (-1,-2) -- (-1,0);
   \draw (-.5,0) -- (-.5,-2);
   \draw ( -1,  -2 ) --(-.5, -2 );
     \draw (.5,-.5) -- (.5, 0);
     \draw (0, -1.5) -- (0,  0);
     \draw (-1,-1.5) -- (0,  -1.5);
     \draw (-1, -1) -- (0, -1);
     \draw (-1,-.5) -- (.5, -.5);
      \node  at  (-.25, -.75) {$2$};     
          \node  at  (-.75, -1.25) {$3$};   
          \node  at  (-.25, -1.25) {$t$};   
               \node  at  (-.75, -1.75) {$t$};   
              \node  at  (-.25, -2.35) {$w = 12311$};      
               \node  at  (-.55, -2.65) {$c = t^2$}; 
       \end{tikzpicture}
       \end{subfigure}
      \hfill
           \begin{subfigure}{0.2 \textwidth}
             \centering
  \begin{tikzpicture}
      \draw[fill=gray!30]    (-1,0) -- (.5,0) -- (.5,-.5) -- (-1,-.5);
       \draw[fill=gray!30]    (-1,-.5) -- (-.5,-.5) -- (-.5,-1) -- (-1,-1);
   \draw (-1,0) -- (.5,0);
   \draw (-1,-2) -- (-1,0);
   \draw (-.5,0) -- (-.5,-2);
   \draw ( -1,  -2 ) --(-.5, -2 );
     \draw (.5,-.5) -- (.5, 0);
     \draw (0, -1.5) -- (0,  0);
     \draw (-1,-1.5) -- (0,  -1.5);
     \draw (-1, -1) -- (0, -1);
     \draw (-1,-.5) -- (.5, -.5);
             \node  at  (-.25, -.75) {$1$};  
             \node  at  (-.75, -1.25) {$2$};   
              \node  at  (-.25, -1.25) {$t$};   
              \node  at  (-.75, -1.75) {$3t$};   
              \node  at  (-.25, -2.35) {$w = 11213$};      
               \node  at  (-.55, -2.65) {$c = t^2$}; 
       \end{tikzpicture}
       \end{subfigure}
       \hfill
           \begin{subfigure}{0.2 \textwidth}
             \centering
  \begin{tikzpicture}
     \draw[fill=gray!30]    (-1,0) -- (.5,0) -- (.5,-.5) -- (-1,-.5);
       \draw[fill=gray!30]    (-1,-.5) -- (-.5,-.5) -- (-.5,-1) -- (-1,-1);
   \draw (-1,0) -- (.5,0);
   \draw (-1,-2) -- (-1,0);
   \draw (-.5,0) -- (-.5,-2);
   \draw ( -1,  -2 ) --(-.5, -2 );
     \draw (.5,-.5) -- (.5, 0);
     \draw (0, -1.5) -- (0,  0);
     \draw (-1,-1.5) -- (0,  -1.5);
     \draw (-1, -1) -- (0, -1);
     \draw (-1,-.5) -- (.5, -.5);
              \node  at  (-.25, -.75) {$1$};  
             \node  at  (-.75, -1.25) {$2$};   
              \node  at  (-.25, -1.25) {$3t$};   
              \node  at  (-.75, -1.75) {$t$};   
              \node  at  (-.25, -2.35) {$w = 11231$};      
               \node  at  (-.55, -2.65) {$c = t^2$}; 
       \end{tikzpicture}
       \end{subfigure}
        \hfill
           \begin{subfigure}{0.2 \textwidth}
             \centering
  \begin{tikzpicture}
     \draw[fill=gray!30]    (-1,0) -- (.5,0) -- (.5,-.5) -- (-1,-.5);
       \draw[fill=gray!30]    (-1,-.5) -- (-.5,-.5) -- (-.5,-1) -- (-1,-1);
   \draw (-1,0) -- (.5,0);
   \draw (-1,-2) -- (-1,0);
   \draw (-.5,0) -- (-.5,-2);
   \draw ( -1,  -2 ) --(-.5, -2 );
     \draw (.5,-.5) -- (.5, 0);
     \draw (0, -1.5) -- (0,  0);
     \draw (-1,-1.5) -- (0,  -1.5);
     \draw (-1, -1) -- (0, -1);
     \draw (-1,-.5) -- (.5, -.5);
          \node  at  (-.25, -.75) {$1$};  
             \node  at  (-.75, -1.25) {$1$};   
              \node  at  (-.25, -1.25) {$2t$};   
              \node  at  (-.75, -1.75) {$3t$};   
              \node  at  (-.25, -2.35) {$w = 11123$};      
               \node  at  (-.55, -2.65) {$c = t^2$}; 
       \end{tikzpicture}
       \end{subfigure}
   \end{figure}

     \begin{figure}[H]   
         \begin{subfigure}{0.4 \textwidth}
           \centering
  \begin{tikzpicture}
     \draw[fill=gray!30]    (-1,0) -- (.5,0) -- (.5,-.5) -- (-1,-.5);
       \draw[fill=gray!30]    (-1,-.5) -- (-.5,-.5) -- (-.5,-1) -- (-1,-1);
   \draw (-1,0) -- (.5,0);
   \draw (-1,-2) -- (-1,0);
   \draw (-.5,0) -- (-.5,-2);
   \draw ( -1,  -2 ) --(-.5, -2 );
     \draw (.5,-.5) -- (.5, 0);
     \draw (0, -1.5) -- (0,  0);
     \draw (-1,-1.5) -- (0,  -1.5);
     \draw (-1, -1) -- (0, -1);
     \draw (-1,-.5) -- (.5, -.5);
      \node  at  (-.25, -.75) {$2$};     
          \node  at  (-.75, -1.25) {$1$};   
          \node  at  (-.25, -1.25) {$t$};   
               \node  at  (-.75, -1.75) {$3t$};   
              \node  at  (-.25, -2.35) {$w = 12113$};      
               \node  at  (-.55, -2.65) {$c = t^2$}; 
       \end{tikzpicture}
       \end{subfigure}
      \hfill
           \begin{subfigure}{0.4 \textwidth}
             \centering
  \begin{tikzpicture}
      \draw[fill=gray!30]    (-1,0) -- (.5,0) -- (.5,-.5) -- (-1,-.5);
       \draw[fill=gray!30]    (-1,-.5) -- (-.5,-.5) -- (-.5,-1) -- (-1,-1);
   \draw (-1,0) -- (.5,0);
   \draw (-1,-2) -- (-1,0);
   \draw (-.5,0) -- (-.5,-2);
   \draw ( -1,  -2 ) --(-.5, -2 );
     \draw (.5,-.5) -- (.5, 0);
     \draw (0, -1.5) -- (0,  0);
     \draw (-1,-1.5) -- (0,  -1.5);
     \draw (-1, -1) -- (0, -1);
     \draw (-1,-.5) -- (.5, -.5);
             \node  at  (-.25, -.75) {$2$};  
             \node  at  (-.75, -1.25) {$1$};   
              \node  at  (-.25, -1.25) {$3t$};   
              \node  at  (-.75, -1.75) {$t$};   
              \node  at  (-.25, -2.35) {$w = 12131$};      
               \node  at  (-.55, -2.65) {$c = t^2$}; 
       \end{tikzpicture}
       \end{subfigure}
   \end{figure}

           \begin{figure}[H]   
         \begin{subfigure}{0.2 \textwidth}
           \centering
  \begin{tikzpicture}
     \draw[fill=gray!30]    (-1,0) -- (.5,0) -- (.5,-.5) -- (-1,-.5);
       \draw[fill=gray!30]    (-1,-.5) -- (-.5,-.5) -- (-.5,-1) -- (-1,-1);
   \draw (-1,0) -- (.5,0);
   \draw (-1,-2) -- (-1,0);
   \draw (-.5,0) -- (-.5,-2);
   \draw ( -1,  -2 ) --(-.5, -2 );
     \draw (.5,-.5) -- (.5, 0);
     \draw (0, -1.5) -- (0,  0);
     \draw (-1,-1.5) -- (0,  -1.5);
     \draw (-1, -1) -- (0, -1);
     \draw (-1,-.5) -- (.5, -.5);
      \node  at  (-.25, -.75) {$1$};     
          \node  at  (-.75, -1.25) {$t$};   
          \node  at  (-.25, -1.25) {$2t$};   
               \node  at  (-.75, -1.75) {$3t$};   
              \node  at  (-.25, -2.35) {$w = 11213$};      
               \node  at  (-.55, -2.65) {$c = t^3$}; 
       \end{tikzpicture}
       \end{subfigure}
      \hfill
           \begin{subfigure}{0.2 \textwidth}
             \centering
  \begin{tikzpicture}
      \draw[fill=gray!30]    (-1,0) -- (.5,0) -- (.5,-.5) -- (-1,-.5);
       \draw[fill=gray!30]    (-1,-.5) -- (-.5,-.5) -- (-.5,-1) -- (-1,-1);
   \draw (-1,0) -- (.5,0);
   \draw (-1,-2) -- (-1,0);
   \draw (-.5,0) -- (-.5,-2);
   \draw ( -1,  -2 ) --(-.5, -2 );
     \draw (.5,-.5) -- (.5, 0);
     \draw (0, -1.5) -- (0,  0);
     \draw (-1,-1.5) -- (0,  -1.5);
     \draw (-1, -1) -- (0, -1);
     \draw (-1,-.5) -- (.5, -.5);
             \node  at  (-.25, -.75) {$t$};  
             \node  at  (-.75, -1.25) {$1$};   
              \node  at  (-.25, -1.25) {$2t$};   
              \node  at  (-.75, -1.75) {$3t$};   
              \node  at  (-.25, -2.35) {$w = 11123$};      
               \node  at  (-.55, -2.65) {$c = t^3$}; 
       \end{tikzpicture}
       \end{subfigure}
       \hfill
           \begin{subfigure}{0.2 \textwidth}
             \centering
  \begin{tikzpicture}
     \draw[fill=gray!30]    (-1,0) -- (.5,0) -- (.5,-.5) -- (-1,-.5);
       \draw[fill=gray!30]    (-1,-.5) -- (-.5,-.5) -- (-.5,-1) -- (-1,-1);
   \draw (-1,0) -- (.5,0);
   \draw (-1,-2) -- (-1,0);
   \draw (-.5,0) -- (-.5,-2);
   \draw ( -1,  -2 ) --(-.5, -2 );
     \draw (.5,-.5) -- (.5, 0);
     \draw (0, -1.5) -- (0,  0);
     \draw (-1,-1.5) -- (0,  -1.5);
     \draw (-1, -1) -- (0, -1);
     \draw (-1,-.5) -- (.5, -.5);
              \node  at  (-.25, -.75) {$2$};  
             \node  at  (-.75, -1.25) {$t$};   
              \node  at  (-.25, -1.25) {$t$};   
              \node  at  (-.75, -1.75) {$3t$};   
              \node  at  (-.25, -2.35) {$w = 12113$};      
               \node  at  (-.55, -2.65) {$c = t^3$}; 
       \end{tikzpicture}
       \end{subfigure}
        \hfill
           \begin{subfigure}{0.2 \textwidth}
             \centering
  \begin{tikzpicture}
     \draw[fill=gray!30]    (-1,0) -- (.5,0) -- (.5,-.5) -- (-1,-.5);
       \draw[fill=gray!30]    (-1,-.5) -- (-.5,-.5) -- (-.5,-1) -- (-1,-1);
   \draw (-1,0) -- (.5,0);
   \draw (-1,-2) -- (-1,0);
   \draw (-.5,0) -- (-.5,-2);
   \draw ( -1,  -2 ) --(-.5, -2 );
     \draw (.5,-.5) -- (.5, 0);
     \draw (0, -1.5) -- (0,  0);
     \draw (-1,-1.5) -- (0,  -1.5);
     \draw (-1, -1) -- (0, -1);
     \draw (-1,-.5) -- (.5, -.5);
          \node  at  (-.25, -.75) {$t$};  
             \node  at  (-.75, -1.25) {$t$};   
              \node  at  (-.25, -1.25) {$2t$};   
              \node  at  (-.75, -1.75) {$3t$};   
              \node  at  (-.25, -2.35) {$w = 11213$};      
               \node  at  (-.55, -2.65) {$c = t^4$}; 
       \end{tikzpicture}
       \end{subfigure}
   \end{figure}

\section{Type $A$ Quantum Lattice Heisenberg Algebra, $\h_A$} \label{QuaLatHei}

We now give an explicit description of $\h_A.$ 
 Fix an orientation $\epsilon$ of the type $A$ Dynkin diagram.
 Denote $I_A$ to be the set of vertices in  the type $A$ Dynkin diagram.
 Given two vertices $i,j,$ we set
\[ \epsilon_{ij} =
\begin{cases}
                                   0 & \text{if $i,j$ are not connected by an edge;} \\
   1 & \text{if $i,j$ are connected by an edge with tail $i$ and head $j$ agree with the orientation $\epsilon$; } \\
   -1 & \text{if $i,j$ are connected by an edge with tail $i$ and head $j$ disagree with the orientation $\epsilon$. }
  \end{cases} 
\]
It is thus apparent that $\epsilon_{ij} = - \epsilon_{ji}.$

        Let      $[r+1] = t^{-r} + t^{-r+2} + \cdots + t^{r-2} + t^r = \frac{t^{-r}-t^{r}}{t^{-1}-t}$ be the shifted quantum integer. 
We define the \textit{type $A$ quantum Heisenberg algebra} $\h_A$ to be the unital $\C[t, t^{-1}]$-algebra with generators $p_i^{(n)} ,q_i^{(n)} $ for $i \in I_\Gamma,  n \in \N_{\geq 0}$ and relations
\[ \begin{cases} 
      q_i^{(n)} p_i^{(m)} = \sum_{r \geq 0} [r+1] p_i^{(m-r)} q_i^{(n-r)}, \\
      q_i^{(n)} p_j^{(m)} = p_j^{(m)} q_i^{(n)} + p_j^{(m-1)} q_i^{(n-1)} , & \text{if $j = i \pm 1$,} \\
      q_i^{(n)} p_j^{(m)} = p_j^{(m)} q_i^{(n)} , & \text{if $|i - j| > 1$,} \\
      p_i^{(n)} p_j^{(m)} = p_j^{(m)} p_i^{(n)}, \\
      q_i^{(n)} q_j^{(m)} = q_j^{(m)} q_i^{(n)}.
   \end{cases}
\]
  By convention, $p_i^{(0)} = q_i^{(0)} = 1$ is the identity element in  $\h_A$.

Alternatively,  $\h_\Gamma$ can be generated by $a_i(r),$ for $i \in I_\Gamma$ and $r,s \in \Z-0,$ satisfying

$$[a_i(r), a_j(s)] = \delta_{r,-s}[s\<i,j\>] \frac{[s]}{s}.$$

\noindent For any partition $\lambda \vdash n,$ we can define $p_i^\lambda$ using generating functions involving $a_i(r).$
  For example,
  
 $$ exp \left( \sum_{r \geq 1} \frac{a_i(-r)}{[r]} z^r \right) = \sum_{s \geq 0}  p_i^{(s)} z^s \text{  and  } exp \left( \sum_{r \geq 1} \frac{a_i(r)}{[r]} z^r \right) = \sum_{s \geq 0}  q_i^{(s)} z^s,$$ 
 
  $$ exp \left( - \sum_{r \geq 1} \frac{a_i(-r)}{[r]} z^r \right) = \sum_{s \geq 0}  (-1)^s p_i^{(1^s)} z^s \text{  and  } exp \left( - \sum_{r \geq 1} \frac{a_i(r)}{[r]} z^r \right) = \sum_{s \geq 0}  (-1)^s q_i^{(1^s)} z^s.$$ 
 
\noindent In general, for partition $\lambda = (\lambda_1,  \lambda_2, \hdots,  \lambda_t) = (1^{m_1}2^{m_2} \cdots)$, we first define $a_{i,+}^\lambda = a_i(\lambda_1) a_i(\lambda_2) \cdots a_i(\lambda_s), $ and similarly $ a_{i,-}^\lambda = a_i(-\lambda_1) a_i(-\lambda_2) \cdots a_i(-\lambda_s).$  
  Let $ [z]_\lambda = [1]^{m_1} m_1! [2]^{m_2} m_2! \cdots, $ and $h_{i, \pm}^n = \sum_{\lambda \vdash n}  [z]_\lambda^{-1} a_{i,\pm}^\lambda.$
We define 
$$ p^\lambda_i = det(h^{\lambda_j-k+l}_{i,-})^t_{k,l=1}, \text{ and } q^\lambda_i = det(h^{\lambda_j-k+l}_{i,+})^t_{k,l=1}.$$

\noindent Note that this resembles the Jacobi-Trudi identity for expressing Schur functions in terms of complete homogeneous symmetric function. 
 In fact, as pointed out in \cite{ErikLi}, there is an isometric isomorphism between the Heisenberg algebra generators $a_i(r)$ and the power sum symmetric functions.
  Combining with the transition matrix between complete homogeneous symmetric function, we obtain the formula for the Schur functions. 
  We could have defined $p^\lambda_i,q^\lambda_i$ using the character table of the symmetric group which serves as the transition matrix between the Schur functions and the power symmetric functions as explained in \cite{Mac}.
  However, we found that the definitions given above made the required coefficients apparent and therefore, are practical for computations. 

   Consequently, deriving from \cite{ErikLi}, we will then have the commuting relation between $q_i^{\lambda}$ and $p_j^{\mu}$ :
$$ \begin{cases} 
      q_i^{(\mu)} p_i^{(\lambda)} = \sum_{\nu \subseteq \lambda, \kappa \subseteq \mu, |\lambda|- |\nu| = |\mu| - |\kappa|}  \fC^{\mu \lambda}_{\nu \kappa}(t) p_i^{(\nu)} q_i^{(\kappa)}, \\
      q_i^{(\mu)} p_j^{(\lambda)} = \sum_{\nu \subseteq \lambda^t, \kappa \subseteq \mu, |\lambda|- |\nu| = |\mu| - |\kappa|}  C^{\mu \lambda^t}_{\nu \kappa} p_i^{(\nu)} q_i^{(\kappa)}, & \text{if $j = i \pm 1$,} \\
      q_i^{(\mu)} p_j^{(\lambda)} = p_j^{(\lambda)} q_i^{(\mu)} , & \text{if $|i - j| > 1$,} \\
      p_i^{(\mu)} p_j^{(\lambda)} = p_j^{(\lambda)} p_i^{(\mu)}, \\
      q_i^{(\mu)} q_j^{(\lambda)} = q_j^{(\lambda)} q_i^{(\mu)}, 
   \end{cases}
$$

\noindent where 
\begin{align}
C^{\mu \lambda}_{\nu \kappa} &= \sum_{\lambda_T \in \fl \fp(\lambda- \nu, \mu- \kappa)} 1 \text{    , and    } \\
\fC^{\mu \lambda}_{\nu \kappa}(t) &= \sum_{ \lambda_\fT \in t \text{-}\fl \fp(\lambda- \nu, \mu- \kappa)} t^{-(|\lambda|-|\nu|)}c(\lambda_\fp)(t^2).
\end{align}

         \section{Fock Space Representation of $\h_A$} \label{Fockrep}
     
In this section, we recall the main construction of Fock space representation in \cite{AliLi}.

 Consider the subalgebra $\h_A^{-} \subset \h_A$ generated by the $q_i^{(n)}$ for all $i \in I_A, n \geq 0.$
 If $\h_A^{+}$ is the subalgebra of $\h_A$ generated by the $p_i^{n}, n \geq 0,$ then $\h_A \cong \h_A^{+} \otimes \h_A^{-}$ as vector spaces.
   Let $triv_0$ be the trivial representation of $\h^{-}_A,$ where $1$ acts as the identity and $q_i^{(n)}$ acts by $0$ for $n > 0.$
   Then,  the $\h_A$-module
   
    $$\cF_A := \text{Ind}^{\h_A}_{\h^-_A}(triv_0) = \h_A \otimes_{\h^-_A} triv_0 $$
    
 \noindent   is the \textit{Fock space representation} of $\h_A.$
    Let $v_0$ be the vacuum vector in the Fock space where $q_i^{(n)} \cdot v_0 = 0$ for $n >0.$
    This will help to distinguish between the elements of $\h_A$ and its representation $\cF_A$.
    
    Note that the only relation that holds in $\h_A^{-}$ and $\h_A^{+}$ respectively is the abelian multiplication of generators.
	One can readily see that the Fock space $\cF_\Gamma$ is naturally isomorphic to the space of polynomials in the commuting variables $\{ p^{(n)}_i v_0 \}_{i \in I_\Gamma}.$

   \section{Braid Operator in the Fock Space Representation} \label{braidop}

   We define an integer statistic $d_\lambda$ associated to a partition $\lambda$ as follows: if $\lambda' \subset \lambda$ is the largest $(r \times r)$ square, also known as the \textit{Durfee square}, whose Young diagram fits properly inside of $\lambda,$ then the complement $\lambda \setminus \lambda'$ is a skew partition with the top horizontal component $\alpha$ and the bottom vertical component $\beta$ and we set 
   $$ d_\lambda := |\alpha| - |\beta| + r .$$
   
   For example,  let $\lambda = (5,5,3,3,1,1)$ be the following Young diagram. 
   
   \begin{figure}[H]  
   \centering
   \begin{tikzpicture}
   \draw (-1,0) -- (1.5,0);
   \draw (-1,-3) -- (-1,0);
   \draw (-.5,0) -- (-.5,-3);
    \draw (-1,-3) -- (-.5,-3);
     \draw (-1,-2.5) -- (-.5,-2.5);
     \draw ( -1,  -2 ) --(.5, -2 );
     \draw (.5,-2) -- (.5, 0);
     \draw (0, -2) -- (0,  0);
     \draw (-1,-1.5) -- (.5,  -1.5);
     \draw (-1, -1) -- (1.5, -1);
     \draw (-1,-.5) -- (1.5, -.5);
     \draw (1,0) -- (1,-1);
     \draw (1.5,0) -- (1.5,-1);
     \fill[gray!100, nearly transparent] (-1,0) -- (0.5,0) -- (.5,-1.5) -- (-1,-1.5) -- cycle;
     \fill[yellow!100, nearly transparent] (.5,0) -- (1.5,0) -- (1.5,-1) -- (.5,-1) -- cycle;
     \fill[blue!100, nearly transparent] (-1,-1.5) -- (.5,-1.5) -- (.5,-2) -- (-.5,-2) -- (-.5,-3) -- (-1,-3) -- cycle;
     \node at (-.25,0.25) {$3$};
       \node at (-1.25,-0.75) {$3$};
       \node at (-1.25,  -2.25) {$\beta$};
       \node at (1.75, -.5) {$\alpha$};
       
      \node at  (5, -1) {$d_{(5,5,3,3,1,1)}$};
       \node at  (5.6, -1.5) {$= |(2,2)| - |(3,1,1)| + 3$};
      \node at  (4.6, -2) {$= 4- 5 + 3$};
        \node at  (4, -2.5) {$= 2$};
   \end{tikzpicture}
   \end{figure}  
   
   We will concentrate on the Heisernberg algebra $\h_{A_{2n-1}}$ associated to the type $A_{2n-1}$ Dynkin diagram.  The type $A_{2n-1}$ Dynkin diagram has its vertices labelled by $\{ 1,  2, \hdots, 2n-1 \}$ and only $i$ and $i+1$ vertices are connected by one edge. 
    Then, we consider the Fock space representation $F_{A_{2n-1}}$ of $\h_{A_{2n-1}}.$
     It is known that the Fock space $F_{A_{2n-1}}$ is a graded vector space which is isomorphic to $\oplus_{m \in \N_{\geq 0}} F_{A_{2n-1}}^m,$ with 
    $$F_{A_{2n-1}}^m = \text{ span }  \left\{p_{1}^{\lambda_1} p_{2}^{\lambda_2} \cdots p_{{2n-1}}^{\lambda_{2n-1}}v_0 \middle\vert \  \sum_i |\lambda_i| = m \right\}.$$
 
 For each $m \in \N_{\geq 0}$ $i \in I_\Gamma,$ we define the distinguished element   $\varsigma_{i,m}  \in \h_{A_{2n-1}}$ to be
 
  $$\varsigma_{i,m} := \sum_{ \{ \lambda :  0 \leq |\lambda| \leq m \} } (-1)^{d_\lambda}t^{-d_\lambda}p_i^{\lambda}q_i^{\lambda^T}$$
  
\noindent where $\lambda^T$ denotes the transpose of $\lambda$ and the other distinguished element $(\varsigma_i^m)' \in \h_{A_{2n-1}}$ to be
  
$$\varsigma_{i,m}':= \sum_{ \{ \lambda :  0 \leq |\lambda| \leq m,  \lambda = \lambda^T \} } (-1)^{d_\lambda}t^{d_\lambda}p_i^{\lambda}q_i^{\lambda^T} + \sum_{ \{ \lambda :  0 \leq | \lambda | \leq m,  \lambda \neq \lambda^T \} } (-1)^{d_\lambda +2}t^{d_\lambda+2}p_i^{\lambda}q_i^{\lambda^T} $$

\noindent Note that $\varsigma_{i,m} \in \h_{A_{2n-1}}$  can be expressed as the truncation of the generating function 

$$P(x,y,t) = 1+ \sum^\infty_{a=1} \frac{(xyt)^a}{\prod^a_{b=1}(1-(xyt)^b)(1-(xyt^{-1})^b)} $$

\noindent where the $P$ is a variant of Euler partition function enumerated based on Durfee square of partitions. 
     
    Next, we introduce the braid operators $\Sigma_{i,m},$ $\Sigma_{i,m}'$  $: F^m_{A_{2n-1}} \ra F^m_{A_{2n-1}} $   of multiplying with $\varsigma_{i,m},  \varsigma_{i,m}'$ respectively: 
    $$ \Sigma_{i,m}(x) = \varsigma_{i,m} \cdot x,  \hspace{5mm}  \Sigma_{i,m}'(x) = \varsigma_{i,m}' \cdot x,  \hspace{5mm} x \in F^m_{A_{2n-1}}$$

\begin{conjecture} 
These braid operators define a braid group action representation on the Fock space.

\end{conjecture}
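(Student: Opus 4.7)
The plan is to verify the defining relations of the type $A_{2n-1}$ braid group directly for the operators $\Sigma_{i,m}$ and $\Sigma_{i,m}'$ on each graded piece $F^m_{A_{2n-1}}$, namely invertibility $\Sigma_{i,m}\Sigma_{i,m}'=\Sigma_{i,m}'\Sigma_{i,m}=\mathrm{id}$, distant commutation $\Sigma_{i,m}\Sigma_{j,m}=\Sigma_{j,m}\Sigma_{i,m}$ for $|i-j|>1$, and the braid relation $\Sigma_{i,m}\Sigma_{i+1,m}\Sigma_{i,m}=\Sigma_{i+1,m}\Sigma_{i,m}\Sigma_{i+1,m}$. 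Since each $\Sigma_{i,m}$ is left multiplication by an element of $\h_{A_{2n-1}}$, it suffices to check these identities at the level of elements acting on vectors of the form $p_1^{\lambda_1}\cdots p_{2n-1}^{\lambda_{2n-1}}v_0$; the $\h^-$-part of $\varsigma_{i,m}$ eventually hits $v_0$, so annihilation of high-weight $q_i^{\lambda^T}$ truncates every computation to a finite sum. Concretely, I would unwind each product $\varsigma_{i,m}\cdot\varsigma_{j,m}$ by repeatedly applying the commutation formulas
\[
q_i^{(\mu)} p_i^{(\lambda)} = \sum_{\nu\subseteq\lambda,\ \kappa\subseteq\mu} \mathfrak{C}^{\mu\lambda}_{\nu\kappa}(t)\, p_i^{(\nu)} q_i^{(\kappa)},\qquad
q_i^{(\mu)} p_{i\pm1}^{(\lambda)} = \sum_{\nu\subseteq\lambda^t,\ \kappa\subseteq\mu} C^{\mu\lambda^t}_{\nu\kappa}\, p_{i\pm1}^{(\nu)} q_i^{(\kappa)}
\]
recalled in Section 9.3, and then compare the resulting tableaux sums on both sides of the desired relation.

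The inverse check and the distant commutation should be relatively painless. For distant commutation, $p_i^{\lambda}$ and $q_j^{\mu}$ commute whenever $|i-j|>1$, so $\varsigma_{i,m}$ and $\varsigma_{j,m}$ commute already in $\h_{A_{2n-1}}$ and the operator relation follows. For invertibility I would expand $\varsigma_{i,m}\varsigma_{i,m}'$ using the single-index commutation identity, group terms by the resulting Durfee statistic, and show that the signs $(-1)^{d_\lambda}t^{\mp d_\lambda}$ together with the $\mathfrak{C}^{\mu\lambda}_{\nu\kappa}(t)$ coefficients collapse the sum to $1$ via the generating-function identity that $\varsigma_{i,m}$ is a truncation of the variant Euler partition function $P(x,y,t)$; the key lemma is that $P(x,y,t)P(x,y,t^{-1})|_{\text{appropriate sign twist}}=1$, which should be a combinatorial identity reflecting the Jacobi triple product or a quantum dilogarithm-type factorisation.

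The genuine obstacle is the adjacent braid relation $\Sigma_{i,m}\Sigma_{i+1,m}\Sigma_{i,m}=\Sigma_{i+1,m}\Sigma_{i,m}\Sigma_{i+1,m}$. The straightforward approach is to expand both sides as finite $\mathbb{Z}[t,t^{-1}]$-linear combinations of monomials $p_{i}^{\alpha}p_{i+1}^{\beta}p_{i}^{\gamma}q_{i}^{\delta}q_{i+1}^{\varepsilon}q_{i}^{\zeta}$ using the mixed commutation rule $C^{\mu\lambda^t}_{\nu\kappa}$, and verify a tableau identity comparing lattice permutations of shape determined by $(\alpha,\beta,\gamma)$ on one side with those of $(\gamma,\beta,\alpha)$-type on the other. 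This amounts to a genuinely nontrivial symmetric-function identity, and I expect the cleanest route is not brute combinatorics but rather to interpret $\varsigma_{i,m}$ as the truncation of an exponential in the free boson generators $a_i(r)$, i.e.\ a vertex operator, and then invoke the classical Lusztig braid identities for exponentials of Chevalley generators in $U_q(\mathfrak{sl}_{2n})$ via the boson--fermion correspondence; the cited work \cite{cautis2014vertex} of Cautis provides exactly such a vertex-operator formalism at the categorical level and should decategorify to what is needed.

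Alternatively, one can aim for the stable limit $m\to\infty$ first: define formal operators $\Sigma_i=\sum_{\lambda}(-1)^{d_\lambda}t^{-d_\lambda}p_i^{\lambda}q_i^{\lambda^T}$ acting on the full Fock space $\cF_{A_{2n-1}}$, check the braid relations there using vertex-operator methods (where the generating function $P(x,y,t)$ is genuinely an infinite product), and then observe that the grading restriction to $F^m_{A_{2n-1}}$ truncates every term with $|\lambda|>m$ to zero on weight-$m$ vectors, so the finite-$m$ identity follows from the stable one. Either way, the final step would be to bundle the verified relations into a genuine (weak) action of $\mathcal{A}(A_{2n-1})$, after which the intended application of plat closure and Kauffman's bracket to produce a Temperley-Lieb representation on the span of crossingless matching vectors becomes available.
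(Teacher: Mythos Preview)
Both your proposal and the paper's treatment are strategies rather than completed proofs: the statement is explicitly a conjecture, and the paper's own argument is labelled ``Strategy of Proof'' and concludes by noting that the braid relations have only been verified by brute force up to $n=3$. Your core approach matches the paper's: write out the inverse and adjacent braid identities as equalities of Heisenberg-algebra elements, expand via the commutation formulas $\mathfrak{C}^{\mu\lambda}_{\nu\kappa}(t)$ and $C^{\mu\lambda^t}_{\nu\kappa}$, realise $\varsigma_{i,m}$ through the generating function $P(x,y,t)$, and search for a $q$-series or partition identity to force the cancellations. The paper explicitly flags a ``basic hypergeometric series identity, partition identity, or $q$-series identity (might be from Ramanujan)'' as the missing ingredient, which is exactly the slot where your Jacobi triple product / quantum dilogarithm suggestion would sit.

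Where you go beyond the paper: you correctly dispose of the distant commutation case as immediate from $q_i^{(\mu)} p_j^{(\lambda)} = p_j^{(\lambda)} q_i^{(\mu)}$ for $|i-j|>1$ (the paper does not isolate this), and you propose two alternative routes the paper does not mention---(i) interpret $\varsigma_{i,m}$ as a vertex operator and import Lusztig's braid identities for $U_q(\mathfrak{sl}_{2n})$ via the boson--fermion correspondence and the categorified vertex operators of \cite{cautis2014vertex}, and (ii) prove the relations first in the stable limit $m\to\infty$ and then observe that truncation to $F^m_{A_{2n-1}}$ is automatic. Both are plausible and arguably more structured than the bare $q$-series hunt the paper envisions, but neither is carried through here. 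In short, your proposal is consistent with and somewhat richer than the paper's own strategy, yet like the paper it remains a plan; the genuine obstacle---a closed-form identity handling the adjacent braid relation---is identified but not resolved in either.
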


\begin{proof}[Strategy of Proof]\renewcommand{\qedsymbol}{}

Prove the following identity 

$$ \left( \sum_{ \{ \lambda :  0 \geq |\lambda| \leq n \} } (-1)^{d_\lambda}t^{-d_\lambda}p_i^{\lambda}q_i^{\lambda^T} \right) \left(  \sum_{ \{ \lambda :  0 \leq |\lambda| \leq n,  \lambda = \lambda^T \} } (-1)^{d_\lambda}t^{d_\lambda}p_i^{\lambda}q_i^{\lambda^T} + \sum_{ \{ \lambda :  0 \leq | \lambda | \leq n,  \lambda \neq \lambda^T \} } (-1)^{d_\lambda +2}t^{d_\lambda+2}p_i^{\lambda}q_i^{\lambda^T} \right)= id$$

$$ \left( \sum_{ \{ \lambda :  0 \geq |\lambda| \leq n \} } (-1)^{d_\lambda}t^{-d_\lambda}p_i^{\lambda}q_i^{\lambda^T} \right)  \left( \sum_{ \{ \lambda :  0 \geq |\lambda| \leq n \} } (-1)^{d_\lambda}t^{-d_\lambda}p_{i+1}^{\lambda}q_{i}^{\lambda^T} \right) \left( \sum_{ \{ \lambda :  0 \geq |\lambda| \leq n \} } (-1)^{d_\lambda}t^{-d_\lambda}p_i^{\lambda}q_i^{\lambda^T} \right) \\$$
$$ =  \left( \sum_{ \{ \lambda :  0 \geq |\lambda| \leq n \} } (-1)^{d_\lambda}t^{-d_\lambda}p_{i+1}^{\lambda}q_{i}^{\lambda^T} \right)  \left( \sum_{ \{ \lambda :  0 \geq |\lambda| \leq n \} } (-1)^{d_\lambda}t^{-d_\lambda}p_i^{\lambda}q_i^{\lambda^T} \right)  \left( \sum_{ \{ \lambda :  0 \geq |\lambda| \leq n \} } (-1)^{d_\lambda}t^{-d_\lambda}p_{i+1}^{\lambda}q_{i}^{\lambda^T} \right) $$

We first realize the braid operators as generating functions.
 Then, we require generating functions for operation on (generalised) Littlewood-Richardson rule.
 Next, we probably need appropriate basic hypergeometric series identity, partition identity, or $q$-series identity (might be from Ramanujan) to help us do cancellation. 
  Do the maths and find out the unnecessary terms cancelled nicely. 
  The author has checked the braid relations to $n=3$ case by brute force.

\end{proof}

\section{Crossingless Matching to Algebra} \label{Fockinv}
     
Recall that a \textit{crossingless matching} $c$ on a three-sphere $\mathbb{S}_3$ consist of multiple curves $c_r$ with  endpoints up to ambient isotopy. Given a crossingless matching $c$ of $2n$ points,  we construct a graded Fock vector $v_c \in F^n_{A_{2n-1}}$ as follows.

  \subsection{Crossingless Matching to Cross Diagram} 
  
  In this section, we introduce some essential terminology and describe steps coverting crossingless matchings to cross diagrams.

   A crossingless matching is called \textit{decomposable} if there exists a vertical line $v_i$ that doesn't intersect any curves.
   Subsequently, a decomposable crossingless matching is partitioned into disjoint parts. 
   A \textit{subcrossingless matching} is a crossingless matching obtained by successively removing the curves with the largest difference of endpoints (it is possible to have more than one curve of such property) or curves in the other disjoint parts of a decomposable crossingless matching. 
     A crossingless matching is called \textit{nested} if it contains a decomposable subcrossingless matching, but it is not decomposable.

\begin{enumerate}
\item Enumerate the crossingless matching $c$ starting with curve possessing the least difference of endpoints from left to right by \textcolor{orange}{$ c_1,  \hdots,  c_n $}. 
\item Draw $2n - 1$ vertical lines \textcolor{blue}{$v_i$} that bisect perpendicularly  the horizontal line connecting $i$-th and $(i$+$1)$-th vertices from left to right,  for $i \in \{ 1, \hdots, 2n-1 \}$. 
 Note that every curve in crossingless matching intersects the vertical lines in odd number $2m+1.$
 Then, we define $n-m$ to be the \textit{level} of a curve. 
\item We shall label each intersection points  \textcolor{orange}{$c_r$}$\cap$\textcolor{blue}{$v_i$}  with a triple $($\textcolor{orange}{$r$},\textcolor{blue}{$i$},\textcolor{red}{$\ell_{r,i}$}$)$ where \textcolor{red}{$\ell_{r,i}$} $\in \N_{\geq 0}$ enumerates intersections on each \textcolor{blue}{$v_i$} from top to bottom.
   This intersection points are called the \textit{distinguished crosses}.
 \item  A \textit{cross diagram }is a set of distinguished crosses sitting in $\N \times \N$ lattice.
 Replace  the multicurves diagram (e.g. \cref{c}) with a cross diagram (e.g. \cref{d}) labelled by the triples $($\textcolor{orange}{$r$},\textcolor{blue}{$i$},\textcolor{red}{$\ell_{r,i}$}$)$;
 a distinguished cross $(x,y)$ is picked for each intersection point in a way that $x$ is the level of the curve where it lies and $y$ is subscript $i$ of the vertical line it intersects. 
 Note that the crosses are arranged in a floating pyramid-like arrangement reflecting their relative positions in the original crossingless matching. 
 
 \end{enumerate}

%

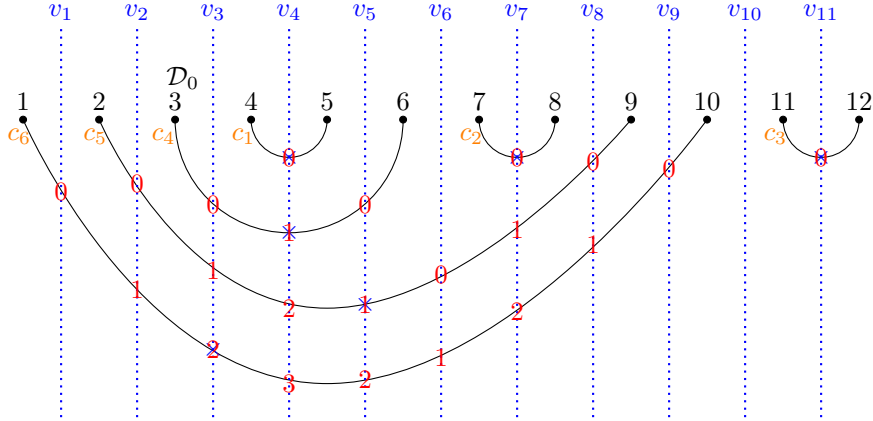
\begin{figure}[H]  
\centering
\begin{tikzpicture} 

\draw[fill] (-5.5,0) circle [radius=0.045];
\draw[fill] (-4.5,0) circle [radius=0.045];
\draw[fill] (-3.5,0) circle [radius=0.045];
\draw[fill] (-2.5,0) circle [radius=0.045];
\draw[fill] (-1.5,0) circle [radius=0.045];
\draw[fill] (-0.5,0) circle [radius=0.045];
\draw[fill] (0.5,0) circle [radius=0.045];
\draw[fill] (1.5,0) circle [radius=0.045];
\draw[fill] (2.5,0) circle [radius=0.045];
\draw[fill] (3.5,0) circle [radius=0.045];
\draw[fill] (4.5,0) circle [radius=0.045];
\draw[fill] (5.5,0) circle [radius=0.045];

\draw (-1.5,0) arc(0:-180:.5) ;
\draw (-0.5,0) arc(0:-180:1.5) ;
\draw (1.5,0) arc(0:-180:.5) ;
\draw plot[smooth, tension=1]coordinates {(2.5,0) (-1.5,-2.5)(-4.5,0)};
\draw plot[smooth, tension=1]coordinates {(3.5,0) (-1.5,-3.5)(-5.5,0)};
\draw (5.5,0) arc(0:-180:.5) ;

\draw [thick,blue,dotted] (-5, 1.2) -- (-5, -4);
\draw [thick,blue,dotted] (-4, 1.2) -- (-4, -4);
\draw [thick,blue,dotted](-3, 1.2) -- (-3, -4);
\draw [thick, blue,dotted] (-2, 1.2) -- (-2, -4);
\draw [thick,blue,dotted](-1, 1.2) -- (-1, -4);
\draw [thick,blue,dotted](0, 1.2) -- (0, -4);
\draw [thick,blue,dotted] (5, 1.2) -- (5, -4);
\draw [thick,blue,dotted] (4, 1.2) -- (4, -4);
\draw [thick,blue,dotted](3, 1.2) -- (3, -4);
\draw [thick, blue,dotted] (2, 1.2) -- (2, -4);
\draw [thick,blue,dotted](1, 1.2) -- (1, -4);

\node at (-3.4,0.55) {$\cD_0$};

\node [above,blue] at (-5, 1.2) {$v_1$};
\node  [above,blue] at (-4, 1.2) {$v_2$} ;
\node [above, blue] at (-3, 1.2) {$v_3$};
\node [above, blue] at (-2, 1.2) {$v_4$};
\node [above,  blue] at (-1, 1.2) {$v_5$};
\node [above,  blue] at (0, 1.2) {$v_6$};
\node  [above,  blue] at (1,1.2) {$v_7$} ;
\node [above,  blue] at (2,1.2) {$v_8$};
\node [above,  blue] at (3,1.2) {$v_9$};
\node  [above,  blue] at (4, 1.2) {$v_{10}$} ;
\node [above,  blue] at (5, 1.2) {$v_{11}$};

\node [above] at (-5.5, 0) {$1$};
\node  [above] at (-4.5, 0) {$2$} ;
\node [above] at (-3.5,0) {$3$};
\node [above] at (-2.5, 0) {$4$};
\node [above,] at (-1.5, 0) {$5$};
\node [above,] at (-0.5, 0) {$6$};
\node  [above] at (.5,0) {$7$} ;
\node [above] at (1.5,0) {$8$};
\node [above] at (2.5,0) {$9$};
\node  [above] at (3.5, 0) {$10$} ;
\node [above] at (4.5, 0) {$11$};
\node [above] at (5.5, 0) {$12$};

\node[blue] at (-2,-.5) {$\times$}	;
\node[blue] at (1,-.5) {$\times$};
\node[blue] at (5,-.5) {$\times$};
\node[blue] at (-2,-1.5) {$\times$};
\node[blue] at (-1,-2.45) {$\times$};
\node[blue] at (-3,-3.05) {$\times$};

\node[red] at (-2,-.5) {$0$};
\node[red] at (-2,-1.5) {$1$};
\node[red] at (-2,-2.5) {$2$};
\node[red] at (-2,-3.5) {$3$};
\node[red] at (-3,-1.125) {$0$};
\node[red] at (-3,-2.) {$1$};
\node[red] at (-3,-3.05) {$2$};
\node[red] at (-1,-1.125) {$0$};
\node[red] at (-1,-2.45) {$1$};
\node[red] at (-1,-3.45) {$2$};
\node[red] at (-4,-.85) {$0$};
\node[red] at (-4,-2.25) {$1$};
\node[red] at (-5,-.95) {$0$};
\node[red] at (0,-2.05) {$0$};
\node[red] at (0,-3.15) {$1$};
\node[red] at (1,-1.45) {$1$};
\node[red] at (1,-2.55) {$2$};
\node[red] at (1,-.5) {$0$};
\node[red] at (2,-.55) {$0$};
\node[red] at (2,-1.65) {$1$};
\node[red] at (3,-.65) {$0$};
\node[red] at (5,-.5) {$0$};

\node [right, below, orange] at (-5.55,0) {$c_{6}$};
\node [right, below, orange] at (-4.55,0) {$c_{5}$};
\node [right, below, orange] at (-3.65,0) {$c_{4}$};
\node [right, below, orange] at (-2.6,0) {$c_{1}$};
\node [right, below, orange] at (0.4,0) {$c_{2}$};
\node [right, below, orange] at (4.4,0) {$c_{3}$};


\end{tikzpicture}
\caption{An example of crossingless matching c in a $2n$-punctured $3$-sphere.} \label{c}
\end{figure}
\begin{figure}[H]  \label{crossdiagram}
\centering
\begin{tikzpicture}  
\node [blue]  at (-4, -.5) {X};
\node [blue] at (-4, -1.5) {X};
\node  at (-4, -2.5) {X};
\node  at (-4, -3.5) {X};
\node  at (-3, -1.5) {X};
\node  [blue] at (-3, -2.5) {X};
\node  at (-3, -3.5) {X};
\node  at (-2, -2.5) {X};
\node  at (-2, -3.5) {X};
\node  at (-5, -1.5) {X};
\node  at (-5, -2.5) {X};
\node [blue] at (-5, -3.5) {X};
\node  at (-6, -2.5) {X};
\node  at (-6, -3.5) {X};
\node  at (-7, -3.5) {X};
\node [blue] at (-1, -.5) {X};
\node  at (-1, -2.5) {X};
\node  at (-1, -3.5) {X};
\node  at (0, -2.5) {X};
\node  at (0, -3.5) {X};
\node  at (1, -3.5) {X};
\node  [blue] at (3, -.5) {X};

\node [below]  at (-4, -.5) {(\textcolor{orange}{$1$},\textcolor{blue}{$4$},\textcolor{red}{$0$})} ;
\node [below] at (-4, -1.5) {(\textcolor{orange}{$4$},\textcolor{blue}{$4$},\textcolor{red}{$1$})};
\node [below] at (-4, -2.5) {(\textcolor{orange}{$5$},\textcolor{blue}{$4$},\textcolor{red}{$2$})};
\node  [below] at (-4, -3.5) {(\textcolor{orange}{$6$},\textcolor{blue}{$4$},\textcolor{red}{$3$})};
\node [below] at (-3, -1.5) {(\textcolor{orange}{$4$},\textcolor{blue}{$5$},\textcolor{red}{$0$})};
\node [below] at (-3, -2.5) {(\textcolor{orange}{$5$},\textcolor{blue}{$5$},\textcolor{red}{$1$})};
\node  [below] at (-3, -3.5) {(\textcolor{orange}{$6$},\textcolor{blue}{$5$},\textcolor{red}{$2$})};
\node [below] at (-2, -2.5) {(\textcolor{orange}{$5$},\textcolor{blue}{$6$},\textcolor{red}{$0$})};
\node [below] at (-2, -3.5) {(\textcolor{orange}{$6$},\textcolor{blue}{$6$},\textcolor{red}{$1$})};
\node [below]  at (-5, -1.5) {(\textcolor{orange}{$4$},\textcolor{blue}{$3$},\textcolor{red}{$0$})};
\node [below] at (-5, -2.5) {(\textcolor{orange}{$5$},\textcolor{blue}{$3$},\textcolor{red}{$1$})};
\node [below] at (-5, -3.5) {(\textcolor{orange}{$6$},\textcolor{blue}{$3$},\textcolor{red}{$2$})};
\node [below] at (-6, -2.5) {(\textcolor{orange}{$5$},\textcolor{blue}{$2$},\textcolor{red}{$0$})};
\node [below] at (-6, -3.5) {(\textcolor{orange}{$6$},\textcolor{blue}{$2$},\textcolor{red}{$1$})};
\node [below] at (-7, -3.5) {(\textcolor{orange}{$6$},\textcolor{blue}{$1$},\textcolor{red}{$0$})};
\node [below] at (-1, -.5) {(\textcolor{orange}{$2$},\textcolor{blue}{$7$},\textcolor{red}{$0$})};
\node [below] at (-1, -2.5) {(\textcolor{orange}{$5$},\textcolor{blue}{$7$},\textcolor{red}{$1$})};
\node [below] at (-1, -3.5) {(\textcolor{orange}{$6$},\textcolor{blue}{$7$},\textcolor{red}{$2$})};
\node [below] at (0, -2.5) {(\textcolor{orange}{$5$},\textcolor{blue}{$8$},\textcolor{red}{$0$})};
\node [below] at (0, -3.5) {(\textcolor{orange}{$6$},\textcolor{blue}{$8$},\textcolor{red}{$1$})};
\node [below]  at (1, -3.5) {(\textcolor{orange}{$6$},\textcolor{blue}{$9$},\textcolor{red}{$0$})};
\node [below] at (3, -.5) {(\textcolor{orange}{$3$},\textcolor{blue}{$11$},\textcolor{red}{$0$})};

\draw [dashed, purple] (-5.5, -4.25) -- (-5.5, -1.25);
\draw [dashed, purple] (-2.5, -4.25) -- (-2.5, -1.25);
\draw [dashed, purple] (-5.5, -4.25) -- (-2.5, -4.25);
\draw [dashed, purple] (-5.5, -1.25) -- (-2.5, -1.25);

\draw [dashed, purple] (-4.5, -2.15) -- (-4.5, 0);
\draw [dashed, purple] (-1.5, -1.15) -- (-1.5, 0);
\draw [dashed, purple] (2.35, -1.15) -- (2.35, 0);

\draw [dashed, purple] (-3.5, -2.15) -- (-3.5, 0);
\draw [dashed, purple] (-.5, -1.15) -- (-.5, 0);
\draw [dashed, purple] (3.65, -1.15) -- (3.65, 0);

\draw [dashed, purple] (-4.5, -2.15) -- (-3.5, -2.15);
\draw [dashed, purple] (-1.5, -1.15) -- (-.5, -1.15);
\draw [dashed, purple] (2.35, -1.15) -- (3.65, -1.15);

\draw [dashed, purple] (-4.5, 0) -- (-3.5, 0);
\draw [dashed, purple] (-1.5, 0) -- (-.5, 0);
\draw [dashed, purple] (2.35, 0) -- (3.65, 0);
\end{tikzpicture}
\caption{A cross diagram for the example with counting fence in purple dashed line.} \label{d}
\end{figure}
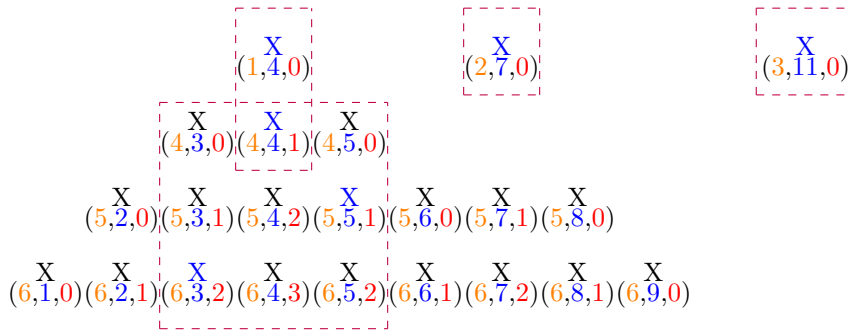

     A \textit{counting rectangle} is a rectangle containing either complete rows of distinguished crosses or empty rows of crosses.
     The rows of distinguished crosses contained in a counting rectangle are called the \textit{lattice segments}.
     Given a cross diagram and a choice of $b$ marked distinguished crosses,  we can form the (possibly empty) smallest counting rectangle containing the $b$ marked distinguished crosses, or marked crosses, for simplicity. 
     A lattice segment is \textit{effective} if it contains marked crosses and \textit{potential} if it doesn't.  
     Observe that a counting rectangle could have more potential lattice segments than effective lattice segments.
     When $b=1,$ the smallest counting rectangle is one-by-one grid by definition.
     Now, for $1 \leq a \leq b,$ we consider the set of all counting rectangles coming from possible $a$-wise marked crosses but remove those rectangles that are contained in a larger one. 
    This particular set of counting rectangles is called a counting fence.

In particular, we are interested in the set of sets of marked crosses where each of them comes from different curves in a crossingless matching $c$, denoted by $ \mathcal{X}_c $. 
   To give an overview of the remaining chapter, we provide our end goal here without going into the details.
   Details will be laid out in the future sections. 
  Every elements in $\cX_c$ will be realised as a vector in the Fock space via a map $\psi.$
  Finally, we will associate $c$ to a vector $v_c$ defined as
  
 \begin{equation}
  v_c = \sum_{   p_{i_{1}}^{\lambda_1}p_{i_{2}}^{\lambda_2} \hdots p_{i_{2n-1}}^{\lambda_{2n-1}} v_0 \in  \psi( \cX_c) } \fs \fp(p_{i_{1}}^{\lambda_1}p_{i_{2}}^{\lambda_2} \hdots p_{i_{2n-1}}^{\lambda_{2n-1}} v_0) \cdot p_{i_{1}}^{\lambda_1}p_{i_{2}}^{\lambda_2} \hdots p_{i_{2n-1}}^{\lambda_{2n-1}} v_0
 \end{equation}

\noindent  where $\fs \fp$ is called the \textit{spreading polynomial} of the vector in a given crossingless matching.
   The spreading polynomial  $\fs \fp$ lives in the Laurent ring $\Z[t,t^{-1}]$ and will be defined based on several combinatoric invariants.

\subsection{Cross Diagram to Fock Vector}

In this section, we will lay out the details of constructing the the spreading polynomial  $\fs \fp$, and subsequently, introducing a graded Fock vector for a crossingless matching. 
  At the end, we present some conjectures relating to the vectors we established.
 
 \begin{enumerate} \addtocounter{enumi}{4}
 
\item Given a crossingless matching $c$ with curves $c_1, c_2, \cdots, c_n$, pick exactly one intersection point from each curves \textcolor{orange}{$c_r$} in $c$. 
These are $n$ marked crosses each lying on distinct curves  with the label  $($\textcolor{orange}{$r$},\textcolor{blue}{$i$},\textcolor{red}{$\ell_{r,i}$}$)$.
 Consider the set $\cI_{i_1, \hdots,  i_{n}}$ containing  them:

\begin{center}
$  \cI_{i_1, i_2 \hdots,  i_{n}} := \{$ $($\textcolor{orange}{$1$},\textcolor{blue}{$i_1$},\textcolor{red}{$\ell_{1,i}$}$)$
$($\textcolor{orange}{$2$},\textcolor{blue}{$i_2$},\textcolor{red}{$\ell_{2,i}$}$)$, $\cdots$
$($\textcolor{orange}{$n$},\textcolor{blue}{$i_n$},\textcolor{red}{$\ell_{n,i}$}$)$
 $\mid \text{ $1 \leq i_r \leq 2n-1$, $ 0 \leq \ell \leq n-1$} \},$
\end{center}  

\noindent which we call a marked cross configuration.
     We associate a  Fock vector  ${p_{i_{1}}} p_{i_{2}} \cdots p_{i_{n}} v_0$ to each set $\cI_{i_1, i_2, \cdots,  i_{n}} .$ 
 
 For example,  the two set 
 $I_{4,7,11,4,5,3} = \{($\textcolor{orange}{$1$},\textcolor{blue}{$4$},\textcolor{red}{$0$}$)$,$($\textcolor{orange}{$2$},\textcolor{blue}{$7$},\textcolor{red}{$0$}$)$,$($\textcolor{orange}{$3$},\textcolor{blue}{$11$},\textcolor{red}{$0$}$)$,$($\textcolor{orange}{$4$},\textcolor{blue}{$4$},\textcolor{red}{$1$}$)$, $($\textcolor{orange}{$5$},\textcolor{blue}{$5$},\textcolor{red}{$1$}$)$, $($\textcolor{orange}{$6$},\textcolor{blue}{$3$},\textcolor{red}{$2$})$\}  \neq \{($\textcolor{orange}{$1$},\textcolor{blue}{$4$},\textcolor{red}{$0$}$)$, $($\textcolor{orange}{$2$},\textcolor{blue}{$7$},\textcolor{red}{$0$}$)$,$($\textcolor{orange}{$3$},\textcolor{blue}{$11$},\textcolor{red}{$0$}$)$,$($\textcolor{orange}{$4$},\textcolor{blue}{$3$},\textcolor{red}{$0$}$)$,$($\textcolor{orange}{$5$},\textcolor{blue}{$4$},\textcolor{red}{$2$}$)$,$($\textcolor{orange}{$6$},\textcolor{blue}{$5$},\textcolor{red}{$2$}$)$ $\} = I_{4,7,11,3,4,5}$
 give $p_4 p_7 p_{11} p_4 p_5 p_3 v_0 $ and $ p_4 p_7 p_{11} p_3 p_4 p_5 v_0$ respectively. 
 But, we want to remark that the two vectors are equal in the Fock space by the Heisenberg algebra relations.
 
 \item Next, we define the sets
 $$\cX_{i_1, i_2, \hdots,  i_{n}} = \bigsqcup_{s \in \text{Perm}(i_1, i_2, \hdots,   i_{n})} \cI_{s}, \hspace{3mm} \text{ and }  \hspace{3mm}  \cX_c = \bigsqcup_{p \in \{(i_1, i_2, \hdots,  i_{n})\}} \cX_{p}$$
 where the first set is taking the disjoint union over all permutations of  $(i_1, i_2, \hdots,  i_{n})$ and the second set is taking the disjoint union of all combinations of $(i_1, i_2, \hdots,  i_{n})$ in $\cX_{(i_1, i_2, \hdots,  i_{n})}.$
 Then, $\cX_c $ is the set of all marked cross configurations of $c.$
 By previous step,  there exists a a map $\psi : \cX_c \ra F^n_{A_{2n-1}}$ sending $\cI_{i_1, i_2, \cdots,  i_{n}} \mapsto   {p_{i_{1}}} p_{i_{2}} \cdots p_{i_{n}} v_0$. 
 By abuse of notation, we set $\cX_{p_{i_1}p_{i_2} \cdots p_{i_n}}:= \cX_{i_1, i_2, \hdots,  i_{n}}.$
\item After that,  we will associate a spreading polynomial $\fs \fp$ to each Fock vector $ p_{1}^{k_1}p_{2}^{k_2} \hdots p_{2n-1}^{k_{2n-1}} v_0$ which are defined  based on its preimage
$$\psi^{-1} \left( p_{1}^{k_1}p_{2}^{k_2} \hdots p_{2n-1}^{k_{2n-1}} v_0 \right)$$

following the steps below:
 \begin{enumerate}
 \item \label{parta}
 First,  we would like to compute the cardinality $\left|\psi^{-1} \left( p_{1}^{k_1}p_{2}^{k_2} \hdots p_{2n-1}^{k_{2n-1}} v_0 \right)  \right|$ of the preimage of a Fock vector $p_{1}^{k_1}p_{2}^{k_2} \hdots p_{2n-1}^{k_{2n-1}} $ under the map $\psi$ .  
  We outline a series of steps to express the cardinality as a \textbf{specific product} of binomial coefficients, that is the total number of a Fock vector $p_{1}^{k_1}p_{2}^{k_2} \hdots p_{2n-1}^{k_{2n-1}} v_0 $ appearing in $c$ will be in the form of $\prod_j {y_j \choose z_j}$ for some $y_j$ and $z_j.$ 
  It amounts to compute the number of ways getting each $p_j$ or the corresponding $j$-crosses.
  We start by choosing one element in the preimage   
 $\psi^{-1} \left( p_{1}^{k_1}p_{2}^{k_2} \hdots p_{2n-1}^{k_{2n-1}} v_0 \right).$

       Starting with a cross diagram of $c,$ for curves that has only one intersection with $v_{i},$ it is clear that the sole choice we can make is $p_i,$ in other words,  they contribute $1 \choose 1$. 
       We can then delete those marked crosses/rows of crosses with the same \textcolor{orange}{$r$}-label from the diagram.
       
       After that,  we form a counting fence for the  remaining marked crosses.  
     If the counting fence consists of one and only one counting rectangle, it is obvious that the number of effective lattice segments, in this case, the number of rows of the unique rectangle counts the possible combination of marked cross configuration in the form of product of binomial coefficients.

     When $p_j$ is present in an intersection  of more than one counting rectangles, this implies that there exists other marked crosses not in the lattice segment of the same counting rectangle containing $p_j$.
     The counting of possible $p_j$ combination will then be dependant on multiple rectangles. 
     In fact, each $p_j$ can only be contained in one and only one counting triangle or in the intersection of two counting rectangles.
     Since if it is in the intersection of more than two counting rectangles, there must exist a marked cross configuration such that it is only contained in the intersection of two counting rectangles by the nature of crossingless matching.
       
       Hence, we can always find a $i$ and at least one $p_i$  of $p_i^k$ such that $p_i$ lies in curve of the highest or lowest level among the other $p_j$'s lie  and  stays in one and only one counting rectangle.
       We would call it a \textit{single} $p_i.$
       In particular, $p_i$'s coming from curves that has only one intersection with $v_{i}$ are single.
       We should start by counting the $p_i^{k_i}$ which contain single $p_i.$
       Let $y_i$ be the number of effective lattice segments in the unique counting rectangle.
       Then,  we have $y_i \choose k_i$ way to pick $p_i^{k_i}$ from that unique counting rectangle containing $p_i.$
       We will then delete all the single $p_i$'s and the rows containing it.
       We continue to do the same for the other single $p_j$'s in the unique counting rectangle containing $p_i.$
       Collectively, the remaining $p_j$'s including $p_i$ will all contain in this unique counting rectangle containing $p_i$ and we will regard them as coming from the same column when forming the next counting fence.
       In principle, by counting and deleting the single $p_j$'s, we have fixed a configuration that they could appear in another counting rectangle that intersects with the original unique rectangle and subsequently, the column $p_j$ present become irrelevant. 
       
       We will keep picking out the single $p_j$'s until we reach a counting fence consisting of one and only one counting rectangle.
       Taking the product of the possible $ {y_j \choose z_j}$ for some $y_j$ and $z_j,$ we had computed the desired cardinality.

       To justify this, this is the least upper bound for the cardinality since to turn any potential lattice segments into effective lattice segments, it must be the case that there are other marked crosses in the lattice segments below or above the potential segments. 
        These are counted in the maximality of a counting fence.
       On the other hand, it is also the greatest lower bound because it realises the number of distinct combinations of set of marked crosses.

 For example, in \cref{c},  there are ${3 \choose 1} {2\choose 1}$ of way realising $p_3 p^2_4 p_5 p_7 p_{11} v_0 .$ 
 Depending on whether we count $p_3$ or $p_4$ first, we have $ {3 \choose 1} = {3 \choose 2}$ of way realising $p_3 p^3_4 p_7 p_{11}  v_0$ in $c.$

 \item Then, we associate a Fock vector $ {p_{1}^{k_1}p_{2}^{k_2} \hdots p_{2n-1}^{k_{2n-1}}} v_0$ a polynomial $\phi$ in $\Z[t,t^{-1}]$ based on an expression of the cardinality $\left| \cX_ {p_{1}^{k_1}p_{2}^{k_2} \hdots p_{2n-1}^{k_{2n-1}}} \right|$ = $\prod^k_{j=1} {y_j \choose z_j}$ of its preimage obtained from \cref{parta}.  
  To that expression, we can associate the product $\phi$ of the graded dimension of the cohomology of Grassmannian $Gr(z_j,y_j),$ or the Gaussian polynomial, that is, 
 
  \[ \phi(p_{1}^{k_1}p_{2}^{k_2} \hdots p_{2n-1}^{k_{2n-1}} v_0) = \prod^k_{j=1} g_r\text{dim} Gr(z_j, y_j) = \prod^k_{j=1} \begin{bmatrix}
  y_j  \\ z_j
  \end{bmatrix} 
  \]

\noindent where $[n] := \frac{1-t^n}{1-t} = 1 + t + t^2 + \cdots + t^{n-1}, [n]! := \prod_{j=1}^{n}[j] = \sum_{w \in S_n} t^{\text{inv}(w)}$ $\begin{bmatrix}  y  \\ z  \end{bmatrix} := \frac{[y]!}{[y]![y-z]!} = \frac{(1-t^{y})(1-t^{y-1}) \cdots (1-t^{y-z+1})}{(1-t)(1-t^2) \cdots (1-t^z)}.$
  Note that the quantum integer defined here is not the shifted one as in \cref{sec2}. 

  At first sight,  it might be lured that different expressions could give different $\phi$. 
    For example,  $\phi (p_3 p_4^3  p_7 p_{11}  v_0) = \begin{bmatrix}  3  \\ 2  \end{bmatrix} = 1 + t + t^2 = \begin{bmatrix}  3  \\ 1  \end{bmatrix} .$ 
To explain it, we need this lemma.

\begin{lemma} \label{perm}
Suppose $\omega \in S_n.$ Then,

$$\begin{bmatrix}  j_1 + j_2 + \cdots + j_n  \\ j_n  \end{bmatrix} \cdots  \begin{bmatrix}  j_1 + j_2   \\ j_2  \end{bmatrix}\begin{bmatrix}  j_1   \\ j_1  \end{bmatrix} 
= \begin{bmatrix}  j_{\omega(1)} + j_{\omega(2)} + \cdots + j_{\omega(n)}  \\ j_{\omega(n)}  \end{bmatrix} \cdots  \begin{bmatrix}  j_{\omega(1)} + j_{\omega(2)}   \\ j_{\omega(2)}  \end{bmatrix}\begin{bmatrix}  j_{\omega(1)}   \\ j_{\omega(1)}  \end{bmatrix}.$$

\end{lemma}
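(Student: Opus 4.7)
The plan is to rewrite each side of the claimed identity in closed form as a $q$-multinomial coefficient and then observe that this closed form is manifestly invariant under permutations of $j_1,\ldots,j_n$. The key point is the telescoping product behavior of Gaussian binomials, which reduces both sides to the same symmetric expression.

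First, I would recall the identity $\begin{bmatrix} a+b \\ b \end{bmatrix} = \dfrac{[a+b]!}{[a]!\,[b]!}$, which is immediate from the defining formula $\begin{bmatrix} y \\ z \end{bmatrix} = \dfrac{[y]!}{[z]!\,[y-z]!}$ given in the text. Applying this to each factor on the left-hand side, with $a = j_1 + \cdots + j_{k-1}$ and $b = j_k$, yields
\[
\prod_{k=1}^{n} \begin{bmatrix} j_1 + j_2 + \cdots + j_k \\ j_k \end{bmatrix}
= \prod_{k=1}^{n} \frac{[j_1 + \cdots + j_k]!}{[j_1 + \cdots + j_{k-1}]!\,[j_k]!}.
\]
In the resulting product, the numerator factor $[j_1 + \cdots + j_k]!$ cancels against the denominator factor $[j_1 + \cdots + j_k]!$ coming from the next term (using the convention $[0]! = 1$), so the expression telescopes to
\[
\frac{[j_1 + j_2 + \cdots + j_n]!}{[j_1]!\,[j_2]!\,\cdots\,[j_n]!}.
\]

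Next, the same computation applied to the right-hand side produces
\[
\frac{[j_{\omega(1)} + j_{\omega(2)} + \cdots + j_{\omega(n)}]!}{[j_{\omega(1)}]!\,[j_{\omega(2)}]!\,\cdots\,[j_{\omega(n)}]!}.
\]
Since $\omega \in S_n$ is a bijection, the sum $j_{\omega(1)} + \cdots + j_{\omega(n)}$ equals $j_1 + \cdots + j_n$, and the product of factorials $[j_{\omega(1)}]!\,\cdots\,[j_{\omega(n)}]!$ is simply a reordering of $[j_1]!\,\cdots\,[j_n]!$. Hence both sides reduce to the identical expression
\[
\frac{[j_1 + \cdots + j_n]!}{[j_1]!\,\cdots\,[j_n]!},
\]
which one recognises as the $q$-multinomial coefficient $\begin{bmatrix} j_1 + \cdots + j_n \\ j_1,\,\ldots,\,j_n \end{bmatrix}$, concluding the proof.

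The argument is entirely elementary, with no real obstacle; the only care needed is in setting up the telescoping correctly (in particular, noting that the final factor $\begin{bmatrix} j_1 \\ j_1 \end{bmatrix} = 1$ matches the boundary of the telescope with $[0]! = 1$). Because no serious combinatorial identity beyond the closed-form expression for Gaussian binomials is invoked, I expect the proof to occupy only a few lines in the final write-up.
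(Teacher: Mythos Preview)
Your proof is correct, but it takes a more direct route than the paper. The paper proceeds by induction on $n$: it first isolates two auxiliary identities, the transpose symmetry $\begin{bmatrix} r+s \\ s \end{bmatrix} = \begin{bmatrix} r+s \\ r \end{bmatrix}$ and a ``chain rule'' identity $\begin{bmatrix} r \\ s \end{bmatrix}\begin{bmatrix} s \\ t \end{bmatrix} = \begin{bmatrix} r \\ t \end{bmatrix}\begin{bmatrix} r-t \\ r-s \end{bmatrix}$, and then uses these to show that one can move any $j_{\omega(n)}$ into the outermost slot, reducing to the inductive hypothesis on $S_{n-1}$. Notably, the paper proves its chain rule identity via the very same closed form $\begin{bmatrix} y \\ z \end{bmatrix} = [y]!/([z]![y-z]!)$ that you invoke; your argument simply applies this formula to the entire product at once, so that the telescoping collapse to the $q$-multinomial $\dfrac{[j_1+\cdots+j_n]!}{[j_1]!\cdots[j_n]!}$ makes the permutation-invariance manifest without any induction. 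Your approach is shorter and conceptually cleaner; the paper's inductive argument, on the other hand, stays closer to manipulations of binomial products and may be what one would discover if working purely with the two basic symmetries rather than passing through the factorial expression.
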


\begin{proof}

Note that the a Gaussian polynomial $\begin{bmatrix}  r+s   \\ s  \end{bmatrix}$  can be computed by counting the $q$-graded partition of $m \leq s \times r$ that fits in $(s \times r)$-rectangular partition. 
  By the transpose symmetry of the counting method, it is not hard to see that we have
  
  \begin{equation} \label{transpose symmetry}
   \begin{bmatrix}  r + s  \\ s  \end{bmatrix} = \begin{bmatrix}  r + s  \\ r  \end{bmatrix}.
  \end{equation}

Also, we have 
\begin{equation} \label{chain rule symmetry}
     \begin{bmatrix}  r  \\ s  \end{bmatrix} \begin{bmatrix}  s  \\ t  \end{bmatrix} =  \begin{bmatrix}  r  \\ t  \end{bmatrix} \begin{bmatrix}  r -t \\ r-s  \end{bmatrix} 
\end{equation} 
 
since 
  $$\begin{bmatrix}  r  \\ s  \end{bmatrix} \begin{bmatrix}  s  \\ t  \end{bmatrix} 
  = \frac{[r]!}{[s]![r-s]!} \frac{[s]!}{[t]![s-t]!} 
  =  \frac{[r]!}{[t]!} \frac{1}{[r-s]! [s-t]!}
   =  \frac{[r]!}{[t]![r-t]!} \frac{[r-t]!}{[r-s]! [s-t]!} 
   = \begin{bmatrix}  r  \\ t  \end{bmatrix} \begin{bmatrix}  r -t \\ r-s  \end{bmatrix}.$$

We will prove \cref{perm} by inducting on $n.$
  When $n=1,$ the statement is trivial. 
  When $n=2,$ the statement is the manifest of \cref{transpose symmetry} and the last factor can easily be  transformed  $$\begin{bmatrix}  j_1  \\ j_1  \end{bmatrix} = \begin{bmatrix}  j_2  \\ j_2  \end{bmatrix}.$$  
  Just to demonstrate how \cref{chain rule symmetry} comes into proving the general statement, we start with $n=3$.
  Suppose $n=3,$  if $w$ fixes $3,$ then it is really the case when $n=2.$
  Then, we are left with $\omega \in \{ (23), (13), (123), (132) \},$ or in other words, $\omega$ sends $3$ to $2$ or $1.$
  Let us deal with $\omega (3) = 1$ since the other case require less manipulation.
  \begin{align*}
  \begin{bmatrix}  j_1 + j_2 + j_3  \\ j_3  \end{bmatrix}   \begin{bmatrix}  j_1 + j_2   \\ j_2  \end{bmatrix}\begin{bmatrix}  j_1   \\ j_1  \end{bmatrix}  
  &=    \begin{bmatrix}  j_1 + j_2 + j_3  \\ j_1 + j_2  \end{bmatrix}   \begin{bmatrix}  j_1 + j_2   \\ j_1  \end{bmatrix}\begin{bmatrix}  j_1   \\ j_1  \end{bmatrix}         (\cref{transpose symmetry})  \\
&=  \begin{bmatrix}  j_1 + j_2 + j_3  \\ j_1  \end{bmatrix} \begin{bmatrix}  j_2 + j_3   \\ j_3  \end{bmatrix} \begin{bmatrix}  j_1   \\ j_1   \end{bmatrix}    (\cref{chain rule symmetry})\\
&= \begin{bmatrix}  j_1 + j_2 + j_3  \\ j_1  \end{bmatrix} \begin{bmatrix}  j_2 + j_3   \\ j_3  \end{bmatrix} \begin{bmatrix}  j_2   \\ j_2   \end{bmatrix} \\
&= \begin{bmatrix}  j_1 + j_2 + j_3  \\ j_1  \end{bmatrix} \begin{bmatrix}  j_2 + j_3   \\ j_2  \end{bmatrix} \begin{bmatrix}  j_3   \\ j_3   \end{bmatrix}   (\cref{transpose symmetry})   \\
&= \begin{bmatrix}  j_{\omega(1)} + j_{\omega(2)}  + j_{\omega(3)}  \\ j_{\omega(3)}  \end{bmatrix} \begin{bmatrix}  j_{\omega(1)} + j_{\omega(2)}   \\ j_{\omega(2)}  \end{bmatrix}\begin{bmatrix}  j_{\omega(1)}   \\ j_{\omega(1)}  \end{bmatrix}
  \end{align*}
  
  for $\omega \in \{ (13), (123)  \}.$
  
  Suppose it is true for $n \leq m-1.$ We want prove for $m.$
  For $\omega \in S_m$ fixes $m,$ it is done by induction hypothesis.
  Now, let $\omega$ sends $m$ to $\omega(m) \neq m.$
\begin{align*} & \begin{bmatrix}  j_1 + j_2 + \cdots + j_n  \\ j_n  \end{bmatrix} \begin{bmatrix}  j_1 + j_2 + \cdots + j_{n-1}  \\ j_{n-1}  \end{bmatrix} \cdots  \begin{bmatrix}  j_1 + j_2   \\ j_2  \end{bmatrix}\begin{bmatrix}  j_1   \\ j_1  \end{bmatrix}   \\
&= \begin{bmatrix}  j_1 + j_2 + \cdots + j_n  \\ j_n  \end{bmatrix} \begin{bmatrix}  j_{\omega'({1})} + j_{\omega'({2})} + \cdots + j_{\omega'({n-1})}  \\ j_{\omega'({n-1})}  \end{bmatrix} \cdots  \begin{bmatrix}  j_{\omega'({1})} + j_{\omega'({2})}   \\ j_{\omega'({2})}  \end{bmatrix}\begin{bmatrix}  j_{\omega'({1})}   \\ j_{\omega'({1})}  \end{bmatrix}     \\
&= \begin{bmatrix}  j_1 + j_2 + \cdots + j_n  \\  j_{\omega'({1})} + j_{\omega'({2})} + \cdots + j_{\omega'({n-1})}  \end{bmatrix} \begin{bmatrix}  j_{\omega'({1})} + j_{\omega'({2})} + \cdots + j_{\omega'({n-1})}  \\ j_{\omega'({n-1})}  \end{bmatrix} \cdots  \begin{bmatrix}  j_{\omega'({1})} + j_{\omega'({2})}   \\ j_{\omega'({2})}  \end{bmatrix}\begin{bmatrix}  j_{\omega'({1})}   \\ j_{\omega'({1})}  \end{bmatrix}     \\
&= \begin{bmatrix}  j_1 + j_2 + \cdots + j_n  \\  j_{\omega'({n-1})}   \end{bmatrix} 
\begin{bmatrix}  j_1 + j_2 + \cdots + j_n - j_{\omega'({n-1})}   \\ j_n  \end{bmatrix} \cdots  \begin{bmatrix}  j_{\omega'({1})} + j_{\omega'({2})}   \\ j_{\omega'({2})}  \end{bmatrix}\begin{bmatrix}  j_{\omega'({1})}   \\ j_{\omega'({1})}  \end{bmatrix}    \\
&= \begin{bmatrix}  j_{\omega(1)} + j_{\omega(2)} + \cdots + j_{\omega(n)}  \\ j_{\omega(n)}  \end{bmatrix} \cdots  \begin{bmatrix}  j_{\omega(1)} + j_{\omega(2)}   \\ j_{\omega(2)}  \end{bmatrix}\begin{bmatrix}  j_{\omega(1)}   \\ j_{\omega(1)}  \end{bmatrix}
\end{align*}
 
 where the first equality follows from IH that there exists $ \omega' \in S_{m-1}$ such that $ \omega'(n-1) = \omega(n),$
 the second equality follows from \cref{transpose symmetry}, and the third equality follows from \cref{chain rule symmetry}, and the fourth equality follows from induction hypothesis again. 
\end{proof}

By the lemma above, the Gaussian polynomial of a Fock vector associated to a crossingless matching is an invariant of the  cardinality  obtained in \cref{parta}.
  We need to argue why the order of ``$t$-counting" doesn't matter. 
  If there are only two choices to make in the counting rectangle, then it is equivalent to  $\cref{transpose symmetry}.$
  If there are more than two choices exist in the counting rectangle, then it is covered by $\cref{perm}.$
  It is not hard to extend the argument to $p_j$'s containing in the intersection of two rectangles.

\begin{lemma} \label{ell}
The spreading polynomial  $\fs \fp({p_{i_{1}}^{\lambda_1}p_{i_{2}}^{\lambda_2} \hdots p_{i_{2n-1}}^{\lambda_{2n-1}} v_0})$ of a Fock vector doesn't depend on the order you taking the $p_{i_j},$ or in another word,  it does not depend on the preimage  $\psi^{-1}({p_{i_{1}}^{k_1}p_{i_{2}}^{k_2} \hdots p_{i_{2n-1}}^{k_{2n-1}} v_0}).$ 
\end{lemma}

 \item As mentioned before,  a single Fock vector can be further decomposed into the basis of Fock space in terms of multipartitioned Fock vector $p_{1}^{\lambda_1}p_{2}^{\lambda_2} \hdots p_{2n-1}^{\lambda_{2n-1}} v_0$,  $\lambda_i \vdash k_i.$
 
 Each basis element will also be associated to a polynomial $\theta \in \Z[t, t^{-1}]$ based on the major statistic with standard Young tableaux fillings on multipartition.
     
     For each partition $\lambda \vdash n$,  the total number of standard Young tableaux  associated can be interpreted as the dimension of irreducible representation of $S_n$ or more directly the number of Gelfand-Tsetlin basis.
     We now define a major statistic to these standard Young tableaux.
     Given a standard Young tableau $\lambda_S$ of $\lambda,$ a descent of $T$ is defined to be a value of $i,$ $1 \leq i \leq |\lambda|$ for which $i + 1$ occurs in a row below $i$ in $\lambda_S.$
     A \textit{major statistic} $m_{{\lambda_S}}$ associated to a standard Young tableau $\lambda_S$ is the sum 
     $$m_{{\lambda_S}} := \sum_{i \in \text{descent of $\lambda_S$}} i.$$
     Let $q$ be the dimension of irreducible representation of $\lambda.$
     Then 
     $$\theta_\lambda := \sum_{i =1}^q {t^{m_{{\lambda_{S_i}}}}}$$
   and, for multipartitioned case,
 $$ \theta (p_{1}^{\lambda_1}p_{2}^{\lambda_2} \hdots p_{2n-1}^{\lambda_{2n-1}} v_0) := \theta_{\lambda_1} \theta_{\lambda_2} \cdots \theta_{\lambda_{2n-1}}. $$

 For example,  in the Fock space representation, $p_3 p_4^2  p_5 p_7 p_{11} v_0= p_3  p_4^{(2)} p_5 p_7 p_{11} v_0 + p_3 p_4^{(1^2)} p_5  p_7 p_{11} v_0.$
 Then,  $\theta (p_3  p_4^{(2)} p_5  p_7 p_{11} v_0) = 1 $ and $  \theta ( p_3 p_4^{(1^2)} p_5  p_7 p_{11} v_0)= t.$ In the case of $p_4^{(2,1)} p_7 p_{11} p_9 v_0,$ we get $\theta(p_4^{(2,1)} p_7 p_{11} p_9 v_0 ) = t + t^2.$ Another example would be  $p_4^{(2)}p_5^{(1^2)}p_7 p_{11} v_0,$ we get $\theta (p_4^{(2)}p_5^{(1^2)}p_7 p_{11} v_0) = 1 \cdot t = t.$

\item  It can be shown(\cref{ell + 1 term}) that,  all the elements in a single set of intersection points has the same total sum $\ell$ of \textcolor{red}{$\ell_{r,s}$}. Then,  we will assign the spreading polynomial $\fs \fp(p_{i_{1}}^{\lambda_1}p_{i_{2}}^{\lambda_1} \hdots p_{i_{2n-1}}^{\lambda_{2n-1}} v_0)  \in \Z[t,t^{-1}] $ of an existing basis element $p_{s_{1}}^{\lambda_1}p_{i{2}}^{\lambda_1} \hdots p_{i_{2n-1}}^{\lambda_{2n-1}} v_0$ in $c$ by 
   
$$\fs \fp(p_{i_{1}}^{\lambda_1}p_{i_{2}}^{\lambda_2} \hdots p_{i_{2n-1}}^{\lambda_{2n-1}} v_0) = \theta \left( p_{i_{1}}^{\lambda_1}p_{i_{2}}^{\lambda_1} \hdots p_{i_{2n-1}}^{\lambda_{2n-1}} v_0 \right) \phi \left( p_{i_{1}}^{|\lambda_1|}p_{i_{2}}^{|\lambda_2|} \hdots p_{i_{2n-1}}^{|\lambda_{2n-1}|} v_0 \right)  \Biggr\rvert_{t \mapsto t^2} \cdot t^{- \ell}.$$

Note that the definition of $\fs \fp$ can be extended linearly, so that $\fs \fp(p_{i_{1}}^{k_1}p_{i_{2}}^{k_2} \hdots p_{i_{2n-1}}^{k_{2n-1}} v_0),$ for $k_i \in \Z$ is defined to be the sum of the spreading polynomial of the basis elements that it decomposes into.

For example,  $\{ ($\textcolor{orange}{$1$},\textcolor{blue}{$4$},\textcolor{red}{$0$}$)$,$($\textcolor{orange}{$2$},\textcolor{blue}{$7$},\textcolor{red}{$0$}$)$,$($\textcolor{orange}{$3$},\textcolor{blue}{$11$},\textcolor{red}{$0$}$)$,$($\textcolor{orange}{$4$},\textcolor{blue}{$4$},\textcolor{red}{$1$}$)$,$($\textcolor{orange}{$5$},\textcolor{blue}{$5$},\textcolor{red}{$1$}$)$,$($\textcolor{orange}{$6$},\textcolor{blue}{$3$},\textcolor{red}{$2$}$) \} \\ \neq \{ ($\textcolor{orange}{$1$},\textcolor{blue}{$4$},\textcolor{red}{$0$}$)$,$($\textcolor{orange}{$2$},\textcolor{blue}{$7$},\textcolor{red}{$0$}$)$,$($\textcolor{orange}{$3$},\textcolor{blue}{$11$},\textcolor{red}{$0$}$)$,$($\textcolor{orange}{$4$},\textcolor{blue}{$3$},\textcolor{red}{$0$}$)$,$($\textcolor{orange}{$5$},\textcolor{blue}{$4$},\textcolor{red}{$2$}$)$, $($\textcolor{orange}{$6$},\textcolor{blue}{$5$},\textcolor{red}{$2$}$) \}$, but $0 + 0 +0 +1 + 1 +2 = 4 = 0+ 0 + 0 +0 +2 +2.$ 
  So, $\ell_{p_3 p_4^{2} p_5 p_7 p_{11} v_0} = 4.$
  Then,  
  $$\fs \fp \left( p_3 p_4^{(2)} p_5 p_7 p_{11}  v_0 \right) = 1 \cdot (1 + 2 t + 2 t^2  + t^3)   \Biggr\rvert_{t \mapsto t^2} \cdot t^{- 4} = t^{-4} + 2t^{-2} + 2 + t^2,$$ and
  $$\fs \fp \left(p_3 p_4^{(1^2)} p_5 p_7 p_{11} v_0  \right) = t \cdot (1 + 2 t + 2 t^2  + t^3)   \Biggr\rvert_{t \mapsto t^2} \cdot t^{- 4} = t^{-2} + 2 + 2 t^2 + t^4.$$
We can extend the definition as a homomorphism to a general Fock vector $p_{1}^{k_1}p_{2}^{k_2} \hdots p_{2n-1}^{k_{2n-1}}.$
For example,  
\begin{align*}
\fs \fp(p_3 p_4^2 p_5 p_7 p_{11} v_0) 
&= \fs \fp \left(p_3 p_4^{(2)} p_5 p_7 p_{11}  v_0 \right) + \fs \fp \left(p_3 p_4^{(1^2)} p_5 p_7 p_{11}   v_0 \right) \\
&=  t^{-4} + 3t^{-2} + 4 + 3t^2  + t^{4}
\end{align*}
which has $5 = \ell + 1$ terms. This is a fact that we will prove in \cref{ell + 1 term}.

 \end{enumerate}  
  
  \item Consider all such sets of intersection points.  We will associate a crossingless matching $c$ a finite sum $F(c)$ of Fock vectors based on the set
\begin{center}
$\underset{r,i} {\bigcup}\{$(\textcolor{orange}{$c_r$},\textcolor{blue}{$v_i$},\textcolor{red}{$\ell_{r,i}$})$\mid \text{no repeated $r$} \}.$
\end{center}  
The set above will give us a list of Fock vectors contained in $c$.
We then have

$$v_c = \sum_{   p_{i_{1}}^{\lambda_1}p_{i_{2}}^{\lambda_2} \hdots p_{i_{2n-1}}^{\lambda_{2n-1}} v_0 \in  \psi( X_c) } \fs \fp(p_{i_{1}}^{\lambda_1}p_{i_{2}}^{\lambda_2} \hdots p_{i_{2n-1}}^{\lambda_{2n-1}} v_0) \cdot p_{i_{1}}^{\lambda_1}p_{i_{2}}^{\lambda_2} \hdots p_{i_{2n-1}}^{\lambda_{2n-1}} v_0.$$

\end{enumerate}

\begin{lemma} \label{ell + 1 term}
The spreading polynomial $\sum_{ \lambda_1 \vdash k_1,  \lambda_2 \vdash k_2, \cdots \lambda_{2n-1} \vdash k_{2n-1} } \fs \fp(p_{i_{1}}^{\lambda_1}p_{i_{2}}^{\lambda_2} \hdots p_{i_{2n-1}}^{\lambda_{2n-1}} v_0)$  has $ \ell + 1$ terms.
Moreover,  the spreading polynomial $\sum_{ \lambda_1 \vdash k_1,  \lambda_2 \vdash k_2, \cdots \lambda_{2n-1} \vdash k_{2n-1} } \fs \fp(p_{i_{1}}^{\lambda_1}p_{i_{2}}^{\lambda_2} \hdots p_{i_{2n-1}}^{\lambda_{2n-1}} v_0) \big\rvert_{t=1} $ counts the total number of basis elements required to express $p_{i_{1}}^{| \lambda_1 |}p_{i_{2}}^{| \lambda_2 |} \hdots p_{i_{2n-1}}^{|\lambda_{2n-1}| } v_0$.
\end{lemma}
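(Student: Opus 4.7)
The plan is to factor the sum into two separable pieces and treat each assertion in turn. Since $\theta$ splits multiplicatively over the individual $\lambda_j$ while $\phi$ depends only on their sizes $k_j = |\lambda_j|$ (and on the geometry of $c$), the entire sum rearranges as
\[
\sum_{\lambda_1 \vdash k_1,\, \ldots,\, \lambda_{2n-1} \vdash k_{2n-1}} \fs\fp(p_{i_1}^{\lambda_1} \cdots v_0) \;=\; t^{-\ell} \cdot \Bigl( \phi(p_{i_1}^{k_1} \cdots v_0) \cdot \prod_{j} T_{k_j}(t) \Bigr)\Big|_{t \mapsto t^2},
\]
where $T_k(t) := \sum_{\lambda \vdash k} \theta_\lambda(t)$ is the major-index generating function for standard Young tableaux of size $k$. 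Write $P(t) := \phi \cdot \prod_j T_{k_j}$ for the bracketed polynomial.

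For the second assertion (the $t = 1$ specialisation), the two factors provide the two combinatorial ingredients separately. By the counting-rectangle construction in step 6(a), $\phi|_{t=1}$ equals the cardinality $|\psi^{-1}(p_{i_1}^{k_1} \cdots v_0)|$, i.e.\ the multiplicity with which this Fock vector arises from marked-cross configurations in $c$. Under the identification between the Heisenberg subalgebra $\h_A^+$ and the ring of symmetric functions in \cref{QuaLatHei}, the ``power'' $p_i^{k}$ corresponds to $h_1^k$ and decomposes in the Schur (equivalently, $p_i^\lambda$-)basis via the classical Pieri identity $h_1^k = \sum_{\lambda \vdash k} f^\lambda s_\lambda$, yielding $p_i^{k} = \sum_{\lambda \vdash k} f^\lambda \, p_i^\lambda$. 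Hence $T_k|_{t=1} = \sum_{\lambda \vdash k} f^\lambda$ counts, with multiplicity, the basis vectors $p_i^\lambda$ occurring in this decomposition, and multiplying by $\phi|_{t=1}$ gives the advertised total.

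For the first assertion, the goal is to show that $P(t)$ is a polynomial supported on $\{0, 1, \ldots, \ell\}$ with every coefficient strictly positive; once this is done, substituting $t \mapsto t^2$ and shifting by $t^{-\ell}$ spreads the support onto $\{-\ell, -\ell+2, \ldots, \ell\}$, giving exactly $\ell + 1$ nonzero terms. That $P$ has no internal gaps will be almost automatic: each Gaussian polynomial $\binom{y_j}{z_j}_t$ has strictly positive coefficients at every power from $0$ up to $z_j(y_j - z_j)$ (it generates partitions inside a $z_j \times (y_j - z_j)$ rectangle, and at least one such partition exists for every intermediate size), and each $T_k(t)$ can be shown inductively to be strictly positive at every integer power from $0$ up to $\binom{k}{2}$. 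A one-line convolution argument then propagates this ``positive consecutive support starting at $0$'' property to any product of such polynomials. The real obstruction is the degree identity
\[
\deg P \;=\; \sum_j z_j(y_j - z_j) \;+\; \sum_i \binom{k_i}{2} \;=\; \ell.
\]

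To establish this identity I plan to match both sides combinatorially on the cross diagram of $c$. Recall that $\ell$ is the sum, over the marked crosses, of their level-indices $\ell_{r, i}$, and by \cref{ell} is a function of the Fock vector alone. I will decompose $\ell$ into a within-column contribution and a between-column contribution. On each vertical line $v_i$, placing the $k_i$ marked crosses into the topmost $k_i$ positions would yield the minimum achievable sum $0 + 1 + \cdots + (k_i - 1) = \binom{k_i}{2}$, and summing over $i$ recovers the second term. The remaining displacement is forced by unmarked crosses belonging to curves that are marked at other vertical lines, and the iterative single-$p_i$ deletion procedure of step 6(a) organises these forced displacements into counting rectangles of sizes $(y_j, z_j)$, each contributing exactly $z_j(y_j - z_j)$ as $z_j$ marked rows must slide past $y_j - z_j$ unmarked ones. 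The delicate technical point, and the hardest part of the argument, will be to verify that this bookkeeping is both exhaustive and non-overlapping: every unit of displacement counted by $\ell$ must be attributed to exactly one rectangle or one within-column contribution, so that the global identity (rather than a one-sided inequality) holds regardless of the order in which the single-$p_i$ deletion is performed, which is precisely the flexibility already guaranteed by \cref{perm} and \cref{ell}.
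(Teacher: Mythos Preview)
Your approach matches the paper's: both reduce the first assertion to showing that $P(t) = \phi \cdot \prod_j T_{k_j}$ has strictly positive coefficients at every degree from $0$ to $\ell$, with the degree identity $\deg P = \ell$ coming from the counting-rectangle bookkeeping on the cross diagram, and both leave that identity at a comparable level of sketch. Your explicit factorization is a tidier organization than the paper's case-by-case treatment; as a bonus you sidestep a small slip in the paper, which in the single-column case asserts $\sum_{\lambda \vdash k} \theta_\lambda = [k]_t!$ (already false at $k=3$, where the left side is $1 + t + t^2 + t^3$ while $[3]_t! = 1 + 2t + 2t^2 + t^3$), though only the degree $\binom{k}{2}$ and positivity of $T_k$ are actually needed, and those you state correctly.
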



\begin{proof}  Suppose $\ell = 0.$ 
That means there is only one possible choice of $p_{i_{1}}^{\lambda_1}p_{i_{2}}^{\lambda_2} \hdots p_{i_{2n-1}}^{\lambda_{2n-1}} v_0.$ 
Note that in this case $\lambda_i \in \{0, 1\}.$
   Then, by construction, it  is a single graded vector over $\Z.$
   Suppose now,  we only look at a Fock vector $p_i^{k_i}$ consisting of a repeated indices $i.$
   Then, we have $p_i^{k_i} = \sum_{\lambda_i \vdash k_i} p_i^{\lambda_i}.$
   Then, $\ell + 1 = 0 + 1 + 2 + \cdots + k_i-2 + k_i-1 + 1, $ but $\fs \fp (p_i^{k_i}) = \sum_{\lambda_i \vdash k_i} \fs \fp(p_i^{\lambda_i}) = [k_i]! $ which has $k_i -1 + k_{i} -2 + k_{i} -3 + \cdots + 1,$ as desired. 
   
   For the rest of the case, note that we can reduce the case of a single non-decomposable crossingless matching since the function $\fs \fp$ is multiplicative on the decomposable crossingless matchings. 
   First, we discuss the case when the crossingless matching is non-nested.
   Consider its associated lattice diagram where it is noticed as a pyramid with the tip, say $p_k.$ 
   By discussion before, we can pick a specific representative of the vector by marking the crosses from top to the bottom in the following sequence:    $p_k^{\lambda_k}, p_{k-1}^{\lambda_{k-1}}, p_{k+1}^{\lambda_{k+1}}, p_{k-2}^{\lambda_{k-2}}, p_{k+2}^{\lambda_{k+2}}, \hdots.$
   Forming the counting fence, we see that the counting rectangles are those with top vertices having $\ell$-label as $0.$ 
   Note that, by precious paragraph, $\ell$-label on $p_{i_j}^{\lambda_{i_j}}$ carries the contribution of $ \lambda_{i_j} - 1 +  \lambda_{i_j} - 2 + \cdots + 1.$
   Then, from the desingated sequence, it is easy to see that the $\ell$-label on $p_{i_j}^{\lambda_{i_j}}$ is the total contribution from the way to choose  $p_{i_j}$ in the lattice diagram and the spreading polynomial function.
   To see it, we could start counting the way to choose the last term $p_{i_j}$ in the sequence since it is not in the intersection with other counting rectangles or, in other words, choosing it is independent of choosing the other.
   The $\ell$-label of the first appearing $p_{i_j}$ indicates there are $\begin{pmatrix}  \ell +  \lambda_{i_j}  \\ \lambda_{i_j} \end{pmatrix}$ choices of it which its Gaussian polynomial $$\begin{bmatrix}  \ell +  \lambda_{i_j}  \\ \lambda_{i_j} \end{bmatrix}$$ has degree $\ell \cdot \lambda_{i_j}$. Here, we recall that a degree of a polynomial with one indeterminate is the highest power attaining by the indeterminate appearing in the summand.
   Similarly, we apply the method to all the other $p_{i_j}.$  
   We then obtained the desired result.

 For the case of nested crossingless matching, it might be the case that all the counting rectangles has nontrivial intersections on marked crosses. 
 However, it is not hard to extend the argument to get the desired result.

Statement 2 is obvious.
\end{proof}

\begin{conjecture} \label{graTemp}

The span of the graded vectors $v_c$ in $F^{n}_{A_{2n-1}}$ form a Temperley-Lieb representation.

\end{conjecture}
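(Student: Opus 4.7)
The plan is to exhibit explicit operators $U_1, \ldots, U_{2n-1}$ on $F^n_{A_{2n-1}}$ that preserve the subspace $V := \mathrm{span}\{v_c\}$ and satisfy the Temperley-Lieb relations with loop value $\delta = [2] = t + t^{-1}$. There are two natural candidate constructions to pursue in parallel. The first is to extract $U_i$ from the braid operator $\Sigma_{i,n}$ of Section 5.4 via a Kauffman-style decomposition $\Sigma_{i,n} = t^{1/2}\mathrm{id} + t^{-1/2} U_i$ (after normalization), which would make the TL structure a direct corollary of the (still-conjectural) braiding of $\Sigma_{i,n}$. The second, more elementary route is to \emph{define} $U_i$ on $V$ by the rule $U_i \cdot v_c := \delta^{k(i,c)}\, v_{u_i \cdot c}$, where $u_i \cdot c$ is the crossingless matching obtained from $c$ by topological cap-cup composition at positions $i, i+1$, and $k(i,c)$ counts the resulting closed loops that are removed.

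My first main step would be to establish linear independence of the family $\{v_c\}$ as $c$ ranges over crossingless matchings of $2n$ points, so that the dimension of $V$ equals the Catalan number $C_n$, matching the dimension of the standard TL cell module. The strategy is to identify, for each $c$, a distinguished Fock monomial $p_{i_1}^{\lambda_1}\cdots p_{i_{2n-1}}^{\lambda_{2n-1}} v_0$ whose coefficient in $v_c$ is nonzero but whose coefficient in $v_{c'}$ vanishes for every $c' \neq c$. A natural candidate is the monomial corresponding to marking, on each curve $c_r$, the unique intersection point with the vertical line $v_{i_r}$ whose midpoint lies directly above $c_r$; this configuration carries the minimal value of $\ell$ and is rigidly determined by the topological nesting of $c$. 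An induction on the nesting depth, exploiting the multiplicativity of $\mathfrak{sp}$ across decomposable subcrossingless matchings established implicitly in \cref{ell + 1 term}, should yield the required leading-term argument.

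My second main step is to verify the Temperley-Lieb relations $U_i^2 = \delta U_i$, $U_i U_j U_i = U_i$ for $|i-j|=1$, and $U_i U_j = U_j U_i$ for $|i-j|>1$. Once linear independence of $\{v_c\}$ is established, the direct definition $U_i \cdot v_c = \delta^{k(i,c)} v_{u_i \cdot c}$ is automatically well-defined, and all three relations reduce immediately to the corresponding topological identities for cap-cup composition of crossingless matchings. The first step therefore reduces the entire conjecture to the combinatorial problem of linear independence, together with a verification that the scalar $\delta = [2]$ arising from closed-loop removal matches the $[2]$-factor that Heisenberg-style calculations would naturally produce.

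The main obstacle will be the linear independence of $\{v_c\}$. The spreading-polynomial construction is intricate enough that distinct crossingless matchings produce vectors whose Fock monomials can overlap substantially, and the interplay between the Gaussian-polynomial factor $\phi$, the major-statistic factor $\theta$, and the $t^{-\ell}$ twist could in principle conspire to create nontrivial linear relations among the $v_c$. The inductive leading-term argument above is clean for non-nested matchings but becomes subtle for deeply nested configurations where multiple counting rectangles intersect; here the order-independence result \cref{perm} will be essential to control the combinatorics. A secondary obstacle, should linear independence fail for generic $t$, is to identify the kernel of $c \mapsto v_c$ explicitly and show it coincides with the standard quotient kernel of the TL cell module at a root-of-unity specialization, which would still yield a (possibly non-semisimple) TL representation and thereby confirm the conjecture in modified form.
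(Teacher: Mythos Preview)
The statement you are attempting to prove is explicitly labeled a \emph{conjecture} in the paper and is left unproven there; the paper offers no proof, no sketch, and no strategy beyond the remark in the preface that the author ``conjecture[s] that the span of such vectors forms a Temperley-Lieb representation.'' There is therefore nothing in the paper's own argument to compare your proposal against.

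That said, a few remarks on your plan as a research strategy. Your second route---defining $U_i$ on $V$ by topological cap-cup composition and then checking the TL relations---only proves that the abstract $\C(t)$-span of symbols $\{v_c\}$ carries the standard TL action; it does not by itself show that this action is realized by genuine operators on the Fock space $F^n_{A_{2n-1}}$, which is what the paper's framing intends. For the conjecture to have content one would want the $U_i$ to arise from the Heisenberg-algebra side (e.g.\ via your first route through $\Sigma_{i,n}$), and that route depends on the still-open braid conjecture of Section~3.5. Your linear-independence step is a reasonable first goal, and the leading-monomial idea is the natural one, but even granting it you would still need to exhibit the TL generators as honest Fock-space operators restricting to $V$, not merely as an abstract action on the set of matchings.
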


\begin{conjecture} \label{preJones}
By realising a link in $3$-sphere as a plat closure of a braid \cite{BirBraid}, through Temperley-Lieb representation \cite{Kauff} , it consists of a combination of gluing crossingless matchings $c$ and upside down crossingless matchings $c^t$.
  This topological gluing can be interpreted as multiplying the vector $v_{c^t}$ (replacing the generator $p$ with $q$) with the vector $v_c.$ 
  Then, one can recover the Jones polynomial of the link by counting the graded vacuum vectors in the Fock space.
\end{conjecture}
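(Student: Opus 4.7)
The plan is to prove both conjectures together by building an explicit Temperley--Lieb action on $F^n_{A_{2n-1}}$ whose restriction to $\mathrm{span}\{v_c\}$ realises the graphical calculus of crossingless matchings. First I would promote the conjectural braid operators $\Sigma_{i,n}$ of Section 4.5 into candidate Temperley--Lieb generators $U_{i,n}$ via the Kauffman identity $\Sigma_{i,n} = A \cdot \mathrm{id} + A^{-1} U_{i,n}$, solving for $U_{i,n}$; equivalently, this inverts the representation $\pi: \mathcal{A}(A_n) \to TL_{n+1}$ from the chapter introduction to read off an explicit second-order operator on $F^n$ written in the $p_i^{\lambda}, q_i^{\lambda}$ basis. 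Assuming the conjectural braiding of the $\Sigma_{i,n}$, the derived $U_{i,n}$ will automatically satisfy the Temperley--Lieb relations $U_{i,n}^2 = \delta \, U_{i,n}$, $U_{i,n} U_{i\pm 1,n} U_{i,n} = U_{i,n}$, and $U_{i,n} U_{j,n} = U_{j,n} U_{i,n}$ for $|i-j|>1$, with $\delta = -A^2 - A^{-2}$; otherwise these relations must be verified directly by expanding in the commutation coefficients $C^{\mu\lambda}_{\nu\kappa}$ and $\mathfrak{C}^{\mu\lambda}_{\nu\kappa}(t)$ recorded after the definition of $\mathfrak{h}_A$.

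Next I would attack Conjecture \ref{graTemp} by establishing the intertwining identity $U_{i,n} \cdot v_c = v_{u_i \cdot c}$, with the convention that if $u_i \cdot c$ creates a closed loop it is replaced by $\delta \cdot c'$, where $c'$ is the matching obtained by removing the loop. The forward step is to construct a bijection between the marked cross configurations $\mathcal{X}_{u_i c}$ and a natural subset of $\mathcal{X}_c$ (together with an extra ``creation'' configuration accounting for the new cap--cup of $u_i$), preserving the level labels $\ell_{r,j}$, so that the cardinalities produced by Step 7(a) transform predictably. I would then track the effect on the Gaussian factors $\phi$ and the major-index weights $\theta$, invoking Lemma \ref{perm} to reorganise the product of binomial coefficients independently of the order in which single $p_j$'s are extracted from the counting fence, and Lemma \ref{ell + 1 term} to control the total number of basis terms appearing after decomposing $p_j^{k_j}$ into $\sum_{\lambda \vdash k_j} p_j^{\lambda}$. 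The closed-loop scalar $\delta$ should emerge from the transpose-symmetry identity $\begin{bmatrix} r+s \\ s \end{bmatrix} = \begin{bmatrix} r+s \\ r \end{bmatrix}$ applied to the counting rectangle adjoined by the new arc.

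For Conjecture \ref{preJones}, I would dualise the construction of $v_c$ by replacing each creation operator $p_i$ with the annihilation operator $q_i$, producing a covector $v_{c^t} \in (F^n)^*$ for an upside-down matching $c^t$. Given a braid $\beta \in \mathcal{A}(A_{2n-1})$, its plat closure is then modelled by the Fock-space pairing $\langle v_{c^t} \mid \rho(\beta) \cdot v_c \rangle$, with $\rho(\beta)$ the composite of the braid operators $\Sigma_{i,n}^{\pm 1}$, and the pairing extracted as the coefficient of the vacuum $v_0$. By the first conjecture, $\rho(\beta) \cdot v_c$ expands in $\mathrm{span}\{v_{c'}\}$ with coefficients matching the Kauffman bracket expansion of $\beta$, so the pairing reduces to a sum of elementary pairings $\langle v_{c^t} \mid v_{c'} \rangle$, each of which should equal $\delta^{k}$, where $k$ is the number of connected components of the planar closure of $c^t \cup c'$. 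The formula for the Jones polynomial then follows from the standard Kauffman-bracket-to-Jones normalisation.

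The main obstacle will be verifying the intertwining identity $U_{i,n} v_c = v_{u_i c}$ in full generality, particularly for nested crossingless matchings where the counting rectangles in Step 7(a) intersect nontrivially and the Gaussian factor $\phi$ does not factorise across individual curves. In those cases the bijection between $\mathcal{X}_c$ and $\mathcal{X}_{u_i c}$ has to be assembled lattice-segment by lattice-segment, and the compatibility of the major statistic $\theta_\lambda$ under this bijection rests on a Littlewood--Richardson style cancellation among the coefficients $\mathfrak{C}^{\mu\lambda}_{\nu\kappa}(t)$ that is not yet available in closed form. A secondary obstacle, shared with the braiding conjecture for $\Sigma_{i,n}$ itself, is that both proofs ultimately reduce to $q$-series identities of basic hypergeometric type; I expect the pairing computation for Conjecture \ref{preJones} to go through essentially formally once the intertwining identity for $U_{i,n}$ is in hand, because the loop-counting scalar $\delta$ then cascades out of the Heisenberg commutation relations by induction on the number of Temperley--Lieb generators in $\beta$.
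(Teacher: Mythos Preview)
The statement you are addressing is labelled a \emph{conjecture} in the paper and is not proved there; it is stated immediately after Conjecture~\ref{graTemp} with no argument following, so there is no proof in the paper to compare your proposal against. What you have written is, appropriately, a research strategy rather than a proof, and you correctly flag the intertwining identity $U_{i,n}\, v_c = v_{u_i\cdot c}$ for nested matchings as the crux.

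There is, however, a concrete error in your first step. You assert that if the $\Sigma_{i,n}$ satisfy the braid relations then the operators $U_{i,n}$ extracted via $\Sigma_{i,n} = A\cdot\mathrm{id} + A^{-1} U_{i,n}$ will \emph{automatically} satisfy the Temperley--Lieb relations. This is false: the braid relations put no constraint on $U_{i,n}^2$. For Kauffman's map $\pi$ to land in the Temperley--Lieb algebra one needs, in addition, a Hecke-type quadratic relation on the braid generators (equivalently $(\Sigma_{i,n} - A)(\Sigma_{i,n} + A^{-3}) = 0$ up to normalisation), and nothing in the definition of $\varsigma_{i,m}$ as a truncated partition generating function suggests this holds on all of $F^n_{A_{2n-1}}$. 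Your fallback ``verify directly by expanding in $\mathfrak{C}^{\mu\lambda}_{\nu\kappa}(t)$'' is not a plan: that expansion is exactly the kind of $q$-series identity the paper already identifies as the obstruction to the braiding conjecture itself.

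This also makes your logical order backwards. If you could prove the intertwining identity $U_{i,n}\, v_c = v_{u_i\cdot c}$ directly from the combinatorics of counting fences, the Temperley--Lieb relations on $\mathrm{span}\{v_c\}$ would follow for free from the graphical calculus of the $u_i$, and Conjecture~\ref{preJones} would then reduce to the pairing computation you sketch. So the intertwining identity should be the starting point, not a consequence; your current plan front-loads a step that is both unnecessary (on the subspace) and likely false (on the whole Fock space), while depending on the separate open braiding conjecture that the paper has only checked by brute force through $n=3$.
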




  \appendix



  \addcontentsline{toc}{chapter}{Bibliography}


 \include{bibB2}
         \printbibliography
\end{document}